\newlength\figH
\newlength\figW
\pgfplotsset{compat=newest}
\DeclareMathOperator{\Tr}{Tr}
\DeclareMathOperator{\sgn}{sgn}
\DeclareMathOperator{\E}{\mathbf{E}}
\newcommand{\iso}{\mathrm{iso}}
\newcommand{\ii}{\mathrm{i}}
\newcommand{\cred}[1]{{\color{red} #1}}
\renewcommand{\C}{\mathbf{C}}
\newcommand{\C}{\mathbf{C}}
\newcommand{\HC}{\mathbf{H}}
\newcommand{\vx}{\bm{x}}
\newcommand{\bu}{\bm{u}}
\newcommand{\vy}{\bm{y}}
\newcommand{\R}{\mathbf{R}}
\newcommand{\N}{\mathbf{N}}
\newcommand{\Z}{\mathbf{Z}}
\newcommand{\cO}{\mathcal{O}}
\newcommand{\co}{{\scriptstyle\mathcal{O}}}
\newcommand{\dif}{\operatorname{d}\!{}}
\newcommand\restr[3]{{% we make the whole thing an ordinary symbol
  \left.\kern-\nulldelimiterspace % automatically resize the bar with \right
  #1 % the function
  \vphantom{\big|} % pretend it's a little taller at normal size
  \right|_{#2}^{#3} % this is the delimiter
  }}
\DeclarePairedDelimiter{\braket}{\langle}{\rangle}%
\DeclarePairedDelimiter{\abs}{\lvert}{\rvert}%
\DeclarePairedDelimiterX{\tuple}[1](){#1}
\DeclarePairedDelimiterX{\set}[1]\{\}{#1}
\DeclarePairedDelimiterXPP{\landauO}[1]{\cO}(){}{#1}
\DeclarePairedDelimiterXPP{\landauo}[1]{\co}(){}{#1}
\DeclarePairedDelimiterXPP{\landauOprec}[1]{\cO_\prec}(){}{#1}
\DeclarePairedDelimiterXPP{\landauOstd}[1]{\cO_\prec^2}(){}{#1}
\DeclarePairedDelimiterXPP{\landauOE}[1]{\cO_\prec^1}(){}{#1}
\DeclarePairedDelimiterXPP{\landauOd}[1]{\cO_\mathrm{m}}(){}{#1}
\long\def\ifnodedefined#1#2#3{%
	\@ifundefined{pgf@sh@ns@#1}{#3}{#2}%
}
\newcommand{\domain}[4]{
	\path[name path=kappa] (-3,#1^.333) -- (3,#1^.333); 
	\path[name intersections={of=kappa and rho,name=i#4}];;
	\draw[lightgray,fill=black!#3] (i#4-1 |- 0,1.2) -- (i#4-1 |- 0,#2) -- (i#4-2 |- 0,#2) -- (i#4-2 |- 0,1.2);
}
\newcommand{\domains}[4]{
	\path[name path=kappa] (-3,#1^.333) -- (3,#1^.333); 
	\path[name intersections={of=kappa and rho,name=i#4}];;
	\draw[lightgray,fill=black!#3] (i#4-1 |- 0,1.2) -- (i#4-1 |- 0,#2) -- (i#4-2 |- 0,#2) -- (i#4-2 |- 0,1.2);
	\draw[lightgray,fill=black!#3] (i#4-3 |- 0,1.2) -- (i#4-3 |- 0,#2) -- (i#4-4 |- 0,#2) -- (i#4-4 |- 0,1.2);
}
\newcommand{\rectangle}[3]{\fill [lightgray!50] (#1,-0.15) rectangle (#2,0.15) node[midway,below=2mm,black] {$J_#3$};}
\newcommand{\interv}[3]{\draw[color=black, {Bracket[width=5mm]}-{Bracket[width=5mm]}] 
	(#1,0) node[above=2mm] {$a_#3^{(\kappa)}$} -- (#2,0) node[above=2mm] {$b_#3^{(\kappa)}$};}
\pgfplotsset{compat=1.17, %Laci:Here I changed 1.18 to 1.17, my latex compiler recommended it, without this change it did not run
	/pgf/declare function={
		g(\e,\z) = -4*(-3 + \e^2 + 3*\z^2)^3 + (9*\e - 2*\e^3 + 18 * \e * \z^2)^2;
		h(\e,\z) = (9*\e - 2*\e^3 + 18*\e*\z^2 + g(\e,\z)^(1/2))^(1/3);
		f(\e,\z) = (6 - 2*\e^2 + 2^(1/3)*h(\e,\z)^2 - 6*\z^2)/(2^(5/3) * 3^(1/2)*h(\e, \z));
		m(\z) = (-1 + 20*\z^2 + 8*\z^4 + (1 + 8*\z^2)^(3/2))^(1/2)/(2^(3/2)*\z);
	},
}
\renewcommand{\Im}{\mathrm{Im}\,}
\renewcommand{\Re}{\mathrm{Re}\,}
\numberwithin{equation}{section}
\newtheorem{theorem}{Theorem}[section]
\newtheorem{assumption}[theorem]{Assumption}
\newtheorem{lemma}[theorem]{Lemma}
\newtheorem{proposition}[theorem]{Proposition}
\newtheorem{definition}[theorem]{Definition}
\newtheorem{example}[theorem]{Example}
\newtheorem{remark}[theorem]{Remark}
\newcommand{\eps}{\epsilon}
\newcommand{\D}{\mathrm{d}}
\newcommand{\I}{\mathrm{i}}
\newcommand{\Defo}{{\Lambda}}
\author{Giorgio Cipolloni}
\address{Princeton Center for Theoretical Science, Princeton University, Princeton, NJ 08544, USA}
\author{L\'aszl\'o Erd\H{o}s\(^\#\) and Joscha Henheik\(^\#\)}
\address{IST Austria, Am Campus 1, 3400 Klosterneuburg, Austria}
\author{Dominik Schr\"oder\(^{\ast}\)}
\address{ETH Zurich, R\"amistrasse 101, 8092 Zurich, Switzerland}
\email{gc4233@princeton.edu} 
\email{lerdos@ist.ac.at}
\email{joscha.henheik@ist.ac.at}
\email{dschroeder@ethz.ch}
\thanks{\(^\#\)Supported by ERC Advanced Grant ``RMTBeyond'' No.~101020331}
\thanks{\(^\ast\)Supported by the SNSF Ambizione Grant \texttt{PZ00P2\char`_209089}}
\subjclass[2020]{60B20, 15B52, 65F22} 
\keywords{Eigenvalue condition number, Non-Hermitian perturbation theory, Quantum unique ergodicity}
\title[Eigenvector overlaps for  non-Hermitian random matrices]{
	%Overlaps of eigenvectors and thermalisation of singular vectors  for non-Hermitian  random matrices \\  OR \\ 
	Optimal lower bound on eigenvector overlaps for non-Hermitian  random matrices 
	%\\ OR \\  Optimal lower bound on eigenvalue condition number for non-Hermitian  random matrices
	%\\ OR \\  Lower bound on eigenvalue condition number for non-Hermitian  random matrices
}
\date{\today} 
\begin{document}
\begin{abstract}
	We consider large non-Hermitian $N\times N$ matrices with an additive independent, identically distributed (i.i.d.)
	noise for each matrix elements.
	We show that already a small noise of variance $1/N$ completely thermalises the bulk singular vectors,  in particular
	they satisfy the strong form of Quantum Unique Ergodicity (QUE) with an optimal speed of convergence. In physics
	terms, we thus extend the Eigenstate Thermalisation Hypothesis, formulated originally by Deutsch \cite{deutsch}
	and proven for Wigner matrices in~\cite{ETHpaper}, to arbitrary non-Hermitian matrices with an i.i.d. noise.
	As a consequence we  obtain an optimal {\it lower} bound on the diagonal overlaps of the corresponding non-Hermitian eigenvectors.
	This quantity, also known as the (square of the) eigenvalue condition number  measuring the sensitivity of the eigenvalue to
	small perturbations, 	has notoriously escaped  rigorous treatment beyond the explicitly computable Ginibre ensemble
	apart from the  very recent {\it upper} bounds given in~\cite{2005.08930}
	and~\cite{2005.08908}. As a key tool, we develop a new systematic decomposition of general observables
	in random matrix theory that governs the size of  products of resolvents with deterministic matrices in between.
	%functional correlations between different spectral regimes.  
\end{abstract}
\maketitle
%\tableofcontents
\section{Introduction} \label{sec:intro}
Traditional  random matrix theory focuses on statistics of eigenvalues, where spectacular
universality phenomena arise: the local spectral statistics tend to become universal as the dimension goes to infinity
with new distributions arising; most importantly the celebrated
	{\it Wigner-Dyson-Mehta bulk statistics} and the {\it Tracy-Widom edge statistics} in
the {\it Hermitian} spectrum
and the {\it Ginibre statistics} in the {\it non-Hermitian} spectrum.
More recently eigenvectors of {\it Hermitian} ensembles   received considerable attention.
They   also become universal, albeit in a more conventional way: they tend to be entirely randomised, i.e.
Haar distributed \cite{BourgadeYau1312.1301,BourgadeYauYin1807.01559,MarcinekYau2005.08425,BenigniLopatto2103.12013,
	normalfluc, A2, CipolloniBenigni2022}. In this paper we study two related questions:
how do  eigenvectors and singular vectors  of a typical {\it non-Hermitian} random matrix  in high dimension look like?
To answer them, we introduce a new decomposition of general observables
that identifies correlations of the  Hermitised
resolvents as {\it entire matrices}  at different spectral parameters. This captures
correlations of  the singular vectors
well beyond correlations of traces of resolvents that govern only the singular \emph{values}.
Somewhat surprisingly, we are then able to transfer information on singular vectors to the
non-Hermitian eigenvectors.  \normalcolor

\subsection{Non-Hermitian eigenvector overlaps}
To be specific, we consider non-Hermitian $N\times N$ matrices of the form $\Lambda+X$, where
$\Lambda$ is an arbitrary deterministic matrix
and $X$ is random.
We assume that the norm of $\Lambda$ is bounded independently of $N$
and  $X$ has
independent, identically distributed (i.i.d.) centred matrix elements with variance  $\E |x_{ij}|^2 =\frac{1}{N}$
with some further moment conditions.
This normalisation guarantees that $\| X\|\le 2+o(1)$ and the spectrum of $X$  lies
essentially in the unit disk ({\it circular law}) with very high probability, hence $\Lambda$ and $X$ remain
of comparable size as $N$ increases. Note that $X$ perturbs each matrix elements of
$\Lambda$  by a small random amount of order $1/\sqrt{N}$, however the spectra of $\Lambda$ and  $\Lambda+X$
substantially differ.
%\footnote{In practical applications motivated by numerical linear algebra
%(see~\cite{2005.08930, 2005.08908} and references therein), in order to reduce the effect of the random perturbation
%one often considers a small coupling constant $0<\gamma\ll 1$ in the noise matrix, i.e. one looks at $\Lambda+\gamma X$.
%As long as $\gamma$ is $N$ independent, one may set $\gamma=1$ by a simple rescaling so we refrain from carrying
%this extra factor in the current paper but our methods would allow to trace the polynomial
%$\gamma$-dependence in all our mail estimates as well, albeit not with an optimal power.
%\label{foot}}.

The analysis of  non-Hermitian random matrices is typically much harder than that of the Hermitian ones.
Non-Hermitian matrices have two different sets of spectral data: eigenvalues/vectors and singular values/vectors
which cannot be directly related. In particular,
the study of singular vectors and eigenvectors substantially differ: while singular vectors can  still be
understood from a Hermitian theory, there is no such route for eigenvectors.
Unlike for non-Hermitian {\it eigenvalues}, where Girko's formula translates their linear statistics into a Hermitian problem,
no similar "Hermitisation" relation is known for non-Hermitian  {\it eigenvectors}.
Furthermore, left and right eigenvectors
differ and their relation is very delicate. Assuming that each eigenvalue $\mu_i$  of $\Lambda+X$ is simple, we denote
the corresponding left and right eigenvectors by $\boldsymbol{l}_i $, $\boldsymbol{r}_i$, i.e.
$$
	(\Lambda+X) \boldsymbol{r}_i=\mu_i\boldsymbol{r}_i\,, \qquad \boldsymbol{l}_i^t (\Lambda+X) =\mu_i\boldsymbol{l}_i^t\,,
$$
under the standard bi-orthogonality relation
$\langle \bar{\boldsymbol{l}}_j, \boldsymbol{r}_i \rangle = \boldsymbol{l}^t_j \boldsymbol{r}_i = \delta_{i,j}$.
Note that this relation  leaves a large freedom in choosing the normalisation of each eigenvector.
The key invariant quantity is the {\it eigenvector overlap}
\[
	\mathcal{O}_{ij} := \langle \boldsymbol{r}_j, \boldsymbol{r}_i \rangle \langle \boldsymbol{l}_j, \boldsymbol{l}_i \rangle\,,
\]
which emerges  in many problems where non-Hermitian eigenvectors are concerned,
see e.g.~\cite{Akemann, ChalkerMehlig, ChalkerMehlig2, Belinschi, 1801.01219, Fyodorov}. Two prominent examples
are
\begin{itemize}
	\item[(i)] in numerical linear algebra; where $\sqrt{\mathcal{O}_{ii}}$ is the {\it eigenvalue condition number}
		determining how fast $\mu_i$ moves under small perturbation in the worst case
		using the formula
		\begin{equation}\label{condnum}
			\sqrt{\mathcal{O}_{ii}}  = \lim_{t\to 0} \;\sup\Big\{ \Big| \frac{\mu_i(\Lambda+X+tE) -\mu_i(\Lambda+X)}{t} \Big| \; : \; E\in \C^{N\times N},
			\| E\|=1\Big\}
		\end{equation}
		(see, e.g.~\cite{2005.08930});
	\item[(ii)] in the theory of  the {\it Dyson Brownian motion} for non-Hermitian matrices; where $\mathcal{O}_{ij}$ gives the correlation
		of the martingale increments for the stochastic evolution of the eigenvalues $\mu_i$ and $\mu_j$
		as the matrix evolves by the  natural
		Ornstein-Uhlenbeck flow (see~\cite{grela}, \cite[Appendix A]{1801.01219}).
\end{itemize}

The main result of this paper is an almost
optimal {\it lower} bound of order $N$ on the diagonal overlap $\mathcal{O}_{ii}$, with very high probability.
In the context of numerical linear algebra this means that non-Hermitian eigenvalues of $\Lambda+X$ still
move at a speed of order $\sqrt{N}$ under the "worst" perturbation $E$ in~\eqref{condnum},
despite having added a random smoothing component $X$
to $\Lambda$. Note that in numerics  one typically views the random smoothing as a tool to reduce
the overlap of $\Lambda$ in order to enhance  the stability of its eigenvalues; our result shows a natural limitation for  such reduction.
It still does not exclude the possibility that a very specially  chosen
$X$ reduces the eigenvalue condition numbers much more than a typical random one does, in particular it does not disprove the
Davidson-Herrero-Salinas conjecture (see~\cite[Problem 2.11]{DHS}). However, our
$N$-dependent lower bound on  $\mathcal{O}_{ii}$ shows that a naive randomisation argument
is not sufficient for resolving this conjecture.
Complementary  {\it upper} bounds  on $\mathcal{O}_{ii}$ have recently been proven in~\cite{2005.08930}
and~\cite{2005.08908}. These hold only in expectation sense, as  $\mathcal{O}_{ii}$ has a fat-tail,
and they are off by a factor $N$. Very recently this factor was removed in \cite{JiErd}.
We remark, however, that  $N$ is the most relevant parameter of the problem
only from our random matrix theory point of view.
%and the upper bounds in~\cite{2005.08930, 2005.08908} are non-optimal as far as $N$ is concerned. 
Works motivated by numerical analysis, such as~\cite{2005.08930, 2005.08908} and references therein,
often  focus on tracking the $\gamma$-dependence for the
problem $\Lambda+\gamma X$ in the small noise regime $\gamma\ll 1$ in order to reduce
the effect of the random perturbation.
%(see Footnote~\ref{foot}), 
In this setup  the non-optimality of the  $N$-power may be
considered less relevant.\footnote{As long as $\gamma$ is $N$ independent, one may set
	$\gamma=1$ by a simple rescaling  so we refrain from carrying
	this extra factor in the current paper. We remark that our methods would allow to trace the polynomial
	$\gamma$-dependence in all our main estimates as well, albeit not with an optimal power.
	\label{foot}}

In the context of the Dyson Brownian motion,
our lower bound on $\mathcal{O}_{ii}$ implies a diffusive lower bound on the
eigenvalues of the Ornstein-Uhlenbeck (OU) matrix flow,
generalizing the analogous result
of Bourgade and Dubach~\cite[Corollary 1.6]{1801.01219} from Ginibre ensemble
to arbitrary i.i.d.~ensemble (see~\eqref{diff} later).

\subsection{Thermalisation of singular vectors}
The key step to our lower bound on $\mathcal{O}_{ii}$ is a {\it thermalisation} result on the singular vectors that is
of independent interest.
Namely, we show that  singular vectors of $\Lambda+X$
are fully randomised
in the large $N$ limit in the sense that
their quadratic forms with arbitrary test matrices have a  deterministic limit with an optimal
$N^{-1/2}$ speed of convergence.
This holds with very high probability which enables us to make such statement
for matrices of the form $(\Lambda-z)+X$   {\it simultaneously} for any shift parameter $z$, even for random ones.
We will use this for $z=\mu$, an eigenvalue of $\Lambda+X$. This allows us to gain access to
eigenvectors of $\Lambda+X$, by noticing
that singular vectors and eigenvectors are unrelated in general with an obvious exception: if $\mu$ is an eigenvalue of
$\Lambda+X$,  then any vector in the kernel of $\Lambda+X-\mu$ is an eigenvector of $\Lambda+X$
with eigenvalue $\mu$, and a singular vector of $\Lambda+X-\mu$ with singular value $0$.
Hence high probability statements for singular vectors can be converted into similar statements
for eigenvectors -- this key idea  may be viewed as the eigenvector version  of the transfer principle between eigenvalues
and singular values encoded in Girko's formula.

Our thermalisation result for singular vectors
may be viewed as the non-Hermitian analogue of the \emph{Quantum Unique Ergodicity}
(QUE)  for Hermitian Wigner matrices proven in~\cite{ETHpaper}.
We now briefly explain the QUE  phenomenon  and its physics background in the  simplest Hermitian context
before we consider the singular vectors of $\Lambda+X$. In fact, via a standard Hermitisation procedure
we will  turn the singular vector problem to a Hermitian eigenvector problem.

For Hermitian random matrices $H$, that can be considered as the Hamilton operator of a disordered quantum system,
a major motivation comes from physics, where the randomisation of the eigenvectors is interpreted
as a   {\it thermalisation} effect. The {\it Eigenstate Thermalisation Hypothesis (ETH)} by Deutsch~\cite{deutsch}
and Srednicki~\cite{Srednicki} (see also~\cite{DAlessio, deutsch2})  asserts that  any deterministic Hermitian matrix  $A$ (observable),
becomes essentially diagonal in the eigenbasis of a "sufficiently chaotic" Hamiltonian, where chaos
may come from an additional randomness or from the ergodicity of the underlying classical dynamics.
In other words,
\begin{equation}\label{uAu}
	\langle \bu_i, A \bu_j \rangle - \delta_{ij} \langle\!\langle A \rangle\!\rangle_i \to 0\,, \qquad \mbox{ as $N\to\infty$}\,,
\end{equation}
where
$\{ \bu_i\}$ is a orthonormal eigenbasis of  $H$ and the deterministic "averaged" coefficient
$\langle\!\langle A \rangle\!\rangle_i$ is  to be computed from the statistics of $H$.

%   ****** The following paragraph may be dropped ******
%   
%    
%       In the mathematics literature the same problem is known as the {\it Quantum (Unique) Ergodicity} of the system; in 
%   its most interesting and developed form the Hamiltonian is given by the Laplace-Beltrami operator on a surface
%   with ergodic geodesic flow and $A$ is an appropriate  pseudodifferential 
%   operators. In this case $\langle\!\langle A \rangle\!\rangle_i$ is independent of $i$ and it is given 
%   by the integral of the symbol of $A$ on the unit cotangent bundle of the surface.
%   For {\it most} indices $i,j$ this was first proven by Shnirelman~\cite{47 from 2012.13215 -- the numbers below refer to the references
%   in this ETH paper}, extended later 
%  to much larger classes of observables by Colin de Verdi\'ere~\cite{17} and Zelditch~\cite{54}, more recently 
%  to the discrete model of regular graphs by Anantharaman and Le Masson~\cite{2}. Extension of these results
%  to {\it all} indices (QUE conjecture of Rudnick and Sarnak~\cite{44}) is still largely unsolved, except
%  for very special arithmetic surfaces~\cite{13, 34, 40, 48}.
%  
%  ************* and replaced with the sentence

In the mathematics literature the same problem is known as the Quantum (Unique) Ergodicity, % of the system
originally formulated for  the Laplace-Beltrami operator on surfaces with ergodic geodesic flow,
see~\cite{Shnirelman, ColinDeVerdiere, Zelditch}, on regular graphs~\cite{AnantLeMasson}
and on special arithmetic surfaces~\cite{RudnickSarnak, BrooksLindenstrauss, Lindenstrauss, Soundararajan}.
In~\cite{ETHpaper} we proved QUE in the strongest form with an optimal speed of convergence
for the eigenvectors of Wigner matrices that, by E. Wigner's vision,  can be viewed as the "most random"  Hamiltonian.
In this case,
the diagonal limit $\langle\!\langle A \rangle\!\rangle_i$ in \eqref{uAu} is independent of $i$  and  given by the normalised
trace $\langle A\rangle:= \frac{1}{N}\Tr A$.
In fact, in subsequent papers~\cite{normalfluc, A2} (see also \cite{BenigniLopatto2103.12013}) even the normal fluctuation
of $\sqrt{N}\big[  \langle \bu_i, A \bu_i \rangle - \langle A\rangle \big]$  was proven, followed by the proof
of joint Gaussianity of finite many overlaps in~\cite{CipolloniBenigni2022}.
Previously QUE results were proven for rank one observables (see~\cite{KnowlesYin13, TVEigenvector}
under four moment matching and \cite{BourgadeYau1312.1301} in general) and
finite rank observables~\cite{MarcinekYau2005.08425}, see also~\cite{Benigni} for deformed Wigner matrices
and~\cite{BourgadeYauYin1807.01559} for band matrices.
The proofs crucially used that $H$ is Hermitian,
heavily relying on sophisticated Hermitian techniques (such as {\it local laws}
and {\it Dyson Brownian Motion}) developed in the last decade for eigenvalue universality
questions.

Back to our non-Hermitian context,
we  consider the singular vectors $\{\bu_i, {\bm v}_i\}_{i=1}^N$ of $\Lambda + X$,
$$
	(X+\Lambda)(X+\Lambda)^* {\bm u}_i = \sigma_i^2 {\bm u}_i\,, \qquad  (X+\Lambda)^*(X+\Lambda) {\bm v}_i = \sigma_i^2 {\bm v}_i\,,
$$
belonging to the singular value $\sigma_i$.
We view them as the two $N$-dimensional
components of the eigenvectors ${\bm w}_i = (\bu_i, {\bm v}_i)$ of the $2N$-dimensional
{\it Hermitisation} of $\Lambda + X$, defined as
\begin{equation} \label{eq:herm1}
	H= H^\Defo:= W +  \hat{\Defo}\,, \quad W:= \left( \begin{matrix}
			0   & X \\
			X^* & 0
		\end{matrix}\right)\,, \qquad \hat{\Defo}: =\left( \begin{matrix}
			0         & \Lambda \\
			\Lambda^* & 0
		\end{matrix} \right)\,.
	%\left( \begin{matrix}
	%	0 & X\\
	%		X^* & 0
	%	\end{matrix}\right) + \hat{\Defo} =: 
	%W + \hat{\Defo}\,,
\end{equation}
%where $\hat{\Defo}^* = \hat{\Defo}= \mathbf{E}H^\Defo$ is the matrix of expectations defined in \eqref{eq:Defohat}. 
In particular, from the overlaps  $\langle {\bm w}_i, A {\bm w}_j\rangle$ of eigenvectors for the Hermitised problem with a general $(2N)\times (2N)$
matrix $A$ one may read off all the singular vector overlaps
of the form $\langle \bu_i, B \bu_j\rangle$, $\langle {\bm v}_i, B {\bm v}_j\rangle$ and $\langle \bu_i, B {\bm v}_j\rangle$ with any $N\times N$ matrix $B$.
Therefore our goal is to show the general thermalisation phenomenon, the convergence of $\langle{\bm w}_i, A {\bm w}_j\rangle$ (cf.~\eqref{uAu}),
for the Hermitised matrix $H^\Lambda$ thus generalizing the ETH proven in~\cite{ETHpaper}
beyond Wigner matrices and with an additional arbitrary matrix $\Lambda$. Unlike in the Wigner case, the
limit $\langle\!\langle A \rangle\!\rangle_i$ genuinely depends on the index $i$ and part of the task is to determine
its precise form.
Note that due to the large zero blocks, $W$ has  about half as many random degrees of freedom
as a Wigner matrix  of the same dimension has, moreover the block structure gives rise to potential instabilities, thus
the ETH for $H^\Lambda$ is considerably more involved than for Wigner matrices.
In the next section we explain the main new method of this paper
%the main methodological novelty of this paper wh
%a concept to decompose general observables into "regular" and "singular" parts
that systematically handles all  these instabilities.

\subsection{Structural decomposition of observables}
We introduce a  new concept for splitting general observables into "regular" and "singular" components;
where  the singular component gives the leading
contribution and the regular component is estimated.
In the case of Wigner matrices $H$ in~\cite{ETHpaper, 2012.13218}
we used the decomposition $A=\langle A\rangle + \mathring{A}$,
where   the traceless part of $A$, $\mathring{A}:=A-\langle A\rangle$,  is the regular component
and the projection\footnote{We equip the space of matrices with the usual normalised
	Hilbert-Schmidt scalar product, $\langle A, B\rangle: = \frac{1}{N}\Tr A^*B = \langle A^*B\rangle$.}
of $A$ onto the one dimensional space spanned by the identity matrix is the singular component.
This gave rise to the following
decomposition of  resolvent $G=G(w) =(H-w)^{-1}$ for any $w\in\C\setminus \R$:
\begin{equation}\label{Gdecomp}
	\langle GA\rangle = m \langle A\rangle + \langle A\rangle \langle G-m \rangle  + \langle G\mathring{A}\rangle,
\end{equation}
where $m=m(w)$ is the Stieltjes transform of the semicircle law. The second term in~\eqref{Gdecomp} is asymptotically
Gaussian of size  $\langle G-m \rangle\sim (N\eta)^{-1}$ \cite{MR3678478}
%{\color{red}[J: Is MR4095015 (He, Knowles) really the right citation?]} 
and the last term
is also Gaussian, but of much smaller size
$\langle G\mathring{A}\rangle\sim \langle \mathring{A}\mathring{A}^*\rangle^{1/2}/(N\eta^{1/2})$
in the interesting regime of small $\eta:=|\Im w|\ll 1$ \cite{2012.13218}.

Similar decomposition governs the traces of longer resolvent chains of Wigner matrices, for
example
$$
	\langle GAG^*B \rangle = \langle GG^* \rangle = \frac{1}{\eta} \langle \Im G \rangle \sim \frac{1}{\eta}\gg 1
$$
if $A=B=I$, i.e. both observable matrices are purely singular, while for regular (and bounded) observables $A=\mathring{A}$,
$B=\mathring{B}$ we have
\begin{equation}\label{GAGB}
	\langle GAG^*B \rangle \sim 1.
\end{equation}
Both examples indicate the {\it $\sqrt{\eta}$-rule}
(see \eqref{eq:correlator} and Remark \ref{rmk:sqrteta} later), informally asserting that each regular observable renders
the size of a resolvent chain smaller by a factor $\sqrt{\eta}$ than its singular counterpart.
In~\cite{multiG,A2} we obtained the deterministic leading terms and optimal error estimates on
the fluctuation for resolvent chains of arbitrary length
\begin{equation}\label{chain}
	\langle G(w_1)A_1 G(w_2) A_2 \ldots\rangle
\end{equation}
with arbitrary observables in between. The answer followed the {\it $\sqrt{\eta}$-rule}
hence it heavily depended on the
$A_i = \langle A_i\rangle + \mathring{A}_i$ decomposition for each observable.

In particular, in order to estimate $\langle \bu_i, A\bu_j \rangle - \delta_{ij}\langle A\rangle = \langle \bu_i, \mathring{A}\bu_j \rangle$
for ETH in~\eqref{uAu}, we had
$$
	N |\langle \bu_i, \mathring{A}\bu_j \rangle|^2 \lesssim \langle \Im G(w_1) \mathring{A} \Im G(w_2) \mathring{A}  \rangle \lesssim 1\,,
$$
where we first used spectral decomposition of both $G$'s
and then used a version of~\eqref{GAGB}. Here the spectral parameters $w_k=e_k+\ii \eta$ are chosen
such that $e_1$ and $e_2$ be close to the eigenvalues  corresponding to $\bu_i$ and $\bu_j$, respectively,
and $\eta\sim N^{-1}$ in order to resolve the spectrum on the fine scale of the individual eigenvalues.\footnote{Strictly speaking
	we used $\eta= N^{-1+\xi}$ with any small $\xi>0$, and all estimates held up to an $N^\xi$ factor
	but we ignore these technicalities in the introduction.}

The key point in all these  analyses for Wigner matrices was that the
regular/singular concept was {\it independent} of the spectral parameter: the same universal decomposition
into tracial and traceless parts worked in every instance along the proofs.
% We stress that  this was essential not only for stating the results in a clean form, but also in their proofs, where this decomposition
%was constantly used. 
One  consequence is the $i$-independence of the limiting overlap $\langle\!\langle A \rangle\!\rangle_i:=\langle A\rangle$
in~\eqref{uAu}.\footnote{A quick direct way to see this independence is the special case of Gaussian Wigner matrices
	(GUE or GOE),  where the eigenvectors are Haar distributed, independently of their eigenvalue.}

For more complicated ensembles, like $H^\Lambda$ in~\eqref{eq:herm1}, especially if
an arbitrary  matrix $\Lambda$ is involved, the correct decomposition depends on the
location  in the spectrum of $H$ where we work.
To guess it, first we recall the {\it single resolvent local law} (Theorem~\ref{thm:singleG})
for the resolvent $G=G^\Lambda(w)= (H^\Lambda-w)^{-1}$,
asserting that $\langle GA\rangle \approx \langle MA\rangle$, where  $M=M^\Lambda(w)$ solves
a nonlinear deterministic equation, the {\it Matrix Dyson Equation (MDE)}, see~\eqref{eq:MDE} later.
Then a heuristic calculation (see Appendix~\ref{subsec:variance}) shows that
for $w=e+\ii\eta\in \C_+$ we have
\begin{equation}\label{2term}
	\E \big| \langle (G-M)A\rangle\big|^2 \approx \frac{ \big| \langle \Im M A\rangle\big|^2}{(N\eta)^2}
	+ \frac{  \big| \langle \Im M A E_-\rangle\big|^2}{N^2\eta(|e|+\eta)} + \mathcal{O} \Big(\frac{1}{N^2\eta}\Big)\,,
	\quad E_- :=\left(\begin{matrix}
			1 & 0 \\ 0 & -1	\end{matrix}\right)\,,
\end{equation}
indicating that the singular component of $A$ is {\it two dimensional}, depends on $w$,
and for any $A$ orthogonal to the two singular directions
$\Im M$ and $E_- \Im M$ the size of $\langle (G-M)A\rangle$
is smaller by a factor $\sqrt{\eta}$.
The first singular direction is always present. The second singular direction
%The two dimensionality of the singularity subspace
is a consequence of the block structure of $H$ and it is manifested only for $w$  near
the imaginary axis. For energies $|e|\sim 1$, only the first  singular direction, namely the one involving $\Im M$ plays a role.

What about longer chains~\eqref{chain}? Each matrix $A_i$ is sandwiched between two resolvents
with different spectral parameters $w_i$, $w_{i+1}$. We find that the correct decomposition of any $A$
between two resolvents in a chain $\ldots G(w)AG(w')\ldots $ depends only on $w, w'$ and it has
the  form
\begin{equation}\label{Adec}
	A = \langle V_+, A\rangle U_+ + \langle V_-, A\rangle U_- + \mathring{A}\,, \qquad V_\pm = V_\pm^{w,w'}\,, \quad
	\mathring{A}=  \mathring{A}^{w,w'}\,,
\end{equation}
where the first two terms form the singular component of $A$, and $\mathring{A}$,
defined by this equation, is the regular component. We will establish that both $V_+$ and $V_-$ are
the right eigenvectors of a certain {\it stability operator} $\mathcal{B}$ acting on $\C^{2N\times 2N}$
that
corresponds to the Dyson equation, and $U_\pm$ will be explained later.
For example, if $\Im w$ and $\Im w'$ have opposite signs then
$V_+$ is  the  right eigenvector  of
$$
	\mathcal{B}[\cdot ] = 1- M(\bar w) \mathcal{S} [\cdot ]M(\bar w')\,,
$$
where
$\mathcal{S}$ is covariance operator for the matrix $W$  in~\eqref{eq:herm1}
(see~\eqref{Sop}). $V_\pm$ with other sign combinations are defined
very similarly (in Appendix~\ref{subsec:exactform} we present
all cases). Actually, the special directions $\Im M$ and $E_- \Im M $ that we found by
direct variance calculation in~\eqref{2term} also emerge {\it canonically} as eigenvectors
of a certain stability operator!
Similar variance calculation for longer chains
would reveal the same consistency: the variance of the chain~\eqref{chain}
is the smallest if each $A_i$ is regular with respect to the two neighboring
spectral parameters $w_i, w_{i+1}$.

Note that the  choice of $V_\pm$ is basically dictated  by variance calculations
like~\eqref{2term}. However,  the matrices $U_\pm$ in~\eqref{Adec} can still be chosen
freely up to their linear independence and the
normalisation requirement $\langle V_\sigma, U_\tau\rangle=\delta_{\sigma, \tau}$.
The latter
guarantees that the sum of the singular terms
in~\eqref{Adec} is  actually a (non-orthogonal) projection $|U_+\rangle\langle V_+| + |U_-\rangle\langle V_- | $ acting on $A$.
Since $V_\pm$ are the right eigenvectors of a stability operator, one may be tempted to choose $U_\pm$
as certain left eigenvectors but we did not find this guiding principle helpful. Instead,
we use  this freedom   to simplify the calculation of the singular terms. Substituting the singular part of $A$
into $\ldots G(w)A G(w') \ldots$, we need to compute $G(w)U_\pm G(w')$ and quite pragmatically
we choose $U_\pm$ such that the entity could be applied
and thus reduce the length of the chain.  Thanks to the spectral
symmetry of $H=H^\Lambda$, for its resolvent we have $E_- G(-w) E_-=-G(w)$, and we find that  $U_+=I$, $U_-=E_-$
do the job, which accidentally coincide with the left eigenvectors of the stability operator
for the special case of i.i.d. matrices.

In Appendix~\ref{subsec:exactform} we present the  canonical choices
of $V_\pm$ and $U_\pm$ in a more general situation and explain at which stage
of the proof their correct choice emerges. In our current application only $V_\pm$ are nontrivial (in particular energy dependent),
while $U_\pm$ are very simple. This is due to the fact that
the chain~\eqref{chain} consists of  resolvents of the {\it same} operator. In more general
problems one may take resolvents with two different $\Lambda$'s in the chain,
in which case  $U_\pm$ are also nontrivial.

\medskip

This decomposition scheme is the really novel ingredient of our proofs.
Several other tools we use, such as {\it recursive Dyson equations}, hierarchy of {\it master inequalities} and  {\it reduction inequalities}
have been introduced before (especially in our related works on Wigner matrices~\cite{ETHpaper, 2012.13218}),
but the dependence of the decomposition on the spectral parameters in the  current setup
requires quite different new estimates along the arguments.
We informally explain the prototype of such an estimate at the beginning of Section~\ref{sec:basic}.

\subsection{Notations}
%\textbf{Notations.}
We define the $2N \times 2N$ matrices $E_{\pm} := E_1 \pm E_2$, where
\begin{equation}
	\label{eq:defe1e2}
	E_1 := \left(\begin{matrix}
			1 & 0 \\ 0 & 0
		\end{matrix}\right)\quad \text{and} \quad
	E_2 := \left(\begin{matrix}
			0 & 0 \\ 0 & 1
		\end{matrix}\right)\,.
\end{equation}
Each entry of the matrix is understood as a multiple of the $N \times N$--identity. By $\lceil \cdot \rceil$, $\lfloor \cdot \rfloor$ we denote the upper and lower integer part, respectively, i.e.~for $x \in \R$ we define $\lceil x \rceil := \min\{ m \in \Z \colon m \ge x \}$ and $\lfloor x \rfloor := \max\{ m \in \Z \colon m \le x \}$.
We denote $[k] := \{1, ... , k\}$ for $k \in \mathbf{N}$ and $\langle A \rangle := d^{-1} \mathrm{Tr}(A)$, $d \in \N$, is the normalised trace of a $d \times d$-matrix.
For positive quantities $A, B$ we write $A \lesssim B$ resp.~$A \gtrsim B$ and mean that $A \le C B$ resp.~$A \ge c B$ for some $N$-independent constants $c, C > 0$. We denote vectors by bold-faced lower case Roman letters $\boldsymbol{x}, \boldsymbol{y} \in \mathbf{C}^{2N}$, for some $N \in \mathbf{N}$, and define
\begin{equation*}
	\langle \boldsymbol{x}, \boldsymbol{y} \rangle := \sum_i \bar{x}_i y_i\,,
	\qquad A_{\boldsymbol{x} \boldsymbol{y}} := \langle \boldsymbol{x}, A \boldsymbol{y} \rangle\,.
\end{equation*}

Matrix entries are indexed by lower case Roman letters $a, b, c , ... $ from the beginning of the alphabet and unrestricted sums over $a, b, c , ... $ are always understood to be over $\{ 1 , ... , N, N+1, ... , 2N\}$. Analogously, unrestricted sums over lower case Roman letters $i,j,k, ...$ from the middle of the alphabet are always understood to be over $\{ -N , ... , -1, 1, ... , N\}$. Finally,
the lower case Greek letters $\sigma$ and $\tau$ as indices indicate a sign, i.e.~$\sigma, \tau \in \{ + , - \}$, and unrestricted sums over $\sigma, \tau$ are understood to be over $\{+ , -  \}$.

We will use the concept of `with very high probability', meaning that any fixed $D > 0$, the probability of an $N$-dependent event is bigger than $1 - N^{-D}$ for all $N \ge N_0(D)$. Also, we will use the convention that $\xi > 0$ denotes an arbitrarily small constant, independent of $N$. Moreover, we introduce the common notion of \emph{stochastic domination} (see, e.g., \cite{semicirclegeneral}): For two families
\begin{equation*}
	X = \left(X^{(N)}(u) \mid N \in \N, u \in U^{(N)}\right) \quad \text{and} \quad Y = \left(Y^{(N)}(u) \mid N \in \N, u \in U^{(N)}\right)
\end{equation*}
of non-negative random variables indexed by $N$, and possibly a parameter $u$, then we say that $X$ is stochastically dominated by $Y$, if for all $\varepsilon, D >0$ we have
\begin{equation*}
	\sup_{u \in U^{(N)}} \mathbf{P} \left[X^{(N)}(u) > N^\epsilon Y^{(N)}(u)\right] \le N^{-D}
\end{equation*}
for large enough $N \ge N_0(\epsilon, D)$. In this case we write $X \prec Y$. If for some complex family of random variables we have $\vert X \vert \prec Y$, we also write $X = O_\prec(Y)$.

\medskip

{\it Acknowledgement:} The authors are grateful to Oleksii Kolupaiev for valuable discussions, especially about
the choice of contours in Lemma~\ref{lem:intrepG^2}.
We  thank Nikhil Srivastava for bringing~\cite{DHS} to our attention.

\section{Main results} \label{sec:results}
We consider \emph{real or complex i.i.d.~matrices} $X$ , i.e.~$N \times N$ matrices whose entries are independent and identically distributed as $x_{ab} \stackrel{\rm d}{=} N^{-1/2}\chi$ for some real or complex random
variable $\chi$ satisfying the following assumptions:
\begin{assumption} \label{ass:iid}
	We assume that $\E \chi  = 0$ and $\E |\chi|^2 = 1$. Furthermore, we assume the existence of high moments, i.e., that there exist constants $C_p > 0$, for any $p \in \N$, such that
	\begin{equation*} %\label{eq:assmoment}
		\E |\chi|^p \le C_p\,.
	\end{equation*}
	Additionally, in the complex case, we assume that $\E \chi^2 = 0$.
\end{assumption}
For definiteness, in the sequel we perform our entire analysis
for the complex case; the real case being completely analogous and hence omitted.

\subsection{Non-Hermitian singular vectors and eigenvectors} \label{sec:first}
Fix a deterministic matrix $\Defo \in \C^{N \times N}$, with $N$-independent norm bound, $\| \Lambda\| \lesssim 1$.
Let $\{\sigma_i\}_{i \in [N]}$ be the singular values of $X+\Defo$ with corresponding (normalised) left- and right-singular vectors $\{\boldsymbol{u}_i\}_{i \in [N]}$ and $\{\boldsymbol{v}_i\}_{i \in [N]}$, respectively, i.e.
\begin{equation} \label{eq:singvec}
	(X+\Defo)\boldsymbol{v}_i = \sigma_i \boldsymbol{u}_i \qquad \text{and} \qquad (X+\Defo)^* \boldsymbol{u}_i = \sigma_i \boldsymbol{v}_i\,.
\end{equation}
All these objects naturally depend on $\Defo$, but we will omit this fact from the notation.

Let $\nu_i$, $i\in [N]$, be the increasingly ordered singular values of $\Lambda$.
Define the {\it Hermitisation} of $\Defo$ %and $X+\Defo$
as
\begin{equation} \label{eq:Defohat}
	\hat{\Defo}:= \begin{pmatrix}
		0 & \Defo \\ \Defo^* & 0
	\end{pmatrix}. % \qquad  H= H^\Defo:=\left( \begin{matrix}
	%0 & X+\Defo \\
	%(X+\Defo)^* & 0
	%\end{matrix}\right).
\end{equation}
Due to its block structure,
the spectrum of $\hat{\Defo}$ is symmetric with respect to zero, with eigenvalues  $\{\nu_i\}_{0 \neq |i| \le N}$
such that
$\nu_{-i} = -\nu_{i}$ for all $i \in [N]$.
The empirical density of states of $\hat{\Defo}$ is denoted by
\begin{equation*} \label{eq:muLambda}
	\mu_{\hat{\Defo}} := \frac{1}{2N} \sum_{0 \neq |i| \le N} \delta_{\nu_i}\,.
\end{equation*}

%In order to formulate our main results, we need several preparatory definitions. 

Let $\mu_{\rm sc}$ be the Wigner semicircle distribution with density $\rho_{\rm sc}(x):= (2 \pi)^{-1} \sqrt{[4 - x^2]_+}$,
where $[\cdots]_+$ is the positive part of a real number.  Define
the free additive convolution
\begin{equation}\label{freec}
	\mu = \mu^{\Defo} := \mu_{\rm sc} \boxplus \mu_{\hat{\Defo}},
\end{equation}
which
is a probability distribution on $\R$.
We now briefly recall basic facts about the free convolution with
the semicircle density (see, e.g. the classical paper by   P. Biane \cite{biane}).
Most conveniently $\mu$
may be defined by inverting its Stieltjes transform
\begin{equation*}
	m(w) = m^\Defo(w) := \int_\R \frac{\mu(\D e) }{e-w} , \qquad w \in \C \setminus \R,
\end{equation*}
where $m$ satisfies the implicit equation
\begin{equation} \label{eq:selfconsm}
	m(w) = \int_{\R} \frac{\mu_{\hat{\Defo}}(\D e)}{e- (w + m(w))}  \,.
\end{equation}
With the additional constraint $\Im m(w) \cdot \Im w >0$ this equation
has a unique solution that is analytic away from the real axis with $m(\overline{w}) = \overline{m}( w)$.
Since $\mu_{\hat{\Defo}}$ is symmetric to zero with bounded support,
$\mu$ is also symmetric with  support bounded independently of $N$. Moreover $\mu$
is absolutely continuous with respect to Lebesgue measure with density  denoted by $\rho = \rho^\Lambda$.
The density $\rho$ may be obtained\footnote{ For orientation of the reader we mention that $\rho$ is the deterministic approximation, the so-called \emph{self-consistent
		density of states (scDos)}, for the empirical eigenvalue density of the Hermitisation of $X+\Defo$.  This connection will
	be explained in the next Section~\ref{subsec:eth}.}
as the boundary value of $\Im m$ at any $e$ on the real line, i.e.
\begin{equation} \label{eq:scDos}
	\rho(e):= \lim\limits_{\eta \downarrow 0} \rho(e+\I \eta)\,, \qquad \rho(w) := \frac{1}{\pi} |\Im m(w)|\,.
\end{equation}
In fact $m$ itself has a continuous extension to the real axis from the upper half plane
$m(e) := \lim_{\eta \downarrow 0}m(e+ \I \eta)$. Proving
the existence of these limits is standard from~\eqref{eq:selfconsm}.

% 

%Note that in case of $\Defo = 0$  (i.e.~$\mu_{\hat{\Defo}} = \delta_0$)
% the defining equation~\eqref{eq:selfconsm} reduces to the standard quadratic equation $(m_{\rm sc}(w)+w)m_{\rm sc}(w)=-1$
% for the Stieltjes transform of $\mu_{\rm sc}$. 

% {\color{red}[Discuss $M$ extension]} Moreover, we note that the Stieltjes transform \eqref{eq:mdef} satisfies the self-consistent equation \cite{bibid}
%\begin{equation} \label{eq:selfconsm}
%m(w) = \int_{\R} \frac{\mu_{\hat{\Defo}}(\D e)}{e- (w + m(w))}  \,,
%\end{equation}
%which for $\Defo = 0$  (i.e.~$\mu_{\hat{\Defo}} = \delta_0$) reduces to the equation for $m_{\rm sc}$, the Stieltjes transform of $\mu_{\rm sc}$. 
%
Next, for any (small) $\kappa > 0$, we define the \emph{$\kappa$-bulk} of the density $\rho$ as
\begin{equation} \label{eq:bulk}
	\mathbf{B}_\kappa = 	\mathbf{B}_\kappa^\Defo := \{ x \in \R : \rho(x) \ge \kappa^{1/3} \}\,
\end{equation}
which is symmetric to the origin.
Finally, we denote a (modified) $i^{\mathrm{th}}$ quantile of the density $\rho$ by $\gamma_i$, i.e.
\begin{equation} \label{eq:quantiles}
	\frac{i+N}{2N} = \int_{-\infty}^{\gamma_i} \rho(e) \, \D e\,, \qquad |i| \le N\,,
\end{equation}
and we immediately conclude by symmetry of $\rho$ that $\gamma_i = - \gamma_{-i}$ for every $|i| \le N$.

Our first main result establishes the {\it thermalisation of singular vectors} of $X+\Defo$ in the bulk, i.e.
% (analogously to the \emph{Eigenstate Thermalisation Hypothesis} for Hermitian eigenvectors, see Theorem \ref{thm:main} below)
for indices $i,j$ with quantiles $\gamma_i, \gamma_j$ uniformly in the \emph{bulk} of the density $\rho$.

\begin{theorem} \label{thm:main singvec} {\rm (Thermalisation of Singular Vectors)} \\
	Fix a bounded $\Defo \in \C^{N \times N}$ and $\kappa > 0$ independent of $N$.
	Let $\{\boldsymbol{u}_i\}_{i \in [N]}$ and $\{\boldsymbol{v}_i\}_{i \in [N]}$ be the (normalised) left- and right-singular vectors of $X+\Defo$, respectively, where $X$ is an i.i.d.~matrix satisfying Assumption~\ref{ass:iid}. Then, for any deterministic matrix $B \in \C^{N \times N}$ with $\Vert B \Vert \lesssim 1$ it holds that\footnote{The deterministic terms following the Kronecker symbol $\delta_{j,i}$ in \eqref{eq:uuvvuv} will be shown to be bounded.} % (see also \eqref{eq:MDE}).}
	\begin{subequations} \label{eq:uuvvuv}
		\begin{equation}
			\label{eq:uu}
			\max_{i,j} \left| \langle {\bm u}_i, B {\bm u}_j \rangle - \delta_{j, i} \frac{\left\langle \Im\left[ \frac{\gamma_j + m(\gamma_j)}{\Defo \Defo^* - (\gamma_j + m(\gamma_j))^2}\right] \, B \right\rangle}{\pi \rho(\gamma_j)} \right|   \prec \frac{1}{\sqrt{N}} \,,
		\end{equation}
		\begin{equation}
			\label{eq:vv}
			\max_{i,j} \left| \langle {\bm v}_i, B {\bm v}_j \rangle - \delta_{j, i} \frac{\left\langle \Im\left[ \frac{\gamma_j + m(\gamma_j)}{\Defo^* \Defo - (\gamma_j + m(\gamma_j))^2}\right] \, B \right\rangle}{\pi \rho(\gamma_j)}  \right| \prec \frac{1}{\sqrt{N}} \,,
		\end{equation}
		\begin{equation}
			\label{eq:uv}
			\max_{i,j} \left| \langle {\bm u}_i, B {\bm v}_j \rangle - \delta_{j, i} \, \frac{\left\langle \Defo \, \Im\left[  \big( \Defo^* \Defo - (\gamma_j + m(\gamma_j))^2  \big)^{-1} \right] \, B \right\rangle}{\pi \rho(\gamma_j)}  \right| \prec \frac{1}{\sqrt{N}}\,,
		\end{equation}
	\end{subequations}
	%with very high probability for any arbitrarily small $\xi > 0$, 
	where the maximum is taken over all $i,j \in [N]$ such that the quantiles $\gamma_i, \gamma_j \in \mathbf{B}_\kappa$ are in the
	$\kappa$-bulk of the density $\rho$.
\end{theorem}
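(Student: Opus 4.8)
\medskip
\noindent\emph{Proof strategy.}
The plan is to reduce the statement to a single \emph{Eigenstate Thermalisation (ETH)} bound for the Hermitisation $H = H^\Lambda$ of~\eqref{eq:herm1}, and then to make the limiting coefficients explicit by solving the Matrix Dyson Equation~\eqref{eq:MDE} with the block structure of~\eqref{eq:herm1}. By~\eqref{eq:singvec} the normalised eigenvectors of $H$ are $\boldsymbol{w}_{\pm i} = \tfrac{1}{\sqrt2}(\boldsymbol{u}_i, \pm\boldsymbol{v}_i)$ with eigenvalues $\pm\sigma_i$, so each of the three quadratic forms in~\eqref{eq:uuvvuv} can be read off from $\langle\boldsymbol{w}_i, \widehat B\,\boldsymbol{w}_j\rangle$ with $H$-indices $i,j\in[N]$ and $\widehat B$ one of the block matrices $\diag(B,0)$, $\diag(0,B)$, $\begin{pmatrix} 0 & B \\ 0 & 0 \end{pmatrix}$; in particular only positive $H$-indices occur, so the chiral partner $\boldsymbol{w}_{-i} = E_-\boldsymbol{w}_i$ never interferes. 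Hence it suffices to prove that, uniformly over deterministic $A\in\C^{2N\times 2N}$ with $\|A\|\lesssim 1$ and over $i,j\in[N]$ with $\gamma_i,\gamma_j\in\mathbf{B}_\kappa$,
\begin{equation}\label{eq:ETHgoal}
\Bigl|\langle\boldsymbol{w}_i, A\,\boldsymbol{w}_j\rangle - \delta_{ij}\,\frac{\langle\Im M^\Lambda(\gamma_i)\,A\rangle}{\pi\rho(\gamma_i)}\Bigr| \prec \frac{1}{\sqrt N}\,.
\end{equation}
To identify the coefficients in~\eqref{eq:uuvvuv} I would block-invert~\eqref{eq:MDE}: since the self-energy $\mathcal{S}$ annihilates the off-diagonal blocks and $\langle M^\Lambda_{11}\rangle = \langle M^\Lambda_{22}\rangle = m$ by the $\nu\mapsto-\nu$ symmetry of $\mu_{\hat\Lambda}$, this yields an $M^\Lambda(w)$ whose blocks are rational in $w+m(w)$, $\Lambda\Lambda^*$, $\Lambda^*\Lambda$ and $\Lambda$; substituting them into $\langle\Im M^\Lambda(\gamma_i)\,\widehat B\rangle/(\pi\rho(\gamma_i))$ and using $\langle\Im M^\Lambda\rangle = \Im m = \pi\rho$ reproduces exactly the three deterministic terms in~\eqref{eq:uu}--\eqref{eq:uv}, which are bounded in the bulk since there $\rho(\gamma_i)\ge\kappa^{1/3}$ and $\|M^\Lambda\|\lesssim 1$.

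To prove~\eqref{eq:ETHgoal} I would use the spectral decomposition of $\Im G(w_k) := \Im(H-w_k)^{-1}$ at the fine scale $w_1 := \gamma_i+\I\eta$, $w_2 := \gamma_j+\I\eta$ with $\eta := N^{-1+\xi}$, and split $A = A^{\mathrm{sing}} + \mathring A$ according to~\eqref{Adec} relative to the pair $(w_1,w_2)$, so that $\mathring A = \mathring A^{w_1,w_2}$ is orthogonal to \emph{both} singular directions $V_\pm^{w_1,w_2}$ while $A^{\mathrm{sing}} = \langle V_+^{w_1,w_2}, A\rangle\,I + \langle V_-^{w_1,w_2}, A\rangle\,E_-$. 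Because $\langle\boldsymbol{w}_i, I\,\boldsymbol{w}_j\rangle = \delta_{ij}$ and $\langle\boldsymbol{w}_i, E_-\boldsymbol{w}_j\rangle = \langle\boldsymbol{w}_i, \boldsymbol{w}_{-j}\rangle = 0$ for $i,j\in[N]$, the singular part contributes only $\delta_{ij}\langle V_+^{w_1,w_1}, A\rangle = \delta_{ij}\,\langle\Im M^\Lambda(\gamma_i+\I\eta)\,A\rangle/(\pi\rho(\gamma_i+\I\eta))$, which matches the target in~\eqref{eq:ETHgoal} up to an $O(\eta)$ error by Lipschitz continuity of $M^\Lambda$ and $\rho$ in the bulk up to the real axis. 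For the regular part I would apply the operator inequality $|\boldsymbol{w}_k\rangle\langle\boldsymbol{w}_k| \lesssim \eta\,\Im G(\gamma_k+\I\eta)$ --- valid with very high probability by the eigenvalue rigidity $|\lambda_k-\gamma_k|\prec 1/N \ll \eta$ in the bulk, a consequence of Theorem~\ref{thm:singleG} --- for both indices together with Cauchy--Schwarz, reducing to
\begin{equation}\label{eq:reduction}
N\,\bigl|\langle\boldsymbol{w}_i, \mathring A\,\boldsymbol{w}_j\rangle\bigr|^2 \lesssim N^2\eta^2\,\bigl\langle\Im G(w_1)\,\mathring A\,\Im G(w_2)\,\mathring A^*\bigr\rangle\,.
\end{equation}
It then remains to prove the two-resolvent local law $\langle\Im G(w_1)\,\mathring A^{w_1,w_2}\,\Im G(w_2)\,(\mathring A^{w_1,w_2})^*\rangle \prec \|A\|^2$, \emph{uniformly} for $\eta$ down to $N^{-1+\xi}$ in the bulk; inserting this into~\eqref{eq:reduction} yields $|\langle\boldsymbol{w}_i, \mathring A\,\boldsymbol{w}_j\rangle| \prec N^{-1/2+\xi}\|A\|$, which gives~\eqref{eq:ETHgoal} since $\xi>0$ is arbitrary.

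The hard part is this multi-resolvent local law --- the two-resolvent chain above and, as the recursion will require, the longer chains $\langle G(w_1)A_1 G(w_2)A_2\cdots\rangle$ with regular observables in between. I would run the by-now-standard bootstrap: derive the self-consistent (recursive Dyson) equation for such chains by cumulant expansion, close a hierarchy of master inequalities coupling chains of increasing length, and supply reduction inequalities and the single-resolvent local law (Theorem~\ref{thm:singleG}) as input. The genuinely new difficulty is that the regular/singular splitting~\eqref{Adec}, the relevant inverse of the stability operator $\mathcal{B}$ of the Dyson equation (an operator of the form $1-M\mathcal{S}[\cdot]M$, with conjugations dictated by the signs of the imaginary parts, and with right eigenvectors $V_\pm$), and hence the $\sqrt\eta$-gain per regular observable, now all depend on the two spectral parameters flanking each observable; one must show that on the regular subspace $\{\mathring A^{w,w'}\}$ --- the joint orthogonal complement of the eigenvectors $V_\pm^{w,w'}$ --- the inverse $\mathcal{B}^{-1}$ remains bounded as $\eta\downarrow 0$ throughout the bulk, and propagate this spectral-parameter-dependent control consistently through the recursion, which forces new estimates at essentially every step. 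A second, ensemble-specific obstacle is the block structure of $H^\Lambda$: $W$ carries only half as many random degrees of freedom as a Wigner matrix and produces the \emph{second} singular direction $E_-\Im M$ appearing in~\eqref{2term} (active near $e=0$, which lies in the bulk), so the stability analysis must track a two-dimensional singular subspace, and the block correlations of $W$, rather than the single tracial direction present for Wigner matrices. Once the chain local laws are established, \eqref{eq:reduction} together with the identification of $M^\Lambda$ above closes the argument.
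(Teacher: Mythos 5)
Your proposal is correct and follows essentially the same route as the paper: the three bounds in~\eqref{eq:uuvvuv} are read off from the ETH for the Hermitisation (Theorem~\ref{thm:main}) with the block observables $\mathrm{diag}(B,0)$, $\mathrm{diag}(0,B)$, $\bigl(\begin{smallmatrix}0&0\\B&0\end{smallmatrix}\bigr)$, the coefficients being made explicit by block-inverting the MDE. Your sketch of the ETH itself --- decompose $A$ into the $(w_1,w_2)$-singular and regular parts, absorb the singular part into the $\delta_{ij}$ coefficient via $\langle\boldsymbol{w}_i,E_\sigma\boldsymbol{w}_j\rangle=\delta_{i,\sigma j}$, and bound the regular part by the spectral-decomposition inequality \eqref{eq:reduction} and a two-resolvent local law --- is precisely the paper's Lemma~\ref{lem:overlap2G} and Proposition~\ref{prop:2Gav}, with the spectral-parameter-dependent regularisation and two-dimensional singular subspace matching the paper's treatment of the stability operator.
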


The thermalisation of singular vectors will be a simple
corollary to the \emph{Eigenstate Thermalisation Hypothesis (ETH)} for the Hermitisation $H^\Defo$ of $X+\Defo$, which is formulated in Theorem~\ref{thm:main} below. The proof of Theorem \ref{thm:main singvec} will be given in Section \ref{sec:proofmain}.

%RECONSIDER: Theorem \ref{thm:overlap} is a corollary to the thermalisation of singular vectors of $X-z$ in Theorem~\ref{thm:main singvec} and it shall be proven in Section \ref{sec:proofmain} below. 

Our second main result concerns the bi-orthonormal left and right eigenvectors $\{\boldsymbol{l}_i\}_{i \in [N]}$ and $\{\boldsymbol{r}_i\}_{i \in [N]}$, respectively, of $X+\Lambda$, with corresponding eigenvalues $\{\mu_i\}_{i \in [N]}$, i.e.
\begin{equation} \label{eq:leftrightev}
	(X+\Defo) \boldsymbol{r}_i = \mu_i \boldsymbol{r}_i \,, \qquad \boldsymbol{l}^t_i (X+\Defo) = \mu_i \boldsymbol{l}^t_i\,,
\end{equation}
where $t$ denotes the transpose of a vector.
More precisely, the following theorem provides a lower bound on the diagonal part of the \emph{overlaps matrix}
\begin{equation} \label{eq:overlap}
	\mathcal{O}_{ij} := \langle \boldsymbol{r}_j, \boldsymbol{r}_i \rangle \langle \boldsymbol{l}_j, \boldsymbol{l}_i \rangle\,,
\end{equation}
defined subject to the standard normalisation
\begin{equation} \label{eq:orthonorm}
	\langle \bar{\boldsymbol{l}}_j, \boldsymbol{r}_i \rangle = \boldsymbol{l}^t_j \boldsymbol{r}_i = \delta_{i,j}\,.
\end{equation}
We restrict our results to eigenvalues $\mu_i$ in the \emph{bulk} of $X + \Defo$ in the following sense.
\begin{definition} \label{def:bulknonherm}
	We say that $z \in \C$ is in the \emph{bulk} of $X + \Defo$ if and only if there exists an $N$-independent $\kappa> 0$ for which
	\begin{equation*}
		0 \in \mathbf{B}_\kappa^{\Defo - z} = \{x \in \R : \rho^{\Defo - z}(x) \ge \kappa^{1/3}  \}\,.
	\end{equation*}
\end{definition}
There is no simple characterisation of the bulk of  $X + \Defo$ in terms of the spectrum of $\Defo$.
However, taking the imaginary part of~\eqref{eq:selfconsm} at $w=0+\ii 0$ shows that $0\in \mathbf{B}_\kappa^{\Defo - z}$
is equivalent to
$$
	\frac{1}{N}\sum_{i=1}^N \frac{1}{ \nu_i(\Defo -z)^2 + \kappa^{2/3}} \ge 1\,,
$$
where $\nu_i(\Defo -z)$ are the singular values of $\Lambda-z$.
\begin{theorem}
	\label{thm:overlap}
	Consider $X+\Lambda$, with $\Lambda$ being a deterministic matrix as in \eqref{eq:Defohat} and with $X$ being an i.i.d.~matrix satisfying Assumption \ref{ass:iid}.
	% \color{blue} [Commented out existence of density] \normalcolor % and the single-entry distribution $\chi$ have a density with respect to the Lebesgue measure. 
	Then
	\begin{equation}
		\label{eq:Oiilb}
		\mathcal{O}_{ii} \succ N\,,
	\end{equation}
	%with very high probability for any arbitrarily small $\xi > 0$, 
	where the index $i \in [N]$ is such that $\mu_i$ is in the bulk of $X+\Lambda$.
\end{theorem}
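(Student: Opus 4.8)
The plan is to derive the lower bound $\mathcal{O}_{ii} \succ N$ from the thermalisation of singular vectors (Theorem~\ref{thm:main singvec}) by exploiting the elementary but crucial link between eigenvectors of $X+\Lambda$ and singular vectors of $(X+\Lambda) - \mu_i$ at the shift $z = \mu_i$. Concretely, if $\mu := \mu_i$ is an eigenvalue of $X+\Lambda$, then the smallest singular value of $X+\Lambda-\mu$ is $0$, and the corresponding right singular vector $\boldsymbol{v}_0$ is (up to normalisation) exactly $\boldsymbol{r}_i/\|\boldsymbol{r}_i\|$, while the corresponding left singular vector $\boldsymbol{u}_0$ is $\bar{\boldsymbol{l}}_i/\|\boldsymbol{l}_i\|$ (using $(X+\Lambda-\mu)^* \boldsymbol{u}_0 = 0$ and $\boldsymbol{l}_i^t(X+\Lambda-\mu) = 0$). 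Since $\mathcal{O}_{ii} = \|\boldsymbol{r}_i\|^2 \|\boldsymbol{l}_i\|^2 = \langle \boldsymbol{r}_i,\boldsymbol{r}_i\rangle\langle\boldsymbol{l}_i,\boldsymbol{l}_i\rangle$ with the bi-orthonormalisation $\boldsymbol{l}_i^t \boldsymbol{r}_i = 1$, I would first write $\mathcal{O}_{ii} = |\langle \boldsymbol{u}_0, B \boldsymbol{v}_0\rangle|^{-2}$ for an appropriately chosen test matrix $B$ — the natural choice being $B$ such that $\langle \boldsymbol{u}_0, B \boldsymbol{v}_0\rangle$ reproduces the normalisation constraint $\boldsymbol{l}_i^t \boldsymbol{r}_i = \overline{\langle\bar{\boldsymbol{l}}_i,\boldsymbol{r}_i\rangle}$, i.e. $B = I$ after accounting for the complex conjugation relating $\boldsymbol{u}_0$ and $\bar{\boldsymbol{l}}_i$. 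This gives the exact identity $\mathcal{O}_{ii} = 1/|\langle \boldsymbol{u}_0, \boldsymbol{v}_0\rangle|^2$ (or a close variant), reducing the whole problem to an \emph{upper} bound of order $N^{-1/2}$ on the single scalar $|\langle \boldsymbol{u}_0, \boldsymbol{v}_0\rangle|$, the overlap of the zero-singular-value left and right singular vectors of $X+\Lambda-\mu$.

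Next I would invoke Theorem~\ref{thm:main singvec} applied to the shifted matrix $X + (\Lambda - \mu)$ with the choice $B = I$ in equation~\eqref{eq:uv}: the diagonal $i=j$ term gives that $\langle \boldsymbol{u}_0, \boldsymbol{v}_0\rangle$ is, up to an error $O_\prec(N^{-1/2})$, equal to the deterministic quantity
\begin{equation*}
\frac{\left\langle (\Lambda-\mu)\, \Im\big[ ((\Lambda-\mu)^*(\Lambda-\mu) - (\gamma_0 + m(\gamma_0))^2)^{-1}\big] \right\rangle}{\pi \rho(\gamma_0)}\,,
\end{equation*}
evaluated at the relevant quantile $\gamma_0$ of the shifted density $\rho^{\Lambda-\mu}$ — and since $\mu$ is a bulk eigenvalue, the relevant singular value is $0$, so $\gamma_0 = 0$ and $m(\gamma_0) = m^{\Lambda-\mu}(0)$. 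The key structural point is that at $\gamma_0 = 0$ the matrix $(\Lambda-\mu)^*(\Lambda-\mu) - (0 + m(0))^2$ is Hermitian up to the explicit scalar shift $m(0)^2$, and taking $\Im$ of its inverse produces a bounded quantity: one needs to check that the deterministic numerator is $O(1)$ while $\rho(0) \gtrsim \kappa^{1/3}$ is bounded below by the bulk assumption in Definition~\ref{def:bulknonherm}. Crucially, I expect that the deterministic term actually \emph{vanishes} — or at least is $o(1)$ — because $\Im$ of the relevant resolvent, when paired against $(\Lambda-\mu)$ and traced, should produce a purely real or otherwise negligible contribution, so that the only surviving contribution to $\langle\boldsymbol{u}_0,\boldsymbol{v}_0\rangle$ is the fluctuation term $O_\prec(N^{-1/2})$.

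There is, however, a genuine technical subtlety that is the main obstacle: Theorem~\ref{thm:main singvec} is stated for indices $i,j$ whose quantiles lie in the $\kappa$-bulk, but the index corresponding to the zero singular value sits precisely at the spectral edge $\sigma = 0$ of $X+\Lambda-\mu$, which is the \emph{hard edge} region, \emph{not} the bulk — the density $\rho^{\Lambda-\mu}$ near $0$ is positive (that is exactly what "bulk of $X+\Lambda$" means), but $0$ is the symmetry point of the density and the quantile $\gamma_0 = 0$, and one must confirm that $\gamma_0 \in \mathbf{B}_\kappa^{\Lambda-\mu}$, i.e. $\rho^{\Lambda-\mu}(0) \geq \kappa^{1/3}$ — which is precisely Definition~\ref{def:bulknonherm}. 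So in fact the bulk hypothesis on $\mu_i$ is exactly tailored to make the zero-singular-value index admissible in Theorem~\ref{thm:main singvec}. The remaining care is that $\mu$ is itself random (an eigenvalue of $X+\Lambda$), so one cannot apply Theorem~\ref{thm:main singvec} for a fixed deterministic shift; this is resolved by the uniformity-in-$z$ (including random $z$) that the theorem is explicitly designed to provide, as emphasised in the introduction — the statement holds with very high probability simultaneously for all shifts, hence in particular for $z = \mu_i$. One also must handle the normalisation freedom in the singular vectors $\boldsymbol{u}_0, \boldsymbol{v}_0$ versus the bi-orthogonal normalisation of $\boldsymbol{l}_i, \boldsymbol{r}_i$, but this is bookkeeping: the combination $\mathcal{O}_{ii}$ is normalisation-invariant, and writing it as $\|\boldsymbol{r}_i\|^2\|\boldsymbol{l}_i\|^2 / |\boldsymbol{l}_i^t\boldsymbol{r}_i|^2$ with unit-norm singular vectors and then using $|\langle\boldsymbol{u}_0,\boldsymbol{v}_0\rangle| \prec N^{-1/2}$ immediately yields $\mathcal{O}_{ii} \succ N$. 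I would finish by noting that the error term $N^{-1/2}$ in Theorem~\ref{thm:main singvec} translates, after inverting, into the $\succ N$ bound with the implicit $N^{\xi}$ slack absorbed into the $\prec$ / $\succ$ notation.
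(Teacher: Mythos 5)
Your core reformulation is correct: writing $\mathcal{O}_{ii} = 1/|\langle \boldsymbol{u}_1, \boldsymbol{v}_1\rangle|^2$ for the unit-normalised singular vectors of $X+\Lambda-\mu_i$ at the zero singular value is a clean identity, and your observation that the deterministic term in \eqref{eq:uv} essentially vanishes at $\gamma_1 \approx 0$ is right — at a purely imaginary spectral parameter $0+m(0) = \I\pi\rho(0)$ the matrix $(\Lambda-\mu)^*(\Lambda-\mu) - (0+m(0))^2$ is Hermitian and positive, so $\Im$ of its inverse vanishes exactly, and the $O(1/N)$ correction from $\gamma_1 \neq 0$ is harmless against the $N^{-1/2}$ error. (Minor note: the admissible index is $i=1$, not $i=0$, but $\gamma_1 = O(1/N)$ so this is notational.)

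The genuine gap is the step you label ``resolved by the uniformity-in-$z$ (including random $z$) that the theorem is explicitly designed to provide.'' Theorem~\ref{thm:main singvec} is stated and proved for a \emph{fixed} deterministic $\Lambda$; it carries no supremum over shifts. Upgrading an eigenvector-level statement to one holding simultaneously over all $z$ is not a routine grid argument here, because $\langle \boldsymbol{u}_1, B \boldsymbol{v}_1\rangle$ is \emph{not} a Lipschitz function of $z$: singular vectors rotate with a modulus of continuity controlled by the gap $\sigma_2 - \sigma_1$, which is itself $O(1/N)$, so the quantity is genuinely unstable under $O(1/N)$ perturbations of the shift. You cannot simply invoke Theorem~\ref{thm:main singvec} at the random value $z = \mu_i$, which is correlated with the singular vectors you are trying to control.

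The paper's proof is designed precisely to circumvent this. It never invokes the eigenvector-level statement at a random shift. Instead it bounds the \emph{resolvent} quantity $\langle \Im G^z(\ii\eta) F \Im G^z(\ii\eta) F^*\rangle$ with $F = \big(\begin{smallmatrix}0&0\\1&0\end{smallmatrix}\big)$ uniformly over $z$ in the bulk (equation \eqref{eq:bsup}), using Proposition~\ref{prop:2Gav}. This quantity \emph{is} Lipschitz in $z$ at fixed $\eta \sim N^{-1+\epsilon}$, so the $\sup_z$ is justified by the usual grid argument. The pass to the kernel eigenvectors is then a purely deterministic, pointwise inequality: $\Im G^\mu(\ii\eta) \ge \mathcal{P}/\eta$ where $\mathcal{P}$ is the rank-two orthogonal projection built from $\bar{\boldsymbol{l}}, \boldsymbol{r}$, and the algebraic computation $\langle \mathcal{P} F \mathcal{P} F^*\rangle = |\langle \bar{\boldsymbol{l}},\boldsymbol{r}\rangle|^2/(2N\|\boldsymbol{r}\|^2\|\boldsymbol{l}\|^2)$ closes the argument. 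Filling the gap in your proposal would require you to carry out essentially these same steps — prove a uniform resolvent bound, then use the deterministic kernel-projection inequality — at which point your route collapses onto the paper's.
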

In the introduction we already mentioned the consequence of this result on the sensitivity of an eigenvalue
of $X+\Lambda$ under small perturbations. Now we explain its other consequence on
the diffusivity of the Dyson-type eigenvalue dynamics.
Let  each entry of $X=X(t)$ evolve as an independent complex  OU process,
$$
	\D X_{ij} = \frac{\D B_{ij}}{\sqrt{N}} - \frac{1}{2} X_{ij} \D t,
$$
where $B_{ij}$ are independent standard complex Brownian motions
and the initial condition $X(0)$ satisfies Assumption \ref{ass:iid}.
A direct calculation~\cite[Proposition A.1]{1801.01219} shows that
the eigenvalues $\mu_i=\mu_i(t) $ follow the Dyson-type stochastic dynamics
\begin{equation}\label{DBM}
	\D \mu_i = \D M_i - \frac{1}{2} \mu_i \D t, \qquad \{\mu_i(0)\} = \mbox{Spec} X(0), \qquad 1\le i\le N,
\end{equation}
where the martingales $M_i$ have brackets $\langle M_i, M_j\rangle=0$ and
$\D \langle M_i, M_j\rangle_t = \frac{1}{N}\mathcal{O}_{ij} (t) \D t$. In particular,
we immediately obtain, for any $\epsilon>0$ that
\begin{equation}\label{diff}
	\E \big[ |\mu_i(t) -\mu_i(0)|^2 {\bf 1}(\mu_i(0)\in \mathbf{B}_\kappa)\big]\ge tN^{-\epsilon}
\end{equation}
up to some time $t\le T(\kappa)$, where $\mathbf{B}_\kappa$ denotes the $\kappa$-bulk of $X(0)$.
For Ginibre initial condition $X(0)$ \eqref{diff} was established in~\cite[Corollary 1.6]{1801.01219},
we now generalise it to  i.i.d. initial conditions.
We remark that~\eqref{DBM} is similar to its Hermitian counterpart, the
standard Dyson Brownian motion (DBM) on the real line, with some notable differences.
In particular, in the latter process the eigenvalues cannot cross each other, hence they
are quite rigid and  confined to an interval of size essential $1/N$, so they are not diffusive
beyond a time-scale $1/N$. Along the evolution~\eqref{DBM} the non-Hermitian eigenvalues still repel
each other (encoded in the typically negative off-diagonal overlaps, see~\cite[Theorem 1.3]{1801.01219}
in the Gaussian case), but they still can pass by each other and not hindering the diffusive behavior~\eqref{diff}.

\begin{example} \label{exam:Defo=-z}
	The most prominently and extensively studied \cite{girko, bai, taovukrishna, BYYcirc1, BYYcirc2, Ycirc3, taovu, edgeuniv, iidcltcomplex, iidcltreal} deformation is $\Defo = -z$ with $z \in \C$, since it plays a key role in Girko's formula \cite{girko} expressing
	linear statistics of non-Hermitian eigenvalues of $X$ in terms of the Hermitisation of $X-z$.
	In this case, the self-consistent equation \eqref{eq:selfconsm} reduces to the well-known cubic relation
	\begin{equation*}
		- \frac{1}{m} = w + m - \frac{|z|^2}{w+m}\,.
	\end{equation*}
	As a consequence, the deterministic terms in \eqref{eq:uuvvuv} drastically simplify (e.g., the fractions in \eqref{eq:uu} and \eqref{eq:vv} are simply replaced by $\langle B \rangle$) and one also has  explicit formulas for the bulk \eqref{eq:bulk}
	in terms of solution of a cubic  equation.
	In particular, for $|z|< 1 - \epsilon_\kappa$, the $\kappa$-bulk $\mathbf{B}_\kappa$
	consists of a single interval, while for $|z| \ge 1 - \epsilon_\kappa$ it consists of two intervals, where $\epsilon_\kappa\sim \kappa^{2/3}$.
	In the former case $0\in \mathbf{B}_\kappa$.
	Consequently, Theorem~\ref{thm:overlap} gives the lower bound~\eqref{eq:Oiilb} for all  the diagonal overlaps $\mathcal{O}_{ii} $
	of eigenvectors of $X$ whose eigenvalue  $\mu_i$ lies in a disk of radius $1-\epsilon$ with some $\epsilon>0$ independent of $N$.
	%{\color{red}[Insert a paragraph about the special case $\Defo = z$: explicit $m$, $\mathbf{B}_\kappa$]}
\end{example}
\begin{figure}[htbp]
	\centering
	\begin{tikzpicture}
		\begin{axis}[no markers,
				xlabel=$\Re w$,
				ylabel=$\rho(\Re w)$,
				width=\textwidth,
				height=0.4\textwidth,
				axis lines=middle,
				ymin=-.1,ymax=.85,
				xmin=-3.2,xmax=3.2,
				ytick={0},yticklabels={0},
				xtick={-2,0,2},
				xticklabels={-2,0,2},
				axis on top=true, color=black]
			\pgfmathsetmacro{\Z}{0.98}
			\pgfmathsetmacro{\min}{-m(\Z)+.001}
			\pgfmathsetmacro{\max}{m(\Z)}
			\path[name path=rho,save path=\saverho] (\min,0) -- plot[domain={\min}:\max,samples=301] ({\x},{f(abs(\x),\Z)}) -- (\max,0);
			\path[name path=kappa] (-3,.45) -- (3,.45);
			\path[name intersections={of=kappa and rho,name=i}];
			\draw[dashed,gray,fill=black!5] (i-1 |- 0,0) -- (i-1) -- (i-2) -- (i-2 |- 0,0);
			\draw[dashed,gray,fill=black!5] (i-3 |- 0,0) -- (i-3) -- (i-4) -- (i-4 |- 0,0);
			\draw[stealth-stealth,gray] (\min,.1) -- (i-1 |- 0,.1) node[right] {\footnotesize$\sim\kappa^{2/3}$};
			\draw[stealth-stealth,gray] (i-2 |- 0,.1) -- (i-3 |- 0,.1) node[right] {\footnotesize$\sim\kappa$};
			\draw[stealth-stealth,gray] (-.5,0) -- (-.5,0 |- i-1) node[midway,left] {\footnotesize$\sim\kappa^{1/3}$};
			\draw[|-|,thick] (i-1 |- 0,0) -- (i-2 |- 0,0);
			\draw[|-|,thick] (i-3 |- 0,0) -- (i-4 |- 0,0) node [midway, below] {%\footnotesize$\mathbf{B}_\kappa$
			};
			\draw[thick,solid,black,use path=\saverho];
		\end{axis}
	\end{tikzpicture}
	\caption{Depicted is the density $\rho$ for the deformation $\Defo = - z$ with $|z|$ slightly less than one.
		On the horizontal axis, we indicated the two components of the bulk $\mathbf{B}_\kappa$. The distance between $\mathbf{B}_\kappa$ and a regular edge scales like $\kappa^{2/3}$, while near an (approximate) cusp the distance between the two components scales linearly (see also \eqref{eq:bulk} and \eqref{kappabulkreg}).}
	\label{fig:Defo=-z Example}
\end{figure}
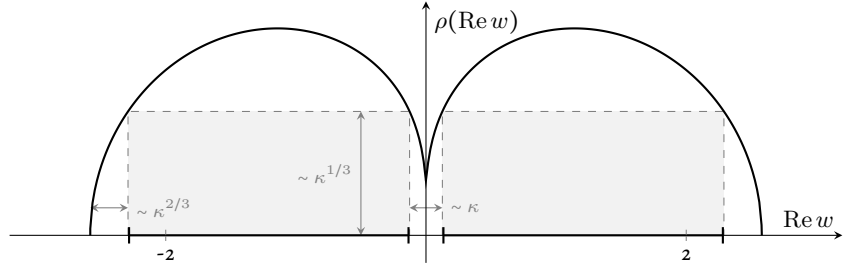
In the next section we explain the key technical result behind our main theorems, the eigenstate thermalisation
for the Hermitisation of $X+\Defo$.

\subsection{Eigenstate Thermalisation Hypothesis for the Hermitisation of $X+\Defo$} \label{subsec:eth}
The key to access the non-Hermitian singular vectors of $X+\Defo$  is to study its \emph{Hermitisation} \cite{girko}, which is defined as
\begin{equation} \label{eq:herm}
	H= H^\Defo:=\left( \begin{matrix}
			0           & X+\Defo \\
			(X+\Defo)^* & 0
		\end{matrix}\right) =:
	W + \hat{\Defo}\,,
\end{equation}
where $\hat{\Defo}^* = \hat{\Defo}$ was defined in \eqref{eq:Defohat} and can also be viewed  as the matrix of expectation
$\hat{\Defo}= \mathbf{E}H^\Defo$.

We denote by $\{{\bm w}_i\}_{|i|\le N}$ the (normalised) eigenvectors of $H$ and by $\{\lambda_i\}_{|i|\le N}$ the corresponding eigenvalues.\footnote{In the definition of the eigenvectors and eigenvalues, we omitted $0$ in the set of indices, i.e.~$|i| \le N$ really means $i \in \{-N, ... , -1, 1 , ... , N\}$. } By means of the singular value decomposition in \eqref{eq:singvec}, the eigenvalues and eigenvectors of $H$
are related to the singular values and singular vectors of $X+\Defo$ as follows:
\begin{equation*}
	{\bm w}_i = ({\bm u}_i, {\bm v}_i)^t \quad \text{and} \quad \lambda_i = \sigma_i \quad \text{for} \quad i \in [N]\,,
\end{equation*}
up to a normalisation, since now $\Vert {\bm u}_i \Vert^2 = \Vert {\bm v}_i \Vert^2 = \tfrac{1}{2}$.
Moreover, the block structure of $H$ induces a symmetric spectrum around zero, i.e.~$\lambda_{-i}=-\lambda_i$ for any $i\in[N]$. This symmetry for the eigenvalues is also reflected in the eigenvectors, which satisfy $\boldsymbol{w}_{-i} = E_- \boldsymbol{w}_i$ for any $i \in [N]$. By spectral decomposition, this immediately shows the \emph{chiral symmetry}
\begin{equation} \label{eq:chiral}
	E_- G(w) = - G(-w) E_-,  \qquad \mbox{with}\quad E_- = \left( \begin{matrix} 1 & 0\cr 0 & -1\end{matrix} \right),
\end{equation}
for the resolvent $G(w) = G^\Defo(w):=(H^\Defo-w)^{-1}$, with spectral parameter $w\in\mathbf{C}\setminus \mathbf{R}$.
We also have $\langle G(w) E_-\rangle=0$ for any $w$ since
$\langle  \boldsymbol{w}_i, E_- \boldsymbol{w}_i\rangle= \Vert {\bm u}_i \Vert^2 - \Vert {\bm v}_i \Vert^2  =0$.

A basic feature of a very large class of random matrices is that their resolvent becomes approximately deterministic
in the large $N$ limit, often even for any spectral parameter with $|\Im w|\ge N^{-1+\epsilon}$; these statements are called {\it local laws}.
In our case the deterministic approximation of the resolvent $G(w)$ is given by
\begin{equation} \label{eq:Mu}
	M(w) = M^\Defo(w):=\left( \begin{matrix}
			M_{11}(w)                             & \frac{\Defo M_{22}(w)}{w + m(w) } \\
			\frac{\Lambda^* M_{11}(w)}{w + m(w) } & M_{22}(w)
		\end{matrix} \right) \in \C^{2N \times 2N} \,,  \qquad w\in \C\setminus \R,
\end{equation}
with each block being understood as a matrix in $\C^{N \times N}$, where the diagonal entries are defined via
\begin{equation}	\label{eq:mde}
	M_{11}(w):=\frac{w + m(w)}{\Defo \Defo^* - (w + m(w))^2}\,, \qquad
	M_{22}(w):=\frac{w + m(w)}{\Defo^* \Defo - (w + m(w))^2}\,.
\end{equation}
Here we require $m(w) = \langle M(w) \rangle$, which is an implicit equation for the function $m(w)$. Simple
calculation shows that this implicit equation is exactly~\eqref{eq:selfconsm}. Moreover, one can easily check that $M(w)$ also satisfies the chiral symmetry \eqref{eq:chiral}, i.e.~
\begin{equation} \label{eq:chiralM}
	E_- M(w) = -M(-w) E_-\,.
\end{equation}

To derive these formulas systematically, we recall that the deterministic approximation to
$G(w)$ is obtained as the unique solution to the \emph{matrix Dyson equation (MDE)}
(introduced first in~\cite{Helton} and extensively studied  in \cite{1506.05095, firstcorr,1804.07752}).
The MDE corresponding to the random matrix $H$ is given by
\begin{equation} \label{eq:MDE}
	- \frac{1}{M(w)} =  w - \hat{\Defo} + \mathcal{S}[M(w)]
\end{equation}
under the constraint $\Im M(w)\cdot  \Im w > 0$, where $\Im M(w) := \frac{1}{2\I}\big[ M(w) - (M(w))^*\big]$. Here
$\mathcal{S}[\cdot]$, the  \emph{self-energy operator}, is defined  via
\[
	\mathcal{S}[T]:=\widetilde{\mathbf{E}} (\widetilde{H} - \mathbf{E} H) T (\widetilde{H} - \mathbf{E} H)
\]
for any $T \in \mathbf{C}^{2N \times 2N}$, where $\widetilde{H}$ denotes an independent copy of $H$.
In our case we can write $\mathcal{S}$ in the following two ways
\begin{equation}\label{Sop}
	\mathcal{S}[T]= 2E_1 \langle T E_2 \rangle + 2 \langle  E_1 T \rangle E_2 = \sum_{\sigma = \pm }\sigma \langle TE_\sigma \rangle E_\sigma,
\end{equation}
with $E_1,E_2$ defined as in \eqref{eq:defe1e2}, and $E_\pm=E_1\pm E_2$. Using $\langle M_{11}(w) \rangle = \langle M_{22}(w) \rangle$ that directly follows from~\eqref{eq:mde},
it is straightforward to check that $M(w)$ as defined in~\eqref{eq:Mu} satisfies the MDE~\eqref{eq:MDE}. Since the MDE has a unique
solution, we see that the density $\rho$  defined via free convolution in Section~\ref{sec:first}
coincides with the {\it self-consistent density of states (scDos)} corresponding to the MDE,
defined as the boundary value of $\frac{1}{\pi}\langle \Im M\rangle$ on the real axis in the theory of MDE~\cite{firstcorr,1804.07752}.

For the reader's convenience in Appendix~\ref{sec:A1} we will collect a few  facts about  $M$, in particular
we will show that it has a continuous extension as a matrix valued function to the real axis, i.e. the
limit $ M(e) := \lim_{\eta \downarrow 0} M(e+ \I \eta)$ exists for any $e\in\R$. This extends the similar result on its trace mentioned
in \eqref{eq:scDos}. Moreover, we will also
show that for spectral parameters $w \in \C\setminus \R$ with $\Re w \in \mathbf{B}_\kappa$,
we have
\begin{equation} \label{eq:Mboundedmain}
	\Vert M(w)\Vert \lesssim 1\,.
\end{equation}
In particular, together with~\eqref{eq:Mu}--\eqref{eq:mde}
this implies that the deterministic terms in~\eqref{eq:uuvvuv} are bounded.
Finally, we will also prove
an important regularity property of the $\kappa$-bulk, namely that
\begin{equation}\label{kappabulkreg}
	\mathrm{dist}(\partial \mathbf{B}_{\kappa'}, \mathbf{B}_\kappa) \ge \mathfrak{c} (\kappa - \kappa')
\end{equation}
for any small $0 < \kappa' < \kappa$ and some $N$-independent constant $ \mathfrak{c}= \mathfrak{c}(\Vert \Defo \Vert) > 0$.
In fact, for our proof it is sufficient if $ \mathfrak{c}= \mathfrak{c}(\kappa, \Vert \Defo \Vert)$, i.e. an additional
$\kappa$ dependence is allowed -- in Appendix~\ref{sec:A1} we will explain that this weaker result is considerably
easier to obtain (see Remark~\ref{rem:cdependence}).
We will also show that $\mathbf{B}_\kappa$ is a finite disjoint union of compact intervals; the number of  these \emph{components}
depends only on $\kappa$ and $\Vert \Defo \Vert$.

The above mentioned concentration of $G$ around $M$ is the content of the following single resolvent \emph{local law}, both in \emph{averaged} and \emph{isotropic form}, which we prove in Appendix \ref{app:locallaw}.
\begin{theorem} {\rm (Single resolvent local law for the Hermitisation $H$)} \label{thm:singleG}\\
	Fix a bounded deterministic $\Defo \in \C^{N \times N}$ and $\kappa > 0$
	independent of $N$. Then, for any $w \in \C \setminus \R$ with $|w| \le N^{100}$ and $\Re w \in \mathbf{B}_\kappa$, we have
	\begin{equation*}
		\left| \langle (G(w) - M(w)) B \rangle  \right| \prec \frac{1}{N \eta}\,, \qquad  \left| \langle \boldsymbol{x}, (G(w) - M(w)) \boldsymbol{y} \rangle \right| \prec \frac{1}{\sqrt{N \eta}}\,,
	\end{equation*}
	where $\eta := |\Im w|$, for any bounded deterministic matrix $\Vert B \Vert \lesssim 1$ and vectors $\Vert \boldsymbol{x} \Vert,  \Vert \boldsymbol{y}\Vert \lesssim 1$.
\end{theorem}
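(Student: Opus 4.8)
The plan is to follow the by-now standard route for deriving a single resolvent local law from the matrix Dyson equation~\eqref{eq:MDE}, the only structural features to accommodate being the block/chiral form~\eqref{eq:chiral} of $H^\Defo$ and the requirement that all implicit constants be uniform in the deformation $\Defo$ (with $\Vert\Defo\Vert\lesssim1$) throughout the $\kappa$-bulk. First I would write down the perturbed Dyson equation for $G-M$: with $\underline{WG}:=WG+\mathcal{S}[G]\,G$ denoting the self-energy renormalised fluctuation — note that by~\eqref{Sop} and the identity $\langle G E_-\rangle=0$ one simply has $\mathcal{S}[G]=\langle G\rangle I$ — the resolvent identity combined with~\eqref{eq:MDE} gives, after multiplying by $M$,
\begin{equation*}
  \mathcal{B}[\,G-M\,] \;=\; -\,M\,\underline{WG} \;+\; \langle G-M\rangle\,M\,(G-M)\,,\qquad \mathcal{B}:=\mathcal{I}-M\,\mathcal{S}[\,\cdot\,]M\,,
\end{equation*}
where the last term is quadratically small. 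Everything then reduces to (i) a quantitative inversion of the stability operator $\mathcal{B}$, and (ii) high-probability bounds on $\underline{WG}$, both isotropically and in average.

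For (i), the key simplification is that the self-energy operator~\eqref{Sop} has rank two: it annihilates every matrix that is traceless in each $N\times N$ block, and on $\mathrm{span}\{I,E_-\}$ it acts by $\mathcal{S}[I]=I$ and $\mathcal{S}[E_-]=-E_-$. Since moreover $\langle G E_-\rangle=\langle M E_-\rangle=0$ by the chiral symmetry~\eqref{eq:chiralM}, only the ``$\langle\,\cdot\,\rangle$-mode'' of $\mathcal{B}$ is actually seen, and its inversion reduces to the scalar stability $1/(1-\langle M^2(w)\rangle)$; the complementary, potentially dangerous $E_-$-mode near $\Re w=0$ (the ``second singular direction'' anticipated in~\eqref{2term}) drops out automatically. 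Using $\Vert M(w)\Vert\lesssim1$ from~\eqref{eq:Mboundedmain} and the bulk lower bound $\Im M(w)\gtrsim\rho(w)\gtrsim\kappa^{1/3}$ for $\Re w\in\mathbf{B}_\kappa$ — both traceable through the explicit formula $M=(\hat{\Defo}-(w+m))^{-1}$ with $m$ solving~\eqref{eq:selfconsm}, which is exactly what makes the arbitrary deformation manageable — the standard rotation--inversion argument of the MDE stability theory yields $|1-\langle M^2(w)\rangle|\gtrsim\rho(w)$ (in particular $\gtrsim\kappa^{1/3}$ on $\mathbf{B}_\kappa$), hence $\Vert\mathcal{B}^{-1}\Vert\lesssim1$ on the relevant subspace, uniformly in $\Defo$ and for all $|w|\le N^{100}$ with $\Re w\in\mathbf{B}_\kappa$.

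For (ii), I would run a cumulant expansion in the entries of $W$, using the high moments from Assumption~\ref{ass:iid} and the Ward identity $\sum_b|G_{ab}|^2=\eta^{-1}(\Im G)_{aa}$ for the power counting in $\eta$; the second-order cumulant reproduces the counterterm $\mathcal{S}[G]G$, and the remaining terms are controlled by the current a priori size of $G-M$. This delivers the isotropic bound $|\langle\boldsymbol{x},\underline{WG}\,\boldsymbol{y}\rangle|\prec\sqrt{\langle\Im G\rangle/(N\eta)}$ and, after one further ``fluctuation averaging'' expansion of the diagonal sum $\sum_a(\underline{WG})_{aa}B_{aa}$, the improved averaged bound $|\langle\underline{WG}\,B\rangle|\prec(N\eta)^{-1}$ (up to a $\sqrt{\langle\Im G\rangle}$ factor and a self-improving error). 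Inserting (i) and (ii) into the displayed equation and using $\langle\Im M\rangle\lesssim1$ produces a closed system of \emph{master inequalities} for the isotropic error $\sup_{\Vert\boldsymbol{x}\Vert,\Vert\boldsymbol{y}\Vert\le1}|\langle\boldsymbol{x},(G-M)\boldsymbol{y}\rangle|$ and the averaged error $|\langle(G-M)B\rangle|$.

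Finally I would close the bootstrap by a continuity argument in $\eta$: at $\eta\sim N^{100}$ the bound follows from a Neumann series since $\Vert M\,\mathcal{S}[\,\cdot\,]M\Vert\lesssim\eta^{-2}\ll1$ there, and decreasing $\eta$ along a polynomially fine grid, using the Lipschitz continuity of $w\mapsto G(w)$ to exclude the unstable branch of the master inequalities, propagates both bounds down to the scales needed. I expect the main obstacle to be step (i): establishing the lower bound on the reduced stability quantity $|1-\langle M^2(w)\rangle|$ — equivalently, the invertibility of $\mathcal{B}$ on the complement of the $E_-$-mode — \emph{uniformly} in the arbitrary deformation $\Defo$ and over the entire $\kappa$-bulk, in particular near its internal edges, where one has to invoke the regularity estimate~\eqref{kappabulkreg} for $\mathbf{B}_\kappa$. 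The probabilistic input (ii) and the bootstrap itself are, by now, routine adaptations of existing techniques.
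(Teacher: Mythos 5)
Your proposal takes essentially the same route as the paper's proof in Appendix~\ref{app:locallaw}: the linearised MDE $\mathcal{B}[G-M] = -M\underline{WG} + \langle G-M\rangle M(G-M)$, the observation that $\langle GE_-\rangle = \langle ME_-\rangle = 0$ neutralises the dangerous $E_-$-mode of the (non-flat) stability operator while $|1-\langle M^2\rangle|\gtrsim\langle\Im M\rangle^2\gtrsim 1$ controls the remaining mode, cumulant bounds on the underlined term (which the paper imports from \cite{slowcorr} rather than re-deriving), and a standard bootstrap in $\eta$. Your exponent $|1-\langle M^2\rangle|\gtrsim\rho(w)$ should read $\gtrsim\rho(w)^2$, but since $\rho\gtrsim\kappa^{1/3}$ is a fixed constant on $\mathbf{B}_\kappa$ this does not affect the argument.
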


Our main result for the Hermitised random matrix $H$ is the \emph{Eigenstate Thermalisation Hypothesis (ETH)},
that in mathematical terms is the proof of an optimal convergence rate of the strong \emph{Quantum Unique Ergodicity (QUE)} for general observables $A$, uniformly in the \emph{bulk} \eqref{eq:bulk} of the spectrum of $H$, i.e.~in the bulk of the scDos $\rho$.
\begin{theorem} \label{thm:main} {\rm (Eigenstate Thermalisation Hypothesis for the Hermitisation $H$)} \\
	Fix some bounded $\Defo \in \C^{N \times N}$ %with $||z|-1| \ge \zeta$ for some $\zeta > 0$ 
	and $\kappa > 0$ independent of $N$. Let $\{{\bm w}_i\}_{|i| \le N}$ be the orthogonal eigenvectors of the Hermitisation $H$ of $X+\Defo$, where $X$ is an i.i.d.~matrix satisfying Assumption~\ref{ass:iid}. Then, for any deterministic matrix $ A \in \mathbf{C}^{2N \times 2N}$ with $\Vert A \Vert \lesssim 1$ it holds that
	\begin{equation} \label{eq:mainthm} %\color{red}
		\max_{i,j} \left| \langle {\bm w}_i, A {\bm w}_j \rangle - \delta_{j, i} \frac{\langle  \Im M(\gamma_j) A \rangle}{\langle \Im M(\gamma_j) \rangle} -  \delta_{j, - i} \frac{\langle  \Im M(\gamma_j) E_-  A  \rangle}{\langle \Im M(\gamma_j)\rangle} \right| \prec \frac{1}{\sqrt{N}}\,,
	\end{equation}
	%with very high probability for any arbitrarily small $\xi > 0$, 
	where the maximum is taken over all $|i|, |j| \le N$, such that the quantiles $\gamma_i, \gamma_j \in \mathbf{B}_\kappa$ defined in \eqref{eq:quantiles} are in the bulk of the scDos $\rho$.
\end{theorem}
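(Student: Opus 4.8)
The plan is to follow the now-standard multi-resolvent/ETH strategy developed for Wigner matrices in \cite{ETHpaper,2012.13218,multiG,A2}, but adapted to the chiral block structure of $H=H^\Defo$ using the spectral-parameter-dependent decomposition \eqref{Adec} described in the introduction. The starting point is the spectral decomposition of two resolvents: for spectral parameters $w_1=\gamma_i+\I\eta$, $w_2=\gamma_j+\I\eta'$ with $\eta,\eta'$ of the same or opposite sign and of size $\eta\sim N^{-1+\xi}$, one has the bound
\begin{equation*}
  \sum_{|k|,|l|\le N}\frac{\eta\,\eta'\,|\langle\boldsymbol{w}_k,A\boldsymbol{w}_l\rangle|^2}{((\lambda_k-\gamma_i)^2+\eta^2)((\lambda_l-\gamma_j)^2+\eta'^2)}
  = \langle \Im G(w_1)\,A\,\Im G(w_2)\,A^*\rangle\,.
\end{equation*}
By rigidity of the eigenvalues $\lambda_k$ (which follows from Theorem~\ref{thm:singleG}), only the diagonal terms $k=i$, $l=j$ survive on the left after choosing $\eta\sim N^{-1+\xi}$, so $N|\langle\boldsymbol{w}_i,A\boldsymbol{w}_j\rangle|^2$ is controlled by the right-hand side. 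Thus the whole theorem reduces to proving the two-resolvent local law
\begin{equation*}
  \bigl|\langle \Im G(w_1)\,A\,\Im G(w_2)\,A^*\rangle - \langle \Im M(w_1)\,A\,\Im M(w_2)\,A^*\rangle\bigr| \prec \text{(small)}
\end{equation*}
together with a sharp bound on the deterministic leading term. The key structural input is the decomposition $A=\langle V_+,A\rangle\,I + \langle V_-,A\rangle\,E_- + \mathring A^{w_1,w_2}$ of \eqref{Adec}: the two singular directions are exactly those dictated by the variance computation \eqref{2term}, and for the regular part $\mathring A$ the $\sqrt\eta$-rule forces $\langle G(w_1)\mathring A G(w_2)\mathring A^*\rangle=\mathcal O_\prec(1)$ (rather than $1/\eta$), which after inserting $\eta\sim N^{-1+\xi}$ is what produces the $N^{-1/2}$ rate. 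For the singular directions one applies the resolvent identity: $G(w_1)IG(w_2)=(G(w_1)-G(w_2))/(w_1-w_2)$ and $G(w_1)E_-G(w_2)=-E_-(G(-w_1)-G(w_2))/(-w_1-w_2)$ by chirality \eqref{eq:chiral}, each reducing the chain to single resolvents controlled by Theorem~\ref{thm:singleG}. Matching the resulting deterministic expressions against $\Im M A\,\Im M A^*$, using $E_-M(w)=-M(-w)E_-$ and $\langle GE_-\rangle=0$, yields precisely the two terms $\delta_{ji}\langle\Im M(\gamma_j)A\rangle/\langle\Im M(\gamma_j)\rangle$ and $\delta_{j,-i}\langle\Im M(\gamma_j)E_-A\rangle/\langle\Im M(\gamma_j)\rangle$ in \eqref{eq:mainthm}.

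The core of the work, and the main obstacle, is establishing the $\mathcal O_\prec(1)$ bound on $\langle G(w_1)\mathring A G(w_2)\mathring A^*\rangle$ and related chains uniformly down to $\eta\gtrsim N^{-1+\xi}$ in the $\kappa$-bulk. This is done via a bootstrap: one sets up a hierarchy of \emph{master inequalities} for the quantities $\langle G_1\mathring A_1\rangle$, $\langle G_1\mathring A_1 G_2\mathring A_2\rangle$ and their isotropic counterparts, derives \emph{recursive Dyson equations} for these by expanding $G=M-M\mathcal S[G-M]G+\dots$ (cumulant expansion / self-improving estimates), and closes the recursion using \emph{reduction inequalities} that bound longer chains by shorter ones. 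The decisive new difficulty compared to the Wigner case is that the decomposition \eqref{Adec}, hence the very notion of ``regular'' observable, depends on the pair $(w_1,w_2)$; so when the recursion generates a new resolvent $G(w_3)$ one must re-decompose each observable with respect to the shifted pair of parameters and control the cost of switching between decompositions. This requires quantitative stability of the operator $\mathcal B^{w_1,w_2}[\cdot]=1-M(\bar w_1)\mathcal S[\cdot]M(\bar w_2)$ and of its eigenvectors $V_\pm$ as $w_1,w_2$ vary, which in turn rests on the bulk regularity and boundedness facts \eqref{eq:Mboundedmain}, \eqref{kappabulkreg} for $M$ proved in Appendix~\ref{sec:A1}. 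I would also need the analogous control when one or both resolvents have spectral parameters near the imaginary axis (energies $|e|\sim 0$), where the second singular direction $E_-\Im M$ is active; this is exactly the instability flagged in the introduction as coming from the zero blocks of $W$, and handling it uniformly is the technically most delicate point.

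Finally, to pass from the two-resolvent local law back to the eigenvectors one must be careful about two points. First, the argument above only gives the bound for a single fixed pair $(\gamma_i,\gamma_j)$; to get the maximum over all bulk indices in \eqref{eq:mainthm} one takes $\eta=N^{-1+\xi}$, uses the very-high-probability nature of the estimates together with a union bound over an $N$-net of spectral parameters (and Lipschitz continuity of $G$ in $w$), and invokes the eigenvalue rigidity to identify the surviving diagonal term. Second, one must check that when $i=\pm j$ both the ``true'' diagonal contribution and, in the $j=-i$ case, the chiral partner contribution $\langle\boldsymbol{w}_i,A\boldsymbol{w}_{-i}\rangle=\langle\boldsymbol{w}_i,AE_-\boldsymbol{w}_i\rangle$ are correctly picked out — this is why \eqref{eq:mainthm} has two Kronecker terms — and that the normalization $\|\boldsymbol{w}_i\|=1$ is consistent with $\langle\Im M(\gamma_j)\rangle=\pi\rho(\gamma_j)$ in the denominators. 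Theorem~\ref{thm:main singvec} then follows by reading off the $(1,1)$-, $(2,2)$- and $(1,2)$-blocks of \eqref{eq:mainthm} with $A=\mathrm{diag}(B,0)$, $A=\mathrm{diag}(0,B)$, and the appropriate off-diagonal choice, and simplifying the resulting traces of $\Im M$ via \eqref{eq:Mu}--\eqref{eq:mde}; this step is purely algebraic.
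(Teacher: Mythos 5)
Your overall strategy — spectral decomposition to reduce to a two-resolvent estimate, the $(w_1,w_2)$-dependent decomposition of observables, the $\sqrt\eta$-rule, resolvent identities for the singular directions, master/reduction inequalities — is indeed the route the paper takes (Lemma~\ref{lem:overlap2G}, Proposition~\ref{prop:2Gav}, Sections~\ref{sec:proofmaster}--\ref{sec:proofreduc}). However, there is a genuine gap in the way you make the initial reduction. You propose to apply the spectral-decomposition bound with the observable $A$ itself and then deduce the two $\delta$-terms by ``matching the resulting deterministic expressions'' inside $\langle\Im G\,A\,\Im G\,A^*\rangle$. This does not work: the spectral decomposition gives an equality involving $\sum_{k,l}(\cdots)\,|\langle\boldsymbol{w}_k,A\boldsymbol{w}_l\rangle|^2$, so an estimate on $\langle\Im G\,A\,\Im G\,A^*\rangle-\langle\Im M\,A\,\Im M\,A^*\rangle$ only constrains a weighted sum of \emph{squared moduli} of overlaps. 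You cannot recover the \emph{signed} fluctuation $\langle\boldsymbol{w}_i,A\boldsymbol{w}_j\rangle-\delta_{ji}\langle\!\langle A\rangle\!\rangle^+-\delta_{j,-i}\langle\!\langle A\rangle\!\rangle^-$ from such a statement (the phase is lost), and the diagonal $k=l=i$ term is $O(1)$ and not isolated from its neighbours at the required precision.

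The paper's fix, which you gesture at only in your last paragraph, has to be made the first step: define $B=A-\frac{\langle\Im M(\gamma_j)A\rangle}{\langle\Im M(\gamma_j)\rangle}E_+-\frac{\langle\Im M(\gamma_j)E_-A\rangle}{\langle\Im M(\gamma_j)\rangle}E_-$, and use the \emph{exact} orthogonality identities $\langle\boldsymbol{w}_i,E_+\boldsymbol{w}_j\rangle=\delta_{ij}$ and $\langle\boldsymbol{w}_i,E_-\boldsymbol{w}_j\rangle=\delta_{j,-i}$ (from $\boldsymbol{w}_{-j}=E_-\boldsymbol{w}_j$) to see that $\langle\boldsymbol{w}_i,B\boldsymbol{w}_j\rangle$ is \emph{identically} the left-hand side of~\eqref{eq:mainthm}. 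Then one applies the spectral-decomposition bound (Lemma~\ref{lem:overlap2G}) to $B$, giving $N|\langle\boldsymbol{w}_i,B\boldsymbol{w}_j\rangle|^2\prec(N\eta)^2\langle\Im G(\gamma_i+\I\eta)B\,\Im G(\gamma_j+2\I\eta)B^*\rangle$, and the task is to show the right-hand side is $\mathcal O_\prec(N^{2\xi})$. For this, one further needs the perturbative adjustment of Lemma~\ref{lem:perturb mainthm} (which you omit): $B$ is defined via $\Im M(\gamma_j)$, not via the exact two-parameter regularisation $\mathring A^{w,w'}$, and one must show the discrepancy is $\mathcal O(|\gamma_i\mp\gamma_j|+\eta)E_\pm$, whose contribution is then absorbed by resolvent identities and Theorem~\ref{thm:singleG}. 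So the $\delta$-terms do not come from any deterministic ``matching'' in a two-resolvent local law; they enter algebraically before any local law is invoked, and this ordering is essential for the proof to close.
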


The main technical result underlying Theorem~\ref{thm:main} is an averaged local law for \emph{two} resolvents
with different spectral parameters, which we will formulate in Theorem~\ref{thm:multiGll} later.

\begin{remark}
	Given the optimal bound \eqref{eq:mainthm}, following a Dyson Brownian Motion (DBM) analysis similar to \cite{normalfluc, A2}, it is possible to prove a CLT for single diagonal overlaps $ \langle {\bm w}_i, A {\bm w}_i\rangle $. However, for the sake of brevity, we do not present this argument here and defer the CLT analysis to future work.
\end{remark}

In the following Section \ref{sec:proofmain} we precisely define the regularisation
and we will prove our main results formulated above
assuming the key technical Proposition~\ref{prop:2Gav}.
This proposition  is obtained from a \emph{local law}, which we prove in Section \ref{sec:prooflocallaw}.
Local laws are proved by a hierarchy  of \emph{master and reduction inequalities}, that are derived in Sections \ref{sec:proofmaster} and \ref{sec:proofreduc}, respectively.  Appendix~\ref{app:motivation} contains two motivating calculations for the correct regularisation.
Several technical and auxiliary results are deferred to the other appendices.

\section{Proof of the main results} \label{sec:proofmain}

The key to understanding the eigenvector overlaps and showing
%Theorem \ref{thm:main} and Theorem \ref{thm:overlap} 
our main results is an improved bound on the averaged trace of \emph{two} resolvents with \emph{regular}
(see Section~\ref{sec:reg} below for the precise definition) deterministic matrices $A_1, A_2$ in between, i.e.~for
\begin{equation} \label{eq:2Gav}
	\langle G(w_1) A_1 G(w_2) A_2 \rangle\,.
\end{equation}
Naively, for arbitrary $A_1, A_2$, estimating \eqref{eq:2Gav} via a trivial Schwarz inequality and Ward identity yields the upper bound $|\langle G(w_1) A_1 G(w_2) A_2 \rangle| \prec 1/\eta$, where $\eta := \min_j |\Im w_j|$. However, this bound drastically improves, whenever the matrices $A_1, A_2$ are \emph{regular}, i.e.~orthogonal to certain \emph{critical} eigenvectors $V_\pm$ of the associated two-body stability operators \eqref{eq:stabop}, which is denoted as $A_j = \mathring{A}_j$; see \eqref{eq:circ} and Definitions~\ref{def:reg obs1} and~\ref{def:regobs}. In this case, in our key Proposition~\ref{prop:2Gav} we will show that
\begin{equation*}
	|\langle G(w_1) \mathring{A}_1 G(w_2) \mathring{A}_2 \rangle| \prec 1
\end{equation*}
even for very small $\eta \sim N^{-1+\epsilon}$ as a consequence of a more precise
\emph{local law} for \eqref{eq:2Gav}, which we present in Section \ref{sec:prooflocallaw}.
We find that (see Theorem \ref{thm:multiGll} and Remark \ref{rmk:sqrteta}) both the size of its deterministic approximation and the fluctuation around this mean heavily depend on whether (one or both of) the matrices $A_1, A_2$ are regular, i.e.~satisfy $\langle V_\pm, A_j \rangle = 0$, or not.

Therefore, the general rather \emph{structural} regularizing decomposition (or \emph{regularisation}) of a matrix $A$ shall be conducted as
\begin{equation} \label{eq:circ}
	\boxed{A^\circ \equiv \mathring{A} := A - \langle V_+, A \rangle U_+ - \langle V_-, A \rangle U_- }
\end{equation}
for $U_\sigma, V_\sigma \in \mathbf{C}^{2N \times 2N}$ satisfying $\langle V_\sigma, U_\tau \rangle = \delta_{\sigma, \tau}$ and the normalisation $\langle U_\sigma, U_\sigma \rangle  = 1$, where recall that
$\langle R,T \rangle := \langle R^* T \rangle$ denotes the (normalised) Hilbert-Schmidt scalar product.
% of two matrices $R,T \in \C^{2N \times 2N}$. 
The \emph{regularisation  map}
\begin{equation}
	(1-\Pi): \C^{2N \times 2N} \to \C^{2N \times 2N}\,, \quad A \mapsto \mathring{A}\,,
\end{equation}
where $\Pi$ is a \emph{two-dimensional} (non-orthogonal) projection,\footnote{The condition $\langle V_\sigma, U_\tau \rangle = \delta_{\sigma, \tau}$ guarantees that the regularisation is idempotent, i.e.~$(\mathring{A})^\circ = \mathring{A}$ and $\Pi^2 = \Pi$.} is closely related
to the built-in chiral symmetry \eqref{eq:chiral} of our model. Indeed, for other ensembles without this special structure only \emph{one} of the terms $\langle V_\sigma, A \rangle U_\sigma$ in~\eqref{eq:circ} would be present.

As mentioned above, the matrices $V_\pm$ are determined as \emph{critical} eigenvectors (with corresponding small eigenvalue) of naturally associated two-body stability operators with their precise form worked out in Appendix \ref{app:motivation} and given in \eqref{eq:Vpmexplicit}. In Appendix \ref{app:motivation} we also give two different calculations
that helped us guess  these formulas. However, for the directions $U_\pm$ there are \emph{a priori} no further constraints (apart from orthogonality and normalisation).
Hence, as it turns out to be convenient for our proofs, we will \emph{choose} the matrices $U_\sigma$ (in principle, even allowing for two different deformations $\Defo_1 \neq \Defo_2$) in such a way, that a resolvent identity
\begin{equation} \label{eq:resolventid}
	G^{\Defo_1}(w_1) U_\sigma G^{\Defo_2}(w_2) \approx\big(G^{\Defo_1}(w_1) - G^{\Defo_2}(\sigma w_2)\big) U_\sigma\,,
\end{equation}
can be applied (here, the symbol `$\approx$' neglects lower order terms). This is used to reduce the number of resolvents in a chain. Note that, again due to the eminent chiral symmetry \eqref{eq:chiral} for the resolvents, there are in fact \emph{two} matrices $U_\sigma$ for which a resolvent identity \eqref{eq:resolventid} can be applied.

Although the regularisation \eqref{eq:circ} shall be motivated for arbitrary deformations $\Defo_1, \Defo_2$ in Appendix~\ref{app:motivation}, we will henceforth choose a single bounded deformation $\Defo \in \C^{N \times N}$, which remains fixed with the just mentioned exception in Appendix~\ref{app:motivation}. For a single deformation $\Defo$, this restricts the matrices $U_\pm$ satisfying \eqref{eq:resolventid} to be given by $E_\pm$.

In case that the spectral parameters $(w_1, w_2)$ appearing in \eqref{eq:2Gav} (with a single fixed deformation $\Defo$) are such that \emph{none} of the eigenvectors of the stability operator is \emph{critical} (cf.~Appendix \ref{app:stabop}), we consider \emph{every} matrix $A$ as regular. The distinction between these two scenarios is regulated by cutoff functions $\mathbf{1}_\delta^\pm$
introduced in \eqref{eq:case regulation} below.

\subsection{Regular observables: A bound on \eqref{eq:2Gav}}
\label{sec:reg}

As already mentioned above, our main result for the Hermitised random matrix, Theorem~\ref{thm:main}, shall be derived from a bound on \eqref{eq:2Gav}, where we assume the (real parts of the) spectral parameters $w_1, w_2$ to be in the \emph{bulk} of the scDos $\rho$ (recall \eqref{eq:bulk}).

We now specify the concept of regularisation \eqref{eq:circ} to our setting. The eigenvectors $V_\pm$ will be computed in Appendix \ref{app:motivation}, the matrices $U_\pm$ are simply chosen as $E_\pm$.

\begin{definition}{\rm (Regular observables)} \label{def:reg obs1}
	Given $\kappa > 0$, let\footnote{Note that the parameter $\delta > 0$ is independent of the matrix size $N$.}
	\begin{equation} \label{eq:delta}
		\delta = \delta(\kappa, \Vert \Defo \Vert) > 0
	\end{equation}
	be sufficiently small (to be chosen below, see \eqref{eq:deltachoice}) and let $w, w' \in \C\setminus \R$ with $\Re w, \Re w' \in \mathbf{B}_\kappa$ be spectral parameters. Furthermore, we introduce the (symmetric) cutoff functions
	\begin{equation} \label{eq:case regulation}
		\mathbf{1}_\delta^{\pm}(w, w') := \phi_\delta(\Re w \mp \Re w' ) \ \phi_\delta(\Im w ) \ \phi_\delta(\Im w')\,,
	\end{equation}
	where $0 \le \phi_\delta \le 1$ is a smooth symmetric bump function on $\R$ satisfying $\phi_\delta(x) = 1$ for $|x|\le \delta/2$ and $\phi_\delta(x) = 0$ for $|x| \ge \delta$.
	\begin{itemize}
		\item[(a)] We define the \emph{$(w,w')$-regular component} or \emph{$(w,w')$-regularisation} $\mathring{A}^{w,w'}$ of a matrix $A$  as\footnote{\label{ftn:alternreg}Putting the summation parameter $\tau$ at the second spectral parameter $w'$ (and not at $w$) is simply a free choice, which we made here. More precisely, defining the regularisation as
				\begin{equation*}
					\tilde{\mathring{A}}^{{w,w'}}:= A -  \sum_{\tau = \pm}\mathbf{1}_\delta^{\tau \mathfrak{s} }(w, w') \frac{\langle M(\Re w + \tau \I \Im w) A M(\Re w' + \I \Im w') E_{\tau \mathfrak{s} } \rangle}{\langle M(\Re w + \tau \I \Im w) E_{\tau\mathfrak{s} } M( \Re w' + \I \Im w') E_{\tau \mathfrak{s} } \rangle} E_{\tau\mathfrak{s} }
				\end{equation*}
				would equally work in our proofs (see Appendices \ref{app:motivation} and \ref{app:stabop} for details).}
			\begin{equation}  \label{eq:reg A1A2}
				\boxed{	\mathring{A}^{{w,w'}}:= A -  \sum_{\tau = \pm}\mathbf{1}_\delta^{\tau \mathfrak{s} }(w, w') \frac{\langle M(\Re w + \I \Im w) A M(\Re w' + \tau \I \Im w') E_{\tau \mathfrak{s} } \rangle}{\langle M(\Re w + \I \Im w) E_{\tau \mathfrak{s}} M(\Re w' + \tau \I \Im w') E_{\tau \mathfrak{s} } \rangle} E_{\tau\mathfrak{s}}\,,}
			\end{equation}
			where the relative sign of the imaginary parts is defined as
			\begin{equation} \label{eq:sign}
				\mathfrak{s} \equiv \mathfrak{s}_{w,w'}:= - \sgn (\Im w \,  \Im w')\,.
			\end{equation}
		\item[(b)] We say that $A$ is \emph{$(w,w')$-regular} if and only if $A =  \mathring{A}^{{w,w'}}$.
	\end{itemize}
\end{definition}
The regularisation shall be revisited in Definition \ref{def:regobs}, where we tailor it to certain averaged \eqref{eq:av} or isotropic \eqref{eq:iso} resolvent chains.
\begin{remark} \label{rmk:regular}
	We have several comments concerning the above definition.
	\begin{itemize}
		\item[(i)] In case that at least one of the spectral parameters is away from the imaginary axis, say $|\Re w| > \delta$ w.l.o.g., then the regularisation in \eqref{eq:reg A1A2} contains at most one summand: If $\mathbf{1}_\delta^{+}(w, w') = 1$, i.e.~$\Re w$ is close to $\Re w'$, then we have that
			\begin{equation*}
				\mathring{A}^{{w,w'}}:= A -   \frac{\langle M(w) A M(\Re w' + \mathfrak{s} \I \Im w')  \rangle}{\langle M(w)  M(\Re w' + \mathfrak{s} \I \Im w')  \rangle} E_{+}\,,
			\end{equation*}
			whereas if $\mathbf{1}_\delta^{-}(w, w') = 1$, i.e.~if $\Re w$ is close to $- \Re w'$, then we have that
			\begin{equation*}
				\mathring{A}^{{w,w'}}:= A -   \frac{\langle M(w) A E_-M(-\Re w' + \mathfrak{s} \I \Im w')  \rangle}{\langle M(w)  M(- \Re w' + \mathfrak{s} \I \Im w')  \rangle} E_{-}\,,
			\end{equation*}
			where we used that $M(w)E_- = - E_- M(-w)$ (see \eqref{eq:chiralM})
		\item[(ii)] The cutoff functions in \eqref{eq:case regulation} satisfy the basic symmetry properties
			\begin{equation*}
				\mathbf{1}^\pm_\delta(w,w') = \mathbf{1}^\pm_\delta(\bar{w},w') = \mathbf{1}^\pm_\delta(w,\bar{w}') = \mathbf{1}^\pm_\delta(\bar{w},\bar{w}')\,.
			\end{equation*}
			However, $\mathring{A}$ is \emph{not} symmetric in its two spectral parameters, i.e.~$\mathring{A}^{w,w'} \neq \mathring{A}^{w',w}$ in general
		\item[(iii)] For spectral parameters satisfying $\mathbf{1}^\pm_\delta(w,w') > 0$, it will be shown in Appendix \ref{app:stabop} that the respective denominators  in \eqref{eq:reg A1A2} are bounded away from zero. In particular, the linear map $A \mapsto \mathring{A}$ is bounded with a bound depending only on $\delta $ and $\Vert \Defo \Vert$.
		\item[(iv)] Whenever it holds that $\mathbf{1}_\delta^\pm(w,w') = 0$ then also $\mathbf{1}_{\delta'}^\pm(w,w') = 0$ for every $0 < \delta'< \delta$. Conversely, whenever it holds that $\mathbf{1}_\delta^\pm(w,w') = 1$ then also $\mathbf{1}_{\delta'}^\pm(w,w') = 1$ for every $0 <\delta< \delta'$.
		\item[(v)] We point out that the notion of regularity implicitly depends on $\kappa$ and $\delta$ and hence also on the (norm of the) deformation $\Defo$.
	\end{itemize}
\end{remark}

%We stress that our notation $\mathring{\cdot}$ for the regular component of the $A_i$ does \emph{not} have an overall fixed meaning but depends on the spectral parameters of the $G$'s `surrounding' the deterministic matrix $A_i$ under consideration. However, if we aim at specifying the spectral parameters defining the operation $\cdot^\circ$, we add them (or their indices) as a subscript, i.e.~write 
%\begin{equation} \label{eq:regularw1w2}
%	A_1 ^{\circ_{w_1, w_{2}}} \equiv A_1^{\circ_{1,2}} \equiv A_1^\circ \equiv \mathring{A}_1 \quad \text{or} \quad A_2 ^{\circ_{w_2, w_{1}}} \equiv A_2^{\circ_{2,1}} \equiv A_2^\circ \equiv \mathring{A}_2\,. 
%\end{equation}

Moreover, the regularisation defined above satisfies the following elementary properties. The identities in \eqref{eq:regularbar2} and \eqref{eq:regularbasic} are immediate from the definition, the perturbative statements are proven in Appendix \ref{app:stabop}.
\begin{lemma} \label{lem:regularbasic}
	Fix a bounded deterministic deformation $\Defo \in \C^{N \times N}$ and let $A \in \C^{2N \times 2N}$ be an arbitrary bounded deterministic matrix. %In the following, we omit the arguments of the characteristic functions \eqref{eq:case regulation} from the notation. 
	\begin{itemize}
		\item[(i)] Let $w, w'\in \C\setminus \R$ with $\Re w,  \Re w' \in \mathbf{B}_\kappa$. Then, we have the identities
			\begin{equation}
				\big(	\mathring{A}^{{w,w'}} \big)^* = 	\mathring{(A^*)}^{{\bar{w}',\bar{w}}}\,, \quad
				\mathring{A}^{w,w'}E_- = \mathring{(AE_-)}^{{w,-w'}} \,, \quad 	E_- \mathring{A}^{w,w'} = \mathring{(E_- A)}^{{-w,w'}}\,. \label{eq:regularbasic}
			\end{equation}
		\item[(ii)] Moreover, by definition it holds that
			\begin{equation}
				\mathring{A}^{{{w},\bar{w}'}} = \mathring{A}^{{{w},{w}'}} \,,  \label{eq:regularbar2}
			\end{equation}
			and setting $\mathfrak{s} := - \sgn(\Im w \Im w')$, we have the perturbative estimate\footnote{Note that the asymmetry between \eqref{eq:regularbar1} and \eqref{eq:regularbar2} stems from the asymmetry imposed in the definition of the regularisation, namely by placing the summation index $\tau$ in \eqref{eq:reg A1A2} at the second spectral parameter.}
			\begin{equation}
				\mathring{A}^{{\bar{w},{w}'}} = \mathring{A}^{{{w},{w}'}} +  \mathcal{O}(|w - \mathfrak{s}  \bar{w}'|\wedge 1) E_\mathfrak{s}  +\mathcal{O}(|w +\mathfrak{s}  {w}'|\wedge 1) E_{-\mathfrak{s} } \label{eq:regularbar1}\,.
			\end{equation}
		\item[(iii)] 	Let $w_1, w_1', w_2,w_2' \in \C\setminus \R$ with $\Re w_1,  \Re w_1', \Re w_2, \Re w_2'\in \mathbf{B}_\kappa$ as well as $\Im w_1 \cdot \Im w_2 >0 $ and $\Im w_1' \cdot \Im w_2' >0 $ be spectral parameters. %By adjusting $\delta > 0$, we may also assume that
			%\begin{equation*}
			%\mathbf{1}_{\delta/2}^{\pm}(w_1, w_1')  =  \mathbf{1}_\delta^{\pm}(w_1, w_1') = \mathbf{1}_{2\delta}^{\pm}(w_1, w_1') 
			%\end{equation*}
			%we are not at a borderline cases in \eqref{eq:case regulation}. 
			Then
			%, in case that $\mathbf{1}_\delta^{\pm}(w_1, w_1') = 1$, 
			we have that
			\begin{align}
				\mathring{A}^{{w_2,w'_1}} & = 	\mathring{A}^{{w_1,w_1'}} +  \mathcal{O}\big(|w_1 - w_2|  \wedge 1\big) E_+ + \mathcal{O}\big(|w_1 - w_2|  \wedge 1\big) E_-\,, \label{eq:regularperturb1}         \\[1mm]
				\mathring{A}^{{w_1,w'_2}} & = 	\mathring{A}^{{w_1,w_1'}} +  \mathcal{O}\big(|w'_1 - w'_2|  \wedge 1\big) E_+ +  \mathcal{O}\big(|w'_1 - w'_2|  \wedge 1\big) E_-   \,. \label{eq:regularperturb2}
			\end{align}
			%\begin{align}
			%	\mathring{A}^{{w_2,w'_1}} &= 	\mathring{A}^{{w_1,w_1'}} + \sum_{\tau = \pm} \mathbf{1}_{\delta}^\tau(w_2, w_1') \, \mathcal{O}_\tau\big([|w_1 - w_2| + (1 - \mathbf{1}_\delta^\tau(w_1, w_1'))] \wedge 1\big) E_\tau \,, \label{eq:regularperturb1}\\[1mm]
			%	\mathring{A}^{{w_1,w'_2}} &= 	\mathring{A}^{{w_1,w_1'}} + \sum_{\tau = \pm} \mathbf{1}_{\delta}^\tau(w_1, w_2') \, \mathcal{O}_\tau\big([|w'_1 - w'_2| + (1 - \mathbf{1}_\delta^\tau(w_1, w_1'))] \wedge 1\big) E_\tau \,. \label{eq:regularperturb2}
			%\end{align}	
			%For constellations of spectral parameters \emph{not} satisfying $\mathbf{1}_\delta^{\pm}(w_1, w_1') = 1$, the perturbations have a similar form. 
	\end{itemize}
\end{lemma}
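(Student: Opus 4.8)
The plan is to verify every claim by direct manipulation of the explicit formula \eqref{eq:reg A1A2}, grouped according to how strongly the sign combinatorics is affected. Throughout I will rely on the elementary facts that $E_\pm$ are self-adjoint, commute, and square to the identity, that the normalised trace is cyclic, that $M(\bar w) = M(w)^*$ and $E_- M(w) = -M(-w) E_-$ (both immediate from \eqref{eq:Mu}--\eqref{eq:mde} and \eqref{eq:chiralM}), the cutoff symmetries $\mathbf{1}_\delta^\sigma(\bar w, w') = \mathbf{1}_\delta^\sigma(w, w')$ and $\mathbf{1}_\delta^\sigma(w, -w') = \mathbf{1}_\delta^{-\sigma}(w, w')$ coming from \eqref{eq:case regulation}, and the sign rules $\mathfrak{s}_{\bar w', \bar w} = \mathfrak{s}_{w,w'}$ and $\mathfrak{s}_{w, \bar w'} = \mathfrak{s}_{\bar w, w'} = \mathfrak{s}_{-w, w'} = \mathfrak{s}_{w, -w'} = -\mathfrak{s}_{w,w'}$. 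For \eqref{eq:regularbar2} I substitute $\bar w'$ for $w'$ in \eqref{eq:reg A1A2}: the inner spectral parameter becomes $\Re w' - \tau \I \Im w'$ and the relative sign flips, so re-indexing $\tau \mapsto -\tau$ and using $\mathbf{1}_\delta^\sigma(w, \bar w') = \mathbf{1}_\delta^\sigma(w, w')$ reproduces \eqref{eq:reg A1A2} verbatim. For the three identities in \eqref{eq:regularbasic} I will take the adjoint, resp.\ left- or right-multiply \eqref{eq:reg A1A2} by $E_-$, push the operation through using the facts above, and match the resulting terms one-to-one with the claimed right-hand side after a re-indexing of $\tau$ (and, where necessary, \eqref{eq:regularbar2} to realign the second spectral parameter); this step is routine but bookkeeping-heavy.

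For the fixed-sign perturbative estimates \eqref{eq:regularperturb1}--\eqref{eq:regularperturb2}, the hypotheses $\Im w_1 \, \Im w_2 > 0$ and $\Im w_1' \, \Im w_2' > 0$ keep $\mathfrak{s}$ constant, so no sign in \eqref{eq:reg A1A2} jumps and it suffices to show that each of the (at most two) scalar coefficients $\mathbf{1}_\delta^{\tau\mathfrak{s}}(w,w') \langle M(w) A M(\Re w' + \tau \I \Im w') E_{\tau\mathfrak{s}} \rangle / \langle M(w) E_{\tau\mathfrak{s}} M(\Re w' + \tau \I \Im w') E_{\tau\mathfrak{s}} \rangle$ is Lipschitz in $(w,w')$, uniformly for $\Re w, \Re w' \in \mathbf{B}_\kappa$, with constant depending only on $\kappa, \delta, \Vert \Defo \Vert$. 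This will follow from three inputs: $w \mapsto M(w)$ is bounded by \eqref{eq:Mboundedmain} and Lipschitz for $\Re w \in \mathbf{B}_\kappa$ (Appendix~\ref{sec:A1}); $\phi_\delta$ is smooth; and the denominators are bounded away from zero on the support of $\mathbf{1}_\delta^\pm$ (Remark~\ref{rmk:regular}(iii), proved in Appendix~\ref{app:stabop}), while off that support the term vanishes. The truncation $\wedge 1$ is simply the uniform boundedness of the coefficients. Varying $w_1$, resp.\ $w_1'$, then changes every coefficient by $\mathcal{O}(|w_1 - w_2| \wedge 1)$, resp.\ $\mathcal{O}(|w_1' - w_2'| \wedge 1)$, and both $E_+$ and $E_-$ directions may be affected.

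The remaining estimate \eqref{eq:regularbar1} is the only genuinely nonperturbative point, and I expect it to be the main obstacle: passing from $w$ to $\bar w$ flips $\Im w$, hence $\mathfrak{s}$, and the naive Lipschitz bound would give only $\mathcal{O}(|\Im w| + |\Im w'|)$, which is too weak. Expanding $\mathring{A}^{\bar w, w'}$ and $\mathring{A}^{w,w'}$ via \eqref{eq:reg A1A2} and using $\mathbf{1}_\delta^\sigma(\bar w, w') = \mathbf{1}_\delta^\sigma(w, w')$, one finds that the $E_\mathfrak{s}$-component of the difference is $-\mathbf{1}_\delta^{\mathfrak{s}}(w,w') \big[ c_\mathfrak{s}(\bar w, \bar w') - c_\mathfrak{s}(w, w') \big]$ and the $E_{-\mathfrak{s}}$-component is $-\mathbf{1}_\delta^{-\mathfrak{s}}(w,w') \big[ c_{-\mathfrak{s}}(\bar w, w') - c_{-\mathfrak{s}}(w, \bar w') \big]$, where $c_\sigma(u, u') := \langle M(u) A M(u') E_\sigma \rangle / \langle M(u) E_\sigma M(u') E_\sigma \rangle$. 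The structural input I will exploit is that $M(w)$ is a \emph{normal} matrix --- immediate from the block form \eqref{eq:Mu}--\eqref{eq:mde}, e.g.\ by diagonalising via the singular value decomposition of $\Defo$ into a direct sum of $2 \times 2$ blocks $\left(\begin{smallmatrix} \beta d & \sigma d \\ \sigma d & \beta d \end{smallmatrix}\right)$ with real $\sigma$ --- so that $[M(w), M(w)^*] = 0$. Combined with $M(\bar w) = M(w)^*$, the chiral symmetry \eqref{eq:chiralM}, and cyclicity, this forces $c_\mathfrak{s}(\bar w, \bar w') - c_\mathfrak{s}(w, w')$ to vanish identically on the degenerate configuration $\{w = \mathfrak{s}\bar w'\}$ (there the numerators collapse to normalised traces against the commutator $[M(w), M(w)^*]$) and $c_{-\mathfrak{s}}(\bar w, w') - c_{-\mathfrak{s}}(w, \bar w')$ to vanish on $\{w = -\mathfrak{s}w'\}$ (traces against $[M(w'), M(w')^*]$). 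Since, on the support of the relevant cutoff, both differences are Lipschitz in $w$ with bounded-below denominators (again Remark~\ref{rmk:regular}(iii) and Appendix~\ref{sec:A1}), this exact vanishing upgrades to $|c_\mathfrak{s}(\bar w, \bar w') - c_\mathfrak{s}(w, w')| \lesssim |w - \mathfrak{s}\bar w'| \wedge 1$ and $|c_{-\mathfrak{s}}(\bar w, w') - c_{-\mathfrak{s}}(w, \bar w')| \lesssim |w + \mathfrak{s}w'| \wedge 1$, which is exactly \eqref{eq:regularbar1}. Everything else is elementary, modulo the boundedness-away-from-zero of the denominators, which is the content of the stability-operator analysis of Appendix~\ref{app:stabop}.
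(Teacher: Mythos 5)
Your proposal is correct and follows essentially the same route as the paper, which disposes of the algebraic identities in part~(i) and \eqref{eq:regularbar2} as immediate from the definition and proves the perturbative bounds \eqref{eq:regularbar1}, \eqref{eq:regularperturb1}, \eqref{eq:regularperturb2} by combining chiral symmetry, the commutativity (normality) of $M$ encoded in Lemma~\ref{lem:Mbasic}~(a), the Lipschitz estimate of Lemma~\ref{lem:Mbasic}~(b), and the denominator bounds of Lemma~\ref{lem:boundedpert}~(a). You correctly identify and flesh out the one non-routine point that the paper compresses into a single sentence, namely that for \eqref{eq:regularbar1} the coefficient differences vanish identically on the critical configurations because the numerator differences reduce (via cyclicity and \eqref{eq:chiralM}) to traces against the commutator $[M(w),M(w)^{*}]=0$, after which the Lipschitz bound on the relevant half-planes upgrades this to the quantitative estimate.
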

%The estimate \eqref{eq:regularperturb1} (and its analogue \eqref{eq:regularperturb2}) looks somewhat involved due to the characteristic functions, but it expresses how much $\mathring{A}^{w_1, w_1'}$ changes if $w_1$ is replaced with $w_2$. First, there is no change	if $\mathbf{1}_\delta^\pm (w_2, w_1') =0$ since in this case both regularisations are trivial.	If one or both of them are nonzero, then we have one or two additional terms in the direction of $E_-$ or $E_+$.	The coefficient is of order $|w_1-w_2|$ if $\Re w_1$ is close to $\pm \Re w_1'$, otherwise it is of order one. 

We can now state the bound on \eqref{eq:2Gav} for regular observables,
which shall be proven in Section \ref{sec:prooflocallaw} as an immediate corollary to a local \cred{law} for \eqref{eq:2Gav} given in Theorem \ref{thm:multiGll} and the bound from Lemma~\ref{lem:Mbound}.
\begin{proposition} \label{prop:2Gav} Fix a bounded deterministic $\Defo \in \C^{N \times N}$,
	$\epsilon >0 $, $\kappa > 0$, and let $w_1, w_2 \in \C$ with $|w_1|, |w_2| \le N^{100}$, $\Re w_1, \Re w_2 \in \mathbf{B}_\kappa$,  and $|\Im w_1|, |\Im w_2| \ge N^{-1+\epsilon}$. Moreover, let $A_1 \in \C^{2N \times 2N}$ be a \emph{$(w_1, w_2)$-regular} %(i.e.~$A_1^{\circ^z_{w_1, w_2}} = A_1$, recall \eqref{eq:reg A1A2}) 
	and $A_2 \in \C^{2N \times 2N}$ a \emph{$(w_2, w_1)$-regular}
	%(i.e.~$A_2^{\circ^z_{w_2, w_1}} = A_2$) 
	deterministic matrix, both satisfying $\Vert A_1 \Vert , \Vert A_2 \Vert \lesssim 1$.
	Then we have
	\begin{equation} \label{eq:2Gbound}
		\big|  \big\langle G(w_1) \mathring{A}_1^{{w_1, w_2}} G(w_2) \mathring{A}_2^{{w_2, w_1}}\big\rangle  \big| \prec 1 \,.
	\end{equation}
\end{proposition}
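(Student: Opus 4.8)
The plan is to deduce the bound \eqref{eq:2Gbound} from a more precise \emph{two-resolvent local law} for the chain $\langle G(w_1) A_1 G(w_2) A_2 \rangle$, stated later as Theorem~\ref{thm:multiGll}. That local law will provide a deterministic approximation $\langle \mathcal{M}(w_1, A_1, w_2) A_2 \rangle$ together with a fluctuation bound; the key qualitative feature --- which I would have to establish in Section~\ref{sec:prooflocallaw} --- is that \emph{both} the size of the deterministic term and the size of the fluctuation obey the $\sqrt{\eta}$-rule: they gain a factor $\sqrt{\eta}$ (compared with the naive $1/\eta$) for \emph{each} regular observable in the chain. Since here both $A_1$ and $A_2$ are regular (with respect to the appropriate ordered pair of spectral parameters), the chain has \emph{two} regular slots, so one expects a total gain of $\eta$, turning the naive $1/\eta$ into $\mathcal{O}(1)$. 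Concretely, I would show that the deterministic approximation $\mathcal{M}(w_1,\mathring A_1, w_2)$ is bounded in norm by $\mathcal{O}(1)$ whenever $A_1$ is $(w_1,w_2)$-regular --- this is the content of the auxiliary bound invoked as Lemma~\ref{lem:Mbound} --- and that the fluctuation $\langle (G(w_1)\mathring A_1 G(w_2) - \mathcal{M})\mathring A_2 \rangle$ is $\prec 1/(N\eta)\cdot \eta^{-1/2+?}$, in any case $\prec 1$ for $\eta \ge N^{-1+\epsilon}$. Combining the two gives \eqref{eq:2Gbound}.

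The structural reason the $\sqrt\eta$-rule holds is precisely the decomposition \eqref{eq:circ}: the operator $A \mapsto \langle G(w)A G(w')\rangle$, viewed through the Dyson equation, is governed by the two-body stability operator $\mathcal{B} = 1 - M(\cdot)\mathcal{S}[\cdot]M(\cdot)$, whose small (``critical'') eigenvalue is of order $\eta$ and whose critical eigendirection is (essentially) $V_\pm$. When $A$ is regular, i.e.~$\langle V_\pm, A\rangle = 0$, the dangerous near-singular direction of $\mathcal{B}^{-1}$ is annihilated, and one is left with the non-critical part of the spectrum of $\mathcal{B}$, which is bounded below by a constant; this is what converts the $1/\eta$ blow-up into $\mathcal{O}(1)$. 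Making this rigorous requires (a) the bulk bound $\Vert M(w)\Vert \lesssim 1$ from \eqref{eq:Mboundedmain}, (b) the explicit identification of $V_\pm$ as critical eigenvectors of $\mathcal{B}$, worked out in Appendix~\ref{app:motivation} and Appendix~\ref{app:stabop}, and (c) quantitative control of the gap in the spectrum of $\mathcal{B}$ away from $V_\pm$, which in turn relies on the regularity property \eqref{kappabulkreg} of the $\kappa$-bulk. The role of the cutoff functions $\mathbf{1}_\delta^\pm$ is to switch on the subtraction in \eqref{eq:reg A1A2} exactly when the stability operator actually has a critical eigenvalue (i.e.~when $\Re w \approx \pm\Re w'$ and both imaginary parts are small); away from this regime $\mathcal{B}$ is uniformly invertible, every $A$ counts as regular, and \eqref{eq:2Gbound} is a comparatively soft consequence of the single-resolvent local law (Theorem~\ref{thm:singleG}), a Schwarz inequality $|\langle G A_1 G A_2\rangle| \le \langle G\Im G\, A_1 A_1^*\rangle^{1/2}\langle \ldots\rangle^{1/2}/\eta$, and the Ward identity $G^*G = \Im G/\eta$ combined with the bound $\langle \Im G \rangle \lesssim 1$ in the bulk.

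There is one routine but necessary reduction at the start: Proposition~\ref{prop:2Gav} is stated for $A_1$ that is $(w_1,w_2)$-regular and $A_2$ that is $(w_2,w_1)$-regular, and it is really a statement about the particular regularised matrices $\mathring A_1^{w_1,w_2}$ and $\mathring A_2^{w_2,w_1}$; so there is nothing to reduce, but one does use the boundedness of the regularisation map (Remark~\ref{rmk:regular}(iii)) to know $\Vert \mathring A_j\Vert \lesssim 1$, and one uses the perturbative stability estimates of Lemma~\ref{lem:regularbasic} to pass freely between the various nearby spectral parameters that appear when one unfolds the resolvent chain (e.g.~after applying the resolvent identity \eqref{eq:resolventid} to the $E_\pm$-parts). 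Another small point is that the estimate must hold \emph{uniformly} for all admissible $w_1,w_2$ and, via the standard $\prec$ machinery together with a union bound over a fine grid plus Lipschitz continuity in $w$, this uniformity upgrades to a simultaneous statement --- this is what later allows taking $w_2 = \mu$ an eigenvalue.

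\textbf{Main obstacle.} The genuinely hard part is \emph{not} the deduction above but the underlying two-resolvent local law Theorem~\ref{thm:multiGll}: proving that the fluctuation in \eqref{eq:2Gav} is $\prec 1$ for $\eta$ as small as $N^{-1+\epsilon}$ when both observables are regular. This requires the full machinery --- a system of coupled \emph{master inequalities} for $\langle G\mathring A G\mathring A\rangle$, $\langle G \mathring A G\rangle$ and isotropic analogues, closed by \emph{reduction inequalities}, and bootstrapped down in $\eta$ --- where, crucially and unlike the Wigner case, the regularisation \eqref{eq:circ} depends on the spectral parameters, so that differentiating the chain in the cumulant expansion produces chains whose observables are regular with respect to \emph{shifted} parameters; controlling the mismatch is exactly where Lemma~\ref{lem:regularbasic}'s perturbative identities and the $\kappa$-bulk regularity \eqref{kappabulkreg} are indispensable. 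I expect the estimate tracking how the ``wrong-parameter'' regularisation differs from the ``right'' one --- and showing this difference is harmless because it lies in the $\mathcal{O}(|w-w'|)E_\pm$-corrections --- to be the technical crux.
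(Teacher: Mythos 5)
Your proposal is correct and takes the same route as the paper: Proposition~\ref{prop:2Gav} is obtained by combining the two-resolvent averaged local law (Theorem~\ref{thm:multiGll}) with the bound on the deterministic term (Lemma~\ref{lem:Mbound}, \eqref{eq:Mboundtrace} with $k=2$), exactly as you outline, and you also correctly identify that the substantive work lies in establishing Theorem~\ref{thm:multiGll} via master/reduction inequalities while controlling the mismatch of spectral-parameter-dependent regularisations through Lemma~\ref{lem:regularbasic}. The only minor inaccuracy is the guessed form of the fluctuation error, which the paper gives as $1/\sqrt{N\eta}$ for $\eta\le 1$ rather than $1/(N\eta)\cdot\eta^{-1/2+?}$, but since you only use that it is $\prec 1$ on the domain $\eta\ge N^{-1+\epsilon}$ this does not affect the argument.
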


\subsection{Estimating \eqref{eq:2Gav} for general observables}
\label{sec:general}
Armed with the correct regularisation, we can now present a systematic analysis of
$\langle G(w_1) A_1 G(w_2) A_2\rangle$
from~\eqref{eq:2Gav} for \emph{arbitrary} bounded deterministic
matrices $A_1, A_2$.  Decomposing $A_1, A_2$ according to Definition \ref{def:reg obs1} as
\begin{equation} \label{eq:A1A2decomp}
	\begin{split}
		A_1 &= \mathring{A}_1^{w_1, w_2} + \langle \! \langle A_1 \rangle \! \rangle^+_{w_1, w_2} E_+ + \langle \! \langle A_1 \rangle \! \rangle^-_{w_1, w_2} E_-\,, \\[1mm]
		A_2 &= \mathring{A}_2^{w_2, w_1} + \langle \! \langle A_2 \rangle \! \rangle^+_{w_2, w_1} E_+ + \langle \! \langle A_2 \rangle \! \rangle^-_{w_2, w_1} E_-\,,
	\end{split}
\end{equation}
(where $\langle \! \langle \cdot \rangle \!\rangle^\sigma_{w,w'}$ can be read off as the coefficients in \eqref{eq:reg A1A2}) and plugging \eqref{eq:A1A2decomp} into \eqref{eq:2Gav}, we find that
\begin{equation} \label{eq:correlator}
	\begin{split}
		\big\langle G(w_1)A_1 G(w_2) A_2 \big\rangle = &\sum_{\sigma, \tau} \langle \! \langle A_1 \rangle \! \rangle^\sigma_{w_1, w_2} \langle \! \langle A_2 \rangle \! \rangle^\tau_{w_2, w_1}\big\langle G(w_1)E_\sigma G(w_2) E_\tau \big\rangle \\
		&+ \sum_\sigma \langle \! \langle A_1 \rangle \! \rangle^\sigma_{w_1, w_2} \big\langle G(w_1)E_\sigma G(w_2) \mathring{A}_2^{w_2, w_1} \big\rangle \\
		&+ \sum_\tau \langle \! \langle A_2 \rangle \! \rangle^\tau_{w_2, w_1} \big\langle G(w_1)\mathring{A}_1^{w_1, w_2}  G(w_2)E_\tau \big\rangle \\
		&+ \big\langle G(w_1) \mathring{A}_1^{{w_1, w_2}} G(w_2) \mathring{A}_2^{{w_2, w_1}}\big\rangle\,.
	\end{split}
\end{equation}

Which terms
in~\eqref{eq:correlator} are effectively present depends on the coefficients $\langle \! \langle \cdot \rangle \!\rangle^\sigma_{w,w'}$, i.e.
on the singular components of $A_1, A_2$. For terms with nonzero coefficients the following rule of thumb applies.
Denoting $\eta := \min \big( |\Im w_1|, |\Im w_2| \big)\ge N^{-1+\epsilon}$, the
terms $\langle GEGE\rangle$ in the first line of \eqref{eq:correlator} are bounded by $1/\eta$,
the terms $\langle GEG\mathring{A}\rangle$ in the two middle lines of \eqref{eq:correlator}
are bounded by $1/\sqrt{\eta}$,
and $\langle G\mathring{A}G\mathring{A}\rangle$ in the last line is of order one (Proposition~\ref{prop:2Gav}).
This is in perfect agreement with the \emph{$\sqrt{\eta}$-rule}
mentioned in the Introduction (see also Remark \ref{rmk:sqrteta} below).
Some of these bounds are actually sharp for special values of $w_1, w_2$, for example
$$
	\langle G(w) E_+ G(\bar w) E_+ \rangle = \frac{\langle\Im G(w)\rangle}{\eta}\sim\frac{1}{\eta}, \qquad
	\mbox{or}\quad \langle G(w) E_- G(-\bar w) E_- \rangle
	= - \frac{\langle \Im G(w)\rangle}{\eta},
$$
where we used the chiral symmetry~\eqref{eq:chiral}. In fact, two terms with  $\sigma\tau=-1$ in the first
line of~\eqref{eq:correlator} are identically zero by applying the chiral symmetry, followed by the resolvent
identity and  $\langle G E_-\rangle=0$.
For a middle term in~\eqref{eq:correlator} we have
$$
	\langle G(w) E_+ G(\bar w)  \mathring{A}^{{\bar w, w}} \rangle  = \frac{1}{\eta}\langle \Im G(w)  \mathring{A}^{{\bar w, w}}\rangle
	\lesssim 1+  \frac{1}{N\eta}\frac{1}{\sqrt{\eta}}.
$$
In the very last relation we treated $\langle G(w)  \mathring{A}^{{\bar w, w}}\rangle $ and
$\langle G(\bar w)  \mathring{A}^{{\bar w, w}}\rangle $ separately. In both cases we first used Lemma \ref{lem:regularbasic} to
adjust the regularisation to  $\mathring{A}^{{w, w}}$ and $\mathring{A}^{{\bar w, \bar w}}$, respectively, to match
the new single-resolvent setup and then we applied the corresponding single-resolvent local law with regular observable
(see Theorem~\ref{thm:singleGopt} below).

Note that the most critical estimate concerns
the last line of~\eqref{eq:correlator}, i.e.~the regular part for both observable matrices.
The bound \eqref{eq:2Gbound} is obtained from a local law with \emph{two} resolvents and \emph{two} regular matrices, while the first and
the middle terms in~\eqref{eq:correlator} can be understood already from an improved local law for \emph{one} resolvent and
\emph{one} regular matrix
(see Theorem~\ref{thm:singleGopt} below) after applying resolvent identities and adjusting the
regularisation by Lemma \ref{lem:regularbasic}.
Furthermore, observe that
the sizes of the first three lines  in~\eqref{eq:correlator} are sensitive to $w_1, w_2$ via the usual resolvent identity (see \eqref{eq:resolid} below) and the chiral symmetry \eqref{eq:chiral}, for example
\begin{equation*}
\begin{split}
	&|\langle G(w_1) E_+ G(w_2) E_+ \rangle| = \left| \frac{\langle G(w_1)- G(w_2) \rangle}{w_1-w_2} \right| 
\lesssim \frac{1}{|w_1-w_2|}\,, \\
\quad \mbox{or}\quad  &|\langle G(w_1) E_- G(w_2) E_- \rangle | = |\langle G(w_1) G(-w_2) \rangle |\lesssim \frac{1}{|w_1+w_2|},
\end{split}
\end{equation*}
while the last line in~\eqref{eq:correlator} is typically order one.

Summarizing, the singular parts of  $\langle G(w_1) A_1 G(w_2) A_2\rangle$
can be explicitly computed (using single-resolvent local laws) as explicit functions of $w_1, w_2$,
while the regular part remains of order one. A combination of our  decomposition~\eqref{eq:reg A1A2},
the perturbation formulas from Lemma~\ref{lem:regularbasic}, and our single- and two-resolvent
local laws  together
with their explicit deterministic terms from the subsequent Section~\ref{sec:prooflocallaw}
provide an effective recipe to compute $\langle G(w_1) A_1 G(w_2) A_2\rangle$  with high precision
in all cases. We refrain from formulating it as a comprehensive theorem due to the large number of cases.

\subsection{Proof of the main results} \label{subsec:proofmain}
We will first focus on the proof of Theorem \ref{thm:main} and turn to the proofs of Theorem \ref{thm:main singvec}  and Theorem \ref{thm:overlap} afterwards.

\subsubsection{Proof of Theorem \ref{thm:main}} As a first step towards the proof of Theorem \ref{thm:main}, we show that \eqref{eq:mainthm} indeed follows from a bound similar to \eqref{eq:2Gbound}, where $G$ is replaced by $\Im G$. The proof of the following simple lemma is given after completion of the proof of Theorem \ref{thm:main}.
\begin{lemma} \label{lem:overlap2G}
	Fix a bounded deterministic $\Defo \in \C^{N \times N}$,
	$\epsilon >0 $, $\kappa > 0$,  and let $B \in \C^{2N \times 2N}$. Then, for any bulk indices $|i|, |j| \le N$, i.e.~with $\gamma_i, \gamma_j \in \mathbf{B}_\kappa$, and $\eta \ge N^{-1+\epsilon}$, we have
	\begin{equation} \label{eq:overlap2G}
		N\left| \langle \boldsymbol{w}_i, B \boldsymbol{w}_j \rangle\right|^2 \prec  (N \eta)^2 \langle \Im G(\gamma_i+ \I \eta) B \Im G(\gamma_j+ 2\I \eta) B^* \rangle\,.
	\end{equation}
\end{lemma}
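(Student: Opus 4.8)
The plan is to use the spectral decomposition of the two factors of $\Im G$ together with the bound $\Im G(\gamma+\I\eta)=\eta\,G(\gamma+\I\eta)G(\gamma+\I\eta)^*\ge 0$ in the sense of positive semidefinite matrices. Concretely, writing $\Im G(\gamma_i+\I\eta)=\sum_k \frac{\eta}{(\lambda_k-\gamma_i)^2+\eta^2}|\boldsymbol{w}_k\rangle\langle\boldsymbol{w}_k|$ and similarly for $\gamma_j+2\I\eta$, the right-hand side of \eqref{eq:overlap2G} is a double sum over $k,l$ of the manifestly nonnegative weights
\begin{equation*}
	\frac{\eta}{(\lambda_k-\gamma_i)^2+\eta^2}\;\frac{2\eta}{(\lambda_l-\gamma_j)^2+4\eta^2}\;\bigl|\langle \boldsymbol{w}_k, B\boldsymbol{w}_l\rangle\bigr|^2\,,
\end{equation*}
so dropping all terms except $k=i$, $l=j$ gives a lower bound. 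For those diagonal terms the Lorentzian weights are $\asymp 1/\eta$, hence $\langle \Im G(\gamma_i+\I\eta) B \Im G(\gamma_j+2\I\eta) B^*\rangle \gtrsim \eta^{-2} N^{-2}\,|\langle \boldsymbol{w}_i, B\boldsymbol{w}_j\rangle|^2$ (the extra $N^{-2}$ coming from the normalised trace $\langle\cdot\rangle=\tfrac{1}{2N}\Tr$ applied to a rank-one-ish object), which upon rearranging is exactly \eqref{eq:overlap2G} up to constants, and the $\prec$ absorbs them.

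The one genuine input needed is that the quantiles $\gamma_i,\gamma_j$ actually sit within distance $\lesssim\eta$ (indeed $\lesssim N^{-1+\epsilon}$ up to the $N^\xi$ tolerance hidden in $\prec$) of the corresponding eigenvalues $\lambda_i,\lambda_j$. This is the standard rigidity estimate for the eigenvalues of $H$ in the bulk of the scDos $\rho$, which follows from the single resolvent local law of Theorem~\ref{thm:singleG} by the usual argument (integrating the local law against a smooth approximation of the counting function, or Helffer–Sjöstrand); since $\gamma_i\in\mathbf{B}_\kappa$ the local density is bounded below by $\kappa^{1/3}$, so rigidity holds on scale $N^{-1+\xi}$ with very high probability. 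Given this, $(\lambda_i-\gamma_i)^2+\eta^2 \prec \eta^2$ and likewise for $j$ (using $\eta\ge N^{-1+\epsilon}$ to dominate the $N^{-1+\xi}$ rigidity fluctuation after adjusting $\epsilon$), so the diagonal weights are $\succ 1/\eta$ and the lower bound goes through.

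I would carry this out in three short steps: (1) record the positive semidefinite lower bound obtained by keeping only the $k=i,l=j$ terms in the spectral double sum; (2) invoke bulk rigidity (a consequence of Theorem~\ref{thm:singleG}) to control $|\lambda_i-\gamma_i|,|\lambda_j-\gamma_j|\prec N^{-1+\xi}\le\eta$, so that the two Lorentzian factors are each $\succ 1/\eta$; (3) combine and rearrange, noting that the normalised trace contributes the factor $N$ on the left of \eqref{eq:overlap2G}. The only place one must be slightly careful is the interplay of the $N^\xi$ in $\prec$ with the rigidity scale — but since the claim itself is stated with $\prec$ and with $\eta\ge N^{-1+\epsilon}$, there is room, and no real obstacle arises. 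In short, Lemma~\ref{lem:overlap2G} is essentially a positivity-plus-rigidity observation and its proof is short.
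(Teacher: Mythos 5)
Your proposal takes the paper's own route: spectral decomposition of both $\Im G$ factors, retaining only the $(k,l)=(i,j)$ term in the double sum, and invoking bulk rigidity $|\lambda_i-\gamma_i|\prec N^{-1}$ (a consequence of Theorem~\ref{thm:singleG}) to bound the two Lorentzian weights below by $\sim 1/\eta$. The only slip is the parenthetical claim that the normalised trace contributes a factor $N^{-2}$; it is $\tfrac{1}{2N}$ applied to a rank-one trace, giving $\langle\Im G\,B\,\Im G\,B^*\rangle\succ|\langle\boldsymbol{w}_i,B\boldsymbol{w}_j\rangle|^2/(N\eta^2)$ --- which your step (3) already records correctly --- and the argument closes.
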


The same bound still holds without the factor of two {in the argument of the second resolvent} in \eqref{eq:overlap2G}. However, we chose to have it, in order to ensure that the spectral parameters of the two resolvents are always forced to be different.

\begin{proof}[Proof of Theorem \ref{thm:main}]
	Having Lemma \ref{lem:overlap2G} at hand, we are left with estimating the rhs.~of \eqref{eq:overlap2G} for
	\begin{equation} \label{eq:B}
		B = A - \frac{\langle \Im M(\gamma_j) A \rangle }{\langle \Im M(\gamma_j)\rangle } E_+ - \frac{\langle \Im M(\gamma_j) E_- A \rangle }{\langle \Im M(\gamma_j) \rangle } E_-
	\end{equation}
	using Proposition \ref{prop:2Gav}. Note that the two terms in \eqref{eq:mainthm} carrying a $\delta$-symbol arise from the orthogonality relations $\langle \boldsymbol{w}_i, E_\pm \boldsymbol{w}_j \rangle = \delta_{j, \pm i}$, following from the spectral symmetry described around \eqref{eq:chiral}.

	We now write out $\Im G(w) = (G(w) - G(\bar{w}))/(2\I)$, such that \eqref{eq:overlap2G} leaves us with four different terms, each of which can be bounded individually. Since their treatment is completely analogous, we focus on the exemplary term
	\begin{equation} \label{eq:exemp term}
		\langle G(\gamma_i+ \I \eta) B G(\gamma_j - 2\I \eta) B^* \rangle
	\end{equation}
	with the deterministic matrix $B$ being defined in \eqref{eq:B}. We rely on the following simple perturbative lemma, which follows from Lemma \ref{lem:regularbasic} by invoking Lemma~\ref{lem:Mbasic}.

	%Note that \eqref{eq:exemp term} being bounded by one trivially follows from Proposition \ref{prop:2Gav} if $\mathbf{1}_\delta^\sigma(\cdot, \cdot) = 0$ (recall \eqref{eq:case regulation}) and we can henceforth assume that $\mathbf{1}_\delta(\cdot, \cdot) = 1$. 
	\begin{lemma} \label{lem:perturb mainthm}Using the notation introduced in \eqref{eq:reg A1A2}, the matrix $B \in \C^{2N \times 2N}$ from \eqref{eq:B} satisfies
		\begin{equation} \label{eq:Bperturb}
			\begin{split}
				B &= \mathring{A}^{{\gamma_i + \I \eta, \gamma_j - 2\I \eta}} + \mathcal{O}\big(|\gamma_i - \gamma_j|+ \eta\big) E_+ + \mathcal{O}\big(|\gamma_i + \gamma_j| + \eta\big) E_-\,, \\
				B^* &= \mathring{(A^*)}^{{\gamma_j - 2 \I \eta, \gamma_i + \I \eta}} + \mathcal{O}\big(|\gamma_i - \gamma_j| + \eta\big) E_+ + \mathcal{O}\big(|\gamma_i + \gamma_j|+ \eta\big) E_-\,.
			\end{split}
		\end{equation}
	\end{lemma}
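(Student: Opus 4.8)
The plan is to prove \eqref{eq:Bperturb} by comparing, coefficient by coefficient, the two decompositions of $B$ in the ``basis'' $\{A, E_+, E_-\}$. Writing $w_1 := \gamma_i + \I\eta$ and $w_2 := \gamma_j - 2\I\eta$ one has $\mathfrak{s}_{w_1,w_2} = -\sgn(\Im w_1 \Im w_2) = +1$, so by \eqref{eq:reg A1A2}
\[
\mathring{A}^{w_1,w_2} = A - \mathbf{1}_\delta^{+}(w_1,w_2)\,\tilde{c}_+\, E_+ - \mathbf{1}_\delta^{-}(w_1,w_2)\,\tilde{c}_-\, E_-\,,
\]
with $\tilde{c}_+ := \langle M(w_1) A M(w_2)\rangle / \langle M(w_1) M(w_2)\rangle$ and $\tilde{c}_- := \langle M(w_1) A M(\bar{w}_2) E_-\rangle / \langle M(w_1) E_- M(\bar{w}_2) E_-\rangle$; on the other hand $B = A - c_+ E_+ - c_- E_-$ with $c_\pm$ the coefficients read off from \eqref{eq:B}. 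Hence it suffices to show $|\mathbf{1}_\delta^{+}(w_1,w_2)\tilde{c}_+ - c_+| \lesssim |\gamma_i - \gamma_j| + \eta$ and $|\mathbf{1}_\delta^{-}(w_1,w_2)\tilde{c}_- - c_-| \lesssim |\gamma_i + \gamma_j| + \eta$, which gives the first line of \eqref{eq:Bperturb}; the second line then follows from it by applying the adjoint identity in \eqref{eq:regularbasic} together with \eqref{eq:regularbar2}--\eqref{eq:regularbar1} of Lemma~\ref{lem:regularbasic} (using $E_\pm^* = E_\pm$ and $|w_1 - \bar{w}_2| \le |\gamma_i - \gamma_j| + \eta$, $|w_1 + w_2| \le |\gamma_i + \gamma_j| + \eta$).

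First I would dispose of the trivial regime. By the bulk bound \eqref{eq:Mboundedmain} and $\langle \Im M(\gamma_j)\rangle = \pi \rho(\gamma_j) \ge \pi\kappa^{1/3}$ for $\gamma_j \in \mathbf{B}_\kappa$, the coefficients $c_\pm$ are bounded; by Remark~\ref{rmk:regular}(iii) so are $\tilde{c}_\pm$ whenever the relevant cutoff is positive. If $\mathbf{1}_\delta^{+}(w_1,w_2) < 1$, then by \eqref{eq:case regulation} and the support of $\phi_\delta$ either $\eta \gtrsim \delta$ or $|\gamma_i - \gamma_j| \gtrsim \delta$, so the target bound is $\gtrsim_\delta 1$ and holds trivially; similarly when $\mathbf{1}_\delta^{-}(w_1,w_2) < 1$. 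It remains to treat $\mathbf{1}_\delta^{+}(w_1,w_2) = 1$ (which, once $\eta \le \delta/4$, forces $|\gamma_i - \gamma_j| \le \delta/2$, so that $\gamma_i, \gamma_j$ lie in a single component of $\mathbf{B}_\kappa$ for $\delta$ small, by \eqref{kappabulkreg}) and, analogously, $\mathbf{1}_\delta^{-}(w_1,w_2) = 1$; in both cases $\eta$ is small.

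In this main regime I would invoke the continuity of $M$ up to the real axis in the bulk (Lemma~\ref{lem:Mbasic}): on a single component of $\mathbf{B}_\kappa$, $\rho$ is bounded below and real-analytic, hence $M$ extends there Lipschitz continuously with a $\kappa$-dependent constant, and $M(\bar{w}) = M(w)^*$. Set $M_\pm := M(\gamma_j \pm \I 0)$, so $M_- = M_+^*$. Replacing $M(w_1)$ by $M_+$ and $M(w_2)$ by $M_-$ in $\tilde{c}_+$ costs $\mathcal{O}(|\gamma_i - \gamma_j| + \eta)$, the denominator — being within $\mathcal{O}(|\gamma_i-\gamma_j|+\eta)$ of $\langle M_+ M_-\rangle = 1$ — staying bounded below; thus $\tilde{c}_+ = \langle M_+ A M_-\rangle + \mathcal{O}(|\gamma_i - \gamma_j| + \eta)$. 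Taking the imaginary part of the MDE \eqref{eq:MDE}, using that $\mathcal{S}$ commutes with $\Im(\cdot)$ and $\langle \Im M(w) E_-\rangle = 0$ (since $\langle M(w) E_-\rangle = 0$), yields the Ward identity $\Im M(w) = (\Im w + \langle \Im M(w)\rangle)\, M(w) M(w)^* = (\Im w + \langle \Im M(w)\rangle)\, M(w)^* M(w)$, whence at the boundary $\Im M_+ = \langle \Im M_+\rangle M_+ M_- = \langle \Im M_+\rangle M_- M_+$ (so in particular $M_+$ is normal and $\langle M_+ M_-\rangle = 1$ after normalising). Consequently $\langle M_+ A M_-\rangle = \langle M_- M_+ A\rangle = \langle \Im M_+ A\rangle / \langle \Im M_+\rangle = c_+$, settling the $E_+$-estimate. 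For the $E_-$-estimate, the cutoff $\mathbf{1}_\delta^{-}(w_1,w_2) = 1$ forces $\gamma_i$ close to $-\gamma_j$; using the chiral symmetry \eqref{eq:chiralM} in the form $M(-\gamma_j + \I 0) = -E_- M_+^* E_-$, the same continuity step gives $\tilde{c}_- = \langle(-E_- M_+^* E_-) A M_+ E_-\rangle / \langle(-E_- M_+^* E_-) E_- M_+ E_-\rangle + \mathcal{O}(|\gamma_i + \gamma_j| + \eta)$; expanding, the signs cancel, the denominator equals $\langle M_+^* M_+\rangle = 1$, and the numerator equals $\langle M_+ M_+^* E_- A\rangle = \langle \Im M_+ E_- A\rangle / \langle \Im M_+\rangle = c_-$, as needed.

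The one genuinely non-routine point is this last identification: the regularisation coefficients in \eqref{eq:reg A1A2} are built from products $M(\cdot)\,(\cdot)\,M(\cdot)$ of two deterministic factors sandwiching the observable (as dictated by the two-resolvent variance computation in Appendix~\ref{app:motivation}), whereas the coefficients in $B$ are expressed through the single matrix $\Im M(\gamma_j)$; reconciling the two is exactly where the structural MDE identities ($M(\bar{w}) = M(w)^*$, $\langle M(w) E_-\rangle = 0$, the boundary Ward identity, and the chiral symmetry \eqref{eq:chiralM}) are used. The remaining work is bookkeeping: handling the two cutoff cases, keeping track of the boundary sides $\pm\I 0$, and checking — via $\langle M_+ M_-\rangle = 1$ — that all denominators stay uniformly bounded away from zero throughout the limiting procedure.
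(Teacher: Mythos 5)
Your proof is correct, and it fills in exactly the details that the paper collapses into the one-line statement ``follows from Lemma~\ref{lem:regularbasic} by invoking Lemma~\ref{lem:Mbasic}''. The ingredients you use are precisely those two lemmas: Lipschitz continuity of $M$ in the bulk (Lemma~\ref{lem:Mbasic}~(b) together with the continuous extension to the real axis from Appendix~\ref{sec:A1}), the $M$-Ward identity $\Im M = (\Im w + \Im m)\,M^* M$ from \eqref{eq:MWard} with $w_2 = \bar w_1$, the chiral symmetry \eqref{eq:chiralM}, and the adjoint/conjugation identities \eqref{eq:regularbasic}--\eqref{eq:regularbar1} for the second line of \eqref{eq:Bperturb}. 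The key identification you flag as ``non-routine'' — reconciling the $M(\cdot)\,A\,M(\cdot)$-form of the regularisation coefficients with the $\Im M(\gamma_j)$-form appearing in $B$, via the boundary Ward identity $\Im M_\pm = \langle\Im M_\pm\rangle\, M_\mp M_\pm$ and $\langle M_+M_-\rangle = 1$ — is indeed the crux, and you carry it out correctly, including the chiral-symmetry step that brings $M(-\gamma_j + \I 0) = -E_- M_- E_-$ into the $E_-$ coefficient and makes the signs cancel. Your treatment of the cutoff cases (trivial when $\mathbf{1}_\delta^\pm < 1$ since then the claimed error is $\gtrsim_\delta 1$; boundary-limit argument when $\mathbf{1}_\delta^\pm = 1$) is the right structure. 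One small presentational point: you attribute the Ward identity to ``taking the imaginary part of the MDE \eqref{eq:MDE}'', but in the paper it is more directly a consequence of \eqref{eq:MWard} with $w_2 = \bar w_1$, using $M(\bar w) = M(w)^*$; the result is the same, but citing \eqref{eq:MWard} matches the paper's own statement of Lemma~\ref{lem:Mbasic}~(a).
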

	Hence, plugging \eqref{eq:Bperturb} into \eqref{eq:exemp term}, we get a sum of several terms, which can all be estimated separately. For the `leading term', we use Proposition \ref{prop:2Gav} to get that
	\begin{equation*}
		\big\vert \big\langle G(\gamma_i + \I \eta) \mathring{A}^{{\gamma_i + \I \eta, \gamma_j - 2\I \eta}} G(\gamma_j - 2\I \eta) \mathring{(A^*)}^{{\gamma_j - 2\I \eta, \gamma_i + \I \eta}} \big\rangle \big\vert \prec 1\,.
	\end{equation*}

	Two further representative terms are given by
	\begin{equation*} %\label{eq:tworep} 
		\mathcal{O}\big(|\gamma_i \mp \gamma_j| + \eta\big) \, \langle G(\gamma_i + \I \eta) E_\pm G(\gamma_j - 2\I \eta) C \rangle\,,
	\end{equation*}
	where $C \in \C^{2N \times 2N}$ is some generic bounded matrix. Now, by using \eqref{eq:chiral}, these terms can be rewritten as
	\begin{equation*}
		\mathcal{O}\big(|\gamma_i \mp \gamma_j| + \eta\big) \, \langle G(\gamma_i + \I \eta)  G(\pm(\gamma_j - 2\I \eta)) E_\pm C \rangle\,.
	\end{equation*}
	For either sign choice (due to the factor two), we can now employ a simple resolvent identity 
	\begin{equation} \label{eq:resolid}
	G(w_1) G(w_2) = \frac{G(w_1) - G(w_2)}{w_1 - w_2}\,, 
	\end{equation}
	 leaving us with
	 	\begin{equation*}
		\frac{\mathcal{O}\big( |\gamma_i - \gamma_j| + \eta \big)}{(\gamma_i \mp \gamma_j) + (1 \pm2) \I \eta} \langle [G(\gamma_i + \I \eta) - G(\pm (\gamma_j - 2\I \eta))] C \rangle \,,
	\end{equation*}
	which is surely stochastically dominated by one by means of Theorem \ref{thm:singleG}.
	\begin{comment}
	For the second sign choice, we employ the integral representation
	\begin{equation} \label{eq:intrepG^2 easy}
		G^z(\gamma_i^z + \I \eta) G^z(-\gamma_j^z + \I \eta) = \frac{1}{2 \pi \I} \int_{\R} \frac{G^z(x + \I \eta/2)}{\big( x - \gamma_i^z - \I \eta/2 \big)\big( x + \gamma_j^z - \I \eta/2 \big)} \D x\,
	\end{equation}
	which easily follows from residue calculus.
	In order to obtain the desired bound, we first estimate $\langle G^z(x + \I \eta/2) E_- C \rangle$ in the integrand by one {\color{red}[If one does it honestly, the integral has to be split according to $|w| \le N^{100}$. Should we do this?]} with the aid of Theorem \ref{thm:singleG} (see also the discussion below Definition~\ref{def:epsi unif}). Second, we note that
	\begin{equation*}
		\int_{\R} \frac{1}{| x - \gamma_i^z - \I \eta/2 | \, | x + \gamma_j^z - \I \eta/2 |} \D x \prec \frac{1}{|\gamma_i^z + \gamma_j^z| + \eta}\,,
	\end{equation*}
	which, in combination with the error from \eqref{eq:tworep}, proves the integral to contribute only $\mathcal{O}_\prec(1)$.
	\end{comment}
	Thus, collecting all the terms, we find that $\vert \eqref{eq:exemp term}\vert \prec 1$.

	Finally, we choose $\eta = N^{-1+\xi}$ for an arbitrarily small $\xi > 0$, such that Lemma \ref{lem:overlap2G} with $B$ as in \eqref{eq:B} yields Theorem~\ref{thm:main}.
\end{proof}
We conclude with giving a proof of Lemma \ref{lem:overlap2G}.
\begin{proof}[Proof of Lemma \ref{lem:overlap2G}]

	By spectral decomposition we write
	\begin{equation*}
		\begin{split}
			\langle \Im G(\gamma_i + \I \eta) B \Im G(\gamma_j + 2\I \eta) B^* \rangle&=\frac{1}{2N}\sum_{k,l} \frac{2\eta^2| \langle \boldsymbol{w}_k, B \boldsymbol{w}_l \rangle|^2}{[(\lambda_k-\gamma_i)^2+\eta^2][(\lambda_l-\gamma_j)^2+4\eta^2]} \\
			%&\gtrsim \frac{\eta^2| \langle \boldsymbol{w}_i, B \boldsymbol{w}_j \rangle|^2}{N[(\lambda_i-\gamma_i)^2+\eta^2][(\lambda_j-\gamma_j)^2+4\eta^2]} \\
			&\succ \frac{| \langle \boldsymbol{w}_i, B \boldsymbol{w}_j \rangle|^2}{N\eta^2}\,,
		\end{split}
	\end{equation*}
	which proves \eqref{eq:overlap2G}. We point out that in the last inequality we used rigidity of the eigenvalues \cite{firstcorr, slowcorr}:
	\begin{equation}
		\label{eq:rigidity}
		|\lambda_i-\gamma_i|\prec \frac{1}{N}\,,
	\end{equation}
	which holds for bulk indices as a standard consequence of the single-resolvent local law, Theorem~\ref{thm:singleG}.
\end{proof}

\subsubsection{Proof of Theorem \ref{thm:main singvec}}

The bounds in \eqref{eq:uu}, \eqref{eq:vv}, and \eqref{eq:uv} follow from Theorem~\ref{thm:main} by choosing
\[
	A =  \left(\begin{matrix}
			B & 0 \\
			0 & 0
		\end{matrix}\right)\,,  \qquad
	A = \left(\begin{matrix}
			0 & 0 \\
			0 & B
		\end{matrix}\right)\,, \qquad \text{and} \qquad A=\left(\begin{matrix}
			0 & 0 \\
			B & 0
		\end{matrix}\right)\,,
\]
respectively, and invoking \eqref{eq:Mu}--\eqref{eq:mde}. \qed

\subsubsection{Proof of Theorem \ref{thm:overlap}}

By the definition
\[
	H^z:=\left(\begin{matrix}
			0               & X+\Lambda-z \\
			(X+\Lambda-z)^* & 0
		\end{matrix}\right)
\]
it follows that $\mu\in \mathrm{Spec}(X+\Lambda)$ if and only if $\lambda_1^\mu=0$. Here by {$\{\lambda_i^z\}_{i\in [N]}$ we denoted the increasingly ordered non--negative} eigenvalues of $H^z$. We remark that $\Lambda$ is omitted by the notation since it is fixed throughout the proof. In particular, using the bound for products of two resolvents and two regular matrices in \eqref{eq:2Gbound}, we will now prove the lower bound in \eqref{eq:Oiilb} for the overlap of left and right eigenvectors corresponding to eigenvalues $\mu$ which lies in the bulk of the spectrum of $X+\Defo$.

\begin{proof}[Proof of Theorem \ref{thm:overlap}]

	Define
	\[
		F:=\left(
		\begin{matrix}
				0 & 0 \\
				1 & 0
			\end{matrix}
		\right) \in \C^{2N \times 2N}\,,
	\]
	then by \eqref{eq:2Gbound}, for $\eta\ge N^{-1}$, we conclude
	\begin{equation}
		\label{eq:bsup}
		\sup_{z\in \mathrm{bulk}} \langle\Im G^z(\ii\eta)F\Im G^z(\ii\eta)F^*\rangle  \prec 1\,,
	\end{equation}
	where the supremum is taken over the bulk as given in Definition \ref{def:bulknonherm}. {The fact that \eqref{eq:bsup} holds for the supremum over the $z$'s with very high probability follows by a standard grid argument together with the Lipschitz continuity of $z\mapsto \Im G^z$.}
	Here we used that $F$ is regular in the sense of \eqref{eq:reg A1A2}; this immediately follows from the fact that $F$ is (block) off--diagonal and $\Im M(\ii\eta)$ is (block) diagonal (see Lemma \ref{lem:MDE}). We now want to show that if we choose $z=\mu_i$ to be a bulk eigenvalue of $X+\Lambda$ the upper bound \eqref{eq:bsup} implies a lower bound on $\mathcal{O}_{ii}$. To make the notation simpler, from now on we denote $\mu=\mu_i$.

	Consider the non-Hermitian left/right--eigenvectors $ \boldsymbol{l}, \boldsymbol{r}$,
	with corresponding eigenvalue $\mu$, defined as in \eqref{eq:leftrightev}
	and set
	\begin{equation*}
		\mathcal{P}:=
		\left(\begin{matrix}
				\frac{\overline{\boldsymbol{l}} \, \overline{\boldsymbol{l}}^*}{\lVert \boldsymbol{l}\rVert^2} & 0                                                                    \\
				0                                                                                              & \frac{\boldsymbol{r}\boldsymbol{r}^*}{\lVert \boldsymbol{r}\rVert^2}
			\end{matrix}\right)\,.
	\end{equation*}
	Clearly $\mathcal{P}$ is a rank two orthogonal projection whose range  lies in
	the kernel of $H^\mu$, recalling that up to scalar multiples the non-Hermitian eigenvectors
	$\overline{\boldsymbol{l}}, \boldsymbol{r}$ coincide with some singular vectors ${\bm u}, {\bm v}$ of $X+\Lambda-\mu$, respectively,
	forming an eigenvector ${\bm w} = ({\bm u}, {\bm v})$ in the kernel of  $H^\mu$.
	Note that $\mathrm{Ker}(H^\mu)$ has dimension two if $\mu$ is a simple
	eigenvalue, but in general the multiplicity of $\mu$ and the multiplicity of $\lambda_1^\mu=0$ may differ.
	Let $\mathcal{Q}$ be the orthogonal projection onto the  kernel of $H^\mu$,
	then $\mathcal{P}\le \mathcal{Q}$.
	Then, almost surely, by spectral decomposition (and by the spectral symmetry of $H^\mu$)
	\begin{equation*}
		\Im G^\mu(\ii\eta)= \frac{\mathcal{Q}}{\eta} +
		\sum_{i: \lambda_i^\mu\ne 0 }\frac{\eta}{(\lambda_i^\mu)^2+\eta^2}
		\left(\begin{matrix}
				{\bm u}_i^\mu \\
				{\bm v}_i^\mu
			\end{matrix}\right)
		\left(\begin{matrix}
				{\bm u}_i^\mu \\
				{\bm v}_i^\mu
			\end{matrix}\right)^* \ge  \frac{\mathcal{P}}{\eta}\,.
	\end{equation*}
	By \eqref{eq:bsup} we thus obtain
	\begin{equation*}
		1\succ \sup_{z\in \mathrm{bulk}} \langle\Im G^z(\ii\eta)F\Im G^z(\ii\eta)F^*\rangle\succ \frac{1}{\eta^2}\langle \mathcal{P}F\mathcal{P}F^*\rangle =\frac{\big|\langle \overline{\boldsymbol{l}}, \boldsymbol{r}\rangle\big|^2}{N\eta^2\rVert \boldsymbol{r}\rVert^2\Vert \boldsymbol{l}\rVert^2}\,,
	\end{equation*}
	which, by \eqref{eq:orthonorm}, implies
	\begin{equation*}
		\mathcal{O}_{ii}=\rVert \boldsymbol{r}\rVert^2\Vert \boldsymbol{l}\rVert^2\succ \frac{1}{N\eta^2}\,.
	\end{equation*}
	Choosing $\eta=N^{-1+\epsilon/2}$, this concludes the proof.

	%=\frac{\big| \langle {\bf u},{\bf v}\rangle\big|^2}{N\eta^2}

\end{proof}

\section{Local laws with regular observables} \label{sec:prooflocallaw}
The goal of the present section is to establish the key Proposition \ref{prop:2Gav} by proving an averaged local law for a product of two resolvents (of the Hermitisation \eqref{eq:herm}) in the bulk of the scDos $\rho$ with \emph{regular} (recall Definition \ref{def:reg obs1} and see Definition~\ref{def:regobs} below) deterministic matrices $A_1, A_2$ in between. Throughout the rest of this paper, %(except for Appendix \ref{app:motivation}, where we perform a motivating general expansion for $\langle G^{z_1}(w_1)A_1 G^{z_2}(w_2)A_2 \rangle$), 
we consider the case of several spectral parameters $w_1, w_2, ...$ and fixed bounded deformations $\Defo_1 = \Defo_2= ... \equiv \Defo \in \C^{N \times N}$, %with $||z|-1| \ge \zeta$ for some $\zeta > 0$ 
which we continue to omit from the notation.

Using the abbreviations $G_i := G(w_i):= G^\Defo(w_i)$ (and analogously for $M_i$), the deterministic approximation to the resolvent chain
\begin{equation*}
	G_1 B_1  \,  \cdots \, B_{k-1} G_{k}
\end{equation*}
for arbitrary deterministic $B_1, ... , B_{k}$\footnote{We will use the the notational convention, that the letter $B$ denotes arbitrary (generic) matrices, while $A$ is reserved for \emph{regular} matrices, in the sense of Definition \ref{def:regobs}. } is denoted by
\begin{equation} \label{eq:Mdef}
	M(w_1, B_1,  ... ,  B_{k-1}, w_{k})
\end{equation}
and defined recursively in the length $k$ of the chain.
\begin{definition} \label{def:Mdef}
	Fix $k \in \N$ and let $w_1, ... , w_k \in \C \setminus \R$ be spectral parameters. As usual, the corresponding solutions to the MDE~\eqref{eq:MDE} are denoted by $M(w_j)$, $j \in [k]$. Then, for deterministic matrices $B_1, ... , B_{k-1}$ we recursively define
	\begin{align}
		M(w_1,B_1, ... B_{k-1} , & w_{k}) = \big(\mathcal{B}_{1k}\big)^{-1}\bigg[M(w_1) B_1 M(w_{2}, ...  , w_{k}) \label{eq:M_definitionapp}                                           \\
		                         & + \sum_{\sigma = \pm} \sum_{l = 2}^{k-1} \sigma M(w_1) \langle M(w_1,  ... , w_l) E_\sigma \rangle  E_\sigma M(w_l, ... , w_{k}) \bigg]\,, \nonumber
	\end{align}
	where we introduced the shorthand notation
	\begin{equation} \label{eq:stabopdef}
		\mathcal{B}_{mn} \equiv \mathcal{B}(w_m,w_n)= 1 - M(w_m) \mathcal{S}[\cdot] M(w_n)
	\end{equation}
	for the so-called stability operator, discussed later in Appendix \ref{app:stabop}.
\end{definition}
Note that the recursion \eqref{eq:M_definitionapp} is well defined, since on the rhs.~of \eqref{eq:M_definitionapp}, there are only $M(w_m, ... , w_n)$ appearing for which the number of spectral parameters is strictly smaller than on the lhs.~of \eqref{eq:M_definitionapp}, i.e.~$n-m +1< k$.
We may call these formulas \eqref{eq:M_definitionapp} {\it recursive Dyson equations} as they provide us with the correct deterministic quantity
for longer resolvent chains.
As an example, we have that
\begin{equation} \label{eq:Mexample}
	M(w_1, B_1, w_2) = \mathcal{B}_{12}^{-1}[M_1 B_1 M_2] = M_1 \mathcal{X}_{12}[B_1] M_2\,,
\end{equation}
where $\mathcal{B}_{12}^{-1}$ is the inverse stability operator \eqref{eq:stabopdef} and $\mathcal{X}_{12} = \big(1 - \mathcal{S}[M_1 \cdot M_2]\big)^{-1}$.
We remark that $M$ satisfies several different recursions besides~\eqref{eq:M_definitionapp};
they are presented in Lemma~\ref{lem:recurel} (see also~\cite[Lemma 5.4]{thermalisation} for a simpler setup of Wigner matrices).
The equivalence of these recursions will be proved via the so-called \emph{meta-argument}, see e.g.~\cite{metaargument}.

As already mentioned above, we are aiming at local laws for expressions of the form
\begin{equation} \label{eq:av}
	\langle G_1 A_1 \, \cdots \, G_k A_k \rangle
\end{equation}
in the averaged case, or
\begin{equation} \label{eq:iso}
	\big(G_1 A_1 \, \cdots \, A_k G_{k+1} \big)_{\boldsymbol{x}\boldsymbol{y}}
\end{equation}
in the isotropic case, where the deterministic matrices $A_1, ... , A_k$ are assumed to be \emph{regular}.

%The general concept (i.e.~not tailored to the chains \eqref{eq:av} and \eqref{eq:iso}) has already been introduced in Definition \ref{def:reg obs1}.
The general concept of {\it regularity} depending on
two spectral parameters $w$ and $w'$  has already been introduced
in Definition \ref{def:reg obs1}.  In the following definition we tailor this concept
to observables in chains \eqref{eq:av} and \eqref{eq:iso}. It basically says that
observable $A_j$, located between $G_j=G(w_j)$ and $G_{j+1}=G(w_{j+1})$
in these chains will naturally be regularised using the spectral parameters
$w_j$ and $w_{j+1}$.
\begin{definition} {\rm (Regular observables in chains)} \label{def:regobs}\\
	Fix $\kappa > 0$ and let $\delta = \delta(\kappa, \Vert \Defo\Vert) > 0$ be small enough (see \eqref{eq:delta} and \eqref{eq:deltachoice}). Consider one of the two expressions \eqref{eq:av} or \eqref{eq:iso} for some fixed length $k \in \N$ and bounded matrices $\Vert A_i \Vert \lesssim 1$ and let  $w_1, ... , w_{k+1} \in \C \setminus \R$ be spectral parameters with $\Re w_i \in \mathbf{B}_\kappa$.
	%with $\max_j |w_j| \le N^{100}$, $\Re w_j \in \mathbf{B}_\kappa$, and $\min_j |\Im w_j| \ge N^{-1 + \epsilon}$. 
	For any $j \in [k]$, analogously to \eqref{eq:case regulation}, we denote
	\begin{equation} \label{eq:case regulation2}
		\mathbf{1}_\delta^\pm(w_j, w_{j+1}) := \phi_\delta(\Re w_j \mp\Re w_{j+1} ) \ \phi_\delta(\Im w_j) \ \phi_\delta(\Im w_{j+1})
	\end{equation}
	and $\mathfrak{s}_j := - \sgn(\Im w_j \Im w_{j+1})$, where, here and in the following, in case of \eqref{eq:av}, the indices are understood cyclically modulo $k$.
	\begin{itemize}
		\item[(a)] For $i \in [k]$ we define the \emph{regular component} or \emph{regularisation} of $A_i$ from \eqref{eq:av} or \eqref{eq:iso} \emph{(w.r.t.~the pair of spectral parameters $(w_i, w_{i+1})$)} as
			\begin{equation} \label{eq:circ def}
				\mathring{A}_i := \mathring{A}_i^{{w_i,w_{i+1}}}\,.
			\end{equation}
			%where we abbreviated 
			%\begin{equation} \label{eq:sign}
			%	\sigma_i := - \mathrm{sgn}(\Im w_i \Im w_{i+1})\,.
			%\end{equation}
		\item[(b)] Moreover, we call $A_i$ \emph{regular (w.r.t.~$(w_i, w_{i+1})$)} if and only if $\mathring{A}_i = A_i$.
	\end{itemize}
\end{definition}
For example, in case of \eqref{eq:av} for $k=1$ with spectral parameter $w_1 \in \C\setminus \R$ satisfying $\Re w_1 \in \mathbf{B}_\kappa$, $|\Re w_1| \le \delta/4$ and $|\Im w_1| \le \delta/2$ (recall \eqref{eq:delta} and \eqref{eq:case regulation2}), the regular component of $A_1$ is given by
\begin{equation} \label{eq:1G traceless}
	\mathring{A}_1 := A_1 - \frac{\langle \Im M_1 A_1 \rangle }{\langle \Im M_1 \rangle } E_+ - \frac{\langle M_1 A_1 M_1 E_- \rangle }{\langle M_1 E_- M_1 E_- \rangle}E_- \,.
\end{equation}
Here we used the short--hand notation $M_1:=M(w_1)$.

We emphasise, that our notation $\,\mathring{\cdot}\,$ for the regular component of $A_i$ does \emph{not} have an overall fixed meaning but depends on the spectral parameters of the resolvents `surrounding' the deterministic matrix $A_i$ under consideration, i.e.
\begin{equation*}
	\langle \ \cdots \ G_i A_i G_{i+1}\  \cdots \  \rangle \quad \text{or} \quad \big(\ \cdots \  G_i A_i G_{i+1} \ \cdots \  \big)_{\boldsymbol{x} \boldsymbol{y}}\,,
\end{equation*}
or in case of \eqref{eq:av} for $k=1$ the single spectral parameter involved.
However, if we aim at specifying the spectral parameters defining the operation $\,\mathring{\cdot}\,$, we add them (or their indices) as a subscript, i.e.~write
\begin{equation*} \label{eq:regularwiwi+1}
	\mathring{A}_i^{{w_i, w_{i+1}}} \equiv \mathring{A}_i^{{i,i+1}}  \equiv \mathring{A}_i \equiv A_i^\circ \equiv A_i^{\circ_{i,i+1}} \equiv A_i^{\circ_{w_i, w_{i+1}}}\,,
\end{equation*}
as done in Definition \ref{def:reg obs1}, and do not use imprecise notation $\mathring{A}_i$.

The just explained caveats are in stark contrast to the case of Wigner matrices \cite{ETHpaper,multiG, A2}, where the regular component of a matrix $A$ is simply its traceless part, i.e.~$\mathring{A} = A - \langle A \rangle$, irrespective of the spectral parameters involved. Apart from this independence of the location in the spectrum, there is a one further important difference to our case, which we already mentioned in Section \ref{sec:proofmain}: For Wigner matrices, the condition for $A$ being regular is one-dimensional and hence restricts $A$ to a $(N^2 -1)$-dimensional subspace of $\C^{N \times N}$ (the traceless matrices), whereas in our case, the regularity condition is two-dimensional (if $\mathbf{1}_\delta^\sigma(\cdot, \cdot) = 1$) and hence restricts a regular matrix $A$ to a $((2N)^2 - 2)$-dimensional subspace of $\C^{2N \times 2N}$, which depends on the `surrounding' spectral parameters.

We now give bounds on the size of the deterministic term $M(w_1, B_1, ... , B_{k-1}, w_{k})$ from \eqref{eq:Mdef}, where all $B_i$ are regular in the sense of Definition \ref{def:regobs}. The proof of this lemma is presented in Appendix~\ref{app:Mbound}.
\begin{lemma} {\rm (Bounds on $M$, see \cite[Lemma~2.4]{multiG})} \label{lem:Mbound} \\
	Fix $\kappa > 0$.  	Let $k \in [4]$ and  $w_1, ... , w_{k+1} \in \C \setminus \R$ be spectral parameters with $\Re w_j \in \mathbf{B}_\kappa$. Then, for bounded \emph{regular} deterministic matrices $A_1, ... , A_{k}$  (in the sense of Definition \ref{def:regobs}), we have the bounds
	\begin{align}
		\Vert M(w_1, A_1, ... , A_{k}, w_{k+1}) \Vert                      & \lesssim\begin{cases}
			                                                                             \frac{1}{\eta^{\lfloor k/2 \rfloor}} \hspace{11.5mm} & \text{if} \ \eta \le 1 \\
			                                                                             \frac{1}{\eta^{k+1}} \qquad                          & \text{if} \ \eta > 1
		                                                                             \end{cases} \,, \label{eq:Mboundnorm}  \\[2mm]
		\vert \langle  M(w_1, A_1, ... , A_{k-1}, w_{k})A_k  \rangle \vert & \lesssim \begin{cases}
			                                                                              \frac{1}{\eta^{\lfloor k/2\rfloor-1 }}\vee 1 \quad & \text{if} \ \eta \le 1 \\
			                                                                              \frac{1}{\eta^{k}} \quad                           & \text{if} \ \eta > 1
		                                                                              \end{cases}\,, \label{eq:Mboundtrace}
	\end{align}
	for the deterministic approximation \eqref{eq:Mdef} of a resolvent chain, where $\eta := \min_j |\Im w_j|$.
\end{lemma}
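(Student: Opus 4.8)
The plan is to prove the bounds \eqref{eq:Mboundnorm}--\eqref{eq:Mboundtrace} by induction on the chain length $k$, using the recursive definition \eqref{eq:M_definitionapp} together with boundedness of the inverse stability operator $\mathcal{B}_{1k}^{-1}$ on the subspace of \emph{regular} observables. First I would record the base case $k=1$: here $M(w_1, A_1, w_2) = \mathcal{B}_{12}^{-1}[M_1 A_1 M_2]$ as in \eqref{eq:Mexample}, and since $A_1$ is regular (orthogonal to the critical eigendirections $V_\pm$ of $\mathcal{B}_{12}$ discussed in Appendix~\ref{app:stabop}), the input $M_1 A_1 M_2$ has no component along the unstable directions of $\mathcal{B}_{12}$. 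Consequently $\mathcal{B}_{12}^{-1}$ acts with an $\eta$-\emph{independent} bound on this input, using $\Vert M(w)\Vert \lesssim 1$ from \eqref{eq:Mboundedmain}; this gives $\Vert M(w_1,A_1,w_2)\Vert \lesssim 1$ for $\eta \le 1$, i.e.~$\lfloor 1/2\rfloor = 0$ powers of $1/\eta$, and the trace bound $|\langle M(w_1)A_1\rangle|\lesssim 1$ is the single-resolvent statement. The large-$\eta$ regime $\eta>1$ is easier since then $\Vert M(w)\Vert \lesssim 1/\eta$ and the stability operator is trivially invertible with norm $\lesssim 1$, yielding the stated powers by direct counting.

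For the inductive step I would feed the recursion \eqref{eq:M_definitionapp} into the estimate. The first term $M(w_1) A_1 M(w_2,\dots,w_k)$ is bounded by $\Vert M(w_2,\dots,w_k)\Vert$, and by induction this is $\lesssim \eta^{-\lfloor (k-1)/2\rfloor}$; applying the regular inverse stability operator $\mathcal{B}_{1k}^{-1}$ costs no extra $\eta$ power (again because $A_1$ regular means the whole bracket is supported away from the unstable directions), so this term obeys $\eta^{-\lfloor (k-1)/2\rfloor} \le \eta^{-\lfloor k/2\rfloor}$. For the sum over $l$ and $\sigma$, each summand contributes $\Vert M(w_1)\Vert \cdot |\langle M(w_1,\dots,w_l)E_\sigma\rangle| \cdot \Vert M(w_l,\dots,w_k)\Vert$; by the induction hypothesis for the trace bound \eqref{eq:Mboundtrace} the middle factor is $\lesssim \eta^{-(\lfloor (l-1)/2\rfloor - 1)}\vee 1$ and the last factor is $\lesssim \eta^{-\lfloor (k-l+1)/2\rfloor}$, and one checks the exponents add up to at most $\lfloor k/2\rfloor$ for every admissible split $2\le l\le k-1$ — this is the combinatorial heart of the argument, essentially identical to the Wigner case in \cite[Lemma~2.4]{multiG}. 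The trace bound \eqref{eq:Mboundtrace} follows by the same scheme after pairing $M(w_1,A_1,\dots,A_{k-1},w_k)$ with the final regular matrix $A_k$ and noting that $\langle M(w_1,\dots,w_k)A_k\rangle$ with $A_k$ regular enjoys one extra $\eta^{1/2}$ improvement per regular observable (the $\sqrt{\eta}$-rule), which upon squaring/pairing in the trace gives the gain of one full power of $\eta$ over the norm bound.

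The step I expect to be the main obstacle is controlling the inverse stability operator $\mathcal{B}_{1k}^{-1}$ \emph{restricted to regular inputs} uniformly in the spectral parameters throughout the bulk. Away from the critical configurations (where $\mathbf{1}_\delta^\pm = 0$) the operator $\mathcal{B}_{1k}$ is bounded below and invertible with an $\eta$-independent norm bound, but near the critical configurations one of its eigenvalues becomes small (of size $\sim |w_1 \mp \bar w_k|$ or $\sim \eta$), and one must verify that the recursively-built input to $\mathcal{B}_{1k}^{-1}$ has vanishing (or suitably small) overlap with the corresponding eigendirection $V_\pm$ — this is exactly what the regularisation \eqref{eq:circ}, propagated through \eqref{eq:M_definitionapp}, is designed to guarantee, and it requires the perturbative identities for $V_\pm$ and the $E_\pm$-structure worked out in Appendix~\ref{app:stabop}. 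A secondary technicality is the bookkeeping of the cyclic versus non-cyclic index convention and keeping track of which of the two regularisation directions $E_+, E_-$ is active for each pair $(w_j, w_{j+1})$; this does not change the power counting but must be handled carefully so that the $\delta$-cutoffs match up across consecutive links of the chain. Since $k \le 4$ only, all these case distinctions are finite in number and can be checked by hand; I would present the general inductive inequality and then remark that the finitely many boundary cases are verified by the same mechanism.
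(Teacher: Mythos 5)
The high-level idea is right (use the recursive Dyson equation plus regularity to avoid the small eigenvalue of $\mathcal{B}$), and the base case $k=1$ and the $\eta>1$ regime are fine. But the inductive step as written has two genuine gaps that the paper's proof is designed precisely to circumvent.

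First, you claim that because $A_1$ is regular, "the whole bracket is supported away from the unstable directions" of $\mathcal{B}_{1k}$, so that $\mathcal{B}_{1k}^{-1}$ costs no $\eta$-power. This is not true for the $j=1$ recursion \eqref{eq:M_definitionapp}. The summand there is $\sigma M(w_1)\langle M(w_1,\dots,w_l)E_\sigma\rangle E_\sigma M(w_l,\dots,w_k)$; the left eigenvectors of $\mathcal{B}_{1k}$ are $L_\pm = E_\pm$, and these terms have no reason to be $L_\pm$-orthogonal — in fact the first term $M_1 A_1 M(w_2,\dots,w_k)$ also need not be, since $A_1$ regular w.r.t.~$(w_1,w_2)$ controls the overlap with $M_2 E_\sigma M_1$, not with $M_k E_\sigma M_1$. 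In general you can only guarantee the $\langle E_\sigma, \cdot\rangle$ overlap of the \emph{result} is small by a cancellation between the two terms, not term by term. The paper sidesteps this entirely: for the trace bound it writes $\langle \mathcal{B}_{1k}^{-1}[\cdots]A_k\rangle = \langle [\cdots]\mathcal{X}_{k1}[A_k]\rangle$ and uses the regularity of $A_k$ to absorb the inverse stability operator (so one never needs to invert $\mathcal{B}_{1k}$ on the bracket at all), and for the norm bounds it uses the $j=2$ recursion \eqref{eq:recursion1} rather than $j=1$, so that the $E_\sigma$ sits \emph{inside} a shorter chain $M(w_1, \mathcal{X}_{12}[A_1]M(w_2)E_\sigma, w_3)$ rather than being fed into $\mathcal{B}_{1k}^{-1}$.

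Second, you bound $|\langle M(w_1,\dots,w_l)E_\sigma\rangle|$ by the inductive trace estimate \eqref{eq:Mboundtrace}, but that estimate requires the final observable to be regular and $E_\sigma$ is not — in fact it lies exactly along a singular direction. The paper handles these factors by different means: for $\sigma=+$ the cyclic trace is rewritten via the resolvent identity $M(w_l, E_+, w_1)=(M_l - M_1)/(w_l - w_1)$, for $\sigma=-$ via the chiral symmetry \eqref{eq:chiralM}, and this produces extra factors of $|w_1 \mp w_l|^{-1}$ that must then be matched against improved bounds. This forces the paper to prove an intermediate statement not in your plan at all: that the norm bound \eqref{eq:Mboundnorm} for $k=2$ continues to hold when only \emph{one} of the two observables is regular (see the argument around \eqref{eq:Mbound stability}). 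Without that intermediate step, the $\sigma=-$, $\tau=-$ contributions in your power-counting cannot be closed. The power-count arithmetic you write is not the bottleneck; the problem is that the quantities being counted do not actually satisfy the asserted bounds.
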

For the presentation of our main results, we would only need \eqref{eq:Mboundnorm} and \eqref{eq:Mboundtrace} for $k \in [2]$ from the previous lemma. However, the remaining bounds covered by Lemma \ref{lem:Mbound} will be instrumental in our proofs of Theorems \ref{thm:singleGopt} and \ref{thm:multiGll} below (see Sections \ref{sec:proofmaster} and \ref{sec:proofreduc}).

The main result of the present section and most important input for our proofs in Section~\ref{sec:proofmain} is the following averaged local law in the bulk of the spectrum for two resolvents and regular matrices.
\begin{theorem} \label{thm:multiGll} {\rm (Local laws with \emph{two} regular matrices)} \\
	Fix a bounded deterministic $\Defo \in \C^{N \times N}$,  $\epsilon >0 $ and $\kappa > 0$. 	Then, for spectral parameters $w_1, w_2, w_3 \in \C$ satisfying $\max_j |w_j| \le N^{100}$, $\Re w_j \in \mathbf{B}_\kappa$ and $\eta:= \min_j |\Im w_j| \ge N^{-1+\epsilon}$, deterministic vectors $\bm{x}, \bm{y}$ with $\Vert \bm{x} \Vert, \Vert \bm{y}\Vert \lesssim 1 $, and any \emph{regular} deterministic matrices $A_1, A_2 \in \C^{2N \times 2N}$ (cf.~Definition~\ref{def:regobs}), we have the \emph{averaged local law}
	\begin{subequations} \label{eq:2aviso}
		\begin{equation} %\label{eq:2Gavll}
			\left| \langle G_1 A_1 G_2 A_2 - M(w_1, A_1, w_2) A_2 \rangle  \right| \prec \begin{cases}
				\frac{1}{\sqrt{N \eta}} \quad & \text{if} \ \eta \le 1 \\
				\frac{1}{N \eta^3} \quad      & \text{if} \ \eta > 1
			\end{cases}
		\end{equation}
		and the \emph{isotropic law}
		\begin{equation}
			\left| \big\langle \bm{x}, \big(G_1 A_1 G_2 A_2 G_3 - M(w_1, A_1, w_2, A_2, w_3) \big) \bm{y} \big\rangle \right| \prec \begin{cases}
				\frac{1}{\eta} \quad            & \text{if} \ \eta \le 1 \\
				\frac{1}{\sqrt{N} \eta^4} \quad & \text{if} \ \eta > 1
			\end{cases}\,.
		\end{equation}
	\end{subequations}

\end{theorem}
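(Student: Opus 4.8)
The plan is to prove Theorem~\ref{thm:multiGll} by the now-standard machinery of \emph{master inequalities} combined with the \emph{cumulant expansion}, but carefully adapted to handle the novel feature of this paper: the spectral-parameter–dependent regularisation~\eqref{eq:reg A1A2}. First I would set up a uniform control quantity. For a chain of resolvents with regular observables in between I define normalised error functionals
\begin{equation*}
\Psi_k^{\mathrm{av}} := N\eta \sup \bigl| \langle (G_1 A_1 \cdots G_k A_k) - M(w_1,A_1,\ldots,w_k) A_k \rangle \bigr|\,, \qquad \Psi_k^{\mathrm{iso}} := \sqrt{N\eta}\,\sup \bigl| \bigl\langle \bm x, \bigl(G_1 A_1 \cdots A_k G_{k+1} - M(\ldots)\bigr)\bm y\bigr\rangle\bigr|\,,
\end{equation*}
where the suprema run over all admissible spectral parameters (with $\Re w_j \in \mathbf{B}_\kappa$, $\eta := \min_j|\Im w_j| \ge N^{-1+\epsilon}$) and all bounded regular $A_j$. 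The goal is then to derive a closed system of \emph{master inequalities} bounding $\Psi_1^{\mathrm{av}}, \Psi_1^{\mathrm{iso}}, \Psi_2^{\mathrm{av}}, \Psi_2^{\mathrm{iso}}$ (and auxiliary longer-chain quantities up to length $4$, which is why Lemma~\ref{lem:Mbound} is stated for $k\in[4]$) in terms of each other and of lower-order quantities, of the schematic form
\begin{equation*}
\Psi_2^{\mathrm{av}} \prec 1 + \frac{\Psi_1^{\mathrm{iso}} + \Psi_2^{\mathrm{av}}}{\sqrt{N\eta}} + \frac{(\Psi_2^{\mathrm{av}})^2 + \ldots}{N\eta}\,,
\end{equation*}
from which the claimed bounds follow by the usual iteration/bootstrap, using the single-resolvent local law (Theorem~\ref{thm:singleG}) as the base case and the a priori bound $\Psi_k \prec (N\eta)^{1/2}$ (from the trivial Ward estimate) as the starting point of the iteration.

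The heart of the argument is the cumulant expansion. Writing $G_1 A_1 G_2 A_2 = M_1 \mathcal{S}[\cdot] \ldots$ type identities via the first resolvent identity $G_1 = M_1 - M_1(W + \hat\Lambda - \mathbb{E}H - \mathcal S[M_1])G_1 + \ldots$, one expands $\langle (G_1 A_1 G_2 A_2 - \mathbb E[\ldots])\rangle$ in cumulants of the entries of $W$. The second-order term reproduces the stability operator $\mathcal{B}_{12}$ acting on the error, so that after inverting $\mathcal{B}_{12}$ one gets $M(w_1,A_1,w_2)$ from Definition~\ref{def:Mdef} as the deterministic approximation; the key point is that since $A_1$ is \emph{regular}, i.e.~$\langle V_\pm, A_1\rangle = 0$, the dangerous near-kernel directions of $\mathcal{B}_{12}$ (the critical eigenvectors, responsible for the $1/\eta$ blow-up for generic observables) do not contribute, and $\mathcal{B}_{12}^{-1}$ acts boundedly on the relevant subspace — this is precisely what Lemma~\ref{lem:Mbound} quantifies and what must be invoked at every step. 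Higher cumulants produce longer chains (hence the need for the hierarchy) or extra factors of $N^{-1/2}$. Throughout, whenever a resolvent identity $G(w)G(w') = (G(w)-G(w'))/(w-w')$ or the chiral symmetry~\eqref{eq:chiral} is used to shorten a chain, the regularisation of the surrounding observables must be \emph{readjusted} via the perturbative estimates of Lemma~\ref{lem:regularbasic} (equations~\eqref{eq:regularperturb1}--\eqref{eq:regularperturb2} and~\eqref{eq:regularbar1}); the error terms $\mathcal{O}(|w_i - w_j|\wedge 1)E_\pm$ generated there are singular-direction matrices $E_\pm$, so they feed back into the $\langle GEGE\rangle \lesssim 1/\eta$–type terms with a prefactor that compensates the loss — this bookkeeping is delicate but tracks through.

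The main obstacle, and the genuinely new difficulty compared to the Wigner case of~\cite{ETHpaper,multiG,A2}, is exactly this interplay between the cumulant expansion and the $w$-dependence of the notion of regularity. In the Wigner setting "regular" means "traceless" uniformly, so after any resolvent identity the observable stays regular for free; here, shortening $\langle G_1 A_1 G_2 A_2\rangle$ via a resolvent identity changes which spectral parameters "surround" the observable, so $\mathring A^{w_1,w_2}$ is no longer literally regular for the new configuration and one must absorb the mismatch. One has to verify that the accumulated $E_\pm$-corrections, when inserted back into shorter chains, are always controlled by the already-established bounds on $\langle GEG\mathring A\rangle$ and $\langle GEGE\rangle$ — i.e.~that the hierarchy genuinely closes and no term with an uncompensated $1/\eta$ survives. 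A secondary technical point is the block structure of $H$: the chiral symmetry forces the \emph{two}-dimensional singular subspace $\mathrm{span}(E_+, E_-)$ (rather than one-dimensional), and near the imaginary axis both directions are critical simultaneously, so the stability operator has a genuinely two-dimensional near-kernel; controlling $\mathcal{B}_{12}^{-1}$ restricted to its complement, uniformly for all $w_1,w_2$ with real parts in $\mathbf{B}_\kappa$ including the regime $\Re w_j \approx 0$, is where the explicit spectral analysis of Appendix~\ref{app:stabop} is needed. Once the master inequalities are in place and the stability bounds are available, the iteration and the final $N^{-1+\epsilon}$ cutoff on $\eta$ are routine.
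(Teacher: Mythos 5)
Your proposal takes essentially the same route as the paper: express the chain error as a renormalised (underlined) term, cumulant-expand, control the resulting longer chains via a hierarchy of master inequalities for $\Psi_k^{\rm av/iso}$ (coupled with reduction inequalities up to $k=4$), and close by iteration, with the genuinely new difficulty being the spectral-parameter dependence of the regularisation and its required readjustment through Lemma~\ref{lem:regularbasic} whenever a resolvent identity or contour representation shifts an adjacent spectral parameter — exactly the mechanism the paper implements in Sections~\ref{sec:proofmaster}--\ref{sec:proofreduc}.

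One caution on the bookkeeping: the paper's control quantities carry $k$-dependent $\eta$-powers, $\Psi_k^{\rm av}=N\eta^{k/2}\vert\cdots\vert$ and $\Psi_k^{\rm iso}=\sqrt{N\eta^{k+1}}\vert\cdots\vert$ (encoding the $\sqrt{\eta}$-rule), not the fixed $N\eta$ and $\sqrt{N\eta}$ you wrote; with your normalisation the base case $\Psi_1^{\rm av}\prec 1$ and the iteration would be off by $\sqrt{\eta}$ per length. A second implementation point you flag conceptually but not concretely is the shrinking-domain device $\mathbf D^{(\epsilon_0,\kappa_0)}_\ell$: the contour $\Gamma$ in Lemma~\ref{lem:intrepG^2} leaves the current domain, so each application of the integral representation (used to shorten $G_iG_i$ or $G_iE_-G_i$) costs a strict shrinkage of the admissible region, which is why Proposition~\ref{prop:master} is stated $\ell$-to-$(\ell+1)$ uniformly and why the final $L(\epsilon)$-step iteration is set up the way it is.
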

Together with \eqref{eq:Mboundtrace} for $k=2$, this proves Proposition \ref{prop:2Gav}. Moreover, as a byproduct of our proof of Theorem~\ref{thm:multiGll}, we obtain the following optimal local laws with a single regular matrix.
\begin{theorem} \label{thm:singleGopt} {\rm (Optimal local laws with \emph{one} regular matrix)} \\
	Fix a bounded deterministic $\Defo \in \C^{N \times N}$, $\epsilon >0 $ and $\kappa > 0$.   Then, for spectral parameters $w_1, w_2 \in \C$ satisfying $\max_j |w_j| \le N^{100}$, $\Re w_j \in \mathbf{B}_\kappa$ and $\eta:= \min_j |\Im w_j| \ge N^{-1+\epsilon}$, deterministic vectors $\bm{x}, \bm{y}$ with $\Vert \bm{x} \Vert, \Vert \bm{y}\Vert \lesssim 1 $, and any \emph{regular} deterministic matrix $A_1$ (cf.~Definition \ref{def:regobs}), we have the \emph{optimal averaged local law}
	\begin{subequations} \label{eq:1aviso}
		\begin{equation}
			\left| \langle (G_1 - M_1) A_1 \rangle  \right| \prec \begin{cases}
				\frac{1}{N \eta^{1/2}} \quad & \text{if} \ \eta \le 1 \\
				\frac{1}{N \eta^2} \quad     & \text{if} \ \eta > 1
			\end{cases}
		\end{equation}
		and the \emph{optimal isotropic local law}
		\begin{equation}
			\left| \big\langle \bm{x}, \big(G_1 A_1 G_2 - M(w_1, A_1, w_2) \big) \bm{y} \big\rangle \right| \prec \begin{cases}
				\frac{1}{\sqrt{N \eta^2}} \quad & \text{if} \ \eta \le 1 \\
				\frac{1}{\sqrt{N} \eta^3} \quad & \text{if} \ \eta > 1
			\end{cases}\,.
		\end{equation}
	\end{subequations}
\end{theorem}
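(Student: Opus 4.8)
The plan is to read off Theorem~\ref{thm:singleGopt} from the very same bootstrap that proves Theorem~\ref{thm:multiGll}, rather than proving it in isolation: the shorter chains are controlled \emph{en route} to the two‑resolvent, two‑regular‑matrix local law. Concretely, I would fix $\kappa,\epsilon$ and introduce, uniformly over all admissible configurations ($\Re w_j\in\mathbf{B}_\kappa$, $|\Im w_j|\ge N^{-1+\epsilon}$, $|w_j|\le N^{100}$) and over all bounded \emph{regular} observables, normalised error quantities $\Psi^{\mathrm{av}}_k$ and $\Psi^{\mathrm{iso}}_k$ tracking $\langle G_1 A_1\cdots G_k A_k\rangle-\langle M(w_1,A_1,\dots,w_k)A_k\rangle$ and its isotropic analogue $\big\langle \bm{x},(G_1 A_1\cdots A_k G_{k+1}-M(w_1,A_1,\dots,w_{k+1}))\bm{y}\big\rangle$, each divided by the target size dictated by the $\sqrt\eta$‑rule, so that the goal becomes $\Psi\prec 1$. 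The inputs are the single‑resolvent local law Theorem~\ref{thm:singleG} (general observables, no $\sqrt\eta$‑gain), the deterministic bounds of Lemma~\ref{lem:Mbound}, the trivial Ward/Schwarz bound $|\langle G_1 B_1 G_2 B_2\rangle|\prec 1/\eta$, and the chiral identities \eqref{eq:chiral} together with $\langle GE_-\rangle=0$.

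The engine is a family of \emph{master inequalities} obtained from one cumulant expansion of a single resolvent in the chain. The second‑order term reproduces a self‑energy insertion $\mathcal{S}[\cdot]$, which I would resum using the stability operator $\mathcal{B}_{mn}=1-M(w_m)\mathcal{S}[\cdot]M(w_n)$ and the recursive definition of $M$ (Definition~\ref{def:Mdef}); the crucial point is that, because each $A_i$ is regular — hence in the range of $1-\Pi$ and orthogonal to the critical directions $V_\pm$ — the combination entering $\mathcal{B}_{mn}^{-1}$ avoids the near‑kernel responsible for the blow‑up, so $\mathcal{B}_{mn}^{-1}$ acts boundedly with constants depending only on $\kappa,\delta,\|\Lambda\|$ (this is the content of Appendix~\ref{app:stabop} and is precisely why the regularisation is built into the statement). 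The remaining cumulant terms either lengthen the chain by one with a prefactor $N^{-1}$, or are genuinely lower order; this yields, schematically, $\Psi^{\mathrm{av}}_k\prec 1+(\text{products of }\Psi\text{'s of comparable length})+N^{-\epsilon}(\text{longer chains})$, and likewise for $\Psi^{\mathrm{iso}}$, with the averaged and isotropic hierarchies feeding into one another.

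To close the hierarchy from above I would use \emph{reduction inequalities} expressing a longer chain through shorter ones via Cauchy--Schwarz and the Ward identity $G(w)G(w)^*=\Im G(w)/\eta$, so that chains of length up to the $k=4$ range covered by Lemma~\ref{lem:Mbound} are controlled by length‑$\le 2$ data. Iterating master and reduction inequalities, improving the exponent by a factor $N^\epsilon$ at each pass, drives all $\Psi$'s to $O_\prec(1)$. Theorem~\ref{thm:singleGopt} is then extracted as the two shortest cases: the averaged statement is the $k=1$ averaged case, whose fluctuation a single cumulant expansion bounds by $N^{-2}$ times $\langle G_1\mathring{A}_1 G_1^*\mathring{A}_1^*\rangle\prec 1/\eta$ (itself the $\eta\le 1$ instance of Theorem~\ref{thm:multiGll}/Proposition~\ref{prop:2Gav}), giving $|\langle(G_1-M_1)A_1\rangle|\prec 1/(N\eta^{1/2})$; the isotropic statement is the $k=1$ isotropic case $\big\langle\bm{x},(G_1A_1G_2-M(w_1,A_1,w_2))\bm{y}\big\rangle$, handled in the same pass. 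The regime $\eta>1$ is easier (the stability operator is uniformly invertible and no $\sqrt\eta$‑subtlety arises) and follows from the same scheme, or by contour/resolvent‑identity reduction to $\eta\le 1$.

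The hard part will be the interaction between the cumulant expansion and the \emph{spectral‑parameter‑dependent} regularisation. Unlike the Wigner case, $\mathring{A}^{w,w'}$ changes when a cumulant expansion moves, splits, or differentiates a resolvent, so one must re‑regularise with new parameters and feed the perturbative identities of Lemma~\ref{lem:regularbasic} (the $\mathcal{O}(|w-\mathfrak{s}\bar w'|\wedge 1)E_{\mathfrak{s}}$‑type corrections) through the whole hierarchy without spoiling the $\sqrt\eta$‑gains; the observation making this work is that every spurious $E_\pm$‑piece so produced sits next to a \emph{difference} of resolvents and can be collapsed by a resolvent identity at no cost, exactly as illustrated after \eqref{eq:correlator}. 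The second obstacle, closely tied to this, is uniform control of $\mathcal{B}_{mn}$ over the \emph{entire} bulk, including the near‑critical regimes $w_1\approx\pm w_2$ and both parameters near the imaginary axis, where the block/chiral structure forces the \emph{two}‑dimensional critical space (the second direction being $E_-\Im M$); showing that on the regular subspace $\mathcal{B}_{mn}^{-1}$ stays bounded there is the crux, and is where the explicit eigenvectors $V_\pm$ computed in Appendix~\ref{app:motivation} are needed.
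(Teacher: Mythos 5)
Your proposal follows the paper's own strategy: Theorem~\ref{thm:singleGopt} is extracted as the $k=1$ cases of Lemma~\ref{lem:multiGll} (giving $\Psi_1^{\rm av}+\Psi_1^{\rm iso}\prec 1$), which is proved by iterating the master inequalities of Proposition~\ref{prop:master} against the reduction inequalities of Lemma~\ref{lem:reduction}, with re-regularisation handled through Lemma~\ref{lem:regularbasic} and boundedness of $\mathcal{B}^{-1}$ on regular observables from Appendix~\ref{app:stabop}. You correctly identify both crux points (spectral-parameter dependence of $\mathring A^{w,w'}$ cancelling small denominators, and control of the two-dimensional critical subspace of the stability operator), so the approach is essentially identical to the paper's.
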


\begin{remark} \label{rmk:sqrteta} We have several comments.
	\begin{itemize}
		\item[(i)]  The above local laws are in agreement with the \emph{$\sqrt{\eta}$-rule} first established for Wigner matrices {for traceless matrices} in \cite[Theorem 2.5]{multiG}: Every regular deterministic matrix $A_i$ reduces both the size of the deterministic approximation and the error term by a factor $\sqrt{\eta}$.
		\item[(ii)] The error terms in Theorem \ref{thm:multiGll} dealing with two regular matrices can still be improved by a factor $1/\sqrt{N \eta}$, as shown in \cite{multiG}. A similar analysis could have been conducted here, but we refrain from doing so, as it is not needed for our main results from Section \ref{sec:results}. However, the error bounds in \eqref{eq:1aviso} with one regular matrix are in fact optimal.
		\item[(iii)] Given Theorem \ref{thm:singleG}, and Theorems~\ref{thm:multiGll}--\ref{thm:singleGopt}, it is possible to deduce similar bounds for averaged and isotropic chains as in \eqref{eq:2aviso}, where not both matrices $A_1, A_2$ are regular  (see \eqref{eq:correlator}).
	\end{itemize}
\end{remark}
In the rest of this paper, we give a detailed proof of Theorem \ref{thm:multiGll} in the much more involved $\eta \le 1$ regime. For $\eta > 1$, the bound simply follows by induction on the number of resolvents in chain by invoking the trivial $\Vert M(w) \Vert \lesssim 1/|\Im w|$. The detailed argument has been carried out in \cite[Appendix~B]{multiG} for the case of Wigner matrices. However, at a certain technical point (within the proof of the \emph{master inequalities} in Proposition \ref{prop:master} and the \emph{reduction inequalities} in Lemma~\ref{lem:reduction}), the proof uses Theorems \ref{thm:multiGll} and \ref{thm:singleGopt} (and even its analogues for longer chains) for the $\eta > 1$ regime. But the master and reduction inequalities are not needed for proving the above estimates in the $\eta > 1$ regime, hence the argument is not circular. With partial exception in Appendix \ref{app:Mbound}, where we prove Lemma \ref{lem:Mbound}, throughout the rest of this paper we assume that $\min_j |\Im w_j| =: \eta \le 1$.
\subsection{Basic control quantities and proof of Theorems \ref{thm:multiGll} and \ref{thm:singleGopt}}\label{sec:basic}
Our strategy for proving Theorem \ref{thm:multiGll} (and thereby Theorem \ref{thm:singleGopt} as a byproduct)
is to derive a system of {\it master inequalities}  (Proposition~\ref{prop:master})
for the errors in the local laws by cumulant expansion, then use an iterative scheme
to gradually improve their estimates. The cumulant expansion introduces longer resolvent chains, potentially leading
to an uncontrollable hierarchy, so our
master inequalities are complemented by a set of {\it reduction inequalities} (Lemma~\ref{lem:reduction})
to estimate longer chain in terms of shorter ones.
We have used a similar strategy  in~\cite{multiG, A2} for Wigner matrices, but now many new error
terms due to regularisations need to be handled.

Before entering the detailed proof, we explain the main mechanism of the new type of error terms. Cumulant expansions
applied  to chains $\ldots G_i A_i G_{i+1} \ldots$ with regular $A_i$'s introduce more resolvent factors, for example
$\ldots G_i  G_i A_i G_{i+1} \ldots$ or $\ldots G_i E_- G_i A_i G_{i+1} \ldots$
without introducing more $A$'s. Multiple $G$ factors without intermediate $A$'s appear
which we wish to reduce to fewer $G$ factors using {resolvent identities \eqref{eq:resolid}} or contour integral representations; in the
example above we will use
\begin{equation}\label{G2}
	G_iG_i = G(w_i)^2= \frac{1}{2\pi\ii}\int_\Gamma \frac{G(z)}{(z-w_i)^2} \D z,
\end{equation}
where $\Gamma$ is an appropriate contour (see Lemma~\ref{lem:intrepG^2}). When this formula is inserted into the chain,
we have $\ldots  G(z) A_i G_{i+1}\ldots$, i.e. $A_i$ is not regular any more with respect to the
neighboring spectral parameters $(z, w_{i+1})$ since $w_i$ has been changed to $z$. We need to regularise $A_i$
to the new situation. Fortunately, the
regularisation is Lipschitz continuous by Lemma~\ref{lem:regularbasic}, so roughly speaking we make an
error of order $|z-w_i|$ when we regularise $A_i$ from $(w_i, w_{i+1})$ to $(z, w_{i+1})$. This error exactly compensates
the higher power of $z-w_i$ in the denominator in~\eqref{G2}, making eventually the adjustment of regularisations
harmless in the estimates. We need to meticulously implement this strategy for longer chains and also taking into
account the chiral symmetry to reduce  $G_i E_- G_i $ in chains like $\ldots G_i E_- G_i A_i G_{i+1} \ldots$.
The precise form of the error terms in Lemma~\ref{lem:regularbasic} is essential.
It is remarkable that the signs appearing in~\eqref{eq:regularbar1}, \eqref{eq:regularperturb1}, and \eqref{eq:regularperturb2} exactly match those
that arise in the denominators of the contour integral formulas like~\eqref{G2}. We now start the actual proof.

%basically follows the ideas developed in \cite{bibid}, where optimal multi-resolvent local laws for Wigner matrices have been proven. 

As the basic control quantities in the sequel of the proof, we  introduce the normalised differences
\begin{align} \label{eq:Psi avk}
	\Psi_k^{\rm av}(\boldsymbol{w}_k, \boldsymbol{A}_k)                                        & := N \eta^{k/2} |\langle  G_1 A_1 \cdots G_{k} A_k  -  M(w_1, A_1, ... , w_{k}) A_k  \rangle |\,,                                                            \\
	\label{eq:Psi isok}
	\Psi_k^{\rm iso}(\boldsymbol{w}_{k+1}, \boldsymbol{A}_{k}, \boldsymbol{x}, \boldsymbol{y}) & := \sqrt{N \eta^{k+1}} \left\vert  \big(  G_1 A_1 \cdots A_{k} G_{k+1} - M(w_1, A_1, ... , A_{k}, w_{k+1}) \big)_{\boldsymbol{x} \boldsymbol{y}} \right\vert
\end{align}
for $k \in \N$, where we used the short hand notations
\begin{equation*} %\label{eq:abbrev}
	G_i:= G(w_i)\,, \quad \eta := \min_i |\Im w_i|\,, \quad \boldsymbol{w}_k :=(w_1, ... , w_{k})\,, \quad \boldsymbol{A}_k:=(A_1, ... , A_{k})\,.
\end{equation*}
%Recall that, throughout the entire paper (except for Appendix \ref{app:motivation}), the deformation $\Defo_i \in \C^{N \times N}$ is fixed for every resolvent under consideration, i.e.~$\Defo_1 = ... = \Defo_k \equiv \Defo \in \C^{N \times N}$, and we will continue suppressing it as in \eqref{eq:abbrev}. 
The deterministic matrices $\Vert A_i \Vert \le 1$, $i \in [k]$, are assumed to be \emph{regular} (i.e., $A_i = \mathring{A}_i$, see Definition~\ref{def:regobs}) and the deterministic counterparts
\begin{equation*} %\label{eq:Ms}
	M(w_1, A_1, ... , A_{k-1}, w_{k})
\end{equation*}
used in \eqref{eq:Psi avk} and \eqref{eq:Psi isok} (see also \eqref{eq:Mdef}) are given recursively in Definition \ref{def:Mdef}.

For convenience, we extend the above definitions to $k=0$ by
\begin{equation*} %\label{eq:Psi aviso0}
	\Psi_0^{\rm av}(w):= N \eta |\langle G(w) - M(w)\rangle |\,, \quad \Psi_0^{\rm iso}(w, \boldsymbol{x}, \boldsymbol{y}) := \sqrt{N \eta} \big| \big(G(w) - M(w)\big)_{\boldsymbol{x} \boldsymbol{y}} \big|
\end{equation*}
and observe that
\begin{equation} \label{eq:single G}
	\Psi_0^{\rm av} + \Psi_0^{\rm iso} \prec 1
\end{equation}
is the usual single-resolvent local law (in the bulk) from Theorem \ref{thm:singleG}, where here and in the following the arguments of $\Psi_k^{\rm av/iso}$ shall occasionally be omitted.
We remark that the index $k$ counts the number of regular matrices in the sense of Definition~\ref{def:regobs}.

%{\color{red}[Include some sketch of the domains (see drawing (A)) $\rightarrow$ \textbf{Dominik}]}

Throughout the entire argument, let $\epsilon > 0$ and $\kappa > 0$ be \emph{arbitrary} but fixed, and let
\begin{equation} \label{eq:Omega}
	\mathbf{D}^{(\epsilon, \kappa)}:= \big\{  w \in \C :  \Re w \in \mathbf{B}_{\kappa}\,, \ N^{100} \ge |\Im w| \ge N^{-1+\epsilon}\big\}
\end{equation}
be the \emph{target spectral domain}, where the $\kappa$-bulk $\mathbf{B}_\kappa$ has been introduced in \eqref{eq:bulk}. This target spectral domain $\mathbf{D}^{(\epsilon, \kappa)}$ will be reached by shrinking a larger \emph{initial spectral domain}
\begin{equation} \label{eq:Omega00}
	\mathbf{D}^{(\epsilon_0, \kappa_0)} := \big\{  w \in \C :  \Re w \in \mathbf{B}_{\kappa_0}\,, \ N^{100} \ge |\Im w| \ge  N^{-1+\epsilon_0}\big\}
\end{equation}
many times, say $(L-1)$ times, during our whole argument, where $L=L(\epsilon)$ is an $N$-independent positive integer to be determined below (see Remark~\ref{rmk:L=L(epsilon)}).
In \eqref{eq:Omega00}, we set $\epsilon_0 := \epsilon/2$ and chose the initial bulk parameter
\begin{equation} \label{eq:kappa0}
	\kappa_0 = \kappa_0(\epsilon, \kappa) = \frac{\kappa}{L(\epsilon)}> 0
\end{equation}
The just mentioned shrinking of domains will be conducted alongside the decreasing family $(\mathbf{D}^{(\epsilon_0, \kappa_0)}_\ell)_{\ell \in [L]}$ of spectral domains, defined via
\begin{align} \label{eq:Omegal}
	\mathbf{D}^{(\epsilon_0, \kappa_0)}_\ell := \big\{  w \in \C :  \Re w \in \mathbf{B}_{\ell \kappa_0}\,, \ N^{100} \ge |\Im w| \ge \ell N^{-1+\epsilon_0}\big\} \subset \mathbf{D}^{\cred{(\epsilon_0, \kappa_0)}}\,.
\end{align}

\begin{figure}[htbp]
	\centering
	\begin{tikzpicture}
		\begin{axis}[no markers,
				xlabel=$\Re w$,
				ylabel=$\Im w$,
				width=\textwidth,
				height=0.4\textwidth,
				axis lines=middle,
				ymin=0,ymax=1.2,
				xmin=-2.7,xmax=3.3,
				ytick={0},yticklabels={0},
				xtick={-2,0,2},
				xticklabels={-2,0,2},
				axis on top=true, color=black]
			\pgfmathsetmacro{\Z}{0.93}
			\pgfmathsetmacro{\min}{-m(\Z)+.001}
			\pgfmathsetmacro{\max}{m(\Z)}
			\path[name path=rho,save path=\saverho] (\min,0) -- plot[domain={\min}:\max,samples=301] ({\x},{f(abs(\x),\Z)}) -- (\max,0);
			\domain{0.01}{0.02}{4}{1}
			%\domain{0.03}{0.04}{4}{2}
			\domain{0.04}{0.05}{6}{3}
			%\domains{0.07}{0.08}{8}{4}
			\domains{0.07}{0.08}{8}{5}
			\domains{0.1}{0.11}{10}{7}
			\domains{0.13}{0.14}{12}{8}
			%\domains{0.11}{0.12}{12}{5}
			\domains{0.18}{0.2}{18}{6}
			\draw[dashed] (\min,0) -- (\min,1.2);
			\node (int5m) at ($(i5-3)!0.5!(i5-4)$) {};
			\node (int6m) at ($(i6-1)!0.5!(i6-2)$) {};
			\node (dom6) at (int5m |- 0,1.1) {$\mathbf{D}^{(\epsilon,\kappa)}$};
			\draw[stealth-stealth] (int6m |- 0,0) -- (int6m |- 0,0.2) node[midway,left] {\footnotesize $N^{-1+\epsilon}$};
			\draw[stealth-stealth] (int5m |- 0,0) -- (int5m |- 0,0.14) node[midway,right] {\footnotesize $\sim N^{-1+\epsilon_0}$};
			\node (D1l) at (3.0,0.6) {$\mathbf{D}^{(\epsilon_0,\kappa_0)}$};
			\draw[-stealth] (D1l) -- (i1-2 |- D1l);
			\node (D2l) at (3.0,0.8) {$\mathbf{D}_2^{(\epsilon_0,\kappa_0)}$};
			\draw[-stealth] (D2l) -- (i3-2 |- D2l);
			\node (D5l) at (3.0,1) {$\mathbf{D}_3^{(\epsilon_0,\kappa_0)}$};
			\draw[-stealth] (D5l) -- (i5-4 |- D5l);
			\draw[stealth-stealth] (\min,1.1) -- (i6-1 |- 0,1.1) node[midway,below] {\footnotesize$\sim\kappa^{2/3}$};
			\draw[stealth-stealth] (0,.8) -- (i6-2 |- 0,.8) node[left] {\footnotesize$\sim\kappa$};
			\draw[thick,solid,black,use path=\saverho];
		\end{axis}
	\end{tikzpicture}
	\caption{Depicted are the target spectral domain \eqref{eq:Omega}, the initial spectral domain \eqref{eq:Omega00} and four intermediate domains from the family \eqref{eq:Omegal}. The solid black curve represents the symmetric scDos $\rho$ for the perturbation $\Defo = -z$ with $|z|$ slightly less than one (see Example \ref{exam:Defo=-z}). Close to a regular edge of the scDos, the horizontal distance between two domains scales like $\kappa^{2/3}$. Near an (approximate) cusp, the scaling agrees with the linear lower bound given in \eqref{kappabulkreg}. }
	\label{fig:domains}
\end{figure}
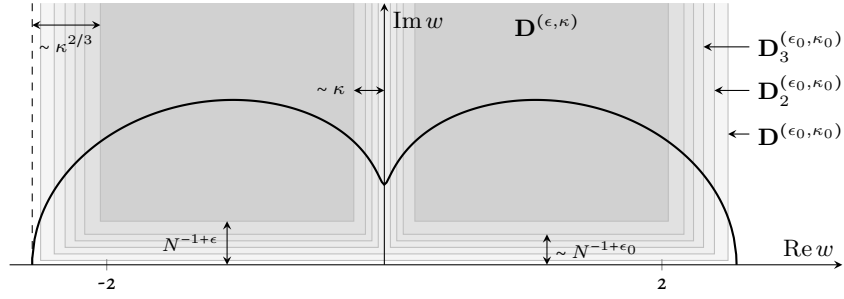

Finally, the cut-off parameter $\delta > 0$ used in the definition of the regular component of an observable (see \eqref{eq:delta} and \eqref{eq:circ def} in Definition \ref{def:regobs}) shall be chosen by the following two requirements: First, it has to be much smaller than the initial bulk-parameter $\kappa_0$ from \eqref{eq:kappa0}, i.e.
\begin{equation} \label{eq:deltachoice}
	0 < \delta \ll \mathfrak{c}\kappa_0\,,
\end{equation}
where $\mathfrak{c} > 0$ is the same constant as introduced in \eqref{kappabulkreg}. Second, it has to be small enough such that the denominators in \eqref{eq:circ def} (see also Appendix \ref{app:stabop}) as well as in Lemmas~\ref{lem:splittingstable 1av},~\ref{lem:splitting stable 1iso},~and~\ref{lem:splitting stable 2av} are uniformly bounded away from zero -- in case that $\mathbf{1}_\delta^\sigma(w_i,w_{i+1}) = 1$. Note that these requirements also depend on the deformation $\Defo\in \C^{N \times N}$ (but only via the norm $\Vert \Defo \Vert \lesssim 1$) as it determines the scDos~$\rho$.
\begin{definition} \label{def:epsi unif} {\rm (Uniform bounds in domains)} \\ Let $\epsilon > 0$ and $\kappa > 0$ as above and let $k \in \N$.
	We say that the bounds
	\begin{equation} \label{eq:epsi unif}
		\begin{split}
			\big\vert \langle G(w_1) B_1 \ \cdots \ G(w_k) B_k - M(w_1, B_1, ... , w_k) B_k \rangle \big\vert &\prec \mathcal{E}^{\rm av}\,, \\[2mm]
			\left\vert  \big( G(w_1) B_1 \ \cdots \ B_k G(w_{k+1}) - M(w_1, B_1, ... , B_k, w_{k+1})  \big)_{\boldsymbol{x} \boldsymbol{y}}  \right\vert &\prec \mathcal{E}^{\rm iso}
		\end{split}
	\end{equation}
	hold \emph{$(\epsilon, \kappa)$-uniformly} for some deterministic control parameters $\mathcal{E}^{\rm av/iso} = \mathcal{E}^{\rm av/iso}(N, \eta)$, depending only on $N$ and $ \eta:= \min_i |\Im w_i|$, if the implicit constant in \eqref{eq:epsi unif} are uniform in bounded deterministic matrices $\Vert B_j \Vert \le 1$, deterministic vectors $\Vert \boldsymbol{x} \Vert , \Vert \boldsymbol{y} \Vert \le 1$, and \emph{admissible} spectral parameters $w_j\in \mathbf{D}^{(\epsilon, \kappa)}$ satisfying $ 1 \ge \eta:= \min_j |\Im w_j|$.

	Similarly, we use the phrase that a bound holds \emph{$(\epsilon_0, \kappa_0,\ell)$-uniformly} (or simply \emph{$\ell$-uniformly}), if the above statement is true with $\mathbf{D}^{(\epsilon_0, \kappa_0)}_\ell$ in place of $\mathbf{D}^{(\epsilon, \kappa)}$.

	Moreover, we may allow for additional restrictions on the deterministic matrices. For example, we may talk about uniformity under the additional assumption that some (or all) of the matrices are \emph{regular} (in the sense of Definition \ref{def:regobs}).
\end{definition}
Note that \eqref{eq:epsi unif} is stated for a fixed choice of spectral parameters $w_j$ in the left hand side, but it is in fact equivalent to an apparently stronger statement, when the same bound holds with a supremum over the spectral parameters (with the same constraints). More precisely, if $\mathcal{E}^{\rm iso} \ge N^{-C}$ for some constant $C > 0$, then \eqref{eq:epsi unif} implies
\begin{equation} \label{eq:epsi unif2}
	\sup_{w_1, ... , w_{k+1}} 	\left\vert  \big( G(w_1) B_1 \ \cdots \ B_k G(w_{k+1}) - M(w_1, B_1, ... , B_k, w_{k+1})  \big)_{\boldsymbol{x} \boldsymbol{y}}  \right\vert \prec \mathcal{E}^{\rm iso}
\end{equation}
(and analogously for the averaged bound), where the supremum is taken over all choices of $w_j$'s in the admissible spectral domain $\mathbf{D}^{(\epsilon, \kappa)}$ or $\mathbf{D}^{(\epsilon_0, \kappa_0)}_\ell$. This bound follows from \eqref{eq:epsi unif} by a standard \emph{grid argument} (see, e.g., the discussion after \cite[Def.~3.1]{multiG}).   %Indeed, we may apply \eqref{eq:epsi unif} for a dense $N^{-10(k+1)}$-grid of $(k+1)$-tuples of complex numbers in the spectral domain. The number of these chosen tuples grows at most polynomially in $N$, such that we can use the standard property of stochastic domination to conclude $\max_i X_i \prec Y$ from $X_i \prec Y$ via a simple union bound, as long as the number of $i$'s is at most polynomially in $N$. Finally, we use the Lipschitz continuity (with Lipschitz constant at most $\eta^{-k-2} \le N^{k+2}$) of the lhs.~of \eqref{eq:epsi unif} to extend the bound to all complex numbers in the admissible spectral domain. 
Throughout the entire paper, we will frequently use the equivalence between \eqref{eq:epsi unif} and \eqref{eq:epsi unif2}, e.g.~when integrating such bounds over some spectral parameters as done in Sections \ref{sec:proofmaster} and \ref{sec:proofreduc}.

We can now formulate our main results of the present section, Theorem \ref{thm:multiGll} and Theorem~\ref{thm:singleGopt}, in the language of our basic control quantities $\Psi_k^{\rm av/iso}$.
\begin{lemma} {\rm (Estimates on $\Psi^{\rm av/iso}_1$ and $\Psi^{\rm av/iso}_2$)} \label{lem:multiGll}
	For any $\epsilon > 0$ and $\kappa > 0$ we have
	\begin{equation*}
		\Psi_1^{\rm av} + \Psi_1^{\rm iso} \prec 1 \qquad \text{and} \qquad 	\Psi_2^{\rm av}  + 	\Psi_2^{\rm iso}\prec \sqrt{N \eta}
	\end{equation*}
	$(\epsilon, \kappa)$-uniformly in regular matrices (i.e.~for spectral parameters $w_j\in \mathbf{D}^{(\epsilon, \kappa)}$ with $ 1 \ge \eta:= \min_j |\Im w_j|$).
\end{lemma}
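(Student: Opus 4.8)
The plan is to prove Lemma~\ref{lem:multiGll} via the standard bootstrap machinery that was developed for Wigner matrices in~\cite{multiG, A2}, suitably adapted to our two-dimensional, spectral-parameter-dependent regularisation. The logical skeleton consists of three ingredients: (a) a system of \emph{master inequalities} (Proposition~\ref{prop:master}) controlling $\Psi_k^{\mathrm{av}}$ and $\Psi_k^{\mathrm{iso}}$ for $k \le 2$ (and auxiliary higher-$k$ quantities) in terms of each other and of lower-$k$ quantities, obtained by a multivariate cumulant expansion of $\E |\langle G_1 A_1 \cdots G_k A_k - M \cdots\rangle|^{2p}$; (b) a set of \emph{reduction inequalities} (Lemma~\ref{lem:reduction}) that bound long chains appearing in the cumulant expansion by products of shorter chains, using the $G$-reduction identities~\eqref{G2} together with the Lipschitz adjustment of regularisations from Lemma~\ref{lem:regularbasic}; and (c) an iterative improvement scheme that, starting from the crude $\Psi_0 \prec 1$ of Theorem~\ref{thm:singleG}, feeds the master inequalities back into themselves on the shrinking family of domains $(\mathbf{D}_\ell^{(\epsilon_0,\kappa_0)})_{\ell\in[L]}$, gaining an $\eta$-power at each of the $L-1$ steps until the target domain $\mathbf{D}^{(\epsilon,\kappa)}$ is reached with the optimal exponents.

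\textbf{Step 1: Set up the control quantities and the a priori bound.} First I would record that, by the $M$-bounds of Lemma~\ref{lem:Mbound} (with $k\in[2]$) together with the trivial Schwarz--Ward bound $|\langle G_1 A_1 G_2 A_2\rangle| \prec 1/\eta$ and its isotropic analogue, one has the starting estimates $\Psi_1^{\mathrm{av}} + \Psi_1^{\mathrm{iso}} \prec \sqrt{N\eta}$ and $\Psi_2^{\mathrm{av}} + \Psi_2^{\mathrm{iso}} \prec N\eta$ — i.e.\ the claimed bounds are off by a factor $\sqrt{N\eta}$ and $\sqrt{N\eta}$ respectively — valid $(\epsilon_0,\kappa_0)$-uniformly on the initial domain. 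These serve as the input to the bootstrap.

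\textbf{Step 2: Master and reduction inequalities.} Next I would invoke Proposition~\ref{prop:master} and Lemma~\ref{lem:reduction}, whose proofs occupy Sections~\ref{sec:proofmaster} and~\ref{sec:proofreduc}. The master inequalities have the schematic form $\Psi_k \prec 1 + (\text{stuff involving } \Psi_j, j\le k, \text{ and } \Psi_{k+1} \text{ divided by } \sqrt{N\eta})$, where the division by $\sqrt{N\eta}$ is exactly what makes the iteration contractive; the new feature compared to the Wigner case is the appearance of extra terms in which a regular $A_i$ must be re-regularised with respect to a shifted spectral parameter $z$ coming from a contour integral~\eqref{G2}, producing an error $\mathcal{O}(|z-w_i|\wedge 1)E_\pm$ that is then integrated against $(z-w_i)^{-2}\,\D z$ along the contour $\Gamma$ of Lemma~\ref{lem:intrepG^2}; the Lipschitz continuity of the regularisation map (Lemma~\ref{lem:regularbasic}) guarantees this integral converges and only contributes a harmless $\mathcal{O}_\prec(1)$ times a lower-order chain. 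The reduction inequalities close the hierarchy by bounding $\Psi_3, \Psi_4$ in terms of $\Psi_{\le 2}$ so that the system is finite.

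\textbf{Step 3: Iteration over the shrinking domains.} Then I would run the iterative scheme of the Wigner papers: assuming $\Psi_k \prec \Phi_\ell$ holds $\ell$-uniformly (with $\Phi_\ell$ an intermediate bound interpolating between the a priori and the optimal one), plug this into the master inequalities to deduce $\Psi_k \prec \Phi_{\ell+1}$ with an improved $\eta$-exponent, $(\ell+1)$-uniformly on the slightly smaller domain $\mathbf{D}_{\ell+1}^{(\epsilon_0,\kappa_0)}$ — the domain must shrink because re-regularisations and the grid/continuity arguments (cf.~\eqref{eq:epsi unif2}) cost a small amount of bulk-parameter and a small amount of $\eta$ at each step. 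After $L-1 = L(\epsilon)-1$ iterations the exponent saturates at the optimal value, and since $\mathbf{D}_L^{(\epsilon_0,\kappa_0)} \supset \mathbf{D}^{(\epsilon,\kappa)}$ with $L\kappa_0 = \kappa$ and $LN^{-1+\epsilon_0} \le N^{-1+\epsilon}$ by the choices~\eqref{eq:kappa0} and $\epsilon_0 = \epsilon/2$, the final bounds $\Psi_1^{\mathrm{av}}+\Psi_1^{\mathrm{iso}} \prec 1$ and $\Psi_2^{\mathrm{av}}+\Psi_2^{\mathrm{iso}} \prec \sqrt{N\eta}$ hold $(\epsilon,\kappa)$-uniformly, as claimed. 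Finally I would note that the corresponding statements in the $\eta>1$ regime follow non-circularly by a separate, simpler induction using only $\|M(w)\|\lesssim 1/|\Im w|$, as explained after Theorem~\ref{thm:singleGopt}.

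\textbf{Main obstacle.} The genuinely hard part — and the place where the novelty of this paper is concentrated — is establishing the master inequalities of Proposition~\ref{prop:master} with the correct error structure: one must track, through every term of the cumulant expansion, how the $w$-dependent regularisation of each observable interacts with the contour-integral reductions of multiple-$G$ blocks, and verify that the signs in the perturbative formulas~\eqref{eq:regularbar1}, \eqref{eq:regularperturb1}, \eqref{eq:regularperturb2} match the poles produced by the chiral symmetry~\eqref{eq:chiral} and by identities like~\eqref{G2}, so that no spurious $\eta^{-1}$ survives. Once Proposition~\ref{prop:master} and Lemma~\ref{lem:reduction} are in hand, the iteration itself is a routine (if bookkeeping-heavy) adaptation of~\cite[Sections~3--5]{multiG}.
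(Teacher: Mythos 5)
Your proposal is correct and follows essentially the same route as the paper: invoke the master inequalities of Proposition~\ref{prop:master}, close the hierarchy with the reduction inequalities of Lemma~\ref{lem:reduction}, and iterate on the shrinking domain family $(\mathbf{D}_\ell^{(\epsilon_0,\kappa_0)})_\ell$ via the abstract iteration Lemma~\ref{lem:iteration}. The only differences are cosmetic: the paper works through the explicit algebra of combining the master and reduction bounds (with intermediate displays~\eqref{eq:step1it}--\eqref{eq:step3it}) and applies the iteration lemma a fixed number of times ($K+K'+K''$ domain shrinkings), whereas you describe the improvement scheme more schematically; the precise a priori sizes you quote for $\Psi_1,\Psi_2$ are slightly off, but this is immaterial since Lemma~\ref{lem:iteration} only requires a crude polynomial starting bound.
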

\begin{proof}[Proof of Theorems \ref{thm:multiGll} and \ref{thm:singleGopt}]
	These immediately follow from Lemma \ref{lem:multiGll}.
\end{proof}
The rest of the proof is structured as follows: First, in Section \ref{subsec:masterandred}, we state the \emph{master inequalities} and corresponding \emph{reduction inequalities} on the $\Psi^{\rm av/iso}_k$ parameters, which we then use in Section \ref{subsec:proof Psi est} to prove Lemma \ref{lem:multiGll}. Afterwards, in Section \ref{sec:proofmaster}, we prove the master inequalities and, finally, the proof of the reduction inequalities is presented in Section \ref{sec:proofreduc}.
\subsection{Master inequalities and reduction lemma} \label{subsec:masterandred}
We now state the relevant part of a non-linear infinite hierarchy of coupled master inequalities for $\Psi^{\rm av}_k$ and $\Psi^{\rm iso}_k$. In fact, for our purposes, it is sufficient to have only the inequalities for $k \in [2]$, but with fairly more effort (despite closely following the arguments in Section \ref{sec:proofmaster}) it is possible to obtain analogous estimates for general $k \in \N$.
\begin{proposition}{\rm (Master inequalities)} \label{prop:master} Assume that
	\begin{equation} \label{eq:apriori Psi}
		\Psi_i^{\rm av/iso} \prec \psi_i^{\rm av/iso}\,, \quad i \in [4]\,,
	\end{equation}
	$\ell$-uniformly (i.e.~for spectral parameters $w_j \in \mathbf{D}^{(\epsilon_0, \kappa_0)}_\ell$ and $1 \ge \min_j |\Im w_j|$) in regular matrices, for some deterministic control parameters $\psi_i^{\rm av/iso}$, which are independent of the spectral parameters $w_j$. Then it holds that
	\begin{subequations}
		\begin{align}
			\label{eq:masterineq Psi 1 av}
			\Psi_1^{\rm av}  & \prec 1 + \frac{\psi_1^{\rm av}}{N \eta} + \frac{\psi_1^{\rm iso} +  (\psi_2^{\rm av})^{1/2}  }{(N \eta)^{1/2}} + \frac{(\psi_2^{\rm iso})^{1/2}}{(N \eta)^{1/4}}	\,,                                                                                                            \\
			\label{eq:masterineq Psi 1 iso}
			\Psi_1^{\rm iso} & \prec 1 + \frac{\psi_1^{\rm iso} + \psi_1^{\rm av} }{(N \eta)^{1/2}} + \frac{(\psi_2^{\rm iso})^{1/2}}{(N \eta)^{1/4}} \,,                                                                                                                                                       \\
			\label{eq:masterineq Psi 2 av}
			\Psi_2^{\rm av}  & \prec 1 + \frac{(\psi_1^{\rm av})^{2} + {(\psi_1^{\rm iso})^2} + \psi_2^{\rm av}}{N \eta}+ \frac{\psi_2^{\rm iso} + (\psi_4^{\rm av})^{1/2}  }{(N \eta)^{1/2}}  + \frac{(\psi_3^{\rm iso})^{1/2} + (\psi_4^{\rm iso})^{1/2}}{(N \eta)^{1/4}}    \,,                              \\
			\label{eq:masterineq Psi 2 iso}
			\Psi_2^{\rm iso} & \prec 1 + \psi_1^{\rm iso} + \frac{ \psi_1^{\rm av} \psi_1^{\rm iso} + (\psi_1^{\rm iso})^2 }{N \eta}+ \frac{\psi_2^{\rm iso} + (\psi_1^{\rm iso} \psi_3^{\rm iso})^{1/2}}{(N \eta)^{1/2}}  +   \frac{(\psi_3^{\rm iso})^{1/2} + (\psi_4^{\rm iso})^{1/2}}{(N \eta)^{1/4}}   \,,
		\end{align}
	\end{subequations}
	now $(\ell+1)$-uniformly (i.e.~for spectral parameters $w_j\in \mathbf{D}^{(\epsilon_0, \kappa_0)}_{\ell+1}$ with $ 1 \ge \eta:= \min_j |\Im w_j|$) in regular matrices.
\end{proposition}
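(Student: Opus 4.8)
The plan is to derive the four inequalities \eqref{eq:masterineq Psi 1 av}--\eqref{eq:masterineq Psi 2 iso} by a cumulant (self-energy renormalisation) expansion of the resolvent chains, following the scheme of~\cite{multiG,A2} for Wigner matrices but carefully tracking the new error terms generated by the spectral-parameter-dependent regularisation of Definition~\ref{def:regobs}. Throughout we work with spectral parameters in $\mathbf{D}^{(\epsilon_0,\kappa_0)}_{\ell+1}$ and (recall the running assumption) in the regime $\eta\le 1$; by~\eqref{kappabulkreg} and~\eqref{eq:deltachoice} there is then room of size $\gtrsim \mathfrak{c}\kappa_0$ to move each parameter inside the larger domain $\mathbf{D}^{(\epsilon_0,\kappa_0)}_{\ell}$ where the a priori bounds~\eqref{eq:apriori Psi} hold --- this is what the contour manipulations below require and is precisely the reason for the passage from $\ell$- to $(\ell+1)$-uniformity. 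By the grid argument recalled after Definition~\ref{def:epsi unif} it suffices to prove each bound for one fixed admissible choice of parameters and regular matrices.

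I would start from the identity $G-M = -M\,\underline{WG} + M\,\mathcal{S}[G-M]\,G$, with $\underline{WG} := WG + \mathcal{S}[G]G$ the renormalised product, and substitute it for the first resolvent of the chain $\langle G_1A_1\cdots G_kA_k\rangle$, respectively $(G_1A_1\cdots A_kG_{k+1})_{\boldsymbol{x}\boldsymbol{y}}$. The term containing $\underline{WG_1}$ is expanded by the cumulant expansion: the second-order (Gaussian) contractions of $W$ with the remaining resolvents produce, after resummation, the action of the stability operator $\mathcal{B}_{1k}$ on the unknown chain together with the lower-order pieces of the recursive Dyson equation~\eqref{eq:M_definitionapp}; moving the leading deterministic term $M(w_1,A_1,\dots,w_k)A_k$ to the left and inverting $\mathcal{B}_{1k}$ on the \emph{regular} subspace --- where, by Appendix~\ref{app:stabop} and the bounds of Lemma~\ref{lem:Mbound}, it is invertible with the $\sqrt\eta$-gains encoded in~\eqref{eq:Mboundnorm}--\eqref{eq:Mboundtrace} --- reproduces the normalising factors $N\eta^{k/2}$ resp.\ $\sqrt{N\eta^{k+1}}$. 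The leftover contributions are: (i) higher-order cumulants, carrying extra $N^{-1/2}$ and absorbed into the ``$1$''; (ii) the quadratic term $M_1\mathcal{S}[G_1-M_1]G_1$, which pairs two error factors and yields the terms $(\psi_1^{\rm av})^2/(N\eta)$, $(\psi_1^{\rm iso})^2/(N\eta)$, $\psi_1^{\rm av}\psi_1^{\rm iso}/(N\eta)$, $\psi_2^{\rm av/iso}/(N\eta)$; and (iii) the genuinely new terms discussed next.

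The step I expect to be the main obstacle concerns those second-order contractions where $W$ is contracted with a resolvent \emph{adjacent in the chain}, i.e.\ with no observable in between, which produce sub-blocks $G(w_i)G(w_i)$ or $G(w_i)E_-G(w_i)$ (and, via the chiral symmetry~\eqref{eq:chiral}, $G(w_i)G(-w_i)$, $G(w_i)E_-G(-w_i)$). One reduces the number of resolvents here using the contour representations
\begin{equation*}
  G(w_i)^2 = \frac{1}{2\pi\I}\oint_\Gamma \frac{G(z)}{(z-w_i)^2}\,\D z\,,\qquad
  G(w_i)E_-G(w_i) = \frac{1}{2\pi\I}\oint_\Gamma \frac{G(z)E_-}{(z-w_i)(z+w_i)}\,\D z\,,
\end{equation*}
with $\Gamma\subset\mathbf{D}^{(\epsilon_0,\kappa_0)}_\ell$ a short contour around $w_i$ (resp.\ around $w_i$ and $-w_i$); see Lemma~\ref{lem:intrepG^2}. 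Inserting such a formula into $\langle\cdots G(w_i)^2A_iG(w_{i+1})\cdots\rangle$ replaces $w_i$ by $z$, so $A_i$ is no longer regular with respect to the new pair $(z,w_{i+1})$, and we must re-regularise it via Lemma~\ref{lem:regularbasic}: the resulting error is of order $|z-w_i|$ (resp.\ $|z\mp w_i|$ at the two poles), which exactly cancels the double (resp.\ simple) pole so that the $z$-integral converges and only feeds back \emph{shorter} chains with regular observables, controlled by $\psi_1^{\rm iso},\psi_2^{\rm av/iso},\psi_3^{\rm iso},\psi_4^{\rm av/iso}$. The crucial point --- and what makes the bookkeeping close --- is that the signs in the perturbative identities~\eqref{eq:regularbar1}, \eqref{eq:regularperturb1}, \eqref{eq:regularperturb2} match those of the poles $z\mp w_i$, which is a direct reflection of the chiral symmetry of $H$.

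Finally, the fractional-power error terms $(\psi_2^{\rm av})^{1/2}/(N\eta)^{1/2}$, $(\psi_j^{\rm iso})^{1/2}/(N\eta)^{1/4}$, $(\psi_1^{\rm iso}\psi_3^{\rm iso})^{1/2}/(N\eta)^{1/2}$ arise from the high-moment/Ward estimates used to bound the fully expanded remainder: a chain (or $\boldsymbol{x}\boldsymbol{y}$-matrix element) is Cauchy--Schwarzed against its adjoint, turning $GAG$ segments into $GAG^*A^*G^*$ segments via $GG^*=\Im G/\eta$, so that a longer $\Im G$-chain appears whose size is governed by $\psi_{2k}$ with its own normalisation; extracting the square root produces the half-powers of $\psi$ and the quarter-powers of $N\eta$. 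Assembling all contributions --- the bounded deterministic part from Lemma~\ref{lem:Mbound} giving the leading ``$1$'', the linear-in-error terms divided by $N\eta$ or $\sqrt{N\eta}$, the quadratic ones, the re-regularised shorter chains, and the Cauchy--Schwarz/Ward terms --- and matching powers of $N\eta$ yields~\eqref{eq:masterineq Psi 1 av}--\eqref{eq:masterineq Psi 2 iso}; the detailed implementation for $k\in[2]$, identifying in each place which of $\psi_1,\dots,\psi_4$ actually occurs, is carried out in Section~\ref{sec:proofmaster}.
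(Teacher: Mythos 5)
Your strategy is essentially the one the paper uses: derive a representation of the error as a full underlined term via the Dyson equation, cumulant-expand a high moment, handle the adjacent-resolvent sub-blocks $G_iG_i$, $G_iE_-G_i$ via the contour representation of Lemma~\ref{lem:intrepG^2}, re-regularise the observables with respect to the shifted spectral parameter and use Lemma~\ref{lem:regularbasic} to cancel the poles, and finish with Ward/Cauchy--Schwarz for the fractional-power terms. You also correctly isolate the key new mechanism (the $|z-w_i|$-errors from re-regularisation matching the pole orders, a consequence of the chiral symmetry), and the role of the domain shrinkings $\mathbf{D}_\ell\to\mathbf{D}_{\ell+1}$ in allowing the contour manipulations.

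There is however one step that would fail as literally stated. You propose to substitute the Dyson-equation expansion \emph{for the first resolvent} in both the averaged chain $\langle G_1A_1\cdots G_kA_k\rangle$ and the isotropic chain $(G_1A_1\cdots A_kG_{k+1})_{\boldsymbol{x}\boldsymbol{y}}$. For the averaged case this is fine (and, by cyclicity of the trace, equivalent to expanding the last resolvent). For the isotropic cases, however, the paper deliberately expands a \emph{middle} resolvent --- $G_2$ in both $\Psi_1^{\rm iso}$ and $\Psi_2^{\rm iso}$ --- so that the underlined $W$ sits in the interior of the chain, e.g. $\underline{G_1\mathring{A}_1' W G_2}$ rather than $\underline{WG_1\mathring{A}_1'G_2}$; this is stated explicitly after Lemma~\ref{lem:underlined2}. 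The reason matters quantitatively: with $W$ at the end, the Gaussian contractions in $\widetilde{\Xi}_1^{\rm iso}$ would produce an isotropic chain with four resolvents and hence a $\psi_4^{\rm iso}$ error in the $\Psi_1^{\rm iso}$ master inequality. But feeding $\psi_4^{\rm iso}$ through the reduction lemma~\eqref{eq:reduction iso} gives $(\psi_4^{\rm iso})^{1/2}/(N\eta)^{1/4}\gtrsim (N\eta)^{1/2}$, which would only yield $\Psi_1^{\rm iso}\prec (N\eta)^{1/2}$ instead of the target $\Psi_1^{\rm iso}\prec 1$, and the bootstrap of Section~\ref{subsec:proof Psi est} would not close. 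Placing $W$ in the middle keeps the off-diagonal pieces in the cumulant expansion short enough that only $\psi_1,\psi_2$ enter~\eqref{eq:masterineq Psi 1 iso}; this is the one non-obvious bookkeeping choice you would need to make explicit. A smaller inaccuracy: you say the higher-order cumulants are ``absorbed into the~$1$'', whereas Lemma~\ref{lem:higherorder} shows they do carry $\psi$-dependent contributions (e.g.\ $(\psi_2^{\rm iso})^{1/4}/(N\eta)^{1/8}$), which are then absorbed into the stated right-hand sides by Young's inequality.
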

As shown in the above proposition, resolvent chains of length $k$ are estimated by resolvent chains up to length $2k$. In order to avoid the indicated infinite hierarchy of master inequalities with higher and higher $k$ indices, we will need the following \emph{reduction lemma}.

\begin{lemma} \label{lem:reduction} {\rm (Reduction inequalities)}
	Assume that
	$\Psi_n^{\rm av/iso} \prec \psi_n^{\rm av/iso}$ holds for $1 \le n \le 4$, $\ell$-uniformly (i.e.~for spectral parameters $w_j\in \mathbf{D}^{(\epsilon_0, \kappa_0)}_{\ell}$ with $ 1 \ge \eta:= \min_j |\Im w_j|$) in regular matrices (cf.~Definition~\ref{def:epsi unif}). Then we have
	\begin{equation} \label{eq:reduction av}
		\Psi_4^{\rm av} \prec (N\eta)^2 + (\psi_2^{\rm av})^2\,,
	\end{equation}
	on the same domain. Furthermore, we have
	\begin{equation} \label{eq:reduction iso}
		\begin{split}
			\Psi_3^{\rm iso} &\prec N \eta \left( 1+ \frac{\psi_2^{\rm iso}}{\sqrt{N\eta}} \right) \left( 1 + \frac{\psi_2^{\rm av}}{N\eta} \right)^{1/2}\,,\\
			\Psi_4^{\rm iso} &\prec (N \eta)^{3/2}\left( 1+ \frac{\psi_2^{\rm iso}}{\sqrt{N\eta}} \right) \left( 1 + \frac{\psi_2^{\rm av}}{N\eta} \right)\,
		\end{split}
	\end{equation}
	again uniformly in $w_j\in \mathbf{D}^{(\epsilon_0, \kappa_0)}_{\ell}$ and in regular matrices.
\end{lemma}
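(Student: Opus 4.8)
# Proof Plan for the Reduction Inequalities (Lemma \ref{lem:reduction})

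\textbf{Overall strategy.} The plan is to reduce long resolvent chains to shorter ones using the \emph{chiral symmetry} \eqref{eq:chiral} together with contour-integral representations like \eqref{G2}, exactly as foreshadowed at the beginning of Section~\ref{sec:basic}. The key mechanism is that a chain of four regular matrices can be "folded" onto itself: by a Schwarz/Ward-type inequality one bounds $|\langle G_1 A_1 G_2 A_2 G_3 A_3 G_4 A_4\rangle|$ in terms of $\langle G_1 A_1 G_2 A_2 G_2^* A_2^* G_1^* A_1^*\rangle^{1/2}$-type quantities, i.e.~chains of the \emph{same} length but with matched spectral parameters $w,\bar w$, which can then be handled by the already-available estimates on $\Psi_2^{\mathrm{av/iso}}$ via the single-resolvent local law (Theorem~\ref{thm:singleG}) and the bounds on $M$ from Lemma~\ref{lem:Mbound}. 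The subtlety throughout is that whenever a spectral parameter is changed (by a resolvent identity, by folding $w\mapsto\bar w$, or by the chiral flip $w\mapsto -w$), the regularity of the neighbouring $A_i$ must be re-adjusted; the perturbative estimates \eqref{eq:regularbar1}, \eqref{eq:regularperturb1}, \eqref{eq:regularperturb2} of Lemma~\ref{lem:regularbasic} provide exactly the Lipschitz control needed, and (as remarked) their signs match the poles of the contour integrals so the adjustments are harmless.

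\textbf{Step 1: the averaged bound \eqref{eq:reduction av}.} First I would write $\langle G_1 A_1 G_2 A_2 G_3 A_3 G_4 A_4\rangle$ and apply the resolvent identity to pairs $G_1 G_1^*$ after using $|\langle G_1 A_1 \cdots\rangle|^2 \le \langle G_1 A_1 G_2 A_2 A_2^* G_2^* A_1^* G_1^*\rangle \cdot \langle \cdots\rangle$ — more precisely, split the length-$4$ trace by Cauchy–Schwarz in the Hilbert–Schmidt inner product at the "middle" into two length-$2$-type pieces $\langle G_1 A_1 G_2 (A_2 G_3 A_3 G_4 A_4)\rangle$, then iterate Ward ($G G^* = \Im G/\eta$) to collapse $G_3, G_4$. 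After re-regularising the observables at the new (coincident or conjugate) spectral parameters via \eqref{eq:regularperturb1}–\eqref{eq:regularperturb2}, the error terms proportional to $E_\pm$ produce chains with a non-regular $E_\pm$ insertion, which by the chiral identity \eqref{eq:chiral} plus a resolvent identity reduce to single-$G$ objects controlled by $\Psi_0$ and $\Psi_1$, hence by Theorem~\ref{thm:singleG}. Collecting: the "diagonal" folded term contributes $(N\eta)^2 \cdot (1 + \psi_2^{\mathrm{av}}/(N\eta))^2 \lesssim (N\eta)^2 + (\psi_2^{\mathrm{av}})^2$, the deterministic $M$-term contributes $O((N\eta)^2)$ by Lemma~\ref{lem:Mbound}, and all cross terms are lower order.

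\textbf{Step 2: the isotropic bounds \eqref{eq:reduction iso}.} For $\Psi_3^{\mathrm{iso}}$, bound $\big|(G_1 A_1 G_2 A_2 G_3 A_3 G_4)_{\bm x\bm y}\big|^2$ by $(G_1 A_1 G_2 A_2 G_3 A_3 G_4 G_4^* A_3^* G_3^* A_2^* G_2^* A_1^* G_1^*)_{\bm x\bm x}$ after one Schwarz step, use Ward on $G_4 G_4^*$, and recognise the resulting object as (essentially) an isotropic length-$2$ chain sandwiched by an averaged length-$2$ chain — producing the factorised structure $N\eta\,(1 + \psi_2^{\mathrm{iso}}/\sqrt{N\eta})(1 + \psi_2^{\mathrm{av}}/(N\eta))^{1/2}$; the extra square root on the averaged factor reflects that it enters once under a square root. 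The bound for $\Psi_4^{\mathrm{iso}}$ is obtained the same way with one more folding, giving the extra $(N\eta)^{1/2}$ and a full (un-square-rooted) averaged factor. Throughout, the re-regularisation errors from Lemma~\ref{lem:regularbasic} and the $M$-estimates from Lemma~\ref{lem:Mbound} absorb into the stated right-hand sides, and the uniformity over $w_j\in\mathbf{D}^{(\epsilon_0,\kappa_0)}_\ell$ and over regular matrices is preserved because every operation used (Schwarz, Ward, resolvent/chiral identities, the grid argument turning \eqref{eq:epsi unif} into \eqref{eq:epsi unif2}) is uniform in these data.

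\textbf{Main obstacle.} The hard part is bookkeeping the re-regularisation: each time a contour integral \eqref{G2} or a chiral flip changes $w_i$ to $z$ or $-w_i$, the factor $A_i$ acquires correction terms $\mathcal O(|z - w_i|\wedge 1)E_\pm$, and one must verify that the pole structure of the contour (double pole at $w_i$, simple poles from the chiral-shifted resolvents) precisely cancels these Lipschitz errors so that the $z$-integral converges and contributes only $O(1)$ relative to the main term. Getting the signs $\mathfrak{s}_j$ in \eqref{eq:regularbar1}, \eqref{eq:regularperturb1}, \eqref{eq:regularperturb2} to line up with the signs in the denominators of the integral kernels — e.g.~in $G(w_1)G(w_2) = (G(w_1)-G(w_2))/(w_1-w_2)$ versus $G(w_1)E_-G(w_2) = -(G(w_1)E_- - E_- G(-w_2))/(w_1+w_2)$ — is the delicate step; once this matching is established the estimates themselves follow by the standard Schwarz/Ward scheme already used for Wigner matrices in \cite{multiG}.
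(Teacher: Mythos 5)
Your high-level plan---reduce the length-4 chains to length-2 objects and control the re-regularisation errors via Lemma~\ref{lem:regularbasic}---is the right \emph{target}, but the mechanism you propose for the reduction does not actually work, and you are missing the two ingredients the paper actually uses.

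The problem is your ``split by Cauchy--Schwarz in the Hilbert--Schmidt inner product and then iterate Ward'' step. A HS--CS at the middle of the chain gives
$\bigl|\langle G_1A_1G_2\,A_2G_3A_3G_4A_4\rangle\bigr|^2
\le \langle G_1A_1G_2G_2^*A_1^*G_1^*\rangle\,\langle A_4^*G_4^*A_3^*G_3^*A_2^*A_2G_3A_3G_4A_4\rangle$. The first factor admits Ward on $G_2G_2^*$ and (after cyclicity and a second Ward on $G_1^*G_1$) collapses to $\langle \Im G_1A_1\Im G_2A_1^*\rangle/(\eta_1\eta_2)$, fine. But in the second factor every $G$ is separated from its adjoint by a genuine observable ($A_2^*A_2$ sits between $G_3^*$ and $G_3$, $A_4A_4^*$ between $G_4$ and $G_4^*$), so there is \emph{no} adjacent $GG^*$ pair to collapse. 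You are left with a six-resolvent trace, not a shorter one. The same happens in the isotropic case: $(G_1A_1\cdots G_4G_4^*\cdots A_1^*G_1^*)_{\bm x\bm x}$ after one Ward is a \emph{seven}-resolvent isotropic chain with parameters $w_1,\dots,w_4,\bar w_3,\bar w_2,\bar w_1$, which is strictly worse than the $\Psi_4^{\rm iso}$ you started with. HS--CS alone does not reduce length.

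What the paper does instead (and what you are missing) is (i) a \emph{spectral decomposition} of the whole chain into a sum over eigenvector overlaps divided by $\prod_k(\lambda_{i_k}-w_k)$, followed by a Schwarz inequality performed \emph{inside that sum}; this factorises the length-4 trace into a product of four two-resolvent quantities of the form $\langle |G_i|A_i|G_{i+1}|A_i^*\rangle$, with \emph{absolute values} of resolvents, precisely because Schwarz on the denominators $|\lambda_{i_k}-w_k|$ produces $|G|$ rather than $\Im G$ or $GG^*$; and (ii) the integral representation
\begin{equation*}
|G(e+\I\eta)| \;=\; \frac{2}{\pi}\int_0^\infty \Im G\bigl(e+\I\sqrt{\eta^2+s^2}\bigr)\,\frac{\D s}{\sqrt{\eta^2+s^2}}\,,
\end{equation*}
which is a separate lemma proved at the start of Section~\ref{sec:proofreduc} and is essential because $|G|$ is not itself a resolvent of $H$ and cannot be fed directly into any local law. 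Only after writing $|G_i|$ as an $s$-integral of $\Im G$ at shifted spectral parameters $w_{i,s}=e_i+\I\sqrt{\eta_i^2+s^2}$ does one split $\Im G=(G-G^*)/(2\I)$ and invoke $\Psi_2^{\rm av/iso}$; and only then do the Lipschitz estimates of Lemma~\ref{lem:regularbasic}, which you did correctly identify, enter to re-regularise $\mathring A_i^{w_i,w_{i+1}}$ to $\mathring A_i^{w_{i,s},w_{i+1,t}}$ with errors of order $s+t$ that are compensated by the measure $\D s\,\D t/\sqrt{\eta_1^2+s^2}\sqrt{\eta_2^2+t^2}$. Your proposal never mentions $|G|$ or its integral representation, and without them the reduction does not close.
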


%\begin{lemma} \label{lem:reduction} {\rm (Reduction inequalities)}
%	Assume that 
%	$\Psi_n^{\rm av/iso} \prec \psi_n^{\rm av/iso}$ holds for $1 \le n \le 4$, $\ell$-uniformly (i.e.~for spectral parameters $w_j\in \mathbf{D}^{(\epsilon_0, \kappa_0)}_{\ell}$ with $ 1 \ge \eta:= \min_j |\Im w_j|$) in regular matrices (cf.~Definition \ref{def:epsi unif}). Then we have {\color{red} [This is not complete; can only be finished after completion of Section \ref{sec:proofreduc}. Probably, \eqref{eq:reduction av} is correct as it stands, but maybe \eqref{eq:reduction iso} looks a bit different finally $\rightarrow$ \textbf{Giorgio}]}
%	\begin{equation} \label{eq:reduction av}
%		\Psi_4^{\rm av} \prec (N\eta)^2 + (\psi_2^{\rm av})^2\,,
%	\end{equation}
%now $(\ell + 1)$-uniformly (i.e.~for spectral parameters $w_j\in \mathbf{D}^{(\epsilon_0, \kappa_0)}_{\ell+1}$ with $ 1 \ge \eta:= \min_j |\Im w_j|$) in regular matrices,	and for $j \in [2]$, 
%	\begin{equation} \label{eq:reduction iso}
%		\Psi_{2+j}^{\rm iso} \prec N \eta \left( 1+ \frac{\psi_2^{\rm iso}}{\sqrt{N\eta}} \right) \left( 1 + \frac{\psi_{2j}^{\rm av}}{N\eta} \right)^{1/2}\,,
%	\end{equation}
%again both $(\ell + 1)$-uniformly in regular matrices. 
%\end{lemma}

\subsection{Proof of Lemma \ref{lem:multiGll}} \label{subsec:proof Psi est}

Within the proof, we repeatedly use a simple argument, which we call \emph{iteration}.
\begin{lemma}{\rm (Iteration)} \label{lem:iteration}
	For every $D > 0$, $\nu > 0$, and $\alpha \in (0,1)$, there exists some $K = K(D,\nu, \alpha)$, such that whenever (i) $X \prec N^D$ on $\mathbf{D}^{(\epsilon_0, \kappa_0)}_1$ and (ii) $X \prec x$ on $\mathbf{D}^{(\epsilon_0, \kappa_0)}_\ell$ for some $\ell \in \N$, implies that
	\begin{equation*}% \label{eq:itstart}
		X \prec A + \frac{x}{B} + x^{1-\alpha}C^\alpha \qquad \text{on} \quad \mathbf{D}^{(\epsilon_0, \kappa_0)}_{\ell+1}
	\end{equation*}
	for some constants $B \ge N^\nu$ and $A,C > 0$, it also holds that
	\begin{equation*} %\label{eq:itfinal}
		X \prec A+ C \qquad \text{on} \quad \mathbf{D}^{(\epsilon_0, \kappa_0)}_{\ell+K}\,.
	\end{equation*}
\end{lemma}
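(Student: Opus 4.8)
The plan is a deterministic bootstrap (``self‑improving estimate'') argument. First I would set $x_0 := N^D$, which by hypothesis~(i) gives $X \prec x_0$ on $\mathbf{D}^{(\epsilon_0,\kappa_0)}_1$, and define the scalar iterates $x_{n+1} := A + x_n/B + x_n^{1-\alpha}C^\alpha$. Applying the improvement hypothesis~(ii) with $\ell = 1+n$ to the bound $X\prec x_n$ on $\mathbf{D}^{(\epsilon_0,\kappa_0)}_{1+n}$ yields $X\prec x_{n+1}$ on $\mathbf{D}^{(\epsilon_0,\kappa_0)}_{2+n}$; iterating this (using the equivalence of the fixed-$w$ and the $\sup$-over-$w$ formulations recorded after Definition~\ref{def:epsi unif}) gives $X\prec x_n$ on $\mathbf{D}^{(\epsilon_0,\kappa_0)}_{1+n}$ for all $n$. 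It therefore suffices to produce an $N$-independent $K = K(D,\nu,\alpha)$ together with a constant $C_\ast$ such that $x_K \le C_\ast(A+C)$, since then $X \prec x_K \prec A + C$ on $\mathbf{D}^{(\epsilon_0,\kappa_0)}_{1+K}$ (the constant $C_\ast$ being absorbed into the $N^\epsilon$ in the definition of $\prec$); the statement for general $\ell$ is identical after relabelling the domains.

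The remaining work is an elementary analysis of the recursion $x_{n+1} = f(x_n)$ with $f(x) := A + x/B + x^{1-\alpha}C^\alpha$, $B\ge N^\nu$, $\alpha\in(0,1)$. I would argue via two features of $f$. On the one hand, the band $\{x \le A+C\}$ is essentially invariant: if $x \le A+C$ then, using $B\ge N^\nu\ge 1$ and $x^{1-\alpha}C^\alpha\le (A+C)^{1-\alpha}C^\alpha\le A+C$, one gets $f(x)\le 3(A+C)$, and one further step keeps the iterate bounded by $C_\ast(A+C)$; hence once $x_n$ enters this band it stays there for all later $n$. On the other hand, starting from the crude value $x_0=N^D$ the iterates descend into this band in boundedly many steps: as long as $x_n$ is still a large power of $N$ (relative to $C$ and to $A+C$), both $x_n/B\le N^{-\nu}x_n$ and $x_n^{1-\alpha}C^\alpha = x_n\,(C/x_n)^\alpha$ are bounded by a genuine negative power of $N$ times $x_n$, so the $N$-exponent of $x_n$ drops by a fixed amount at each step; this reduces the exponent from $D$ down to $O(1)$ above that of $A+C$ in $O(D/\nu)$ steps, after which the sub-linear factor $x^{1-\alpha}$ (together with $x/B$) contracts $x_n$ the rest of the way into the band in a further number of steps depending only on $\alpha$ and $\nu$. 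Taking $K$ to exceed the sum of these two $N$-independent step counts completes the argument.

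The main obstacle I anticipate is precisely the bookkeeping that makes $K$ genuinely $N$-independent: one must keep \emph{both} the factor $1/B\le N^{-\nu}$ — which does the heavy lifting of the polynomial descent from $N^D$ — and the sub-linear power $x^{1-\alpha}$ — which governs the final approach to the band $\{x\lesssim A+C\}$ — in play simultaneously throughout the case distinction, and verify that the number of ``near-the-band'' steps does not secretly depend on $N$. Everything else, namely the inductive production of the bounds $X\prec x_n$ along the shrinking family $(\mathbf{D}^{(\epsilon_0,\kappa_0)}_\ell)_\ell$ and the absorption of constants into the relation $\prec$, is routine.
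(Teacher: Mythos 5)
The paper offers no proof of this lemma (it is invoked as a ``simple argument''), so there is no proof text to compare against. Your bootstrap scheme is the natural approach, and the first phase — using $1/B\le N^{-\nu}$ to drop the $N$-exponent of $x_n$ by $\nu$ per step — is correct and carries $x_n$ from $N^D$ down to roughly $(A+C)\,N^{\nu/\alpha}$ in $O(D/\nu)$ steps.

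The step I do not believe is your second phase, namely the assertion that once $x_n$ is ``$O(1)$ above'' the band, the sub-linear term $x^{1-\alpha}$ finishes the descent ``in a further number of steps depending only on $\alpha$ and $\nu$''. The deterministic iterate satisfies the \emph{lower} bound $x_{n+1}\ge x_n^{1-\alpha}C^\alpha$; taking for definiteness $A=C=1$, $B=N^\nu$, this gives $x_n\ge N^{D(1-\alpha)^n}$, a genuine positive power of $N$ for every fixed $n$. Concretely, with $\alpha=\tfrac12$, $B=N$, $D=2$ one finds $x_0=N^2$, $x_1\approx 2N$, $x_2\approx\sqrt{2N}$, and $x_{2+m}\approx(2N)^{2^{-m}}$, so a further $m\gtrsim\log\log N$ steps are needed before $x_n$ stops growing with $N$. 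In particular $x_K/(A+C)\to\infty$ as $N\to\infty$ for every fixed $K$, so $X\prec x_K$ does not deliver $X\prec A+C$: the relation $\prec$ demands the bound $X\le N^{\epsilon}(A+C)$ for \emph{every} $\epsilon>0$, including $\epsilon$ smaller than the residual exponent $D(1-\alpha)^K$. This is where the real work of the lemma sits, and your write-up glosses over it. Some extra structure appears to be needed to close the argument — for instance, when $C$ itself carries a negative power of $N$ (say $C\lesssim N^{-c}A$, which occurs in some but not all of the paper's invocations via factors of $(N\eta)^{-c}$), the exponent drops by an \emph{additive} constant at each step and does vanish after $O(D/\nu+1/(c\alpha))$ steps. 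You should either impose such smallness on $C$ explicitly, or explain precisely how the $\prec$-quantification, together with the fact that hypothesis~(ii) holds at every level $\ell$, bridges the gap; as written the argument does not.
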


We can now turn to the proof of Lemma \ref{lem:multiGll}.

\begin{proof}[Proof of Lemma \ref{lem:multiGll}]
	Assume that
	\begin{equation*}
		\Psi_j^{\rm av/iso} \prec \psi_j^{\rm av/iso}\,, \quad j \in [4]\,,
	\end{equation*}
	$\ell$-uniformly, for some fixed $\ell>0$, i.e.~it holds on the domain $\mathbf{D}^{(\epsilon_0, \kappa_0)}_\ell$. Then, by \eqref{eq:masterineq Psi 1 av}--\eqref{eq:masterineq Psi 2 iso}, using that $N\eta\ge 1$ to remove some lower order terms, we immediately obtain
	\begin{equation}
		\label{eq:step1it}
		\begin{split}
			\Psi_1^{\rm av}+\Psi_1^{\rm iso}&\prec 1+\frac{\psi_1^{\rm av}+\psi_1^{\rm iso}}{(N \eta)^{1/2}}+\frac{(\psi_2^{\rm av})^{1/2}+(\psi_2^{\rm iso})^{1/2}}{(N\eta)^{1/4}}\\
			\Psi_2^{\rm av}+\Psi_2^{\rm iso}&\prec 1+\psi_1^{\rm iso} +\frac{(\psi_1^{\rm av})^2+(\psi_1^{\rm iso})^2}{N\eta}+\frac{\psi_2^{\rm av}+\psi_2^{\rm iso}}{(N \eta)^{1/2}} \\
			& \hspace{3cm}+\frac{(\psi_4^{\rm av})^{1/2}}{(N \eta)^{1/2}}+ \frac{(\psi_1^{\rm iso} \psi_3^{\rm iso})^{1/2}}{(N \eta)^{1/2}}  +   \frac{(\psi_3^{\rm iso})^{1/2} + (\psi_4^{\rm iso})^{1/2}}{(N \eta)^{1/4}}
		\end{split}
	\end{equation}
	on the domain $ \mathbf{D}^{(\epsilon_0, \kappa_0)}_{\ell+1}$. We point out that to get the second bound in \eqref{eq:step1it} we also used $\psi_1^{\rm iso}\psi_1^{\rm av}\le(\psi_1^{\rm iso})^2+(\psi_1^{\rm av})^2$. Now, given the first estimate in \eqref{eq:step1it} we obtain a bound for $\Psi_1^{\rm av}+\Psi_1^{\rm iso}$ which is better than the original a priori bound \eqref{eq:apriori Psi}. We can thus replace the $\psi_1^{\rm av/\rm iso}$ from \eqref{eq:apriori Psi} with the rhs. of the first line of \eqref{eq:step1it}.  Using iteration in both lines, we thus get
	%Then, plugging the first line of \eqref{eq:step1it} into the second line and
	\begin{equation}
		\label{eq:step2it}
		\begin{split}
			\Psi_1^{\rm av}+\Psi_1^{\rm iso}&\prec 1+\frac{(\psi_2^{\rm av})^{1/2}+(\psi_2^{\rm iso})^{1/2}}{(N\eta)^{1/4}}\,,\\
			\Psi_2^{\rm av}+\Psi_2^{\rm iso}&\prec 1+\frac{(\psi_4^{\rm av})^{1/2}}{\sqrt{N\eta}}+ \frac{(\psi_2^{\rm av})^{1/4}+(\psi_2^{\rm iso})^{1/4}}{(N\eta)^{1/8}}\cdot\frac{(\psi_3^{\rm iso})^{1/2}}{(N \eta)^{1/2}}  +   \frac{(\psi_3^{\rm iso})^{1/2} + (\psi_4^{\rm iso})^{1/2}}{(N \eta)^{1/4}}\,,
		\end{split}
	\end{equation}
	on the domain $ \mathbf{D}^{(\epsilon_0, \kappa_0)}_{\ell+K}$, for some $K$ as in Lemma~\ref{lem:iteration}. We now use the reduction inequalities from Lemma~\ref{lem:reduction} in the second line of \eqref{eq:step2it}:
	\begin{equation}
		\label{eq:step3it}
		\begin{split}
			&\Psi_1^{\rm av}+\Psi_1^{\rm iso}\prec 1+\frac{(\psi_2^{\rm av})^{1/2}+(\psi_2^{\rm iso})^{1/2}}{(N\eta)^{1/4}}\\
			&\Psi_2^{\rm av}+\Psi_2^{\rm iso}\prec (N \eta)^{1/2}+\frac{\psi_2^{\rm av}}{\sqrt{N\eta}}+(N\eta)^{1/4}(\psi_2^{\rm iso})^{1/2}+(\psi_2^{\rm av})^{1/2}+\frac{(\psi_2^{\rm av}\psi_2^{\rm iso})^{1/2}}{(N\eta)^{1/4}}\,, \\
			& \ +\left( (N\eta)^{1/4}+\frac{(\psi_2^{\rm av})^{1/4}+(\psi_2^{\rm iso})^{1/4}}{(N\eta)^{1/8}}\right)\left(1+\frac{(\psi_2^{\rm iso})^{1/2}}{(N\eta)^{1/4}}+\frac{(\psi_2^{\rm av})^{1/4}}{(N\eta)^{1/8}}+\frac{(\psi_2^{\rm iso})^{1/2}(\psi_2^{\rm av})^{1/4}}{(N\eta)^{3/8}}\right)\,,
		\end{split}
	\end{equation}
	on the domain $ \mathbf{D}^{(\epsilon_0, \kappa_0)}_{\ell+K}$. Next, using iteration once again in the second line of \eqref{eq:step3it}, we obtain
	\begin{equation*}
		%\label{eq:step4it}
		\Psi_1^{\rm av}+\Psi_1^{\rm iso}\prec 1+\frac{(\psi_2^{\rm av})^{1/2}+(\psi_2^{\rm iso})^{1/2}}{(N\eta)^{1/4}}\,, \quad \qquad
		\Psi_2^{\rm av}+\Psi_2^{\rm iso}\prec (N \eta)^{1/2}
	\end{equation*}
	on the domain $ \mathbf{D}^{(\epsilon_0, \kappa_0)}_{\ell+K+K'}$, for some $K'$ as in Lemma~\ref{lem:iteration}. We point out that here we used Schwarz and Young inequalities for several terms. Finally, using iteration one last time we conclude
	\begin{equation*}
		%\label{eq:step4it}
		\Psi_1^{\rm av}+\Psi_1^{\rm iso}\prec 1\,, \qquad\quad \Psi_2^{\rm av}+\Psi_2^{\rm iso}\prec (N \eta)^{1/2}
	\end{equation*}
	on the domain $ \mathbf{D}^{(\epsilon_0, \kappa_0)}_{\ell+K+K'+K''}$, for some $K''$ as in Lemma~\ref{lem:iteration}. This concludes the proof.
\end{proof}
\begin{remark} \label{rmk:L=L(epsilon)}
	We observe that in every application of Lemma \ref{lem:iteration} during the proof of Lemma \ref{lem:multiGll}, the parameter $D$ is uniformly bounded by, say, $D \le 10$, as follows by estimating every resolvent in $\Psi_k^{\rm av/iso}$ by norm and using the trivial $1/\eta$-bound on inverse of the stability operator in the iterative definition of $M(w_1, ... , w_k)$ given in Definition \ref{def:Mdef}. A further quick inspection of the above proof shows, that $\alpha$ can be chosen as fixed $\alpha = 1/2$. Finally, the parameter $\nu$ is lower bounded by (some universal positive constant times) $\epsilon$, since $N \eta \ge N^{\epsilon/2}$ by construction of the initial domain \eqref{eq:Omega00}. Hence, the constants $K$, $K'$, and $K''$ only depend on $\epsilon$ and therefore also the maximal number $L = L(\epsilon)$ of domain shrinkings.
\end{remark}

%{\color{red}[Comment on the appearing parameters $\alpha, D, \nu$: $\alpha$ is harmless, $D$ is uniformly bounded in all applications, say by $10$, and $\nu$ is determined by $\epsilon > 0$ (I chose $\nu$ instead of $\delta$, since $\delta$ is already reserved for the cutoff functions $\mathbf{1}_\delta^\sigma$) from the final domain ($\nu \ge \epsilon/10$, see Proposition \ref{prop:master} Lemma \ref{lem:reduction}).]}

\section{Proof of the master inequalities, Proposition \ref{prop:master}} \label{sec:proofmaster}
Before going into the proofs of the master inequalities, we state a simple lemma, which will frequently be used in the following. Recall that the deformation $\Defo \in \C^{N \times N}$ is fixed and hence omitted from the notation.
\begin{lemma} \label{lem:intrepG^2}{\rm (Integral representations for products of resolvents)} \\ %{\color{red}[Do it independently for each component]}
	Let $k \in \N$ and $w_1, ... , w_k \in \C\setminus \R$ be spectral parameters, whose imaginary parts have equal sign, i.e.~$\sgn(\Im w_1) = ... = \sgn(\Im w_k) =: \tau$. Then, for any $J \subset \R$ being a union of compact intervals such that $\Re w_i \in \mathring{J}$ (the interior) for all $i \in [k]$ and $0< \tilde{\eta} < \eta := \min_j |\Im w_j|$, we have the integral representation
	\begin{equation} \label{eq:intrepG^2}
		\prod_{j=1}^{k} G(w_j) =  \frac{1}{2\pi \I} \int_{\Gamma} G(z) \prod_{j=1}^{k} \frac{1}{z - w_j} \, \D z\,,
	\end{equation}
	where the contour $\Gamma$ from \eqref{eq:intrepG^2} is defined as (see Figure \ref{fig:J})
	\begin{equation} \label{eq:contour}
		\Gamma \equiv \Gamma^\tau_{\tilde{\eta}}(J) := \begin{cases}
			\partial \big( J\times [\I \tilde{\eta}, \I \infty) \big) \quad      & \text{if} \quad \tau = + \\
			\partial \big( J \times (- \I \infty, - \I \tilde{\eta}] \big) \quad & \text{if} \quad \tau = -
		\end{cases}
	\end{equation}
	and the boundary is parameterised in counter-clockwise orientation.
	% The representation \eqref{eq:intrepG^2} remains valid if $G(z)$ is replaced by $\Im G(z)$ in the integrand. 
\end{lemma}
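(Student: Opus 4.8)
The plan is to reduce the statement to the single-variable Cauchy-type integral representation for one resolvent and then use linearity to assemble the product. The central observation is a residue computation: for fixed $z$ on an appropriate contour, the function $w \mapsto 1/(z-w)$ is holomorphic away from $w=z$, while $G(z)$ is holomorphic in $z$ on the resolvent set of the self-adjoint matrix $H$, i.e.\ on $\C \setminus \R$. So I would first fix all parameters and observe that the right-hand side of \eqref{eq:intrepG^2} is a genuine (absolutely convergent) contour integral: on the contour $\Gamma^\tau_{\tilde\eta}(J)$ the distance from $z$ to the real axis is at least $\tilde\eta$, hence $\|G(z)\| \le 1/\tilde\eta$ there, and the integrand decays like $|z|^{-k}$ as $|z| \to \infty$ along the two vertical rays, which is integrable once $k \ge 2$; for $k=1$ one closes the contour at infinity in the usual principal-value sense, or simply notes that the identity for $k=1$ is the standard Cauchy/Helffer--Sj\"ostrand-type formula and can be obtained by a limiting argument from $k \ge 2$ applied to $G(w)G(w')$ and then letting $w' \to \infty$. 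Since $\Gamma$ is a closed contour (the boundary of a semi-infinite rectangle over $J$), traversed counter-clockwise, enclosing exactly the part of the spectrum of $H$ lying in $J$ together with all poles $w_j$ of the rational prefactor, I can evaluate the integral by residues.

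The key steps, in order: (i) Expand $\prod_{j=1}^k (z-w_j)^{-1}$ into partial fractions, $\prod_j (z-w_j)^{-1} = \sum_{j} c_j (z-w_j)^{-1}$ with $c_j = \prod_{l \ne j}(w_j - w_l)^{-1}$, valid when the $w_j$ are distinct; the general case follows by continuity since both sides of \eqref{eq:intrepG^2} are continuous (indeed analytic) in $(w_1,\dots,w_k)$ on the relevant domain. (ii) For each $j$, compute $\frac{1}{2\pi\I}\int_\Gamma G(z)(z-w_j)^{-1}\,\D z$. The integrand is meromorphic in $z$ inside the region bounded by $\Gamma$, with a simple pole at $z=w_j$ (residue $G(w_j)$) and with $G(z)$ itself contributing the spectral projections of $H$ onto eigenvalues inside $J$; but because $\Gamma$ is a \emph{closed} contour and $G(z) = \sum_i (\lambda_i - z)^{-1}\, \bm w_i \bm w_i^*$, the poles of $G(z)$ at $z=\lambda_i$ for $\lambda_i \in J$ have residues $-\bm w_i \bm w_i^*$, and traversing the full boundary picks up \emph{all} such residues; the contributions from the eigenvalues give back precisely the spectral decomposition, but one must be careful that the orientation is such that the pole at $z=w_j$ is enclosed while the cancellation structure works out. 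The cleanest route is: the contour integral of $G(z) f(z)$ over $\Gamma$ equals $2\pi\I$ times the sum of residues inside, which are the pole at $z = w_j$ contributing $G(w_j)$ (it lies inside the rectangle since $\Re w_j \in \mathring J$ and $|\Im w_j| > \tilde\eta$), plus the poles of $G$ at $z=\lambda_i$ with $\lambda_i \in J$; but $f(z) = (z-w_j)^{-1}$ is bounded on $\R$, and summing $c_j$ over $j$ makes $\sum_j c_j f_j(\lambda_i) = \prod_j(\lambda_i - w_j)^{-1}$, so the spectral contribution reassembles to $\sum_i \prod_j(\lambda_i-w_j)^{-1}\bm w_i\bm w_i^* \cdot(\text{sign from orientation})$, which one checks cancels or is absent depending on whether $J$ is chosen to contain no spectrum — but in fact we do \emph{not} need $J$ to avoid the spectrum. (iii) Summing over $j$ with coefficients $c_j$ recombines to $\prod_{j=1}^k G(w_j)$ via the resolvent identity $G(w_j) = \big(\prod_{l\ne j} G(w_l)\big)^{-1}\cdots$; more directly, $\sum_j c_j G(w_j) = \prod_j G(w_j)$ is exactly the partial-fraction identity applied in the functional calculus, since $\prod_j (\lambda - w_j)^{-1} = \sum_j c_j (\lambda-w_j)^{-1}$ for every real $\lambda$, and then one integrates against the spectral measure of $H$.

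Actually the slickest proof avoids partial fractions on the operator side entirely: write $G(z) = \int_\R (\lambda - z)^{-1}\, \D E_H(\lambda)$ where $E_H$ is the projection-valued spectral measure of $H$, plug into the right-hand side of \eqref{eq:intrepG^2}, exchange the (absolutely convergent) integrals by Fubini, and reduce to the scalar identity
\begin{equation*}
\frac{1}{2\pi\I}\int_{\Gamma} \frac{1}{\lambda - z}\,\prod_{j=1}^k \frac{1}{z-w_j}\,\D z = \prod_{j=1}^k \frac{1}{\lambda - w_j}
\end{equation*}
for every $\lambda \in J$ (equivalently, for every $\lambda$ in the spectrum of $H$, which by hypothesis lies in the real axis and the relevant part in $\mathring J$). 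This scalar identity is a one-line residue computation: the integrand, as a function of $z$, has simple poles at $z = \lambda$ (inside $\Gamma$, with residue $-\prod_j(\lambda - w_j)^{-1}$) and at $z = w_j$ for each $j$ (with $\Im w_j$ of sign $\tau$, hence inside $\Gamma$); summing residues and noting the integrand decays like $|z|^{-(k+1)}$ at infinity so there is no contribution from the arc at infinity, one gets that the integral equals the sum of all residues, and the residues at the $w_j$ telescope via partial fractions to exactly $-\prod_j(\lambda-w_j)^{-1} + (\text{something})$ — a direct check shows the total is $\prod_j (\lambda-w_j)^{-1}$ with the correct sign from the counter-clockwise orientation (the pole at $z=\lambda$ contributes with the opposite sign because the contour, being the boundary of the half-strip \emph{above} $\I\tilde\eta$, has the real point $\lambda$ \emph{outside} it — wait, $\lambda \in J \subset \R$ and the half-strip is $J \times [\I\tilde\eta,\I\infty)$, so $\lambda$ is on the boundary segment at height $\tilde\eta$? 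No: the strip starts at height $\tilde\eta > 0$, so $\lambda$ with $\Im\lambda = 0$ lies \emph{below} the strip, hence outside $\Gamma$). So in fact only the poles at $z = w_j$ are enclosed, and $\sum_{j} \mathrm{Res}_{z=w_j} = \sum_j \big[\prod_{l \ne j}(w_j-w_l)^{-1}\big]\frac{1}{\lambda - w_j} = \prod_j (\lambda - w_j)^{-1}$ by the partial-fraction expansion of the right-hand side. This proves the scalar identity, Fubini gives \eqref{eq:intrepG^2}, and the identical argument with $\Im G(z) = \frac{1}{2\I}(G(z) - G(z)^*) = \int_\R \Im\big[(\lambda-z)^{-1}\big]\,\D E_H(\lambda)$ in place of $G(z)$ gives the last sentence.

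\textbf{Main obstacle.} The only genuinely delicate point is the convergence/justification at infinity and the precise bookkeeping of which poles lie inside $\Gamma$: one must verify that each $w_j$ (with $|\Im w_j| \ge \eta > \tilde\eta$ and $\Re w_j \in \mathring J$) is strictly enclosed by the half-strip contour while the spectrum of $H$ on $\R$ is \emph{not} enclosed (it sits below the bottom edge at height $\tilde\eta$), so that no spectral residues appear — this is exactly why the representation is clean and why one needs $\tilde\eta < \eta$ rather than $\tilde\eta \le \eta$. For $k=1$ the integrand only decays like $|z|^{-2}$, which is still fine; there is no issue. I expect the whole argument to be essentially a page, with the residue calculus being routine once the contour geometry is pinned down.
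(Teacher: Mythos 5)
Your proof of the main identity \eqref{eq:intrepG^2} is correct and follows the natural route the paper gestures at with ``this easily follows from residue calculus'': spectral decomposition $G(z) = \int_\R(\lambda-z)^{-1}\,\D E_H(\lambda)$, Fubini, and the scalar residue computation. After some back-and-forth in the middle of your proposal you land on the right picture: for every real $\lambda$ (not just $\lambda\in J$ --- the hypothesis says nothing about the spectrum of $H$ being contained in $J$, though this turns out not to matter), the pole at $z=\lambda$ lies strictly below the bottom edge of $\Gamma$ at height $\tilde\eta>0$ and hence outside, while each $w_j$ lies strictly inside because $\tilde\eta<\eta\le|\Im w_j|$ and $\Re w_j\in\mathring J$; the residues at the $w_j$ recombine by partial fractions to $\prod_j(\lambda-w_j)^{-1}$. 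The Fubini step and the vanishing at infinity go through exactly as you indicate once you use $\|G(z)\|\le|\Im z|^{-1}$ together with $\prod_j|z-w_j|^{-1}=O(|z|^{-k})$, giving $O(|z|^{-(k+1)})$ decay on the vertical rays for all $k\ge1$ (your earlier worry about the $k=1$ case was based on the cruder bound $\|G(z)\|\le\tilde\eta^{-1}$).

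The last sentence of your proposal, however, contains a genuine gap: you claim that ``the identical argument'' with $\Im G(z)$ in place of $G(z)$ gives the $\Im G$ version, but $\Im G(z)$ is not holomorphic in $z$. It equals $\frac{1}{2\I}\bigl[G(z)-G(\bar z)\bigr]$, and $G(\bar z)$ is anti-holomorphic in $z$, so the residue theorem does not apply to $\Im G(z)\prod_j(z-w_j)^{-1}$. Indeed the claimed identity fails: take the scalar case $H=0$, $G(z)=-1/z$, $k=1$, $w_1=\I\eta_1$, $J=[-a,a]$. Then $\Im G(z)=\Im z/|z|^2\ge0$, and parameterising the three pieces of $\Gamma$ one finds by symmetry that $\int_\Gamma\frac{\Im G(z)}{z-\I\eta_1}\,\D z$ is purely imaginary of the form $\I P$ with $P>0$, so $\frac{1}{2\pi\I}\int_\Gamma\frac{\Im G(z)}{z-\I\eta_1}\,\D z=\frac{P}{2\pi}$ is real and positive --- but $G(\I\eta_1)=\I/\eta_1$ is purely imaginary, so they cannot coincide. (Even in the limit $a\to\infty$, where the vertical sides drop out, the integral evaluates to $\frac{1}{2\I}G(\I\eta_1)$, off by a constant; for compact $J$ the sides contribute an additional $a$-dependent piece.) A correct treatment of the $\Im G$ case would require splitting off the $G(\bar z)$ piece and controlling it separately, not the ``identical argument''. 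This imprecision seems to be inherited from the lemma statement itself, which the paper only ever invokes heuristically; still, it is a real gap in your argument and should have been flagged rather than asserted to go through automatically.
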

\begin{proof}
	This easily follows from residue calculus. {%and resolvent identitities \eqref{eq:resolid}.
	For example, for $k=2$, we have that 
	\begin{equation*}
\frac{1}{\lambda_i - w_1} \frac{1}{\lambda_i - w_2} = \frac{1}{2 \pi \ii} \int_\Gamma \frac{1}{\lambda_i - z} \frac{1}{z-w_1} \frac{1}{z-w_2} \dif z %= \frac{1}{\lambda_i - w_1} \frac{1}{w_1 - w_2} + \frac{1}{\lambda_i - w_2} \frac{1}{w_2 - w_1} 
	\end{equation*}
	for every eigenvalue $\lambda_i$ of $H$ by definition of the contour. This implies \eqref{eq:intrepG^2} for $k=2$ by spectral decomposition for $H$; the argument for general $k$ is analogous.}
\end{proof}
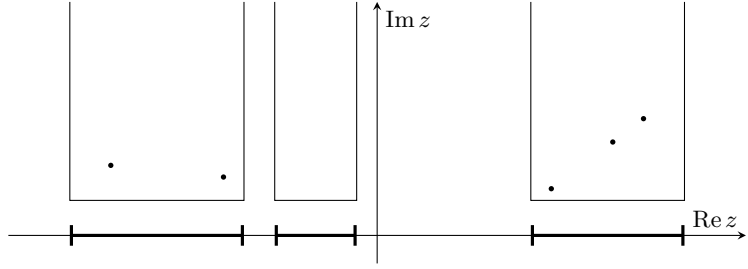
\begin{figure}[htbp]
	\centering
	\begin{tikzpicture}
		\begin{axis}[no markers,
				width=0.9\textwidth,
				xlabel=$\Re z$,
				ylabel=$\Im z$,
				height=0.4\textwidth,
				axis lines=middle,
				ymin=-.12,ymax=1,
				xmin=-3.6,xmax=3.6,
				ytick={0},yticklabels={0},
				xtick={0},
				xticklabels={0},
				axis on top=true, color=black]
			\addplot[mark=none] coordinates {(-3,1) (-3,.15) (-1.3,.15) (-1.3,1)};
			\addplot[mark=none] coordinates {(-1.,1) (-1.,.15) (-.2,.15) (-.2,1)};
			\addplot[mark=none] coordinates {(1.5,1) (1.5,.15) (3,.15) (3,1)};
			\draw[|-|,very thick] (-3,0) -- (-1.3,0);
			\draw[|-|,very thick] (-1,0) -- (-.2,0);
			\draw[|-|,very thick] (1.5,0) -- (3,0) node [midway,below] {%\footnotesize$J$
			};
			\fill[black] (1.7,.2) circle (1pt);
			\fill[black] (2.6,.5) circle (1pt);
			\fill[black] (2.3,.4) circle (1pt);
			\fill[black] (-2.6,.3) circle (1pt);
			\fill[black] (-1.5,.25) circle (1pt);
		\end{axis}
	\end{tikzpicture}
	\caption{Depicted is the scenario from Lemma \ref{lem:intrepG^2} with five spectral parameters represented as dots in the upper half plane. Moreover, we indicated the union of compact intervals $J$ on the real axis and the contour $\Gamma$ as described in \eqref{eq:contour}. Note that one of the three intervals constituting $J$ does not contain any $\Re w_j$.}
	\label{fig:J}
\end{figure}

We recall the definition of the \emph{second order renormalisation}, denoted by underline, from \cite{ETHpaper}. For functions $f(W), g(W)$ of the random matrix $W$ (see \eqref{eq:herm}), we define
\begin{equation} \label{eq:underline}
	\underline{f(W) W g(W)} := f(W) W g(W) - \widetilde{\mathbb{E}} \big[  (\partial_{\widetilde{W}}f)(W)\widetilde{W} g(W) + f(W)  \widetilde{W} (\partial_{\widetilde{W}}g)(W) \big]\,,
\end{equation}
where $\partial_{\widetilde{W}}$ denotes the directional derivative in the direction of
\begin{equation*}
	\widetilde{W} := \begin{pmatrix}
		0 & \widetilde{X} \\ \widetilde{X}^* & 0
	\end{pmatrix}\,,
\end{equation*}
where $\widetilde{X}$ is a complex Ginibre matrix that is independent of $W$. The expectation is taken w.r.t.~the matrix $\widetilde{X}$. Note that, if $W$ itself consists of a complex Ginibre matrix $X$, then $\E \underline{f(W)Wg(W)} = 0$, while for $X$ with a general distribution this expectation is independent of the first two moments of $X$. In other words, the underline renormalises the product $f(W)W g(W)$ to second order.  We remark that underline \eqref{eq:underline} is a well-defined notation, if the `middle' $W$ to which the renormalisation refers is unambiguous. This is always be the case in all our proofs, since the functions $f, g$ will be products of resolvents, never involving explicitly monomials in $W$.

We note that {for any deterministic matrix $R\in \C^{2N\times 2N}$ we have}
\begin{equation*} %\label{eq:renormS}
	\widetilde{\E} \widetilde{W} R \widetilde{W} = 2 \langle R E_2 \rangle E_1 + 2 \langle R E_1 \rangle E_2 = \sum_\sigma \sigma \langle R E_\sigma \rangle E_\sigma = \mathcal{S}[R]
\end{equation*}
and furthermore, that the directional derivative of the resolvent is given by
\begin{equation*}
	\partial_{\widetilde{W}} G = -G \widetilde{W} G\,.
\end{equation*}
For example, in the special case $f(W) = 1$ and $g(W) = (W + \hat{\Defo} -w)^{-1} = G$, we thus have
\begin{equation*}
	\underline{WG} = WG + \mathcal{S}[G] G
\end{equation*}
by definition of the underline in \eqref{eq:underline}.

Using this underline notation in combination with the identity $G (W + \hat{\Defo} - w) = E_+$ and the defining equation \eqref{eq:MDE} for $M$, we have
\begin{equation} \label{eq:start}
	G = M - M \underline{W G} + M \mathcal{S}[G-M] G  = M - \underline{GW}M + G \mathcal{S}[G-M]M\,.
\end{equation}
Recall that $\langle GE_-\rangle=0$ (see below~\eqref{eq:chiral}) which %For the following computations, it is important to observe that, by means of the Schur complement formula, the resolvent can be written in the block form
%\begin{equation*} 
%	G(w) = \begin{pmatrix}
%		w \big((X+\Defo)(X+\Defo)^* - w^2\big)^{-1} & \big((X+\Defo)(X+\Defo)^* - w^2\big)^{-1}(X+\Defo) \\
%		(X+\Defo)^* \big((X+\Defo)(X+\Defo)^* - w^2\big)^{-1} & w \big((X+\Defo)^*(X+\Defo) - w^2\big)^{-1}
%	\end{pmatrix}\,. 
%\end{equation*}
%This 
immediately yields that
$\mathcal{S}[G] = \sum_\sigma \sigma \langle G E_\sigma\rangle E_\sigma =  \langle G \rangle$. Moreover, we have that $\mathcal{S}[M] = \langle M\rangle$, as follows from \eqref{eq:Mu}--\eqref{eq:mde}, and hence $\mathcal{S}[\cdot]$ effectively acts like a trace on $G$ and $M$, i.e.
\begin{equation} \label{eq:GMtrace}
	\mathcal{S}[G-M] = \langle G- M \rangle \,.
\end{equation}

Now, similarly to \cite{multiG}, the key idea of the proof of Proposition~\ref{prop:master} is using \eqref{eq:start} for some $G_j$ in a chain $G_1 A_1 \, \cdots \, A_{k} G_{k+1}$ and extending the renormalisation \eqref{eq:underline} to the whole product at the expense of adding resolvent products of shorter length. This will be done for each of the four estimates from Proposition~\ref{prop:master} separately and presented in an \emph{underlined lemma} in the beginning of each of the following subsections. Afterwards, the renormalisation of the whole product will be handled by cumulant expansion, exploiting that its expectation vanishes up to second order. While the proofs of the underlined lemmas for $\Psi_1^{\rm av/iso}$ are presented in detail, we defer the analogous arguments for $\Psi_2^{\rm av/iso}$ to Appendix \ref{app:underlinedproofs}.

%Throughout the entire proofs in this section, we will focus on the most complicated case, where \emph{all} pairs $(w,w')$ of involved spectral parameters satisfy
%\begin{equation} \label{eq:2dimregass}
%\mathbf{1}_{\delta}^\pm(w,w') = 1\,,
%\end{equation}
%such that, in particular, (i) all spectral parameters are close to the imaginary axis, (ii) all necessary regularisations will involve both summands in \eqref{eq:reg A1A2}, and (iii) all required perturbative estimates are conducted as in \eqref{eq:regularperturb1} and \eqref{eq:regularperturb2} with every characteristic function set equal to one. However, at several instances we will keep on carrying $\mathbf{1}^\sigma_\delta(w,w')$ in order to illustrate their significance for cases where $\mathbf{1}^\sigma_\delta(w,w')= 0$. 
%All other constellations of spectral parameters that do not satisfy \eqref{eq:2dimregass} can be treated similarly after appropriate distinctions of various cases.
%In fact all these regimes are easier since the regularisation becomes simpler (see \eqref{eq:reg A1A2} and Remark \ref{rmk:regular}~(i)). After completing the detailed proof of the most complicated case, in Section~\ref{subsec:2dimregass} we will explain the necessary modifications and simplifications in the proof of all other cases, but the details are left to the reader.

%In fact, the treatment of all other constellations (requiring numerous tedious case distinctions) is similarly to the case \eqref{eq:2dimregass} and hence left to the reader (see also Remark~\ref{rmk:2dimregass} below). 

\subsection{Proof of the first master inequality \eqref{eq:masterineq Psi 1 av}} \label{subsec:proofmaster1}
Let $w \equiv w_1 $ be a spectral parameter in $\mathbf{D}_{\ell+1}^{(\epsilon_0, \kappa_0)}$ (in particular in the bulk of the scDos, recall \eqref{eq:Omegal}) and $A \equiv A_1$ a $(w, w)$-\emph{regular} matrix (cf.~Definition \ref{def:regobs}). We use the notation $w = e + \I \eta$ and we assume without loss of generality~(by conjugation with $E_-$, see \eqref{eq:chiral}) that $1 \ge \eta > 0$. We also assume that \eqref{eq:apriori Psi} holds (in this subsection we will need it only for $\Psi_1^{\rm av}$ and $\Psi_2^{\rm av}$).
\begin{lemma} {\rm (Representation as full underlined)} \label{lem:underlined1} \\
	For any regular matrix $A = \mathring{A}$ we have that
	\begin{equation} \label{eq:underlined Psi 1 av}
		\langle (G-M)\mathring{A} \rangle = - \langle \underline{W G\mathring{A}'} \rangle + \mathcal{O}_\prec\big( \mathcal{E}_1^{\rm av} \big)
	\end{equation}
	for some other regular matrix $A' = \mathring{A}'$, which linearly depends on $A$ (see \eqref{eq:A'1av def} for the precise formula for $A'$). For the error term in \eqref{eq:underlined Psi 1 av}, we used the shorthand notation
	\begin{equation} \label{eq:E1av}
		\mathcal{E}_1^{\rm av} := \frac{1}{N \eta^{1/2}}\left(1 + \frac{\psi_1^{\rm av}}{N \eta}\right)\,.
	\end{equation}
\end{lemma}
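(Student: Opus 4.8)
The plan is to follow the same scheme as in the Wigner setting of~\cite{multiG,A2}, starting from the self‑consistent expansion~\eqref{eq:start} and "spreading" the single renormalisation onto the whole observable. Since $E_+$ is the $2N\times 2N$ identity and $\langle GE_-\rangle=\langle ME_-\rangle=0$, the operator $\mathcal S$ acts like a normalised trace on $G-M$, i.e.~$\mathcal S[G-M]=\langle G-M\rangle$ (this is~\eqref{eq:GMtrace}), so~\eqref{eq:start} reads $G-M=-M\,\underline{WG}+\langle G-M\rangle\,MG$. Pairing with $\mathring A$, using cyclicity of the trace and the (immediate) fact that the renormalisation~\eqref{eq:underline} is unchanged upon multiplying the underlined product by a \emph{deterministic} matrix on either side, I would obtain
\[
\langle (G-M)\mathring A\rangle=-\big\langle \underline{WG\,(\mathring A M)}\big\rangle+\langle G-M\rangle\,\langle MG\mathring A\rangle,
\]
where $\langle G-M\rangle\prec(N\eta)^{-1}$ by the single‑resolvent law~\eqref{eq:single G}.

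Next I would deal with the fact that $\mathring A M$ is generally \emph{not} $(w,w)$‑regular: decompose it by~\eqref{eq:reg A1A2} as $\mathring A M=\mathring{(\mathring A M)}^{w,w}+\beta_+E_++\beta_-E_-$ with bounded scalars $\beta_\pm$ depending linearly on $A$ (nonzero only near the imaginary axis). The regular part gives the sought full‑underlined term $-\langle \underline{WG\,\mathring{(\mathring A M)}^{w,w}}\rangle$; the $E_\pm$ parts give $-\beta_+\langle\underline{WG}\rangle-\beta_-\langle\underline{WG}E_-\rangle$, which I would reduce using the defining identity $WG=E_+-\hat\Defo G+wG$ together with $M^{-1}=\hat\Defo-(w+m)$ (equivalent to~\eqref{eq:MDE}, since $\mathcal S[M]=mE_+$) and the chiral symmetry~\eqref{eq:chiral}. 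A short computation then gives $\langle\underline{WG}\rangle=(w+2m)\langle G-M\rangle+\langle G-M\rangle^2-\langle (G-M)\hat\Defo\rangle$ and $\langle\underline{WG}E_-\rangle=-\langle (G-M)(E_-\hat\Defo)\rangle$. The key structural input here is that, by a direct computation with the block form~\eqref{eq:Mu}--\eqref{eq:mde} of $M$, the matrix $\hat\Defo$ is itself $(w,w)$‑regular, and $E_-\hat\Defo$ has only an $E_-$‑component in its singular part, which drops out of $\langle (G-M)\cdot\rangle$ because $\langle (G-M)E_-\rangle=0$; hence both $\langle (G-M)\hat\Defo\rangle$ and $\langle (G-M)(E_-\hat\Defo)\rangle$ are single‑resolvent traces against \emph{regular} observables, controlled by $\Psi_1^{\rm av}\prec\psi_1^{\rm av}$ (up to the factor $(N\eta^{1/2})^{-1}$).

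I would then treat the correction $\langle G-M\rangle\langle MG\mathring A\rangle$ in the same way (regularising $\mathring A M$ once more inside it) and collect everything. The terms carrying a $\langle G-M\rangle$ in front of a regular‑observable trace are $\prec(N\eta)^{-1}\cdot\psi_1^{\rm av}(N\eta^{1/2})^{-1}=\psi_1^{\rm av}N^{-2}\eta^{-3/2}\le\mathcal E_1^{\rm av}$; several terms that are pure powers of $\langle G-M\rangle$ (e.g.~$\pm\beta_+\langle G-M\rangle^2$) cancel outright — and this cancellation of \emph{all} pure inverse powers of $N\eta$ is essential, since an uncancelled $(N\eta)^{-2}$ would translate into a diverging term $N^{-1/2}\eta^{-3/2}$ in~\eqref{eq:masterineq Psi 1 av}. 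What remains of a priori size $(N\eta)^{-1}$ are deterministic‑coefficient multiples of $\langle G-M\rangle$ (and of the already‑good $\langle (G-M)\hat\Defo\rangle$); using the self‑consistent relation $\langle G-M\rangle\bigl(1-\langle MG\rangle\bigr)=-\langle\underline{WGM}\rangle$ (again from~\eqref{eq:start}), the a priori bound $|\langle\underline{WGM}\rangle|\prec(N\eta)^{-1}$, and $|1-\langle M^2\rangle|\gtrsim1$ in the bulk, each such $\langle G-M\rangle$ can be traded for $-\langle\underline{WGM}\rangle/(1-\langle M^2\rangle)$ up to an $O_\prec((N\eta)^{-2})$ error; and $-\langle\underline{WGM}\rangle=-\langle\underline{WG}\,M\rangle=-\langle\underline{WG\,(cM)}\rangle$ is again a full‑underlined quantity. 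Regularising $cM$ and iterating this absorption finitely many times (each round carries an extra bounded deterministic factor, so the resulting finite linear system — whose defining operator is invertible in the bulk — can be solved) collects all these contributions into $-\langle\underline{WG\mathring A'}\rangle$, with $\mathring A'$ the explicit $A$‑linear regular matrix of~\eqref{eq:A'1av def}.

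The hard part will be precisely this last bookkeeping: showing that the a priori $O((N\eta)^{-1})$ pieces organise \emph{exactly} into (i) the full‑underlined term and (ii) errors bounded by $\mathcal E_1^{\rm av}$, with no leftover pure powers of $(N\eta)^{-1}$. This hinges on the MDE identities ($M^{-1}=\hat\Defo-(w+m)$, $\mathcal S[M]=mE_+$, $M(w)M(\bar w)=\Im M/(\eta+\pi\rho(w))$, and the resulting $\langle M^2\mathring A\rangle=(m-\bar m)\beta_+$), on the regularity of $\hat\Defo$ and $E_-\hat\Defo$, and on the invertibility of the small linear system defining $A'$; the precise formula for $A'$ in~\eqref{eq:A'1av def} is dictated by these requirements. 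The subsequent cumulant‑expansion estimate of $\langle\underline{WG\mathring A'}\rangle$, which upgrades~\eqref{eq:underlined Psi 1 av} to the master inequality~\eqref{eq:masterineq Psi 1 av}, is the content of the rest of this subsection and is not part of this lemma.
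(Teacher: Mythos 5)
Your starting point omits the single crucial step on which the paper's proof hinges. The paper first \emph{inverts} the one-body stability operator $\mathcal{B}=1-M\mathcal{S}[\cdot]M$ to arrive at $\langle(G-M)A\rangle=-\langle\underline{WG}\,\mathcal{X}[A]M\rangle+\langle G-M\rangle\langle(G-M)\mathcal{X}[A]M\rangle$, with $\mathcal{X}=(\mathcal{B}^*)^{-1*}$. The whole point of this inversion is Lemma~\ref{lem:Psi1av stable}: for $A$ regular, the $E_+$-singular coefficient of $\mathcal{X}[A]M$ is $\mathcal{O}(\eta)$, so only the $E_-$-direction survives. You skip $\mathcal{X}$ and pair \eqref{eq:start} directly with $\mathring{A}$, producing $-\langle\underline{WG\,\mathring{A}M}\rangle+\langle G-M\rangle\langle MG\mathring{A}\rangle$. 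Now the $E_+$-coefficient $\beta_+$ of $\mathring{A}M$ (equivalently $\langle M^2\mathring{A}\rangle/(m-\bar m)$) is generically $\mathcal{O}(1)$ for regular $A$, \emph{not} only near the imaginary axis as you assert --- the cutoff $\mathbf{1}^+_\delta(w,w)=\phi_\delta(0)\phi_\delta(\eta)^2$ is on for all $|\eta|<\delta/2$. Consequently your expansion carries the term $-\beta_+\langle\underline{WG}\rangle\sim\beta_+\langle G-M\rangle\prec(N\eta)^{-1}$, which is much larger than $\mathcal{E}_1^{\rm av}$. Your claimed cancellation does kill $\langle G-M\rangle^2$, but the coefficient of $\langle G-M\rangle$ collapses to $-(w+m+\bar m)\beta_+$, which for general $\Defo$ is \emph{not} $\mathcal{O}(\eta)$ (that identity is special to the pure semicircle). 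Each further round of absorption via $\langle G-M\rangle(1-\langle MG\rangle)=-\langle\underline{WGM}\rangle$ regenerates a $c_+\langle G-M\rangle^2\prec(N\eta)^{-2}$ error with $c_+$ the generically order-one $E_+$-coefficient of $M$, which again exceeds $\mathcal{E}_1^{\rm av}$ for $\eta$ near $N^{-1+\epsilon}$. So the scheme does not close.

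Two further points. First, your claim that $\hat{\Defo}$ is $(w,w)$-regular is false: a direct block computation gives $\langle\Im M\,\hat{\Defo}\rangle=\Im\!\big(\langle M_{11}\Defo\Defo^*\rangle/(w+m)\big)\ne 0$ in general; it is only $E_-\hat{\Defo}$ for which $(E_-\hat{\Defo})^\circ=E_-\hat{\Defo}$, as the paper shows. Second, even for this genuinely regular observable you cannot simply estimate $\langle(G-M)E_-\hat{\Defo}\rangle\prec\psi_1^{\rm av}/(N\sqrt{\eta})$ and stop: that would leave an error of order $\psi_1^{\rm av}/(N\sqrt{\eta})$, which exceeds $\mathcal{E}_1^{\rm av}=\frac{1}{N\sqrt{\eta}}(1+\frac{\psi_1^{\rm av}}{N\eta})$ whenever $\psi_1^{\rm av}\gg 1$ (precisely the a priori situation the master-inequality iteration starts from). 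The paper's proof instead writes the same full-underlined expansion for $A=E_-\hat{\Defo}$ and solves the resulting one-dimensional linear system --- this is exactly the origin of the denominator $1-\mathbf{1}_\delta^-c_-(\mathcal{X}[E_-\hat{\Defo}]M)$ in \eqref{eq:A'1av def}, whose invertibility is Lemma~\ref{lem:splittingstable 1av}. Your self-described ``hard bookkeeping'' is therefore not a technicality left to the reader but the missing core of the argument: without the stability input from $\mathcal{X}$, the correct identification of $E_-\hat{\Defo}$ as the relevant regular observable, and the self-consistent solve, the lemma is not proved.
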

Having this approximate representation of $\langle (G-M)\mathring{A} \rangle$ as a full underlined term at hand, we turn to the proof of \eqref{eq:masterineq Psi 1 av} via a (minimalistic) cumulant expansion: {For fixed indices $a,b$ and a smooth function $f(W)$ we have 
\begin{equation}
	\E w_{ab} f(W) = \sum_{l_1+l_2\ge 1} \frac{1}{l_1! l_2!} \kappa(\{w_{ab}\}^{l_1+1}, \{w_{ba}\}^{l_2}) \E \partial_{ab}^{l_1} \partial_{ba}^{l_2} f(W) ,
\end{equation}
where $\kappa(\{w_{ab}\}^{l_1+1}, \{w_{ba}\}^{l_2})$ is the cumulant $l_1+1$ copies of the random variable $w_{ab}$ and $l_2$ copies of the random variable $w_{ba}$, and $\partial_{ab}^{l_1} \partial_{ba}^{l_2}$ denotes the $l_1$-th derivative in the $ab$-entry and the $l_2$-th derivative in the $ba$-entry.
}
\begin{proof}[Proof of \eqref{eq:masterineq Psi 1 av}] %We restrict our attention to the situation, where $\mathbf{1}_\delta(w,w) = 1$, i.e.~$\eta \le \delta$ (recall the definition of the characteristic function $\mathbf{1}_\delta$ from \eqref{eq:case regulation} and \eqref{eq:case regulation2}). 
	Let $p \in \N$. Then, starting from \eqref{eq:underlined Psi 1 av}, {and recalling the second order renormalisation \eqref{eq:underline}, we have
		\begin{equation}\label{cum exp intro}
			\begin{split}
				&\E \braket{\underline{WG} A}\braket{(G-M)A}^{2p-1} = \frac{1}{N} \sum_{ab} \E \underline{w_{ab}(GA)_{ba}} \braket{(G-M)A}^{2p-1}\\
				&\quad= \frac{1}{N}\sum_{ab} \E (GA)_{ba}\Bigl(\kappa(w_{ab}, w_{ba}) \partial_{ba} \braket{(G-M)A}^{2p-1} +    \kappa(w_{ab}, w_{ab}) \partial_{ab} \braket{(G-M)A}^{2p-1} \Bigr)\\
				&\qquad + \frac{1}{N} \sum_{ab} \sum_{l_1+l_2\ge 2} \frac{1}{l_1! l_2!} \kappa(\{w_{ab}\}^{l_1+1}, \{w_{ba}\}^{l_2}) \E \partial_{ab}^{l_1} \partial_{ba}^{l_2}\big[ (GA)_{ba} \braket{(G-M)A}^{2p-1}\big].
			\end{split}
		\end{equation}
		By computing the resolvent derivatives explicitly as 
		\begin{equation}
			\partial_{ab}\braket{(G-M)A}=-\frac{1}{N} (G AG)_{ba}
		\end{equation}
		and using that $\kappa(w_{ab},w_{ba})=R_{ab}/N$, the first term in the middle line of \eqref{cum exp intro} simplifies to 
		\begin{equation*}
			\frac{1}{N^3} \sum_{ab}  (GA)_{ba} (GAG)_{ab} = \frac{\braket{E_1 GA E_2 GAG}+\braket{E_2 GA E_1 GAG}}{N^2}
		\end{equation*}
		(up to the factor of $\braket{(G-M)A}^{2p-2}$) ,
		the second term being similar up to an additional transposition. Here \[
			R_{ab} := \mathbf{1}(a \le N, b \ge N+1 \  \text{or} \  b \le N, a \ge N+1 )
		\] is the indicator function for the off-diagonal blocks of $W$.
		For the remaining term in~\eqref{cum exp intro} we simply estimate the cumulants by their size 
		$|\kappa(\{w_{ab}\}^{l_1+1}, \{w_{ba}\}^{l_2})| \le N^{-(l_1+l_2+1)/2} R_{ab}$ to obtain
	}
	\begin{align}
		         & \mathbf{E} |\langle (G-M)A \rangle |^{2p} \nonumber                                                                                                                                                                                                                   \\
		=        & \left\vert  - \mathbf{E}   \langle \underline{WG} {A'} \rangle  \langle (G-M)A \rangle^{p-1}   \langle (G-M)^*A^* \rangle^{p}   \right\vert   + \mathcal{O}_\prec\big((\mathcal{E}_1^{\rm av})^{2p}\big) \label{eq:min exp 1av}                                       \\
		\lesssim & \, \mathbf{E} \,  \frac{\big| \sum_{\sigma } \sigma \langle G E_\sigma G {A'} E_\sigma G A \rangle \big| + \big| \sum_{\sigma } \sigma \langle G^* E_\sigma G {A'} E_\sigma G^* A^* \rangle \big| }{N^2} \big\vert  \langle (G-M)A \rangle \big\vert^{2p-2} \nonumber \\
		         & + \sum_{|\boldsymbol{l}| + \sum(J \cup J_*) \ge 2} \mathbf{E} \, \Xi_1^{\rm av}(\boldsymbol{l}, J, J_*) \big\vert  \langle (G-M)A \rangle \big\vert^{2p-1 - |J \cup J_*|} + \mathcal{O}_\prec\big((\mathcal{E}_1^{\rm av})^{2p}\big) \nonumber\,,
	\end{align}
	where $\Xi_1^{\rm av}(\boldsymbol{l}, J, J_*)$ is defined as
	\begin{equation} \label{eq:Xi 1 av def}
		\Xi_1^{\rm av} := N^{-(|\boldsymbol{l}| + \sum(J \cup J_*) + 3)/2} \sum_{ab} R_{ab}|\partial^{\boldsymbol{l}} (G {A'})_{ba}| \prod_{\boldsymbol{j} \in J} |\partial^{\boldsymbol{j}} \langle GA \rangle |   \prod_{\boldsymbol{j} \in J_*} |\partial^{\boldsymbol{j}} \langle G^*A^*  \rangle |
	\end{equation}
	and the summation in the last line of  \eqref{eq:min exp 1av} is taken over tuples\footnote{{Note that the role played by $(l_1,l_2)$ here is slightly different than in~\eqref{cum exp intro} above. Here the derivatives are applied to the individual factors according to Leibniz' rule, resulting in $\bm l, J, J_\ast$, and $\bm l$ encodes only 
	the derivatives hitting the $(GA')_{ab}$ factor.}} $\boldsymbol{l} \in \Z^2_{\ge 0}$ and multisets of tuples $J, J_* \subset \Z^2_{\ge 0} \setminus \{(0,0)\}$.
	Moreover, we set ${\partial^{\bm l}=}\partial^{(l_1,l_2)} := \partial_{ab}^{l_1} \partial_{ba}^{l_2}$, {similarly 
	$\partial^{\bm j}=\partial^{(j_1,j_2)} := \partial_{ab}^{j_1} \partial_{ba}^{j_2}$ and we define}
	$|{\bm l}|=|(l_1, l_2)| = l_1 + l_2$, $\sum J = \sum_{\boldsymbol{j} \in J} |\boldsymbol{j}|$. 
	In the remainder of the proof, we need to analyze the rhs.~of the inequality derived in \eqref{eq:min exp 1av}. We begin with the third line and study the terms involving $\Xi_1^{\rm av}$ from \eqref{eq:Xi 1 av def} afterwards.

	Before going into the proof, we note that, due to the cumulant expansion in \eqref{eq:min exp 1av}, there are chains of resolvents $G$ and deterministic matrices $A$ appearing, where some of the $A$'s are \emph{not} necessarily regular w.r.t.~the spectral parameters of the surrounding $G$'s. The principal idea is to decompose such $A$ with the aid of Lemma \ref{lem:regularbasic} and carefully track the resulting errors. As a rule of thumb, potentially small denominators resulting from {resolvent identities \eqref{eq:resolid}} or the integral representation in Lemma \ref{lem:intrepG^2} are balanced with the linear perturbative estimates from Lemma \ref{lem:regularbasic}.  See also Remark \ref{rmk:strategy} below.
	\\[2mm]
	{\bf \underline{Gaussian contribution: third line of \eqref{eq:min exp 1av}.}} %{\color{purple}Lemma 0.1 (or an av analogue) could help; but we should keep it pedagogically and self-contained} 
	In order to do so, we need to analyze in total four terms, each of which carries a factor of
	\begin{equation*}
		\langle G E_\sigma G {A'} E_\sigma G A \rangle \quad \text{or} \quad  \langle G^* E_\sigma G {A'} E_\sigma G^* A^* \rangle\,, \quad \text{for} \quad \sigma = \pm \,.
	\end{equation*}
	Since their treatment is very similar, we focus on the two exemplary terms
	\begin{equation} \label{eq:twoterms1av}
		\text{(i)} \quad  \langle G G {A'}  G A \rangle \qquad \text{and} \qquad  \text{(ii)} \quad \langle G^*  G {A'}  G^* A^* \rangle\ \,.
	\end{equation}
	In the analysis of the Gaussian contribution in Section \ref{subsec:proofmaster2}, we will discuss the analogs of the other two terms in more detail.
	\\[1mm]
	\emph{\underline{First term.}} For the first term in \eqref{eq:twoterms1av}, we apply the integral representation from Lemma \ref{lem:intrepG^2} to $GG$ with
	\[
		\tau = +\,, \quad   J= \mathbf{B}_{ \ell \kappa_0}\,, \quad \text{and} \quad  \tilde{\eta} = \frac{\ell}{\ell +1} \eta\,,
	\]
	for which we recall that $w \in \mathbf{D}_{\ell +1}^{(\epsilon_0, \kappa_0)}$, i.e.~in particular $\eta \ge (\ell +1) N^{-1+\epsilon_0}$
	%and $\rho(\Re w) \ge ((\ell + 1) \kappa_0)^{1/3}$, 
	and hence $\tilde{\eta} \ge \ell N^{-1+\epsilon_0}$. {The fact that $J$ is a union of compact intervals follows from the fact the support of the density of $H^\Lambda$ has finitely many components.}
	%and $\rho(x) \ge (\ell \kappa_0)^{1/3}$ for every $x \in \R$ such that $\mathrm{dist}(x, \mathbf{B}_{(\ell + 1) \kappa_0}) \le \mathfrak{c} \tilde{\kappa}$. 
	In particular,  $\Gamma \equiv \Gamma^\tau_{\tilde{\eta}}(J) \subset \mathbf{D}_{\ell}^{(\epsilon_0, \kappa_0)}$. Now, we split the contour $\Gamma$ in three parts,\footnote{\label{ftn:furthersplit}In the case of several $w_1, ... , w_k$, the second part might require a further decomposition: If the spectral parameters of the resolvents which are \emph{not} involved in such an integral representation have spectral parameters with imaginary parts of absolute value greater than one, we need to split $\Gamma_2$ according to $|\Im z| \le 1$ and $|\Im z| > 1$. While the former will be treated exactly as $\Gamma_2$ here, the latter shall be estimated by means of the $\eta> 1$-laws, which we discussed after Remark \ref{rmk:sqrteta}.} i.e.
	\begin{equation} \label{eq:contourdecomp}
		\Gamma = \Gamma_1 \plus \Gamma_2 \plus \Gamma_3\,.
	\end{equation}
	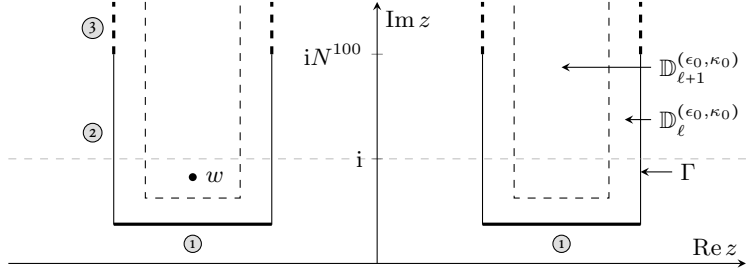
\begin{figure}[htbp]
		\centering
		\begin{tikzpicture}
			\begin{axis}[no markers,
				width=0.9\textwidth,
				xlabel=$\Re z$,
				ylabel=$\Im z$,
				height=0.4\textwidth,
				axis lines=middle,
				ymin=0,ymax=1,
				xmin=-3.5,xmax=3.5,
				ytick={0,.4,.8},yticklabels={0,$\mathrm{i}$,$\mathrm{i}N^{100}$},
				xtick={0},
				xticklabels={0},
				axis on top=true, color=black]
				\addplot[mark=none, very thick] coordinates {(-2.5,.15) (-1.,.15)};
				\addplot[mark=none] coordinates {(-2.5,.8) (-2.5,.15) (-1,.15) (-1,.8)};
				\addplot[mark=none,dashed] coordinates {(-2.2,1) (-2.2,.25) (-1.3,.25) (-1.3,1)};
				\addplot[mark=none, very thick, dashed] coordinates {(-1,.8) (-1,1)};
				\addplot[mark=none, very thick, dashed] coordinates {(-2.5,.8) (-2.5,1)};
				\draw[dashed,lightgray] (-3.5,.4) -- (3.5,.4);
				\addplot[mark=none, very thick] coordinates {(2.5,.15) (1.,.15)};
				\addplot[mark=none] coordinates {(2.5,.8) (2.5,.15) (1,.15) (1,.8)};
				\addplot[mark=none,dashed] coordinates {(2.2,1) (2.2,.25) (1.3,.25) (1.3,1)};
				\addplot[mark=none, very thick, dashed] coordinates {(1,.8) (1,1)};
				\addplot[mark=none, very thick, dashed] coordinates {(2.5,.8) (2.5,1)};
				\draw[dashed,lightgray] (2.5,.4) -- (2.5,.4);
				\node[circle,draw=black,inner sep=1pt,fill=lightgray!50] at (axis cs: 1.75,.075) {\footnotesize 1};
				\node[circle,draw=black,inner sep=1pt,fill=lightgray!50] at (axis cs: -1.75,.075) {\footnotesize 1};
				\node[circle,draw=black,inner sep=1pt,fill=lightgray!50] at (axis cs: -2.7,.5) {\footnotesize 2};
				\node[circle,draw=black,inner sep=1pt,fill=lightgray!50] at (axis cs: -2.7,.9) {\footnotesize 3};
				\draw[stealth-] (1.75,.75) -- (2.6,.75) node [right] {$\mathbb D_{\ell+1}^{(\epsilon_0,\kappa_0)}$};
				\draw[stealth-] (2.35,.55) -- (2.6,.55) node [right] {$\mathbb D_{\ell}^{(\epsilon_0,\kappa_0)}$};
				\draw[stealth-] (2.5,.35) -- (2.8,.35) node [right] {$\Gamma$};
				\node[circle,fill=black,inner sep=0pt,minimum size=3pt,label=right:{$w$}] at (-1.75,.33) {};
			\end{axis}
		\end{tikzpicture}
		\caption{The contour $\Gamma$ is split into three parts (see \eqref{eq:contourdecomp}). In case of multiple spectral parameters, the second part might require a further decomposition at the level
			indicated by the dashed horizontal line (see Footnote \ref{ftn:furthersplit}).
			Depicted is the situation, where the bulk $\mathbf{B}_{\ell \kappa_0}$ consists of  two components.}
		\label{fig:decomp}
	\end{figure}
	As depicted in Figure \ref{fig:decomp}, the first part of the contour consists of the entire horizontal part of $\Gamma$. The second part, $\Gamma_2$, covers the vertical components up to
	%everything of $\sum_{\gamma = 1}^{Q_{(\ell + 1)\kappa_0}} \Gamma^{(\tilde{\kappa}, \tilde{\eta})}_\tau(\widehat{w}_\gamma)$, for which 
	$|\Im z| \le N^{100}$. Finally, $\Gamma_3$ consists of the remaining part with $|\Im z| > N^{100}$.

	{Now, the contribution coming from $\Gamma_3$ can easily be estimated by one via a trivial norm bound on $G$. For $z \in \Gamma_2$, we use that $\mathbf{1}_\delta^\pm(z,w) = 0$ for every $w \in \mathbf{D}_{\ell + 1}^{(\epsilon_0, \kappa_0)}$ (recall \eqref{kappabulkreg} and \eqref{eq:deltachoice}) and hence every matrix is $(z,w)$-regular. Therefore, after adding and subtracting the corresponding deterministic approximation, we can bound this part by $(1 + \psi_2^{\rm av}/(N \eta))$ with the aid of Lemma \ref{lem:Mbound}.} Hence, after splitting the contour integral and bounding each contribution as just described, we find
	\begin{equation} \label{eq:Gaussian1av1}
		\left| \langle G G {A'}  G A \rangle\right| \prec  \left(1 + \frac{\psi_2^{\rm av}}{N \eta}\right) +  \int_{ \mathbf{B}_{ \ell \kappa_0}} \frac{\left\vert \langle G(x + \I \tilde{\eta}) A' G(e+\I \eta) A\rangle \right\vert}{(x -e)^2 +  \eta^2}  \, \D x\,.
	\end{equation}

	Next, we decompose $A = \mathring{A} = \mathring{A}^{{e+\I \eta, e + \I \eta}}$ and $A'= \mathring{A}' = \mathring{(A')}^{{e+\I \eta, e + \I \eta}}$ according to Lemma \ref{lem:regularbasic} as
	\begin{align*}
		\mathring{A}^{{e+\I \eta, e + \I \eta}}    & = \mathring{A}^{{ e+ \I \eta, x + \I \tilde{\eta}}} + \mathcal{O}\big(|x-e|+ \eta\big) E_+ + \mathcal{O}\big(|x-e| + \eta\big) E_-\,,      \\[1mm]
		\mathring{(A')}^{{e+\I \eta, e + \I \eta}} & = \mathring{(A')}^{{ x + \I \tilde{\eta}, e+ \I \eta, }} + \mathcal{O}\big(|x-e|+ \eta\big) E_+ + \mathcal{O}\big(|x-e| + \eta\big) E_-\,.
	\end{align*}
	Plugging this into \eqref{eq:Gaussian1av1}, we obtain several terms contributing to the integral. 	{Adding and subtracting the deterministic approximation, the leading term accounts for
	\begin{equation*}
		\int_{ \mathbf{B}_{ \ell \kappa_0}} \frac{\big\vert \langle G(x + \I \tilde{\eta}) \mathring{(A')}^{{ x + \I \tilde{\eta}, e+ \I \eta }} G(e+\I \eta) \mathring{A}^{{ e+ \I \eta, x + \I \tilde{\eta}}}\rangle \big\vert}{(x -e)^2 +  \eta^2}  \, \D x \prec \frac{1}{\eta} \left(1 + \frac{\psi_2^{\rm av}}{N \eta}\right)
	\end{equation*}
	by means of Lemma \ref{lem:Mbound}.
Here the ``$1$'' on the right-hand side is due to the contribution of the deterministic approximation $\langle M(x+\ii\tilde\eta,(A')^\circ,e+\ii\eta) \mathring{A}^{{ e+ \I \eta, x + \I \tilde{\eta}}} \rangle $, while the ``$\psi_2^{\rm av}/(N\eta)$'' is due to the definition of $\Psi_2^\mathrm{av} $ and the bound $\Psi_2^\mathrm{av} \prec \psi_2^\mathrm{av}$.}
	The error terms can be dealt with by simple \cred{resolvent identities \eqref{eq:resolid}} in combination with the usual single-resolvent local law, Theorem \ref{thm:singleG}, proving them to be bounded by $\eta^{-1}$. Indeed, for a generic $B \in \C^{2N \times 2N}$, we consider the exemplary term
	\begin{align*}
		\int_{ \mathbf{B}_{ \ell \kappa_0}}\left\vert \langle G(x + \I \tilde{\eta})  E_+ G(e+\I \eta) B\rangle \right\vert & \frac{|x-e| + \eta}{(x -e)^2 +  \eta^2}  \, \D x                                                                                                                                                  \\
		\lesssim                                                                                                            & \int_{ \mathbf{B}_{ \ell \kappa_0}} \frac{\left\vert \big\langle \big(G(x + \I \tilde{\eta}) -  G(e+\I \eta)\big) B\big\rangle \right\vert}{(x -e)^2 +  \eta^2}  \, \D x  \prec \frac{1}{\eta}\,.
	\end{align*}
	\emph{\underline{Second term.}} The second term in \eqref{eq:twoterms1av} is much simpler than the first. After writing $G G^* = \Im G/ \eta$, it suffices to realise that, by means of Lemma \ref{lem:regularbasic},
	\begin{equation*}
		A' = \mathring{(A')}^{{e+\I \eta, e - \I \eta}}\,, \quad \mathring{(A')}^{{e-\I \eta, e - \I \eta}} = A' + \mathcal{O}(|e|) E_-\,, \quad \text{and} \quad A^* = \mathring{(A^*)}^{{e-\I \eta, e \pm \I \eta}}
	\end{equation*}
	in order to bound
	\[
		\left\vert \langle G^*  G {A'}  G^* A^* \rangle \right\vert \prec \frac{1}{\eta} \left(1 + \frac{\psi_2^{\rm av}}{N \eta} \right) + \frac{|e|}{\eta} \, \frac{\big| \langle [G(-e + \I \eta) - G(e - \I \eta)] A^*E_- \rangle \big|}{|e| + \eta} \prec \frac{1}{\eta} \left(1 + \frac{\psi_2^{\rm av}}{N \eta} \right)
	\]
	with the aid of Lemma \ref{lem:Mbound}, the chiral symmetry \eqref{eq:chiral}, a {resolvent identity \eqref{eq:resolid}} and Theorem \ref{thm:singleG}.
	\\[2mm]
	This finishes the estimate for the Gaussian contribution from the third line of \eqref{eq:min exp 1av}, for which we have shown that
	\begin{equation} \label{eq:Gaussian1av final}
		\frac{1}{N^2}\sum_{\sigma }\left(\big|  \langle G E_\sigma G {A'} E_\sigma G A \rangle \big| + \big|  \langle G^* E_\sigma G {A'} E_\sigma G^* A^* \rangle \big|\right)   \prec \frac{1}{N^2 \eta} \left(1 + \frac{\psi_2^{\rm av}}{N \eta}\right)\,.
	\end{equation}
	We are now left with the terms from the last line \eqref{eq:min exp 1av} resulting from higher order cumulants.
	\\[2mm]
	{\bf \underline{Higher order cumulants and conclusion.}} The terms stemming from higher order cumulants are estimated in Section \ref{subsec:higherorder}, the precise bound being given in \eqref{eq:higherorder1av}. Indeed, plugging \eqref{eq:Gaussian1av final} and \eqref{eq:higherorder1av} into \eqref{eq:min exp 1av} we obtain
	\begin{align*}
		\mathbf{E} & |\langle (G-M)A \rangle |^{2p} \prec (\mathcal{E}_1^{\rm av})^{2p}                                                                                                                                                                                                   \\
		           & + \sum_{m=1}^{p} \left[ \frac{1}{N \eta^{1/2}} \left( 1 + \frac{\psi_1^{\rm iso} + (\psi_2^{\rm av})^{1/2}}{(N \eta)^{1/2}} + \frac{(\psi_2^{\rm iso})^{1/8}}{(N \eta)^{1/8}} \right) \right]^m \left( \E |\langle (G-M)A \rangle |^{2p}  \right)^{1-m/2p} \nonumber
	\end{align*}
	and get the appropriate estimate $\E |\ldots|^{2p}$ using Young inequalities. Since $p$ was arbitrary, it follows that
	\begin{equation*}
		\Psi_1^{\rm av} \prec 1 + \frac{\psi_1^{\rm av}}{N \eta} + \frac{\psi_1^{\rm iso} +  (\psi_2^{\rm av})^{1/2}  }{(N \eta)^{1/2}} + \frac{(\psi_2^{\rm iso})^{1/4}}{(N \eta)^{1/8}}\,.
	\end{equation*}
	The bound given in Proposition \ref{prop:master} is an immediate consequence after a further trivial Young inequality.
	%{\color{green} CONTINUE HERE}
	%{\color{red}[To be continued: Naive estimates and Ward improvement; works like in Wigner case $\rightarrow$ \textbf{Dominik}]} {\color{purple}Lemma 0.1 (or an av analogue) will help}  \\[2mm]
	%{\color{red}[Conclusion of the proof via Young inequality]}
\end{proof}
\begin{remark} \label{rmk:strategy}
	Although the proof of the first master inequality \eqref{eq:masterineq Psi 1 av} is rather short, it already revels a general strategy for dealing with a generic (not strictly) alternating chain
	\begin{equation} \label{eq:chainstrategy}
		\cdots \, GG A G A G E_- A  G E_- G A \, \cdots
	\end{equation}
	of resolvents $G$ and deterministic matrices $A$.
	\begin{itemize}
		\item[(i)] Apply {resolvent identites \eqref{eq:resolid}} and the integral representation from Lemma \ref{lem:intrepG^2} in order to reduce a product of resolvents to a linear combination (discrete or continuous, respectively). For terms of the form $G E_- G$ instead of $GG$ this additionally requires an application of the chiral symmetry \eqref{eq:chiral}.
		\item[(ii)] In the resulting strictly alternating chain, decompose every deterministic $A$ according to the regularisation from Definition \ref{def:regobs} w.r.t.~the spectral parameters of its surrounding resolvents by using Lemma~\ref{lem:regularbasic}.
		\item[(iii)] Estimate the regular parts coming from this decomposition in terms of $\Psi_k^{\rm av/iso} \prec \psi_k^{\rm av/iso}$. Carefully track the resulting errors stemming from the other parts.
	\end{itemize}
	These steps shall be applied repeatedly until the entire chain \eqref{eq:chainstrategy} has been examined. {The first two steps outlined above will be performed mechanically without any complication.} However, the third step is non-trivial and requires careful analysis on a case-by-case basis.

	We have already mentioned that, as a rule of thumb, potentially small denominators resulting from Step (i) are balanced with the linear perturbative numerators from Step (ii).
\end{remark}

It remains to give a proof of Lemma \ref{lem:underlined1}.

\begin{proof}[Proof of Lemma \ref{lem:underlined1}] Similarly as in \eqref{eq:underlined Psi 1 av}, we suppress the indices of $G \equiv G_1$, $M \equiv M_1$ etc.
	%Moreover, we restrict our attention to the situation, where $\mathbf{1}_\delta(w_1,w_1) = 1$, i.e.~$\eta_1 \le \delta$ (recall the definition of the characteristic function $\mathbf{1}_\delta$ from \eqref{eq:case regulation} and \eqref{eq:case regulation2}). 

	We start with the first identity in \eqref{eq:start}, such that, after defining the one-body stability operator
	\begin{equation*}
		\mathcal{B}:= 1- M \mathcal{S}[\cdot] M
	\end{equation*}
	we find
	\begin{equation*}
		\mathcal{B}[G-M] = - M \underline{WG} + M \mathcal{S}[G-M] (G-M)
	\end{equation*}
	and consequently, by inversion, multiplication by $A = \mathring{A}$ (in the sense of \eqref{eq:circ def}, see also \eqref{eq:1G traceless}) and taking a trace
	\begin{equation} \label{eq:1G basic exp}
		\langle (G-M) A \rangle = - \langle \underline{WG} \mathcal{X}[A]M \rangle + \langle \mathcal{S}[G-M] (G-M) \mathcal{X}[A]M \rangle\,,
	\end{equation}
	where we introduced the linear operator
	\begin{equation*}
		\mathcal{X}[B] :=\big((\mathcal{B}^*)^{-1}[B^*]\big)^*  = \big(1 - \mathcal{S}[M \, \cdot \, M]\big)^{-1}[B]\quad \text{for} \quad B \in \C^{2N \times 2N}\,.
	\end{equation*}

	Then, it is important to note that the condition $\mathbf{1}_\delta^+ \langle \Im M A \rangle = 0$ (the first of the two imposed via \eqref{eq:1G traceless}; recall the definition of the cutoff function $\mathbf{1}_\delta^+$ from \eqref{eq:case regulation} and \eqref{eq:case regulation2}), is stable under the linear operation $A \mapsto \mathcal{X}[A] M $.
	\begin{lemma} \label{lem:Psi1av stable}
		For a generic $B \in \C^{2N \times 2N}$,  we find
		\begin{equation} \label{eq:stability}
			\langle \mathcal{X}[B] M \Im M \rangle = \langle B \mathcal{B}^{-1}[M\Im M] \rangle = \frac{\I }{2} \frac{\langle B \Im M \rangle}{\langle \Im M \rangle} + \mathcal{O}(\eta)\,.
		\end{equation}
	\end{lemma}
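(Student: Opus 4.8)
The plan to prove Lemma~\ref{lem:Psi1av stable} is to treat its two asserted equalities separately: the first is a purely algebraic duality identity for the one-body stability operator, and the second---the quantitative content---follows from two \emph{exact} structural identities for $M$ together with the bulk boundedness of $\mathcal{B}^{-1}$ (the latter being anyway needed to make sense of \eqref{eq:1G basic exp}).

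For the first equality I would work with the symmetric bilinear pairing $(P,Q):=\langle PQ\rangle$ on $\C^{2N\times 2N}$. From the explicit form \eqref{Sop} of $\mathcal{S}$ together with cyclicity of the trace one reads off that $\langle \mathcal{S}[X]Y\rangle=\langle X\mathcal{S}[Y]\rangle$ and $\langle (MXM)Y\rangle=\langle X(MYM)\rangle$, i.e.~both $\mathcal{S}$ and $X\mapsto MXM$ are self-transpose for $(\cdot,\cdot)$. Writing $C:=\mathcal{X}[B]$, so that $C-\mathcal{S}[MCM]=B$ by definition of $\mathcal{X}$, this gives for an arbitrary matrix $Y$ that
\begin{equation*}
 \langle C\,\mathcal{B}[Y]\rangle=\langle C Y\rangle-\langle C (M\mathcal{S}[Y]M)\rangle=\langle C Y\rangle-\langle (MCM)\,\mathcal{S}[Y]\rangle=\langle C Y\rangle-\langle \mathcal{S}[MCM]\,Y\rangle=\langle BY\rangle\,.
\end{equation*}
Choosing $Y:=\mathcal{B}^{-1}[M\Im M]$ (recall $\mathcal{B}$ is invertible in the bulk, Appendix~\ref{app:stabop}) then yields $\langle\mathcal{X}[B]M\Im M\rangle=\langle B\,\mathcal{B}^{-1}[M\Im M]\rangle$, which is the first equality in \eqref{eq:stability}.

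For the second equality it suffices to show that $\mathcal{B}^{-1}[M\Im M]=\tfrac{\I}{2}\,\Im M/\langle\Im M\rangle+\mathcal{O}(\eta)$ (pairing with $B$ and using $\Vert B\Vert\lesssim 1$ then gives the claim), and since $\Vert\mathcal{B}^{-1}\Vert\lesssim 1$ on the bulk (Appendix~\ref{app:stabop}) it is enough to verify the approximate identity $M\Im M=\mathcal{B}\big[\tfrac{\I}{2}\Im M/\langle\Im M\rangle\big]+\mathcal{O}(\eta)$. To this end I compute $\mathcal{B}[\Im M]=\Im M-M\,\mathcal{S}[\Im M]\,M$; since $\langle(\Im M)E_-\rangle=\Im(\langle M_{11}\rangle-\langle M_{22}\rangle)=0$ by \eqref{eq:mde} and $E_+=I$, one has $\mathcal{S}[\Im M]=\langle\Im M\rangle\,I$, hence $\mathcal{B}[\Im M]=\Im M-\langle\Im M\rangle\,M^2$. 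Next, from the MDE \eqref{eq:MDE} and $\mathcal{S}[M]=\langle M\rangle\,I$ we get $M^{-1}=\hat{\Defo}-(w+m)$, so (using that $\hat{\Defo}$ is self-adjoint) $\Im(M^{-1})=-(\eta+\langle\Im M\rangle)\,I$, while on the other hand $\Im(M^{-1})=-M^{-1}(\Im M)(M^*)^{-1}$; comparing, $\Im M=q\,MM^{*}$ with $q:=\eta+\langle\Im M\rangle$. Substituting $M^{*}=M-2\I\,\Im M$ turns this into $M^2=q^{-1}\Im M+2\I\,M\Im M$, and plugging back,
\begin{equation*}
 \mathcal{B}[\Im M]=\Big(1-\tfrac{\langle\Im M\rangle}{q}\Big)\Im M-2\I\,\langle\Im M\rangle\,M\Im M=\tfrac{\eta}{q}\,\Im M-2\I\,\langle\Im M\rangle\,M\Im M=\eta\,MM^{*}-2\I\,\langle\Im M\rangle\,M\Im M\,,
\end{equation*}
where in the last step I used $q^{-1}\Im M=MM^{*}$ again. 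Since $\Vert M\Vert\lesssim 1$ by \eqref{eq:Mboundedmain} and $\langle\Im M\rangle\gtrsim 1$ in the bulk (see Appendix~\ref{sec:A1}), the term $\eta\,MM^{*}$ is $\mathcal{O}(\eta)$, so dividing by $-2\I\langle\Im M\rangle$ and using $1/(2\I)=\I/2$ gives $M\Im M=\mathcal{B}\big[\tfrac{\I}{2}\Im M/\langle\Im M\rangle\big]+\mathcal{O}(\eta)$; applying $\mathcal{B}^{-1}$ concludes.

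The proof is short precisely because every displayed identity above is exact; the only genuine obstacle is careful bookkeeping --- juggling the self-transpose rules for $\mathcal{S}$ and $X\mapsto MXM$ under the bilinear pairing, and the elementary algebra linking $M^{-1}$, $M^{*}$, $\Im M$ and $M^2$ via the MDE. The single external input is $\Vert\mathcal{B}^{-1}\Vert\lesssim 1$ on the bulk domain (plus $\langle\Im M\rangle\gtrsim 1$ there), both of which are furnished by the stability/scDos analysis in Appendices~\ref{app:stabop} and~\ref{sec:A1}.
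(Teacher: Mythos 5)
Your derivation of the first equality (the duality $\langle\mathcal{X}[B]\,M\Im M\rangle = \langle B\,\mathcal{B}^{-1}[M\Im M]\rangle$ via the self-transposeness of $\mathcal{S}$ and of $X\mapsto MXM$ under the trace pairing) is correct and spelled out more explicitly than in the paper, which merely states the identity. Your route for the second equality --- compute $\mathcal{B}$ applied to the candidate answer $\Im M$, use the MDE algebra to convert $M^2$ into $q^{-1}\Im M + 2\I\,M\Im M$, and then invert --- is algebraically sound and leads to the exact identity $\mathcal{B}[\Im M] = \eta\,MM^{*} - 2\I\,\langle\Im M\rangle\,M\Im M$. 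Up to this point everything checks out.

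The gap is in the very last step. You justify passing $\mathcal{B}^{-1}$ through the $\mathcal{O}(\eta)$ error by asserting ``$\|\mathcal{B}^{-1}\|\lesssim 1$ on the bulk,'' and you cite Appendix~\ref{app:stabop}. But that is not what the appendix proves, and it is in fact \emph{false} near the imaginary axis: the one-body stability operator $\mathcal{B}(w,w)$ has the two non-trivial eigenvalues $\beta_{+} = 1-\langle M^{2}\rangle$ and $\beta_{-} = 1+\langle ME_{-}ME_{-}\rangle$; by \eqref{eq:beta pm lowerbound} with $w_1=w_2=w$ one only has $|\beta_{-}|\gtrsim(|\Re w|+\eta)\wedge 1$, and on the imaginary axis the chiral symmetry together with \eqref{eq:saturation} gives $\beta_{-}=1-\langle MM^{*}\rangle=\mathcal{O}(\eta)$. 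So the full operator norm $\|\mathcal{B}^{-1}\|$ is as large as $\eta^{-1}$ in exactly the regime of interest, and ``applying $\mathcal{B}^{-1}$ concludes'' does not work as stated. Your conclusion nevertheless happens to be correct because the \emph{specific} error $\eta\,MM^{*}$ has no component along the critical direction: the left eigenvector associated with $\beta_{-}$ is $E_{-}$, and $\langle E_{-}MM^{*}\rangle = 0$ (equivalently, $\mathcal{S}[MM^{*}]=\langle MM^{*}\rangle I$), so $\mathcal{B}^{-1}$ applied to $MM^{*}$ only sees the non-critical eigenvalue $\beta_{+}=1-\langle M^{2}\rangle$, which satisfies $|\beta_{+}|\gtrsim 1$ by \eqref{eq:1-M2}. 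Concretely, $\mathcal{B}^{-1}[MM^{*}] = MM^{*} + \frac{\langle MM^{*}\rangle}{1-\langle M^{2}\rangle}M^{2}$, which is $\mathcal{O}(1)$. You need to either make this observation or compute $\mathcal{B}^{-1}[M\Im M]$ directly via the decomposition $M\Im M=\frac{1}{2\I}(M^{2}-MM^{*})$, which is what the paper's proof does (and which isolates the error as $\frac{1}{2\I}\frac{1-\langle MM^{*}\rangle}{1-\langle M^{2}\rangle}M^{2}$, manifestly $\mathcal{O}(\eta)$).
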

	\begin{proof}
		Using \eqref{eq:MWard}, we compute
		\begin{equation*}
			\mathcal{B}^{-1}[M \Im M] = \frac{\mathcal{B}^{-1}[M^2 - M M^*]}{2 \I} = \frac{\I}{2} \frac{\Im M}{\eta + \langle \Im M\rangle} + \frac{1}{2 \I} \frac{1 - \langle M M^* \rangle}{1 - \langle M^2 \rangle} M^2\,.
		\end{equation*}
		Now, by means of Lemma \ref{lem:Mbasic} and Lemma \ref{lem:eigendecomp}, we find that
		\begin{equation*}
			\left| 1 - \langle M M^* \rangle \right| = \mathcal{O}(\eta) \quad \text{and} \quad \left| 1 - \langle M^2 \rangle \right| \gtrsim 1\,, \quad \text{respectively}. \qedhere
		\end{equation*}
	\end{proof}

	Recall from \eqref{eq:GMtrace} that $\mathcal{S}[G - M] = \langle G - M \rangle$.
	Therefore, by means of the usual averaged local law, Theorem \ref{thm:singleG}, which in particular shows that $\vert \langle \underline{WG} B \rangle \vert \prec \frac{1}{N\eta}$ for arbitrary $\Vert B \Vert \lesssim 1$ (see also Appendix~\ref{app:locallaw} and \cite{slowcorr}), we can write \eqref{eq:1G basic exp} as
	\begin{align} \nonumber
		\langle (G-M) A \rangle = & - \langle \underline{WG} (\mathcal{X}[A]M)^\circ \rangle + \langle G - M \rangle\langle  (G-M)( \mathcal{X}[A]M)^\circ \rangle                     \\
		                          & -\mathbf{1}_\delta^-  c_-(\mathcal{X}[A]M)    \langle \underline{WG} E_- \rangle + \mathcal{O}_\prec \big( N^{-1}\big)\,, \label{eq:1G basic exp2}
	\end{align}
	where in the underlined term, we used that the $E_+$ component
	of the regularisation of $\mathcal{X}[A]M$ is negligible thanks to Lemma \ref{lem:Psi1av stable}
	and the regularity of $A$, and we introduced the short hand notation
	\begin{equation*}
		c_-(\mathcal{X}[A]M) := \frac{\langle M \mathcal{X}[A] M M E_- \rangle}{\langle M E_- M E_- \rangle } \,.
	\end{equation*}

	Next, with the aid of $WG = I -\hat{\Defo}  G +w G$ and using $\langle G E_- \rangle = 0$ from \eqref{eq:GMtrace}, we undo the underline in the second to last term, such that we infer
	\begin{align*}
		\langle \underline{WG} E_- \rangle = - \langle G E_- \hat{\Defo}\rangle =-  \langle (G - M ) E_- \hat{\Defo}\rangle = - \langle (G - M ) (E_- \hat{\Defo})^\circ \rangle\,.
	\end{align*}
	In the second equality, we used that $ \langle M E_- \hat{\Defo}\rangle = 0$, which follows by a simple computation using the explicit form of $M$ given in \eqref{eq:Mu}--\eqref{eq:mde}. For the last equality, we note that
	\[
		(E_- \hat{\Defo})^\circ = E_- \hat{\Defo} - \mathbf{1}_\delta^+ \frac{\langle \Im M E_- \hat{\Defo} \rangle }{\langle \Im M \rangle } E_+ - \mathbf{1}_\delta^- \frac{\langle M E_- \hat{\Defo} M E_- \rangle }{\langle ME_- M E_- \rangle} E_- = E_- \hat{\Defo}\,,
	\]
	which again follows after a simple computation using the fact that $\hat{\Defo}$ is off-diagonal together with \eqref{eq:Mu}--\eqref{eq:mde}.

	We can now write \eqref{eq:1G basic exp2} for $ A = \mathring{A} = (E_- \hat{\Defo})^\circ = E_- \hat{\Defo}$ and solve the resulting equation for $\langle (G- M) E_- \hat{\Defo} \rangle$. Plugging this back into \eqref{eq:1G basic exp2} yields
	\begin{align}
		%&\langle (G-M) A \rangle  \nonumber\\
		\langle (G-M) A \rangle= & - \langle \underline{WG} (\mathcal{X}[A]M)^\circ \rangle + \langle G - M \rangle\langle  (G-M)( \mathcal{X}[A]M)^\circ \rangle + \mathcal{O}_\prec \big( N^{-1}\big) \nonumber                                               \\
		                         & \hspace{1cm}+ \frac{\mathbf{1}_\delta^- \, c_-(\mathcal{X}[A]M)}{1 - \mathbf{1}_\delta^- \, c_-(\mathcal{X}[E_- \hat{\Defo}]M)}  \bigg[  - \langle \underline{WG} (\mathcal{X}[E_- Z]M)^\circ \rangle \label{eq:1G av final} \\
		                         & \hspace{3.5cm}+ \langle G - M \rangle\langle  (G-M)( \mathcal{X}[E_- Z]M)^\circ \rangle   + \mathcal{O}_\prec \big( N^{-1}\big) \bigg] \,. \nonumber
	\end{align}

	Since $\Vert \mathcal{X}[\mathring{A}] \Vert \lesssim 1$ (see Lemma \ref{lem:boundedpert}), the only thing left to check is, that the denominator in \eqref{eq:1G av final} is bounded away from zero.
	\begin{lemma} \label{lem:splittingstable 1av}
		For small enough $\delta > 0$, we have that
		\begin{equation*}
			\big|  1 - \mathbf{1}_\delta^-(w, w)  \, c_-(\mathcal{X}[E_- \hat{\Defo}]M)\big| \gtrsim 1\,.
		\end{equation*}
	\end{lemma}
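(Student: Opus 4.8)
The claim is trivial whenever $\mathbf{1}_\delta^{-}(w,w) = 0$, since then the quantity is exactly $1$. So I would assume $\mathbf{1}_\delta^{-}(w,w) \neq 0$, which by \eqref{eq:case regulation} forces $|\Re w| \le \delta/2$ and $|\Im w| \le \delta$; in particular $w$ lies in a $\delta$-neighbourhood of the origin, and since $\delta \ll \mathfrak{c}\kappa_0$ by \eqref{eq:deltachoice}, the bulk regularity \eqref{kappabulkreg} gives $0 \in \mathbf{B}_{\kappa_0/2}$, hence $\rho(0) \gtrsim 1$. It then suffices to show that $c_-(\mathcal{X}[E_-\hat{\Defo}]M)$ stays bounded away from $1$ for all such $w$, with a gap depending only on $\epsilon, \kappa, \Vert\Defo\Vert$.

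The algebraic key is that $\mathcal{X}$ acts trivially on $E_-\hat{\Defo}$. Inserting the explicit form \eqref{eq:Mu}--\eqref{eq:mde} of $M$ and using that $M_{11}$, $M_{22}$ are functions of $\Defo\Defo^*$, $\Defo^*\Defo$ respectively (so that $M_{11}\Defo = \Defo M_{22}$ and $\Defo^* M_{11} = M_{22}\Defo^*$), a direct block computation shows $\langle M(E_-\hat\Defo)M\rangle = 0$ and $\langle M(E_-\hat\Defo)ME_-\rangle = 0$, i.e.~$\mathcal{S}[M(E_-\hat\Defo)M] = 0$. Thus $E_-\hat\Defo$ is a fixed point of the operator $1 - \mathcal{S}[M\,\cdot\,M]$, which is invertible in the bulk (Appendix~\ref{app:stabop}); hence $\mathcal{X}[E_-\hat\Defo] = E_-\hat\Defo$, and we are left to control
\begin{equation*}
c_-(\mathcal{X}[E_-\hat\Defo]M) = \frac{\langle M E_-\hat\Defo M^2 E_-\rangle}{\langle M E_- M E_-\rangle}\,.
\end{equation*}

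Next I would evaluate both of these in closed form. Setting $u := w + m(w)$, the identity $M_{11}^2\big(1 - (\Defo\Defo^*)u^{-2}\big) = -\big(\Defo\Defo^* - u^2\big)^{-1}$ together with $m(w) = \langle M(w)\rangle$, written as $\tfrac1N\Tr\big[(\Defo\Defo^* - u^2)^{-1}\big] = m(w)/u$, gives $\langle M E_- M E_-\rangle = -m(w)/(w+m(w))$; expanding $M^2$ blockwise and telescoping powers of $\Defo\Defo^*$ against $\Defo\Defo^* - u^2$ gives $\langle M E_-\hat\Defo M^2 E_-\rangle = -\tfrac1N\Tr\big[\Defo\Defo^*(\Defo\Defo^* - u^2)^{-2}\big]$. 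At $w = 0$ one has $u = m(0)$ with $m(0)^2 = -\pi^2\rho(0)^2$, so these become $\langle M E_- M E_-\rangle = -1$, $\tfrac1N\Tr\big[(\Defo\Defo^* + \pi^2\rho(0)^2)^{-1}\big] = 1$, and, writing $\nu_i$ for the singular values of $\Defo$,
\begin{equation*}
c_0 := c_-(\mathcal{X}[E_-\hat\Defo]M)\big|_{w=0} = \frac{1}{N}\sum_{i=1}^N \frac{\nu_i^2}{(\nu_i^2 + \pi^2\rho(0)^2)^2}\,.
\end{equation*}
Subtracting the trace identity $\tfrac1N\sum_i (\nu_i^2 + \pi^2\rho(0)^2)^{-1} = 1$ from $1$ then yields the lower bound
\begin{equation*}
1 - c_0 = \pi^2\rho(0)^2 \cdot \frac1N\sum_{i=1}^N \frac{1}{(\nu_i^2 + \pi^2\rho(0)^2)^2} \ge \frac{\pi^2\rho(0)^2}{(\Vert\Defo\Vert^2 + \pi^2\rho(0)^2)^2} \gtrsim 1\,,
\end{equation*}
using $\rho(0)\gtrsim 1$ and $\nu_i, \rho(0)\lesssim 1$.

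Finally I would pass from $w=0$ to general $w$ with $|\Re w|, |\Im w| \le \delta$. Since $M(w)$ extends continuously to the real axis (Appendix~\ref{sec:A1}) and $\langle M E_- M E_-\rangle = -m(w)/(w+m(w))$ stays bounded away from $0$ there (because $|m(w)| \approx \pi\rho(0) \gtrsim 1$), the map $w \mapsto c_-(\mathcal{X}[E_-\hat\Defo]M)$ is continuous at $0$; choosing $\delta = \delta(\epsilon,\kappa,\Vert\Defo\Vert)$ small enough gives $|c_-(\mathcal{X}[E_-\hat\Defo]M) - c_0| \le \tfrac12(1-c_0)$ on the $\delta$-neighbourhood, whence, using $\mathbf{1}_\delta^{-}\in[0,1]$ and $c_0\in[0,1)$,
\begin{equation*}
\big|1 - \mathbf{1}_\delta^{-}(w,w)\,c_-(\mathcal{X}[E_-\hat\Defo]M)\big| \ge 1 - \big|c_-(\mathcal{X}[E_-\hat\Defo]M)\big| \ge \tfrac12(1-c_0) \gtrsim 1\,,
\end{equation*}
which is the claim. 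The only genuinely computational ingredients are the two blockwise evaluations in the third step; everything else is soft. I expect that step to be the main obstacle — not because it is deep, but because it requires patiently pushing the intertwining relations through the full $2N\times 2N$ block algebra. It also makes transparent why the $E_-$-regularisation is active precisely near the imaginary axis: that is exactly where $\langle M E_- M E_-\rangle$ remains of order one while $1 - \mathcal{S}[M\,\cdot\,M]$ degenerates.
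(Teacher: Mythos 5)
Your proposal is correct, and it follows the same overall skeleton as the paper's proof (dismiss the $\mathbf{1}_\delta^- = 0$ case, use the fixed-point identity $\mathcal{X}[E_-\hat\Defo] = E_-\hat\Defo$, establish an order-one gap at a distinguished reference point on the imaginary axis, then propagate by continuity), but the key middle step is carried out differently. The paper uses the MDE to rewrite $E_-\hat\Defo = E_-(u + M^{-1})$ with $u = w+m$, reduces the quotient to
\begin{equation*}
1 - c_-(\mathcal{X}[E_-\hat\Defo]M) = \tfrac12\Bigl[1 - \tfrac{w+m}{m}\langle M^2\rangle\Bigr]\,,
\end{equation*}
and then invokes the already-proven lower bound $|1-\langle M^2\rangle|\ge 2\langle\Im M\rangle^2\gtrsim 1$ from \eqref{eq:1-M2} (via Lemma~\ref{lem:eigendecomp}). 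You instead push the block algebra all the way through and express both traces directly as resolvent traces of $\Defo\Defo^*$, landing on the explicit formula $1 - c_0 = \pi^2\rho(0)^2\cdot\frac1N\Tr\bigl[(\Defo\Defo^*+\pi^2\rho(0)^2)^{-2}\bigr]$ at $w=0$, which you lower-bound by $\pi^2\rho(0)^2/(\Vert\Defo\Vert^2+\pi^2\rho(0)^2)^2$. I checked your intermediate identities ($\langle ME_-ME_-\rangle = -m/u$, $\langle ME_-\hat\Defo M^2E_-\rangle = -\tfrac1N\Tr[\Defo\Defo^*(\Defo\Defo^*-u^2)^{-2}]$, and the evaluation of $c_0$) and they are all correct; note also that your explicit bound and the paper's are of the same order $\rho(0)^2$, so the two arguments agree quantitatively. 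What each buys: your route is self-contained (it does not use Lemma~\ref{lem:eigendecomp}) and makes the positivity of $1-c_0$ manifestly a pointwise statement about the spectrum of $\Defo$, at the cost of a longer explicit block computation; the paper's route is shorter because it delegates the lower bound to an already-available lemma and it works at $w=\I\eta$ with small $\eta>0$ rather than on the real axis, thereby avoiding any discussion of boundary extension of $M$. One small point you should tighten: the final reverse triangle inequality $|1 - \mathbf{1}_\delta^-\cdot c_-|\ge 1-|c_-|$ requires that the $\mathbf{1}_\delta^-$ prefactor lie in $[0,1]$ (true by construction) \emph{and} that $|c_-|<1$ uniformly on the $\delta$-ball, which is exactly what the continuity step produces — just make sure the continuity estimate is stated in terms of $|c_--c_0|$ before, not after, taking absolute values, as you did.
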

	\begin{proof} The statement is trivial for $\mathbf{1}_\delta^-(w, w)  = 0$ and we hence focus on the case where $\lambda := \mathbf{1}_\delta^-(w, w) \in (0,1]$. First, we note that $\mathcal{X}[E_- \hat{\Defo}] = E_-\hat{\Defo}$, which follows from the explicit form of $M$ given in \eqref{eq:Mu}--\eqref{eq:mde} using the fact that $\hat{\Defo}$ is purely off-diagonal. Next, we use the MDE \eqref{eq:MDE}, the chiral symmetry \eqref{eq:chiralM}, and Lemma \ref{lem:Mbasic}~(a) to infer
		\begin{equation*}
			1 -  c_-(\mathcal{X}[E_- \hat{\Defo}]M)= 1 - \frac{\langle M E_- \hat{\Defo} M M E_- \rangle}{\langle M E_- M E_- \rangle} = \frac{1}{2} \left[ 1 - \frac{w+m}{m} \langle M^2\rangle\right]\,.
		\end{equation*}
		Now, specialising to $w = \I \eta$ with sufficiently small $\eta$, we find that, to leading order,
		\begin{equation} \label{eq:splitstablerealpart}
			\Re \left[  1 - \frac{\eta+\Im m}{\Im m} \langle M^2\rangle \right] \sim\Re \left[ 1 - \langle M^2 \rangle \right] = 1 - \langle MM^*\rangle + 2 \langle (\Im M)^2\rangle \ge 2 \langle \Im M\rangle^2 \gtrsim 1
		\end{equation}
		by direct computation. Using Lipschitz continuity of this expression in $w$, this principal lower bound on $\Re \big[1 -  c_-(\mathcal{X}[E_- \hat{\Defo}]M)\big]$ of order one persists after a small perturbation of $w$ allowing for a non-zero real part, but as long as $\lambda = \mathbf{1}_\delta^-(w,w)> 0$ for some $\delta > 0$ small enough. Hence, we conclude the lower bound
		\begin{equation}
			\big|  1 - \lambda \, c_-(\mathcal{X}[E_- \hat{\Defo}]M)\big| \ge (1 - \lambda ) 1 + \lambda \Re \big[	1 -  c_-(\mathcal{X}[E_- \hat{\Defo}]M) \big] \gtrsim 1
		\end{equation}
		for the convex combination, by separately considering smaller and larger values of $\lambda \in (0,1]$.
	\end{proof}

	From the expansion \eqref{eq:1G av final} it is apparent, that the main terms for understanding the size of $\langle (G - M) A \rangle $ are the underlined ones, the rest carrying additional $\langle G - M \rangle$-factors, hence they will become negligible errors. In fact, summarizing our investigations, we have shown that
	\begin{equation*}
		\langle (G-M) \mathring{A} \rangle = - \langle \underline{W G \mathring{A}'} \rangle +  \mathcal{O}_\prec\big(\mathcal{E}_1^{\rm av}\big)\,,
	\end{equation*}
	where we used the shorthand notation
	\begin{equation} \label{eq:A'1av def}
		\mathring{A}' := (\mathcal{X}[\mathring{A}]M)^\circ + \frac{\mathbf{1}_\delta^- c_-(\mathcal{X}[\mathring{A}]M)}{1 - \mathbf{1}_\delta^- \, c_-(\mathcal{X}[E_- \hat{\Defo}]M)} (\mathcal{X}[E_- \hat{\Defo}]M)^\circ
	\end{equation}
	in the underlined term. Using the usual averaged local law \eqref{eq:single G} and \eqref{eq:apriori Psi}, we collected all the error terms from \eqref{eq:1G av final} in $\mathcal{E}_1^{\rm av}$, defined in \eqref{eq:E1av}.
\end{proof}

\subsection{Proof of the second master inequality \eqref{eq:masterineq Psi 1 iso}} \label{subsec:proofmaster2}
Let $w_j\in \mathbf{D}_{\ell +1}^{(\epsilon_0, \kappa_0)}$ for $j \in [2]$ be spectral parameters and $A_1$ a regular matrix w.r.t.~the pair of spectral parameters $(w_1, w_2)$ (see Definition \ref{def:regobs}). By conjugation with $E_-$, we will assume w.l.o.g.~that $\Im w_1 > 0$ and $\Im w_2< 0$. Moreover, we use the notations $e_j \equiv \Re w_j$, $\eta_j := |\Im w_j|$ for $j \in [2]$ and define $1 \ge \eta := \min_j |\Im w_j|$. We also assume that \eqref{eq:apriori Psi} holds.
\begin{lemma} {\rm (Representation as full underlined)} \label{lem:underlined2} \\
	For $\Vert \boldsymbol{x} \Vert , \Vert \boldsymbol{y} \Vert \le 1$ and any $(w_1, w_2)$-regular matrix $A_1  = \mathring{A}_1$, we have that
	\begin{equation} \label{eq:underlined Psi 1 iso}
		\big(G_1 \mathring{A}_1 G_2 - M(w_1, \mathring{A}_1, w_2)\big)_{\boldsymbol{x} \boldsymbol{y}} = - \big(\underline{G_1 \mathring{A}_1' WG_2}\big)_{\boldsymbol{x} \boldsymbol{y}} + \mathcal{O}_\prec\big(\mathcal{E}_1^{\rm iso}\big)
	\end{equation}
	for some $(w_1, w_2)$-regular matrix $A_1' = \mathring{A}_1'$, which linearly depends on $A_1 = \mathring{A}_1$ (see \eqref{eq:A'1iso def}). For the error term in \eqref{eq:underlined Psi 1 iso}, we used the shorthand notation
	\begin{equation} \label{eq:E1iso}
		\mathcal{E}_1^{\rm iso} := \frac{1}{\sqrt{N \eta^{2}}}\left(1 + \frac{\psi_1^{\rm av}}{(N \eta)^{1/2}} + \frac{\psi_1^{\rm iso}}{N \eta} \right)\,.
	\end{equation}
\end{lemma}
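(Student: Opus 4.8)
\textbf{Proof proposal for Lemma \ref{lem:underlined2}.}
The plan is to mimic the structure of the proof of Lemma \ref{lem:underlined1}, but now working with the two-resolvent chain $G_1 A_1 G_2$ and the associated \emph{two-body} stability operator $\mathcal{B}_{12} = 1 - M_1 \mathcal{S}[\cdot]M_2$ in place of the one-body operator $\mathcal{B}$. First I would start from the expansion \eqref{eq:start} applied to $G_1$ (and, symmetrically, to $G_2$), namely $G_1 = M_1 - M_1\underline{WG_1} + M_1 \mathcal{S}[G_1 - M_1]G_1$, multiply by $A_1 G_2$ on the right and take the $\boldsymbol{x}\boldsymbol{y}$-matrix element. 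Using $\mathcal{S}[G_1 - M_1] = \langle G_1 - M_1\rangle$ from \eqref{eq:GMtrace} and the definition \eqref{eq:M_definitionapp}--\eqref{eq:Mexample} of $M(w_1, A_1, w_2) = M_1 \mathcal{X}_{12}[A_1] M_2$, one obtains, after inverting $\mathcal{B}_{12}$ and pushing the $G$'s through, an identity of the schematic form
\begin{equation*}
\big(G_1 A_1 G_2 - M(w_1, A_1, w_2)\big)_{\boldsymbol{x}\boldsymbol{y}} = - \big(\underline{WG_1}\, \mathcal{Y}_{12}[A_1]\, G_2\big)_{\boldsymbol{x}\boldsymbol{y}} + (\text{terms carrying } \langle G_j - M_j\rangle)\,,
\end{equation*}
where $\mathcal{Y}_{12}$ is an explicit linear operator built from $M_1, M_2$ and the inverse stability operator (with $\mathcal{Y}_{12}[A] = \mathcal{X}_{12}[A]M_2$ or its appropriate adjoint variant, depending on which resolvent one expands). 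The terms with a $\langle G_j - M_j\rangle$-prefactor are gains of order $\Psi_0^{\rm av}/(N\eta) \prec 1/(N\eta)$ by Theorem \ref{thm:singleG}, and using the a priori bounds \eqref{eq:apriori Psi} for $\Psi_1^{\rm av/iso}$ they fit into the error budget $\mathcal{E}_1^{\rm iso}$ defined in \eqref{eq:E1iso}.

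The crucial structural point — exactly analogous to Lemma \ref{lem:Psi1av stable} — is that the regularity condition on $A_1$ is \emph{preserved} by the operator $\mathcal{Y}_{12}$ up to an $\mathcal{O}(\eta)$ error. Concretely, I would prove a lemma asserting that for the two critical directions $E_{\tau\mathfrak{s}}$ ($\tau = \pm$) the pairings $\langle \mathcal{Y}_{12}[B]\, , \, \text{(critical dual vector)}\rangle$ reproduce, up to $\mathcal{O}(\eta)$ and up to the denominators appearing in \eqref{eq:reg A1A2}, the pairings $\langle V_{\tau}, B\rangle$; this is where the precise form of $V_\pm$ worked out in Appendix \ref{app:motivation}/\eqref{eq:Vpmexplicit} as critical eigenvectors of the stability operator enters. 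Granting this, the $E_+$-component of the regularisation of $\mathcal{Y}_{12}[A_1]$ is negligible by regularity of $A_1$ (just as in \eqref{eq:1G basic exp2}), and one is left with a term proportional to $\langle \underline{WG_1} E_{\mathfrak{s}}\, G_2 \cdots\rangle$-type contributions carrying the $E_-$-direction. These are handled exactly as in the one-resolvent case: undo the underline via $WG_1 = I - \hat\Defo G_1 + w_1 G_1$, use $\langle M_1 E_- \hat\Defo\rangle = 0$ and the off-diagonality of $\hat\Defo$ together with \eqref{eq:Mu}--\eqref{eq:mde} to see that $(E_-\hat\Defo)^\circ = E_-\hat\Defo$, then solve the resulting closed linear equation for the $E_-$-overlap. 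The solvability requires the analogue of Lemma \ref{lem:splittingstable 1av}, i.e.~that the relevant $1 - \mathbf{1}_\delta^- c_-(\cdots)$ denominator is bounded away from zero, which again follows from Lemma \ref{lem:eigendecomp} and the explicit MDE computation for small $\eta$, now with the two (close) spectral parameters $w_1, w_2$ instead of one — Remark \ref{rmk:regular}(iii) and Appendix \ref{app:stabop} guarantee the needed lower bounds on the denominators whenever $\mathbf{1}_\delta^\sigma(w_1,w_2) = 1$.

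Assembling these pieces, the output is the desired representation \eqref{eq:underlined Psi 1 iso} with $A_1' = \mathring{A}_1'$ defined, in parallel with \eqref{eq:A'1av def}, as
\begin{equation} \label{eq:A'1iso def}
	\mathring{A}_1' := (\mathcal{X}_{12}[\mathring{A}_1]M_2)^\circ + \frac{\mathbf{1}_\delta^- \, c_-(\mathcal{X}_{12}[\mathring{A}_1]M_2)}{1 - \mathbf{1}_\delta^- \, c_-(\mathcal{X}_{12}[E_-\hat{\Defo}]M_2)}\, (\mathcal{X}_{12}[E_-\hat{\Defo}]M_2)^\circ \,,
\end{equation}
with the convention that the regularisation $\circ$ here is the $(w_1,w_2)$-regularisation; boundedness $\Vert \mathcal{X}_{12}[\mathring A_1]\Vert \lesssim 1$ follows from Lemma \ref{lem:boundedpert}. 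The $1/\sqrt{N\eta^2}$ scale in $\mathcal{E}_1^{\rm iso}$ rather than the $1/(N\eta^{1/2})$ of $\mathcal{E}_1^{\rm av}$ simply reflects that we are now measuring an isotropic (single matrix-element) quantity with two resolvents; the bookkeeping of which $\langle G_j - M_j\rangle$-term produces which power of $(N\eta)^{-1}$ is routine once the stability lemma is in place. I expect the main obstacle to be precisely that stability lemma: verifying that the two-body operator $\mathcal{X}_{12}$ (and its interaction with the chiral projection onto $E_\pm$) does not destroy regularity, including the $\mathcal{O}(\eta)$ control on the error, requires carefully matching the explicit $V_\pm$ of \eqref{eq:Vpmexplicit} against the image of $\mathcal{B}_{12}^{-1}$ and using the near-criticality (small eigenvalue) of the stability operator in the regime $\mathbf{1}_\delta^\sigma(w_1,w_2)=1$ — this is the genuinely new input compared to the Wigner-type arguments of \cite{multiG, A2} and the one-resolvent case above.
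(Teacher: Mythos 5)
Your plan breaks down at the very step you flag as ``the main obstacle,'' and the paper actually warns about this explicitly right before giving its proof of Lemma~\ref{lem:underlined2}: the stability property that lets one drop the $E_+$-component after applying $\mathcal{X}$ in the one-resolvent case (Lemma~\ref{lem:Psi1av stable}) is \emph{not available} once the two spectral parameters differ. The computation behind \eqref{eq:stability} used $M\Im M = (M^2 - MM^*)/(2\ii)$ together with the $M$-Ward identity at a single $w$; with $M_1\neq M_2$ the analogue of $\langle \mathcal{X}_{12}[B]M_2\, M_1\rangle$ is not approximately proportional to the regularising pairing of $B$, so there is no $\mathcal{O}(\eta)$ argument that kills $c_+(\mathcal{X}_{12}[\mathring{A}_1]M_2)$. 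As a consequence your proposed $\mathring{A}_1'$, which carries only the $\sigma=-$ correction and tests against $E_-\hat\Defo$, is wrong: the correct definition (paper's \eqref{eq:A'1iso def}) has corrections in \emph{both} directions $\sigma=\pm$, and the matrices you must solve against are $\mathring{\Phi}_\sigma$ with $\Phi_\sigma := E_\sigma M_2^{-1} - \mathcal{S}[M_1 E_\sigma]$ (which satisfy $M(w_1,\Phi_\sigma,w_2)=M_1 E_\sigma$ and $c_\tau(\Phi_\sigma)\sim\delta_{\sigma\tau}$), not $E_-\hat\Defo$. The object $E_-\hat\Defo$ arose in the single-resolvent proof from undoing the underline of $\langle\underline{WG}E_-\rangle$ via $WG=I-\hat\Defo G+wG$ together with $\langle GE_-\rangle=0$; there is no analogous simplification for $\underline{G_1 E_\sigma W G_2}$, which instead produces exactly $\Phi_\sigma$.

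There is a second structural mismatch. You expand $G_1$, so your underlined term comes out in the form $\underline{WG_1}\cdots G_2$, i.e.\ with $W$ on the left. The lemma you are proving asserts a representation with $W$ \emph{between} $G_1\mathring{A}_1'$ and $G_2$, which comes from expanding the \emph{second} resolvent $G_2 = M_2 - M_2\underline{WG_2} + M_2\mathcal{S}[G_2-M_2]G_2$, left-multiplying by $G_1\mathcal{X}_{12}[A_1]$, and then extending the underline to the whole product. This is not a cosmetic choice: as the paper remarks after the statement, placing $W$ in the middle is what keeps the subsequent cumulant expansion inside the hierarchy — if one expanded $G_1$, the Gaussian contribution $\widetilde{\Xi}_1^{\rm iso}$ would produce isotropic chains of length four that the system of master inequalities cannot absorb. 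So besides the wrong $\mathring{A}_1'$, your underlined term has the wrong shape to feed the downstream argument in Section~\ref{subsec:proofmaster2}.

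To salvage the proof along the paper's lines: expand $G_2$, get \eqref{eq:Psi 1 iso before decomp}, decompose $\mathcal{X}_{12}[A_1]M_2$ via \eqref{eq:decomp Psi 1 iso}, undo the underline in the $E_\sigma$-terms to produce $\Phi_\sigma$, decompose $\Phi_\sigma$ itself, then write the identity once more with $\mathring{A}_1=\mathring{\Phi}_\pm$ and solve the resulting decoupled $2\times2$ linear system; the solvability of that system (Lemma~\ref{lem:splitting stable 1iso}) and the separate treatment of the two critical error terms \eqref{eq:Psi 1 iso firstcrit} and \eqref{eq:Psi 1 iso secondcrit} are then what produce the error scale $\mathcal{E}_1^{\rm iso}$.
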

Note that unlike in Section~\ref{subsec:proofmaster1}, now
in \eqref{eq:underlined Psi 1 iso} the second resolvent $G_2$ was expanded instead of $G_1$
rendering the $W$ factor in the middle of the underlined term.
This prevents the emergence of resolvent chains in the proof of \eqref{eq:masterineq Psi 1 iso},
which are `too long' to be handled within our hierarchical framework
of master inequalities (e.g., a chain involving four resolvents would appear in $\widetilde{\Xi}_1^{\rm iso}$ defined below).

Having this approximate representation of $\big(G_1 \mathring{A}_1 G_2 - M(w_1, \mathring{A}_1, w_2)\big)_{\boldsymbol{x} \boldsymbol{y}}$ as a full underlined term at hand, we turn to the proof of \eqref{eq:masterineq Psi 1 iso} via a (minimalistic) cumulant expansion.
\begin{proof}[Proof of \eqref{eq:masterineq Psi 1 iso}]
	Let $p \in \N$. Then, starting from \eqref{eq:underlined Psi 1 iso} and using the same notations as in the proof of \eqref{eq:masterineq Psi 1 av}, we obtain
	\begin{align}
		         & \mathbf{E} \big|\big(G_1 \mathring{A}_1 G_2-M(w_1, \mathring{A}_1, w_2)\big)_{\boldsymbol{x}\boldsymbol{y}}  \big|^{2p} \label{eq:min exp 1iso}                                                                                                                                                         \\
		%=& \left\vert  - \mathbf{E}    \big(\underline{G_1 \mathring{A}_1'WG_2}\big)_{\boldsymbol{x}\boldsymbol{y}}  \big(G_1 \mathring{A}_1 G_2-M(\ldots)\big)_{\boldsymbol{x}\boldsymbol{y}}^{p-1}   \big(G_2^* \mathring{A}_1^* G_1^*-M^*(\ldots)\big)_{\boldsymbol{y}\boldsymbol{x}}^{p}   \right\vert   + \mathcal{O}_\prec\big((\mathcal{E}_1^{\rm iso})^{2p}\big) \nonumber\\[1mm]
		\lesssim & \, \mathbf{E} \,  \widetilde{\Xi}_1^{\rm iso} \,  \big\vert  \big(G_1 \mathring{A}_1 G_2-M(\ldots)\big)_{\boldsymbol{x}\boldsymbol{y}} \big\vert^{2p-2} \nonumber                                                                                                                                       \\
		         & + \sum_{|\boldsymbol{l}| + \sum(J \cup J_*) \ge 2} \mathbf{E} \, \Xi_1^{\rm iso}(\boldsymbol{l}, J, J_*) \big\vert  \big(G_1 \mathring{A}_1 G_2-M(\ldots)\big)_{\boldsymbol{x}\boldsymbol{y}}  \big\vert^{2p-1 - |J \cup J_*|} + \mathcal{O}_\prec\big((\mathcal{E}_1^{\rm iso})^{2p}\big) \nonumber\,,
	\end{align}
	where
	\begin{align*}
		\widetilde{\Xi}_1^{\rm iso} := & \frac{\sum_\sigma\left[\left\vert \big(  G_1 \mathring{A}_1'E_\sigma G_1 \mathring{A}_1 G_2\big)_{\boldsymbol{x}\boldsymbol{y}} \big( G_1 E_\sigma G_2 \big)_{\boldsymbol{x}\boldsymbol{y}} \right\vert + \left\vert\big( G_1 \mathring{A}_1' E_\sigma G_2 \big)_{\boldsymbol{x}\boldsymbol{y}} \big(G_1 \mathring{A}_1 G_2 E_\sigma G_2  \big)_{\boldsymbol{x}\boldsymbol{y}}\right\vert\right]}{N} \nonumber \\[2mm]
		                               & + \frac{\sum_\sigma \left[\left\vert \big( G_1 \mathring{A}_1'E_\sigma G_2^*(\mathring{A}_1)^* G_1^*\big)_{\boldsymbol{x}\boldsymbol{x}} \big(G_2^* E_\sigma G_2\big)_{\boldsymbol{y}\boldsymbol{y}} \right\vert + \left|  \big(G_1 \mathring{A}_1'E_\sigma G_1^*\big)_{\boldsymbol{x}\boldsymbol{x}} \big( G_2^*(\mathring{A}_1)^* G_1^* E_\sigma G_2\big)_{\boldsymbol{y}\boldsymbol{y}}\right|\right]}{N}
	\end{align*}
	and	$\Xi_1^{\rm iso}(\boldsymbol{l}, J, J_*)$ is defined via
	\begin{align} \label{eq:Xi 1 iso def}
		\Xi_1^{\rm iso}:=N^{-(|\boldsymbol{l}| + \sum(J \cup J_*) + 1)/2} \sum_{ab} & R_{ab}\big|\partial^{\boldsymbol{l}} \big[(G_1 \mathring{A}'_1)_{\boldsymbol{x}a}\big(G_2\big)_{b \boldsymbol{y}} \big]\big|                                                                                                                                                                         \\
		                                                                            & \times \prod_{\boldsymbol{j} \in J} \big|\partial^{\boldsymbol{j}}  \big(G_1 \mathring{A}_1 G_2\big)_{\boldsymbol{x}\boldsymbol{y}}\big|   \prod_{\boldsymbol{j} \in J_*} \big|\partial^{\boldsymbol{j}} \big(G_2^* (\mathring{A}_1)^* G_2^*\big)_{\boldsymbol{y}\boldsymbol{x}}  \big|\,. \nonumber
	\end{align}
	In the remainder of the proof, we need to analyze the rhs.~of the inequality derived in \eqref{eq:min exp 1iso}. Following the general strategy outlined in Remark \ref{rmk:strategy}, we begin with the second line and study the terms involving $\Xi_1^{\rm iso}$ from \eqref{eq:Xi 1 iso def} afterwards.
	\\[2mm]
	{\bf \underline{Gaussian contribution: third line of \eqref{eq:min exp 1iso}.}} %{\color{purple} [Here is a balancing between two chains, i.e. Lemma 0.1 cannot be applied]} 
	In order to do so, following Remark \ref{rmk:strategy}, we need to analyze in total eight terms, each of which carries one of the summands in the definition of $\widetilde{\Xi}_1^{\rm iso}$ as a factor. Since their treatment is very similar, we focus on the two exemplary terms
	\begin{equation} \label{eq:twoterms1iso}
		\text{(i)} \ \big(  G_1 \mathring{A}_1'E_- G_1 \mathring{A}_1 G_2\big)_{\boldsymbol{x}\boldsymbol{y}} \big( G_1 E_- G_2 \big)_{\boldsymbol{x}\boldsymbol{y}} \,, \quad \text{(ii)}\ \big(G_1 \mathring{A}_1'E_- G_1^*\big)_{\boldsymbol{x}\boldsymbol{x}} \big( G_2^*(\mathring{A}_1)^* G_1 E_- G_2\big)_{\boldsymbol{y}\boldsymbol{y}}\,,
	\end{equation}
	{showcasing the key difficulties. 
	Recall that, in the analysis of the Gaussian term in Section \ref{subsec:proofmaster1} we discussed analogs of the above terms with the choice $\sigma = +$.}
	\\[2mm]
	\underline{\emph{Term (i) in \eqref{eq:twoterms1iso}.}} For the first term, we decompose, similarly to Lemma \ref{lem:regularbasic},
	\begin{equation} \label{eq:Psi 1 iso Gaussian split}
		(\mathring{A}'_1)^{{1,2}} E_- = \big((\mathring{A}_1')^{{1,2}} E_-\big)^{\circ_{1,1}} + \mathcal{O}\big(|e_1 + e_2| + |\eta_1 -  \eta_2|\big) E_+ + \mathcal{O}\big(|e_1 +e_2| + |\eta_1 -  \eta_2|\big) E_-\,.
	\end{equation}
	Inserting this into the first term in \eqref{eq:twoterms1iso} and using Lemma \ref{lem:Mbound}, we find
	\begin{align}\label{ggg}
		\left|  \big(  G_1 \mathring{A}_1'E_- G_1 \mathring{A}_1 G_2\big)_{\boldsymbol{x}\boldsymbol{y}} \right| \prec \frac{1}{\eta}\left(1 + \frac{\psi_2^{\rm iso}}{\sqrt{N \eta}}\right) + \big( |e_1 + e_2| + |\eta_1 - \eta_2|\big) \sum_\sigma \left|  \big(  G_1 E_\sigma G_1 \mathring{A}_1 G_2\big)_{\boldsymbol{x}\boldsymbol{y}} \right|\,.
	\end{align}
	In the last term, we focus on $\sigma = - $, while $\sigma =+$ can be dealt with by Lemma \ref{lem:intrepG^2}. In fact, using \eqref{eq:chiral} and a {resolvent identity \eqref{eq:resolid}}, we obtain
	\begin{align*}
		\left\vert \big(  G_1 E_- G_1 \mathring{A}_1 G_2\big)_{\boldsymbol{x}\boldsymbol{y}} \right\vert = \left\vert \frac{1}{w_1} \big(  [G(-w_1)- G(w_1)] \mathring{A}_1^{w_1, w_2} G(w_2)\big)_{(E_-\boldsymbol{x})\boldsymbol{y}} \right\vert \prec \frac{1}{\eta^2} \left( 1 + \frac{\psi_1^{\rm iso}}{\sqrt{N \eta}}\right)\,,
	\end{align*}
	where in the last step we used Lemma \ref{lem:Mbound} and the trivial approximation
	$$\mathring{A}_1^{-w_1, w_2} = \mathring{A}_1^{w_1,w_2} + \mathcal{O}(1) E_+ + \mathcal{O}(1)E_-\,.$$

	For the second factor in the first term in \eqref{eq:twoterms1iso}, we use \eqref{eq:chiral} and employ the integral representation from Lemma~\ref{lem:intrepG^2} with
	\[
		\tau  = +\,, \quad J =  \mathbf{B}_{ \ell \kappa_0}\,, \quad \text{and} \quad \tilde{\eta} = \frac{\ell}{\ell +1} \eta\,,
	\]
	for which we recall that $w_j \in \mathbf{D}_{\ell +1}^{(\epsilon_0, \kappa_0)}$, i.e.~in particular $\eta \ge (\ell +1) N^{-1+\epsilon_0}$ and hence $\tilde{\eta} \ge \ell N^{-1+\epsilon_0}$. After splitting the contour integral and estimating the contribution as described around \eqref{eq:contourdecomp}, we find, with the aid of Lemma \ref{lem:Mbound} and absorbing logarithmic corrections into `$\prec$', that
	\begin{align}
		\left| \big(G_1 E_- G_2\big)_{\boldsymbol{x}\boldsymbol{y}} \right| & \prec 1 + \int_{ \mathbf{B}_{ \ell \kappa_0}} \frac{\big| \big(G(x + \I \tilde{\eta})\big)_{\boldsymbol{x}(E_- \boldsymbol{y})} \big|}{\left| \big( x - e_1 - \I (\eta_1 - \tilde{\eta})\big) \, \big(x + e_2 - \I (\eta_2 - \tilde{\eta})\big) \right|} \D x \nonumber \\
		                                                                    & \prec 1 + \frac{1}{|e_1 + e_2| + \eta_1  + \eta_2} \label{eq:G1E-G2 iso}
	\end{align}
	where in the last step we used the usual single resolvent local law from Theorem \ref{thm:singleG}.
	Notice the key cancellation of the $|e_1+e_2|$ factor in~\eqref{ggg} and~\eqref{eq:G1E-G2 iso}.
	Collecting all the estimates, we have shown that
	\begin{equation} \label{eq:Psi1isofirsttermfinal}
		\big| \eqref{eq:twoterms1iso} \, \text{(i)} \, \big| \prec \frac{1}{\eta^2}\left( 1 + \frac{\psi_1^{\rm iso}}{\sqrt{N \eta}} + \frac{\psi_2^{\rm iso}}{\sqrt{N \eta}} \right) \,.
	\end{equation}
	\underline{\emph{Term (ii) in \eqref{eq:twoterms1iso}.}} In the first factor in the second term in \eqref{eq:twoterms1iso}, we again employ the decomposition \eqref{eq:Psi 1 iso Gaussian split} to find
	\begin{equation} \label{eq:Psi1iso secondterm1}
		\big| \big(G_1 \mathring{A}_1'E_- G_1^*\big)_{\boldsymbol{x}\boldsymbol{x}} \big| \prec \frac{1}{\eta^{1/2}} \left( 1 + \frac{\psi_1^{\rm iso}}{\sqrt{N \eta}}\right) + \frac{|e_1 + e_2| + |\eta_1 - \eta_2|}{\eta}
	\end{equation}
	with the aid of Theorem \ref{thm:singleG} and Lemma \ref{lem:Mbound} as well as a {resolvent identity \eqref{eq:resolid}} and Lemma \ref{lem:intrepG^2} for the $E_+$ and $E_-$ in \eqref{eq:Psi 1 iso Gaussian split}, respectively.

	In the second factor, similarly to \eqref{eq:G1E-G2 iso} above, we use Lemma \ref{lem:intrepG^2} together with the decomposition\footnote{\label{ftn:Osigma} The notation $\sum_\sigma \mathcal{O}_\sigma (\alpha) E_\sigma $ means that, for each $\sigma$, the term $\mathcal{O}_\sigma(\alpha)$ is a scalar function $g_\sigma(\alpha)$ of order $\mathcal{O}(\alpha)$.}
	\[
		(\mathring{A}_1^{{w}_1, {w}_2})^* = \mathring{(A_1^*)}^{\bar{w}_2, \bar{w}_1} = \mathring{(A_1^*)}^{\bar{w}_2, {w}_1} = \mathring{(A_1^*)}^{\bar{w}_2, x + \I \tilde{\eta}} + \sum_\sigma \mathcal{O}_\sigma(|x - e_1| + |\eta_1 - \tilde{\eta}|) E_\sigma
	\]
	from Lemma \ref{lem:regularbasic} for arbitrary $x$ to find
	\begin{align}
		\left| \big( G_2^*(\mathring{A}_1)^* G_1 E_- G_2\big)_{\boldsymbol{y}\boldsymbol{y}} \right| \prec & \frac{1}{\eta^{1/2}}  \left(1 + \frac{\psi_1^{\rm iso}}{\sqrt{N \eta}} \right) \nonumber                                                                                                                                                                                                                                            \\
		    & +	\int_{ \mathbf{B}_{ \ell \kappa_0}}  \frac{\big| \big(G(\bar{w}_2) \mathring{(A_1^*)}^{\bar{w}_2, x + \I \tilde{\eta}} G(x + \I \tilde{\eta}) \big)_{\boldsymbol{y}(E_- \boldsymbol{y})} \big|}{\left| \big( x - e_1 - \I (\eta_1 - \tilde{\eta})\big) \, \big(x + e_2 - \I (\eta_2 - \tilde{\eta})\big) \right|} \D x  \nonumber \\
		    & + \int_{ \mathbf{B}_{ \ell \kappa_0}}  \frac{\sum_\sigma \big| \big(G(\bar{w}_2) E_\sigma G(x + \I \tilde{\eta}) \big)_{\boldsymbol{y}(E_- \boldsymbol{y})} \big|}{\left| x + e_2 - \I (\eta_2 - \tilde{\eta}) \right|} \D x \label{eq:Psi1iso secondterm2}   \\
		\prec & \frac{1}{\eta^{1/2}} \left(1 + \frac{\psi_1^{\rm iso}}{\sqrt{N \eta}}\right) \left(1 + \frac{1}{|e_1 + e_2| + \eta_1 + \eta_2}\right) + \frac{1}{\eta} \nonumber\,.
	\end{align}
	Now, combining \eqref{eq:Psi1iso secondterm1} and \eqref{eq:Psi1iso secondterm2}, we obtain
	\begin{equation} \label{eq:Psi1isosecondtermfinal}
		\big| \eqref{eq:twoterms1iso} \, \text{(ii)} \, \big| \prec \frac{1}{\eta^2} \left(  1 + \frac{\psi_1^{\rm iso}}{\sqrt{N \eta}}\right)^2\,.
	\end{equation}
	\\[1mm]
	This finishes the estimate for the Gaussian contribution from the third line of \eqref{eq:min exp 1iso}, for which we have shown that
	\begin{equation} \label{eq:Gaussian1iso}
		\widetilde{\Xi}_1^{\rm iso} \prec \frac{1}{N \eta^2} \left( 1 + \frac{(\psi_1^{\rm iso})^2}{N \eta} + \frac{\psi_2^{\rm iso}}{\sqrt{N \eta}} \right)
	\end{equation}
	as easily follows by combining \eqref{eq:Psi1isofirsttermfinal} with \eqref{eq:Psi1isosecondtermfinal} and using a Schwarz inequality.

	We are now left with the terms from the last line \eqref{eq:min exp 1iso} resulting from higher order cumulants.
	\\[2mm]
	{\bf \underline{Higher order cumulants and conclusion.}} The estimate stemming from higher order cumulants is given in \eqref{eq:higherorder1iso}. Then, plugging \eqref{eq:Gaussian1iso} and \eqref{eq:higherorder1iso} into \eqref{eq:min exp 1iso}, we find, similarly to Section \ref{subsec:proofmaster1}, that
	\begin{equation*}
		\Psi_1^{\rm iso} \prec 1 + \frac{\psi_1^{\rm iso}}{N \eta} + \frac{\psi_1^{\rm iso} + \psi_1^{\rm av} }{(N \eta)^{1/2}} + \frac{(\psi_2^{\rm iso})^{1/2}}{(N \eta)^{1/4}} + \frac{(\psi_2^{\rm iso})^{1/4}}{(N \eta)^{1/8}}\,.
	\end{equation*}
	The bound given in Proposition \ref{prop:master} is an immediate consequence after a trivial Young inequality.
	%{\color{red}[To be continued: Naive estimates and Ward improvement $\rightarrow$ \textbf{Dominik}]}  {\color{purple}Lemma 0.1 (or an av analogue) will help}
\end{proof}
It remains to give a proof of Lemma \ref{lem:underlined2}. This is much more involved than for the previous underlined Lemma \ref{lem:underlined1}. The proof of Lemma \ref{lem:underlined1} crucially used that the orthogonality $\langle \Im M A \rangle = 0$ is (almost) preserved under the operation $A \mapsto \mathcal{X}[A]M$ (see Lemma \ref{lem:Psi1av stable}). This is simply not available here, since we deal with two spectral parameters $w_1, w_2$.
\begin{proof}[Proof of Lemma \ref{lem:underlined2}]
	We denote $A_1 \equiv \mathring{A}_1$, except we wish to emphasise $A_1$ being regular. Just as in Section~\ref{subsec:proofmaster1}, we start with
	\begin{equation*}
		G_2 = M_2 - M_2 \underline{W G_2} + M_2 \mathcal{S}[G_2-M_2] G_2\,,
	\end{equation*}
	such that we get
	\begin{equation*}
		G_1 \tilde{A}_1 G_2 = G_1 \tilde{A}_1 M_2 - G_1 \tilde{A}_1 M_2 \underline{W G_2} + G_1 \tilde{A}_1 M_2 \mathcal{S}[G_2-M_2] G_2
	\end{equation*}
	for  $\tilde{A}_1 = \mathcal{X}_{12}[A_1]$ and $A_1 = \mathring{A}_1$ (note that $\Vert\mathcal{X}_{12}[\mathring{A}_1] \Vert \lesssim 1$ by Lemma \ref{lem:boundedpert}), where we introduced the linear operator
	\begin{equation} \label{eq:X12def}
		\mathcal{X}_{12}[B] := \big(1 - \mathcal{S}[M_1 \, \cdot \, M_2]\big)^{-1}[B]\quad \text{for} \quad B \in \C^{2N \times 2N}\,.
	\end{equation}
	Extending the underline to the whole product, we obtain
	\begin{align*}
		G_1 \tilde{A}_1 G_2 = & M_1 \tilde{A}_1 M_2 + (G_1 - M_1) \tilde{A}_1 M_2 - \underline{G_1 \tilde{A}_1 M_2 W G_2}     \\
		                      & + G_1 \tilde{A}_1 M_2 \mathcal{S}[G_2 - M_2]G_2 + G_1 \mathcal{S}[G_1 \tilde{A}_1 M_2] G_2\,,
	\end{align*}
	from which we conclude that
	\begin{align*}
		G_1 \big(\tilde{A}_1 - \mathcal{S}[M_1 \tilde{A}_1 M_2] \big) G_2 = & \ M_1 \tilde{A}_1 M_2 + (G_1 - M_1) \tilde{A}_1 M_2 - \underline{G_1 \tilde{A}_1 M_2 W G_2}                 \\
		                                                                    & + G_1 \tilde{A}_1 M_2 \mathcal{S}[G_2 - M_2]G_2 + G_1 \mathcal{S}[(G_1-M_1) \tilde{A}_1 M_2] G_2  \nonumber
	\end{align*}
	and thus
	\begin{align} \label{eq:Psi 1 iso before decomp}
		G_1 A_1 G_2 = & M_1 \mathcal{X}_{12}[A_1] M_2 + (G_1 - M_1) \mathcal{X}_{12}[A_1] M_2 - \underline{G_1 \mathcal{X}_{12}[A_1]M_2 W G_2}             \\
		              & + G_1 \mathcal{X}_{12}[A_1] M_2 \mathcal{S}[G_2 - M_2]G_2 + G_1 \mathcal{S}[(G_1-M_1) \mathcal{X}_{12}[A_1] M_2] G_2 \,. \nonumber
	\end{align}
	We note that $\Vert \mathcal{X}_{12}[\mathring{A}_1] \Vert \lesssim1 $ by means of Lemma \ref{lem:boundedpert}.

	Then, we need to further decompose $\mathcal{X}_{12}[A_1] M_2$  in the last three terms in \eqref{eq:Psi 1 iso before decomp} as
	\begin{equation} \label{eq:decomp Psi 1 iso}
		\mathcal{X}_{12}[A_1] M_2 = \big(\mathcal{X}_{12}[A_1] M_2\big)^\circ + \sum_{\sigma} \mathbf{1}_\delta^\sigma\,
		c_{\sigma}(\mathcal{X}_{12}[A_1]M_2) E_\sigma\,,
	\end{equation}
	where %, similarly as for $\cdot^\circ$,
	we suppressed the spectral parameters (and the relative sign of their imaginary parts, which has been fixed by $\Im w_1 > 0$ and $\Im w_2 <0$) in the notation for the linear functionals $c_\sigma(\cdot)$ on $\C^{2N \times 2N}$ defined as
	\begin{equation} \label{eq:csigmadef Psi 1 iso}
		c_+(B) := \frac{\langle M_1 B M_{2}  \rangle}{\langle M_1  M_{2} \rangle} \qquad \text{and} \qquad c_{- }(B) := \frac{\langle M_1 B M^*_{2} E_{- } \rangle}{\langle M_1 E_{- } M^*_{2} E_{- } \rangle}\,.
	\end{equation}

	Plugging \eqref{eq:decomp Psi 1 iso} into \eqref{eq:Psi 1 iso before decomp} we find $G_1 A_1 G_2$ to equal
	\begin{align} \label{eq:Psi 1 iso after decomp}
		 & M_1 \mathcal{X}_{12}[A_1] M_2 + (G_1 - M_1) \mathcal{X}_{12}[A_1] M_2 - \underline{G_1 \big(\mathcal{X}_{12}[A_1] M_2 \big)^\circ W G_2}                                                                                           \\
		 & + G_1 \big(\mathcal{X}_{12}[A_1] M_2 \big)^\circ \mathcal{S}[G_2 - M_2]G_2 + G_1 \mathcal{S}\big[(G_1-M_1) \big(\mathcal{X}_{12}[A_1] M_2 \big)^\circ\big] G_2  \nonumber                                                          \\
		 & + \sum_{\sigma} \mathbf{1}_\delta^\sigma \, c_\sigma(\mathcal{X}_{12}[A_1]M_2) \left[ - \underline{G_1 E_\sigma W G_2} + G_1 E_\sigma \mathcal{S}[G_2 - M_2 ] G_2 + G_1 \mathcal{S}[(G_1 - M_1) E_\sigma] G_2 \right] \nonumber\,.
	\end{align}
	Recall that the regular component is defined w.r.t.~the pair of spectral parameters $(w_1, w_2)$. In particular, $\big(\mathcal{X}_{12}[A_1] M_2 \big)^\circ = \big(\mathcal{X}_{12}[A_1] M_2 \big)^{\circ_{1,2}}$ in the last term in the second line of \eqref{eq:Psi 1 iso after decomp} is \emph{not} regular as defined via the conditions with one resolvent \eqref{eq:1G traceless}.

	In the last line of \eqref{eq:Psi 1 iso after decomp} we now undo the underline and find the bracket $\big[ \cdots\big]$ to equal (the negative of)
	\begin{align*}
		  & G_1 E_\sigma W G_2 + G_1 E_\sigma \mathcal{S}[M_2] G_2 + G_1 \mathcal{S}[M_1 E_\sigma] G_2                               \\
		= & G_1 E_\sigma + G_1 \big(E_\sigma (w_2 - \hat{\Defo} +  \mathcal{S}[M_2]) + \mathcal{S}[M_1 E_\sigma]\big) G_2            \\
		= & G_1 E_\sigma -  G_1 \big(E_\sigma M_2^{-1} - \mathcal{S}[M_1 E_\sigma]\big) G_2 =: G_1 E_\sigma - G_1 \Phi_\sigma G_2\,,
	\end{align*}
	where we used $WG_2 = E_+ + w_2 G_2 - \hat{\Defo} G_2$ in the first step and the MDE \eqref{eq:MDE} in the second step. Moreover, we introduced the shorthand notation
	\begin{equation} \label{eq:Psidef1iso}
		\Phi_\sigma := E_\sigma \frac{1}{M_2} - \mathcal{S}[M_1E_\sigma]\,.
	\end{equation}

	From the expansion \eqref{eq:Psi 1 iso after decomp} it is apparent (and it can also be checked by hand using the explicit form of \eqref{eq:Psidef1iso}) that
	\begin{equation*}
		M_1 E_\sigma = M_1 \big(E_\sigma M_2^{-1}\big)M_2 = M_1 \mathcal{X}_{12}[\Phi_\sigma] M_2  = M(w_1, \Phi_\sigma, w_2)\,,
	\end{equation*}
	where in the last step we used \eqref{eq:Mexample}. This finally yields that $G_1 A_1 G_2$ equals
	\begin{align} \label{eq:Psi 1 iso after decomp2}
		 & M(w_1, A_1, w_2) + (G_1 - M_1) \mathcal{X}_{12}[A_1] M_2 - \underline{G_1 \big(\mathcal{X}_{12}[A_1] M_2 \big)^\circ W G_2}                                                             \\
		 & + G_1 \big(\mathcal{X}_{12}[A_1] M_2 \big)^\circ \mathcal{S}[G_2 - M_2]G_2 + G_1 \mathcal{S}\big[(G_1-M_1) \big(\mathcal{X}_{12}[A_1] M_2 \big)^\circ\big] G_2  \nonumber               \\
		 & + \sum_{\sigma} \mathbf{1}_\delta^\sigma \, c_\sigma(\mathcal{X}_{12}[A_1]M_2) \left[ - (G_1-M_1)E_\sigma + \big(G_1 \Phi_\sigma G_2- M(w_1,\Phi_\sigma, w_2)\big) \right] \nonumber\,.
	\end{align}

	The last term in the last line of \eqref{eq:Psi 1 iso after decomp2} requires further decomposition of $\Phi_\sigma$ from \eqref{eq:Psidef1iso} (completely analogous to \eqref{eq:decomp Psi 1 iso} and \eqref{eq:csigmadef Psi 1 iso}) as
	\begin{equation*}
		\Phi_\sigma = \mathring{\Phi}_\sigma + \sum_{\tau} \mathbf{1}_\delta^\tau \, c_\tau(\Phi_\sigma) E_\tau\,.
	\end{equation*}
	Using the explicit form of $\Phi_\sigma$, we further observe that
	\begin{equation} \label{eq:orthogonality Psi 1 iso}
		c_\tau(\Phi_\sigma) \sim \delta_{\sigma, \tau} \quad \text{and} \quad c_\tau(\mathcal{X}_{12}[\Phi_\sigma] M_2) \sim \delta_{\sigma, \tau}\,.
	\end{equation}
	Therefore, by means of the first relation in \eqref{eq:orthogonality Psi 1 iso}, the expansion \eqref{eq:Psi 1 iso after decomp2} can be carried out further as
	\begin{align} \label{eq:Psi 1 iso before stab}
		 & M(w_1, A_1, w_2) + (G_1 - M_1) \mathcal{X}_{12}[A_1] M_2 - \underline{G_1 \big(\mathcal{X}_{12}[A_1] M_2 \big)^\circ W G_2}                                                                          \\
		 & + G_1 \big(\mathcal{X}_{12}[A_1] M_2 \big)^\circ \mathcal{S}[G_2 - M_2]G_2 + G_1 \mathcal{S}\big[(G_1-M_1) \big(\mathcal{X}_{12}[A_1] M_2 \big)^\circ\big] G_2  \nonumber                            \\
		 & + \sum_{\sigma} \mathbf{1}_\delta^\sigma \, c_\sigma(\mathcal{X}_{12}[A_1]M_2) \  \bigg[ - (G_1-M_1)E_\sigma + \big(G_1 \mathring{\Phi}_\sigma G_2- M(w_1,\mathring{\Phi}_\sigma, w_2)\big)\nonumber \\
		 & \hspace{5cm} + c_\sigma(\Phi_\sigma)\big( G_1E_\sigma G_2 - M(w_1, E_\sigma, w_2) \big) \bigg] \nonumber\,.
	\end{align}

	Next, we write \eqref{eq:Psi 1 iso before stab} for both, $A_1 = \mathring{A}_1 = \mathring{\Phi}_+$ and $A_1= \mathring{A}_1 = \mathring{\Phi}_-$, and solve the two resulting linear equations for $G_1 \mathring{\Phi}_\pm G_2 - M(w_1, \mathring{\Phi}_\pm, w_2)$. Observe that by means of the second relation in \eqref{eq:orthogonality Psi 1 iso} the original system of linear equations boils down to two separate ones. Thus, plugging the solutions for $G_1 \mathring{\Phi}_\pm G_2 - M(w_1, \mathring{\Phi}_\pm, w_2)$ back into \eqref{eq:Psi 1 iso before stab} we arrive at
	\begin{align} \label{eq:Psi 1 iso final}
		G_1 A_1 G_2 = & M(w_1, A_1, w_2) + (G_1 - M_1) \mathcal{X}_{12}[A_1] M_2 - \underline{G_1 \big(\mathcal{X}_{12}[A_1] M_2 \big)^\circ W G_2}                                                                                                                                                                                                                     \\
		              & + G_1 \big(\mathcal{X}_{12}[A_1] M_2 \big)^\circ \mathcal{S}[G_2 - M_2]G_2 + G_1 \mathcal{S}\big[(G_1-M_1) \big(\mathcal{X}_{12}[A_1] M_2 \big)^\circ\big] G_2  \nonumber                                                                                                                                                                       \\
		              & + \sum_{\sigma} \frac{\mathbf{1}_\delta^\sigma \, c_\sigma(\mathcal{X}_{12}[A_1]M_2)}{1 - \mathbf{1}_\delta^\sigma \, c_\sigma(\mathcal{X}_{12}[\mathring{\Phi}_\sigma]M_2)} \  \bigg[ (G_1 - M_1) \mathcal{X}_{12}[\mathring{\Phi}_\sigma] M_2 - \underline{G_1 \big(\mathcal{X}_{12}[\mathring{\Phi}_\sigma] M_2 \big)^\circ W G_2} \nonumber \\
		              & + G_1 \big(\mathcal{X}_{12}[\mathring{\Phi}_\sigma] M_2 \big)^\circ \mathcal{S}[G_2 - M_2]G_2 + G_1 \mathcal{S}\big[(G_1-M_1) \big(\mathcal{X}_{12}[\mathring{\Phi}_\sigma] M_2 \big)^\circ\big] G_2 \nonumber                                                                                                                                  \\
		              & - (G_1-M_1)E_\sigma  + c_\sigma(\Phi_\sigma)\big( G_1E_\sigma G_2 - M(w_1, E_\sigma, w_2) \big) \bigg]\,.  \nonumber
	\end{align}

	We now need to check that the denominators in \eqref{eq:Psi 1 iso final} are bounded away from zero.
	\begin{lemma} \label{lem:splitting stable 1iso} For small enough $\delta > 0$, we have that
		\begin{equation*}
			\left| 1 - \mathbf{1}_\delta^\sigma(w_1, w_2) \, c_\sigma(\mathcal{X}_{12}[\mathring{\Phi}_\sigma]M_2) \right| \gtrsim 1 \qquad \text{for} \quad \sigma = \pm\,.
		\end{equation*}
	\end{lemma}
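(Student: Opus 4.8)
The plan is to follow the template of the proof of Lemma~\ref{lem:splittingstable 1av}. When $\mathbf{1}_\delta^\sigma(w_1,w_2)=0$ the claim is trivial, and since $M$ extends continuously and boundedly to the real axis (Appendix~\ref{sec:A1}) while the relevant maps and denominators are bounded and Lipschitz in $(w_1,w_2)$ on the support of the cutoffs (Lemma~\ref{lem:boundedpert}, Appendix~\ref{app:stabop}), it suffices to produce an $N$-independent lower bound for $\Re\big[1-c_\sigma(\mathcal{X}_{12}[\mathring\Phi_\sigma]M_2)\big]$ on the set $\{\mathbf{1}_\delta^\sigma(w_1,w_2)>0\}$; the lemma then follows for all cutoff values from $\Re\big[1-\mathbf{1}_\delta^\sigma c_\sigma(\mathcal{X}_{12}[\mathring\Phi_\sigma]M_2)\big]=(1-\mathbf{1}_\delta^\sigma)+\mathbf{1}_\delta^\sigma\,\Re\big[1-c_\sigma(\mathcal{X}_{12}[\mathring\Phi_\sigma]M_2)\big]$. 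By the symmetry relations of Lemma~\ref{lem:regularbasic} together with the chiral symmetry~\eqref{eq:chiralM} and the fact that $E_-\hat{\Defo}$ is (block) off-diagonal, the case $\sigma=-$ reduces to $\sigma=+$ (with $w_2$ replaced by $-w_2$); I thus treat $\sigma=+$, and, since $\{\mathbf{1}_\delta^+>0\}\subset\{\mathbf{1}_{2\delta}^+=1\}$, by continuity (for $\delta$ small) I may reduce to the configuration $w_1=e+\I\eta_1$, $w_2=e'-\I\eta_2$ with $e,e'\in\mathbf{B}_\kappa$, $|e-e'|\le\delta$, $\eta_j\le\delta$, and finally to its limiting case $w_1=e+\I0=\bar w_2$.

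The second step is the explicit algebra. Using $E_+=I$ and $\mathcal{X}_{12}[\Phi_\sigma]M_2=E_\sigma$ — which follows from $M(w_1,\Phi_\sigma,w_2)=M_1E_\sigma$ (proof of Lemma~\ref{lem:underlined2}) and~\eqref{eq:Mexample} — linearity of $\mathcal{X}_{12}$ and of the functionals $c_\tau$, together with $c_\sigma(E_\sigma)=1$, gives
\begin{equation*}
1-c_\sigma(\mathcal{X}_{12}[\mathring\Phi_\sigma]M_2)=\sum_\tau\mathbf{1}_\delta^\tau(w_1,w_2)\,c_\tau(\Phi_\sigma)\,c_\sigma(\mathcal{X}_{12}[E_\tau]M_2)\,.
\end{equation*}
The key inputs are the trace identity $\Tr\,g(\Lambda\Lambda^*)=\Tr\,g(\Lambda^*\Lambda)$ for rational $g$, which via the block form~\eqref{eq:Mu}--\eqref{eq:mde} of $M$ yields $\langle M_1E_-M_2\rangle=\langle M_1M_2^*E_-\rangle=0$ (hence $c_\tau(E_\sigma)=\delta_{\sigma\tau}$, and more generally $\langle M^nE_-\rangle=0$); and, as a consequence, $\mathcal{S}[M_1M_2]=\langle M_1M_2\rangle I$ and $\mathcal{S}[M_1E_-M_2]=-\langle M_1E_-M_2E_-\rangle E_-$, so that $I$ and $E_-$ are \emph{exact} eigenvectors of $1-\mathcal{S}[M_1\,\cdot\,M_2]$ with eigenvalues $1-\langle M_1M_2\rangle$ and $1+\langle M_1E_-M_2E_-\rangle$. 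Inverting these one-dimensional directions and simplifying $M_1\Phi_+M_2=M_1-m_1M_1M_2$ by the MDE~\eqref{eq:MDE}, all four quantities $c_+(\Phi_\pm)$ and $c_+(\mathcal{X}_{12}[E_\pm]M_2)$ become explicit rational expressions in $m_1$, $\langle M_1M_2\rangle$, $\langle M_1M_2^2\rangle$, $\langle M_1E_-M_2E_-\rangle$, $\langle M_1E_-M_2^2\rangle$; in particular the critical eigenvalue $1-\langle M_1M_2\rangle$ cancels in the product $c_+(\Phi_+)\,c_+(\mathcal{X}_{12}[E_+]M_2)=m_1\langle M_1M_2^2\rangle/\langle M_1M_2\rangle^2$, which stays bounded.

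The third step extracts the lower bound. At the limiting configuration $M_2=M_1^*$, so $\langle M_1M_2\rangle=\langle M_1M_1^*\rangle=1+\mathcal{O}(\delta)$ by the Ward identity~\eqref{eq:MWard} (Lemma~\ref{lem:Mbasic}), while $|m_1|\ge\pi\rho(e)+\mathcal{O}(\delta)\gtrsim\kappa^{1/3}$ since $e\in\mathbf{B}_\kappa$. In the near-origin regime $e=0$ (the only one where $\mathbf{1}_\delta^-$ may also be active), the chiral symmetry forces $\Re M_1$ to be (block) off-diagonal, so that the general identity $(\Re M_1)^2+(\Im M_1)^2=\tfrac{1}{2}(M_1M_1^*+M_1^*M_1)$ collapses to $(\pi\rho(0)+\mathcal{O}(\delta))^{-1}\Im M_1$, and a short computation then reduces the leading term to $\langle(\Im M_1)^2\rangle+\mathcal{O}(\delta)$, which by Cauchy--Schwarz satisfies $\langle(\Im M_1)^2\rangle\ge\langle\Im M_1\rangle^2=(\pi\rho(0))^2+\mathcal{O}(\delta)\gtrsim\kappa^{2/3}$; for $e\ne0$ (when necessarily $\mathbf{1}_\delta^-=0$) the analogous computation gives a leading term that is, in the limit, real, positive, and bounded below by a constant depending only on $\kappa$ and $\Vert\Defo\Vert$. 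Finally, the term $\mathbf{1}_\delta^-(w_1,w_2)\,c_-(\Phi_+)\,c_+(\mathcal{X}_{12}[E_-]M_2)$, present only when both cutoffs are active and hence $w_1,w_2$ lie within $\mathcal{O}(\delta)$ of $\{\I t:|t|\le\delta\}$, is negligible: both scalar factors vanish at the degenerate configuration $w_1=-w_2=\bar w_2$ — again by the trace identities, which give $\langle M_1\hat{\Defo}M_1E_-\rangle=\langle M^nE_-\rangle=0$, with denominator $1+\langle M_1E_-M_2E_-\rangle=1-\langle M^2\rangle+\mathcal{O}(\delta)$ bounded away from $0$ by Lemma~\ref{lem:eigendecomp} — so each factor is $\mathcal{O}(\delta)$ and the product $\mathcal{O}(\delta^2)$. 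Choosing $\delta$ small enough compared with $\kappa^{2/3}$ completes the argument.

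The main obstacle is the bookkeeping rather than any single hard estimate: one must track carefully which resolvent-type matrix ($M_2$ versus $M_2^*=M(\bar w_2)$) is paired with $E_+$ and with $E_-$ in the functionals $c_\pm$, handle the region near the origin where the $E_+$ and $E_-$ directions of the two-body stability operator $\mathcal{B}_{12}$ are \emph{simultaneously} near-critical, and make the passage from the limiting configuration to general cutoff values via continuity precise. All of these are controlled by the exact eigenvector identities above and by the extra trace cancellations forced by the spectral symmetry of $\hat{\Defo}$, exactly as the corresponding difficulties were handled in Lemma~\ref{lem:splittingstable 1av}.
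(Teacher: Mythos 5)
Your proposal is correct and it arrives at the same algebraic formulas and the same critical-configuration lower bounds as the paper, but with a somewhat cleaner organization. The paper's proof computes $1-c_\pm(\mathcal{X}_{12}[\mathring{\Phi}_\pm]M_2)$ directly (display~\eqref{eq:denom Psi 1 iso}) and lower-bounds them by specializing $\sigma=+$ to $w_2=\bar w_1$ and $\sigma=-$ to $w_2=-w_1$, invoking Lemma~\ref{lem:Mbasic} and the Ward identity in both cases. You instead reduce $\sigma=-$ to $\sigma=+$ via the chiral symmetry, and you organize the computation around the exact eigenvector identity $\mathcal{X}_{12}[\Phi_\sigma]M_2=E_\sigma$ already derived in the proof of Lemma~\ref{lem:underlined2}, which by linearity gives the structured formula $1-c_\sigma(\mathcal{X}_{12}[\mathring{\Phi}_\sigma]M_2)=\sum_\tau\mathbf{1}_\delta^\tau c_\tau(\Phi_\sigma)c_\sigma(\mathcal{X}_{12}[E_\tau]M_2)$; this transparently displays the cancellation of the near-critical factor $1-\langle M_1M_2\rangle$ in the diagonal ($\tau=\sigma$) term. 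The tangible payoff is that your decomposition explicitly isolates the cross-term $\mathbf{1}_\delta^{-\sigma}c_{-\sigma}(\Phi_\sigma)c_\sigma(\mathcal{X}_{12}[E_{-\sigma}]M_2)$, which is nonzero only when both cutoffs are simultaneously active near the origin, and you show that each scalar factor vanishes at the degenerate configuration so the product is $\mathcal{O}(\delta^2)$. The paper's displayed formulas only carry the diagonal term, and the double-cutoff regime is absorbed into the closing remark that the lower bound ``persists after a small perturbation''; your version makes this corner case explicit, which is slightly more complete but not a different proof in substance.
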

	\begin{proof} The statements are trivial for $\mathbf{1}_\delta^\sigma(w_1, w_2)  = 0$ and we hence focus on cases where $\lambda^\sigma:= \mathbf{1}_\delta^\sigma(w_1, w_2) \in (0,1]$. First, we compute
		\begin{align} \label{eq:denom Psi 1 iso}
			1 - c_+(\mathcal{X}_{12}[\mathring{\Phi}_+] M_2) & = \langle M_1 \rangle \frac{\langle M_1 M_2 M_2 \rangle}{\langle M_1 M_2 \rangle^2}\qquad \text{and}                                                                                                                                             \\
			1 - c_-(\mathcal{X}_{12}[\mathring{\Phi}_-] M_2) & = \frac{\langle M_1 E_- M_2^* M_2^{-1} E_- \rangle + \langle M_1 \rangle \langle M_1 E_- M_2^* E_- \rangle }{1 + \langle M_1 E_- M_2 E_- \rangle } \ \frac{\langle M_1 E_- M_2 M_2^* E_- \rangle}{\langle M_1 E_- M_2^* E_- \rangle^2} \nonumber
		\end{align}
		for arbitrary spectral parameters $w_1, w_2$. Recall that we assumed the two spectral parameters to be on different halfplanes, i.e.~$\mathfrak{s}_1 = - \sgn(\Im w_1 \Im w_2) = +$, hence we shall specialise (i) the first expression in \eqref{eq:denom Psi 1 iso} to $w_2 = \bar{w}_1$ and (ii) the second expression in \eqref{eq:denom Psi 1 iso} to $w_2 = - w_1$.

		In this case, for the first expression in \eqref{eq:denom Psi 1 iso}, using Lemma~\ref{lem:Mbasic} and $\Im M_1 \Im w_1 > 0$, we obtain
		\begin{align} \label{eq:splitstableabsval}
			\big\vert1 - c_+(\mathcal{X}_{12}[\mathring{\Phi}_+] M_2) \big\vert = \big| \langle M_1 \rangle \frac{\langle \Im M_1 M_1^* \rangle}{\langle\Im M_1\rangle^2} (\langle \Im M_1 \rangle + \Im w_1) \big| \ge \langle \Im M_1 \rangle^2 \gtrsim 1 \qquad %\text{and}\\
			%\big\vert1 - c_-(\mathcal{X}_{12}[\mathring{\Phi}_-] M_2) \big\vert&\ge \frac{2 \langle \Im M_1 \rangle^2 + \eta_1 \langle \Im M_1 \rangle }{2 \langle \Im M_1 \rangle^2 + \frac{\eta_1}{\langle \Im M_1 \rangle}} \gtrsim 1
		\end{align}
		in the bulk of the spectrum. This principal lower bound of order one persists after a small perturbation of $w_2$ around the special case $w_2 = \bar{w}_1$, but as long as $\lambda^+ = 1$ (for some $\delta > 0$ small enough), which proves the claim for $\sigma = +$ and $\lambda^+ = 1$. A further direct computation by estimating real and imaginary part of $1 - c_+(\mathcal{X}_{12}[\mathring{\Phi}_+] M_2)$ instead of its absolute value in \eqref{eq:splitstableabsval}, similarly to \eqref{eq:splitstablerealpart} shows that also the convex combination
		\begin{equation*}
			(1 - \lambda^+) 1 + \lambda^+ \big[ 1 - c_+(\mathcal{X}_{12}[\mathring{\Phi}_+] M_2)\big]
		\end{equation*}
		is bounded away from zero (in absolute value), by separately considering small and large values of $\lambda^+ \in (0,1)$.
		For the second expression in \eqref{eq:denom Psi 1 iso}, the argument is similar and hence omitted.
	\end{proof}

	Next, we take the scalar product of \eqref{eq:Psi 1 iso final} with two deterministic vectors $\boldsymbol{x}, \boldsymbol{y}$ satisfying $\Vert \boldsymbol{x} \Vert, \Vert \boldsymbol{y} \Vert \le 1$. In the resulting expression,% in case that $\mathbf{1}_\delta^\sigma\,(w_1, w_2) = 1$ (as we assumed in \eqref{eq:2dimregass}),
	there are two particular terms, namely the ones of the form
	\begin{align}
		 & \big(G_1 \mathcal{S}\big[(G_1-M_1) \mathring{A}_1^{{1,2}}\big] G_2\big)_{\boldsymbol{x}\boldsymbol{y}} \quad \text{and} \label{eq:Psi 1 iso firstcrit} \\ &c_\sigma(\mathcal{X}_{12}[\mathring{A}_1^{{1,2}}] M_2) c_\sigma(\Phi_\sigma) \big(G_1 E_\sigma G_2 - M(w_1, E_\sigma, w_2)\big)_{\boldsymbol{x}\boldsymbol{y}}\,, \label{eq:Psi 1 iso secondcrit}
	\end{align}
	whose direct (naive) estimates are $1/(N \eta^2)$ and $1/\eta$, respectively, and thus do not match the target size. Hence, they have to be discussed in more detail. In our notation, we emphasised that the regularisation is defined w.r.t.~the spectral parameters $(w_1, w_2)$, i.e., in particular, $A_1^\circ = A_1^{\circ_{1,2}}$.
	\\[2mm]
	\emph{\underline{Estimating \eqref{eq:Psi 1 iso firstcrit}.}} For the term \eqref{eq:Psi 1 iso firstcrit}, we expand
	\begin{equation} \label{eq:firstcrit 1}
		\big(G_1 \mathcal{S}\big[(G_1-M_1) \mathring{A}_1^{{1,2}}\big] G_2\big)_{\boldsymbol{x}\boldsymbol{y}} = \sum_{\sigma} \sigma \langle (G_1 - M_1) \mathring{A}_1^{{1,2}} E_\sigma \rangle \big(G_1 E_\sigma G_2\big)_{\boldsymbol{x}\boldsymbol{y}}
	\end{equation}
	and observe that, by definition of $\cdot^\circ$ in \eqref{eq:circ def}, we have, similarly to Lemma \ref{lem:regularbasic} (see also \eqref{eq:Psi 1 iso Gaussian split}),
	\begin{equation} \label{eq:firstcrit 2}
		\mathring{A}_1^{{1,2}} E_\sigma = \big(\mathring{A}_1^{{1,2}} E_\sigma\big)^{\circ_{1,1}} + \mathcal{O}\big(|e_1 - \sigma e_2| + |\eta_1 -  \eta_2|\big) E_+ + \mathcal{O}\big(|e_1 - \sigma e_2| + |\eta_1 -  \eta_2|\big) E_-\,.
	\end{equation}

	Now, in the second term in \eqref{eq:firstcrit 1} for $\sigma = +$ and $E_\sigma = E_+$, we use a {resolvent identity \eqref{eq:resolid}} and the usual isotropic local law \eqref{eq:single G} to estimate it as
	\begin{equation} \label{eq:Psi 1 iso G1G2}
		\big\vert \big(G_1  G_2\big)_{\boldsymbol{x}\boldsymbol{y}} \big\vert \prec 1+ \frac{1}{|e_1 - e_2| +  \eta_1 + \eta_2}\,.
	\end{equation}

	Furthermore, in the second term in \eqref{eq:intrepG^2} for $\sigma = - $ and $E_\sigma = E_-$, we employ the integral representation from Lemma \ref{lem:intrepG^2} in combination with the
	\begin{comment}
	\eqref{eq:intrepG^2 easy} to infer %{\color{red}USE DIFFERENT INTEGRAL from Lemma \ref{lem:intrepG^2}}
	\begin{align*}
		\big(G_1 E_- G_2\big)_{\boldsymbol{x}\boldsymbol{y}} & = - \big(G(e_1 + \I \eta_1) G(-e_2 + \I \eta_2 )\big)_{\boldsymbol{x}\, (E_-\boldsymbol{y})}                                                                                                                 \\
		                                                     & =  - \frac{1}{2\pi \I} \int_{\R} \frac{\big(G(x + \I \eta/3)\big)_{\boldsymbol{x}\, (E_-\boldsymbol{y})}}{\big(x -e_1 - \I (\eta_1 - \eta/3)\big) \, \big( x + e_2 - \I (\eta_2 - \eta/3) \big)}  \, \D x\,.
	\end{align*}
	Using the
	\end{comment}
	usual isotropic local law \eqref{eq:single G} (see also \eqref{eq:G1E-G2 iso}) to infer
	\begin{equation} \label{eq:Psi 1 iso G1E-G2}
		\big\vert \big(G_1  E_- G_2\big)_{\boldsymbol{x}\boldsymbol{y}} \big\vert \prec 1 + \frac{1}{|e_1 + e_2| +  \eta_1 + \eta_2}\,.
	\end{equation}
	Combining \eqref{eq:Psi 1 iso G1G2} and \eqref{eq:Psi 1 iso G1E-G2} with the decomposition \eqref{eq:firstcrit 2} and the usual averaged local law \eqref{eq:single G}, we find that \eqref{eq:firstcrit 1} can be bounded by
	\begin{equation*}
		\sum_{\sigma} \left( \left\vert \langle (G_1 - M_1) \big(\mathring{A}_1^{{1,2}} E_\sigma\big)^{\circ_{1,1}}\rangle  \right\vert + \frac{|e_1 - \sigma e_2| + \vert \eta_1 - \eta_2 \vert}{N\eta_1}\right) \left(1 + \frac{1}{|e_1 - \sigma e_2| + \eta_1 + \eta_2 }\right)\,.
	\end{equation*}
	Using the definition of $\Psi_1^{\rm av}$ in \eqref{eq:Psi avk} and the apriori bound $\Psi_1^{\rm av} \prec \psi_1^{\rm av}$, this immediately implies the estimate
	\begin{equation} \label{eq:Psi 1 iso firstcrit final}
		\vert  \eqref{eq:Psi 1 iso firstcrit} \vert \prec	\frac{1}{N \eta} + \frac{1}{\sqrt{N}\eta} \frac{\psi_1^{\rm av}}{(N \eta)^{1/2}}\,.
	\end{equation}
	\emph{\underline{Estimating \eqref{eq:Psi 1 iso secondcrit}.}} For the term \eqref{eq:Psi 1 iso secondcrit}, we first note that the two prefactors $c_\sigma(\mathcal{X}_{12}[A_1^{\circ_{1,2}}] M_2) $ and $c_\sigma(\Phi_\sigma)$ are bounded. However, in each of the two cases $\sigma = \pm $, the bound on \emph{one} of the prefactors needs to be improved: In the first case, $\sigma = +$, we use \eqref{eq:saturation} and compute
	\begin{equation*}
		c_+(\Phi_+) = \frac{\langle M_1 \rangle \big( 1 - \langle M_1 M_2 \rangle  \big)}{\langle M_1 M_2 \rangle } = \mathcal{O} \big( |e_1 - e_2| + \eta_1 + \eta_2 \big)
	\end{equation*}
	from \eqref{eq:csigmadef Psi 1 iso} and \eqref{eq:Psidef1iso}. Combining this with the bound
	\begin{equation*}
		\left\vert \big(G_1 G_2 - M(w_1, E_+, w_2)\big)_{\boldsymbol{x}\boldsymbol{y}} \right\vert \prec \left(\frac{1}{\sqrt{N \eta_1}} + \frac{1}{\sqrt{N \eta_2}}\right) \cdot \frac{1}{|e_1 - e_2| +  \eta_1 + \eta_2}
	\end{equation*}
	which is obtained completely analogous to \eqref{eq:Psi 1 iso G1G2}, we conclude that \eqref{eq:Psi 1 iso secondcrit} for $\sigma = +$ can be estimated by $1/\sqrt{N \eta}$ (recall $\eta := \min\{ \eta_1, \eta_2 \}$). Similarly, in the second case, $\sigma = -$, we perform a computation similar to the one leading to \eqref{eq:stability} and use \eqref{eq:saturation} in order to obtain that $c_-(\mathcal{X}_{12}[\mathring{A}_1^{{1,2}}] M_2)$ equals
	\begin{equation*}
		\frac{\I}{2} \frac{\langle M_1 \mathring{A}_1^{{1,2}} M_2^* E_- \rangle }{\langle M_1 E_- M_2^* E_- \rangle} + \frac{1}{2 \I} \frac{\langle M_1 \mathring{A}_1^{{1,2}} M_2 E_- \rangle }{\langle M_1 E_- M_2^* E_- \rangle} \frac{1 + \langle M_1 E_- M_2^* E_- \rangle }{1 + \langle M_1 E_- M_2 E_- \rangle } =  \mathcal{O}\big(|e_1 + e_2| + \eta_1 + \eta_2\big)
	\end{equation*}
	Combining this with the bound
	\begin{equation*}
		\left\vert \big(G_1 E_- G_2 - M(w_1, E_-, w_2)\big)_{\boldsymbol{x}\boldsymbol{y}} \right\vert \prec \frac{1}{\sqrt{N \eta}} \cdot \frac{1}{|e_1 + e_2| +  \eta_1 + \eta_2}
	\end{equation*}
	which is obtained completely analogous to \eqref{eq:Psi 1 iso G1E-G2}, we conclude that \eqref{eq:Psi 1 iso secondcrit} can be estimated by $1/\sqrt{N \eta}$ -- now in both cases $\sigma = \pm$.
	\\[2mm]
	\underline{\emph{Conclusion}.}
	Summarizing our investigations, %(for the case $\mathbf{1}_\delta = 0$, recall the discussion below \eqref{eq:Psi 1 iso before decomp}), 
	we have shown that
	\begin{equation*}
		\big(G_1 \mathring{A}_1 G_2 - M(w_1, \mathring{A}_1, w_2)\big)_{\boldsymbol{x} \boldsymbol{y}} = - \big(\underline{G_1 \mathring{A}_1' W G_2}\big)_{\boldsymbol{x} \boldsymbol{y}} + \mathcal{O}_\prec\big(\mathcal{E}_1^{\rm iso}\big)\,,
	\end{equation*}
	where we used the shorthand notation
	\begin{equation} \label{eq:A'1iso def}
		\mathring{A}_1' :=   \big(\mathcal{X}_{12}[\mathring{A}_1] M_2 \big)^\circ +   \sum_{\sigma} \frac{\mathbf{1}_\delta^\sigma c_\sigma(\mathcal{X}_{12}[\mathring{A}_1]M_2)}{1 - \mathbf{1}_\delta^\sigma c_\sigma(\mathcal{X}_{12}[\mathring{\Phi}_\sigma]M_2)} \big(\mathcal{X}_{12}[\mathring{\Phi}_\sigma] M_2 \big)^\circ
	\end{equation}
	in the underlined term. Combining \eqref{eq:Psi 1 iso firstcrit final} and the bound on \eqref{eq:Psi 1 iso secondcrit} established above  with the usual single resolvent local laws \eqref{eq:single G} and the bounds on deterministic approximations in Lemma \ref{lem:Mbound}, we collected all the error terms from \eqref{eq:Psi 1 iso final} in \eqref{eq:E1iso}.
\end{proof}

\subsection{Proof of the third master inequality \eqref{eq:masterineq Psi 2 av}} \label{subsec:proofmaster3}
Let $w_j\in \mathbf{D}_{\ell +1}^{(\epsilon_0, \kappa_0)}$ for $j \in [2]$ be spectral parameters and $A_1$ a regular matrix
w.r.t.~$(w_1, w_2)$ and $A_2$ a regular matrix w.r.t.~$(w_2, w_1)$ (see Definition~\ref{def:regobs}).
By conjugation with $E_-$, we again assume w.l.o.g.~that $\Im w_1 > 0$ and $\Im w_2< 0$.
Just as in Section~\ref{subsec:proofmaster2}, we use the notations $e_j \equiv \Re w_j$, $\eta_j := |\Im w_j|$ for $j \in [2]$ and define $1 \ge \eta := \min_j |\Im w_j|$. We also assume that \eqref{eq:apriori Psi} holds.
\begin{lemma} {\rm (Representation as full underlined)} \label{lem:underlined3} \\
	For any $(w_1, w_2)$-regular matrix $A_1  = \mathring{A}_1$ and $(w_2, w_1)$-regular matrix $A_2 = \mathring{A}_2$, we have that
	\begin{equation} \label{eq:underlined Psi 2 av}
		\big\langle \big(G_1 \mathring{A}_1 G_2 - M(w_1, \mathring{A}_1, w_2)\big) \mathring{A}_2 \big\rangle= - \big\langle\underline{W G_1 \mathring{A}_1 G_2 \mathring{A}_2'}\big\rangle + \mathcal{O}_\prec\big(\mathcal{E}_2^{\rm av}\big)
	\end{equation}
	for some $(w_2, w_1)$-regular matrix $A_2' = \mathring{A}_2'$, which linearly depends on $A_2 = \mathring{A}_2$ (analogously to \eqref{eq:A'1iso def}, see \eqref{eq:A'2av def} for an explicit formula). For the error term in \eqref{eq:underlined Psi 2 av}, we used the shorthand notation
	\begin{equation} \label{eq:E2av}
		\mathcal{E}_2^{\rm av} := \frac{1}{N \eta}\left(1 %+ \psi_1^{\rm av} 
		+ \frac{(\psi_1^{\rm av})^2}{N \eta} + \frac{\psi_2^{\rm av}}{N \eta}\right)\,.
	\end{equation}
\end{lemma}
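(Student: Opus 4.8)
The goal is to represent $\langle (G_1 \mathring{A}_1 G_2 - M(w_1, \mathring{A}_1, w_2)) \mathring{A}_2 \rangle$ as a single fully renormalised (underlined) term plus a controlled error $\mathcal{E}_2^{\rm av}$. The strategy parallels the proof of Lemma~\ref{lem:underlined2}, but now with one extra resolvent and one extra regular matrix. First I would expand the \emph{first} resolvent $G_1$ using the identity \eqref{eq:start} in the form $G_1 = M_1 - \underline{G_1 W} M_1 + G_1 \mathcal{S}[G_1 - M_1] M_1$, multiply on the right by $\mathring{A}_1 G_2 \mathring{A}_2$ and take the trace. Extending the underline to the whole product at the cost of shorter chains (exactly as in the underlined lemmas of Sections~\ref{subsec:proofmaster1}--\ref{subsec:proofmaster2}), this produces the leading term $-\langle \underline{W G_1 \mathring{A}_1 G_2 \mathcal{X}_{21}[\mathring{A}_2] M_1}\rangle$ together with terms carrying an extra $\langle G - M\rangle$ factor (negligible by the single-resolvent local law and \eqref{eq:apriori Psi}), terms of the form $\langle G_1 \mathcal{S}[(G_1-M_1)\cdots]\cdots\rangle$, and — crucially — terms where the argument of $\mathcal{S}$ or the matrix $\mathcal{X}_{21}[\mathring{A}_2]M_1$ must be decomposed into regular plus $E_\pm$ components according to Definition~\ref{def:regobs}.

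\textbf{Handling the singular $E_\pm$ directions.} The matrix $\mathcal{X}_{21}[\mathring{A}_2] M_1$ appearing after expansion is regularised w.r.t.\ the pair $(w_2, w_1)$: write $\mathcal{X}_{21}[\mathring{A}_2]M_1 = (\mathcal{X}_{21}[\mathring{A}_2]M_1)^{\circ_{2,1}} + \sum_\sigma \mathbf{1}_\delta^\sigma\, c_\sigma(\mathcal{X}_{21}[\mathring{A}_2]M_1) E_\sigma$, with $c_\sigma$ the appropriate linear functionals as in \eqref{eq:csigmadef Psi 1 iso}. The $E_\sigma$ pieces generate terms of the shape $\langle \underline{W G_1 \mathring{A}_1 G_2 E_\sigma}\rangle$ and $\langle G_1 \mathring{A}_1 G_2 E_\sigma \mathcal{S}[\cdots]\cdots\rangle$; undoing the underline on the former via $WG_1 = E_+ + w_1 G_1 - \hat{\Defo}G_1$ and using $\langle G E_-\rangle = 0$, these reduce to chains $\langle (G_1\mathring{A}_1 G_2 - M(w_1,\mathring{A}_1,w_2)) \Phi_\sigma\rangle$ for suitable $\Phi_\sigma = E_\sigma M_2^{-1} - \mathcal{S}[\cdots]$, exactly analogous to \eqref{eq:Psidef1iso}. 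One then decomposes each $\Phi_\sigma = \mathring{\Phi}_\sigma + \sum_\tau \mathbf{1}_\delta^\tau c_\tau(\Phi_\sigma) E_\tau$, observes the near-diagonality $c_\tau(\Phi_\sigma) \sim \delta_{\sigma\tau}$ (the analogue of \eqref{eq:orthogonality Psi 1 iso}), writes \eqref{eq:underlined Psi 2 av} for $A_2 = \mathring{\Phi}_\pm$, and solves the resulting (decoupled) linear system for $\langle (G_1\mathring{A}_1 G_2 - M)\mathring{\Phi}_\pm\rangle$. This requires the invertibility statement that the relevant denominators $1 - \mathbf{1}_\delta^\sigma c_\sigma(\mathcal{X}_{21}[\mathring{\Phi}_\sigma]M_1)$ are bounded away from zero — a lemma of the same type as Lemma~\ref{lem:splitting stable 1iso}, proven by specialising to $w_1 = \bar w_2$ resp.\ $w_1 = -w_2$ and using $\langle \Im M\rangle \gtrsim 1$ in the bulk together with Lemmas~\ref{lem:Mbasic} and~\ref{lem:eigendecomp}.

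\textbf{The critical error terms.} After the linear algebra, the surviving non-underlined terms that are not obviously $\mathcal{O}_\prec(\mathcal{E}_2^{\rm av})$ are of two kinds: (a) terms $\langle G_1 \mathcal{S}[(G_1-M_1)\mathring{A}_1] G_2 \, \mathring{A}_2\rangle$-type expressions, which I would expand via $\mathcal{S}[(G_1-M_1)B] = \sum_\sigma \sigma \langle (G_1-M_1) B E_\sigma\rangle$, split $\mathring{A}_1 E_\sigma$ into its $(w_1,w_1)$-regular part plus $\mathcal{O}(|e_1 \mp e_2| + |\eta_1 - \eta_2|)E_\pm$ (as in \eqref{eq:firstcrit 2}), and estimate the resulting $\langle G_1 E_\pm G_2 \mathring{A}_2\rangle$ by chiral symmetry \eqref{eq:chiral}, a resolvent identity and Theorem~\ref{thm:singleGopt}, with the small prefactor cancelling the small denominator; and (b) terms with prefactor $c_\sigma(\Phi_\sigma)$ multiplying $\langle G_1 E_\sigma G_2 E_\sigma \rangle$-type quantities minus their $M$-counterpart, where the smallness $c_\sigma(\Phi_\sigma) = \mathcal{O}(|e_1 \mp e_2| + \eta_1 + \eta_2)$ (from \eqref{eq:saturation}) compensates the $1/(|e_1 \mp e_2| + \eta_1 + \eta_2)$ blowup coming from the resolvent identity. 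Collecting everything — using \eqref{eq:single G}, the a priori bounds \eqref{eq:apriori Psi}, the $M$-bounds of Lemma~\ref{lem:Mbound}, and the perturbative estimates of Lemma~\ref{lem:regularbasic} — yields \eqref{eq:underlined Psi 2 av} with $\mathcal{E}_2^{\rm av}$ as in \eqref{eq:E2av} and an explicit formula for $\mathring{A}_2'$ of the form
\begin{equation*}
\mathring{A}_2' := (\mathcal{X}_{21}[\mathring{A}_2]M_1)^{\circ_{2,1}} + \sum_\sigma \frac{\mathbf{1}_\delta^\sigma c_\sigma(\mathcal{X}_{21}[\mathring{A}_2]M_1)}{1 - \mathbf{1}_\delta^\sigma c_\sigma(\mathcal{X}_{21}[\mathring{\Phi}_\sigma]M_1)} (\mathcal{X}_{21}[\mathring{\Phi}_\sigma]M_1)^{\circ_{2,1}}\,,
\end{equation*}
the direct analogue of \eqref{eq:A'1iso def}. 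The main obstacle I anticipate is bookkeeping: tracking which of the chains produced by the expansion are genuinely short enough to be bounded by $\psi_1^{\rm av}, \psi_1^{\rm iso}, \psi_2^{\rm av}$ at the target scale, and ensuring in every instance that the small numerators from the Lipschitz-continuity of the regularisation (Lemma~\ref{lem:regularbasic}) precisely match the small denominators from resolvent identities — the sign-matching noted in the text around \eqref{eq:regularperturb1}. The invertibility of the $2\times 2$ linear systems, and the boundedness of $\mathcal{X}_{21}[\mathring{A}_2]$ via Lemma~\ref{lem:boundedpert}, are the other points requiring care but follow the template already established for $\Psi_1^{\rm iso}$.
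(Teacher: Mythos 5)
Your high-level strategy is the paper's, and your final formula for $\mathring{A}_2'$ matches \eqref{eq:A'2av def} exactly. However, three systematic slips indicate that the plan transfers the structure of Lemma~\ref{lem:underlined2} (where the \emph{middle} resolvent is expanded) to Lemma~\ref{lem:underlined3} (where the \emph{first} one is) without adjusting.

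First, you cite the wrong half of \eqref{eq:start}: to land on $-\langle\underline{W G_1 \mathring{A}_1 G_2 \mathring{A}_2'}\rangle$ one must use $G_1 = M_1 - M_1\underline{WG_1} + M_1\mathcal{S}[G_1-M_1]G_1$, which places $W$ cyclically to the \emph{left} of $G_1$. With the form you wrote, $G_1 = M_1 - \underline{G_1 W}M_1 + \cdots$, the $W$ sits to the right of $G_1$, and after extending the underline and cyclic rotation one obtains $\langle\underline{W M_1\mathring{A}_1 G_2\mathring{A}_2 G_1}\rangle$, which is a genuinely different representation from the one claimed. Correspondingly, $\Phi_\sigma$ should carry $M_1^{-1}$, not $M_2^{-1}$: the paper's \eqref{eq:Phidef} is $\Phi_\sigma = E_\sigma M_1^{-1} - \mathcal{S}[M_2 E_\sigma]$, reflecting that the underline is undone via $W G_1 = E_+ - (M_1^{-1}+\mathcal{S}[M_1])G_1$.

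The decisive gap is the identification of the critical term (a). The expression $\langle G_1\mathcal{S}[(G_1-M_1)\mathring{A}_1]G_2\mathring{A}_2\rangle$ does not arise in this expansion. Terms with $\mathcal{S}[(G_1-M_1)\cdot]$ appear only when $W$ sits in the \emph{interior} of the chain, so that extending the underline produces a derivative that hits the resolvent on the far side (compare $G_1\mathcal{S}[(G_1-M_1)\mathcal{X}_{12}[A_1]M_2]G_2$ in \eqref{eq:Psi 1 iso before decomp}). Here, expanding $G_1$ places $W$ at the boundary of the chain, and the extension over $G_2$ instead produces $M_1\mathcal{S}[G_1\mathring{A}_1 G_2]G_2\mathring{A}_2$, whose $(G_2-M_2)$ part yields the actual critical term \eqref{eq:Psi 2 av firstcrit}, namely $\langle\mathcal{S}[G_1\mathring{A}_1 G_2](G_2-M_2)\mathring{A}_2^{{2,1}}\rangle$. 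Its estimate — expanding $\mathcal{S}$, adjusting the regularisation of $\mathring{A}_1 E_\sigma$ and $\mathring{A}_2 E_\sigma$ to single-parameter form via \eqref{eq:Psi 2 av firstcrit 1}--\eqref{eq:Psi 2 av firstcrit 2}, using the integral representation of Lemma~\ref{lem:intrepG^2} for $\langle G_1 \mathring{A}_1 G_2 E_-\rangle$, and matching the $|e_1 \mp e_2|$ prefactors against the resolvent-identity denominators — is the step that produces the two factors of $\psi_1^{\rm av}$ and hence $(\psi_1^{\rm av})^2/(N\eta)$ in $\mathcal{E}_2^{\rm av}$. Your (b) is also off: the relevant quantity is $\langle(G_1\mathring{A}_1 G_2 - M(w_1,\mathring{A}_1,w_2))E_\sigma\rangle$ with $\mathring{A}_1$ still present, not a pure $\langle G_1 E_\sigma G_2 E_\sigma\rangle$ chain, although the compensation by $c_\sigma(\Phi_\sigma) = \mathcal{O}(|e_1\mp e_2|+\eta_1+\eta_2)$ you invoke is the right mechanism. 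Without identifying the correct critical terms, the bookkeeping would not close on the stated error $\mathcal{E}_2^{\rm av}$.
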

Note that similarly to Lemma \ref{lem:underlined1} but contrary to Lemma \ref{lem:underlined2}, we again expanded the first resolvent $G_1$.
Otherwise, the proof of Lemma \ref{lem:underlined3}, given in Appendix \ref{app:underlinedproofs}, is very
similar to the one of Lemma \ref{lem:underlined2}. We only mention that the quadratic error
$(\psi_1^{\rm av})^2$ stems from terms of the form
\begin{equation*}
	\langle  \mathcal{S}[G_1 \mathring{A}^{{1,2}}_1 G_2] (G_2 - M_2) \mathring{A}_2^{{2,1}} \rangle\,,
\end{equation*}
appearing in the analogue of \eqref{eq:Psi 1 iso final} (see \eqref{eq:Psi 2 av firstcrit} in Appendix \ref{app:underlinedproofs}).
Having the approximate representation \eqref{eq:underlined Psi 2 av}, %of $\big\langle \big(G_1 \mathring{A}_1 G_2 - M(w_1, \mathring{A}_1, w_2)\big) \mathring{A}_2 \big\rangle$  as a full underlined term at hand, 
we turn to the proof of \eqref{eq:masterineq Psi 2 av} via cumulant expansion of
the  full underlined term. 
\begin{proof}[Proof of \eqref{eq:masterineq Psi 2 av}]
	Let $p \in \N$. Starting from \eqref{eq:underlined Psi 1 av}, we obtain, as in the proofs of \eqref{eq:masterineq Psi 1 av} and \eqref{eq:masterineq Psi 1 iso},
	\begin{align}
		         & \mathbf{E} \big|\langle (G_1 \mathring{A}_1 G_2-M(w_1, \mathring{A}_1, w_2))\mathring{A}_2 \rangle \big|^{2p}  \label{eq:min exp 2av}                                                                                                                                                       \\
		\lesssim & \, \mathbf{E} \,  \widetilde{\Xi}_2^{\rm av}\,  \big\vert  \langle (G_1 \mathring{A}_1 G_2-M(\ldots))\mathring{A}_2 \rangle  \big\vert^{2p-2} \nonumber                                                                                                                                     \\
		         & + \sum_{|\boldsymbol{l}| + \sum(J \cup J_*) \ge 2} \mathbf{E} \, \Xi_2^{\rm av}(\boldsymbol{l}, J, J_*) \big\vert  \langle (G_1 \mathring{A}_1 G_2-M(\ldots))\mathring{A}_2 \rangle \big\vert^{2p-1 - |J \cup J_*|} + \mathcal{O}_\prec\big((\mathcal{E}_2^{\rm av})^{2p}\big) \nonumber\,,
	\end{align}
	where
	\begin{equation*}
		\widetilde{\Xi}_2^{\rm av}:= \frac{1}{N^2} \sum_\sigma \big|  \langle G_1 \mathring{A}_1 G_2 \mathring{A}_2 G_1 E_\sigma G_1 \mathring{A}_1 G_2 \mathring{A}_2' E_\sigma \rangle \big| + \cdots
		%\frac{\sum_\sigma \big[\big|  \langle G_1 \mathring{A}_1 G_2 \mathring{A}_2 G_1 E_\sigma G_1 \mathring{A}_1 G_2 \mathring{A}_2' E_\sigma \rangle \big| + \big|\langle G_2 \mathring{A}_2 G_1 \mathring{A}_1 G_2 E_\sigma G_1 \mathring{A}_1 G_2 \mathring{A}_2' E_\sigma  \rangle \big|\big] }{N^2} \\
		%& + \frac{\sum_\sigma \big[\big|  \langle G_1^* \mathring{A}_1^* G_2^* \mathring{A}_2^* G_1^* E_\sigma G_1 \mathring{A}_1 G_2 \mathring{A}_2' E_\sigma \rangle \big| + \big|\langle G_2^* \mathring{A}_2^* G_1^* \mathring{A}_1^* G_2^* E_\sigma G_1 \mathring{A}_1 G_2 \mathring{A}_2' E_\sigma  \rangle \big|\big] }{N^2} 
	\end{equation*}
	with the other terms being analogous, just $1$ and $2$ in the first half $G_1 \mathring{A}_1 G_2 \mathring{A}_2 G_1$ of the chain interchanged or the entire half taken as adjoint, and $\Xi_2^{\rm av}(\boldsymbol{l}, J, J_*)$ is defined as
	\begin{align} \label{eq:Xi 2 av def}
		\Xi_2^{\rm av} := N^{-(|\boldsymbol{l}| + \sum(J \cup J_*) + 3)/2} \sum_{ab} & R_{ab}|\partial^{\boldsymbol{l}} (G_1 \mathring{A}_1 G_2\mathring{A}_2')_{ba}|                                                                                                                                                                                  \\
		                                                                             & \times \prod_{\boldsymbol{j} \in J} |\partial^{\boldsymbol{j}} \langle G_1 \mathring{A}_1 G_2\mathring{A}_2 \rangle |   \prod_{\boldsymbol{j} \in J_*} |\partial^{\boldsymbol{j}} \langle G_2^* \mathring{A}_2^* G_1^* \mathring{A}_1^* \rangle |\,.  \nonumber
	\end{align}
	As in Sections \ref{subsec:proofmaster1} and \ref{subsec:proofmaster2}, in the remainder of the proof, we need to
	analyze the rhs. of~\eqref{eq:min exp 2av}.
	We begin with the second line and study the terms involving $\Xi_2^{\rm av}$ from \eqref{eq:Xi 2 av def} afterwards.
	\\[2mm]
	{\bf \underline{Gaussian contribution: second line of \eqref{eq:min exp 2av}.}} %{\color{purple}Lemma 0.1 (or an av analogue) could help; but not worth it} 
	Along the principal strategy outlined in Remark \ref{rmk:strategy}, we need to analyze in total eight terms, each of which carries one of the summands in the definition of $\widetilde{\Xi}_2^{\rm av}$ as a factor. Since their treatment is very similar, we focus on the exemplary term
	\begin{equation} \label{eq:oneterm2av}
		\langle G_1 \mathring{A}_1^{w_1, w_2} G_2 \mathring{A}_2^{w_2, w_1} G_1 G_1 \mathring{A}_1^{w_1, w_2} G_2 (\mathring{A}_2' )^{w_2, w_1} \rangle \,.
	\end{equation}

	Now, we represent $G_1 G_1$ via the integral representation from Lemma \ref{lem:intrepG^2} with
	\[
		\tau = +\,, \quad   J =  \mathbf{B}_{ \ell \kappa_0}\,, \quad \text{and} \quad  \tilde{\eta} = \frac{\ell}{\ell +1} \eta\,,
	\]
	for which we recall that $w \in \mathbf{D}_{\ell +1}^{(\epsilon_0, \kappa_0)}$, i.e.~in particular $\eta \ge (\ell +1) N^{-1+\epsilon_0}$ and hence $\tilde{\eta} \ge \ell N^{-1+\epsilon_0}$. After splitting the contour integral and bounding the individual contributions as described in \eqref{eq:contourdecomp}, we obtain, with the aid of Lemma \ref{lem:Mbound},
	\begin{equation*}
		\big| \eqref{eq:oneterm2av} \big| \prec \frac{1}{\eta^2} \left( 1 + \frac{\psi_4^{\rm av}}{N \eta}  \right) + \int_{ \mathbf{B}_{ \ell \kappa_0}} \frac{\big|  \langle G_1 \mathring{A}_1^{w_1, w_2} G_2 \mathring{A}_2^{w_2, w_1} G(x + \I \tilde{\eta}) \mathring{A}_1^{w_1, w_2} G_2 (\mathring{A}_2' )^{w_2, w_1} \rangle  \big|}{(x - e_1)^2 + \eta_1^2} \D x\,.
	\end{equation*}

	Next, we decompose $\mathring{A}_2^{w_2, w_1}$ and $\mathring{A}_1^{w_1, w_2}$ in the integrand as {(recall the notation in Footnote \ref{ftn:Osigma})}
	\begin{equation} \label{eq:Psi2avGaussdecomp}
		\begin{split}
			\mathring{A}_2^{w_2, x + \I \tilde{\eta}} &= \mathring{A}_2^{w_2, w_1} + \sum_\sigma \mathcal{O}_\sigma(|x - e_1| + |\eta_1 - \tilde{\eta}|) E_\sigma \\
			\mathring{A}_1^{ x + \I \tilde{\eta}, w_2} &= \mathring{A}_1^{w_1, w_2} + \sum_\sigma \mathcal{O}_\sigma(|x - e_1| + |\eta_1 - \tilde{\eta}|) E_\sigma\,.
		\end{split}
	\end{equation}

	While the properly regularised term contributes an $\eta^{-2}\big(1 + \psi_4^{\rm av}/(N \eta)\big)$-error, a typical cross term shall be estimated as
	\begin{equation} \label{eq:avtoiso}
		\int_{ \mathbf{B}_{ \ell \kappa_0}}\frac{\big|  \big\langle G_1 \mathring{A}_1^{w_1, w_2} G_2 \mathring{A}_2^{w_2, x + \I \tilde{\eta}} [G(x + \I \tilde{\eta}) -  G_2] (\mathring{A}_2' )^{w_2, w_1} \big\rangle  \big|}{\big(|x - e_1| + \eta_1\big) \, \big(|x - e_2| + \eta_2\big)} \prec \frac{1}{\eta^2} \left( 1 + \frac{\psi_2^{\rm iso}}{\sqrt{N \eta}} \right)
	\end{equation}
	where in the second step we wrote out the averaged trace and estimated each summand in isotropic form with the aid of Lemma \ref{lem:Mbound}, using $\psi_2^{\rm iso}$ instead of $\psi_3^{\rm av}$.

	%with the aid of Lemma \ref{lem:regularbasic}, leaves us with
	%\begin{align}
	%\big| \big\langle G_1 &\mathring{A}_1^{w_1, w_2} G_2 \mathring{A}_2^{w_2, w_1} G_1 G_1 \mathring{A}_1^{w_1, w_2} G_2 (\mathring{A}_2' )^{w_2, w_1} \big\rangle \big| \prec \frac{1}{\eta^2} \left( 1 + \frac{\psi_4^{\rm av}}{N \eta}  \right) \nonumber \\[1mm]
	%&+ \int_{|x-e_1| \le \delta} \frac{\sum_\sigma\big|  \big\langle G_1 \mathring{A}_1^{w_1, w_2} G_2 \mathring{A}_2^{w_2, x + \I \tilde{\eta}} G(x + \I \tilde{\eta}) E_\sigma G_2 (\mathring{A}_2' )^{w_2, w_1} \big\rangle  \big|}{|x - e_1| + \eta_1} \D x \label{eq:Psi2avGauss1}\\
	%&+ \int_{|x-e_1| \le \delta} \frac{\sum_\sigma\big|  \big\langle G_1 \mathring{A}_1^{w_1, w_2} G_2 E_\sigma G(x + \I \tilde{\eta}) \mathring{A}_1^{x + \I \tilde{\eta}, w_2} G_2 (\mathring{A}_2' )^{w_2, w_1} \big\rangle  \big|}{|x - e_1| + \eta_1} \D x \nonumber \\
	%&+ \int_{|x-e_1| \le \delta} \sum_{\sigma, \tau }\big|  \big\langle G_1 \mathring{A}_1^{w_1, w_2} G_2 E_\sigma G(x + \I \tilde{\eta}) E_\tau G_2 (\mathring{A}_2' )^{w_2, w_1} \big\rangle  \big| \D x \label{eq:Psi2avGauss2}\,. 
	%\end{align}
	%We now discuss \eqref{eq:Psi2avGauss1} for $\sigma = +$ and \eqref{eq:Psi2avGauss2} for $\sigma = \tau = +$ in more detail. The other terms are analogous and hence omitted. 
	%Indeed, for \eqref{eq:Psi2avGauss1} with $\sigma = +$ we use a resolvent identity and bound the integral as 

	Finally, for  `error $\times$ error'-type terms are bounded by $\eta^{-2}$, simply by using
	%resolvent identities and Lemma \ref{lem:intrepG^2} with parameters $\tau = -$, $\tilde{\kappa} = \kappa_0$, and $\tilde{\eta} = \frac{\ell}{\ell + 1} \eta$, similarly to above, and find
	%	\begin{align*}
	%&\int_{|x-e_1| \le \delta} \big|  \big\langle G_1 \mathring{A}_1^{w_1, w_2} G_2 G(x + \I \tilde{\eta})  G_2 (\mathring{A}_2' )^{w_2, w_1} \big\rangle  \big| \D x\nonumber \\
	%\prec &1 + \int_{|x-e_1| \le \delta}  \frac{\big| \big\langle G_1 \mathring{A}_1^{w_1, w_2} G_2  (\mathring{A}_2' )^{w_2, w_1} \big\rangle  \big| + \big| \big\langle G_1 \mathring{A}_1^{w_1, w_2} G(x + \I \tilde{\eta})  (\mathring{A}_2' )^{w_2, w_1} \big\rangle  \big|}{(x - e_2)^2 + \eta_2^2}\D x \nonumber\\
	%& \hspace{1cm}+ \int_{|x-e_1| \le \delta} \int_{|y-e_2| \le \delta}  \frac{\big| \big\langle G_1 \mathring{A}_1^{w_1, w_2} G(y - \I \tilde{\eta})  (\mathring{A}_2' )^{w_2, w_1} \big\rangle  \big| }{\big(|x - e_2| + \eta_2\big) \big( (y-e_2)^2 + \eta_2^2 \big)}\D x \prec \frac{1}{\eta^2}\,,
	%	\end{align*}
	%where in the last step we used 
	a trivial Schwarz inequality in combination with a Ward identity and the usual local law from Theorem~\ref{thm:singleG} to infer
	\begin{equation*}
		\big| \langle G_1 B_1 G_2 B_2 \big| \le \sqrt{\langle G_1 B_1 B_1^* G_1^* \rangle \langle G_2 B_2 B_2^* G_2^* \rangle} \le \frac{1}{\eta} \sqrt{\langle \Im G_1 B_1 B_1^*  \rangle \langle \Im G_2 B_2 B_2^*  \rangle} \prec \frac{1}{\eta}\,,
	\end{equation*}
	which is valid for arbitrary bounded matrices $\Vert B_1 \Vert, \Vert B_2 \Vert \lesssim 1$.

	This finishes the estimate for the Gaussian contribution from the second line of \eqref{eq:min exp 2av}, for which, collecting the above estimates, we have shown that
	\begin{equation} \label{eq:Gaussian2av}
		\widetilde{\Xi}_2^{\rm av} \prec \frac{1}{N^2 \eta^2} \left( 1 + \frac{\psi_2^{\rm iso}}{\sqrt{N \eta}} + \frac{\psi_4^{\rm av}}{N \eta} \right)\,.
	\end{equation}

	We are now left with the terms from the last line of \eqref{eq:min exp 2av} resulting from higher order cumulants.
	\\[2mm]
	{\bf \underline{Higher order cumulants and conclusion.}} The estimate stemming from higher order cumulants is given in \eqref{eq:higherorder2av} in Section \ref{subsec:higherorder}. Then, plugging \eqref{eq:Gaussian2av} and \eqref{eq:higherorder2av} into \eqref{eq:min exp 2av}, we find, similarly to Section~\ref{subsec:proofmaster1}, that
	\begin{equation*}
		\Psi_2^{\rm av} \prec 1 + \frac{(\psi_1^{\rm av})^{2} + {(\psi_1^{\rm iso})^2} + \psi_2^{\rm av}}{N \eta}+ \frac{\psi_2^{\rm iso} + (\psi_4^{\rm av})^{1/2}  }{(N \eta)^{1/2}} + \frac{(\psi_2^{\rm iso})^{1/2}}{(N \eta)^{1/4}} + \frac{(\psi_3^{\rm iso})^{3/8} + (\psi_4^{\rm iso})^{3/8}}{(N \eta)^{3/16}}  \,.
	\end{equation*}
	The bound given in Proposition \ref{prop:master} is an immediate consequence after a trivial Young inequality.
	%{\color{red}[To be continued: Naive estimates and Ward improvement $\rightarrow$ \textbf{Dominik}]} {\color{purple}Lemma 0.1 (or an av analogue) will help}
\end{proof}

\subsection{Proof of the fourth master inequality \eqref{eq:masterineq Psi 2 iso}} \label{subsec:proofmaster4}   Let $w_j\in \mathbf{D}_{\ell +1}^{(\epsilon_0, \kappa_0)}$ for $j \in [3]$ be spectral parameters and $A_1$ a regular matrix w.r.t.~$(w_1, w_2)$ and $A_2$ a regular matrix w.r.t.~$(w_2, w_3)$ (see Definition~\ref{def:regobs}). By conjugation with $E_-$, we will assume w.l.o.g.~that $\Im w_1 > 0$,
$\Im w_2< 0$, and $\Im w_3 > 0$. As before, we use the notations $e_j \equiv \Re w_j$, $\eta_j := |\Im w_j|$ for $j \in [3]$ and define $1 \ge \eta := \min_j |\Im w_j|$. We also assume that \eqref{eq:apriori Psi} holds.
\begin{lemma} {\rm (Representation as full underlined)} \label{lem:underlined4} \\
	For $\Vert \boldsymbol{x} \Vert , \Vert \boldsymbol{y} \Vert \le 1$ and any $(w_1, w_2)$-regular matrix $A_1  = \mathring{A}_1$ and $(w_2, w_3)$-regular matrix $A_2 = \mathring{A}_2$, we have that
	\begin{equation} \label{eq:underlined Psi 2 iso}
		\big(G_1 \mathring{A}_1 G_2 \mathring{A}_2 G_3 - M(w_1, \mathring{A}_1, w_2, \mathring{A}_2, w_3)\big)_{{\bm x}{\bm y}}  = - \big(\underline{G_1 \mathring{A}_1' W G_2 \mathring{A}_2 G_3}\big)_{{\bm x}{\bm y}}  + \mathcal{O}_\prec\big(\mathcal{E}_2^{\rm iso}\big)
	\end{equation}
	for some other $(w_1, w_2)$-regular matrix $A_1' = \mathring{A}_1'$, which linearly depends on $A_1 = \mathring{A}_1$ (analogously to \eqref{eq:A'1iso def}, see \eqref{eq:A'2iso def} for an explicit formula). For the error term in \eqref{eq:underlined Psi 2 iso}, we used the shorthand notation
	\begin{equation} \label{eq:E2iso}
		\mathcal{E}_2^{\rm iso} := \frac{1}{\sqrt{N \eta^3}}\left(1 + \psi_1^{\rm iso}  + \frac{\psi_1^{\rm av} \psi_1^{\rm iso}}{N \eta} + \frac{\psi_2^{\rm iso}}{N \eta}\right)\,.
	\end{equation}
\end{lemma}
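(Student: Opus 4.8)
The plan is to mimic the proof of Lemma~\ref{lem:underlined2}, now dragging along the extra tail $\mathring{A}_2 G_3$. Since the distinguished $W$ in the target term $\underline{G_1 \mathring{A}_1' W G_2 \mathring{A}_2 G_3}$ sits between the $A_1$-slot and $G_2$, the right move is to expand the \emph{interior} resolvent $G_2$ via its Dyson equation \eqref{eq:start}, $G_2 = M_2 - M_2 \underline{WG_2} + M_2 \mathcal{S}[G_2-M_2]G_2$, after absorbing a factor $\mathcal{X}_{12}$ (see \eqref{eq:X12def}) into the observable, i.e.\ writing $A_1 = \mathcal{X}_{12}[A_1] - \mathcal{S}[M_1 \mathcal{X}_{12}[A_1]M_2]$ so that the deterministic term comes out as $M(w_1,\mathring{A}_1,w_2) = M_1 \mathcal{X}_{12}[\mathring{A}_1]M_2$ and hence, by the recursion of Definition~\ref{def:Mdef}, the full $M(w_1,\mathring{A}_1,w_2,\mathring{A}_2,w_3)$. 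Extending the renormalisation from $\underline{WG_2}$ to the whole chain — the directional $\widetilde{W}$-derivative now acts to the left on $G_1$ and to the right on the \emph{two-resolvent} block $G_2\mathring{A}_2 G_3$ — yields, exactly as in \eqref{eq:Psi 1 iso before decomp}, a representation
\begin{align*}
 G_1 \mathring{A}_1 G_2 \mathring{A}_2 G_3 ={}& M(w_1,\mathring{A}_1,w_2,\mathring{A}_2,w_3) + (G_1-M_1)\mathcal{X}_{12}[\mathring{A}_1]M_2\mathring{A}_2 G_3 - \underline{G_1 \big(\mathcal{X}_{12}[\mathring{A}_1]M_2\big)^\circ W G_2 \mathring{A}_2 G_3} \\
 &{}+ \big(\text{terms carrying an extra } \mathcal{S}[\,\cdot\,] \text{ or an extra } G_j-M_j\big) + \sum_\sigma \mathbf{1}_\delta^\sigma\, c_\sigma\!\big(\mathcal{X}_{12}[\mathring{A}_1]M_2\big)\,\big(\cdots\big),
\end{align*}
where the last sum records the singular part of the decomposition $\mathcal{X}_{12}[\mathring{A}_1]M_2 = (\mathcal{X}_{12}[\mathring{A}_1]M_2)^\circ + \sum_\sigma \mathbf{1}_\delta^\sigma c_\sigma(\mathcal{X}_{12}[\mathring{A}_1]M_2)E_\sigma$ in complete analogy with \eqref{eq:decomp Psi 1 iso}--\eqref{eq:csigmadef Psi 1 iso}, and $\Vert\mathcal{X}_{12}[\mathring{A}_1]\Vert\lesssim 1$ by Lemma~\ref{lem:boundedpert}. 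The new feature relative to Lemma~\ref{lem:underlined2} is one additional term, $G_1 (\mathcal{X}_{12}[\mathring{A}_1]M_2)^\circ \mathcal{S}[G_2\mathring{A}_2 G_3]G_3$, produced by the $\widetilde{W}$-derivative hitting $G_3$.

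\textbf{Routine errors.} The bulk of the work is to show that every summand other than the distinguished underlined term is $\mathcal{O}_\prec(\mathcal{E}_2^{\rm iso})$, after possibly folding part of it back into a modified $\underline{G_1 \mathring{A}_1' W G_2 \mathring{A}_2 G_3}$. Terms with an extra $\mathcal{S}[\,\cdot\,]$ split — since $\mathcal{S}$ acts essentially as a trace on $G$ and $M$ (recall \eqref{eq:GMtrace}) — into a scalar $\langle\,\cdots\,\rangle$ times a shorter chain; combined with the single-resolvent law \eqref{eq:single G}, the chain bounds of Lemma~\ref{lem:Mbound} for $k\le 3$, and the a~priori inputs \eqref{eq:apriori Psi}, these produce the $\psi_1^{\rm iso}$, $\psi_1^{\rm av}\psi_1^{\rm iso}/(N\eta)$ and $\psi_2^{\rm iso}/(N\eta)$ pieces of \eqref{eq:E2iso}; the quadratic product $\psi_1^{\rm av}\psi_1^{\rm iso}$ arises, as in Lemma~\ref{lem:underlined3}, from a term $G_1\mathcal{S}[(G_1-M_1)\mathring{A}_1^{1,2}]G_2\mathring{A}_2 G_3$ carrying a further independent $(G_2-M_2)$ in its tail. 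The singular coefficients $c_\sigma(\mathcal{X}_{12}[\mathring{A}_1]M_2)$ are eliminated by the device of Lemma~\ref{lem:underlined2}: introduce the auxiliary matrices $\mathring{\Phi}_\pm$, write the same expansion also for $\mathring{A}_1=\mathring{\Phi}_\pm$, invoke the near-orthogonality relations $c_\tau(\Phi_\sigma)\sim\delta_{\sigma\tau}$, $c_\tau(\mathcal{X}_{12}[\Phi_\sigma]M_2)\sim\delta_{\sigma\tau}$ (cf.\ \eqref{eq:orthogonality Psi 1 iso}) and the lower bounds $|1-\mathbf{1}_\delta^\sigma c_\sigma(\mathcal{X}_{12}[\mathring{\Phi}_\sigma]M_2)|\gtrsim 1$ of (the analogue of) Lemma~\ref{lem:splitting stable 1iso} to solve the decoupled $2\times2$ linear system, and substitute back. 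This turns $(\mathcal{X}_{12}[\mathring{A}_1]M_2)^\circ$ into the final $\mathring{A}_1'$, whose explicit form \eqref{eq:A'2iso def} is the exact counterpart of \eqref{eq:A'1iso def}.

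\textbf{Critical terms and the main obstacle.} As in Lemma~\ref{lem:underlined2}, a handful of subterms have naive size exceeding $\mathcal{E}_2^{\rm iso}$ and must be treated by hand: the analogues of \eqref{eq:Psi 1 iso firstcrit} and \eqref{eq:Psi 1 iso secondcrit}, now $\big(G_1\mathcal{S}[(G_1-M_1)\mathring{A}_1^{1,2}]G_2\mathring{A}_2 G_3\big)_{\bm{x}\bm{y}}$ and $c_\sigma(\mathcal{X}_{12}[\mathring{A}_1^{1,2}]M_2)\,c_\sigma(\Phi_\sigma)\big(G_1 E_\sigma G_2\mathring{A}_2 G_3 - M(w_1,E_\sigma,w_2,\mathring{A}_2,w_3)\big)_{\bm{x}\bm{y}}$, together with the genuinely new term $\big(G_1(\mathcal{X}_{12}[\mathring{A}_1]M_2)^\circ\mathcal{S}[G_2\mathring{A}_2 G_3]G_3\big)_{\bm{x}\bm{y}}$ from the $G_3$-derivative (handled by splitting $\mathcal{S}[G_2\mathring{A}_2 G_3] = \mathcal{S}[M(w_2,\mathring{A}_2,w_3)] + \mathcal{S}[G_2\mathring{A}_2 G_3 - M(\cdots)]$ and using \eqref{eq:Mboundtrace}, Remark~\ref{rmk:sqrteta}(iii)/\eqref{eq:correlator}). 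Each is controlled by the same mechanism: decompose every loose observable ($\mathring{A}_1^{1,2}E_\sigma$, $(\mathring{A}_2^{2,3})^*$, etc.) via Lemma~\ref{lem:regularbasic}, which produces a Lipschitz prefactor $\mathcal{O}(|e_i\mp e_j|+\eta_i+\eta_j)$ that \emph{exactly cancels} the small denominator created when the clusters $G_1 E_\pm G_2\cdots$, $\cdots G_2 E_\pm G_3$, or $G_1 G_1$/$G_1 E_- G_1$ (the latter from the cumulant step of the surrounding master-inequality proof) are reduced by a resolvent identity or the contour representation of Lemma~\ref{lem:intrepG^2}; for the $c_\sigma$-weighted terms one additionally uses, via \eqref{eq:saturation}, that $c_\sigma(\Phi_\sigma)$ and $c_-(\mathcal{X}_{12}[\mathring{A}_1]M_2)$ are themselves $\mathcal{O}(|e_1\mp e_2|+\eta_1+\eta_2)$. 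The main obstacle is thus not conceptual — no ingredient beyond those already used for Lemmas~\ref{lem:underlined1}--\ref{lem:underlined3} is required — but combinatorial: the chains to be re-regularised and reduced are one resolvent longer than in the $\Psi_1^{\rm iso}$ case, so one must invoke Lemma~\ref{lem:Mbound} up to $k=3$ and meticulously match the relative signs appearing in \eqref{eq:regularbar1}, \eqref{eq:regularperturb1}--\eqref{eq:regularperturb2} against the denominators of \eqref{eq:intrepG^2} at every reduction. Because of this heavier bookkeeping, the detailed computation is carried out in Appendix~\ref{app:underlinedproofs}.
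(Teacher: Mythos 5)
Your proposal follows the same route as the paper's proof in Appendix~\ref{app:underlinedproofs}: expand the middle resolvent $G_2$ via its Dyson equation after pre-conditioning $A_1$ with $\mathcal{X}_{12}$, extend the renormalisation to the whole chain, use the recursion of Definition~\ref{def:Mdef} together with the $\mathcal{S}[G_2\mathring{A}_2 G_3]$-splitting to assemble $M(w_1,\mathring{A}_1,w_2,\mathring{A}_2,w_3)$, decompose $\mathcal{X}_{12}[\mathring{A}_1]M_2$ into regular and $E_\pm$-singular parts, eliminate the singular coefficients by the $\mathring{\Phi}_\pm$ device with the denominator bound from Lemma~\ref{lem:splitting stable 1iso}, and finally estimate the three critical terms by hand using Lemma~\ref{lem:regularbasic}, Lemma~\ref{lem:intrepG^2}, resolvent identities, and the improved decay of $c_+(\Phi_+)$ and $c_-(\mathcal{X}_{12}[\mathring{A}_1]M_2)$ from \eqref{eq:saturation}. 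You correctly identify all three critical terms, including the new one arising from the $\widetilde W$-derivative hitting $G_3$. One small imprecision: the $\psi_1^{\rm av}\psi_1^{\rm iso}$ contribution from $\big(G_1\mathcal{S}[(G_1-M_1)\mathring{A}_1^{1,2}]G_2\mathring{A}_2 G_3\big)_{\bm x\bm y}$ does not come from "a further independent $(G_2-M_2)$ in its tail" — it is the product of the averaged factor $\langle(G_1-M_1)\mathring{A}_1^{1,2}E_\sigma\rangle$ (giving $\psi_1^{\rm av}$) with the isotropic tail $(G_1 E_\sigma G_2\mathring{A}_2 G_3)_{\bm x\bm y}$ (giving $\psi_1^{\rm iso}$ after reduction), as in \eqref{eq:Psi 2 iso (i) firstfactor}--\eqref{eq:Psi 2 iso (i) final}. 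Apart from this, the proposal is correct and matches the paper's argument in structure and in all essential steps.
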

Note that similarly to \eqref{eq:underlined Psi 1 iso}, we again expanded the second resolvent.
The proof of Lemma \ref{lem:underlined4}, given in Appendix \ref{app:underlinedproofs}, is very similar to the one of Lemma \ref{lem:underlined2}. We only mention that the errors carrying $\psi_1^{\rm iso} \psi_1^{\rm av}$ and $\psi_1^{\rm iso}$ stem from terms of the form
\begin{align*}
	                                               & \big(G_1 \mathcal{S}[(G_1-M_1) A_1^{\circ_{1,2}}] G_2 \mathring{A}_2 G_3 \big)_{\boldsymbol{x}\boldsymbol{y}
	} \quad \text{and}                                                                                                                                                                              \\[1mm]
	c_\sigma(\mathcal{X}_{12}[\mathring{A}_1] M_2) & c_\sigma(\Phi_\sigma) \big(G_1 E_\sigma G_2 \mathring{A}_2 G_3 - M(w_1, E_\sigma, w_2, \mathring{A}_2, w_3)\big)_{\boldsymbol{x}\boldsymbol{y}
		}\,,
\end{align*}
respectively, appearing in the analogue of \eqref{eq:Psi 1 iso final} (see \eqref{eq:Psi 2 iso firstcrit} and \eqref{eq:Psi 2 iso thirdcrit} in Appendix \ref{app:underlinedproofs}).
Having the  representation \eqref{eq:underlined Psi 2 iso}
%of $ \big(G_1 \mathring{A}_1 G_2 \mathring{A}_2 G_3 - M(w_1,\ldots, w_3)\big)_{{\bm x}{\bm y}}$ as a full underlined term at hand, 
we turn to the proof of \eqref{eq:masterineq Psi 2 iso} via cumulant expansion of the underlined term.
\begin{proof}[Proof of \eqref{eq:masterineq Psi 2 iso}]
	Let $p \in \N$. Then, starting from \eqref{eq:underlined Psi 2 iso}, we obtain
	\begin{align}
		         & \mathbf{E} \big|\big(G_1 \mathring{A}_1 G_2\mathring{A}_2 G_3-M(w_1, \mathring{A}_1, w_2, \mathring{A}_2, w_3)\big)_{\boldsymbol{x}\boldsymbol{y}}  \big|^{2p} \label{eq:min exp 2iso}                                                                         \\
		\lesssim & \, \mathbf{E} \,  \widetilde{\Xi}_2^{\rm iso} \,  \big\vert  \big(G_1 \mathring{A}_1 G_2\mathring{A}_2 G_3-M(\ldots)\big)_{\boldsymbol{x}\boldsymbol{y}} \big\vert^{2p-2} + \mathcal{O}_\prec\big((\mathcal{E}_1^{\rm iso})^{2p}\big) \nonumber                \\
		         & + \sum_{|\boldsymbol{l}| + \sum(J \cup J_*) \ge 2} \mathbf{E} \, \Xi_2^{\rm iso}(\boldsymbol{l}, J, J_*) \big\vert  \big(G_1 \mathring{A}_1 G_2\mathring{A}_2 G_3 -M(\ldots)\big)_{\boldsymbol{x}\boldsymbol{y}}  \big\vert^{2p-1 - |J \cup J_*|} \nonumber\,,
	\end{align}
	where
	\begin{align*}
		\widetilde{\Xi}_2^{\rm iso} := & \frac{\sum_\sigma \sum_{j=1}^{3}\left\vert \big(  G_1 \mathring{A}_1'E_\sigma G_j \mathring{A}_j \ldots G_3\big)_{\boldsymbol{x}\boldsymbol{y}} \big( G_1 \mathring{A}_1 \ldots \mathring{A}_{j-1}G_j E_\sigma G_2 \mathring{A}_2 G_3 \big)_{\boldsymbol{x}\boldsymbol{y}} \right\vert }{N} \nonumber         \\[2mm]
		                               & + \frac{\sum_\sigma \sum_{j=1}^{3} \left\vert \big( G_1 \mathring{A}_1'E_\sigma  G_j^* \mathring{A}_{j-1}^* \ldots \mathring{A}_1^* G_1^* \big)_{\boldsymbol{x}\boldsymbol{x}} \big( G_3^* \ldots \mathring{A}_j^* G_j^* E_\sigma G_2 \mathring{A}_2 G_3 \big)_{\boldsymbol{y}\boldsymbol{y}} \right\vert}{N}
	\end{align*}
	and	$\Xi_2^{\rm iso}(\boldsymbol{l}, J, J_*)$ is defined as
	\begin{align} \label{eq:Xi 2 iso def}
		\Xi_2^{\rm iso} := N^{-(|\boldsymbol{l}| + \sum(J \cup J_*) + 1)/2} & \sum_{ab} R_{ab}\big|\partial^{\boldsymbol{l}} \big[(G_1 \mathring{A}'_1)_{\boldsymbol{x}a}\big(G_2\mathring{A}_2G_3\big)_{b \boldsymbol{y}} \big]\big|                                                                                                                                                                                    \\
		                                                                    & \times \prod_{\boldsymbol{j} \in J} \big|\partial^{\boldsymbol{j}}  \big(G_1 \mathring{A}_1 G_2\mathring{A}_2G_3\big)_{\boldsymbol{x}\boldsymbol{y}}\big|   \prod_{\boldsymbol{j} \in J_*} \big|\partial^{\boldsymbol{j}} \big(G_3^* \mathring{A}_2^* G_2^* \mathring{A}_1^* G_2^*\big)_{\boldsymbol{y}\boldsymbol{x}}  \big|\,. \nonumber
	\end{align}
	We need to analyze the rhs.~of the inequality derived in \eqref{eq:min exp 2iso}. We begin with the second line.
	% and study the terms involving $\Xi_2^{\rm iso}$ from \eqref{eq:Xi 2 iso def} afterwards. 
	\\[2mm]
	{\bf \underline{Gaussian contribution: second line of \eqref{eq:min exp 2iso}.}} Following Remark \ref{rmk:strategy}, we need to analyze in total twelve terms, each of which carries one of the summands in the definition of $\widetilde{\Xi}_2^{\rm iso}$ as a factor. Again, using Lemma \ref{lem:regularbasic} for the $A$'s, we pick two exemplary terms
	\begin{align}
		\big(G_1 \mathring{A}_1^{w_1, w_2} G_2 \mathring{A}_2^{w_2, w_3} G_3 E_- G_2 \mathring{A}_2^{w_2, w_3} G_3\big)_{\boldsymbol{x} \boldsymbol{y}}    \big(  G_1 \mathring{(A_1')}^{w_1, w_2} E_-  G_3\big)_{\boldsymbol{x} \boldsymbol{y}} \label{eq:twoterms2iso1} \\[1mm]
		\big( G_1 (\mathring{A}_1')^{w_1, w_2} G_2^* (\mathring{A_1^*})^{{\bar{w}_2, \bar{w}_1}} G_1^* \big)_{\boldsymbol{x}\boldsymbol{x}} \big( G_3^* \mathring{(A_2^*)}^{{\bar{w}_3, \bar{w}_2}} G_2^* G_2 \mathring{A}_2^{w_2, w_3} G_3 \big)_{\boldsymbol{y}\boldsymbol{y}} \label{eq:twoterms2iso2}
	\end{align}
	which shall be treated in more detail. The other terms are analogous and hence omitted. %Recall from \eqref{eq:2dimregass} that we focus on the most critical situation where 
	%\begin{equation} \label{eq:111}
	%\mathbf{1}_\delta^\sigma(w_1, w_2) = \mathbf{1}_\delta^\sigma(w_2, w_3) = \mathbf{1}_\delta^\sigma(w_1, w_3) = 1\,.
	%\end{equation}
	\\[2mm]
	\noindent \underline{\emph{The term \eqref{eq:twoterms2iso1}.}} In the first factor, we use \eqref{eq:chiral}, Lemma \ref{lem:regularbasic}, Lemma \ref{lem:Mbound} and Lemma \ref{lem:intrepG^2} with parameters
	\[
		\tau  = +\,, \quad J =  \mathbf{B}_{ (\ell+ \frac{1}{2}) \kappa_0}\,, \quad \text{and} \quad \tilde{\eta} = \frac{2\ell}{2\ell +1} \eta\,,
	\]
	(in order to have some flexibility before approaching the boundary of the domain $\mathbf{D}_\ell^{(\epsilon_0, \kappa_0)}$) to bound it as
	\begin{align*} %\label{eq:Psi2isoGaussianfirstterm1}
		\big| \big(G_1 \mathring{A}_1^{w_1, w_2} & G_2 \mathring{A}_2^{w_2, w_3} G_3 E_- G_2 \mathring{A}_2^{w_2, w_3} G_3\big)_{\boldsymbol{x} \boldsymbol{y}}   \big| \prec \frac{1}{\eta^{3/2}} \left( 1 + \frac{\psi_3^{\rm iso}}{\sqrt{N \eta}}\right)                                                                                                                 \\[1mm]
		                                         & + \int_{\mathbf{B}_{ (\ell+ \frac{1}{2}) \kappa_0}} \frac{\big| \big(G_1 \mathring{A}_1^{w_1, w_2} G_2 \mathring{A}_2^{w_2, w_3} G(x + \I \tilde{\eta}) \mathring{(E_- A_2)}^{-w_2, w_3} G_3\big)_{\boldsymbol{x} \boldsymbol{y}} \big|}{\big(|x - e_3| + \eta_3\big) \, \big( |x + e_2|+\eta_2 \big)} \D x \nonumber\,.
	\end{align*}

	Next, we decompose $\mathring{A}_2^{w_2, w_3}$ and $\mathring{(E_- A_2)}^{-w_2, w_3}$ according to the integration variable with the aid of Lemma \ref{lem:regularbasic}~(iii), analogously to \eqref{eq:Psi2avGaussdecomp}. This leaves us with four terms, which shall be estimated separately. While the fully regularised term gives
	%which shows that \eqref{eq:Psi2isoGaussianfirstterm1} is stochastically dominated by
	\begin{equation*}
		\frac{1}{\eta^{3/2}} \left( 1 + \frac{\psi_3^{\rm iso}}{\sqrt{N \eta}}\right) \left( 1 + \frac{1}{|e_2 + e_3| + \eta_2 + \eta_3} \right) \,,
		%\\
		%&+ \int_{\substack{\, |x - e_3| \le \delta \  \text{or} \\|x + e_2| \le \delta }} \frac{\sum_\sigma\big| \big(G_1 \mathring{A}_1^{w_1, w_2} G_2 \mathring{A}_2^{w_2, x + \I \tilde{\eta}} G(x + \I \tilde{\eta}) E_\sigma G_3\big)_{\boldsymbol{x} \boldsymbol{y}} \big|}{|x - e_3| + \eta_3 } \D x \label{eq:Psi2isoGauss1}\\
		%&+ \int_{\substack{\, |x - e_3| \le \delta \  \text{or} \\|x + e_2| \le \delta }} \frac{\sum_\sigma\big| \big(G_1 \mathring{A}_1^{w_1, w_2} G_2 E_\sigma G(x + \I \tilde{\eta}) \mathring{(E_- A_2)}^{x + \I \tilde{\eta}, w_3} G_3\big)_{\boldsymbol{x} \boldsymbol{y}} \big|}{|x + e_2|+\eta_2} \D x \label{eq:Psi2isoGauss2}\\
	\end{equation*}
	the cross terms can be estimated as
	\begin{equation*}
		\frac{1}{\eta^2} \left( 1 + \frac{\psi_2^{\rm iso}}{\sqrt{N \eta}} \right)\,,
	\end{equation*}
	analogously to \eqref{eq:avtoiso}.
	%\eqref{eq:Psi2avGauss1} and \eqref{eq:Psi2avGauss2}. Moreover, with the aid of Lemma \ref{lem:Mbound} and the usual approximations from Lemma \ref{lem:regularbasic}, the integrals in \eqref{eq:Psi2isoGauss1} and \eqref{eq:Psi2isoGauss2} can easily be estimated as
	%\begin{equation*}
	%\big| \eqref{eq:Psi2isoGauss1}\big| + \big| \eqref{eq:Psi2isoGauss2}\big| \prec \frac{1}{\eta^2} \left( 1 + \frac{\psi_2^{\rm iso}}{\sqrt{N\eta}} \right)\,,
	%\end{equation*}
	%where \eqref{eq:Psi2isoGauss1} for $\sigma = +$ and \eqref{eq:Psi2isoGauss2} for $\sigma=-$ requires further application of the integral representation from Lemma \ref{lem:intrepG^2} (which explains the first choice of parameters above \eqref{eq:Psi2isoGaussianfirstterm1}). 
	As an exemplary error term, we consider
	\begin{equation}\label{eq:Psi2isoGauss3}
		\int_{\mathbf{B}_{ (\ell+ \frac{1}{2}) \kappa_0}} \big| \big(G_1 \mathring{A}_1^{w_1, w_2} G_2 E_+ G(x + \I \tilde{\eta}) E_- G_3\big)_{\boldsymbol{x} \boldsymbol{y}} \big|\D x
	\end{equation}
	and use Lemma \ref{lem:intrepG^2} with new parameters
	\[
		\tau = -\,, \quad  J = \mathbf{B}_{ \ell \kappa_0}\,, \quad \tilde{\eta} = \frac{\ell}{\ell +1} \eta\,,
	\]
	to find, dropping the integration domains for ease of notation,
	\begin{align*}
		\big| \eqref{eq:Psi2isoGauss3} \big| \prec \frac{1}{\eta^{1/2}}\left( 1 + \frac{\psi_1^{\rm iso}}{\sqrt{N \eta}} \right) + \int \mathrm{d} x \int \D y & \frac{\big|  \big( G_1 \mathring{A}_1^{w_1, w_2} G(y - \I \tilde{\eta}) \big)_{\boldsymbol{x} (E_- \boldsymbol{y})} \big|}{\big( |y - e_2| + \eta_2 \big)\, \big( |y +x| + \eta \big)\, \big( |y + e_3| + \eta_3 \big)} \\[1mm]
		                                                                                                                                                       & \prec \frac{1}{\eta^{3/2}} \left( 1 + \frac{\psi_1^{\rm iso}}{\sqrt{N \eta}} \right) \left( 1 + \frac{1}{|e_2 + e_3| + \eta_2 + \eta_3} \right)\,,
	\end{align*}
	where in the last step we used Lemma \ref{lem:regularbasic} for decomposing $\mathring{A}_1^{w_1, w_2}$ accordingly, and Lemma~\ref{lem:Mbound}.

	This finishes the bound on the first factor in \eqref{eq:twoterms2iso1}. The second factor can easily be estimated as
	\begin{equation*}
		\big|  \big(  G_1 \mathring{(A_1')}^{w_1, w_2} E_-  G_3\big)_{\boldsymbol{x} \boldsymbol{y}}  \big| \prec \frac{1}{\eta^{1/2}} \left(  1 + \frac{\psi_1^{\rm iso}}{\sqrt{N \eta}} \right) + \frac{|e_2 + e_3| + \eta_2 + \eta_3}{\eta}
	\end{equation*}
	using \eqref{eq:chiral}, Lemma \ref{lem:regularbasic}, and Lemma \ref{lem:Mbound}.
	Notice  the cancellation of $|e_2+e_3|$ between the two factors.
	\\[2mm]
	\underline{\emph{The term \eqref{eq:twoterms2iso2}.}} For the first factor in \eqref{eq:twoterms2iso2}, we realise that $(\mathring{A}_1')^{w_1, w_2} =(\mathring{A}_1')^{w_1, \bar{w}_2}$, which without approximation immediately yields that
	\[
		\big| \big( G_1 (\mathring{A}_1')^{w_1, w_2} G_2^* (\mathring{A_1^*})^{{\bar{w}_2, \bar{w}_1}} G_1^* \big)_{\boldsymbol{x}\boldsymbol{x}} \big| \prec \frac{1}{\eta} \left( 1 + \frac{\psi_2^{\rm iso}}{\sqrt{N \eta}} \right)
	\]
	with the aid of Lemma \ref{lem:Mbound}.

	In the second factor, we apply a Ward identity to $G_2^* G_2$ and again use that the regularisation is insensitive to complex conjugation in the second spectral parameter. In this way, and decomposing
	\[
		\mathring{A}_2^{w_2, w_3} = \mathring{A}_2^{\bar{w}_2, w_3} + \mathcal{O}\big( |e_2 - e_3| + |\eta_2 - \eta_3| \big) E_+ + \mathcal{O}\big( |e_2 + e_3| + |\eta_2 - \eta_3| \big) E_-
	\]
	by means of Lemma \ref{lem:regularbasic}~(ii), we find that the second factor is stochastically dominated by
	\begin{align*}
		%\big| \big( G_3^* \mathring{(A_2^*)}^{{\bar{w}_3, \bar{w}_2}} G_2^* G_2 \mathring{A}_2^{w_2, w_3} G_3 \big)_{\boldsymbol{y}\boldsymbol{y}} \big| \prec 
		%\frac{1}{\eta^2} \left( 1 + \frac{\psi_2^{\rm iso}}{\sqrt{N \eta}} \right) + \frac{1}{\eta}\sum_\sigma \big(|e_2 - \sigma e_3| + |\eta_2 - \eta_3|\big) \big| \big( G_3^* \mathring{(A_2^*)}^{{\bar{w}_3, \bar{w}_2}} &G_2^* E_\sigma G_3 \big)_{\boldsymbol{y}\boldsymbol{y}} \big| \\&
		\frac{1}{\eta^2} \left( 1 + \frac{ \psi_1^{\rm iso}+ \psi_2^{\rm iso}}{\sqrt{N \eta}} \right) \,.
	\end{align*}
	%where in the last estimated we applied Lemma \ref{lem:intrepG^2} and a usual resolvent identity for $\sigma = +$ and $\sigma = -$, respectively (see also \eqref{eq:Psi1iso secondterm2}). 

	This finishes the estimate for the Gaussian contribution from the second line of \eqref{eq:min exp 2iso}, for which, collecting the above estimates, we have shown that
	\begin{equation} \label{eq:Gaussian2iso}
		\widetilde{\Xi}_2^{\rm iso} \prec \frac{1}{N \eta^3} \left[\left( 1 + \frac{ \psi_3^{\rm iso}}{\sqrt{N \eta}}  \right)\left( 1 + \frac{ \psi_1^{\rm iso}}{\sqrt{N \eta}}  \right) + \left(1 + \frac{\psi_1^{\rm iso} + \psi_2^{\rm iso}}{\sqrt{N \eta}}\right)^2\right]\,.
	\end{equation}

	We are now left with the terms from the last line of \eqref{eq:min exp 2iso} resulting from higher order cumulants.
	\\[2mm]
	{\bf \underline{Higher order cumulants and conclusion.}}
	The estimate stemming from higher order cumulants is given in \eqref{eq:higherorder2iso} in Section \ref{subsec:higherorder}. Then, plugging \eqref{eq:Gaussian2iso} and \eqref{eq:higherorder2iso} into \eqref{eq:min exp 2iso}, we find, similarly to Section~\ref{subsec:proofmaster1}, that
	\begin{equation*}
		\Psi_2^{\rm iso} \prec 1 + \psi_1^{\rm iso} + \frac{ \psi_1^{\rm av} \psi_1^{\rm iso} + (\psi_1^{\rm iso})^2 + \psi_2^{\rm iso}}{N \eta}+ \frac{\psi_2^{\rm iso} + (\psi_1^{\rm iso} \psi_3^{\rm iso})^{1/2}}{(N \eta)^{1/2}}  +   \frac{(\psi_3^{\rm iso})^{3/8} + (\psi_4^{\rm iso})^{3/8}}{(N \eta)^{3/16}}
	\end{equation*}
	The bound given in Proposition \ref{prop:master} is an immediate consequence after a trivial Young inequality.
	%{\color{red}[To be continued: Naive estimates and Ward improvement $\to$ \textbf{Dominik}]} {\color{purple}Lemma 0.1 (or an av analogue) will help}
\end{proof}

\subsection{Contributions from higher order cumulants} \label{subsec:higherorder}
The goal of the present section is to estimate the terms originating from higher order cumulants in \eqref{eq:min exp 1av}, \eqref{eq:min exp 1iso}, \eqref{eq:min exp 2av}, and \eqref{eq:min exp 2iso}. In order to do so, we assume that \eqref{eq:apriori Psi} holds.
\begin{lemma} \label{lem:higherorder}For any $J, J_* \subset \Z^2_{\ge 0} \setminus \{ (0,0) \}$, $\bm{l} \in \Z^2_{\ge 0} $ with $\vert \bm{l}\vert + \sum (J \cup J_*) \ge 2$ it holds that
	\begin{subequations}
		\begin{align}
			\big(\Xi_1^{\rm av} \big)^{1/(1 + \sum (J \cup J_*))}  & \prec \frac{1}{N \eta^{1/2}} \left( 1 + \frac{\psi_1^{\rm iso}}{(N \eta)^{1/2}} + \frac{(\psi_2^{\rm iso})^{1/4}}{(N \eta)^{1/8}} \right) \label{eq:higherorder1av} \,,                                                                        \\[1mm]
			\big(\Xi_1^{\rm iso} \big)^{1/(1 + \sum (J \cup J_*))} & \prec  \frac{1}{\sqrt{N \eta^2}}  \left( 1 + \frac{\psi_1^{\rm iso}}{(N \eta)^{1/2}} + \frac{(\psi_2^{\rm iso})^{1/4}}{(N \eta)^{1/8}} \right) \label{eq:higherorder1iso} \,,                                                                  \\[1mm]
			\big(\Xi_2^{\rm av} \big)^{1/(1 + \sum (J \cup J_*))}  & \prec  \frac{1}{N \eta}  \left( 1 + \frac{(\psi_1^{\rm iso})^2}{N \eta} + \frac{\psi_2^{\rm iso}}{(N \eta)^{1/2}} + \frac{(\psi_3^{\rm iso})^{3/8} + (\psi_4^{\rm iso})^{3/8}}{(N \eta)^{3/16}} \right) \label{eq:higherorder2av} \,,          \\[1mm]
			\big(\Xi_2^{\rm iso} \big)^{1/(1 + \sum (J \cup J_*))} & \prec  \frac{1}{\sqrt{N \eta^3}} \left( 1 + \frac{(\psi_1^{\rm iso})^2}{N \eta} + \frac{\psi_2^{\rm iso}}{(N \eta)^{1/2}} + \frac{(\psi_3^{\rm iso})^{3/8} + (\psi_4^{\rm iso})^{3/8}}{(N \eta)^{3/16}} \right) \,. \label{eq:higherorder2iso}
		\end{align}
	\end{subequations}
\end{lemma}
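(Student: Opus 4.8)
The plan is to estimate each of the four quantities $\Xi_k^{\rm av/iso}$ by a systematic power-counting scheme, following the strategy already used for the Gaussian (second-order) contributions in Sections~\ref{subsec:proofmaster1}--\ref{subsec:proofmaster4} but now applied to derivatives $\partial^{\boldsymbol{l}}$, $\partial^{\boldsymbol{j}}$ of resolvent chains. First I would recall that each derivative $\partial_{ab}$ or $\partial_{ba}$ acting on a resolvent splits it into two shorter pieces, e.g.~$\partial_{ab} G = -(Ge_a e_b^t + G e_b e_a^t)G$ up to the appropriate index bookkeeping (here $a \le N$, $b \ge N+1$ or vice versa, enforced by $R_{ab}$). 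Hence a term with total derivative order $d := |\boldsymbol{l}| + \sum(J \cup J_*)$ produces chains with at most $d$ extra resolvent factors but \emph{no} extra regular matrices $A$, and the $R_{ab}$-summation together with the prefactor $N^{-(d+3)/2}$ (resp.~$N^{-(d+1)/2}$ in the isotropic case) provides the key gain. The combinatorial heart is the standard observation that, after summing over $a,b$ using Ward identities $\sum_a |G_{\boldsymbol{x}a}|^2 = (\Im G)_{\boldsymbol{x}\boldsymbol{x}}/\eta \prec 1/\eta$, each power of $d$ beyond the minimal $d=2$ is ``free'' in the sense that it comes with a compensating factor $N^{-1/2}\cdot(\text{something}\lesssim N^{1/2}\eta^{-O(1)})$; this is precisely why the bounds are stated for the $d$-th root $(\Xi_k^{\rm av/iso})^{1/(1+d)}$, which makes the estimate uniform in $d$.

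Next I would organise the bookkeeping exactly as in Remark~\ref{rmk:strategy}: in the multiple-resolvent chains produced by differentiation, the matrices $A_i$ (and $A_i'$) are generally no longer regular with respect to the new neighbouring spectral parameters created by the split resolvents, so one re-regularises each $A_i$ via Lemma~\ref{lem:regularbasic}, paying a Lipschitz error proportional to the distance between old and new spectral parameters. These errors are then balanced against the small denominators generated by resolvent identities or by the contour-integral representation of Lemma~\ref{lem:intrepG^2} applied to the $GG$ or $G E_- G$ sub-products (using chiral symmetry~\eqref{eq:chiral} for the latter). The fully-regularised ``leading'' pieces are estimated by the a~priori bounds $\Psi_n^{\rm av/iso} \prec \psi_n^{\rm av/iso}$ for $n \le 4$ together with the $M$-bounds of Lemma~\ref{lem:Mbound}; the cross terms and error$\times$error terms are handled by the single-resolvent local law Theorem~\ref{thm:singleG} and Ward identities, exactly as in the Gaussian estimates. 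The precise powers of $N\eta$ appearing on the right-hand sides of \eqref{eq:higherorder1av}--\eqref{eq:higherorder2iso} are dictated by: the number of resolvents in the longest chain that survives after using all available resolvent identities (this fixes which $\psi_n^{\rm iso/av}$ can appear), the fractional exponents arising from interpolating between isotropic and averaged quantities (Schwarz/Young), and the overall prefactor $N^{-(d+3)/2}$ or $N^{-(d+1)/2}$.

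Concretely, for \eqref{eq:higherorder1av} the worst term has $d=2$ and after one application of Lemma~\ref{lem:intrepG^2} to a $GG$ factor reduces to a two-resolvent chain; tracking $\Psi_2^{\rm iso}$ through the $R_{ab}$-sum and the Ward identity yields the stated $(N\eta)^{-1/2}\big(1 + \psi_1^{\rm iso}(N\eta)^{-1/2} + (\psi_2^{\rm iso})^{1/4}(N\eta)^{-1/8}\big)$ structure; the $(\psi_2^{\rm iso})^{1/4}$ with exponent $1/4$ rather than $1/2$ comes from a double Schwarz inequality distributing the length-two isotropic chain over two index sums. For \eqref{eq:higherorder1iso} the extra factor $\eta^{-1/2}$ relative to \eqref{eq:higherorder1av} is simply the isotropic-vs-averaged mismatch ($\sqrt{N\eta^2}$ vs $N\eta^{1/2}$ in the normalisations of $\Psi^{\rm iso}$ vs $\Psi^{\rm av}$). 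For the two $k=2$ estimates \eqref{eq:higherorder2av}--\eqref{eq:higherorder2iso} the chains are one step longer, so after differentiation and reduction one meets $\Psi_3^{\rm iso}$ and $\Psi_4^{\rm iso}$ (and $\Psi_1^{\rm iso}$ squared from terms like $\langle\mathcal S[G\mathring A G](G-M)\mathring A\rangle$), producing the $(\psi_3^{\rm iso})^{3/8} + (\psi_4^{\rm iso})^{3/8}$ contributions with exponent $3/8$; this fractional power is again the outcome of iterated Schwarz/Young applied to split a length-three or length-four isotropic chain.

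The main obstacle will be the sheer case proliferation: each of the four $\Xi$'s unfolds into many subterms indexed by the derivative pattern $(\boldsymbol{l}, J, J_*)$, the sign choices $\sigma$, and the various placements of $E_\pm$ created by differentiation; for every one of these the re-regularisation must be performed carefully so that the Lipschitz errors from Lemma~\ref{lem:regularbasic} exactly cancel the small denominators from Lemma~\ref{lem:intrepG^2} (the sign matching noted after \eqref{eq:G2} is essential here). The technically delicate point, as in the Gaussian analysis, is ensuring that these cancellations are \emph{uniform} over the spectral domain $\mathbf{D}^{(\epsilon_0,\kappa_0)}_{\ell}$ — in particular near the imaginary axis, where both cutoff functions $\mathbf{1}_\delta^\pm$ can be active simultaneously — and that no term is left with a bare $1/\eta^2$ or worse that would break the hierarchy. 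Since all of this machinery (cumulant expansion, Ward identities, contour representations, the $M$-bounds) has been set up in the preceding sections, the argument is a long but routine bookkeeping exercise, and I would present it by treating one representative term per estimate in detail and indicating that the remaining terms are analogous, exactly as done in Sections~\ref{subsec:proofmaster1}--\ref{subsec:proofmaster4}.
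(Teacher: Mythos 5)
Your sketch correctly identifies the ingredients (derivatives splitting resolvents, $R_{ab}$-summation with the $N^{-(d+3)/2}$ prefactor, Ward identities, re-regularisation via Lemma~\ref{lem:regularbasic} to cancel small denominators, the a priori $\psi$-bounds), but it misses the actual organising principle of the proof and therefore cannot be executed as written.

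The paper's argument is not a uniform case-by-case bookkeeping; it is a two-tier scheme. First, a \emph{naive} term-by-term estimate — bounding each differentiated chain by $\eta^{-(k-1)/2}(1+\phi_{k-1})$ and each averaged trace by $(N\eta^{k/2})^{-1}\prod(1+\phi_{k_i})$ — already closes the estimate for all but a short list of boundary constellations of $(\boldsymbol{l}, J)$ (for the averaged case only $|\boldsymbol{l}|+\sum J=2$ survives; for the isotropic case only $|\boldsymbol{l}|+\sum J - |J| < 4$). Second, precisely in those remaining cases, the summation over $a$ (or $b$) is used to extract a $(N\eta)^{-1/2}$ gain via a dedicated ``Ward lemma'', which replaces a factor $(1+\phi_k)$ by $(1+\sqrt{\phi_1+\cdots+\phi_{2k}})$. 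The fractional exponents $1/4$, $3/8$, $3/16$ in the target bounds come from applying this replacement once, twice, or three times and then running Young's inequality on the resulting products of $\phi$'s; they do not come from ``a double Schwarz inequality distributing a length-two isotropic chain'' or from ``iterated Schwarz/Young applied to split a length-three or four isotropic chain'' in the sense you describe.

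The Ward lemma itself is not free: it reduces, after Cauchy–Schwarz on the $a$-sum, to bounding a length-$2k$ isotropic chain in which only $2k-2$ of the inserted matrices are correctly regularised (the two adjacent to the Ward-collapsed pair are not). That requires a separate inductive statement — the ``general chains lemma'' in the paper — which bounds $(G_1A_1\cdots A_k G_{k+1})_{\boldsymbol{x}\boldsymbol{y}}$ when only $a\le k$ of the $A_i$ are regular by $\eta^{-(k-a/2)}(1+\phi_1+\cdots+\phi_a)$. Its induction is where the machinery you gesture at (resolvent identities or Lemma~\ref{lem:intrepG^2} on $G E_\pm G$, chiral symmetry, and the Lipschitz perturbation of the regularisation from Lemma~\ref{lem:regularbasic} cancelling the new small denominators) is actually deployed, with the crucial bookkeeping being that each reduction step lowers $k$ by one while preserving or lowering $a$ by one. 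You allude to all of this, but you present it as ``routine bookkeeping'' already available from prior sections, whereas in fact the Ward lemma and the general chains lemma are new technical ingredients that have to be stated and proven inside this argument, and the sharp exponents in \eqref{eq:higherorder1av}--\eqref{eq:higherorder2iso} fall out only once that structure is in place.
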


For \(k=1,2\), \(\bm l\in \Z_{\ge 0}^2\) and a multiset \(J\subset \Z_{\ge 0}^2\setminus\set{(0,0)}\) we now define slightly (notationally) simplified versions of $\Xi_k^{\rm av/iso}$, namely
\begin{align}\label{Xi1av def}
	\Xi_k^{\rm av} (\bm l,J) & := N^{-(\abs{\bm l} +\sum J+3)/2}\sum_{ab} \abs*{\partial^{\bm l} ((GA)^{k-1}GA')_{ba}} \prod_{\bm j\in J} \abs*{\partial^{\bm j}\braket{(GA)^k}}\,,                           \\\label{Xi1iso def}
	\Xi_k^{\rm iso}(\bm l,J) & := N^{-(\abs{\bm l} +\sum J+1)/2}\sum_{ab} \abs*{\partial^{\bm l} [(GA)_{\bm x a}(G(AG)^{k-1})_{b\bm y}]} \prod_{\bm j\in J} \abs*{\partial^{\bm j}((GA)^kG)_{\bm x\bm y}} \,,
\end{align}
where \(\sum J:=\sum_{\bm j\in J}\abs{\bm j}\), \(\abs{(j_1,j_2)}:=j_1+j_2\) and \(\partial^{(j_1,j_2)}:=\partial_{ab}^{j_1}\partial_{ba}^{j_2}\). Here, for notational simplicity, we do not carry the dependence on the spectral parameters of the resolvents but assume that implicitly each resolvent has its own spectral parameter and that each \(A\) is correctly regularised with respect to its neighboring resolvents. In particular compared to \eqref{eq:Xi 1 av def}, \eqref{eq:Xi 1 iso def}, \eqref{eq:Xi 2 av def}, and \eqref{eq:Xi 2 iso def}, it is not necessary to distinguish the sets \(J,J^\ast\).
\begin{proof}[Proof of Lemma \ref{lem:higherorder}]
	Throughout the proof, we denote $\phi_k := \psi_k^{\rm iso}/\sqrt{N \eta}$. The \emph{naive estimate} for the derivatives simply is
	\begin{equation}\label{eq naive}
		\begin{split}
			\abs{\partial^{\bm l}((GA)^{k-1}GA')_{ba}} &\prec \eta^{-(k-1)/2}\Bigl(1+\phi_{k-1}\Bigr)\,, \\
			\abs{\partial^{\bm j}\braket{(GA)^k}} &\prec  \frac{1}{N\eta^{k/2}}\sum_{k_1+k_2+\cdots=k}\prod_i\Bigl(1+\phi_{k_i}\Bigr)
		\end{split}
	\end{equation}
	due to~\eqref{eq:Mboundnorm} and recalling~\eqref{eq:Psi isok}. {Here a single derivative just splits the chain with $k$ resolvents into two chains with a total of $k+1$ resolvents, leaving the estimates on the main and the error term unchanged.} Using~\eqref{eq naive} in~\eqref{Xi1av def} we obtain
	\begin{equation}
		\begin{split}
			\abs{\Xi_1^{\rm av}} &\prec (N\eta^{1/2})^{-1-\abs{J}} N^{(2-\abs{\bm l}-\sum J)} \sqrt{N\eta} \, \Bigl(1+\phi_1\Bigr)^{\abs{J}}\,,\\
			\abs{\Xi_2^{\rm av}} &\prec (N\eta)^{-1-\abs{J}} N^{(2-\abs{\bm l}-\sum J)} \sqrt{N\eta} \, \Bigl(1+\phi_1\Bigr)\Bigl(1+\phi_2+\phi_1^2\Bigr)^{\abs{J}}\,,\\
			\abs{\Xi_1^{\rm iso}} &\prec (\sqrt{N}\eta)^{-1-\abs{J}} \eta^{1+\abs{J}/2} N^{(4-\abs{\bm l}+\abs{J}-\sum J)/2} \Bigl(1+\phi_1\Bigr)^{\abs{J}}\,, \\
			\abs{\Xi_2^{\rm iso}} &\prec (\sqrt{N}\eta^{3/2})^{-1-\abs{J}} \eta^{1+\abs{J}/2} N^{(4-\abs{\bm l}+\abs{J}-\sum J)/2} \Bigl(1+\phi_1\Bigr)\Bigl(1+\phi_2+\phi_1^2\Bigr)^{\abs{J}} \,,
		\end{split}
	\end{equation}
	and therefore have proved \eqref{eq:higherorder1av} and \eqref{eq:higherorder2av} in all cases except \(\abs{\bm l}+\sum J=2\) and \eqref{eq:higherorder1iso} and \eqref{eq:higherorder2iso} in all cases except \(\abs{\bm l}+ \sum J-\abs{J}< 4\). For the remaining cases we need a more refined estimate using the following \emph{Ward lemma}:
	\begin{lemma}\label{lemma Ward}
		Let \(\vx\) be any deterministic vector of bounded norm, let \(w_1,\ldots,w_k \in \mathbf{D}_{\ell+1}^{(\epsilon_0, \kappa_0)}\) be spectral parameters and \(A_1,\ldots,A_{k}\) deterministic matrices of bounded norm. Then for \(G_i=G(w_i)\) it holds that
		\begin{equation*}
			\frac{1}{N}\sum_a \abs*{(G_1 \mathring{A}_1^{w_1,w_2}\cdots \mathring{A}_{k-1}^{w_{k-1},w_k}G_k A_{k} )_{\vx a}}\prec \frac{1}{\sqrt{N\eta}}\frac{1}{\eta^{(k-1)/2}} \Bigl(1+\phi_1+\cdots+\phi_{2k}\Bigr)^{1/2} \,,
		\end{equation*}
		which improves upon the term-wise bound by a factor of \((N\eta)^{-1/2}\) at the expense of replacing \(1+\phi_k\) by \(1+\sqrt{\phi_1+\cdots+\phi_{2k}}\).
	\end{lemma}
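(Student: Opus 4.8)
The plan is to prove the Ward lemma (Lemma~\ref{lemma Ward}) by induction on the length $k$ of the chain, using in the inductive step the Ward identity $\sum_a |(GB)_{\vx a}|^2 = \frac{1}{\eta}(G B B^* G^*)_{\vx\vx}$ (with $\eta$ the smallest of the relevant imaginary parts) together with the $\sqrt{\eta}$-rule local laws from Theorems~\ref{thm:multiGll} and~\ref{thm:singleGopt} as encoded in the control quantities $\Psi^{\rm av/iso}_j$. First I would observe that by Cauchy--Schwarz in the $a$-summation,
\begin{equation*}
	\frac{1}{N}\sum_a \abs*{(G_1 \mathring{A}_1\cdots \mathring{A}_{k-1}G_k A_{k})_{\vx a}} \le \left(\frac1N\sum_a \abs*{(G_1\mathring{A}_1\cdots \mathring{A}_{k-1}G_k A_k A_k^* G_k^*\mathring{A}_{k-1}^*\cdots G_1^*)_{\vx\vx}} \cdot\text{(trivial)}\right)^{1/2},
\end{equation*}
so it suffices to bound $\frac1N\sum_a|(\cdots)_{\vx a}|^2 = \frac{1}{\eta}(G_1\mathring A_1\cdots G_k A_kA_k^*G_k^*\cdots G_1^*)_{\vx\vx}$, i.e.\ an isotropic chain of $2k$ resolvents (after using $G_kG_k^* = \Im G_k/\eta_k$ to merge the middle pair, giving $2k-1$ resolvents and one extra $1/\eta$). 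The resulting chain contains $2k-1$ deterministic matrices, of which $2k-2$ are regular (the $\mathring A_j$ and their adjoints) and one, the middle $A_kA_k^*$, is generic. Decomposing that middle matrix via Definition~\ref{def:regobs} into its regular part plus the $E_\pm$ components, the regular part contributes a fully regular chain of length $2k-1$ estimated by $\Psi^{\rm iso}_{2k-1}$ (hence by the a priori bound $\psi^{\rm iso}_{2k-1}$ and the $M$-bounds of Lemma~\ref{lem:Mbound}), while the $E_\pm$ pieces split the chain by the resolvent identity~\eqref{eq:resolventid} into two shorter fully regular chains whose lengths are at most $2k-2$ and $2k-1$; carefully, using also the integral representation of Lemma~\ref{lem:intrepG^2} and the perturbative adjustment of regularisations from Lemma~\ref{lem:regularbasic} exactly as in the proofs of the master inequalities, one checks all pieces are controlled by $1+\phi_1+\cdots+\phi_{2k}$ up to the deterministic $M$-size $\eta^{-(k-1)}$ (after accounting for the $1/\eta$ from the Ward identity and the $1/\eta_k$ from merging the middle pair). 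Taking the square root yields the claimed $\frac{1}{\sqrt{N\eta}}\eta^{-(k-1)/2}(1+\phi_1+\cdots+\phi_{2k})^{1/2}$ with the stated gain of $(N\eta)^{-1/2}$ over the term-wise bound and the stated replacement of $1+\phi_k$ by $1+\sqrt{\phi_1+\cdots+\phi_{2k}}$.

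With the Ward lemma in hand, I would return to the four bounds~\eqref{eq:higherorder1av}--\eqref{eq:higherorder2iso} and treat the remaining critical cases $\abs{\bm l}+\sum J = 2$ (for the averaged $\Xi$'s) and $\abs{\bm l}+\sum J -\abs J <4$ (for the isotropic $\Xi$'s) that the naive estimate~\eqref{eq naive} does not cover. In these cases at least one of the two $a,b$ summations in~\eqref{Xi1av def}--\eqref{Xi1iso def} can be performed using Lemma~\ref{lemma Ward} rather than term-by-term: writing $\sum_{ab} R_{ab} |\partial^{\bm l}(\cdots)_{ba}|\prod_{\bm j\in J}|\partial^{\bm j}\braket{(GA)^k}|$ and noting $R_{ab}$ is supported on the off-diagonal blocks, one distributes the derivatives (which only produce longer but still structurally identical resolvent chains), applies $|R_{ab}|\le 1$ and Cauchy--Schwarz, and uses the Ward lemma on the $a$- and/or $b$-sums of the first factor and on the analogous sums hidden in the $\braket{(GA)^k}$ factors. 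The bookkeeping of exponents is exactly as in the paragraph above: each application of Lemma~\ref{lemma Ward} saves a factor $(N\eta)^{-1/2}$ and contributes a $1+\sqrt{\phi_1+\cdots}$ in place of $1+\phi$, and one checks case-by-case (the worst cases being $\bm l=(1,1)$, $J=\emptyset$; $\abs{\bm l}=1$, $\abs J=1$; $\abs{\bm l}=0$, $J=\{(1,1)\}$ or $J=\{(1,0),(0,1)\}$) that the net power of $N\eta$ matches the prefactors $1/(N\eta^{1/2})$, $1/\sqrt{N\eta^2}$, $1/(N\eta)$, $1/\sqrt{N\eta^3}$ respectively, and that raising to the power $1/(1+\sum(J\cup J_*))$ only improves the $\phi$-dependent factor, yielding the displayed right-hand sides with the $\phi_k^{1/4}$-type exponents.

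The main obstacle I anticipate is the induction in Lemma~\ref{lemma Ward}: the chain produced after the Ward identity has a \emph{generic} matrix $A_kA_k^*$ in the middle, and handling its $E_\pm$-components requires invoking the resolvent identity~\eqref{eq:resolventid}/the contour representation~\eqref{eq:intrepG^2} and then re-regularising the neighboring $\mathring A_j$'s with respect to the \emph{new} spectral parameters via Lemma~\ref{lem:regularbasic}, exactly the delicate mechanism described in Remark~\ref{rmk:strategy} and in the proofs of the master inequalities --- one must verify that the small denominators from the resolvent identities are always compensated by the Lipschitz-continuity error of the regularisation (the matching of signs in~\eqref{eq:regularbar1}, \eqref{eq:regularperturb1}, \eqref{eq:regularperturb2} with the contour denominators being essential), and that no chain longer than length $4$ (i.e.\ no $\Psi^{\rm iso/av}_j$ with $j>4$) is ever needed, which restricts the lemma to $k\le 2$ as stated. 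A secondary technical point is that the spectral parameters of the resolvents appearing after differentiating are still in the admissible domain $\mathbf{D}^{(\epsilon_0,\kappa_0)}_{\ell+1}$ (or, after the contour deformation, in $\mathbf{D}^{(\epsilon_0,\kappa_0)}_\ell$), so that all invoked local laws apply $\ell$-uniformly; this is automatic since differentiation does not move spectral parameters and the contour in Lemma~\ref{lem:intrepG^2} is built to stay inside $\mathbf{D}^{(\epsilon_0,\kappa_0)}_\ell$.
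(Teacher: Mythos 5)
Your proposal follows the same basic route as the paper: Cauchy--Schwarz in the $a$-sum to reduce to an isotropic chain of $2k$ resolvents with $2k-2$ regular matrices and one generic one in the middle, and then an estimate of that chain via local laws, contour/resolvent identities, and the re-regularisation Lemma~\ref{lem:regularbasic}. Two remarks on where the paper is cleaner. First, after Cauchy--Schwarz the paper immediately uses the operator inequality $A_kA_k^*\le \Vert A_k\Vert^2 I$ to discard the middle $A_kA_k^*$ entirely, so the only non-regular deterministic matrix left is $E_+$; you instead keep $A_kA_k^*$ and propose to decompose it into regular plus $E_\pm$ components, which is workable but adds an unnecessary case (and also makes your claim that you can ``merge the middle pair'' via $G_kG_k^*=\Im G_k/\eta_k$ internally inconsistent, since $A_kA_k^*$ sits between $G_k$ and $G_k^*$ before you drop it). Second, the paper does not inline the chain-reduction argument: it isolates it as Lemma~\ref{lemma general chains}, a stand-alone inductive estimate for an isotropic chain of $k+1$ resolvents of which $a$ of the $k$ intermediate matrices are regular, giving $\eta^{-(k-a/2)}(1+\phi_1+\cdots+\phi_a)$; the proof of the Ward lemma is then a three-line application. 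Your sketch of the mechanism (resolvent identity vs.\ contour integral depending on $\mathfrak s_j$, compensation of small denominators by the Lipschitz errors of $\mathring{\cdot}$ from Lemma~\ref{lem:regularbasic}, tracking that only $\psi_j^{\rm av/iso}$ with $j\le4$ appear) is essentially a correct re-derivation of that lemma's induction step, so the content is the same; abstracting it out as the paper does just avoids re-proving the reduction each time it is needed (it is reused elsewhere in Section~\ref{subsec:higherorder}).
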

	The proof of the above \emph{Ward lemma} is largely based on yet another more general estimate.
	\begin{lemma}\label{lemma general chains}
		Let \(\vx,\vy\) be normalised vectors, let \(w_1,\ldots,w_{k+1} \in \mathbf{D}_{\ell+1}^{(\epsilon_0, \kappa_0)}\) be spectral parameters and \(A_{1},\ldots,A_{k}\) be deterministic matrices of bounded norm such that \(a\) of them are regular, i.e.\ \(\mathring{A}_i^{w_i,w_{i+1}}=A_i\) for all \(i \in \mathcal{I} \) for some \(\mathcal{I} \subset [k]\) of cardinality $a$. Then with \(G_i=G(w_i)\) it holds that
		\begin{equation}\label{eq all but one regular}
			\abs*{(G_1 A_1 G_2\cdots A_k G_{k+1})_{\vx\vy}} \prec \frac{1}{\eta^{k-a/2}}\Bigl(1+\phi_1 + \cdots+\phi_a\Bigr)\,.
		\end{equation}
	\end{lemma}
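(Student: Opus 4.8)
The plan is to prove the bound by induction on the chain length $k$ and, for fixed $k$, by a secondary finite induction on the number $b := k - a$ of matrices not assumed regular. Throughout one uses the single-resolvent local law $\Psi_0^{\rm av} + \Psi_0^{\rm iso} \prec 1$ from Theorem~\ref{thm:singleG} (cf.~\eqref{eq:single G}), the bounds on the deterministic chains in Lemma~\ref{lem:Mbound}, the chiral symmetry~\eqref{eq:chiral}, the explicit regularisation~\eqref{eq:reg A1A2}, and the Lipschitz-type stability of the regularisation in Lemma~\ref{lem:regularbasic}. The base case $b = 0$, i.e.\ all $k$ matrices regular, is immediate: by~\eqref{eq:Mboundnorm} one has $\Vert M(w_1, A_1, \ldots, A_k, w_{k+1}) \Vert \lesssim \eta^{-\lfloor k/2 \rfloor} \le \eta^{-k/2}$, while the a priori bound $\Psi_k^{\rm iso} \prec \psi_k^{\rm iso}$ together with~\eqref{eq:Psi isok} gives $|(G_1 A_1 \cdots A_k G_{k+1} - M(\ldots))_{\vx\vy}| \prec \psi_k^{\rm iso}/\sqrt{N\eta^{k+1}} = \eta^{-k/2}\phi_k$; adding the two yields $|(G_1 A_1 \cdots A_k G_{k+1})_{\vx\vy}| \prec \eta^{-k/2}(1 + \phi_k)$, which is the claim for $a = k$.

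For the inductive step I would pick a matrix $A_m$ that fails to be $(w_m, w_{m+1})$-regular and split it according to~\eqref{eq:reg A1A2} as $A_m = \mathring A_m^{w_m,w_{m+1}} + \sum_\sigma \mathbf{1}_\delta^{\sigma}(w_m,w_{m+1})\, c_\sigma E_{\sigma}$, with bounded coefficients $|c_\sigma| \lesssim 1$ (Remark~\ref{rmk:regular}(iii)) and the identification of $E_\pm$ with the two singular directions dictated by $\mathfrak s_m = -\sgn(\Im w_m \Im w_{m+1})$. The $\mathring A_m^{w_m,w_{m+1}}$-contribution is again a chain of length $k$, now with one more regular matrix, and is controlled by the secondary induction hypothesis. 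For a singular contribution $c_\sigma E_\sigma$, I would first (when $E_\sigma = E_-$) use~\eqref{eq:chiral} to pass to a product $G(w_m) G(\pm w_{m+1})$ of two resolvents with imaginary parts of the same sign, and then replace this product either by a plain resolvent identity (available with a harmless denominator $\gtrsim \eta$ when the two imaginary parts have opposite sign) or, in the regime where the relevant denominator is small, by the contour integral~\eqref{eq:intrepG^2} of Lemma~\ref{lem:intrepG^2} over a contour $\Gamma$ at distance $\sim\eta$ from the real axis and contained in $\mathbf{D}_\ell^{(\epsilon_0,\kappa_0)}$. In either case the length-$k$ chain becomes a discrete or continuous superposition of length-$(k-1)$ chains — one resolvent fewer, the matrix $E_\sigma$ absorbed into a neighbour — weighted by a kernel $\lesssim |z - w_m|^{-1}|z \mp w_{m+1}|^{-1}$. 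Before invoking the primary induction hypothesis on these shorter chains, the neighbouring matrices have to be re-regularised with respect to the new spectral parameter via Lemma~\ref{lem:regularbasic}; the resulting corrections $\mathcal O(|z - w_m| \wedge 1)\,E_\pm$ (and their analogues on the other side) carry numerators that exactly cancel the dangerous factor $|z - w_m|^{-1}$ in the kernel — this is the balancing principle already used throughout Section~\ref{sec:proofmaster} (cf.~Remark~\ref{rmk:strategy}), and the signs in~\eqref{eq:regularbar1},~\eqref{eq:regularperturb1},~\eqref{eq:regularperturb2} are precisely those appearing in the contour denominators. Integrating the length-$(k-1)$ bounds against the kernel costs a factor $\lesssim \eta^{-1}$ up to $N^{\xi}$, which combines with the $\sqrt\eta$-improvement from shortening the chain to reproduce the target power $\eta^{-(k - a/2)}$; each $E_\pm$-correction creates a new mergeable pair of resolvents, so the step is iterated finitely often.

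The conceptual content is routine given the machinery of Section~\ref{sec:proofmaster}; the real work — and the expected main obstacle — is the bookkeeping. One must verify, at each of the finitely many reduction steps, that the exponent of $\eta$ stays equal to $-(k - a/2)$ and that no $\phi_j$ with $j$ exceeding the current number of regular matrices is generated, i.e.\ that the $\sqrt\eta$-rule is respected uniformly; this amounts to the careful matching of the small denominators produced by the resolvent identities and the contour integrals~\eqref{eq:intrepG^2} against the Lipschitz errors $\mathcal O(|\cdot| \wedge 1)$ of Lemma~\ref{lem:regularbasic}, with consistent signs. One also has to check that all intermediate bounds are uniform in the spectral parameters, so that the grid argument (cf.~\eqref{eq:epsi unif2}) permits integrating them over $z$. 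With this lemma in hand, the Ward-improved estimate of Lemma~\ref{lemma Ward} follows by an additional Schwarz inequality and Ward identity, and the higher-order cumulant bounds of Lemma~\ref{lem:higherorder} then follow.
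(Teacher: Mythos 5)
Your proposal is correct and takes essentially the same route as the paper: decompose each non-regular $A_j$ into its regular part plus a bounded combination of $E_\pm$, reduce $E_-$ to $E_+$ via the chiral symmetry~\eqref{eq:chiral}, collapse $G_j E_+ G_{j+1}$ by a resolvent identity (opposite-sign case, denominator $\gtrsim\eta$) or by the contour integral~\eqref{eq:intrepG^2} (same-sign case), re-regularise the neighbouring observables via Lemma~\ref{lem:regularbasic}, and exploit the cancellation between the resulting small denominators and the Lipschitz numerators. Your explicit double induction on $(k,\,k-a)$ merely formalises the paper's implicit nested recursion, and the bookkeeping you flag as the main obstacle is precisely what the paper carries out in its case analysis.
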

	We defer the proof of Lemma \ref{lemma general chains} to the end of this section.
	\begin{proof}[Proof of Lemma \ref{lemma Ward}]
		By Cauchy-Schwarz and the norm bound on the middle \(A_k\) we have
		\begin{equation*}
			\begin{split}
				&\Bigl(\frac{1}{N}\sum_a \abs*{(G_1 \mathring{A}_1^{w_1,w_2}\cdots \mathring{A}_{k-1}^{w_{k-1},w_k}G_k A_k )_{\vx a}}\Bigr)^2\\
				&\quad\lesssim \frac{1}{N} \Bigl(G_1 \mathring{A}_1^{w_1,w_2}\cdots \mathring{A}_{k-1}^{w_{k-1},w_k} G_kG_k^* \mathring{A}_{k-1}^{\bar{w}_{k},\bar{w}_{k-1}} \cdots \mathring{A}_1^{\bar{w}_2,\bar{w}_1)}G_1^* \Bigr)_{\vx\vx}\\
				&\quad\prec \frac{1}{N\eta^k}  \Bigl(1+\phi_1+\cdots+\phi_{2k}\Bigr)
			\end{split}
		\end{equation*}
		due to Lemma~\ref{lemma general chains} for \(2k\) resolvents and \(a=2k-2\) regularised \(A\)-matrices.
	\end{proof}
	The rest of the proof is split into several cases.
	\\[2mm]
	\emph{Treatment of \eqref{eq:higherorder1av} and \eqref{eq:higherorder2av} for \(\abs{\bm l}+\sum J=2\):}
	For the case \(\abs{\bm l}+\sum J=2\) we either have \(\abs{\bm l}\in\set{0,2}\) or \(\sum J=1=\abs{J}\). In the former case an off-diagonal resolvent is guaranteed to be present in the first factor of~\eqref{Xi1av def} (by parity) and in the latter case the second factor consists of a single off-diagonal resolvent chain. In either case we may use Lemma~\ref{lemma Ward} to gain a factor of \(1/\sqrt{N\eta}\) compared to~\eqref{eq naive} and obtain
	\begin{equation}\label{Xi av improved}
		\begin{split}
			\abs{\Xi_1^{\rm av}} &\prec (N\eta^{1/2})^{-1-\abs{J}}(1+\phi_1)^{(\abs{J}-1)_+}\Bigl(1+\phi_1+\bm1(\abs{J}\ge 1)\phi_2^{1/2}\Bigr)\,,\\
			\abs{\Xi_2^{\rm av}} &\prec (N\eta)^{-1-\abs{J}}(1+\phi_1^2+\phi_2)^{(\abs{J}-1)_+}\Bigl(1 + \phi_1^3 + \phi_2^{3/2} + \bm1(\abs{J}\ge 1) (\phi_3+\phi_4)^{3/4}\Bigr)\,,
		\end{split}
	\end{equation}
	where we used the fact that for \(\abs{J}=0\) only a single factor of \((1+\phi_1)\) needs to be replaced by a factor of \((1+(\phi_1+\phi_2)^{1/2})\) for \(\Xi_2^{\rm av}\) and no factor needs to be replaced for \(\Xi_1^{\rm av}\). Moreover, we used \(\phi_1(\phi_3+\phi_4)^{1/2}+\phi_1^2\phi_2^{1/2}\lesssim \phi_1^3+\phi_2^{3/2}+(\phi_3+\phi_4)^{3/4}\) by a simple Young inequality. Now~\eqref{Xi av improved} implies~\eqref{eq:higherorder1av} and \eqref{eq:higherorder2av} by another simple Young inequality.
	\\[2mm]
	\emph{Treatment of \eqref{eq:higherorder1iso} and \eqref{eq:higherorder2iso} for \(\abs{\bm l}+\sum J-\abs{J}\in\set{2,3}\):}     In this case we can simply use Lemma \ref{lemma Ward} for the two resolvent chains in the first factor of~\eqref{Xi1iso def} involving \(\vx,\vy\) to gain a factor of \((N\eta)^{-1}\) compared to~\eqref{eq naive} at the expense of replacing \(1+\phi_1\) by \(1+\phi_1^{1/2}+\phi_2^{1/2}\) in case of \(\Xi_2^\iso\) which proves \eqref{eq:higherorder1iso} and \eqref{eq:higherorder2iso} in this case.
	\\[2mm]
	\emph{Treatment of \eqref{eq:higherorder1iso} and \eqref{eq:higherorder2iso} for \(\abs{\bm l}+\sum J-\abs{J}=0\):}
	In this case we necessarily have \(\abs{\bm l}=0\) and \(\abs{J}\ge 2\) and \(\abs{\bm j}=1\) for all \(\bm j\in J\). In particular all factors of~\eqref{Xi1iso def} consist of two resolvent chains evaluated in \((\bm x,a),(\bm y,b)\) or \((\bm x,b),(\bm y,a)\), respectively. This allows to use~Lemma \ref{lemma Ward} four times (twice for the \(a\)- and twice for the \(b\)-summation) to gain a factor of \((N\eta)^{-2}\) compared to~\eqref{eq naive} at the expense of replacing
	\begin{equation*}
		\text{one factor of}\quad (1+\phi_1)\quad\text{by}\quad (1+(\phi_1+\phi_2)^{1/2})
	\end{equation*}
	in case of \(\Xi_1^\iso\) and
	\begin{equation}\label{Xi 1 repl}
		\text{one factor of}\quad (1+\phi_1)(1+\phi_1^2+\phi_2)\quad\text{by}\quad (1+(\phi_1+\phi_2)^{1/2})(1+\phi_1+\phi_2+(\phi_3+\phi_4)^{1/2})
	\end{equation}
	in case of \(\Xi_2^\iso\). This concludes the proof in case of \(\Xi_1^\iso\) and together with
	\begin{equation*}
		(1+(\phi_1+\phi_2)^{1/2})(1+\phi_1+\phi_2+(\phi_3+\phi_4)^{1/2}) \lesssim 1+(\phi_1+\phi_2)^{3/2} + (\phi_3+\phi_4)^{3/4}
	\end{equation*}
	also in case of \(\Xi_2^\iso\).
	\\[2mm]
	\emph{Treatment of \eqref{eq:higherorder1iso} and \eqref{eq:higherorder2iso} for \(\abs{\bm l}+\sum J-\abs{J}=1\):}
	In this case we necessarily have \(\abs{J}\ge 1\) and either \(\abs{\bm l}=0\) or \(\abs{\bm j}=1\) for all \(\bm j\in J\). In either case we can use Lemma~\ref{lemma Ward} twice for the first factor and once for some other factor in~\eqref{Xi1iso def} to gain a factor of \((N\eta)^{-3/2}\) compared to~\eqref{eq naive} at the expense of replacing~\eqref{Xi 1 repl} in case of \(\Xi_1^\iso\) and
	\begin{equation*}
		\text{one factor of}\quad (1+\phi_1)(1+\phi_1^2+\phi_2)\quad\text{by}\quad (1+(\phi_1+\phi_2)^{1/2})((1+\phi_1)(1+\phi_1+\phi_2)^{1/2}+(\phi_3+\phi_4)^{1/2})
	\end{equation*}
	in case of \(\Xi_2^\iso\). Together with
	\begin{equation*}
		(1+(\phi_1+\phi_2)^{1/2})((1+\phi_1)(1+\phi_1+\phi_2)^{1/2}+(\phi_3+\phi_4)^{1/2}) \lesssim 1+(\phi_3+\phi_4)^{3/4} + \phi_2^{3/2} + \phi_1^2
	\end{equation*}
	this concludes the proof also in this case.
\end{proof}
It remains to give the proof of Lemma \ref{lemma general chains}.
\begin{proof}[Proof of Lemma \ref{lemma general chains}]
	The proof is via induction, i.e.\ we assume that~\eqref{eq all but one regular} has been established for resolvent chains of up to \(k\) resolvents. For \(k+1\) resolvents and \(a=k\), i.e.\ in case when all deterministic matrices are regular, the claim follow by definition of \(\psi_k^\iso\). Therefore we may assume that some \(A_j\) is not regular which we decompose into its regular component \(\mathring{A}_j^{w_j,w_{j+1}}\) and a linear combination of \(E_\pm\). By linearity it thus suffices to check~\eqref{eq all but one regular} for the cases \(A_j=E_\pm\), and moreover, by chiral symmetry \(G_j E_-G_{j+1}=-E_- G(-w_j) E_+ G_{j+1}\) and \(\mathring{A}^{w_{j-1},w_j} E_-=\mathring{A}^{w_{j-1},-w_j} \) (recall Lemma \ref{lem:regularbasic}) the estimate for \(E_-\) follows from the estimate for \(E_+\) upon replacing \(w_j\) by \(-w_{j}\). Therefore it suffices to check~\eqref{eq all but one regular} in case \(A_j=E_+\).

	If \(\mathfrak{s}_j = -\sgn (\Im w_j\Im w_{j+1}) = +\), i.e.\ the adjacent spectral parameters lie in opposite half-planes, then we use the {resolvent identity \eqref{eq:resolid}} to write
	\begin{equation*}%\label{Gj E+ Gj+1}
		A_{j-1}G_j E_+ G_{j+1}A_{j+1} G_{j+2} = A_{j-1}\frac{G_{j}-G_{j+1}}{w_{j}-w_{j+1}} A_{j+1} G_{j+2}\,.
	\end{equation*}
	We discuss each of the two resulting summands separately. For the summand involving $G_{j+1}$, if \(A_{j-1}\) was not counted as regularised, i.e.\ \(j-1\not\in  \mathcal{I}\), then the claim follows by induction and the trivial estimate \(\abs{w_j-w_{j+1}}\ge \eta\) since \(k\) has been reduced by one, while \(a\) has been preserved. On the other hand, if \(A_{j-1} \) was correctly regularised, then we use Lemma~\ref{lem:regularbasic} to write
	\begin{equation}\label{Aj-1}
		\mathring{A}_{j-1}^{w_{j-1}, w_j}  = \mathring{A}_{j-1}^{w_{j-1},\bar{w}_{j}}= \mathring{A}^{w_{j-1},w_{j+1}}_{j-1} + \mathcal{O}(\abs{\bar{w}_j-w_{j+1}}) E_+  + \mathcal{O}(\abs{\bar{w}_j-w_{j+1}}) E_-\,.
	\end{equation}
	Inserting~\eqref{Aj-1} into \(A_{j-1}G_{j+1}A_{j+1}G_{j+2}/(w_j-w_{j+1})\) the claimed bound follows from induction since for the \(\mathring{A}_{j-1}^{w_{j-1},w_{j+1}}\)-term \(a\) has been preserved and \(k\) has been reduced by one compensating for \(\abs{w_j-w_{j+1}}\ge \eta\), while for \(E_\pm\) both \(k,a\) have been reduced by one and \(\abs{\bar{w}_j-w_{j+1}}/\abs{w_j-w_{j+1}}\le 1\). Next, for the summand involving $G_j$, the argument is completely analogous, apart from the two error terms in
	%if $A_{j+1}$ was not counted as regularised, the bound follows completely analogous as for $G_{j+1}$. On the other hand, if $A_{j+1} = \mathring{A}^{w_{j+1}, w_j}$ was correctly regularised, then we use \Cref{lem:regularbasic} to write
	\begin{align}\label{Aj+1}
		\mathring{A}^{w_j, w_{j+1}}_{j+1}= \mathring{A}_{j+1}^{w_{j},w_{j+2}} & + \mathcal{O}(|w_j - \bar{w}_{j+1}| + |{w}_{j}- \mathfrak{s}_{j+1}{w}_{j+2}|) E_{\mathfrak{s}_{j+1}}                   \\
		                                                                      & +  \mathcal{O}(|w_j - \bar{w}_{j+1}| +|{w}_{j}+ \mathfrak{s}_{j+1}\bar{w}_{j+2}|) E_{-\mathfrak{s}_{j+1}} \nonumber\,,
	\end{align}
	appearing for an $A_{j+1} = \mathring{A}_{j+1}^{w_{j+1}, w_{j+2}}$, which has been correctly regularised. Here, we applied Lemma~\ref{lem:regularbasic} and denoted, as usual, $\mathfrak{s}_{j+1} = - \sgn(\Im w_{j+1} \Im w_{j+2})$. Now, for the error terms, we assume that the second summand in each $\mathcal{O}(...)$ is non-zero (otherwise we are back to \eqref{Aj-1}) and argue by induction: Indeed, using \eqref{eq:chiral} and applying a {resolvent identity \eqref{eq:resolid}}, we find
	\begin{align} \label{threeenergies}
		 & \frac{|w_j - \bar{w}_{j+1}| + |{w}_{j}- \mathfrak{s}_{j+1}{w}_{j+2}|}{w_j - w_{j+1}}   G_j E_{\mathfrak{s}_{j+1}} G_{j+2}                                                                                                                               \\
		 & \hspace{2cm}=  \frac{|w_j - \bar{w}_{j+1}| + |{w}_{j}- \mathfrak{s}_{j+1}{w}_{j+2}|}{(w_j - w_{j+1})\, ({w}_{j}- \mathfrak{s}_{j+1}{w}_{j+2})} \mathfrak{s}_{j+1}  \big(G(w_j) - G(\mathfrak{s}_{j+1} w_{j+2})\big) E_{\mathfrak{s}_{j+1}}\,, \nonumber
	\end{align}
	such that, in the resulting chain we have reduced $k$ by \emph{two} and $a$ by one, and the prefactor in \eqref{threeenergies} is bounded by $1/\eta$. The argument for the second error in \eqref{Aj+1} is completely analogous, after realizing that $\big(|w_j - \bar{w}_{j+1}| + |{w}_{j}+ \mathfrak{s}_{j+1}\bar{w}_{j+2}|\big)/\big(\vert w_j - w_{j+1}\vert \, \vert {w}_{j}+  \mathfrak{s}_{j+1}{w}_{j+2}\vert \big) \le 1/\eta$.

	%By linearity it suffices to further estimate the resulting term with \(G_{j+1}\) (rather than \(G_j\) which is analogous up the minor complication caused by the asymmetry of the regularisation, see e.g.~\cref{???}) {\color{green} Write smth here} 

	On the contrary, if \(\mathfrak{s}_j = -\sgn (\Im w_j\Im w_{j+1}) = -\), i.e.\ the adjacent spectral parameters lie the same half-plane (without loss of generality the upper one), then we use the integral representation from Lemma~\ref{lem:intrepG^2} to write
	\begin{equation}\label{Gj E+ Gj+1 opp}
		A_{j-1}G_j E_+ G_{j+1}A_{j+1} =\frac{1}{2\pi\ii} \int_{\Gamma} \frac{A_{j-1}G(z)A_{j+1}}{(z-w_j)(z-w_{j+1})}\dif z\,,
	\end{equation}
	where $\Gamma$ is an appropriately chosen contour.
	If \(j-1,j+1\not\in \mathcal{I}\), i.e.\ both \(A_{j-1},A_{j+1}\) were not counted as regularised, then the claim follows by induction and estimating the integral by \(\eta^{-1}\) (up to log factors) since \(k\) has been reduced by one, and \(a\) has been preserved. On the other hand, if both \(A_{j-1},A_{j+1}\) were counted as regularised, then we use Lemma~\ref{lem:regularbasic} to write them
	as
	\begin{equation}\label{Ajpm1 exp}
		\begin{split}
			\mathring{A}^{w_{j-1},w_j}_{j-1}& = \mathring{A}_{j-1}^{w_{j-1},z} + \mathcal{O}(\abs{w_j-z}) E_++ \mathcal{O}(\abs{w_j-z}) E_- \,, \\[1mm]
			\mathring{A}_{j+1}^{w_{j+1}, w_{j+2}}& = \mathring{A}_{j+1}^{z,w_{j+2}} + \mathcal{O}(\abs{w_{j+1}-z}) E_+ + + \mathcal{O}(\abs{w_{j+1}-z}) E_-\,.
		\end{split}
	\end{equation}
	The resulting term with \(\mathring{A}^{w_{j-1},z}_{j-1},\mathring{A}_j^{z,w_{j+2}}\) can be estimated by induction since \(k\) has been reduced by one, \(a\) has been preserved and the integral may be estimated by \(\eta^{-1}\). The other terms with either one or two \(E_\pm\) can also be estimated by induction since the integral is at most logarithmically divergent, \(k\) has been reduced by one and \(a\) by at most two. Finally, if in~\eqref{Gj E+ Gj+1 opp} one of \(A_{j-1},A_{j+1}\) were counted as regularised, then we use the relevant expansion from~\eqref{Ajpm1 exp}, so that for the resulting term with \(\mathring{A}\), \(k\) has been reduced by one, and \(a\) has been preserved, so that the \(\eta^{-1}\) estimate on the integral is affordable. The other term with \(E_\pm\) can also be estimated by induction with both \(a,k\) reduced by one, and the integral being at most logarithmically divergent. This concludes the proof.
\end{proof}

\section{Proof of the reduction inequalities, Lemma \ref{lem:reduction}} \label{sec:proofreduc}

During the proof of Lemma \ref{lem:reduction}, we will heavily rely on the following integral representation for the absolute value $|G|$ of a resolvent (see also \cite[Lemma~5.1]{multiG}).
\begin{lemma} {\rm (Integral representation for the absolute value of a resolvent)} \\
	Let $w = e + \I \eta \in \C \setminus \R$. Then the absolute value of the resolvent $G(w)$ can be represented as
	\begin{equation} \label{eq:intrep|G|}
		|G(e+\I \eta)| = \frac{2}{\pi}\int_{0}^{\infty} \Im G (e + \I \sqrt{\eta^2 + s^2}) \frac{\D s}{\sqrt{\eta^2 + s^2}}\,.
	\end{equation}
\end{lemma}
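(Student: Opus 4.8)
\textbf{Proof plan for the integral representation \eqref{eq:intrep|G|}.}

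The plan is to reduce the claimed matrix identity to a scalar identity via the spectral calculus. Let $H = H^\Defo$ be the Hermitisation with spectral decomposition $H = \sum_a \lambda_a P_a$, where $P_a$ denotes the (rank-one) spectral projection onto the eigenvector $\boldsymbol{w}_a$. Then $G(e+\I\eta) = \sum_a (\lambda_a - e - \I\eta)^{-1} P_a$, and hence
\begin{equation*}
	|G(e+\I\eta)| = \sqrt{G(e+\I\eta) G(e+\I\eta)^*} = \sum_a \frac{1}{\sqrt{(\lambda_a - e)^2 + \eta^2}}\, P_a\,,
\end{equation*}
while $\Im G(e+\I\sqrt{\eta^2+s^2}) = \sum_a \frac{\sqrt{\eta^2+s^2}}{(\lambda_a-e)^2 + \eta^2 + s^2}\, P_a$. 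Therefore it suffices to prove the scalar identity
\begin{equation*}
	\frac{1}{\sqrt{\alpha^2 + \eta^2}} = \frac{2}{\pi}\int_0^\infty \frac{\sqrt{\eta^2+s^2}}{\alpha^2 + \eta^2 + s^2}\, \frac{\D s}{\sqrt{\eta^2+s^2}} = \frac{2}{\pi}\int_0^\infty \frac{\D s}{\alpha^2 + \eta^2 + s^2}
\end{equation*}
for every $\alpha \in \R$ (here $\alpha$ plays the role of $\lambda_a - e$), and then sum against the projections $P_a$, using that the series converges (the operators are bounded since $\eta > 0$) and that equality of the scalar coefficients on the common eigenbasis yields equality of the operators.

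The scalar identity is elementary: writing $\beta^2 := \alpha^2 + \eta^2 > 0$, one has the standard integral $\int_0^\infty (\beta^2 + s^2)^{-1}\,\D s = \frac{1}{\beta}\big[\arctan(s/\beta)\big]_0^\infty = \frac{\pi}{2\beta}$, so that $\frac{2}{\pi}\int_0^\infty \frac{\D s}{\beta^2+s^2} = \frac{1}{\beta} = \frac{1}{\sqrt{\alpha^2+\eta^2}}$, as required. To make the reduction to the scalar case fully rigorous one invokes the spectral theorem (or, in this finite-dimensional setting, simply the fact that $|G(w)|$ and $\Im G(e+\I\sqrt{\eta^2+s^2})$ are both diagonal in the eigenbasis of $H$): the function $t \mapsto \frac{2}{\pi}\int_0^\infty \frac{\sqrt{\eta^2+s^2}}{(t-e)^2+\eta^2+s^2}\frac{\D s}{\sqrt{\eta^2+s^2}}$ coincides with $t\mapsto ((t-e)^2+\eta^2)^{-1/2}$ pointwise on $\R$, hence applying both functions to the self-adjoint operator $H$ gives the same operator. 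One may also interchange the integral and the (finite) spectral sum freely since, for fixed $\eta > 0$, the integrand is dominated by an integrable function uniformly in the spectral parameter.

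The only mild subtlety — hardly an obstacle — is bookkeeping about the meaning of $|G|$ and $\Im G$ as positive semidefinite operators and the legitimacy of exchanging $\int_0^\infty$ with the spectral representation; both are immediate here because $H$ is a finite $2N\times 2N$ self-adjoint matrix and $\eta>0$ keeps everything bounded. No probabilistic input is needed, so the lemma is a purely deterministic, functional-calculus statement.
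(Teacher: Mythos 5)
Your proposal is correct and takes essentially the same approach as the paper: reduce the claimed operator identity to a scalar one via the functional calculus for the self-adjoint finite matrix $H$, and then verify the scalar identity $\tfrac{1}{\sqrt{\alpha^2+\eta^2}}=\tfrac{2}{\pi}\int_0^\infty(\alpha^2+\eta^2+s^2)^{-1}\,\D s$ by an elementary $\arctan$ computation. The paper merely presents the scalar identity in the unsimplified resolvent-difference form $\tfrac{1}{\I\pi}\bigl(\tfrac{1}{x-\I\sqrt{\eta^2+s^2}}-\tfrac{1}{x+\I\sqrt{\eta^2+s^2}}\bigr)$, which is algebraically the same thing you wrote.
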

\begin{proof}
	This immediately follows from the functional calculus for $H$ and the identity
	\begin{equation*}
		\frac{1}{|x - \I \eta|} = \frac{1}{\I \pi} \int_{0}^{\infty} \left( \frac{1}{x - \I (\eta^2 + s^2)^{1/2}}  - \frac{1}{x + \I (\eta^2 + s^2)^{1/2}}\right)\frac{\D s}{\sqrt{\eta^2 + s^2}}\,. \qedhere
	\end{equation*}
\end{proof}
\begin{proof}[Proof of Lemma \ref{lem:reduction}]

	To keep the notation simpler within this proof we may often denote
	\[
		A_i=\mathring{A}_i=\mathring{A}_i^{w_i,w_{i+1}}\,, % = A_i^{w_i,w_{i+1}},
	\]
	i.e. %we drop the circle from the notation of regular matrices, and 
	sometimes we
	drop the spectral parameters $w_i = e_i+ \ii \eta_i$.

	%  For notational
	% brevity we also drop the circle and use the notation $A_i^{w_i,w_{i+1}}=\mathring{A}_i^{w_i,w_{i+1}}$.
	% to specify on which spectral parameters the regular matrix $A_i$ depends on.

	We start with the proof of \eqref{eq:reduction av}, for which, similarly to \cite[Lemma 3.6]{multiG}, we get
	\begin{equation}
		\label{eq:avred}
		\Psi_4^{\rm av} \lesssim N \eta + N^2 \eta^2 \, \Big(\langle |G_1| A_1 |G_2|A_1^* \rangle \langle |G_2| A_2 |G_3|A_2^* \rangle \langle |G_3| A_3 |G_4|A_3^* \rangle \langle |G_4| A_4 |G_1|A_4^* \rangle\Big)^{1/2}\,,
	\end{equation}
	by Lemma \ref{lem:Mbound}, spectral decomposition, and a Schwarz inequality. Next, we use \eqref{eq:intrep|G|} to write
	\begin{equation}
		\label{eq:absexp}
		\langle |G_1| A_1 |G_2|A_1^* \rangle=\frac{4}{\pi^2}\iint_0^\infty \langle \Im G(w_{1,s})\mathring{A}_1^{w_1,w_2} \Im G(w_{2,t})(\mathring{A}_1^{w_1,w_2})^*  \rangle\, \frac{\D s \D t}{\sqrt{\eta_1^2+s^2}\sqrt{\eta_2^2+t^2}}\,,
	\end{equation}
	where we defined $w_{i,s}:=e_i+\ii \sqrt{\eta_i^2+s^2}$. The very large $s,t$--regimes in \eqref{eq:absexp} can be easily shown to be negligible (e.g. see \cite[Proof of Lemma~5.1]{multiG}), i.e. even if not stated explicitly we assume that the upper integration limit can be replaced by $N^{100}$. Additionally, we can restrict to the case when $\eta:=\min_j|\Im w_j|\le 1$, when this is not the case we use the local law in the regime $\eta > 1$ from Theorems~\ref{thm:multiGll}--\ref{thm:singleGopt} (see \cite[Proof of Lemma~5.1]{multiG} for a detailed argument). We remark that this argument is not circular since in the proof of the local law for $\eta> 1$ sketched below Remark \ref{rmk:sqrteta} one does not use the reduction inequalities in \eqref{eq:reduction av}--\eqref{eq:reduction iso}.

	In order to estimate the rhs. of \eqref{eq:absexp} we write $ \Im G=\frac{1}{2\ii} (G-G^*)$
	for both $\Im G$ to obtain four terms
	with two resolvents;
	to keep the presentation concise we only present the estimate for one of them.
	From now on we consider only the term $\langle |G_1| A_1 |G_2|A_1^* \rangle$, the bound for all the other terms in the last line of \eqref{eq:avred} is completely analogous and so omitted. In the following we will often use the approximations from Lemma~\ref{lem:regularbasic} (omitting the trivial $\wedge 1$ in the errors for notational simplicity):
	\begin{equation}
		\label{eq:usefrel}
		\begin{split}
			\mathring{A}^{w_1,w_2}&=\mathring{A}^{w_{1,s},w_{2,t}}+\mathcal{O}(|\sqrt{\eta_1^2+s^2}-\eta_1|+|\sqrt{\eta_2^2+t^2}-\eta_2|)E_+ \\
			&\quad+\mathcal{O}(|\sqrt{\eta_1^2+s^2}-\eta_1|+|\sqrt{\eta_2^2+t^2}-\eta_2|)E_- \,, \\[1mm]
			(\mathring{A}^{w_1,w_2})^*&=(\mathring{A}^*)^{w_{2,t},w_{1,s}} +\mathcal{O}(|e_1-e_2|+\sqrt{\eta_1^2+s^2}+\sqrt{\eta_2^2+t^2})E_+ \\
			&\quad+\mathcal{O}(|e_1+e_2|+\sqrt{\eta_1^2+s^2}+\sqrt{\eta_2^2+t^2})E_-\,.
		\end{split}
	\end{equation}
	We point out that when taking the adjoint of the first formula to arrive at the second we used that for any
	$w_1,w_2$ it holds $(\mathring{A}^{w_1,w_2})^*=(\mathring{A}^*)^{\overline{w_2},\overline{w_1}}$, see Lemma~\ref{lem:regularbasic}.
	%Here and in the following, to keep the notation short, we will omit the trivial $\wedge 1$ in the error term.
	Recall that within this proof we always assume that $\eta\le 1$. From now on for the error terms we will always use the bounds
	\begin{equation}
		\label{eq:enoughb}
		|\sqrt{\eta_1^2+s^2}-\eta_1|\lesssim s\,, \qquad \sqrt{\eta_1^2+s^2}\le\eta_1+s\,,
	\end{equation}
	and a similar bound with $\eta_1,s$ replaced with $\eta_2,t$. The first bound is not optimal for small $\eta_1$, but good enough for our estimates. Then using \eqref{eq:usefrel} we write %\cred{(recall the notation in Footnote \ref{ftn:Osigma})}%{\color{purple} [Lemma 0.1 is not helpful here due to the $s,t$-story]} 
	\begin{equation}
		\begin{split}
			\label{eq:bigsplit}
			&\iint_0^\infty \langle G(w_{1,s})\mathring{A}_1^{w_1,w_2} G(w_{2,t})(\mathring{A}_1^{w_1,w_2})^*  \rangle\, \frac{\D s  \D t}{\sqrt{\eta_1^2+s^2}\sqrt{\eta_2^2+t^2}} \\
			&=\iint_0^\infty \langle  G(w_{1,s})\mathring{A}_1^{w_{1,s},w_{2,t}} G(w_{2,t})(\mathring{A}_1^*)^{w_{2,t},w_{1,s}}  \rangle\, \frac{\D s  \D t}{\sqrt{\eta_1^2+s^2}\sqrt{\eta_2^2+t^2}} \\
			&+ \sum_{\sigma\in \{+,-\}}\iint_0^\infty \langle  G(w_{1,s})E_\sigma G(w_{2,t})(\mathring{A}_1^*)^{w_{2,t},w_{1,s}}  \rangle \mathcal{O}(\eta_1+\eta_2+s+t)\, \frac{\D s  \D t}{\sqrt{\eta_1^2+s^2}\sqrt{\eta_2^2+t^2}} \\
			&+ \sum_{\sigma\in \{+,-\}}\iint_0^\infty \langle  G(w_{1,s})\mathring{A}_1^{w_{1,s},w_{2,t}} G(w_{2,t})E_\sigma  \rangle \mathcal{O}(\eta_1+\eta_2+s+t)\, \frac{\D s  \D t}{\sqrt{\eta_1^2+s^2}\sqrt{\eta_2^2+t^2}} \\
			&+ \sum_{\sigma,\tau\in \{+,-\}}\iint_0^\infty \langle  G(w_{1,s})E_\sigma G(w_{2,t})E_\tau \rangle \mathcal{O}(\eta_1^2+\eta_2^2+s^2+t^2)\, \frac{\D s \D t}{\sqrt{\eta_1^2+s^2}\sqrt{\eta_2^2+t^2}} \\
			&+ \iint_0^\infty \langle  G(w_{1,s})\big[\sum_\sigma\mathcal{O}(|e_1-\sigma e_2|)E_\sigma\big]G(w_{2,t})(\mathring{A}_1^*)^{w_{2,t},w_{1,s}} \rangle \, \frac{\D s  \D t}{\sqrt{\eta_1^2+s^2}\sqrt{\eta_2^2+t^2}} \\
			&+ \iint_0^\infty \langle  G(w_{1,s})\mathring{A}_1^{w_{1,s},w_{2,t}} G(w_{2,t})[\mathcal{O}(|e_1-e_2|)E_++\mathcal{O}(|e_1+e_2|)E_-] \rangle\, \frac{\D s  \D t}{\sqrt{\eta_1^2+s^2}\sqrt{\eta_2^2+t^2}} \\
			&+ \iint_0^\infty \langle  G(w_{1,s})\big[\sum_\sigma\mathcal{O}(|e_1-\sigma e_2|)E_\sigma\big] G(w_{2,t})\big[\sum_\tau\mathcal{O}(|e_1-\tau e_2|)E_\tau\big]\rangle  \, \frac{\D s \D t}{\sqrt{\eta_1^2+s^2}\sqrt{\eta_2^2+t^2}}\,.
		\end{split}
	\end{equation}

	We now estimate the terms in the rhs. of \eqref{eq:bigsplit} one by one. In the following estimates we will always omit $\log N$-factors. We start with
	\begin{equation*}
		%\label{eq:step1}
		\left|\iint_0^\infty \langle  G(w_{1,s})\mathring{A}_1^{w_{1,s},w_{2,s}} G(w_{2,t})(\mathring{A}_1^*)^{w_{2,t},w_{1,s}}  \rangle\, \frac{\D s  \D t}{\sqrt{\eta_1^2+s^2}\sqrt{\eta_2^2+t^2}}\right|\prec 1+\frac{\psi_2^{\rm av}}{N\eta}\,,
	\end{equation*}
	which readily follows by the definition of $\Psi_2^{\rm av}$ in \eqref{eq:Psi avk} and from the assumption  $\Psi_2^{\rm av}\prec\psi_2^{\rm av}$. For the third to the fifth line in \eqref{eq:bigsplit} we use the bound
	\begin{equation}
		\begin{split}
			\label{eq:step2}
			&\left| \iint_0^\infty \langle  G(w_{1,s}) E_\sigma G(w_{2,t})B \rangle \mathcal{O}(\eta_1+\eta_2+s+t)\, \frac{\D s  \D t}{\sqrt{\eta_1^2+s^2}\sqrt{\eta_2^2+t^2}}\right| \\
			&\qquad\qquad\quad\prec \iint_0^\infty\left(\frac{1}{\sqrt{\eta_1^2+s^2}}\wedge\frac{1}{\sqrt{\eta_2^2+t^2}}\right)\big[\eta_1+\eta_2+s+t\big]\, \frac{\D s  \D t}{\sqrt{\eta_1^2+s^2}\sqrt{\eta_2^2+t^2}}\lesssim 1\,,
		\end{split}
	\end{equation}
	for any deterministic norm bounded matrices $B$ and for $\sigma\in \{+,-\}$.
	For the fifth line of  \eqref{eq:bigsplit} we used the bound $(s^2+t^2)\wedge 1\le (s+t)\wedge 1$ (recall that $\wedge 1$ is omitted in the error terms in \eqref{eq:bigsplit} for notational simplicity). Note that here we used:
	\begin{equation}
		\label{eq:needbne}
		|\langle G(w_{1,s})E_\sigma G(w_{2,t})B\rangle|\prec  \frac{1}{\sqrt{\eta_1^2+s^2}}\wedge\frac{1}{\sqrt{\eta_2^2+t^2}}\,,
	\end{equation}
	which holds uniformly in matrices with $\lVert B\rVert\lesssim 1$. We point out that to obtain the bound \eqref{eq:needbne} we used spectral decomposition of the resolvents and that $\langle \boldsymbol{w}_{i}, E_\sigma  \boldsymbol{w}_{j}\rangle = \delta_{i, \sigma j}$ to bound\footnote{{We point out that here $\boldsymbol{w}_i$ denotes an eigenvector of $H^\Lambda$, and it should not be confused with the spectral parameters $w_{1,s},w_{2,t}$.}}
	\[
		\begin{split}
			|\langle G(w_{1,s})E_\sigma G(w_{2,t})B\rangle|&=\left|\frac{1}{2N}\sum_i \frac{\langle \boldsymbol{w}_i,B \boldsymbol{w}_{\sigma i} \rangle}{(\lambda_i-w_{1,s})(\lambda_i-\sigma w_{2,t})}\right| \\
			&\lesssim \frac{1}{N}\sum_i \frac{1}{|\lambda_i-w_{1,s}||\lambda_i-\sigma w_{2,t}|}\\
			&\prec \frac{1}{|\Im w_{1,s}|\vee |\Im w_{2,t}| }\,,
		\end{split}
	\]
	where in the last inequality we used the single resolvent local law. % and we ignored logarithmic factors.

	%Schwarz inequality together with the single resolvent local law to obtain: 

	%; additionally, we will omit the $\sigma,\tau$--summations, i.e. every time an $E_\sigma$ appears it is understood that we sum the term over the values $\sigma=+$ and $\sigma=-$.

	Finally, for the last three lines in \eqref{eq:bigsplit} we use that for any norm bounded matrix $B$, by {resolvent identity \eqref{eq:resolid}}, we have (recall that $E_+=I$)
	\begin{equation}
		\label{eq:resid}
		| \langle  G(w_{1,s})B G(w_{2,t}) \rangle|\prec \frac{1}{|w_{1,s}-w_{2,t}|}\,, \qquad | \langle  G(w_{1,s})B G(w_{2,t})E_- \rangle|\prec \frac{1}{|w_{1,s}+w_{2,t}|}\,,
	\end{equation}
	which after the integration in \eqref{eq:bigsplit} gives a bound of order one, as a consequence of
	\[
		\frac{|e_1\pm e_2|}{|w_{1,s}\pm w_{2,t}|}\lesssim 1\,.
	\]
	Note that here it is important
	that the error terms in \eqref{eq:bigsplit}  involving $|e_1-e_2|$ are always multiplied with the matrix $E_+$, while
	errors of order $|e_1+e_2|$ are in the direction of $E_-$.

	Combining the computations in \eqref{eq:bigsplit}--\eqref{eq:resid} we conclude that
	\begin{equation}
		\label{eq:finfirststep}
		|\langle |G_1| A_1 |G_2|A_1^* \rangle|\prec 1+\frac{\psi_2^{\rm av}}{N\eta},
	\end{equation}
	which, after plugging it in the rhs. of \eqref{eq:avred}, clearly implies \eqref{eq:reduction av}\,.

	For \eqref{eq:reduction iso} for $\Psi_3^{\rm iso}$, we find
	\begin{equation}
		\label{eq:isored3}
		\Psi_3^{\rm iso} \lesssim \sqrt{N\eta} + N \eta^2 \Big( \big(G_1 A_1 |G_2| A_1^* G_1^*\big)_{\boldsymbol{x}\boldsymbol{x}} \big(G_4^* A_3^* |G_3| A_3 G_4\big)_{\boldsymbol{y}\boldsymbol{y}} \langle |G_2| A_2 |G_3| A_2^* \rangle \Big)^{1/2}\,,
	\end{equation}
	again by Lemma \ref{lem:Mbound}, spectral decomposition, and a Schwarz inequality. Then, using again the integral representation \eqref{eq:intrep|G|}, we find that
	\begin{equation*}
		\big(G_1 A_1 |G_2| A_1^* G_1^*\big)_{\boldsymbol{x}\boldsymbol{x}}=\frac{2}{\pi}\int_0^\infty \big(G_1 A_1 \Im G(w_{2,s}) A_1^* G_1^*\big)_{\boldsymbol{x}\boldsymbol{x}}\, \frac{\D s}{\sqrt{\eta_2^2+s^2}}\,,
	\end{equation*}
	recalling the notation $w_{2,s}=e_2+\ii \sqrt{\eta_2^2+s^2}$. The estimate for this term is fairly similar to the one in \eqref{eq:absexp}, hence we present only the main differences and skip the details; actually the current case is easier since we now have only one $|G|$.

	After splitting $\Im G = \frac{1}{2\ii} (G-G^*)$ and handling both terms separately,  we can write,
	similarly to \eqref{eq:bigsplit} and using \eqref{eq:usefrel}--\eqref{eq:enoughb}, the following approximation:
	\begin{equation}
		\begin{split}
			\label{eq:newdisplay}
			&\int_0^\infty \big(G_1 A_1  G(w_{2,s}) A_1^* G_1^*\big)_{\boldsymbol{x}\boldsymbol{x}}\, \frac{\D s}{\sqrt{\eta_2^2+s^2}} \\
			&=\int_0^\infty \big(G_1 \mathring{A}_1^{w_1,w_{2,s}}  G(w_{2,s}) (\mathring{A}_1^*)^{w_{2,s},w_1} G_1^*\big)_{\boldsymbol{x}\boldsymbol{x}}\, \frac{\D s}{\sqrt{\eta_2^2+s^2}}+\mathcal{E}\,.
		\end{split}
	\end{equation}
	Here $\mathcal{E}$ is an error coming from all the errors in \eqref{eq:usefrel}. For the first term in the second line of \eqref{eq:newdisplay} we use the bound
	\begin{equation}
		\label{eq:step1iso}
		\left|\int_0^\infty \big(G_1 \mathring{A}_1^{w_1,w_{2,s}}  G(w_{2,s}) (\mathring{A}_1^*)^{w_{2,s},w_1} G_1^*\big)_{\boldsymbol{x}\boldsymbol{x}}\, \frac{\D s}{\sqrt{\eta_2^2+s^2}}\right|\prec \frac{1}{\eta}\left(1+\frac{\psi_2^{\rm iso}}{\sqrt{N\eta}}\right)\,,
	\end{equation}
	which follows by the definition of $\Psi_2^{\rm iso}$. For the error term we do not write the details, since once we replace \eqref{eq:needbne}--\eqref{eq:resid} with (here $B,B_1,B_2$ are deterministic norm bounded matrices)
	\begin{equation}\label{triv}
		\begin{split}
			|\big(G_1 B_1 G(w_{2,s}) B_2 G_1^*\big)_{\boldsymbol{x}\boldsymbol{x}}|&\le \big(G_1 B_1B_1^*G_1^*\big)_{\boldsymbol{x}\boldsymbol{x}}^{1/2}\big(G_1B_2^*G(w_{2,s})G(w_{2,s})^* B_2 G_1^*\big)_{\boldsymbol{x}\boldsymbol{x}}^{1/2}\prec \frac{1}{\eta\sqrt{\eta_2^2+s^2}} \\
			|\big(G_1 E_\sigma G(w_{2,s}) B G_1^*\big)_{\boldsymbol{x}\boldsymbol{x}}|&\prec \frac{1}{\eta|w_1-w_{2,s}|}\,,
		\end{split}
	\end{equation}
	respectively, the estimate
	\begin{equation}
		\label{eq:esterrterm}
		|\mathcal{E}|\prec \frac{1}{\eta}
	\end{equation}
	follows completely analogously.
	The estimates~\eqref{triv} follow by repeated applications of the {resolvent identity \eqref{eq:resolid}}
	(after commuting $E_\sigma$ with $G$ in case of the second formula), the trivial bound $\| G\|\le 1/\eta$ and
	the single resolvent local law.
	Combining, \eqref{eq:step1iso}--\eqref{eq:esterrterm} we conclude
	\begin{equation}
		\label{eq:stepalliso}
		\big|\big(G_1 A_1 |G_2| A_1^* G_1^*\big)_{\boldsymbol{x}\boldsymbol{x}}\big|\prec \frac{1}{\eta}\left(1+\frac{\psi_2^{\rm iso}}{\sqrt{N\eta}}\right)\,.
	\end{equation}
	The bound in \eqref{eq:stepalliso}, together with \eqref{eq:finfirststep} to estimate the averaged term in \eqref{eq:isored3}, concludes the proof \eqref{eq:reduction iso} for $\Psi_3^{\rm iso}$.

	Analogously to \eqref{eq:isored3}, for $\Psi_4^{\rm iso}$ we find that
	\begin{equation*}
		%\label{eq:rediso4}
		\begin{split}
			\Psi_4^{\rm iso}&\lesssim \sqrt{N\eta} + N \eta^{5/2} \Big( \big(G_1 A_1 |G_2| A_1^* G_1^*\big)_{\boldsymbol{x}\boldsymbol{x}} \big(G_5^* A_4^* |G_4| A_4 G_5\big)_{\boldsymbol{y}\boldsymbol{y}} \langle |G_2| A_2 G_3 A_3 |G_4| A_3^* G_3^* A_2^* \rangle \Big)^{1/2} \\
			& \lesssim  \sqrt{N\eta} + N^{3/2} \eta^{5/2} \Big( \big(G_1 A_1 |G_2| A_1^* G_1^*\big)_{\boldsymbol{x}\boldsymbol{x}} \big(G_5^* A_4^* |G_4| A_4 G_5\big)_{\boldsymbol{y}\boldsymbol{y}}\Big)^{1/2} \\
			&\qquad \times\Big( \langle|G_2|A_2|G_3|A_2^*\rangle \langle|G_3|A_3|G_4|A_3^*\rangle \langle|G_4|A_3^*|G_3|A_3\rangle \langle|G_3|A_2^*|G_2|A_2\rangle \Big)^{1/4}
		\end{split}
	\end{equation*}
	where in the last inequality we used spectral decomposition and a bound as in \cite[Proof of Lemma~3.6]{multiG} to bound the trace with four $G$'s and four $A$'s in terms of a product of traces containing only two $G$'s and two $A$'s. Finally, using the bounds \eqref{eq:finfirststep}, \eqref{eq:stepalliso}, we conclude the proof of \eqref{eq:reduction iso} for $\Psi_4^{\rm iso}$ as well.
	%			\begin{equation}
	%	\label{eq:avred}
	%	\begin{split}
	%	\Psi_4^{\rm av} &= N \eta^2 \left\vert \langle G_1 A_1 \cdots G_4 A_4\rangle  - \langle M(w_1, A_1, ... , w_4) A_4 \rangle  \right\vert \\
	%	& \lesssim N \eta + \frac{N \eta^2}{N} \left|  \sum_{ijkl} \frac{\langle \boldsymbol{w}_i, A_1 \boldsymbol{w}_j \rangle \langle \boldsymbol{w}_j, A_2 \boldsymbol{w}_k \rangle\langle \boldsymbol{w}_k, A_3 \boldsymbol{w}_l \rangle\langle \boldsymbol{w}_l, A_4 \boldsymbol{w}_i \rangle}{(\lambda_i - w_1) \, (\lambda_j - w_2) \, (\lambda_k - w_3) \, (\lambda_l - w_4)}  \right| \\
	%	& \lesssim N \eta + N^2 \eta^2 \left( \frac{1}{(2N)^2} \sum_{ijkl} \frac{|\langle \boldsymbol{w}_i, A_1 \boldsymbol{w}_j \rangle|^2 |\langle \boldsymbol{w}_k, A_3 \boldsymbol{w}_l|^2  \rangle}{|\lambda_i - w_1| \, |\lambda_j - w_2| \, |\lambda_k - w_3| \, |\lambda_l - w_4|} \right)^{1/2}  \\
	%		& \hspace{3.5cm}\times \left( \frac{1}{(2N)^2} \sum_{ijkl} \frac{|\langle \boldsymbol{w}_j, A_1 \boldsymbol{w}_k \rangle|^2 |\langle \boldsymbol{w}_l, A_3 \boldsymbol{w}_i|^2  \rangle}{|\lambda_i - w_1| \, |\lambda_j - w_2| \, |\lambda_k - w_3| \, |\lambda_l - w_4|} \right)^{1/2} \\
	%		&= N \eta + N^2 \eta^2 \, \Big(\langle |G_1| A_1 |G_2|A_1^* \rangle \langle |G_2| A_2 |G_3|A_2^* \rangle \langle |G_3| A_3 |G_3|A_3^* \rangle \langle |G_4| A_4 |G_1|A_4^* \rangle\Big)^{1/2},
	%		\end{split}
	%\end{equation}
\end{proof}

\appendix
\section{Motivating derivations of the regularisation} \label{app:motivation}
In this appendix, we shall motivate and derive the regularisation \eqref{eq:circ} introduced in Definition \ref{def:reg obs1} by considering
two basic examples. We also use these  examples  to present two different approaches
to guess the right regularisation.  Before the details, we give an informal summary of these two model calculations.

First, in Section~\ref{subsec:variance}, we compute
\begin{equation} \label{eq:variance motivation}
	\E  \big| \langle \underline{ W G(\I \eta) A} \rangle \big|^2,
\end{equation}
which is the leading contribution to $\langle (G-M)B\rangle$ in the single-resolvent local law,
with $A=\mathcal{X}[B]M$, see~\eqref{eq:1G basic exp}. We will
show that, in order to be able to reduce its naive size $1/(N \eta)^2$ to the target $1/(N^2 \eta)$, we \emph{need} that $\langle A, V_\pm \rangle = 0$, i.e.~we need $A \in \C^{2N \times 2N}$ to be orthogonal to two certain directions $V_\pm$ in $\C^{2N \times 2N}$. For simplicity, we chose the spectral parameter $w = \I \eta$ to be on the imaginary axis, assuming that $0 \in \mathbf{B}_\kappa$ for some $\kappa > 0$. In this case, both cutoff functions \eqref{eq:case regulation2} in the actual definition of the regularisation satisfy $\mathbf{1}_\delta^\pm(\I \eta, \I \eta) = 0$ for $\eta> 0$ small enough. Hence, at least \emph{a posteriori}, we really catch both directions $V_\pm$ and not only one.
This  calculation is rather \emph{foundational} and unambiguously reveals two directions $V_\pm$, for which we need that $\langle A, V_\pm \rangle = 0$, in order to reduce the naive size of \eqref{eq:variance motivation}.

Second, in Section \ref{subsec:generalchain}, we consider the averaged chain with two resolvents
\begin{equation} \label{eq:chain motivation}
	\langle G^{\Defo_1}(w_1)A_1 G^{\Defo_2}(w_2)A_2 \rangle \,,
\end{equation}
where the resolvents are even allowed to have generally \emph{different}\footnote{All results in the current
	paper concern the $\Lambda_1=\Lambda_2$ case; the generalisation $\Lambda_1\ne\Lambda_2$ is mentioned only to stress that
	our method is  also valid beyond the scope of the current paper.} deformations, $\Defo_1$ and  $\Defo_2$.
Let $M_1:= M^{\Defo_1}(w_1)$ and $M_2:= M^{\Defo_2}(w_2)$.
For simplicity,
we will assume that %, abbreviating $M_1:= M^{\Defo_1}(w_1)$ and $M_2:= M^{\Defo_2}(w_2)$, 
the stability operators \begin{equation} \label{eq:stabopgeneral}
	{\mathcal{B}}_{m^{(*)}n^{(*)}} := 1 - M_m^{(*)} \mathcal{S}[\cdot] M_n^{(*)}\,, \quad m,n \in [2]\,,
\end{equation}
for all constellations of adjoints, have at most one \emph{critical} eigenvalue $\beta_{m^{(*)}n^{(*)}}$ which is \emph{not} of order one (with associated right and left eigenvectors $R_{m^{(*)}n^{(*)}}$ and $L_{m^{(*)}n^{(*)}}$, respectively, cf.~\eqref{eq:crit triple} later).
As we will show in Lemma~\ref{lem:eigendecomp}~(c), this is the case, e.g., if $\Defo \equiv \Defo_1 = \Defo_2$ and $\Re w_1, \Re w_2 \in \mathbf{B}_\kappa^\Defo$, and actually remains true for other more general random matrix models with a \emph{flat} \cite{firstcorr} self-energy operator $\mathcal{S}[\cdot]$. Recall that $\mathcal{S}[\cdot]$ is flat if
\begin{equation} \label{eq:flatness}
	c \langle R \rangle \le \mathcal{S}[R] \le C \langle R \rangle
\end{equation}
for some constants $c,C >0$ and any positive semi-definite matrix $R \ge 0$.

Again, the main question is what  special property $A_1, A_2$ must have so that~\eqref{eq:chain motivation} be
smaller than its naive size of order $1/\eta$ obtained from a simple Schwarz inequality.
Similarly to~\eqref{eq:variance motivation}, we could
directly compute the second moment of the corresponding
underline term (see Lemma \ref{lem:underlined3}), but for pedagogical reason we present
an alternative argument.
%  We    will make a \emph{pragmatic ansatz}  on the regularisation.  
Quite \emph{pragmatically}, we
start the usual  proof via cumulant expansion
for a bound on \eqref{eq:chain motivation} and find that certain deterministic
terms are too big for general $A_1, A_2$.
We shall see that there exist two
matrices $\tilde{V}_\pm \in \C^{2N \times 2N}$ (which turn out to be certain
right eigenvectors $R_{m^{(*)}n^{(*)}}$ of \eqref{eq:stabopgeneral},
see \eqref{eq:Vchoice} and \eqref{eq:Vchoice2} later), such that, if $\langle A_i, \tilde{V}_\pm \rangle = 0$, these critical terms are smaller.
This suggests  a \emph{pragmatic ansatz}  of the form~\eqref{eq:circ} on the regularisation.
We will observe
that, for the situation $\Defo_1 = \Defo_2$ and $w_1 = w_2 = \I \eta$, the expressions for $\tilde{V}_\pm$ in fact \emph{coincide} with those for $V_\pm$ obtained in Section \ref{subsec:variance}. Notice that
the imaginary part of the single-resolvent setup leading to~\eqref{eq:variance motivation} %Section~\ref{subsec:variance}
is a special case of the two-resolvent setup~\eqref{eq:chain motivation} since
$$
	\Im \langle G B\rangle =  \langle (\Im G) B\rangle =   \eta\langle  G B G^* E_+\rangle
$$
for self-adjoint $B$. Hence the regularity of $B$ tested against $\Im G(\ii \eta)$
is the same as the regularity of $B$ between  $G(\ii\eta )$ and $G^*(\ii\eta)=G(-\ii \eta)$. This shows, at least in this special case,
that the \emph{foundational} and the \emph{pragmatic}
approaches lead to the same regularisation. Similar conclusion about the equivalence of both approaches
holds in general.

Finally, in Section \ref{subsec:exactform}, motivated by the previous tandem of \emph{foundational} and \emph{pragmatic}
computations in Sections \ref{subsec:variance} and \ref{subsec:generalchain},
respectively, we list generally valid (i.e.~for arbitrary $w_1, w_2$ also away from the imaginary axis)
explicit formulas for the directions $V_\pm$ regularising~\eqref{eq:chain motivation}  in case that $\Defo_1 = \Defo_2$.
These explicit formulas are identical to those used in the regularisation introduced in Definition \ref{def:reg obs1}.

\subsection{Variance calculation of \eqref{eq:variance motivation}} \label{subsec:variance} In the following, we simply write $G = G(\I \eta)$ for ease of notation. Then, using a cumulant expansion and neglecting cumulants of order at least three (or assuming that $X$ is Ginibre), one gets
\begin{align} \nonumber
	\mathbf{E} & \big\vert  \langle \underline{WG A} \rangle \big\vert^2  = \frac{1}{N} \sum_{ab} R_{ab} \mathbf{E} \langle \Delta^{ab} G {A} \rangle \partial_{ba} \langle \underline{{A}^* G^* W} \rangle                                                                        \\
	           & = \frac{1}{N} \sum_{ab} R_{ab} \mathbf{E} \langle \Delta^{ab} G {A} \rangle \langle  G {A}^* G^*  \Delta^{ba}\rangle \label{eq:cumex}                                                                                                                             \\
	           & \hspace{1cm}+ \frac{1}{N^2} \sum_{abcd} R_{ab} R_{cd} \mathbf{E}\langle \Delta^{ab} G \Delta^{dc} G {A} \rangle \langle {A}^* G^* \Delta^{ba} G^* \Delta^{cd} \rangle \nonumber                                                                                   \\
	           & = \frac{1}{N^2} \sum_\sigma \sigma  \mathbf{E}  \langle E_\sigma G {A} E_\sigma {A}^* G^* \rangle \, + \, \frac{1}{N^2} \sum_{\sigma \tau} \, \sigma \tau\mathbf{E}  \langle E_\sigma G^* E_\tau GA \rangle \langle E_\sigma G E_\tau (GA)^* \rangle\,. \nonumber
\end{align}
The rescaled cumulant $R_{ab} := N \kappa(ab, ba)$ has been introduced {above \eqref{eq:min exp 1av}} and $\Delta^{ab} \in \mathbf{C}^{2N \times 2N}$ contains only one non-zero entry at position $(a,b)$, i.e.~$(\Delta^{ab})_{cd} = \delta_{ac} \delta_{bd}$.

As we will show, the cumulant expansion \eqref{eq:cumex} yields that (up to a constant)
\begin{equation} \label{eq:cumexresult}
	\E \big\vert  \langle \underline{WG A} \rangle \big\vert^2 \approx \frac{\E \big| \langle \Im G A \rangle\big|^2}{(N\eta)^2} + \frac{\E \big| \langle \Im G A E_- \rangle\big|^2}{(N\eta)^2} + \mathcal{O}\left(\frac{1}{N^2\eta}\right).
\end{equation}
Indeed, the first summand in the last line of \eqref{eq:cumex} is estimated by $1/(N^2 \eta)$, the target size, with the aid of a trivial Schwarz inequality and a Ward identity using Theorem \ref{thm:singleG}. By writing out the summation in the last summand, we get in total four terms. Since their treatment is very similar, we focus on two exemplary terms with $\sigma = \tau = +$ (analogous to $\sigma = \tau = -$) and $\sigma = - \tau = -$ (analogous to $\sigma = -\tau = +$).

For the former, we apply a Ward identity and find it to be given by
\begin{equation} \label{eq:variance++}
	\frac{\E \big| \langle \Im G A \rangle\big|^2}{(N\eta)^2}\,,
\end{equation}
which, without any further information on $A$, using that $\langle GA \rangle \sim 1$ from Theorem \ref{thm:singleG}, is too big, compared to the targeted $1/(N^2 \eta)$-size. However, this drastically improves if $\langle \Im M, A \rangle = 0$ (recall that $\Im M $ is self adjoint): Since $\langle (G-M)A \rangle$ and $\langle \underline{WGA} \rangle$ are roughly of the same size (see \eqref{eq:1G basic exp}), the contribution \eqref{eq:variance++} basically becomes a lower-order correction. We have thus identified the first of the two directions $V_\pm$, to which $A$ has to be orthogonal to in order to reduce the naive size of \eqref{eq:variance motivation}, namely
\begin{equation} \label{eq:V+variance}
	V_+ = \alpha_+ \, \Im M \quad \text{for some non-zero} \quad \alpha_+ \in \C\,.
\end{equation}

The latter case, $\sigma = - \tau = -$, is slightly more involved due to the asymmetry of the two factors in the last summand in the last line of \eqref{eq:cumex}: For the first factor, again a Ward identity is sufficient. In the second factor, we use \eqref{eq:chiral} together with the integral representation \cite[Eq.~(3.14)]{multiG}
\begin{equation*}
	G^* G^* = \int_\R \frac{\Im G(x + \I \eta/2)}{(x + \I \eta/2)^2} \D x\,,
\end{equation*}
similar to Lemma \ref{lem:intrepG^2}, in the approximate form $G^* G^* \sim \Im G/\eta$. This follows (at least as an effective upper bound) by replacing the Cauchy kernel in the integral
\begin{equation*}
	| \langle G^* G^* E_- A^* \rangle | \le	\int_\R  \frac{| \langle \Im G(x + \I \eta/2) E_- A^* \rangle |}{x^2 + (\eta/2)^2} \D x \sim \frac{\Im G(\I \eta)}{\eta}
\end{equation*}
by a $\delta$-distribution.
Overall, this leaves us (roughly) with
\begin{equation}
	\label{eq:variance-+}
	\frac{\E \big| \langle \Im G A E_- \rangle\big|^2}{(N\eta)^2}
\end{equation}
for the second case. Hence, arguing for \eqref{eq:variance-+} completely analogous as done for \eqref{eq:variance++}, we find the second direction $V_-$, to which $A$ has to be orthogonal to, in order to reduce the naive size of \eqref{eq:variance motivation}, namely
\begin{equation} \label{eq:V-variance}
	V_- = \alpha_- \, \Im M E_- \quad \text{for some non-zero} \quad \alpha_- \in \C\,.
\end{equation}

We point out that the first term in \eqref{eq:cumexresult} would have worked in the exact same way also for spectral parameters $w = e + \I \eta$ with $e \neq 0$. However, the second direction $V_-$ would \emph{not} have been visible in this scenario, since the second term in \eqref{eq:cumexresult} would have been replaced by (at least for an upper bound)
\begin{equation*}
	\frac{\E \big| \langle \Im G(e+\I \eta) A E_- \rangle  \big|^2}{N^2 \eta \,  (|e| + \eta)} + \frac{\E  \big| \langle \Im G(e + \I \eta) A E_- \rangle \langle \Im G(-e + \I \eta) E_- A^* \rangle\big|}{N^2 \eta \,  (|e| + \eta)}\,.
\end{equation*}
\subsection{General structural regularisation in \eqref{eq:chain motivation}} \label{subsec:generalchain} We begin with the general rather \emph{structural} regularizing decomposition of a matrix $A$ (recall \eqref{eq:circ}), which shall be conducted as (dropping the tilde, which has been temporarily introduced below \eqref{eq:stabopgeneral})
\begin{equation} \label{eq:circapp}
	\boxed{A^\circ \equiv \mathring{A} := A - \langle V_+, A \rangle U_+ - \langle V_-, A \rangle U_- }
\end{equation}
for some $U_\sigma, V_\sigma \in \mathbf{C}^{2N \times 2N}$ to be determined but subject to the conditions $\langle V_\sigma, U_\tau \rangle = \delta_{\sigma, \tau}$ and $\langle U_\sigma, U_\sigma \rangle  = 1$. We point out, that the following calculations are largely
insensitive to the form of the self-energy operator $\mathcal{S}[\cdot]$  (but see Footnote~\ref{foot1})
and hence the conclusions for $U_\sigma$ and $V_\sigma$ derived in this section are valid beyond our concrete model
(up to the fact that, due to the chiral symmetry \eqref{eq:chiral}, the regularisation involves a \emph{two}-dimensional projection).

The goal of the present subsection is to \emph{show} that $V_\pm$ must be chosen as certain right eigenvectors $R_{m^{(*)}n^{(*)}}$ of \eqref{eq:stabopgeneral}. This follows by expanding \eqref{eq:chain motivation} and identifying several terms, whose size is too big for general deterministic matrices. Now, these terms can be neutralised, if $\langle A_i , R_{m^{(*)}n^{(*)}} \rangle = 0$ for certain right eigenvectors.  However, as already mentioned in Section \ref{sec:proofmain}, for the directions $U_\pm$ there are \emph{a priori} no further constraints or conditions (apart from orthogonality and normalisation). Hence, as it turns out to be convenient for our proofs, we will choose the matrices $U_\sigma$ in such a way, that a resolvent identity, i.e.~the transformation of a product into a difference,
\begin{equation*} %\label{eq:resolventidapp}
	\boxed{G^{\Defo_1}(w_1) U_\sigma G^{\Defo_2}(w_2) \approx \big(G^{\Defo_1}(w_1) - G^{\Defo_2}(\sigma w_2)\big) U_\sigma\,,}
\end{equation*}
can be applied (here, the symbol `$\approx$' neglects lower order terms).
%potentially unbounded \color{red} [G: Maybe is better to say "$N$--dependent constants"? J: Not only $N$, but also when two energies become very close, this would appear in \eqref{eq:resolventidapp}] \normalcolor constants).  
Finally, the condition $\langle V_\sigma, U_\tau \rangle = \delta_{\sigma, \tau}$ will guarantee that the regularisation is idempotent, i.e.~$(\mathring{A})^\circ = \mathring{A}$. Note that our general ansatz \eqref{eq:circapp} is restricted to the non-degenerate situation, where $U_\sigma$ and $V_\sigma$ are non-orthogonal, $\langle V_\sigma, U_\sigma \rangle \sim 1 $. This is guaranteed for our concrete model with deformations $\Defo_1 = \Defo_2$ (see Section \ref{subsec:exactform}) but requires some non-trivial arguments in more general cases.

Although the regularisation is inherently two-dimensional (at least for our model), we also define
\begin{equation*} %\label{eq:circsigma}
	\mathring{A}^\sigma = A^{\circ_\sigma}:= A - \langle V_\sigma, A \rangle U_\sigma\,, \quad \sigma \in \{+,- \}\,,
\end{equation*}
and refer to $A^{\circ_\sigma}$ as the \emph{$\sigma$-regular component} (or \emph{$\sigma$-regularisation}) of $A$ and to $\langle V_\sigma, A \rangle U_\sigma$ as its \emph{$\sigma$-singular component}. Note that $(A^{\circ_+})^{\circ_-} = (A^{\circ_-})^{\circ_+} = \mathring{A}$, since $\langle V_\sigma, U_\tau \rangle = \delta_{\sigma, \tau}$.

%As already mentioned in Section~\ref{sec:proofmain}, the matrices $U_\pm$ are determined by requiring that a resolvent identity 
%\begin{equation} \label{eq:resolventidapp}
%\boxed{	G^{\Defo_1}(w_1) U_\sigma G^{\Defo_2}(w_2) \sim \big(G^{\Defo_1}(w_1) - G^{\Defo_2}(\sigma w_2)\big)U_\sigma}
%\end{equation}
%can be applied, whereas the matrices $V_\pm$ shall each be chosen as \emph{singular} eigenvectors of naturally associated two-body stability operators (see \eqref{eq:Vchoice} and \eqref{eq:Vchoice2}). 

%In the sequel, we refer to $\mathring{A}$ as the \emph{regular component} or \emph{regularisation} of $A$ and to $A - \mathring{A}$ as its \emph{singular component}. For Wigner matrices \cite{bibid}, the singular component was simply the normalised trace $\langle A \rangle$ of a matrix $A$, hence only one term with $U = V = I$ is subtracted. In our case, it is no longer a one-dimensional but an inherently two-dimensional object. However, 

As usual, we use the common notation $\eta_i := |\Im w_i|$ for $i \in [2]$ and abbreviate (see \eqref{eq:sign})
\begin{equation} \label{eq:signapp}
	\mathfrak{s} _i := - \sgn(\Im w_i \Im w_{i+1})\,, \quad i \in [2]\,,
\end{equation}
where the indices are understood cyclically modulo $2$ (cf.~Definition \ref{def:regobs}). This means that, in particular, $\mathfrak{s}_1 = \mathfrak{s}_2$ due to the short length of the chain \eqref{eq:chain motivation}. In the following, we will drop the arguments by writing, e.g.,~$M_1 = M^{\Defo_1}(w_1)$ and $G_2 = G^{\Defo_2}(w_2)$.
Moreover, we take $A_1 = \mathring{A}_1$ and $A_2 = \mathring{A}_2$ to be regular, i.e.~orthogonal to some yet to be specified $V_\pm$.

Now, by means of
\begin{equation*}
	G_1 = M_1 - M_1 \underline{W G_1} + M_1 \mathcal{S}[G_1-M_1] G_1\,,
\end{equation*}
we immediately find
\begin{equation*}
	G_1 A_1 G_2 = M_1 A_1 G_2 - M_1 \underline{W G_1} A_1 G_2 + M_1 \mathcal{S}[G_1 - M_1] G_1 A_1 G_2\,,
\end{equation*}
from which we conclude that
\begin{align*}
	{\mathcal{B}}_{12}[G_1 A_1 G_2] = \  & M_1 A_1 M_2 + M_1 A_1 (G_2 - M_2) - M_1 \underline{WG_1 A_1 G_2}                       \\
	                                     & + M_1 \mathcal{S}[G_1 - M_1] G_1 A_1 G_2 + M_1 \mathcal{S}[G_1 A_1 G_2] (G_2 - M_2)\,.
\end{align*}
%where ${\mathcal{B}}_{12} \equiv {\mathcal{B}}(z,z,w_1,w_2)$ (see \eqref{eq:stabop}). 
This implies
\begin{align*}
	\langle (G_1 A_1 G_2 - M_{12}^{A_1}) A_2 \rangle  =  \  & \langle M_1 A_1 (G_2 - M_2) {\mathcal{X}}_{21}[A_2] \rangle  - \langle M_1 \underline{WG_1 A_1 G_2} {\mathcal{X}}_{21}[A_2] \rangle \\
	                                                        & + \langle M_1 \mathcal{S}[G_1 - M_1] G_1 A_1 G_2 {\mathcal{X}}_{21}[A_2] \rangle                                                    \\
	                                                        & + \langle M_1 \mathcal{S}[G_1 A_1 G_2] (G_2 - M_2) {\mathcal{X}}_{21}[A_2] \rangle
\end{align*}
where we defined
\begin{equation} \label{eq:det approx mot}
	M_{12}^{A_1}:= {\mathcal{B}}_{12}^{-1}[M_1 A_1 M_2] = M_1 \mathcal{X}_{12}[A_1] M_2 = M(w_1, A_1, w_2)
\end{equation}
(recall \eqref{eq:Mexample} and Definition \ref{def:Mdef}) and used the shorthand notation
\begin{equation*} %\label{eq:XDBrel}
	{\mathcal{X}}_{mn}[B] = \big(({\mathcal{B}}_{nm}^*)^{-1}[B^*]\big)^* = ({\mathcal{B}}_{m^*n^*}^{-1})^*[B]\,, \quad B \in \C^{2N \times 2N}\,.
\end{equation*}
The adjoint of ${\mathcal{B}}_{nm}$ is understood with respect to the standard (normalised) inner product $\langle S, T \rangle := \langle S^* T \rangle$ for  $S, T \in \mathbf{C}^{2N \times 2N}$, which is given by
\begin{equation} \label{eq:stabop adjoint}
	{\mathcal{B}}^* \equiv {\mathcal{B}}^*(w_1,w_2) [\cdot] := 1-\mathcal{S}[(M(w_1))^* \, \cdot \,  (M(w_2))^*]\,.
\end{equation}

So far, the regularisation of $A_1$ and $A_2$ has been rather \emph{structural}. To make it more concrete, we must allow $V_\sigma$ and $U_\sigma$ to be potentially different depending on which of the $A_i$ is regularised. In order to do so, we also temporarily
introduce the additional index $i$, referring to the considered $A_i$. That is, we will write $V_{\sigma, i}$ instead of $V_\sigma$.

The matrices $V_{\mathfrak{s}_i,i}$ (recall \eqref{eq:signapp} for the definition of $\mathfrak{s}_i$) shall be determined by requiring that
\begin{equation*}
	\Vert M_{12}^{A_1}\Vert  = \Vert {M_1 \mathcal{X}}_{12}[ A_1 ]M_2\Vert \lesssim \Vert A_1 \Vert  \quad  \text{for}\ \ i=1 \quad \text{and} \quad  \Vert {\mathcal{X}}_{21}[A_2] \Vert \lesssim \Vert A_2 \Vert \quad  \text{for} \ \ i=2\,,
\end{equation*}
meaning that the (adjoint of the) stability operator has a bounded inverse on regular observables (i.e.~subtracting the $\mathfrak{s}_i$-singular component amounts to removing the `bad direction' of the stability operators $ \mathcal{X}_{12}$ and $\mathcal{X}_{12}$, respectively). From this condition, we find the characterisation of $V_{\mathfrak{s}_1,1}$ and $V_{\mathfrak{s}_2,2}$, namely
\begin{equation} \label{eq:Vchoice}
	\boxed{ V_{\mathfrak{s}_1,1} = R_{1^*2^*} = (R_{21})^* \qquad \text{and} \qquad   V_{\mathfrak{s}_2,2} = R_{2^* 1^*} = (R_{12})^*\,,}
\end{equation}
up to a normalisation constant, which can be specified only after determining $U_\sigma$ (recall that $\langle V_\sigma, U_\tau \rangle = \delta_{\sigma, \tau}$ and $\langle U_\sigma, U_\sigma \rangle  = 1$). Recall from \eqref{eq:stabopgeneral}, that we denote by $R_{m^{(*)}n^{(*)}}$ and $L_{m^{(*)}n^{(*)}}$ the (normalised) right and left eigenvectors of ${\mathcal{B}}_{m^{(*)}n^{(*)}}$ corresponding to the (potentially) \emph{critical eigenvalue} ${\beta}_{m^{(*)}n^{(*)}}$.
%(see \eqref{eq:crit triple} and recall \eqref{eq:close to crit}), i.e., neglecting the subscripts, 
%\begin{equation*}
%{\mathcal{B}}[R] = \beta R \quad \text{and} \quad {\mathcal{B}}^* [L] = \bar{\beta} L
%\end{equation*}
%It is shown in Appendix \ref{app:stabop}, that, for our concrete model and in the parameter regime \eqref{eq:close to crit}, there exists exactly one critical (i.e.~in modulus small) eigenvalue $\beta$ of $\mathcal{B}$. 

Indeed, in order to verify that \eqref{eq:Vchoice} is the right choice for $V_{\mathfrak{s}_i,i}$, we use the decomposition
\begin{equation} \label{decompX}
	\mathcal{X}_{mn} = 	({\mathcal{B}}_{m^{*}n^{*}}^{-1})^* = \frac{1}{\bar{\beta}_{m^{*}n^{*}}} \ket{L_{m^{*}n^{*}}} \bra{R_{m^{*}n^{*}}} + \mathcal{O}(1)\,,
\end{equation}
where $\mathcal{O}(1)$ is a shorthand notation for a linear operator
$\mathcal{E}:\C^{2N \times 2N} \to \C^{2N \times 2N}$ satisfying $\Vert \mathcal{E}[B] \Vert \lesssim \Vert B \Vert$.
This linear operator is represented by a contour integration of the form
$$
	\frac{1}{2\pi \ii}  \oint \frac{\D z}{z- {\mathcal{B}}^*_{m^{*}n^{*}}}
$$
where the contour encircles all non-critical eigenvalues of ${\mathcal{B}}^*_{m^{*}n^{*}}$
and remains at an order one distance from the entire spectrum. Note that for general non-Hermitian operators
the resolvent $(z- {\mathcal{B}}^*_{m^{*}n^{*}})^{-1}$ would not necessarily be bounded (independently  of $N$)
just because $z$ is well away from the eigenvalues. However, the explicit form of $\mathcal{S}$ (see~\eqref{Sop})
implies\footnote{This is the only place in Section~\ref{subsec:generalchain} where the special form of  $\mathcal{S}$ is currently used.
	For more general $\mathcal{S}$ operator an appropriate generalisation of the symmetrised (saturated)
	self-energy operator~\cite[Def. 4.5]{firstcorr} to two different spectral parameters is needed, see~\cite[Eq. (2.30)]{LandonLopattoSosoe}
	in the commutative case.
	\label{foot1}}
that ${\mathcal{B}}^*_{m^{*}n^{*}}= 1 + T$ where $T$ is a rank-two operator. For such operators
elementary linear algebra shows that
$$
	\left\| \frac{1}{z- {\mathcal{B}}^*_{m^{*}n^{*}}} \right\| \lesssim
	\big[\mbox{dist}\big(z, \mbox{Spec} ({\mathcal{B}}^*_{m^{*}n^{*}} )\big) \big]^{-2},
$$
i.e. the non-Hermitian instability only affects a two-dimensional subspace.

Using~\eqref{decompX} we
find
\begin{equation*}
	\mathcal{X}_{12}[\mathring{A}_1^{\mathfrak{s}_1}] =\frac{1}{\bar{\beta}_{1^{*}2^{*}}} \big( \langle R_{1^*2^*}, A_1\rangle - \langle V_{\mathfrak{s}_1,1}, A_1 \rangle \langle R_{1^* 2^*}, U_{\mathfrak{s}_1, 1} \rangle   \big)L_{1^*2^*}+ \mathcal{O}(1)[A_1]
\end{equation*}
for the decomposition of $A_1$ and
\begin{equation*}
	{\mathcal{X}}_{21}[\mathring{A}_2^{\mathfrak{s}_2}] = \frac{1}{\bar{\beta}_{2^{*}1^{*}}} \big(    \langle R_{2^*1^*}, A_2 \rangle - \langle V_{\mathfrak{s}_2,2}, A_2 \rangle \langle R_{2^*1^*}, U_{\mathfrak{s}_2, 2} \rangle\big) L_{2^*1^*} + \mathcal{O}(1)[A_2]\,,
\end{equation*}
for the decomposition of $A_2$. This implies that for $\big(\cdots\big)$ to be vanishing for every $\mathring{A}_i^{{\mathfrak{s}_i}}$, the matrix $V_{\mathfrak{s}_i,i}$ has to be chosen according to \eqref{eq:Vchoice} (recall $\langle V_{\sigma,i}, U_{\tau,i} \rangle = \delta_{\sigma, \tau}$).\footnote{\label{ftn:tolerance}In case that $\Defo_1 = \Defo_2$, by the lower bound \eqref{eq:beta pm lowerbound}, the choices in \eqref{eq:Vchoice} not necessarily have to be made \emph{exact}, but tolerate an error of the order given in the rhs.~of \eqref{eq:beta pm lowerbound}. Having such a  tolerance might be important if one treats the $\Defo_1 \neq \Defo_2$ case (contrary to $\Defo_1 = \Defo_2$ as done in this paper) and still has to satisfy the constraints $\langle V_\sigma, U_\tau \rangle = \delta_{\sigma, \tau}$ and $\langle U_\sigma, U_\sigma \rangle  = 1$.} Overall, subtracting the $\mathfrak{s}_i$-singular component already accounts for removing the `bad direction' of a involved stability operator and thus -- in particular -- reduces the naive size of the deterministic approximation \eqref{eq:det approx mot}.

However, removing the $\mathfrak{s}_i$-singular component is not sufficient: Although $\langle V_{\mathfrak{s}_i,i}, U_{-\mathfrak{s}_i,i} \rangle = 0$ and thus $U_{-\mathfrak{s}_i,i}$ is $\mathfrak{s}_i$-regular, we observe that
\begin{equation} \label{eq:chain motivation 2}
	\langle G_1 U_{-\mathfrak{s}_1, 1} G_2 U_{-\mathfrak{s}_2, 2}\rangle
\end{equation}
still (potentially) has large fluctuations: In our concrete i.i.d.~model, take $z \equiv z_1 = z_2$ (to be suppressed from the notation) and $w \equiv w_1 = -w_2$ with $e = \Re w_1$ and $ \eta = \Im w_1 > 0$ w.l.o.g., which implies that $\mathfrak{s}_1 = \mathfrak{s}_2 = +$ and $U_\sigma = E_\sigma$ for $\sigma = \pm$ (see the discussion below \eqref{eq:resolventid}). In this situation, we use \eqref{eq:chiral} and thus \eqref{eq:chain motivation 2} takes the form
\begin{equation*}
	\langle G(e+\I \eta) E_- G(-e - \I\eta) E_- \rangle = - \langle G(e+ \I\eta ) G(e+\I \eta) \rangle\,.
\end{equation*}
By construction of $V_{\mathfrak{s}_i, i}$, the corresponding deterministic approximation \eqref{eq:det approx mot} is bounded by one, but this is dominated by the fluctuation of order $1/(N \eta^2)$ in the relevant small regime $\eta \sim  N^{-1+\epsilon}$. This example shows again, what we have already established in Section \ref{subsec:variance}: For our concrete model, at least close to the imaginary axis, the regularisation \eqref{eq:circ} is \emph{necessarily a two-dimensional operation}.

For determining the other directions $V_{-\mathfrak{s}_i, i}$, we note that the regularisation should be designed in such a way, that it covers also the cases where one (or both) of the resolvents $G_1, G_2$ are taken as an adjoint (see, e.g., \eqref{eq:twoterms1av} and \eqref{eq:finfirststep}). Hence, requiring that the same arguments leading to \eqref{eq:Vchoice} should also be followed for (i) $\langle G_1 A_1 G_2^* A_2 \rangle$ and (ii) $\langle G_1^* A_1 G_2 A_2 \rangle$ (considering $\langle G_1^* A_1 G_2^* A_2 \rangle$ would again lead to a conclusion for $V_{\mathfrak{s}_i, i}$ as the relative sign of imaginary parts is preserved), we find that $V_{-\mathfrak{s}_1, 1} = (R_{2^*1})^*$ and $V_{-\mathfrak{s}_2, 2} = (R_{12^*})^*$ in case (i), and $V_{-\mathfrak{s}_1, 1} = (R_{21^*})^*$ and $V_{-\mathfrak{s}_2, 2} = (R_{1^*2})^*$ in case~(ii). In general, the right eigenvectors for these two cases are not the same. However, as pointed out in Footnote~\ref{ftn:tolerance}, there is a certain \emph{tolerance} in choosing the $V_\pm$. Therefore, within this tolerance and in order to have a consistent and conceptually simple choice, we take $V_{-\mathfrak{s}_1, 1}$ from case (i) and $V_{-\mathfrak{s}_2, 2}$ from case~(ii), i.e.
\begin{equation} \label{eq:Vchoice2}
	\boxed{	V_{-\mathfrak{s}_1,1} = R_{1^*2} = (R_{2^*1})^* \qquad \text{and} \qquad   V_{-\mathfrak{s}_2,2} = R_{2^* 1} = (R_{1^*2})^*\,.}
\end{equation}
Here, in both situations the spectral parameter being the \emph{right} neighbour of $A_i$ receives a complex conjugate. In comparison, if we took $V_{-\mathfrak{s}_1, 1}$ from case (ii) and $V_{-\mathfrak{s}_2, 2}$ from case (i), we would have ended up with the alternative regularisation from Footnote \ref{ftn:alternreg}, where the \emph{left} neighbor of $A_i$ received a complex conjugate.
%shall be determined by requiring that removing the $(-\sigma_i)$-singular part reduces the size of fluctuations. 
%Using the notation introduced below \eqref{eq:Vchoice}, this is achieved by taking them to be given by
Again, the relations in \eqref{eq:Vchoice2} are understood up to a normalizing constant, which can be specified only after determining $U_\sigma$.

Now, it is very important to observe that, for our concrete model with $\Defo_1 = \Defo_2$ and $w_1 = w_2 = \I \eta$ (in particular, $\mathfrak{s}_1 = \mathfrak{s}_2 = -$), our choices for $V_\pm$ in \eqref{eq:Vchoice} and \eqref{eq:Vchoice2} \emph{agree} with those in \eqref{eq:V+variance} and \eqref{eq:V-variance} obtained from a variance calculation with only a single resolvent. This follows from the explicit formulas for the critical right eigenvector given later in \eqref{eq:crit triple}, Lemma \ref{lem:Mbasic}~(a), and \eqref{eq:chiralM}

%In order to see this, we take the main (simplified) fluctuating term (see Lemmas \ref{lem:underlined1}, \ref{lem:underlined2}, \ref{lem:underlined3}, and \ref{lem:underlined4})
%\begin{equation*}
%\langle \underline{WG_1 \mathring{A}_1 G_2 \mathring{A}_2} \rangle
%\end{equation*}
%in the expansion of \eqref{eq:chain motivation} and compute its variance. 
%{\color{red}[Fill in a sketch of the variance calculation, in which some $G_i^*$ appear, so it indicates that the choice in \eqref{eq:Vchoice2} is a `good' one. Also elaborate on the option to put the $^*$ in \eqref{eq:Vchoice2} on the other index (no difference in critical cases, linear perturbation, which can be afforded). ]}
%The purpose of removing also the $-$-trace is to reduce the size of fluctuations. Characterizing properties for the $U_{\sigma, i}$ shall be determined below. 
%Although the focus lies on the situation where $z_1 = z_2 = z$, we will perform the decomposition in the general $\langle  G_1 A_1 G_2 A_2 \rangle$--case as far as possible, where we denoted $G_i := G^{z_i}(w_i)$ for $i \in [2]$. In particular, we allow for possibly different $z_1, z_2$ in the following and specialise to the case $z_1 = z_2$ after \eqref{eq:fin}. 

\subsection{Explicit formulas for our concrete model and $\Defo_1 = \Defo_2$} \label{subsec:exactform}
In this subsection, we will give explicit formulas for $V_{\pm}$ and $U_\pm$ for our concrete model with one fixed deformation $\Defo$. In fact, for $\Defo_1 = \Defo_2$, the so far unspecified matrices $U_\sigma$ can be characterised by requiring that, jointly with the symmetry relation $ E_- G^z(-w) E_- = - G^z(w)$, a {resolvent identity (see \eqref{eq:resolid} for the standard resolvent identity)} can be applied to $G_2 U_\sigma G_1$. This yields, together with the normalisation $\langle U_\sigma , U_\sigma \rangle = 1$, that\footnote{Note that the assignment of $\pm$ is \emph{a priori} not determined, but we chose it in that way. This is also reflected in \eqref{eq:Vchoice} and \eqref{eq:Vchoice2}.}
\begin{equation*} \label{eq:U choice}
	U_+ = E_+ \quad \text{and} \quad U_- = E_-\,.
\end{equation*}

The singular (or critical) eigenvectors of the stability operators characterizing $V_{\mathfrak{s}_i, i}$ can also be explicitly calculated. Using \eqref{eq:Vchoice} and \eqref{eq:Vchoice2}, we infer, by means of \eqref{eq:crit triple} and the normalisation/orthogonality condition $\langle V_{\sigma,i}, U_{\tau,i} \rangle = \delta_{\sigma, \tau}$, that
\begin{equation} \label{eq:Vpmexplicit}
	\begin{alignedat}{2}
		V_{\mathfrak{s}_1,1} &= \frac{M_2 E_{\mathfrak{s}_1}M_1}{\langle M_2 E_{\mathfrak{s}_1}M_1 E_{\mathfrak{s}_1}\rangle}\,, \qquad &&V_{-\mathfrak{s}_1,1} = \frac{M_2^* E_{-\mathfrak{s}_1} M_1}{\langle M_2^* E_{-\mathfrak{s}_1} M_1 E_{-\mathfrak{s}_1} \rangle }\,,   \\  V_{\mathfrak{s}_2,2} &= \frac{M_1E_{\mathfrak{s}_2}M_2}{\langle M_1 E_{\mathfrak{s}_2}M_2E_{\mathfrak{s}_2} \rangle}\,, \qquad &&V_{-\mathfrak{s}_2,2} = \frac{M_1^* E_{-\mathfrak{s}_2}M_2}{\langle M_1^* E_{-\mathfrak{s}_2} M_2 E_{-\mathfrak{s}_2} \rangle }\,,
	\end{alignedat}
\end{equation}
matching the definition of the regularisation given in \eqref{eq:circ def} and \eqref{eq:reg A1A2}. The normalisation is obvious and the orthogonality readily follows from \eqref{eq:chiralM} in combination with Lemma \ref{lem:Mbasic}.

Finally, we remark that in order to define the regularisation \eqref{eq:reg A1A2} and work with \eqref{eq:Vchoice} and \eqref{eq:Vchoice2}, it is \emph{not} necessary to have the explicit forms for $V_{\sigma, i}$ at hand. Instead, the single instance of relevant \emph{explicit} formulas is the proof of Theorem \ref{thm:main}, more precisely, the bound in Proposition~\ref{prop:2Gav}, where one needs that for $| \Im w_1| \sim N^{-1+\epsilon}$, e.g., $(R_{1^*1})^*$ is close to $\Im M_1$  (up to a normalisation). But this is true beyond our model, as easily follows after taking the imaginary part of the general \emph{matrix Dyson equation} (see \cite{MDEreview})
\begin{equation*}
	-\frac{1}{M} = w - A + \mathcal{S}[M]\,, \qquad \Im w \cdot\Im M > 0
\end{equation*}
with self-adjoint \emph{matrix of expectations} $A = A^*$ and (flat) \emph{self-energy operator} $\mathcal{S}[\cdot]$. In fact, this yields
\begin{equation*}
	\big(1 - M \mathcal{S}[\cdot]M^*\big) (\Im M) = ( \Im w) \, M M^* \,,
\end{equation*}
i.e.~for $|\Im w| \ll 1$ very small, $\Im M$ is an approximate right eigenvector of the stability operator $1 - M \mathcal{S}[\cdot]M^*$ corresponding to the \emph{critical} eigenvalue (recall the discussion below \eqref{eq:stabopgeneral}).

\section{Properties of the MDE and the stability operator: Proof of Lemma~\ref{lem:regularbasic}} \label{app:stabop}

In the first part of this appendix, we derive several elementary properties of the MDE
\begin{equation} \label{eq:MDEapp}
	- \frac{1}{M} = w - \hat{\Defo} + \mathcal{S}[M]\,, \qquad w \in \C\setminus \R\,,
\end{equation}
(recall \eqref{eq:MDE}) and its unique solution $M$ (under the usual constraint $\Im M \cdot \Im w > 0$)
where the  operator $\mathcal{S}$ was given in~\eqref{Sop} and $\hat{\Defo} \in \C^{2N \times 2N}$ is from \eqref{eq:Defohat}.
Afterwards, in the second part, we turn to the associated two-body stability operator
\begin{equation} \label{eq:stabop}
	{\mathcal{B}} \equiv {\mathcal{B}}(w_1,w_2)[\cdot] := 1-M(w_1) \mathcal{S}[\cdot] M(w_2)
\end{equation}
and its adjoint ${\mathcal{B}}^*$ (see \eqref{eq:stabop adjoint}).
Moreover, we also explain the relation between the regularisation from Definition \ref{def:reg obs1} and the stability operator.

Finally, after proving and combining Lemmas \ref{lem:MDE} and \ref{lem:Mbasic} with Lemma \ref{lem:boundedpert}
on $M$ and  $\mathcal{B}$, respectively, we will complete the proof of Lemma \ref{lem:regularbasic}.

\subsection{The Matrix Dyson Equation \eqref{eq:MDEapp} and its solution}\label{sec:A1}

Existence and uniqueness of the solution $M=M(w)$ to \eqref{eq:MDEapp}
with $\Im M \cdot \Im w > 0$ has already been shown in
\cite{Helton}.
By \cite[Prop. 2.1]{firstcorr}, this solution can also be represented as the Stieltjes transform
of a compactly supported semi-definite
matrix-valued probability measure on $\R$, which has the immediate consequence that $\Vert M(w)\Vert \le |\Im w|^{-1}$.
\begin{lemma} \label{lem:MDE}
	Let $M$ be the unique solution to \eqref{eq:MDEapp} and write its $2\times 2$-block representation as
	\begin{equation} \label{eq:Mapp}
		M = \begin{pmatrix}
			M_{11} & M_{12} \\
			M_{21} & M_{22}
		\end{pmatrix}\,.
	\end{equation}
	%being the unique solution to~\eqref{eq:selfconsm}  such that $\Im m(w)\cdot \Im w > 0$.
	Then we have the following:
	\begin{itemize}
		\item[(a)]  The average trace $\langle M\rangle$ coincides with the solution $m$ of~\eqref{eq:selfconsm}, $\langle M(w)\rangle= m(w)$,
			and the blocks in~\eqref{eq:Mapp} are given by~\eqref{eq:Mu}--\eqref{eq:mde}. We have $M^*(w)=M(\bar w)$.
			%The blocks of the solution \eqref{eq:Mapp} are given by 
			%\begin{alignat*}{2}
			%- \frac{1}{M_{11}} &= w+\langle M \rangle - \frac{\Defo \Defo^*}{w+\langle M \rangle}\,, \qquad - \frac{1}{M_{22}} &&= w+\langle M \rangle - \frac{\Defo^* \Defo}{w+\langle M \rangle}\,, \\[2mm]
			%M_{12} &= \frac{\Defo M_{22}}{w+\langle M \rangle} = \frac{M_{11}\Defo}{w+\langle M \rangle}\,, \qquad M_{21} &&= \frac{\Defo^* M_{11}}{w+\langle M \rangle} = \frac{M_{22}\Defo^*}{w+\langle M \rangle}\,.
			%\end{alignat*}
			%where, for the moment, we set $m := \langle M \rangle$. 
		\item[(b)] The solution has a continuous extension to the real line from the upper half plane, denoted by
			$M(e): = \lim_{\eta\downarrow 0} M(e+\ii \eta)$; the limit from the lower half plane is $M^*(e)$.
			The self-consistent density of states of the MDE, defined as $\rho(e)=\frac{1}{\pi}\langle \Im M(e)\rangle$, is identical to the
			free convolution of $\mu_{\hat\Defo} \boxplus\mu_{sc}$ from~\eqref{freec}. Both $\rho$  and its Stieltjes transform $m$
			are H\"older continuous with a small universal exponent $c$, i.e.
			$$
				|\rho(e_1)-\rho(e_2)|\le C|e_1-e_2|^c, \qquad e_1, e_2\in \R,
			$$
			and
			\begin{equation}\label{HC}
				|m(w_1)-m(w_2)|\le C' |w_1-w_2|^c, \qquad w_1, w_2\in \C_+,
			\end{equation}
			%for any $w_1, w_2\in \C_+$ 
			where $C, C'$ depend only  on $\|\Lambda\|$.

			%The traces of the diagonal blocks are the same, $\langle M_{11} \rangle = \langle M_{22} \rangle$, and, moreover, they equal $m$, the Stieltjes transform of $\rho$ as defined in \eqref{eq:mdef} via free convolution.
		\item[(c)] We have the chiral symmetry
			\begin{equation} \label{eq:chiralMapp}
				E_- M(w)= - M(-w) E_- \,.
			\end{equation}
			In particular, for purely imaginary spectral parameter, $w = \I \Im w $, it holds that $m = \I \Im m$ as well as $M_{11} = \I \Im M_{11}$ and $M_{22} = \I \Im M_{22}$. Moreover, the off-diagonal blocks of $\Im M$ are vanishing on the imaginary axis.
		\item[(d)] Fix $\kappa > 0$. For any spectral parameter in the $\kappa$-bulk, $w \in \C \setminus \R$ with $\Re w \in \mathbf{B}_\kappa$,
			we have %the solution \eqref{eq:Mapp} is bounded,
			\begin{equation} \label{eq:Mbounded}
				\Vert M(w)\Vert \le C(\kappa, \|\Lambda\|)\,
			\end{equation}
			for some constant depending only on $\kappa$ and an upper bound on the norm $\|\Lambda\|$.
			Moreover, $\rho(e)$ is real analytic on $\mathbf{B}_\kappa$ with derivatives controlled uniformly
			\begin{equation}\label{realanal}
				\max\{ |\partial^k \rho(e)| \; : \;  e\in\mathbf{B}_\kappa\} \le C(k,\kappa, \|\Lambda\|)
			\end{equation}
			with a constant $C(k,\kappa, \|\Lambda\|)$ for any $k\in\N$.
	\end{itemize}
\end{lemma}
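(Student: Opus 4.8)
\textbf{Proof plan for Lemma~\ref{lem:MDE}.}

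The plan is to treat the four parts roughly in order, since each relies on the previous. For part (a), the key observation is that the ansatz~\eqref{eq:Mu}--\eqref{eq:mde} for $M$ in terms of the scalar $m$ solving~\eqref{eq:selfconsm} can be verified by direct substitution into~\eqref{eq:MDEapp}: using the explicit form of $\mathcal{S}$ from~\eqref{Sop} one computes $\mathcal{S}[M] = \langle M_{11}\rangle E_2 + \langle M_{22}\rangle E_1$, and the identity $\langle M_{11}\rangle = \langle M_{22}\rangle = m$ (immediate from~\eqref{eq:mde}) reduces the MDE to the scalar relation~\eqref{eq:selfconsm}. Uniqueness of the solution to the MDE (cited from~\cite{Helton}) then forces $M$ to be given by~\eqref{eq:Mu}; the identity $M^*(w) = M(\bar w)$ follows from uniqueness applied to the equation obtained by taking adjoints of~\eqref{eq:MDEapp}, using $\mathcal{S}[T^*] = \mathcal{S}[T]^*$ and $\hat\Defo^* = \hat\Defo$.

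For part (b), the existence of the boundary limit $M(e)$ and the identification of $\rho$ with the free convolution $\mu_{\hat\Defo}\boxplus\mu_{\rm sc}$ is essentially a restatement of classical results of Biane~\cite{biane} (already cited in Section~\ref{sec:first}), combined with~\cite[Prop.~2.1]{firstcorr} which represents $M$ as a Stieltjes transform of a matrix-valued measure; the continuity extension statement in~\eqref{eq:scDos} is quoted there as standard from~\eqref{eq:selfconsm}. For the H\"older continuity I would invoke the general regularity theory for the MDE with flat (or, here, block-flat) self-energy operator: the estimates of~\cite{1506.05095, firstcorr} give a universal H\"older exponent for $\langle \Im M\rangle$ up to the real axis, from which~\eqref{HC} follows by the Stieltjes transform representation. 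Since $\mathcal{S}$ here is not strictly flat because of the zero diagonal blocks, a small subtlety is that flatness must be replaced by the block-flatness used for Hermitisations (cf.~\cite{1804.07752}); I would state this and refer to the cited works rather than reprove the regularity bounds.

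Part (c), the chiral symmetry~\eqref{eq:chiralMapp}, is the cleanest: define $\widetilde M(w) := -E_- M(-w)E_-$, verify using $E_-\hat\Defo E_- = -\hat\Defo$ and $E_-\mathcal{S}[T]E_- = \mathcal{S}[E_- T E_-]$ (both immediate from~\eqref{Sop} and~\eqref{eq:Defohat}) that $\widetilde M$ satisfies the MDE~\eqref{eq:MDEapp} with the correct sign of the imaginary part, and conclude $\widetilde M = M$ by uniqueness. The consequences on the imaginary axis ($m = \I\,\Im m$, the diagonal blocks purely imaginary, off-diagonal blocks of $\Im M$ vanishing) follow by combining~\eqref{eq:chiralMapp} at $w = \I\eta$ with $M^*(w) = M(\bar w)$ from part~(a) and the explicit block form~\eqref{eq:Mu}. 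For part (d), the uniform bound~\eqref{eq:Mbounded} in the $\kappa$-bulk will be obtained from the scalar relation: on $\mathbf{B}_\kappa$ one has $\Im m = \pi\rho \gtrsim \kappa^{1/3}$ by definition~\eqref{eq:bulk}, and then the denominators $\Lambda\Lambda^* - (w+m)^2$ and $\Lambda^*\Lambda - (w+m)^2$ in~\eqref{eq:mde} are bounded away from zero uniformly (their imaginary parts are comparable to $\Im(w+m)\cdot \Re(\,\cdot\,)$ which is bounded below since $\Im m\gtrsim 1$ and $\|\Lambda\|\lesssim 1$), yielding $\|M(w)\|\lesssim 1$. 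For real analyticity of $\rho$ and the derivative bounds~\eqref{realanal}, I would use that on $\mathbf{B}_\kappa$ the scalar self-consistent equation~\eqref{eq:selfconsm} is a relation $F(m,e) = 0$ with $\partial_m F$ bounded away from zero (this is exactly the statement that the stability operator is invertible in the bulk, established separately in Appendix~\ref{app:stabop}/Lemma~\ref{lem:eigendecomp}), so the implicit function theorem gives analyticity of $m$ and hence of $\rho = \frac1\pi\Im m$, with derivative bounds following by Cauchy estimates from a uniform lower bound on $\partial_m F$ and uniform upper bounds on higher derivatives of $F$.

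The main obstacle is part (d): establishing~\eqref{eq:Mbounded} and especially the uniform derivative bounds~\eqref{realanal} requires that $\partial_m F$ stay bounded away from zero on all of $\mathbf{B}_\kappa$ with a constant depending only on $\kappa$ and $\|\Lambda\|$, which is a quantitative non-degeneracy of the self-consistent equation in the bulk. This is precisely the content of the spectral analysis of the stability operator carried out in Appendix~\ref{app:stabop} (Lemma~\ref{lem:eigendecomp}), so the cleanest route is to prove Lemma~\ref{lem:eigendecomp} first (or in parallel) and then harvest~\eqref{eq:Mbounded} and~\eqref{realanal} as corollaries; the remaining parts (a)--(c) are essentially bookkeeping with the explicit formulas and uniqueness of the MDE.
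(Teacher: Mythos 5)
Your plan is broadly the same as the paper's, and parts (a) and (c) are essentially correct (your chiral-symmetry argument via uniqueness of the MDE is actually cleaner than the paper's route, which first establishes $m(w) = -m(-w)$ from the symmetry of $\rho$ and then reads off~\eqref{eq:chiralMapp} from the explicit block formulas~\eqref{eq:Mu}--\eqref{eq:mde}). However, there are two genuine gaps in your treatment of part (b).

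First, your resolution of the non-flatness of $\mathcal{S}$ is vague and, as stated, unlikely to go through. You appeal to ``block-flatness'' and say you would cite~\cite{1504.05095, firstcorr, 1804.07752}, but those references develop the regularity theory under the genuine flatness condition~\eqref{eq:flatness}, which $\mathcal{S}[T] = \sum_\sigma \sigma\langle TE_\sigma\rangle E_\sigma$ violates. The paper's fix is sharper and worth internalizing: one observes that $\mathcal{S}[M] = \langle M\rangle$ (using $\langle M_{11}\rangle = \langle M_{22}\rangle$), so $M$ also solves the MDE for the \emph{deformed Wigner} model $W + \hat\Defo$ whose self-energy is $T \mapsto \langle T\rangle$, and this latter operator \emph{is} flat. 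For the purpose of computing $M$ and $m$, one can therefore import the entire regularity theory for flat MDEs verbatim. This is not the same as invoking some weaker ``block-flatness''; it is an exact reduction to a flat model.

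Second, you do not explain how to extend the full matrix $M$ (not just its normalized trace $m$) continuously to the real line. The representation from~\cite[Prop.~2.1]{firstcorr} gives boundedness of the Stieltjes-transform representation but not automatically a uniform modulus of continuity for $M$. The paper supplies a separate argument: if $\|M\|_I := \sup\{\|M(e+\I\eta)\| : e\in I, \eta > 0\} < \infty$, then $M(w) = (\hat\Defo - w - m)^{-1}$ together with the resolvent identity gives $\|M(w_1) - M(w_2)\| \le \|M\|_I^2 |w_1 - w_2|$, so $M$ extends by Lipschitz continuity. In the bulk, $\|M\|_I \lesssim \rho^{-2}$ follows from $\Im M = (\Im w + \langle\Im M\rangle) M M^*$. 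Near the edges one needs the cruder \emph{a priori} bound obtained from the spectral decomposition of $\hat\Defo$: from the imaginary part of~\eqref{eq:selfconsm} one gets $|\nu_i - w - m| \ge (2N)^{-1/2}$ for all singular values $\nu_i$, hence $\|M(w)\| \le (2N)^{3/2}$. This $N$-dependent bound suffices for the continuous extension, though it is of course far from uniform in $N$ and is not used for any quantitative estimate. Your plan skips this step. Finally, for part (d) the paper's route is simpler than what you propose: the bound $\|M(w)\| \le (\Im m + \Im w)^{-1}$ follows directly from $\Im M = (\Im w + \langle\Im M\rangle)MM^*$ together with $\|MM^*\| = \|M\|^2$ and $\|\Im M\|\le\|M\|$, and for analyticity of $\rho$ one just differentiates the scalar equation~\eqref{eq:selfconsm} using the lower bound on $\Im m$; invoking the full stability analysis of Appendix~\ref{app:stabop} is more machinery than needed.
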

\begin{proof}
	For part (a),  a direct computation shows that $M$ from \eqref{eq:Mapp} with the  blocks given
	in~\eqref{eq:Mu}--\eqref{eq:mde}   indeed solves \eqref{eq:MDEapp}
	if $m$ is replaced with $\langle M \rangle$ in these formulas. The calculation uses the simple  observation
	that $\langle M_{11} \rangle = \langle M_{22}\rangle$ from~\eqref{eq:mde}, hence $\mathcal{S}[M]=\langle M \rangle $.
	Furthermore, the MDE also implies that $\langle M \rangle$ solves~\eqref{eq:selfconsm}, but this
	equation has a unique solution by the theory of free convolutions with a semicircular density, hence $m= \langle M \rangle$.
	Finally $M^*(w)=M(\bar w)$ follows from $\bar m(w)= m(\bar w)$. This
	proves (a). % since the solution to \eqref{eq:MDEapp} is unique. %and hence, using the \emph{global law} in \cite[Theorem~2.1]{slowcorr} and the usual \emph{meta argument} (see also Appendix \ref{app:Mbound}), also $\mathcal{S}[M] = \langle M \rangle$, i.e., in particular, $\langle M_{11} \rangle = \langle M_{22}\rangle$. {\color{red}The fact that $\langle M \rangle = m$, the Stieltjes transform of the scDos $\rho$, is a direct consequence of the general free probability theory.} 

	For part (b), since $\mathcal{S}[M]=\langle M \rangle $, we observe that
	% $\langle M_{11} \rangle = \langle M_{22} \rangle = \langle M \rangle $ is immediate from part (a). Hence, 
	$M$ solves
	\begin{equation*}
		- \frac{1}{M} = w - \hat{\Defo} + \langle M \rangle\,,
	\end{equation*}
	which is exactly the MDE for a deformed Wigner matrix model.\footnote{That is, a matrix $H = W + \hat{\Defo}$, where $W$ is a Hermitian matrix with normalised i.i.d.~ (up to the symmetry) entries of variance $1/(2N)$.}
	The point is that the Hermitised $H$ from~\eqref{eq:herm} does not satisfy the uniform lower bound
	in the \emph{flatness}
	condition on the self-energy operator, i.e. $\mathcal{S}[T]\ge c\langle T \rangle$ does not hold in general. Nevertheless, for the purpose of
	computing $M$ we can replace $H$ with the deformed Wigner model $W+\hat\Defo$ with self-energy given
	$\mathcal{S}[T]=\langle T \rangle$ and which is  \emph{flat}.
	Thus we can use several  results from the analysis of the MDE with flatness condition.
	The H\"older-continuity of the scDos  was proven in~\cite[Prop.~2.2]{firstcorr},
	which easily extends to the H\"older-continuity of its Stieltjes transform $m$, see e.g.~\cite[Lemma A.7]{1506.05095}.
	In particular $\langle M(w) \rangle$
	extends continuously to the real line and thus the scDos $\rho(e):=\frac{1}{\pi}\langle\Im M(e)\rangle = \frac{1}{\pi}\Im m(e)$
	is well defined. Since it has the same Stieltjes transform as the free convolution~\eqref{freec} by part (a), we proved that
	the scDos defined via MDE is the same as the free convolution~\eqref{freec}.

	The continuous extension of $M$ (and not only its trace) requires an additional argument.
	For any open interval  $I\in \R$ define
	\begin{equation*}%\label{mnorm}
		\| M\|_I: = \sup \{ \| M(e+\ii\eta)\| \; : \;  e\in I, \eta>0\} \,. %  \qquad w=e+\ii\eta.
	\end{equation*}
	Suppose for some  open $I\in \R$ we have $\| M\|_I<\infty$, then
	we have the Lipschitz continuity
	\begin{equation*}%\label{MM}
		\| M(w_1)-M(w_2)\| \le \| M\|_I^2 |w_1-w_2| \,, \qquad \Re w_1, \Re w_2\in I
	\end{equation*}
	following from the resolvent identity applied to $M(w)=(\hat\Lambda-w-m)^{-1}$.
	Thus $M(w)$ continuously extends to any $e\in I$.

	So the key question for the extension (and for many other results on the MDE) is the boundedness $\| M\|_I<\infty$.
	In the bulk spectrum, i.e. for any $e\in \R$ with $\rho(e)>0$, we can use the bound
	$$\| M(w)\|\le |\Im m(w)+\Im w|^{-1}
	$$
	that is obtained by taking the imaginary part of \eqref{eq:MDEapp}, yielding
	\begin{equation*}
		\Im M = (\Im w + \langle \Im M \rangle) M M^*\,,
	\end{equation*}
	and using $\Vert M M^* \Vert = \Vert M\Vert^2$ and $\Vert \Im M \Vert \le \Vert M \Vert$.
	By the H\"older continuity~\eqref{HC}
	in small neighborhood $I$ of $e$ (whose size depend on the lower bound on $\rho(e)$)
	we obtain $\| M \|_I\lesssim \rho(e)^{-2}<\infty$. Thus $M$ continuously extends to $I$ with the same bound
	and it is locally Lipschitz continuous with a Lipschitz constant of order $\rho(e)^{-2}$.
	In the entire $\kappa$-bulk this extension is controlled by a constant depending only on $\kappa$
	and $\|\Lambda\|$ (via~\eqref{HC}). This proves~\eqref{eq:Mbounded}.

	Near the spectral edges we have only  an $N$-independent upper bound for $\| M\|$.
	Using the spectral decomposition
	of $\hat\Defo$ with eigenvalues $\nu_i$ and normalised eigenvectors ${\bm y}_i$, $i\in \pm [N]$, we have
	\begin{equation}\label{Mspec}
		M(w) = \sum_i \frac{|{\bm y}_i\rangle \langle {\bm y}_i|}{\nu_i -w-m(w)}, \quad \mbox{thus}
		\quad   \|M(w)\| \le \frac{2N}{\min_i |\nu_i -w-m(w)|}\,.
	\end{equation}
	On the other hand the imaginary part of \eqref{eq:selfconsm} implies
	$$
		\Im m = \frac{1}{2N} \sum_i \frac{\Im m +\Im w}{| \nu_i - w-m|^2}
	$$
	thus
	$$
		\frac{1}{2N} \sum_i \frac{1}{| \nu_i - w-m|^2 } = \frac{\Im m}{ \Im m +\Im w} \le 1
	$$
	so $|\nu_i - w-m|\ge 1/\sqrt{2N}$.  From~\eqref{Mspec} this gives the uniform bound
	\begin{equation*}%\label{Ndep}
		\|M(w)\| \le  (2N)^{3/2}, \qquad w\in \C\setminus \R,
	\end{equation*}
	which guarantees the continuous extension of $M$ to the real line with a uniform Lipschitz constant
	$(2N)^{3/2}$. As we have seen, in the bulk this regularity can be improved.\footnote{We remark that under some extra condition on $\Lambda$
	further improvements  away from the bulk are possible for $m$ but not for $M$. For example,
	if the singular values $\nu_i$ of $\Lambda$ are 1/2-H\"older  continuous
	in the sense that $|\nu_i- \nu_j |\le C_0[ |i-j|/N]^{1/2}$, then $m$ is also uniformly bounded
	and 1/3-H\"older continuous with
	a constant depending on $C_0$, see Section 11.4 of~\cite{1506.05095}.}

	%Therefore \cite{bibid}, $\langle M\rangle$ is given as the Stieltjes transform of the free convolution of $\mu_{\rm sc}$ and $\mu_{\hat{\Defo}}$ (see Section \ref{sec:results}), i.e.~$\langle M \rangle$ agrees with $m$ defined in \eqref{eq:mdef}.  
	%%{\color{red} self consistent equation for $m$}immediately follows from plugging \eqref{eq:Mapp} into \eqref{eq:MDEapp} and using that $\mathcal{S}[M] = \langle M \rangle = m$ from part (a).  

	For part (c), the symmetry  $\rho(e)=\rho(-e)$ immediately implies the symmetry $m(w)=-m(-w)$ for its Stieltjes transform.
	%
	%after having established that $\langle M \rangle = m$, we use the symmetry $\rho(e) = \rho(-e)$ of $\rho$ to infer
	%\begin{equation*}
	%m(w) = \int_{\R} \frac{\rho(e)}{e - w} \D e = - \int_{\R} \frac{\rho(e)}{e + w} \D e = - m(-w)\,.
	%%= \int_{\R} \rho(e) \, \frac{ e + \I \eta}{e^2 + \eta^2} \D e = \I \int_{\R} \rho(e) \,  \frac{\eta}{e^2 + \eta^2} \D e = \I \Im m(\I \eta)\,. 
	%\end{equation*}
	Then \eqref{eq:chiralMapp} is an immediate consequences of the formulas~\eqref{eq:Mu}--\eqref{eq:mde}.

	Finally, for part (d), the bound~\eqref{eq:Mbounded} was already proven above.
	The real analyticity of $\rho$ and $m$ in the bulk
	with the bounds on the derivative~\eqref{realanal}
	follows from taking  derivatives in~\eqref{eq:selfconsm} and using again the lower bound on $\Im m$.
\end{proof}

Finally, we prove some regularity property of the $\kappa$-bulk, see~\eqref{kappabulkreg}.
\begin{lemma} Let $0 < \kappa' < \kappa$ be two small constants, then
	\begin{equation}\label{kappabulkreg1}
		\mathrm{dist}(\partial \mathbf{B}_{\kappa'}, \mathbf{B}_\kappa) \ge \mathfrak{c} (\kappa - \kappa')
	\end{equation}
	with some $N$-independent constant $ \mathfrak{c}= \mathfrak{c}(\Vert \Defo \Vert) > 0$. Moreover,
	$\mathbf{B}_\kappa$ is a finite  union of disjoint compact intervals; the number of  these \emph{components}
	depends only on $\kappa$ and $\Vert \Defo \Vert$.
\end{lemma}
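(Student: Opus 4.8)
The statement asserts two things about the $\kappa$-bulk: a quantitative separation between the boundary of $\mathbf{B}_{\kappa'}$ and the set $\mathbf{B}_\kappa$ for $\kappa' < \kappa$, and the fact that $\mathbf{B}_\kappa$ is a finite union of disjoint compact intervals whose number is controlled by $\kappa$ and $\Vert \Defo \Vert$. Both will follow from a sufficiently strong regularity statement for the self-consistent density of states $\rho$ (equivalently, for $\Im m$), together with the structure of $\rho$ near its edges and internal minima. The plan is to first establish the finiteness/interval structure, then the quantitative gap estimate.

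\textbf{Step 1: Interval structure and finiteness.} By Lemma~\ref{lem:MDE}(b), $\rho$ is continuous on $\R$ with compact support (its support is contained in $[-2 - \Vert \Defo \Vert, 2 + \Vert \Defo \Vert]$ since $\mu = \mu_{\rm sc} \boxplus \mu_{\hat{\Defo}}$ and both factors have controlled support). Hence $\mathbf{B}_\kappa = \{ \rho \ge \kappa^{1/3}\}$ is closed and bounded, i.e.\ compact. By Lemma~\ref{lem:MDE}(d), $\rho$ is real analytic on the open set $\{\rho > 0\}$, and in particular on a neighborhood of $\mathbf{B}_\kappa$ (which lies in the interior of the support since $\kappa^{1/3} > 0$). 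A real analytic function on an interval either is constant or has isolated zeros of its derivative; since $\rho$ is not constant (it is a probability density with compact support), $\rho' $ has only finitely many zeros in any compact subinterval of $\{\rho > 0\}$ on which it is defined. Consequently $\rho - \kappa^{1/3}$ changes sign finitely often, so $\mathbf{B}_\kappa$ is a finite union of disjoint compact intervals. The bound on the number of components should be extracted from the analysis of the free convolution: $\rho$ is obtained from $\mu_{\hat{\Defo}} \boxplus \mu_{\rm sc}$, and Biane's theory (cf.~\cite{biane}) together with the implicit equation \eqref{eq:selfconsm} controls the number of local maxima/minima of $\rho$ in terms of the geometry of $\supp \mu_{\hat{\Defo}} \subset [-\Vert\Defo\Vert, \Vert\Defo\Vert]$ and the regularisation scale; in our setting the relevant count depends only on $\kappa$ and $\Vert \Defo \Vert$ via the derivative bounds \eqref{realanal}. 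The cleanest route is: the number of components of $\mathbf{B}_\kappa$ is at most one plus the number of local minima of $\rho$ with value below $\kappa^{1/3}$, and the total number of critical points of $\rho$ on $\{\rho \ge \kappa^{1/3}/2\}$ is bounded using \eqref{realanal} (a real analytic function with $|\rho'| \le C$ and $|\rho''|$ controlled cannot oscillate too much — more precisely one uses that $\rho$ satisfies a fixed polynomial equation derived from \eqref{eq:selfconsm} of degree depending only on structural data, so the number of branches, hence critical points, is bounded).

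\textbf{Step 2: The quantitative gap \eqref{kappabulkreg1}.} Let $x \in \partial \mathbf{B}_{\kappa'}$, so $\rho(x) = \kappa'^{1/3}$ by continuity. Take any $y \in \mathbf{B}_\kappa$, so $\rho(y) \ge \kappa^{1/3}$. Then
\begin{equation*}
\kappa^{1/3} - \kappa'^{1/3} \le \rho(y) - \rho(x) \le \sup_{e} |\rho'(e)| \cdot |x - y|,
\end{equation*}
where the supremum of $\rho'$ is taken over the segment joining $x$ and $y$. The point is that this segment lies in $\mathbf{B}_{\kappa'/2}$ (say) once $|x-y|$ is small — but we cannot assume smallness \emph{a priori}. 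Instead, note: either $|x - y| \ge \mathfrak{c}_0$ for some fixed $\mathfrak{c}_0(\Vert\Defo\Vert)$, in which case \eqref{kappabulkreg1} is trivial (with $\mathfrak{c}$ small), or $|x-y| < \mathfrak{c}_0$ and then the whole segment stays in a region where $\rho \ge \kappa'^{1/3}/2 > 0$, so the derivative bound from Lemma~\ref{lem:MDE}(d) applies with a constant $C(\kappa', \Vert\Defo\Vert)$. Finally, $\kappa^{1/3} - \kappa'^{1/3} \gtrsim (\kappa - \kappa')$ when $\kappa, \kappa'$ are bounded: indeed by the mean value theorem $\kappa^{1/3} - \kappa'^{1/3} = \tfrac{1}{3}\xi^{-2/3}(\kappa - \kappa') \ge \tfrac{1}{3}\kappa_{\max}^{-2/3}(\kappa-\kappa')$ for $\xi$ between $\kappa'$ and $\kappa$. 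Combining, $|x - y| \ge \mathfrak{c}(\kappa - \kappa')$ with $\mathfrak{c}$ depending only on $\Vert\Defo\Vert$ (and, if one only needs the weaker statement of Remark~\ref{rem:cdependence}, one may let $\mathfrak{c}$ depend on $\kappa$ as well, which trivializes the case distinction). Taking the infimum over $x$ and $y$ gives \eqref{kappabulkreg1}.

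\textbf{Main obstacle.} The genuinely delicate point is making the derivative bound in Step~2 uniform in $\kappa'$ rather than just locally on $\{\rho > 0\}$ — i.e.\ controlling $\sup |\rho'|$ on a segment that may approach $\partial\mathbf{B}_{\kappa'}$ where $\rho$ is only $\kappa'^{1/3}$. The resolution is the observation above that near $\partial\mathbf{B}_{\kappa'}$ one still has $\rho$ bounded below by a positive constant depending on $\kappa'$, so \eqref{realanal} applies with a $\kappa'$-dependent constant; this is exactly why the sharper version (with $\mathfrak{c}$ independent of $\kappa$) requires the more careful square-root edge behavior of $\rho$ — near a regular edge $\rho(e) \sim \sqrt{\mathrm{dist}(e, \partial\supp\rho)}$, so $\rho' \sim \rho^{-1}$ blows up, but the product $\rho \cdot \rho'$ stays bounded, and one should rerun the estimate with $\rho(y)^2 - \rho(x)^2$ (or an appropriate power matching the edge exponent) in place of $\rho(y) - \rho(x)$ to cancel this blow-up. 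The second, more routine obstacle is the explicit count of components in Step~1, which requires tracking the degree of the algebraic equation satisfied by $m$ coming from \eqref{eq:selfconsm} and the spectrum of $\hat\Defo$; since we only need a bound depending on $\kappa$ and $\Vert\Defo\Vert$ (not an optimal one), a soft compactness-plus-analyticity argument suffices.
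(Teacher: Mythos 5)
Your Step~1 and Step~2 (up to the paragraph ``Main obstacle'') are essentially the same soft argument as the paper's, and correctly produce the weaker statement $\mathfrak{c}=\mathfrak{c}(\kappa,\|\Defo\|)$ which, as noted in Remark~\ref{rem:cdependence}, is all that the rest of the paper actually needs. The paper also explicitly observes that one must first replace the self-energy $\mathcal{S}$ of $H^\Lambda$ (which is \emph{not} flat) by the flat self-energy $\mathcal{S}[T]=\langle T\rangle$ of the deformed Wigner model $W+\hat\Defo$ before importing regularity results for $\rho$; you use Lemma~\ref{lem:MDE} which already contains this, so this is implicit but fine.

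The genuine gap is in your sketch of how to remove the $\kappa$-dependence. You write that the blow-up of $\rho'$ near the boundary is governed by the square-root edge behavior $\rho\sim\sqrt{\mathrm{dist}}$, and that one should estimate $\rho(y)^2-\rho(x)^2$ instead of $\rho(y)-\rho(x)$. This only addresses regular edges. But the definition $\mathbf{B}_\kappa=\{\rho\ge\kappa^{1/3}\}$ uses the exponent $1/3$, and that exponent is dictated by the \emph{cusp} regime, where $\rho(e)\sim|e-e_0|^{1/3}$: there, requiring $\rho\ge\kappa^{1/3}$ moves the boundary by $\sim\kappa$, which is the linear scaling \eqref{kappabulkreg1} is asserting. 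A cusp makes $\rho'$ blow up like $\rho^{-2}$ rather than $\rho^{-1}$, so squaring does not suffice — one would need to cube, and more importantly one would need to know that $\rho$ has no worse singularities than cusps, with quantitative control. This is precisely the content of the \emph{shape analysis} of Alt--Erd\H{o}s--Kr\"uger \cite{1804.07752} that the paper invokes: after checking that the flat deformed Wigner model satisfies their Assumption~4.5, Theorem~7.2 there gives uniform $1/3$-H\"older continuity of $\rho$ with constant depending only on $\|\Lambda\|$, from which Lipschitz dependence of $\partial\mathbf{B}_\kappa$ on $\kappa$ follows directly. Your proposal does not cite or reconstruct this input, so the $\kappa$-independent bound is not actually established.

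A smaller issue: in Step~1 you float, as an alternative route, that $m$ satisfies ``a fixed polynomial equation derived from \eqref{eq:selfconsm} of degree depending only on structural data.'' That is not correct here: clearing denominators in \eqref{eq:selfconsm} yields a polynomial in $m$ of degree roughly the number of distinct singular values of $\Lambda$, which is generically $N$ and hence $N$-dependent. The analyticity-plus-derivative-bounds argument you give first is the right one; drop the polynomial alternative.
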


\begin{proof} As in the proof of Lemma~\ref{lem:MDE}, we interpret $\mathbf{B}_\kappa$ as the $\kappa$-bulk
	of the deformed Wigner matrix $W+\hat\Defo$, i.e. a model with the flatness condition.
	The statement on the number of components directly follows from the real analyticity of $\rho$ and~\eqref{realanal}.

	The same argument would also imply~\eqref{kappabulkreg1} with a constant $\mathfrak{c}$ that depends on $\kappa$
	and an upper bound on $\|\Lambda\|$. To remove the $\kappa$-dependence, we need to use the detailed \emph{shape
		analysis}  for $\rho$ from~\cite{1804.07752}. In particular, the flatness condition and $\| M\|_I<C(\kappa)$
	for any interval $I\subset \mathbf{B}_\kappa$
	(equivalent to \cite[Eq. (4.16)]{1804.07752}) implies that Assumption 4.5 in~\cite{1804.07752} holds.
	Therefore Theorem 7.2 in~\cite{1804.07752} applies to our case. This theorem says that
	in the regime where $\rho$ is small, it is approximately given by explicit 1/3-H\"older continuous functions,
	moreover $\rho$ itself is 1/3-H\"older continuous with H\"older constant depending only on
	the so-called model parameters of the problem, which in our case is just an upper bound on $\Lambda$
	(note that~\cite{1804.07752} was written for much more complicated self-energy operators
	to include the MDE analysis for random matrices with correlated entries).
	Noticing the $\kappa^{1/3}$ power in the definition of $\mathbf{B}_\kappa$ in~\eqref{eq:bulk}, this means
	that the boundaries of
	$\mathbf{B}_\kappa$ are  Lipschitz continuous functions of $\kappa$ when $\kappa$ is small
	with a Lipschitz constant depending only on an upper bound on $\|\Lambda\|$.
\end{proof}

\begin{remark}\label{rem:cdependence}
	Note that the proof of the independence of $ \mathfrak{c}= \mathfrak{c}(\Vert \Defo \Vert)$ of $\kappa$ required
	a much more sophisticated analysis. However, for our main proof, $ \mathfrak{c}= \mathfrak{c}(\kappa, \Vert \Defo \Vert)>0$
	in~\eqref{kappabulkreg1}
	is sufficient, note that~\eqref{kappabulkreg1} is only used in choosing $\delta$ in~\eqref{eq:deltachoice} appropriately. More precisely, for fixed $L = L(\epsilon)$ and $\kappa_0 > 0$, given the family $(\ell \kappa_0)_{\ell \in [L]}$ of parameters for the domains $\mathbf{D}_\ell^{(\epsilon_0, \kappa_0)}$, we would have that $\mathrm{dist}(\partial \mathbf{B}_{(\ell - 1) \kappa_0}, \mathbf{B}_{\ell \kappa_0}) \ge \mathfrak{c}(\ell \kappa_0, \Vert \Defo \Vert) \kappa_0$. Now, the cutoff parameter $\delta$ in \eqref{eq:deltachoice} is chosen much smaller than $\mathfrak{c}(\ell \kappa_0, \Vert \Defo \Vert) \kappa_0$ for every $\ell \le L(\epsilon)$. %Moreover, $\mathfrak{c}$ could not be used as an absolute constant in Lemma \ref{lem:intrepG^2}, but 
	%we must replace it by $\min_{\ell \in [L]} \mathfrak{c}(\ell \kappa_0, \Vert \Defo \Vert)$, i.e.~
	%dependent on $L = L(\epsilon)$ and $\kappa_0$, which does not cause any problem. 
\end{remark}

%\color{red} [To Joscha:  explain a  bit better how \eqref{kappabulkreg}  is used when choosing $\delta$.
%Are you sure that if $ \mathfrak{c}$ we can still choose the sequence in the way $\ell\kappa_0$?
%My safe instinct would say that choose a small $\kappa$, an even much smaller $\kappa_0$ (that 
%depends on $\kappa$ via $\mathfrak{c}(\kappa, \Vert \Defo \Vert)$) and 
%consider domains ${\bf B}_{\kappa+ \ell\kappa_0}$. Then it is clear that the kappa dependence of c does not matter.
%But one can have different logic, like $\delta$ has to be smaller than $\mathfrak{c}(\ell \kappa_0)\kappa_0$ for
%any $\ell\le L(\epsilon)$. Probably the current delta choice does not quite do that although it is closer to this
%second philosophy.
%Anyway, this is a triviality, but we should have it correct.]
%\normalcolor

\subsection{The stability operator \eqref{eq:stabop} and its spectral properties} Throughout the entire paper, the two-body stability operator \eqref{eq:stabop} and its adjoint \eqref{eq:stabop adjoint} play a crucial role. These operators depend on two (a priori) different spectral parameters $w_1, w_2$ via the solutions $M_1 = M(w_1)$ and $M_2= M(w_2)$ of the MDE \eqref{eq:MDEapp}. For these solutions, we have the following basic lemma.
\begin{lemma} \label{lem:Mbasic}
	Let $w_1, w_2 \in \C\setminus \R$ be two spectral parameters and $M_1 = M(w_1), M_2 = M(w_2)$ the corresponding solutions to \eqref{eq:MDEapp}.
	\begin{itemize}
		\item[(a)] Then we have the \emph{$M$-Ward identity},
			\begin{equation}\label{eq:MWard}
				M_1 -M_2 = \big[ (w_1-w_2)+ (\langle M_1 \rangle - \langle M_2 \rangle) \big] \, M_2 M_1\,.
			\end{equation}
			In particular, $M_1$ and $M_2$ commute and it holds that
			\begin{equation} \label{eq:saturation}
				\left( 1 - \langle M M^* \rangle  \right) \langle \Im M\rangle = \Im w \,  \langle M M^* \rangle \,.
			\end{equation}
		\item[(b)] Fix $\kappa > 0$ and let $\Re w_1, \Re w_2 \in \mathbf{B}_\kappa$. Then, for $\Im w_1 \Im w_2 > 0$, we have the perturbative estimate
			\begin{equation*}
				\left\Vert M(w_1) -  M(w_2) \right\Vert = \mathcal{O}(|w_1 -w_2|\wedge 1)\,.
			\end{equation*}
	\end{itemize}

\end{lemma}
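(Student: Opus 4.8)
\textbf{Proof proposal for Lemma~\ref{lem:Mbasic}.}

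The plan is to prove the two parts in order, starting from the defining equation~\eqref{eq:MDEapp}. For part~(a), the \emph{$M$-Ward identity}~\eqref{eq:MWard} should follow from the standard resolvent-type manipulation: writing $M_i = (\hat\Defo - w_i - \langle M_i\rangle)^{-1}$ (which is the form of the MDE once we use $\mathcal{S}[M_i] = \langle M_i\rangle$, as established in Lemma~\ref{lem:MDE}(a)), I would compute $M_1 - M_2 = M_2\big[M_2^{-1} - M_1^{-1}\big]M_1$ and substitute $M_i^{-1} = -(w_i - \hat\Defo + \langle M_i\rangle)$ to get $M_2^{-1} - M_1^{-1} = (w_1 - w_2) + (\langle M_1\rangle - \langle M_2\rangle)$, a \emph{scalar} multiple of the identity. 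This immediately yields~\eqref{eq:MWard}, and since the right-hand side is a scalar times $M_2 M_1$, symmetry of the left-hand side under $1\leftrightarrow 2$ forces $M_1 M_2 = M_2 M_1$. For~\eqref{eq:saturation}, I would specialise: take $w_2 = \bar w_1$, so $M_2 = M_1^* = M(\bar w_1)$ by Lemma~\ref{lem:MDE}(a), and $w_1 - w_2 = 2\I\,\Im w$, $\langle M_1\rangle - \langle M_2\rangle = 2\I\,\Im\langle M\rangle = 2\I\langle \Im M\rangle$. Then $M_1 - M_1^* = 2\I\,\Im M$, so~\eqref{eq:MWard} reads $2\I\,\Im M = \big[2\I\,\Im w + 2\I\langle\Im M\rangle\big] M M^*$; dividing by $2\I$ and taking the normalised trace gives $\langle\Im M\rangle = (\Im w + \langle\Im M\rangle)\langle M M^*\rangle$, which rearranges to~\eqref{eq:saturation}.

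For part~(b), the perturbative bound $\|M(w_1) - M(w_2)\| = \mathcal{O}(|w_1 - w_2|\wedge 1)$ in the bulk, I would again use $M_1 - M_2 = M_2\big[(w_1 - w_2) + (\langle M_1\rangle - \langle M_2\rangle)\big]M_1$ from~\eqref{eq:MWard}. The factors $\|M_1\|, \|M_2\|$ are bounded by $C(\kappa, \|\Defo\|)$ by~\eqref{eq:Mbounded} of Lemma~\ref{lem:MDE}(d), since $\Re w_1, \Re w_2 \in \mathbf{B}_\kappa$. It remains to control $|\langle M_1\rangle - \langle M_2\rangle| = |m(w_1) - m(w_2)|$. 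Here I would invoke the H\"older continuity~\eqref{HC} of $m$ from Lemma~\ref{lem:MDE}(b) to get $|m(w_1) - m(w_2)| \le C'|w_1 - w_2|^c$ — but this only gives a H\"older, not Lipschitz, bound, which is too weak. The fix is to exploit that we are in the bulk: on $\mathbf{B}_\kappa$, $\rho$ is real analytic with uniformly bounded derivatives by~\eqref{realanal}, and more to the point $\Im m$ is bounded \emph{below} by a constant depending on $\kappa$, so $m$ is actually Lipschitz (not merely H\"older) on the region $\{\Re w \in \mathbf{B}_\kappa\}$ with constant depending only on $\kappa$ and $\|\Defo\|$ — this follows by differentiating the self-consistent equation~\eqref{eq:selfconsm} and using the lower bound on $\Im m$ exactly as in the proof of Lemma~\ref{lem:MDE}(d). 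Thus $|m(w_1) - m(w_2)| \lesssim |w_1 - w_2|$, and combining with the norm bounds on $M_1, M_2$ gives $\|M_1 - M_2\| \lesssim |w_1 - w_2|$; the trivial bound $\|M_1 - M_2\| \le \|M_1\| + \|M_2\| \lesssim 1$ supplies the $\wedge 1$.

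The assumption $\Im w_1 \Im w_2 > 0$ in part~(b) is used precisely to ensure that the straight segment from $w_1$ to $w_2$ (or a short path staying in one half-plane with real part in a slightly enlarged bulk) does not cross the real axis, so that the Lipschitz estimate for $m$ along that path is legitimate; without it one could not integrate $m'$ along a connecting path. The main obstacle I anticipate is this upgrade from H\"older to Lipschitz continuity of $m$ in the bulk: one must be careful that the Lipschitz constant is uniform over $w_1, w_2$ with real parts in $\mathbf{B}_\kappa$ and imaginary parts of the same sign (but possibly of very different magnitude, including tending to $0$), which requires the boundary value $m(e)$ for $e \in \mathbf{B}_\kappa$ to be Lipschitz and the extension to be controlled — all of which is available from the shape analysis cited in Lemma~\ref{lem:MDE}, but needs to be assembled carefully rather than quoted verbatim. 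Everything else is routine algebra.
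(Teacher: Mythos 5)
Your proposal is correct and takes essentially the same route as the paper. Part (a) is handled identically: write $M_i = (\hat\Defo - w_i - m_i)^{-1}$, compute $M_1 - M_2 = M_2(M_2^{-1} - M_1^{-1})M_1$ with the inner difference a scalar, specialise $w_2 = \bar w_1$ and take a trace for~\eqref{eq:saturation}.

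For part (b) there is a small but genuine variation. The paper differentiates the \emph{matrix} MDE directly, obtaining $\partial_w M = M^2/(1-\langle M^2\rangle)$, lower-bounds the denominator by $2\langle\Im M\rangle^2$ (via $\langle M^2\rangle - \langle MM^*\rangle = 2\I\langle M\,\Im M\rangle$ and \eqref{eq:saturation}), and then applies the fundamental theorem of calculus to $M$ itself. You instead factor through the Ward identity~\eqref{eq:MWard}, reducing the matrix estimate to a Lipschitz bound on the scalar $m$, and then propose to obtain that Lipschitz bound by differentiating the scalar self-consistent equation~\eqref{eq:selfconsm}. Both routes bottleneck on the very same quantity: differentiating the scalar equation gives $m'(w) = \langle M^2\rangle/(1-\langle M^2\rangle)$, so you would need exactly the paper's lower bound $|1-\langle M^2\rangle| \ge 2\langle\Im M\rangle^2 \gtrsim 1$ in the bulk. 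Your diagnosis that the H\"older bound~\eqref{HC} is insufficient and must be upgraded to a Lipschitz bound in the bulk is exactly right, and the mechanism you indicate (lower bound on $\Im m$, plus differentiation) is the one the paper uses. The paper's direct-$M$ version is marginally cleaner since it avoids introducing the intermediate scalar estimate, but the two are essentially equivalent, and your reduction has the small aesthetic benefit of deriving commutativity of $M_1, M_2$ as a byproduct of the Ward identity rather than quoting that $M$ is a resolvent of $\hat\Defo$.
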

\begin{proof}
	Part (a) is an immediate consequence of the MDE \eqref{eq:MDEapp} using the fact that
	\begin{equation*}
		M = \big( \hat{\Defo} - (w+m) \big)^{-1}
	\end{equation*}
	is a resolvent of $\hat{\Defo}$. The special case \eqref{eq:saturation} follows from \eqref{eq:MWard} with $w_1 = w$ and $w_2 = \bar{w}$, and taking a trace.

	For part (b), we focus on the case of small imaginary parts for the spectral parameters (the complementary regime being trivial) and use that $M$ is analytic away from the real axis and differentiate \eqref{eq:MDEapp} w.r.t.~$w$, such that we find
	\begin{equation*}
		\partial_w M = \frac{1}{1- \langle M^2 \rangle} M^2
	\end{equation*}
	by means of $\mathcal{S}[M^2] = \langle M^2 \rangle$ as follows from the explicit form of $M$ in \eqref{eq:Mu}--\eqref{eq:mde}. Next, using \eqref{eq:saturation}, the denominator is lower bounded as
	\begin{equation} \label{eq:1-M2}
		\big| 1 - \langle M^2 \rangle \big| = \big| \big(1 - \langle M M^* \rangle\big) - 2 \I \langle M \Im M  \rangle \big| \ge 2 \big| \langle (\Im M)^2 \rangle \big| \ge 2 \langle \Im M \rangle^2\,,
	\end{equation}
	which shows that $\Vert \partial_w M \Vert \lesssim 1$ in the bulk. Now the claim follows from the fundamental theorem of calculus together with the boundedness of $M$, see \eqref{eq:Mboundedmain}.
\end{proof}

Armed with this information, we can now turn to the following lemma, collecting several basic spectral properties stability operator $\mathcal{B}$. Its proof will be given at the end of this section.

\begin{lemma} \label{lem:eigendecomp} Let $w_1, w_2 \in \C\setminus \R$ and $M_1, M_2$ be the respective solutions of \eqref{eq:MDEapp}.
	\begin{itemize}
		\item[(a)]	The associated two-body stability operator
			\begin{equation*}
				\mathcal{B} = 1 - M_1 \mathcal{S}[\cdot]M_2
			\end{equation*}
			has two non-trivial eigenvalues $\beta_\pm$ (the other $(2N)^2-2$ are equal to one), given by
			\begin{equation} \label{eq:beta pm}
				\beta_\pm = 1 \mp \langle M_1 E_\pm M_2 E_\pm \rangle \,.
			\end{equation}
			The corresponding right- and left-eigenvectors
			\[
				{\mathcal{B}}[R_\pm] = \beta_\pm R_\pm\,, \qquad {\mathcal{B}}^*[L_\pm^*] = \bar{\beta}_\pm L_\pm^* \,,
			\]
			take the explicit form
			\begin{equation} \label{eq:RL pm}
				R_\pm = M_1 E_\pm M_2 \,, \qquad L_\pm = E_\pm\,,
			\end{equation}
			up to a normalisation ensuring that $\langle L_\pm, R_\pm \rangle = 1$.
		\item[(b)] The eigenvalues \eqref{eq:beta pm} can be lower bounded as
			\begin{equation} \label{eq:beta pm lowerbound}
				| \beta_\pm| \gtrsim \big( \vert \Re w_1 \mp\Re w_2\vert + |\Im w_1| + |\Im w_2|\big)\wedge 1\,.
			\end{equation}
			In particular, the inverse stability operator $\mathcal{B}^{-1}$ exists.
		\item[(c)] Fix $\kappa > 0$ and denote $\mathfrak{s}  := - \sgn(\Im w_1 \, \Im w_2)$. Then, for $\Re w_1, \Re w_2 \in \mathbf{B}_\kappa$, we have that $|\beta_{-\mathfrak{s} }| \gtrsim 1$.
	\end{itemize}
\end{lemma}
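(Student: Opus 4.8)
The plan is to prove Lemma~\ref{lem:eigendecomp} in three stages, following the order of its parts. Most of the work has been set up by Lemma~\ref{lem:Mbasic} (especially the commutativity of $M_1$ and $M_2$ and the saturation identity~\eqref{eq:saturation}) and by the explicit form~\eqref{Sop} of $\mathcal{S}$, so the proof is largely computational; the only genuinely delicate point is the quantitative lower bound~\eqref{eq:beta pm lowerbound}.

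For part (a), I would start from the explicit action $\mathcal{S}[T] = \sum_\sigma \sigma \langle T E_\sigma \rangle E_\sigma$ from~\eqref{Sop}. Since $\mathcal{S}$ has rank two (its range is spanned by $E_+$ and $E_-$), the operator $T \mapsto M_1 \mathcal{S}[T] M_2$ also has rank two, with range spanned by $M_1 E_+ M_2$ and $M_1 E_- M_2$; hence $\mathcal{B} = 1 - M_1\mathcal{S}[\cdot]M_2$ equals the identity on the $((2N)^2-2)$-dimensional kernel of $\mathcal{S}$, which gives the eigenvalue $1$ with the stated multiplicity. On the two-dimensional complement, I would simply apply $\mathcal{B}$ to the candidate right eigenvectors $R_\pm = M_1 E_\pm M_2$: using $\langle M_1 E_\pm M_2 E_\mp \rangle = 0$ (which follows from the block structure of $M$ in~\eqref{eq:Mu}--\eqref{eq:mde}, since $M_1 E_+ M_2$ is block-diagonal and $M_1 E_- M_2$ is also block-diagonal, so pairing either against $E_\mp$ kills the cross terms — more precisely $\langle M_1 E_\sigma M_2 E_\tau\rangle$ vanishes unless $\sigma=\tau$ by a direct computation with~\eqref{eq:Mu}), one gets $\mathcal{B}[R_\pm] = R_\pm - \langle M_1 E_\pm M_2 E_\pm\rangle \, (\pm M_1 E_\pm M_2) = \beta_\pm R_\pm$ with $\beta_\pm = 1 \mp \langle M_1 E_\pm M_2 E_\pm\rangle$. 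The left eigenvectors $L_\pm = E_\pm$ are checked the same way against $\mathcal{B}^* = 1 - \mathcal{S}[M_1^* \cdot M_2^*]$, again using the vanishing of the mixed traces; the normalisation $\langle L_\pm, R_\pm\rangle = 1$ is then just a rescaling, noting $\langle E_\pm, M_1 E_\pm M_2\rangle = \langle M_1 E_\pm M_2 E_\pm\rangle \ne 0$, which is guaranteed once part (b) is established.

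For part (b), I would expand $\beta_\pm$ using the explicit $2\times2$-block form of $M$. Writing $w_j = e_j + \I\eta_j$ and $m_j = m(w_j)$, the quantity $\langle M_1 E_\pm M_2 E_\pm\rangle$ reduces, via~\eqref{eq:Mu}--\eqref{eq:mde}, to an average of $M_{11}(w_1)M_{11}(w_2)$ (for the $E_+$-case, say $\tfrac12\langle M_{11}(w_1)M_{11}(w_2)\rangle + \tfrac12\langle M_{22}(w_1)M_{22}(w_2)\rangle$) and similarly with a relative sign and a chiral reflection $w_2 \mapsto -w_2$ for the $E_-$-case (using~\eqref{eq:chiralMapp}). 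Thus it suffices to get a lower bound on $|1 - \langle M(w_1)M(w_2)\rangle|$-type expressions. Here the key input is the $M$-Ward identity~\eqref{eq:MWard}: from $M_1 - M_2 = [(w_1-w_2)+(m_1-m_2)]M_1M_2$ one extracts, after taking a trace and rearranging, that $1 - \langle M_1 M_2\rangle$ is comparable to $(w_1-w_2)$ plus lower-order terms when $\Im w_1\Im w_2>0$; and when the signs are opposite one instead uses~\eqref{eq:saturation}-type identities, which for $w_1 = w$, $w_2 = \bar w$ give $1 - \langle MM^*\rangle \sim |\Im w|$ in the bulk, exactly as in~\eqref{eq:1-M2}. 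Combining the two regimes and tracking the chiral reflection for $\beta_-$ yields the claimed $|\beta_\pm| \gtrsim (|\Re w_1 \mp \Re w_2| + |\Im w_1| + |\Im w_2|)\wedge 1$. The main obstacle is handling the case where the real parts are comparable but not equal and the imaginary parts are tiny: one needs the Hölder/analyticity control of $m$ from Lemma~\ref{lem:MDE}(b),(d) and the lower bound on $\Im m$ in the bulk to show that the spurious cancellation cannot bring $\beta_\pm$ below order $|\Re w_1 \mp \Re w_2|$; this is where a careful Taylor expansion of $m$ in the bulk, using~\eqref{realanal}, is required rather than a soft argument.

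For part (c), with $\mathfrak{s} = -\sgn(\Im w_1\Im w_2)$ the eigenvalue $\beta_{-\mathfrak{s}}$ is the one corresponding to the sign combination for which, after the chiral reflection, the two spectral parameters end up in the \emph{same} half-plane with imaginary parts of the same sign — concretely, $\beta_{-\mathfrak{s}} = 1 + \mathfrak{s}\langle M(w_1)E_{-\mathfrak{s}}M(w_2)E_{-\mathfrak{s}}\rangle$ reduces after~\eqref{eq:chiralMapp} to an expression in $M(w_1)$ and $M(\mp w_2)$ whose imaginary parts then have the \emph{same} sign. In that configuration $1 - \langle M_1 \widetilde M_2\rangle$ (with $\widetilde M_2$ the reflected solution) stays bounded away from zero in the $\kappa$-bulk: indeed $|1-\langle M_1 \widetilde M_2\rangle| \gtrsim \langle\Im M_1\rangle\langle\Im \widetilde M_2\rangle \gtrsim \rho(\Re w_1)\rho(\Re w_2) \gtrsim \kappa^{2/3}$ by the definition~\eqref{eq:bulk} of $\mathbf{B}_\kappa$ and the elementary inequality used in~\eqref{eq:1-M2}, together with~\eqref{eq:Mbounded}. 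Hence $|\beta_{-\mathfrak{s}}|\gtrsim 1$, with the implicit constant depending only on $\kappa$ and $\|\Defo\|$. This in particular justifies, a posteriori, that the normalisation in part (a) makes sense and that the denominators in the regularisation~\eqref{eq:reg A1A2} involving the non-critical direction are harmless; the critical direction $\beta_\mathfrak{s}$ is precisely the one cut off by $\mathbf{1}_\delta^\mathfrak{s}$.
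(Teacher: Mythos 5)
Your part (a) follows the same route as the paper (rank-two structure of $\mathcal{S}$ plus direct verification of $R_\pm,L_\pm$), and it is essentially correct; the one wrinkle is the justification that $\langle M_1E_\sigma M_2 E_\tau\rangle=0$ for $\sigma\neq\tau$. The paper's way to see it is not a block-structure count (indeed $M_1E_+M_2 = M_1M_2$ is \emph{not} block-diagonal in general), but the $M$-Ward identity~\eqref{eq:MWard} plus chiral symmetry~\eqref{eq:chiralMapp}: $M_1M_2$ is a scalar multiple of $M_1-M_2$, and $\langle M_jE_-\rangle=0$ kills the trace; and $M_1E_-M_2 = -M_1 M(-w_2)E_-$ reduces the other case to the same computation.

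For part (b) you have the right inputs but miss the identity that makes the argument trivial. The paper combines~\eqref{eq:MWard} with~\eqref{eq:chiralMapp} to get the closed formula
\begin{equation*}
\frac{1}{\beta_\pm} \;=\; 1 + \frac{\langle M_1\rangle \mp \langle M_2\rangle}{w_1 \mp w_2}\,,
\end{equation*}
which, together with the elementary global bound $|\langle M\rangle| \le \langle MM^*\rangle^{1/2} < 1$ (from $MM^* = \Im M/(\Im w + \langle\Im M\rangle)$), instantly yields $|\beta_\pm|\gtrsim |\Re w_1 \mp \Re w_2|\wedge 1$ by the triangle inequality, with \emph{no} bulk assumption and no H\"older/Taylor analysis. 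The $(|\Im w_1|+|\Im w_2|)\wedge 1$ half comes from the separate Cauchy--Schwarz/saturation estimate $\min\{|\beta_+|,|\beta_-|\}\ge 1 - \langle M_1M_1^*\rangle^{1/2}\langle M_2M_2^*\rangle^{1/2}$ combined with~\eqref{eq:saturation}. Your proposed detour through~\eqref{HC} and~\eqref{realanal} would confine the conclusion to the bulk and requires controlling exactly the "spurious cancellations" you flag as the obstacle — i.e. it leaves a genuine gap in the very case you identify as hard.

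Part (c) has a real gap. You assert $|1-\langle M_1\widetilde M_2\rangle|\gtrsim \langle\Im M_1\rangle\langle\Im \widetilde M_2\rangle$ for two \emph{different} spectral parameters with same-signed imaginary parts, citing "the elementary inequality used in~\eqref{eq:1-M2}". But that inequality hinges on the splitting $1 - \langle M^2\rangle = (1-\langle MM^*\rangle) + 2\langle(\Im M)^2\rangle - 2\I\langle \Re M\,\Im M\rangle$, which uses the \emph{same} $M$ twice so that the positivity $1-\langle MM^*\rangle>0$ and $\langle(\Im M)^2\rangle\ge\langle\Im M\rangle^2$ combine cleanly; with $M_1\neq\widetilde M_2$ the analogous decomposition of $\Re\langle M_1\widetilde M_2\rangle$ produces a cross term $\langle\Re M_1\,\Re\widetilde M_2\rangle$ that has no sign, and the Cauchy--Schwarz bound only gives $|\langle M_1\widetilde M_2\rangle|<1$, not $1-O(\kappa^{2/3})$. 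The paper circumvents this by first treating the \emph{coincident} extreme case $w_1 = -\mathfrak{s} w_2$, where chiral symmetry collapses $\beta_{-\mathfrak{s}}$ to $1-\langle M_1^2\rangle$ and~\eqref{eq:1-M2} applies verbatim, and then extends to nearby $w_2$ by continuity and to far-away $w_2$ by invoking part (b). You should either adopt that two-regime argument or supply a separate proof of the claimed two-parameter inequality.
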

By the last item, given $\mathfrak{s}  := - \sgn(\Im w_1 \, \Im w_2)$, we will always refer to
\begin{equation} \label{eq:crit triple}
	\big(\beta := 1 -\mathfrak{s}  \langle M_1 E_\mathfrak{s}  M_2 E_\mathfrak{s}  \rangle\,, \ R := M_1 E_\mathfrak{s}  M_2\,, \ L := E_\mathfrak{s} \big)
\end{equation}
as the \emph{critical eigentriple} (and accordingly $\beta$ as the \emph{critical eigenvalue} etc.), consisting of the eigenvalue and the corresponding right- and left-eigenvector. Moreover, the estimate \eqref{eq:beta pm lowerbound} shows that, if we have (recall \eqref{eq:case regulation})
\[
	\mathbf{1}_\delta^\pm(w_1, w_2) := \phi_\delta(\Re w_1 \mp\Re w_2 ) \ \phi_\delta(\Im w_1) \ \phi_\delta(\Im w_2) = 0
\]
for some $\delta > 0$, then the inverse stability operator $\mathcal{B}^{-1}$ is bounded and none of the eigenvalues $\beta_\pm$ is really critical. In the complementary regime, $\mathbf{1}_\delta^\pm(w_1, w_2) = 1$, and $\Re w_1, \Re w_2 \in \mathbf{B}_\kappa$, we shall now explain the interplay between the critical eigentriple \eqref{eq:crit triple} and the regularisation \eqref{eq:reg A1A2}.

\begin{lemma} \label{lem:boundedpert} Let $w_1, w_2 \in \C\setminus \R$ with $\Re w_1, \Re w_2 \in \mathbf{B}_\kappa$ for some fixed $\kappa > 0$ and denote the relative sign of imaginary parts by $\mathfrak{s} := - \sgn(\Im w_1 \, \Im w_2)$.
	% and let $\mathbf{1}_\delta^\sigma(w_1, w_2) = 1$ for some $\delta > 0$ small enough. 
	Moreover, let $M_1 = M(w_1), M_2= M(w_2)$ be the respective solutions of~\eqref{eq:MDEapp} and $A \in \C^{2N \times 2N}$ a bounded deterministic matrix.
	\begin{itemize}
		\item[(a)] If $\mathbf{1}_\delta^\mathfrak{s} (w_1, w_2) =1$ for some $\delta > 0$ small enough, the critical left- and right-eigenvectors \eqref{eq:crit triple} are normalised as $\langle L, R \rangle \sim 1$. In particular, if $\mathbf{1}_\delta^\pm(w_1, w_2) = 1$, the respective denominator in the regularisation $\mathring{A}^{w_1, w_2}$ (see \eqref{eq:reg A1A2}) is bounded away from zero.
		\item[(b)] The operator $\mathcal{X}_{12}$, acting as
			\begin{equation*}
				\mathcal{X}_{12}[B] := \big((\mathcal{B}_{12}^*)^{-1}[B^*]\big)^* = \big(1 - \mathcal{S}[M_1 \cdot M_2]\big)^{-1}[B]\,, \quad B \in \C^{2N \times 2N}\,,
			\end{equation*}
			where $\mathcal{B}_{12}:= 1 - M_1 \mathcal{S}[\cdot]M_2$, is well defined and bounded on the $\mathfrak{s}$-regular component $\mathring{A}^\mathfrak{s}$ (w.r.t.~the pair of spectral parameters $(w_1, w_2)$) of any bounded $A$. This means, for
			\begin{equation} \label{eq:circsigma bdd}
				\mathring{A}^\mathfrak{s}  := A - \mathbf{1}_\delta^\mathfrak{s} (w_1, w_2)\frac{\langle M_1 A M_2 E_\mathfrak{s} \rangle}{\langle M_1 E_\mathfrak{s}  M_2 E_\mathfrak{s}  \rangle} E_\mathfrak{s}
			\end{equation}
			it holds that $\Vert \mathcal{X}_{12}[\mathring{A}^\mathfrak{s} ]\Vert \lesssim 1$.
	\end{itemize}
\end{lemma}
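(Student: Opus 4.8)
\textbf{Proof plan for Lemma~\ref{lem:boundedpert}.} The statement has two parts, both of which reduce to spectral estimates on the two-body stability operator $\mathcal{B}_{12}$ already collected in Lemma~\ref{lem:eigendecomp}. For part~(a), the plan is to start from the explicit eigentriple \eqref{eq:crit triple}, so that $\langle L, R\rangle = \mathfrak{s}\langle E_\mathfrak{s}, M_1 E_\mathfrak{s} M_2\rangle = \mathfrak{s}\langle M_1 E_\mathfrak{s} M_2 E_\mathfrak{s}\rangle$. Since $\beta = 1 - \mathfrak{s}\langle M_1 E_\mathfrak{s} M_2 E_\mathfrak{s}\rangle$, this is exactly $1-\beta$, so it suffices to show $|1 - \beta| \sim 1$ in the regime $\mathbf{1}_\delta^\mathfrak{s}(w_1,w_2) = 1$. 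The upper bound $|1-\beta| \lesssim 1$ is immediate from $\|M_i\|\lesssim 1$ in the $\kappa$-bulk (Lemma~\ref{lem:MDE}(d)). For the lower bound I would specialise to the extreme case $w_1 = \I\eta_1$, $w_2 = \I\eta_2$ with small imaginary parts (so $\mathfrak{s} = -$; the case $\mathfrak{s}=+$ follows by applying the chiral symmetry \eqref{eq:chiralMapp} which swaps $E_+\leftrightarrow E_-$ in this computation), and use Lemma~\ref{lem:Mbasic}(a): $1 - \langle M_1 E_- M_2 E_-\rangle$ can be computed via the $M$-Ward identity \eqref{eq:MWard}, reducing it (to leading order in $\eta$) to an expression of the form $\tfrac12[1 - \langle M^2\rangle]$ or similar, and then the key input is \eqref{eq:1-M2}, namely $|1 - \langle M^2\rangle| \ge 2\langle \Im M\rangle^2 \gtrsim 1$ in the bulk. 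This is precisely the computation already carried out in the proof of Lemma~\ref{lem:splittingstable 1av} (for $\mathfrak{s}=-$, $w_1=w_2$) and in Lemma~\ref{lem:splitting stable 1iso}, so I would essentially invoke those arguments. Finally, a perturbation argument: this order-one lower bound persists under perturbing $(w_1,w_2)$ away from the purely imaginary configuration, as long as $\mathbf{1}_\delta^\mathfrak{s}(w_1,w_2) = 1$ for $\delta$ small enough, using the Lipschitz continuity of $M$ from Lemma~\ref{lem:Mbasic}(b). The consequence for the regularisation denominators is then immediate: the denominator in \eqref{eq:reg A1A2} corresponding to the $E_{\tau\mathfrak{s}}$-term is $\langle M_1 E_{\tau\mathfrak{s}} M_2 E_{\tau\mathfrak{s}}\rangle$ at the appropriate spectral parameters, which is exactly $\mathfrak{s}\tau(1-\beta_{\tau\mathfrak{s}})$ (or its analogue with a conjugated parameter), bounded away from zero by the same estimate together with Lemma~\ref{lem:eigendecomp}(c) for the non-critical sign.

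For part~(b), the plan is to decompose the inverse stability operator along its spectral projections. By Lemma~\ref{lem:eigendecomp}(a), $\mathcal{B}_{12}$ has eigenvalues $\beta_+, \beta_-$ on the two-dimensional span of $R_+ = M_1 E_+ M_2$, $R_- = M_1 E_- M_2$, and eigenvalue $1$ on the orthogonal complement; dually $\mathcal{B}_{12}^*$ has $\bar\beta_\pm$ on $\mathrm{span}(E_+, E_-)$ and $1$ elsewhere. Hence I can write, as in \eqref{decompX},
\begin{equation*}
\mathcal{X}_{12} = (1 - \mathcal{S}[M_1 \cdot M_2])^{-1} = \sum_{\sigma = \pm} \frac{1}{\beta_\sigma}\, \frac{|R_\sigma\rangle\langle L_\sigma|}{\langle L_\sigma, R_\sigma\rangle} + \mathcal{O}(1),
\end{equation*}
where the $\mathcal{O}(1)$ part is bounded because $\mathcal{B}_{12}$ is a rank-two perturbation of the identity (the contour-integral argument below \eqref{decompX}: $\mathcal{S}$ has the explicit form \eqref{Sop}, so $\mathcal{B}_{12} = 1 + T$ with $T$ of rank two, giving $\|(z - \mathcal{B}_{12})^{-1}\| \lesssim \mathrm{dist}(z, \mathrm{Spec}\,\mathcal{B}_{12})^{-2}$). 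The only potentially large term is the one with $\sigma = \mathfrak{s}$, i.e. the critical eigenvalue $\beta = \beta_\mathfrak{s}$, which can be as small as $N^{-1+\epsilon}$; the $\sigma = -\mathfrak{s}$ term is harmless by Lemma~\ref{lem:eigendecomp}(c). Now apply this to $\mathring{A}^\mathfrak{s}$ from \eqref{eq:circsigma bdd}: the critical term contributes $\beta^{-1}\langle L_\mathfrak{s}, \mathring{A}^\mathfrak{s}\rangle R_\mathfrak{s}/\langle L_\mathfrak{s}, R_\mathfrak{s}\rangle$, and $\langle L_\mathfrak{s}, \mathring{A}^\mathfrak{s}\rangle = \langle E_\mathfrak{s}, \mathring{A}^\mathfrak{s}\rangle$; but $\mathring{A}^\mathfrak{s}$ is defined precisely so that the $E_\mathfrak{s}$-component of $A$ corresponding to the critical direction is subtracted — more carefully, one checks that $\langle M_1 \mathring{A}^\mathfrak{s} M_2 E_\mathfrak{s}\rangle = 0$, equivalently $\langle \mathcal{X}_{12}^*[M_1^* \cdot M_2^*]\cdots\rangle$ vanishes against the critical left eigenvector, which is the content of the regularisation. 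Thus the $\beta^{-1}$-singular term drops out and $\|\mathcal{X}_{12}[\mathring{A}^\mathfrak{s}]\| \lesssim \|\mathring{A}^\mathfrak{s}\| \lesssim \|A\|$, where the last bound uses part~(a) (boundedness of the subtracted coefficient). One has to be slightly careful that $R_\mathfrak{s}$, $L_\mathfrak{s}$ are defined at the spectral parameters $(w_1, w_2)$ while the regularisation \eqref{eq:circsigma bdd} also uses $(w_1, w_2)$ with $\mathfrak{s} = -\sgn(\Im w_1\Im w_2)$ — these match by design, so no parameter adjustment (of the Lemma~\ref{lem:regularbasic} type) is needed here.

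The main obstacle I anticipate is bookkeeping rather than conceptual: precisely matching the critical eigentriple \eqref{eq:crit triple} of $\mathcal{B}_{12}$ (and of $\mathcal{B}_{12}^*$, and of the adjoint-stability operator $\mathcal{B}_{m^*n^*}$ appearing through $\mathcal{X}_{12}[B] = ((\mathcal{B}_{12}^*)^{-1}[B^*])^*$) with the exact coefficient subtracted in \eqref{eq:circsigma bdd}, and verifying that the subtraction indeed annihilates the $\beta^{-1}$-term for every relevant constellation of $(w_1, w_2)$ and of adjoints. This requires carefully tracking which of $M_1, M_2$ is conjugated in each place and using $M(w)^* = M(\bar w)$ (Lemma~\ref{lem:MDE}(a)) together with the chiral symmetry \eqref{eq:chiralMapp}; the explicit formulas \eqref{eq:RL pm} and \eqref{eq:Vpmexplicit} make this a finite, if tedious, check. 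The genuinely quantitative input — that the non-critical eigenvalue is order one in the bulk, and that the critical one is bounded below away from the degenerate configuration — is already supplied by Lemma~\ref{lem:eigendecomp}(b)--(c) and the computation recycled from Lemmas~\ref{lem:splittingstable 1av} and~\ref{lem:splitting stable 1iso}.
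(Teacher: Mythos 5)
Your overall strategy matches the paper's: reduce to the degenerate configuration, compute $\langle L, R\rangle$ explicitly there, and carry the order-one bound through a small perturbation using continuity of $M$; for part~(b), peel off the critical rank-one piece of $\mathcal{X}_{12}$ and observe it annihilates $\mathring{A}^\mathfrak{s}$. Part~(b) is essentially right (modulo the initial slip ``$\langle L_\mathfrak{s}, \mathring{A}^\mathfrak{s}\rangle = \langle E_\mathfrak{s}, \mathring{A}^\mathfrak{s}\rangle$'', which you self-correct to the actually needed $\langle M_1 \mathring{A}^\mathfrak{s} M_2 E_\mathfrak{s}\rangle = 0$; and you use the general spectral-projection decomposition \eqref{decompX} where the paper just writes out the explicit rank-two formula for $\mathcal{X}_{12}$, which is shorter).

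However, in part~(a) the computation you propose to invoke bounds the \emph{wrong} quantity. You want to show $|\langle L, R\rangle| = |\langle M_1 E_\mathfrak{s} M_2 E_\mathfrak{s}\rangle| = |1-\beta| \gtrsim 1$, but you then speak of ``$1 - \langle M_1 E_- M_2 E_-\rangle$'' (which is $2-\beta$, not $1-\beta$) and cite Lemmas~\ref{lem:splittingstable 1av} and~\ref{lem:splitting stable 1iso}, which bound $1 - c_-(\mathcal{X}[E_-\hat\Lambda]M)$ and $1-c_\pm(\mathcal{X}_{12}[\mathring{\Phi}_\pm]M_2)$ --- quantities of a genuinely different shape. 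The bound $|1-\langle M^2\rangle| \ge 2\langle\Im M\rangle^2$ from \eqref{eq:1-M2} is the lower bound on the \emph{non-critical} eigenvalue $\beta_{-\mathfrak{s}}$ (it is precisely $|\beta_{-\mathfrak{s}}|$ when $w_2=-\mathfrak{s}w_1$; see the proof of Lemma~\ref{lem:eigendecomp}(c)), not a computation of $\langle L, R\rangle$. The relevant identity is the one the paper uses: at $w_2 = \mathfrak{s}\bar{w}_1$, chiral symmetry \eqref{eq:chiralM} gives $\langle L, R\rangle = \mathfrak{s}\langle M_1 M_1^*\rangle$, and the saturation identity \eqref{eq:saturation} turns this into $\mathfrak{s}\langle\Im M_1\rangle/(\Im w_1 + \langle\Im M_1\rangle) \sim 1$ in the bulk. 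Your choice of ``both $w_i$ on the imaginary axis'' only covers $\mathfrak{s}=-$, and ``chiral symmetry swaps $E_+\leftrightarrow E_-$'' is not quite how \eqref{eq:chiralMapp} acts; the paper's reduction to $w_2 = \mathfrak{s}\bar{w}_1$ handles both signs uniformly and is the path you should take.
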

In particular, combining Lemma \ref{lem:Mbasic}~(b) with Lemma \ref{lem:boundedpert}~(a), \eqref{eq:chiralM}, and Lemma~\ref{lem:Mbasic}~(a), we conclude the perturbative statements from Lemma \ref{lem:regularbasic}. %and Lemma \ref{lem:perturb mainthm}. 
\begin{proof}[Proof of Lemma \ref{lem:boundedpert}] For part (a), similarly to the proof of Lemma \ref{lem:eigendecomp}~(c) given below, we focus on the extreme case $w_2 = \mathfrak{s} \bar{w}_1$, where the critical eigentriple is given by
	\begin{equation} \label{eq:crit triple special}
		\big(\beta = 1 - \mathfrak{s}  \langle M(w_1)  E_\mathfrak{s}  M(\mathfrak{s} \bar{w}_1) E_\mathfrak{s}  \rangle\,, \ R = M(w_1) E_\mathfrak{s}  M(\mathfrak{s}\bar{w}_1) \,, \ L = E_\mathfrak{s} \big)\,.
	\end{equation}
	Now by means of the chiral symmetry \eqref{eq:chiralM}, we readily obtain
	\begin{equation*}
		\langle L, R \rangle = \mathfrak{s} \langle M_1 M_1^* \rangle = \mathfrak{s}  \frac{\langle \Im M_1 \rangle}{\Im w_1 + \langle \Im M_1 \rangle} \sim 1\,,
	\end{equation*}
	where we used \eqref{eq:saturation} in the second step. This principal normalisation of order persists after small perturbation of $w_2$ around the extreme case, but as long as $\mathbf{1}_\delta^\mathfrak{s} (w_1, w_2) =1$. Our claim for the denominators in the regularisation \eqref{eq:reg A1A2} follows immediately from the representation in \eqref{eq:crit triple special}.

	For part (b), we first note that, by means of Lemma \ref{lem:eigendecomp}, the statement is trivial for constellations of spectral parameters $w_1, w_2$ satisfying $\mathbf{1}_\delta^\mathfrak{s} (w_1, w_2) = 0$ and we can hence focus on the complementary extreme case $\mathbf{1}_\delta^\mathfrak{s} (w_1, w_2) = 1$. Then it follows from the explicit form
	\begin{equation*}
		\mathcal{X}_{12}[B] = B + \sum_\sigma \sigma \frac{\langle M_1 B M_2 E_\sigma \rangle}{1 - \sigma \langle M_1 E_\sigma M_2 E_\sigma \rangle} E_\sigma
	\end{equation*}
	and Lemma \ref{lem:eigendecomp} that
	\begin{equation} \label{eq:X12bdd}
		\mathcal{X}_{12}[B] = \mathfrak{s}  \frac{1}{\beta} \langle M_1 B M_2 E_\mathfrak{s} \rangle E_\mathfrak{s} + \mathcal{O}(1)[B]\,,
	\end{equation}
	where $\mathcal{O}(1)$ is a shorthand notation for a linear operator $\mathcal{E}:\C^{2N \times 2N} \to \C^{2N \times 2N}$ satisfying $\Vert \mathcal{E}[B] \Vert \lesssim \Vert B \Vert$. Now, plugging $\mathring{A}^\mathfrak{s} $ from \eqref{eq:circsigma bdd} into \eqref{eq:X12bdd} yields the desired.
\end{proof}

It remains to give the proof of Lemma \ref{lem:eigendecomp}.
\begin{proof}[Proof of Lemma \ref{lem:eigendecomp}]
	For (a), we first observe that, due to the simple structure of $\mathcal{S}[\cdot]$, indeed $(2N)^2-2$ of the $(2N)^2$ eigenvalues of $\mathcal{B}$ are equal to one. The expressions \eqref{eq:beta pm} and \eqref{eq:RL pm} can be verified by direct computation, invoking Lemma \ref{lem:Mbasic} in combination with the chiral symmetry \eqref{eq:chiralM}.

	For (b) with $w_1 \neq \pm w_2$, we first find that
	\begin{equation} \label{eq:betalower1}
		\frac{1}{\beta_{\pm}} = \frac{1}{1 \mp \langle M_1 E_\pm M_2 E_\pm \rangle} = 1 + \frac{\langle M_1 \rangle \mp \langle M_2 \rangle }{w_1 \mp w_2}
	\end{equation}
	as a consequence of Lemma \ref{lem:Mbasic}~(a) and the chiral symmetry. Now, using that $\big|\langle M \rangle\big| \le \langle M M^* \rangle^{1/2} < 1$, which follows from $M M^* = \Im M /(\Im w + \langle \Im M \rangle)$ (see Lemma \ref{lem:Mbasic}~(a)), we conclude that
	\begin{equation} \label{eq:betalower11}
		| \beta_\pm| \gtrsim   |\Re w_1 \mp \Re w_2| \wedge 1
	\end{equation}
	by application of a triangle inequality in \eqref{eq:betalower1}. Next, we estimate
	\begin{equation} \label{eq:betalower2}
		\min\big\{ | \beta_+|\,,  |\beta_-|  \big\}\ge \big| 1 - \langle M_1 M_1^* \rangle^{1/2} \langle M_2 M_2^* \rangle^{1/2} \big| \gtrsim \big(|\Im w_1| + |\Im w_2|\big)\wedge 1\,,
	\end{equation}
	where in the first step we used $\langle M M^* \rangle < 1$ together with a Schwarz inequality, and \eqref{eq:saturation} in the second step. Combining \eqref{eq:betalower11} and \eqref{eq:betalower2} yields the claim.

	Finally, for (c), we consider the case of small imaginary parts for the spectral parameters (the complementary regime being trivial) and focus on the extreme case $ w_1 = -\mathfrak{s} w_2$. Then, using \eqref{eq:chiralM} and \eqref{eq:1-M2}, we obtain
	\begin{equation}
		\big|\beta_{-\mathfrak{s} }\big| = \big|1 - \langle M_1^2 \rangle\big| \ge 2 \langle \Im M_1 \rangle^2 \gtrsim 1\,.
	\end{equation}
	This principal lower bound persists after small perturbations of $w_2$, and the complementary regime can be dealt with by \eqref{eq:beta pm lowerbound}.
\end{proof}

\section{Proof of Theorem \ref{thm:singleG}} \label{app:locallaw}
In this appendix, we give a short proof of the usual single resolvent local law in the bulk given in Theorem \ref{thm:singleG}. In the literature, bulk local laws are established under the usual \emph{flatness} assumption (see \cite[Assumption~E]{slowcorr}) on the self-energy operator $\mathcal{S}$ (recall \eqref{eq:flatness}). However, for our model, the stability operator $\mathcal{S}[R] = \sum_\sigma \sigma \langle R E_\sigma \rangle E_\sigma$ \emph{violates} the lower bound in the flatness condition \eqref{eq:flatness}, which is why we need to provide a separate argument. The main idea is that lacking of the lower bound in \eqref{eq:flatness} is compensated by the orthogonality relation $\langle GE_- \rangle = \langle ME_- \rangle = 0$ as a consequence of \eqref{eq:GMtrace}.

The following argument heavily relies on  \cite[Theorem~4.1]{slowcorr}, where a general high-moment bound on the underlined term in
\begin{equation} \label{eq:1G basic exp app}
	\langle (G-M)B \rangle = - \langle \underline{WG}\mathcal{X}[B]M \rangle + \langle G - M \rangle \langle (G-M)  \mathcal{X}[B]M\rangle
\end{equation}
and its isotropic counterpart (see \eqref{eq:1G basic exp app iso} below) has been shown. We stress that this estimate from \cite{slowcorr} does \emph{not} require the lower bound in \eqref{eq:flatness} for the self-energy operator $\mathcal{S}$. As usual, we suppressed the spectral parameter $w \in \C\setminus \R$ satisfying $\Re w \in \mathbf{B}_\kappa$ for some fixed $\kappa > 0$ from the notation. The expansion \eqref{eq:1G basic exp app} for an arbitrary deterministic matrix $B \in \C^{2N \times 2N}$ has already been established in \eqref{eq:1G basic exp}, where we introduced the linear operator $\mathcal{X}[B]:= \big(1 - \mathcal{S}[M \cdot M]\big)^{-1}[B]$ acting on matrices.

For given $B$, we now decompose it into its $(-)$-regular and $(-)$-singular component (see \eqref{eq:circsigma bdd}, the cutoff function being irrelevant here),
\begin{equation*}
	B = \mathring{B}^- + \frac{\langle M B M E_-\rangle }{\langle ME_-M E_- \rangle }E_- \,,
\end{equation*}
respectively. For the second summand, we note that $\langle G E_- \rangle = \langle M E_- \rangle = 0$, and we can hence focus on the regular component, i.e.~assume that $B = \mathring{B}^-$ is $(-)$-regular.

In this case, for a bounded deterministic $\Vert B \Vert \lesssim 1$ we thus have $\Vert \mathcal{X}[B]\Vert \lesssim 1$ from Lemma~\ref{lem:boundedpert}. With the high-moment bound on the underlined term from \cite[Theorem 4.1, part (b)]{slowcorr} one can conclude the proof of Theorem \ref{thm:singleG} in the averaged case, $|\langle (G-M)B \rangle| \prec (N\eta)^{-1}$, by a standard \emph{bootstrap} argument (see, e.g., \cite[Sections 5.3 and 5.4]{slowcorr}).

In the isotropic case, we evaluate \eqref{eq:1G basic exp app} for $B = 2 N \ket{\boldsymbol{y}} \bra{\boldsymbol{x}}$, where $\boldsymbol{x}, \boldsymbol{y} \in \C^{2N}$ are deterministic vectors in  with $\Vert \boldsymbol{x} \Vert, \Vert \boldsymbol{y}\Vert \lesssim 1$. More precisely, we subtract its $(-)$-singular component (which can be dealt with separately as explained above) and insert
\[
	B = \mathring{B}^- = 2N \ket{\boldsymbol{y}} \bra{\boldsymbol{x}} - \frac{\langle \boldsymbol{x}, M E_- M \boldsymbol{y}\rangle}{\langle M E_- M E_- \rangle} E_-
\]
in the expansion \eqref{eq:1G basic exp app}, which leaves us with
\begin{align} \label{eq:1G basic exp app iso}
	\big(G-M\big)_{\boldsymbol{x}\boldsymbol{y}} = & - \big(\underline{WG}\big)_{\boldsymbol{x} (M \boldsymbol{y})} + \langle G-M \rangle \big(G-M\big)_{\boldsymbol{x} (M \boldsymbol{y})}                                                                                                                                                                      \\
	                                               & + \left[\frac{\langle \boldsymbol{x}, M E_- M \boldsymbol{y}\rangle}{\langle M E_- M E_- \rangle} + \frac{\langle \boldsymbol{x}, M^2 \boldsymbol{y}\rangle }{1 - \langle M^2 \rangle}\right]  \big[\langle \underline{WG} E_-M \rangle - \langle G-M\rangle \langle (G-M)E_- M \rangle\big]\,.   \nonumber
\end{align}
After realizing that the denominators in \eqref{eq:1G basic exp app iso} are bounded away from zero (see Lemma \ref{lem:eigendecomp} and Lemma~\ref{lem:boundedpert}), the proof of Theorem \ref{thm:singleG} in the isotropic case, $\big| \big(G-M\big)_{\boldsymbol{x}\boldsymbol{y}}\big| \prec (N \eta)^{-1/2}$, can be concluded again by a standard \emph{bootstrap} argument, now using the high-moment bound from \cite[Theorem 4.1, part (a)]{slowcorr} and the already proven averaged law $|\langle (G-M)B \rangle| \prec (N\eta)^{-1}$ with $\Vert B \Vert \lesssim 1$ as an input.

\section{Bounds on the deterministic approximations: Proof of Lemma \ref{lem:Mbound}} \label{app:Mbound}
%{\color{blue}[We need to give one (!) honest clean definition of the $M$'s somewhere (probably by a single recursive relation). Then, we show via the meta argument that also the other recursive relations are satisfied (I need not only one relation to prove Lemma \ref{lem:Mbound}). We should write the first lemma below as some kind of Definition + Lemma (showing in particular that the recursion makes sense), combining it with the meta argument. Also an explicit formula would be interesting (and maybe the best definition)]}
The goal of this appendix is to prove the bounds from Lemma \ref{lem:Mbound} on the deterministic approximation
\begin{equation*}
	M(w_1, B_1, w_2, ... , B_{k-1}, w_k)
\end{equation*}
to a resolvent chain
\begin{equation*}
	G(w_1) B_1 G(w_2) \cdots B_{k-1} G(w_k)\,.
\end{equation*}
While $M(w_1, ..., w_k)$ has been introduced for an arbitrary number $k$ of spectral parameters $w_1, ... , w_k$ in Definition \ref{def:Mdef}, the bounds in Lemma \ref{lem:Mbound} shall be proven for $k$ at most five and the deterministic matrices $B_1, ... , B_{k-1}$ being regular w.r.t.~to the surrounding spectral parameters.

As a preparation for the proof of Lemma~\ref{lem:Mbound}, we shall now show that $M(w_1, ... , w_k)$ from \eqref{eq:M_definitionapp} satisfies multiple recursive relations, called  {\it recursive Dyson equations},
by using a so-called \emph{meta argument}, that relies on the fact that $M(w_1, ... , w_k)$ actually approximates a chain of products of resolvents. In fact, we only picked one of the recursive relations (namely \eqref{eq:recursion1} with $j=1$) for actually \emph{defining} $M(w_1, ... , w_k)$ in Definition~\ref{def:Mdef}. Although the second recursion relation \eqref{eq:recursion2} will not be used in the proof of Lemma \ref{lem:Mbound}, it is obtained completely analogous to \eqref{eq:recursion1} and we hence give it for completeness. A similar meta argument has been done several times, see e.g.~\cite{metaargument}. For convenience of the reader we repeat it in our setup.
%The main result of the present subsection is the following recursive formula for the deterministic counterparts \eqref{eq:Ms} of multi-resolvent chains with arbitrary deterministic matrices $A_i$ in between.
\begin{lemma} {\rm (Recursive Dyson equations for $M(w_1, ... , w_k)$, see \cite[Lemma~4.1]{multiG})} \label{lem:recurel} \\
	Fix $k \in \N$. Let $w_1, ... , w_{k} \in \C\setminus \R$ be spectral parameters and $B_1, ... , B_{k-1} \in \C^{2N \times 2N}$ deterministic matrices. Then for any $1\le j \le k$ we have the relations
	\begin{align}
		M(w_1, & ... , w_{k}) = M(w_1, ... , w_{j-1}, B_{j-1} M(w_j) B_{j}, w_{j+1}, ... , w_{k}) \label{eq:recursion1}                                                                               \\
		       & + \sum_{\sigma = \pm} \sum_{l = 1}^{j-1} \sigma M(w_1, ... ,B_{l-1}, w_l, E_\sigma, w_j, B_{j}, ... , w_{k}) \langle M(w_l, ... , w_{j-1}) B_{j-1} M(w_j) E_\sigma \rangle \nonumber \\
		       & + \sum_{\sigma = \pm} \sum_{l = j+1}^{k} \sigma M(w_1, ... ,  B_{j-1} M(w_j) E_\sigma, w_l, B_{l}... , w_{k}) \langle M(w_j,  ... , w_l) E_\sigma \rangle \nonumber
	\end{align}
	and
	\begin{align}
		M(w_1, & ... , w_{k}) = M(w_1, ... , w_{j-1}, B_{j-1} M(w_j) B_{j}, w_{j+1}, ... , w_{k}) \label{eq:recursion2}                                                                                   \\
		       & + \sum_{\sigma = \pm} \sum_{l = 1}^{j-1} \sigma M(w_1, ... ,  B_{l-1}, w_l, E_\sigma M(w_j) B_j , ... ,  w_{k}) \langle M(w_l, ... , w_{j}) E_\sigma \rangle \nonumber                   \\
		       & + \sum_{\sigma = \pm} \sum_{l = j+1}^{k} \sigma M(w_1, ... ,B_{j-1}, w_j, E_\sigma, w_l, B_{l}, ... , w_{k}) \langle M(w_j) B_{j} M(w_{j+1}, ... , w_{l})  E_\sigma \rangle \nonumber\,.
	\end{align}
	If $j=1$ or $j= k$, we define $B_0 = E_+$ resp.~$B_{k} = E_+$ in \eqref{eq:recursion1} and \eqref{eq:recursion2}.
\end{lemma}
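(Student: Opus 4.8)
\textbf{Proof plan for Lemma \ref{lem:recurel} (recursive Dyson equations).}

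The plan is to run the standard \emph{meta argument}: since $M(w_1,\dots,w_k)$ is \emph{defined} (via Definition \ref{def:Mdef}, i.e.\ via the recursion \eqref{eq:M_definitionapp}, which is \eqref{eq:recursion1} for $j=1$ with all $B_i = E_+$ inserted as needed) to be the deterministic approximation of the resolvent chain $G(w_1)B_1 G(w_2)\cdots B_{k-1}G(w_k)$, any \emph{identity} satisfied by that random chain --- valid up to errors that vanish as $N\to\infty$ --- must in fact be satisfied \emph{exactly} by the deterministic objects $M(\dots)$, because two continuous deterministic functions of the spectral parameters that agree up to $o(1)$ for all admissible $(w_1,\dots,w_k)$ (in a suitable domain, and for all deterministic $B_i$ with $\|B_i\|\le 1$) must coincide. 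Thus it suffices to verify the \emph{resolvent-level} analogue of \eqref{eq:recursion1} and \eqref{eq:recursion2} and then invoke the local laws (Theorems \ref{thm:singleG}, \ref{thm:singleGopt}, \ref{thm:multiGll} and their iterates for longer chains, all available in the $\eta>1$ regime where this lemma is needed in Appendix \ref{app:Mbound}) to pass to the limit.

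Concretely, first I would fix $1\le j\le k$ and expand the middle resolvent $G(w_j)$ using the self-consistent equation in the form \eqref{eq:start}, namely $G(w_j) = M(w_j) - M(w_j)\underline{WG(w_j)} + G(w_j)\mathcal{S}[G(w_j)-M(w_j)]M(w_j)$ (or the symmetric version, for \eqref{eq:recursion2}). Substituting this into the chain $G(w_1)B_1\cdots B_{j-1}G(w_j)B_j\cdots B_{k-1}G(w_k)$ produces: (i) the ``contracted'' chain $G(w_1)B_1\cdots B_{j-2}(B_{j-1}M(w_j)B_j)G(w_{j+1})\cdots G(w_k)$, which matches the first term on the right of \eqref{eq:recursion1}; (ii) an underlined term whose expectation is $o(1)$ and whose fluctuation is controlled by the local laws, hence negligible; and (iii) the $\mathcal{S}[\,\cdot\,]$-term, which using $\mathcal{S}[T]=\sum_\sigma \sigma\langle TE_\sigma\rangle E_\sigma$ from \eqref{Sop} factorizes the chain at the $E_\sigma$-insertion: $\langle G(w_l)\cdots B_{j-1}M(w_j)E_\sigma\rangle$ times $G(w_1)\cdots E_\sigma G(w_j) B_j\cdots G(w_k)$, and analogously on the other side. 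Each factor is again a resolvent chain, so by the local laws it concentrates around the corresponding $M(\dots)$; collecting the terms over $l=1,\dots,j-1$ and $l=j+1,\dots,k$ reproduces exactly the two sums in \eqref{eq:recursion1}. The bookkeeping of which $B$'s and $E_\sigma$'s land in which $M(\dots,\dots)$ block is the only slightly delicate part, but it is purely combinatorial and follows the pattern already recorded in \cite[Lemma~4.1]{multiG}. The relation \eqref{eq:recursion2} is obtained in the same way, expanding $G(w_j)$ with the $W$ on the other side (second identity in \eqref{eq:start}).

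The main obstacle --- and it is a mild one --- is making the meta argument rigorous: one must check that for the range of $(w_1,\dots,w_k)$ and $B_i$ under consideration both sides of \eqref{eq:recursion1} are genuine (deterministic, continuous, $N$-bounded after the appropriate $\eta$-powers are factored out) functions, and that the error in the resolvent-level identity is uniformly $o(1)$, so that equality of the deterministic limits is forced. For $k\le 5$ and regular $B_i$ this is exactly the content of the local laws cited above together with the norm bounds $\|M(w)\|\lesssim 1/|\Im w|$ (in the $\eta>1$ regime); for general $k$ one argues by induction on $k$, using that the recursion \eqref{eq:M_definitionapp} already expresses longer $M$'s through strictly shorter ones, so no circularity arises. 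I would then simply remark that \eqref{eq:recursion1} and \eqref{eq:recursion2} with $j=1$ and $B_0:=E_+$ are consistent with Definition \ref{def:Mdef}, which closes the loop. Finally, with Lemma \ref{lem:recurel} in hand, Lemma \ref{lem:Mbound} follows by inducting on the chain length $k$: feeding the recursion \eqref{eq:recursion1} (say with $j=\lceil k/2\rceil$, to split the chain roughly in half) and using regularity of the $B_i$ --- which by the definition of $\mathring{\cdot}$ and Lemma \ref{lem:eigendecomp}/\ref{lem:boundedpert} removes the critical direction of the stability operators and hence gains the $\sqrt\eta$ per regular matrix --- together with the bound $\|\mathcal{B}_{mn}^{-1}\|\lesssim 1$ on regular inputs, yields the stated powers of $\eta$; the $\eta>1$ bounds are the trivial norm estimates.
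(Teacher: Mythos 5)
Your overall strategy matches the paper's: expand $G(w_j)$ via the self-consistent equation inside the resolvent chain, read off the candidate recursion at the resolvent level, and argue that the deterministic counterparts must satisfy the same identity. However, there is a genuine gap in your limiting step. You claim that because the resolvent-level identity holds up to errors vanishing as $N\to\infty$, ``two continuous deterministic functions of the spectral parameters that agree up to $o(1)$ \ldots\ must coincide.'' This is false at a fixed $N$: the objects $M(w_1,B_1,\dots,w_k)$ are specific $N$-dependent matrices in $\C^{2N\times 2N}$, and there is no $N\to\infty$ limit to invoke (the deformation $\Lambda$ and the observables $B_i$ are arbitrary $N$-dependent matrices with no limiting structure). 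Approximate agreement of a pair $(a_N,b_N)$ up to $o(1)$ as $N\to\infty$ says nothing about $a_N=b_N$ for a fixed $N$, which is exactly what the lemma asserts. The local laws you invoke can only deliver the identity with an uncontrolled $O(1/(N\eta))$ error at each $N$, never exact equality.

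The missing ingredient is the tensorization device the paper uses to make the meta argument rigorous. Fix $N$ and embed the problem into a $(dN)\times(dN)$ model: a Ginibre matrix $\boldsymbol{X}^{(d)}$ with variance $1/(dN)$, deformation $\Lambda\otimes I_d$, observables $B_i\otimes I_d$. The crucial structural fact is that the MDE solution factorizes, $\boldsymbol{M}^{(d)}=M\otimes I_d$, and hence $\boldsymbol{M}^{(d)}(w_1,\dots,w_k)=M(w_1,\dots,w_k)\otimes I_d$; both sides of \eqref{eq:recursion1} are therefore $d$-independent after normalization. Taking expectation with $X$ Gaussian (so the underlined term vanishes identically rather than merely with high probability), the error in the expanded chain is $O(1/(Nd))$, and sending $d\to\infty$ with $N$ held fixed forces exact equality of the $N$-dependent deterministic objects. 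Without some such device your plan does not close; an alternative would be a direct combinatorial verification that the various recursions define the same object, as is done for Wigner matrices in \cite{thermalisation}, but that is a different route from the one you sketch.
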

The formulas \eqref{eq:recursion1} and \eqref{eq:recursion2} shall be derived by expanding the $j^{\rm th}$ resolvent $G_j$ in the resolvent chain $G_1B_1 \ \cdots G_j B_j \ \cdots \  B_{k-1} G_k$ corresponding to $M(w_1, ... , w_k)$ in an underlined term, once to the right (for \eqref{eq:recursion1}, see \eqref{eq:rightexp}) and once to the left (for \eqref{eq:recursion2}, see \eqref{eq:leftexp}). Altogether, this yields $2k$ different recursions for $M(w_1, ...,w_k)$, which are listed in the above lemma. Moreover, it would be possible to prove directly that all these different recursions define the same $M(w_1, ... , w_k)$. This strategy has been used in a much simpler setup \cite{thermalisation} dealing with Wigner matrices. Here, we find it simpler to use the alternative meta argument.
\begin{proof} The principal idea is to derive the respective relations \eqref{eq:recursion1} and \eqref{eq:recursion2} on the level of resolvent chains $G_1 B_1 \cdots B_{k-1} G_k$, which, after taking the expectation and using that $G_i \approx M_i$ from Theorem~\ref{thm:singleG}, yields the same relation on the level of the deterministic approximations. For the purpose of proving identities about $M(w_1, ..., w_k)$, we may use the most convenient distribution for $X$, namely Gaussian. For the sake of this proof, we thus assume the single entry distribution $\chi$ of $X$ to be a standard complex Gaussian $\chi = \mathcal{N}_\C(0,1)$, i.e.~$X$ in Assumption \ref{ass:iid} is a complex Ginibre matrix, in which case it holds that (recall the discussion below \eqref{eq:underline})
	\begin{equation} \label{eq:zeroexp}
		\E \underline{f(W)W g(W)} = 0 \,.
	\end{equation}
	Let  $w_1, ..., w_k \in \C\setminus\R$ be arbitrary (but fixed!) spectral parameters. We now conduct the \emph{meta argument}, consisting of three steps.
	\\[1mm]
	\underline{\textbf{Step 1.}} We consider the resolvent chain
	\begin{equation} \label{eq:Gchainmeta}
		G_1 B_1 \ \cdots \  B_{k-1} G_k\,.
	\end{equation}
	Expanding $G_1$ via the identity
	\begin{equation*} %\label{eq:recursion1 basic}
		G_1 = M_1 - M_1 \underline{W G_1} + M_1 \mathcal{S}[G_1-M_1] G_1
	\end{equation*}
	and using $\mathcal{S}[G_1-M_1] = \langle G_1 - M_1\rangle$ from \eqref{eq:GMtrace}, we find that
	\begin{align}
		  & G_1 B_1 \ \cdots  \ B_{k-1} G_k \nonumber                                                                                                                                                               \\
		= & M_1 B_1  \ \cdots  \ B_{k-1}G_k -  M_1 \underline{W G_1} B_{1} \ \cdots \ B_{k-1} G_k + \langle G_1-M_1\rangle \,  M_1 G_1 B_1 \ \cdots  \ B_{k-1} G_k \nonumber                                        \\
		= & M_1 B_1  \ \cdots  \ B_{k-1}G_k  + \sum_{\sigma = \pm} \sum_{l=2}^{k-1}  \sigma M_1 \langle G_1 B_1 \ \cdots \ B_{l-1} G_l E_\sigma \rangle E_\sigma G_l B_l \ \cdots \ B_{k-1} G_k \label{eq:metathm1} \\
		  & -M_1 \underline{W G_1  B_1 \ \cdots \ B_{k-1} G_k }+ \langle G_1-M_1\rangle \,  M_1 G_1 B_1  \ \cdots \ B_{k-1}G_k + M_1 \mathcal{S}[G_1 B_1 \ \cdots  \ B_{k-1} G_k ] M_k\,,\nonumber
	\end{align}
	where in the last step we distributed the derivatives coming from the definition of the underline in \eqref{eq:underline} according to the Leibniz rule. Now, \eqref{eq:metathm} can be rewritten as
	\begin{align}
		G_1 B_1 & \ \cdots  \ B_{k-1} G_k \nonumber                                                                                                                                                                                            \\
		=       & (\mathcal{B}_{1k})^{-1} \bigg[ M_1 B_1  \ \cdots  \ B_{k-1}G_k  + \sum_{\sigma = \pm} \sum_{l=2}^{k-1}  \sigma M_1 \langle G_1 B_1 \ \cdots \ B_{l-1} G_l E_\sigma \rangle E_\sigma G_l B_l \ \cdots \ B_{k-1} G_k \nonumber \\
		        & -M_1 \underline{W G_1  B_1 \ \cdots \ B_{k-1} G_k }+ \langle G_1-M_1\rangle \,  M_1 G_1 B_1  \ \cdots \ B_{k-1}G_k \bigg] \,.\label{eq:E=0line}
	\end{align}
	Apart from the last two terms in \eqref{eq:E=0line}, this is the exact same relation on the level of resolvents as in Definition \ref{def:Mdef} for $M(w_1, ..., w_k)$.
	\\[1mm]
	\underline{\textbf{Step 2.}} Let the original matrix size $N$ be fixed. For any $d \in \N$, we consider the $dN \times dN$ Ginibre random matrix $\bm{X}^{(d)}$ with entries having variance $1/(dN)$, and the deformation $\bm{\Defo}^{(d)} := \Defo \otimes I_d \in \C^{dN \times dN}$, where $I_d \in \C^{d \times d}$ is the identity matrix. Analogously to \eqref{eq:Defohat} and \eqref{eq:herm}, we also define the Hermitisations $\hat{\bm{\Defo}}^{(d)}$ and $\bm{W}^{(d)}$, as well as the resolvents $\bm{G}^{(d)}_i = \bm{G}^{(d)}(w_i):= (\bm{W}^{(d)} + \hat{\bm{\Defo}}^{(d)} - w_i)^{-1}$. It is crucial to observe that the correspondingly modified MDE
	\begin{equation*}
		- \frac{1}{\bm{M}^{(d)}} = w - \hat{\bm{\Defo}}^{(d)} + \mathcal{S}^{(d)}[\bm{M}^{(d)}]
	\end{equation*}
	under the usual $\Im w \, \Im \bm{M}^{(d)} >0$ constraint with
	\begin{equation*}
		\mathcal{S}^{(d)}[R] :=	\widetilde{\E} \widetilde{\bm{W}}^{(d)} R \widetilde{\bm{W}}^{(d)} =  \sum_\sigma \sigma \langle R \, \bm{E}^{(d)}_\sigma\rangle \bm{E}^{(d)}_\sigma \,, \quad \text{where} \quad \bm{E}^{(d)}_\sigma := E_\sigma \otimes I_d\,,
	\end{equation*}
	has the \emph{unique solution} $\bm{M}^{(d)} = M \otimes I_d$, where $M$ is the unique solution of the MDE \eqref{eq:MDE} on $\C^{2N \times 2N}$. In particular, if we define $\bm{B}_i^{(d)} := B_i \otimes I_d$ for all $i \in [k]$, then it holds that \eqref{eq:M_definitionapp} defined with $\bm{M}^{(d)}_i$ and $\bm{B}_i^{(d)} $ as inputs, also satisfies $\bm{M}^{(d)}(w_1, \bm{B}^{(d)}_1, ... , \bm{B}^{(d)}_{k-1}, w_k) = M(w_1, B_1, ..., B_{k-1}, w_k) \otimes I_d$.

	We now multiply the analogue of \eqref{eq:E=0line} in boldface matrices by some $\bm{B}^{(d)}_k = B_k \otimes I_d$ with $B_k \in \C^{2N \times 2N}$ and take the averaged trace. Next, by means of \eqref{eq:zeroexp}, taking the expectation of the resulting expression removes the underlined term. Hence, using the one-to-one correspondence between the terms in the second line of \eqref{eq:E=0line} and the terms on the rhs.~of \eqref{eq:M_definitionapp}, mentioned below \eqref{eq:E=0line}, it follows by telescopic replacement and a simple induction on the length $k$ of the chain,  that
	\begin{equation} \label{eq:EG=M}
		\lim\limits_{d \to \infty}\E \big\langle \bm{G}^{(d)}_1 \bm{B}^{(d)}_1 \ \cdots \ \bm{G}^{(d)}_k \bm{B}^{(d)}_k \big\rangle = \langle M(w_1, B_1, ... , w_k) B_k \rangle
	\end{equation}
	by means of the usual \emph{global law} \cite[Theorem~2.1]{slowcorr} for the last term on the rhs.~of \eqref{eq:E=0line}. In fact, due to the tensorisation, we have that $|\langle \bm{G}_1^{(d)} - \bm{M}_1^{(d)}\rangle| \prec 1/(Nd)$ since $|\Im w_1| \gtrsim 1$, where the implicit constant potentially depends on $N$ but not on $d$.

	We emphasise that the tensorisation by $I_d$ is indeed a necessary step, since the matrices $M_i$ and $B_i$ are $N$-dependent and hence one cannot take the limit $N \to \infty$ in \eqref{eq:EG=M} for $d=1$.
	\\[1mm]
	\underline{\textbf{Step 3.}} Having \eqref{eq:EG=M} at hand, the recursive relations in \eqref{eq:recursion1} and \eqref{eq:recursion2} can be proven as follows: For \eqref{eq:recursion1}, let $1 \le j \le k$ and expand $G_j$ in \eqref{eq:Gchainmeta} according to
	\begin{equation} \label{eq:rightexp}
		G_j = M_j - M_j \underline{W G_j} + M_j \mathcal{S}[G_j-M_j] G_j\,,
	\end{equation}
	which yields, analogously to \eqref{eq:metathm1},
	\begin{align}
		G_1 \ \cdots \ B_{j-1} & G_j B_{j} \ \cdots \ G_k  =  G_1 \ \cdots \ B_{j-1} M_j B_{j} \ \cdots \ G_k \label{eq:metathm}                                                                             \\
		                       & + \sum_{\sigma = \pm}\sum_{l=1}^{j-1} \sigma G_1  \ \cdots \ B_{l-1} G_l \langle G_l \ \cdots \ G_{j-1} B_{j-1}M_j E_\sigma\rangle E_\sigma G_j B_j \ \cdots \ G_k\nonumber \\
		                       & + \sum_{\sigma = \pm}\sum_{l=j+1}^{k} \sigma G_1  \ \cdots \ B_{j-1} M_j \langle G_j B_j \ \cdots \ B_{l-1} G_l E_\sigma \rangle E_\sigma G_l B_l \ \cdots \ G_k \nonumber  \\
		                       & - \underline{G_1  \ \cdots \ B_{j-1} M_j W G_j B_{j} \ \cdots \ G_k }+ \langle G_j-M_j\rangle \,  G_1  \ \cdots \ B_{j-1}  M_j G_j  B_{j} \ \cdots \ G_k\,. \nonumber
	\end{align}
	Hence, after taking the trace against some arbitrary $B_k \in \C^{2N \times 2N}$, by performing the tensorisation from \textbf{Step 2}, taking an expectation, and using \eqref{eq:EG=M}, we obtain \eqref{eq:recursion1}, but in a trace against $B_k$. However, since $B_k$ was arbitrary, we conclude the desired.

	For the second recursion \eqref{eq:recursion2}, the argument is identical except from the fact that we expand $G_j$ in \eqref{eq:Gchainmeta} according to
	\begin{equation} \label{eq:leftexp}
		G_j = M_j -  \underline{ G_j W} M_j + G_j \mathcal{S}[G_j-M_j] M_j\,.
	\end{equation}
	%we take it In fact, our definition \eqref{eq:M_definitionapp} of $M(w_1,..., w_k)$ also for which we First of all, for the entire proof, we assume the single entry distribution to 
	%The recursive relations follow from an expansion for the corresponding resolvent chain, where the spectral parameters $w_i$ with $|\Im w_i| \gtrsim 1$ are fixed. The derivation of \eqref{eq:recursion1} is based on an expansion of the resolvent $G_j$ via the identity
	%Apart from this distinction, their proofs are completely analogous, and we hence focus on \eqref{eq:recursion1} only. Then, using \eqref{eq:recursion1 basic} and $\mathcal{S}[G_j-M_j] = \langle G_j - M_j\rangle$, we find
	%where we used \eqref{eq:underline} and the Leibniz rule in the second step. We now test \eqref{eq:metathm} against an arbitrary matrix $B \in \C^{2N \times 2N}$ and let the entries of $X$ be complex Gaussian random variables. Then, taking the expectation $\E[\cdots]$ and letting $N \to \infty$ removes the full underline term and the other error term in the last line of \eqref{eq:metathm} by application of \eqref{eq:single G}. After realizing that $\mathcal{S}[T] = \sum_\sigma \sigma \langle T E_\sigma \rangle E_\sigma $, this yields \eqref{eq:recursion1}. 
\end{proof}
The recursive relations from Lemma \ref{lem:recurel} can be used to show the bounds from Lemma \ref{lem:Mbound} on the deterministic counterparts in the definition of $\Psi_k^{\rm av/iso}$ in \eqref{eq:Psi avk} resp.~\eqref{eq:Psi isok} for $k \le 4$. Recall that all deterministic matrices $A_i$ appearing in the respective averaged or isotropic chain are regular in the sense of Definition \ref{def:regobs}.

\begin{proof}[Proof of Lemma \ref{lem:Mbound}]  In the following, we will distinguish the two regimes $\eta \le 1$ and $\eta> 1$ and argue for each of them separately, iteratively using Lemma \ref{lem:recurel}. Before going into the iteration, recall that $\Vert M(w_1)\Vert \lesssim \min(1, \tfrac{1}{|\Im w_1|})$ from Lemma \ref{lem:MDE}, which immediately yields \eqref{eq:Mboundtrace} for $k=1$.
	\\[1mm]
	\noindent\underline{Regime $\eta\le1$.} Using \eqref{eq:recursion1} for $k=j=2$, we find that
	\begin{equation} \label{eq:M2}
		M(w_1, A_1, w_2) = M(w_1) \mathcal{X}_{12}[A_1] M(w_2) = \mathcal{B}_{12}^{-1}[M(w_1)A_1 M(w_2)]\,,
	\end{equation}
	where $\mathcal{X}_{12}[B] := \big( 1 - \mathcal{S}[M(w_1) \, \cdot \, M(w_2)] \big)^{-1}[B]$ for $B \in \C^{2N \times 2N}$. Since $A_1$ is regular, we conclude \eqref{eq:Mboundnorm} for $k=1$ (by means of Lemma \ref{lem:boundedpert}~(b)), which immediately translates to \eqref{eq:Mboundtrace} for $k=2$.

	Next, for \eqref{eq:Mboundnorm} and $k=2$, we again use \eqref{eq:recursion1} with $j=2$, such that we obtain
	\begin{align} \label{eq:M3}
		M(w_1, A_1, w_2, A_2, w_3) = & M(w_1, \mathcal{X}_{12}[A_1] M(w_2)A_2, w_3)                                                                                   \\
		                             & + \sum_\sigma \sigma M(w_1, \mathcal{X}_{12}[A_1] M(w_2) E_\sigma, w_3) \langle M(w_2, A_2, w_3) E_\sigma \rangle\,. \nonumber
	\end{align}
	Moreover, using \eqref{eq:Mboundtrace} for $k=2$ in combination with  \eqref{eq:M2} and the lower bound \eqref{eq:beta pm lowerbound} on the eigenvalues of the stability operator $\mathcal{B}$, \eqref{eq:Mboundnorm} for $k=2$ readily follows.

	For \eqref{eq:Mboundtrace} and $k=3$ we need a different representation of $M(w_1, A_1, w_2, A_2, w_3)$ as
	\begin{equation*}
		\mathcal{B}_{13}^{-1}\big[ M(w_1) A_1 M(w_2, A_2, w_3) + \sum_\sigma \sigma M(w_1) E_\sigma M(w_2, A_2, w_3) \langle M(w_1, A_1, w_2) E_\sigma \rangle  \big]\,,
	\end{equation*}
	which follows from \eqref{eq:recursion1} with $j=1$ (or simply by Definition \ref{def:Mdef}). This implies
	\begin{equation*}
		\langle \mathcal{B}_{13}^{-1}[\cdots] A_3 \rangle = \langle [\cdots] \mathcal{X}_{31}[A_3] \rangle
	\end{equation*}
	and thus, since $\Vert [\cdots] \Vert \lesssim 1$ from \eqref{eq:Mboundnorm} with $k=1$ and $\Vert \mathcal{X}_{31}[A_3] \Vert \lesssim 1$ (recall Lemma \ref{lem:boundedpert}~(b)), we have proven \eqref{eq:Mboundtrace} for $k=3$.

	In order to see \eqref{eq:Mboundnorm} for $k=3$, we first need to show that \eqref{eq:Mboundnorm} for $k=2$ remains valid, if only \emph{one} of the two involved matrices $A_1, A_2$ is regular. Henceforth, we will assume that $A_1 = \mathring{A}_1$ and $A_2$ is arbitrary, the other case being similar and hence omitted. We start with \eqref{eq:M3} and use the lower bound \eqref{eq:beta pm lowerbound} on the eigenvalues of $\mathcal{B}$ in the first term in \eqref{eq:M3}, such that the remaining terms to be investigated are in the last line of \eqref{eq:M3}, where we study each factor separately. Thereby, we focus on the case $\Im w_1 > 0$ and $\mathfrak{s}_1 = \mathfrak{s}_2 = +$ (recall \eqref{eq:sign}), other constellations being completely analogous. Now, in the second factor in the last line of \eqref{eq:M3} we use
	\begin{equation*}
		\big\vert \langle M(w_2, A_2, w_3) E_- \rangle \big\vert = \big\vert \langle M(w_2) A_2 M(w_3) \mathcal{X}_{32}[E_-] \rangle \big\vert \lesssim 1
	\end{equation*}
	for $\sigma = -$. For $\sigma = +$, we find, using cyclicity of the trace, that $\big\vert \langle M(w_2, A_2, w_3) E_+ \rangle \big\vert$ equals
	\begin{equation*}
		\big\vert \langle A_2 M(w_3, E_+, w_2) \rangle \big\vert = \frac{1}{|w_3 - w_2|} \big\vert \langle A_2 \big(M(w_3)- M(w_2)\big) \rangle \big\vert \lesssim 1 + \frac{1}{|w_3 - w_2|}\,.
	\end{equation*}
	In the first factor in the last line of \eqref{eq:M3}, we use the usual bound \eqref{eq:beta pm lowerbound} for $\sigma = -$ and conclude the desired estimate together with the bound on the second factor for $\sigma = -$. However, for $\sigma = +$, the argument is slightly more involved: Using the usual notations $e_j = \Re w_j$ and $\eta_j = |\Im w_j|$, recall from the proof of Lemma \ref{lem:underlined2} (see the estimate of \eqref{eq:Psi 1 iso secondcrit}) that
	\begin{equation*}
		\langle M_1 \mathcal{X}_{12}[A_1^{\circ_{1,2}}] M_2 M_2^* E_- \rangle = \mathcal{O}\big( |e_1 + e_2| +  \eta_1 + \eta_2 \big)\,,
	\end{equation*}
	which readily implies that
	\begin{equation} \label{eq:Mbound stability}
		\langle M_1 \mathcal{X}_{12}[A_1^{\circ_{1,2}}] M_2 M_3 E_- \rangle = \mathcal{O}\big( |e_2 - e_3| + |e_1 + e_2| +  \eta_1 + \eta_2+ \eta_3\big)
	\end{equation}
	by means of Lemma \ref{lem:Mbasic}~(b).
	Employing the associated decomposition in the first factor in the last line of \eqref{eq:M3} (and using the analogous $c_\tau(...)$-notation as in \eqref{eq:csigmadef Psi 1 iso}), we find it being equal to
	\begin{equation*}
		M\big(w_1, \big(\mathcal{X}_{12}[A_1] M(w_2)\big)^{\circ_{1,3}} , w_3\big) + \sum_\tau c_\tau(\mathcal{X}_{12}[A_1^{\circ_{1,2}}] M_2) M(w_1, E_\tau , w_3)\,.
	\end{equation*}
	The first summand is easily bounded by one, as follows from \eqref{eq:Mboundnorm} for $k=1$. Using \eqref{eq:M2}, the term with $\tau = +$ is also bounded by one. The remaining term with $\tau = -$ can be estimated with the aid of \eqref{eq:Mbound stability} as
	\begin{equation*}
		\frac{	|e_2 - e_3| + |e_1 + e_2| +  \eta_1 + \eta_2+\eta_3}{|w_1 + w_3|}\,.
	\end{equation*}
	Collecting all the estimates from above, we find that $\Vert M(w_1, \mathring{A}_1, w_2, A_2, w_3) \Vert$ is bounded by
	\begin{equation*}
		\frac{1}{\eta} + \left( 1 + \frac{|e_1 + e_3| + |e_2 - e_3| + \eta_1 + \eta_2 + \eta_3}{|e_1 + e_3| + \eta_1 + \eta_3} \right) \left( 1 + \frac{1}{|e_3 - e_2| + \eta_2 + \eta_3} \right) \lesssim \frac{1}{\eta}\,,
	\end{equation*}
	which shows that \eqref{eq:Mboundnorm} remains valid if only one of the two involved matrices $A_1$, $A_2$ is regular.

	Having this at hand, we can now turn to the proof of \eqref{eq:Mboundnorm} for $k=3$. In fact, by \eqref{eq:recursion1} for $k=4$, we find
	\begin{align} \label{eq:M4}
		M(w_1, .. , w_4) = & M(w_1, \mathcal{X}_{12}[A_1] M(w_2), A_2, w_3, A_3, w_4)                                                                                 \\
		                   & + \sum_\sigma \sigma M(w_1, \mathcal{X}_{12}[A_1] M(w_2) E_\sigma, w_3, A_3, w_4) \langle M(w_2, A_2, w_3)E_\sigma \rangle \nonumber     \\
		                   & + \sum_\sigma \sigma M(w_1, \mathcal{X}_{12}[A_1] M(w_2) E_\sigma, w_4) \langle M(w_2, A_2, w_3, A_3, w_4) E_\sigma \rangle \nonumber\,,
	\end{align}
	where the first and second line of \eqref{eq:M4} are bounded by $\frac{1}{\eta}$ and we can thus focus on the last line. Structurally, this term is the analog of the last line in \eqref{eq:M3} and also proving it being bounded by $\frac{1}{\eta}$ is completely analogous to the arguments above. This concludes the proof of \eqref{eq:Mboundnorm} for $k=3$, from which \eqref{eq:Mboundtrace} for $k=4$ immediately follows.

	Finally, we turn to the proof of \eqref{eq:Mboundnorm} for $k=4$. By \eqref{eq:recursion1} for $j=1$ (or simply by Definition~\ref{def:Mdef}) we find the different representation
	\begin{align*}
		M(w_1, ... , w_5) = & \mathcal{B}_{15}^{-1} \bigl[ M(w_1) A_1 M(w_2, ... , w_5)                                               \\
		                    & + \sum_\sigma \sigma M(w_1) E_\sigma M(w_2, ... , w_5) \langle M(w_1, A_1, w_2) E_\sigma \rangle        \\
		                    & + \sum_\sigma \sigma M(w_1)E_\sigma M(w_3,..., w_5) \langle M(w_1, ... , w_3) E_\sigma \rangle          \\
		                    & + \sum_\sigma \sigma M(w_1)E_\sigma M(w_4,A_4, w_5) \langle M(w_1, ... , w_4) E_\sigma \rangle\bigr]\,.
	\end{align*}
	Combining $\Vert [\cdots]\Vert \lesssim \eta^{-1}$, as follows from \eqref{eq:Mboundnorm} for $k \in [3]$ and \eqref{eq:Mboundtrace} for $k \in [4]$,  with the usual bound \eqref{eq:beta pm lowerbound}, we conclude the desired. This finishes the proof in the first regime where $\eta \le 1$.
	\\[1mm]
	\noindent\underline{Regime $\eta > 1$.} In this second regime, we note that all inverses of stability operators are bounded (see \eqref{eq:beta pm lowerbound}). Moreover, it easily follows from \eqref{eq:recursion1} that every summand in the definition of $M(w_1, ..., w_k)$ carries at least $k$ factors of (different) $M(w_i)$. Now, as mentioned in the beginning of the proof, we have $\Vert M(w_i) \Vert \lesssim 1/\eta$, which implies the desired bound.
\end{proof}

\section{Proof of Lemmas \ref{lem:underlined3} and \ref{lem:underlined4}} \label{app:underlinedproofs}
In this appendix, we carry out the proofs of the two
Lemmas \ref{lem:underlined3} and \ref{lem:underlined4}.

\begin{proof}[Proof of Lemma \ref{lem:underlined3}]
	Similarly to the proof of Lemma \ref{lem:underlined2}, we get from Appendix \ref{app:motivation} and \eqref{eq:Mexample} that
	\begin{align}
		      & \langle (G_1 A_1 G_2 - M_1 \mathcal{X}_{12}[A_1] M_2)A_2 \rangle \label{eq:Psi 2 av after decomp}                                                                                 \\
		=  \  & \langle M_1 A_1 (G_2 - M_2) {\mathcal{X}}_{21}[A_2] \rangle  - \langle M_1 \underline{WG_1 A_1 G_2} {\mathcal{X}}_{21}[A_2] \rangle \nonumber                                     \\
		      & + \langle M_1 \mathcal{S}[G_1 - M_1] G_1 A_1 G_2 {\mathcal{X}}_{21}[A_2] \rangle + \langle M_1 \mathcal{S}[G_1 A_1 G_2] (G_2 - M_2) {\mathcal{X}}_{21}[A_2] \rangle \,. \nonumber
	\end{align}
	We note that $\Vert \mathcal{X}_{12}[\mathring{A}_1] \Vert \lesssim1$ and $\Vert \mathcal{X}_{21}[\mathring{A}_2] \Vert \lesssim 1 $ by means of Lemma \ref{lem:boundedpert}.

	%Therefore, if the constellation of spectral parameters is such that $\mathbf{1}_\delta(w_2,w_1) = 0$, i.e.~$\eta_1 > \delta$, $\eta_2 > \delta$, or $\big| |E_1| - |E_2|\big| > \delta$ (recall the definition of the characteristic function $\mathbf{1}_\delta$ from \eqref{eq:case regulation} and \eqref{eq:case regulation2}), the claim readily follows by employing Lemma \ref{lem:Mbound} (the two middle terms in \eqref{eq:E2av} are in fact absent in this situation). We can hence restrict to constellations of spectral parameters, where $\mathbf{1}_\delta(w_1,w_2) = 1$ in the following. {\color{purple}[Maybe explain better]}

	Then, analogously to \eqref{eq:decomp Psi 1 iso}, %since we assumed $\mathbf{1}_\delta^\sigma(w_1,w_2) = 1$ in \eqref{eq:2dimregass}, 
	we need to further decompose $\mathcal{X}_{21}[A_2] M_1$  in the last three terms in  \eqref{eq:Psi 1 iso before decomp}  as
	\begin{equation} \nonumber
		{\mathcal{X}}_{21}[\mathring{A}_2] M_1 = ({\mathcal{X}}_{21}[\mathring{A}_2] M_1)^{\circ} + \sum_{\sigma }\mathbf{1}_\delta^\sigma\, c_\sigma({\mathcal{X}}_{21}[\mathring{A}_2] M_1) E_\sigma\,,
	\end{equation}
	where  we again suppressed the spectral parameters (and the relative sign of their imaginary parts, which has been fixed by $\Im w_1 > 0$ and $\Im w_2 <0$) in the notation for the linear functionals $c_\sigma(\cdot)$ on $\C^{2N \times 2N}$ defined as
	\begin{equation} \label{eq:csigmadef Psi 2 av}
		c_+(B) := \frac{\langle M_2 B M_{1}  \rangle}{\langle M_2  M_{1} \rangle} \qquad \text{and} \qquad c_{- }(B) := \frac{\langle M_2 B M^*_{1} E_{- } \rangle}{\langle M_2 E_{- } M^*_{1} E_{- } \rangle}\,.
	\end{equation}
	Continuing the expansion of \eqref{eq:Psi 2 av after decomp}, we arrive at
	\begin{align*}
		 & \langle  M_1 \mathring{A}_1 (G_2 - M_2) {\mathcal{X}}_{21}[\mathring{A}_2] \rangle  - \langle  \underline{WG_1 \mathring{A}_1 G_2} ({\mathcal{X}}_{21}[\mathring{A}_2] M_1)^{\circ} \rangle                                                   \\
		 & + \langle  \mathcal{S}[G_1 - M_1] G_1 \mathring{A}_1 G_2 ({\mathcal{X}}_{21}[\mathring{A}_2] M_1)^\circ \rangle + \langle  \mathcal{S}[G_1 \mathring{A}_1 G_2] (G_2 - M_2) ({\mathcal{X}}_{21}[\mathring{A}_2] M_1)^\circ \rangle             \\
		 & + \sum_{\sigma } \mathbf{1}_\delta^\sigma\,c_\sigma({\mathcal{X}}_{21}[\mathring{A}_2] M_1) \big[   - \langle  \underline{WG_1 \mathring{A}_1 G_2} U_\sigma \rangle + \langle  \mathcal{S}[G_1 - M_1] G_1 \mathring{A}_1 G_2 E_\sigma \rangle \\
		 & \hspace{7cm}+ \langle  \mathcal{S}[G_1 \mathring{A}_1 G_2] (G_2 - M_2) E_\sigma \rangle   \big]\,.
	\end{align*}
	We emphasise that, in case of $\mathring{A}_2$ and its linear dependents, the regular component is defined w.r.t.~the pair of spectral parameters $(w_2, w_1)$.

	Next, analogously to the proof of Lemma \ref{lem:underlined2}, we undo the underline in $\big[\cdots\big]$, such that our expansion of \eqref{eq:Psi 2 av after decomp} becomes
	\begin{align}
		 & \langle (G_1 \mathring{A}_1 G_2 - M_1 \mathcal{X}_{12}[\mathring{A}_1] M_2)\mathring{A}_2 \rangle   \nonumber                                                                                                                                                                                                   \\ = & \  \langle  M_1 \mathring{A}_1 (G_2 - M_2) {\mathcal{X}}_{21}[\mathring{A}_2 ] \rangle  - \langle  \underline{WG_1 \mathring{A}_1 G_2} ({\mathcal{X}}_{21}[\mathring{A}_2] M_1)^{\circ} \rangle \label{eq:fin}\\
		 & + \langle  \mathcal{S}[G_1 - M_1] G_1 \mathring{A}_1 G_2 ({\mathcal{X}}_{21}[\mathring{A}_2] M_1)^{\circ} \rangle + \langle  \mathcal{S}[G_1 \mathring{A}_1 G_2] (G_2 - M_2) ({\mathcal{X}}_{21}[\mathring{A}_2] M_1)^{\circ} \rangle \nonumber                                                                 \\
		 & + \sum_{\sigma }\mathbf{1}_\delta^\sigma\,c_\sigma({\mathcal{X}}_{21}[\mathring{A}_2] M_1) \big[   - \langle \mathring{A}_1 G_2 E_\sigma \rangle + \langle G_1 \mathring{A}_1 G_2 \mathring{\Phi}_\sigma \rangle  + c_\sigma (\Phi_\sigma) \langle G_1 \mathring{A}_1 G_2 E_\sigma \rangle   \big]\,, \nonumber
	\end{align}
	where
	\begin{equation} \label{eq:Phidef}
		\Phi_\sigma := E_\sigma \frac{1}{M_1} - \mathcal{S}[M_2E_\sigma]
	\end{equation}
	was further decomposed with the aid of $ c_\sigma(\Phi_\tau )  \sim \delta_{\sigma, \tau}$ and we used the notation \eqref{eq:csigmadef Psi 2 av}.

	We can now write \eqref{eq:fin} for both, $\mathring{A}_2 = \mathring{\Phi}_+$ and $\mathring{A}_2 = \mathring{\Phi}_-$, and solve the two resulting equation for $\langle G_1 \mathring{A}_1 G_2 \mathring{\Phi}_\sigma \rangle$ and $\langle G_1 \mathring{A}_1 G_2 \mathring{\Phi}_- \rangle$. Observe that by means of
	\[
		c_\tau(\mathcal{X}_{21}[\mathring{\Phi}_\sigma] M_1) \sim \delta_{\sigma, \tau}\,,
	\]
	the original \emph{system} of linear equations boils down to two separate ones. Thus, plugging the solutions for $\langle G_1 \mathring{A}_1 G_2 \mathring{\Phi}_\pm \rangle$ back into \eqref{eq:fin} we arrive at
	\begin{align}
		     & \langle (G_1 \mathring{A}_1 G_2 - M_{1}\mathcal{X}_{12}[\mathring{A}_1]M_2)\mathring{A}_2 \rangle   \nonumber                                                                                                                                                                                                                                  \\
		= \  & - \langle  \underline{WG_1 \mathring{A}_1 G_2} ({\mathcal{X}}_{21}[\mathring{A}_2] M_1)^{\circ} \rangle + \langle G_1 - M_1 \rangle \langle  G_1 \mathring{A}_1 G_2 ({\mathcal{X}}_{21}[\mathring{A}_2] M_1)^{\circ} \rangle  \nonumber                                                                                                        \\
		     & + \langle  M_1 \mathring{A}_1 (G_2 - M_2) {\mathcal{X}}_{21}[\mathring{A}_2 ] \rangle+ \langle  \mathcal{S}[G_1 \mathring{A}_1 G_2] (G_2 - M_2) ({\mathcal{X}}_{21}[\mathring{A}_2] M_1)^{\circ} \rangle   \label{eq:Psi 2 av av cancel 1}                                                                                                     \\
		     & +\sum_{\sigma }  \frac{\mathbf{1}_\delta^\sigma\,c_\sigma({\mathcal{X}}_{21}[\mathring{A}_2] M_1) }{1 - \mathbf{1}_\delta^\sigma\,c_\sigma({\mathcal{X}}_{21}[\mathring{\Phi}_\sigma] M_1)} \bigg[   - \langle  \underline{WG_1 \mathring{A}_1 G_2} ({\mathcal{X}}_{21}[\mathring{\Phi}_\sigma] M_1)^{\circ} \rangle \label{eq:Psi 2 av denom} \\
		     & + \langle G_1 - M_1 \rangle \langle  G_1 \mathring{A}_1 G_2 ({\mathcal{X}}_{21}[\mathring{\Phi}_\sigma] M_1)^{\circ} \rangle   + \langle  M_1 \mathring{A}_1 (G_2 - M_2) {\mathcal{X}}_{21}[\mathring{\Phi}_\sigma ] \rangle  \nonumber                                                                                                        \\
		     & + \langle  \mathcal{S}[G_1 \mathring{A}_1 G_2] (G_2 - M_2) ({\mathcal{X}}_{21}[\mathring{\Phi}_\sigma] M_1)^{\circ} \rangle  \label{eq:Psi 2 av av cancel 2}                                                                                                                                                                                   \\
		     & - \langle \mathring{A}_1 (G_2- M_2) E_\sigma \rangle   + c_\sigma( \Phi_\sigma) \langle (G_1 \mathring{A}_1 G_2 - M_{12}^{\mathring{A}_1}) E_\sigma \rangle  \bigg]\,. \label{eq:VPhicancel2}
	\end{align}

	We now need to check that the denominators in \eqref{eq:Psi 2 av denom} are bounded away from zero.
	\begin{lemma} \label{lem:splitting stable 2av} For small enough $\delta > 0$, we have that
		\begin{equation*}
			\left| 1 -  \mathbf{1}_\delta^\sigma(w_2, w_1)\, c_\sigma(\mathcal{X}_{21}[\mathring{\Phi}_\sigma]M_1) \right| \gtrsim 1 \qquad \text{for} \quad \sigma = \pm\,.
		\end{equation*}
	\end{lemma}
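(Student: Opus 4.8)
The statement is the two-body analogue of Lemma~\ref{lem:splittingstable 1av} and of Lemma~\ref{lem:splitting stable 1iso}, so the plan is to follow the same three-step scheme: reduce to the extreme cutoff configuration, compute the relevant denominator explicitly in that configuration using the $M$-Ward identity, and then argue that the order-one lower bound is stable under small perturbations that keep $\mathbf{1}^\sigma_\delta=1$. First I would note that the statement is trivial when $\mathbf{1}^\sigma_\delta(w_2,w_1)=0$, since then the factor $\mathbf{1}^\sigma_\delta(w_2,w_1)\,c_\sigma(\cdots)$ vanishes and the quantity equals $1$; the intermediate values of the cutoff interpolate between the two extremes, so it suffices to treat $\mathbf{1}^\sigma_\delta(w_2,w_1)=1$. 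Recalling $\mathfrak s_1=\mathfrak s_2=+$ (both spectral parameters on opposite half-planes, by the standing assumption $\Im w_1>0$, $\Im w_2<0$ in Section~\ref{subsec:proofmaster3}), the cutoff $\mathbf{1}^+_\delta=1$ forces $w_2$ close to $\bar w_1$ and $\mathbf{1}^-_\delta=1$ forces $w_2$ close to $-w_1$; in both cases $|\Re w_1|$ and $|\Im w_j|$ are of order at most $\delta$.

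\textbf{Key computation.} The main step is the explicit evaluation of $1-c_\sigma(\mathcal{X}_{21}[\mathring\Phi_\sigma]M_1)$ for arbitrary $w_1,w_2$, which is the exact mirror of \eqref{eq:denom Psi 1 iso} with the roles of $w_1$ and $w_2$ interchanged (cf.~the definitions \eqref{eq:csigmadef Psi 2 av} and \eqref{eq:Phidef} of $c_\sigma$ and $\Phi_\sigma$ used here, which are obtained from \eqref{eq:csigmadef Psi 1 iso} and \eqref{eq:Psidef1iso} by swapping $M_1\leftrightarrow M_2$). Using $\mathcal{X}_{21}=(1-\mathcal{S}[M_2\,\cdot\,M_1])^{-1}$, the eigenvector structure of the stability operator from Lemma~\ref{lem:eigendecomp}, the $M$-Ward identity \eqref{eq:MWard}, and the commutativity of $M_1,M_2$, one obtains (up to harmless errors coming from the $\mathring\Phi_\sigma$ regularisation)
\begin{equation*}
1 - c_+(\mathcal{X}_{21}[\mathring\Phi_+]M_1) = \langle M_2\rangle\,\frac{\langle M_2 M_1 M_1\rangle}{\langle M_2 M_1\rangle^2}\,,
\end{equation*}
and an analogous expression for $\sigma=-$ involving $E_-$, $M_1^*$, and $M_2^*$, exactly as in \eqref{eq:denom Psi 1 iso} but with $1$ and $2$ swapped. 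Specialising to $w_2=\bar w_1$ in the $\sigma=+$ case and to $w_2=-w_1$ in the $\sigma=-$ case, and invoking Lemma~\ref{lem:Mbasic} together with $\Im M_j\,\Im w_j>0$, the first identity becomes $\langle M_1\rangle\,\frac{\langle\Im M_1\,M_1\rangle}{\langle\Im M_1\rangle^2}\,(\langle\Im M_1\rangle+\Im w_1)$ (using $M_2=M(\bar w_1)=M_1^*$), whose modulus is bounded below by $\langle\Im M_1\rangle^2\gtrsim 1$ in the bulk; the $\sigma=-$ case is handled the same way, using the chiral symmetry \eqref{eq:chiralM} to relate $M(-w_1)$ to $M_1$, and yields a lower bound $\langle\Im M_1\rangle^2/2\gtrsim 1$. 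Here the bulk boundedness $\Vert M(w)\Vert\lesssim 1$ and the lower bound $\langle\Im M(w)\rangle\gtrsim\kappa^{1/3}$ for $\Re w\in\mathbf B_\kappa$ are used.

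\textbf{Perturbation and conclusion.} Finally, the order-one lower bound at the extreme configuration persists under a small perturbation of $w_2$ away from $\bar w_1$ (resp.~$-w_1$), as long as $\mathbf{1}^\sigma_\delta(w_2,w_1)=1$ for $\delta$ small enough: this is because $M(w)$ is Lipschitz in $w$ on the bulk by Lemma~\ref{lem:Mbasic}(b) and the same continuity/boundedness arguments from Appendix~\ref{app:stabop} apply, so $c_\sigma(\mathcal{X}_{21}[\mathring\Phi_\sigma]M_1)$ changes by $\mathcal{O}(\delta)$. Choosing $\delta=\delta(\kappa,\Vert\Defo\Vert)$ sufficiently small (consistent with \eqref{eq:deltachoice}) then gives $|1-\mathbf{1}^\sigma_\delta(w_2,w_1)\,c_\sigma(\mathcal{X}_{21}[\mathring\Phi_\sigma]M_1)|\gtrsim 1$ in all cases. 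I do not expect a genuine obstacle here: the only mildly delicate point is bookkeeping the errors introduced by replacing $\Phi_\sigma$ by its regular component $\mathring\Phi_\sigma$ in the formula for $c_\sigma$, but by \eqref{eq:orthogonality Psi 1 iso}-type identities (here $c_\tau(\Phi_\sigma)\sim\delta_{\sigma\tau}$) these are subleading and do not affect the order-one bound. The proof is thus a direct transcription of the argument for Lemma~\ref{lem:splitting stable 1iso} with $w_1\leftrightarrow w_2$, and I would present it in that compressed form.
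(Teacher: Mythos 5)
Your proposal is correct and matches the paper's proof, which is literally a one-line reference to Lemma~\ref{lem:splitting stable 1iso} ("Completely analogous"); your detailed unfolding via the $1\leftrightarrow 2$ relabelling is precisely what that reference intends. The only cosmetic slips are that the displayed identity $1-c_+(\mathcal{X}_{21}[\mathring\Phi_+]M_1)=\langle M_2\rangle\langle M_2M_1M_1\rangle/\langle M_2M_1\rangle^2$ is exact (no errors from the regularisation, as in \eqref{eq:denom Psi 1 iso}), and at $w_2=\bar w_1$ the prefactor should read $\langle M_2\rangle=\langle M_1^*\rangle$ rather than $\langle M_1\rangle$ -- but since only the modulus enters the bound these do not affect the argument.
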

	\begin{proof}
		Completely analogous to Lemma \ref{lem:splitting stable 1iso}.
	\end{proof}

	Next, %in case that $\mathbf{1}_\delta^\sigma\,(w_1, w_2) = 1$ (as we assumed in \eqref{eq:2dimregass}), 
	there are two particular terms, namely the ones of the form
	\begin{equation} \label{eq:Psi 2 av firstcrit}
		\langle  \mathcal{S}[G_1 \mathring{A}^{{1,2}}_1 G_2] (G_2 - M_2) \mathring{A}_2^{{2,1}} \rangle\,,
	\end{equation}
	appearing in \eqref{eq:Psi 2 av av cancel 1} and \eqref{eq:Psi 2 av av cancel 2}, and
	\begin{equation} \label{eq:Psi 2 av secondcrit}
		c_\sigma({\mathcal{X}}_{21}[\mathring{A}^{{2,1}}_2] M_1) c_\sigma(\Phi_\sigma) \langle (G_1 \mathring{A}^{{1,2}}_1 G_2 - M_{1}\mathcal{X}_{12}[\mathring{A}^{{1,2}}_1] M_2) E_\sigma \rangle\,,
	\end{equation}
	appearing in \eqref{eq:VPhicancel2}, whose naive size $1/(N \eta^2)$ does not match the target. Hence, they have to be discussed in more detail. In \eqref{eq:Psi 2 av firstcrit} and \eqref{eq:Psi 2 av secondcrit}, we emphasised the pair of spectral parameters with respect to which the regularisation has been conducted. Moreover, for the following estimates, we recall the a priori bounds \eqref{eq:apriori Psi}.
	\\[2mm]
	\underline{\emph{Estimating \eqref{eq:Psi 2 av firstcrit}.}}
	We begin by expanding
	\begin{equation} \label{eq:Psi 2 av firstcrit start}
		\langle  \mathcal{S}[G_1 \mathring{A}_1^{{1,2}} G_2] (G_2 - M_2) \mathring{A}_2^{{2,1}} \rangle = \sum_{\sigma} \sigma \, \langle G_1 \mathring{A}_1^{{1,2}} G_2 E_\sigma \rangle \langle   (G_2 - M_2) \mathring{A}_2^{{2,1}}E_\sigma  \rangle
	\end{equation}
	and note that, analogously to \eqref{eq:firstcrit 2},
	\begin{equation} \label{eq:Psi 2 av firstcrit 1}
		\mathring{A}_i^{{i,j}} E_\sigma = \big(\mathring{A}_i^{{i,j}} E_\sigma\big)^{\circ_{i,i}} + \mathcal{O}\big(|e_i - \sigma e_j| + |\eta_i -  \eta_j|\big) E_+ + \mathcal{O}\big(|e_i - \sigma e_j| + |\eta_i -  \eta_j|\big) E_-
	\end{equation}
	as well as
	\begin{equation} \label{eq:Psi 2 av firstcrit 2}
		\mathring{A}_i^{{i,j}} E_\sigma = \big(\mathring{A}_i^{{i,j}} E_\sigma\big)^{\circ_{j,j}} + \mathcal{O}\big(|e_i - \sigma e_j| + |\eta_i -  \eta_j|\big)E_+ + \mathcal{O}\big(|e_i - \sigma e_j| + |\eta_i -  \eta_j|\big) E_-
	\end{equation}
	for $i \neq j \in [2]$ and $\sigma = \pm$.

	In the first term in \eqref{eq:Psi 2 av firstcrit start}, for $\sigma = +$ and $E_\sigma = E_+$, we use a \cred{resolvent identity \eqref{eq:resolid}} and the usual averaged local law \eqref{eq:single G} in combination with \eqref{eq:Psi 2 av firstcrit 1}, \eqref{eq:Psi 2 av firstcrit 2} and \eqref{eq:circ def}, in order to bound it as
	\begin{equation} \label{eq:Psi 2 av firstcrit +}
		\big\vert \langle G_1 \mathring{A}_1^{{1,2}} G_2 \rangle \big\vert
		%\prec \frac{1}{|E_1 - E_2| + \eta_1 + \eta_2} \left(\big( |E_1 - E_2| + |\eta_1 - \eta_2| \big)\left( 1 + \frac{1}{N \eta} \right)  + \frac{1}{N\sqrt{\eta}} \right) 
		\prec 1 + \frac{1}{|e_1 - e_2| + \eta_1 + \eta_2} \max_{i \in [2]}\vert \langle (G_i - M_i) (\mathring{A}_1^{{1,2}})^{\circ_{i,i}} \rangle \vert\,.
	\end{equation}
	For $\sigma = -$ and $E_\sigma = E_-$, we first add and subtract the corresponding deterministic approximation $\langle M(w_1, \mathring{A}_1^{{1,2}}, w_2) E_- \rangle$, which itself is bounded by means of Lemma \ref{lem:Mbound}. In the difference term, we use \eqref{eq:chiral} and employ the integral representation from Lemma~\ref{lem:intrepG^2} with
	\[
		\tau  = +\,, \quad J = \mathbf{B}_{ \ell \kappa_0}\,, \quad \text{and} \quad \tilde{\eta} = \frac{\ell}{\ell +1} \eta\,,
	\]
	for which we recall that $w_j \in \mathbf{D}_{\ell +1}^{(\epsilon_0, \kappa_0)}$, i.e.~in particular $\eta \ge (\ell +1) N^{-1+\epsilon_0}$ and hence $\tilde{\eta} \ge \ell N^{-1+\epsilon_0}$. Note that Lemma \ref{lem:intrepG^2} is also true on the level of the corresponding deterministic approximations, as can be seen, e.g., by a meta argument similarly to the proof of Lemma \ref{lem:recurel}. Hence, after splitting the contour integral and bounding the individual contributions as described in \eqref{eq:contourdecomp}, we obtain
	%, with the aid of Lemma \ref{lem:Mbound}, 
	\begin{align*}
		      & \big\vert \langle G_1 A_1^{\circ_{1,2}} G_2 E_- \rangle \big\vert \prec 1 +  \int_{\mathbf{B}_{ \ell \kappa_0} } \frac{\big\vert \big\langle \big( G(x + \I \tilde{\eta})  - M(x + \I \tilde{\eta})\big) A_1^{\circ_{1,2}} E_- \big\rangle \big\vert}{\big\vert \big( x - e_1 - \I (\eta_1 - \tilde{\eta}) \big) \, \big( x + e_2 - \I (\eta_2 - \tilde{\eta}) \big)\big\vert} \D x   \\[1mm]
		\prec & 1 + \int_{\mathbf{B}_{ \ell \kappa_0}}  \frac{\big\vert \big\langle \big( G(x + \I \tilde{\eta})  - M(x + \I \tilde{\eta})\big) \big(A_1^{\circ_{1,2}} E_-\big)^{\circ_{x+ \I \tilde{\eta},x + \I \tilde{\eta}}} \big\rangle \big\vert }{\big\vert \big( x - e_1 - \I (\eta_1 - \tilde{\eta}) \big) \, \big( x + e_2 - \I (\eta_2 - \tilde{\eta}) \big)\big\vert } \D x  \nonumber\,,
	\end{align*}
	where in the second step we used \eqref{eq:Psi 2 av firstcrit 1} and \eqref{eq:Psi 2 av firstcrit 2}, and absorbed logarithmic corrections from the integral into `$\prec$'. This finally yields that
	\begin{equation} \label{eq:Psi 2 av firstcrit -}
		\big\vert  \langle G_1 A_1^{\circ_{1,2}} G_2 E_- \rangle  \big\vert \prec 1  + \frac{1}{|e_1 + e_2| + \eta_1 + \eta_2 } \cdot \frac{\psi_1^{\rm av}}{N \eta^{1/2}}\,.
		%\sup_{\substack{|x - E_1| \ll 1 \text{or} \\ |x + E_2| \ll 1}} \big\vert \langle \big(G(x \pm \I \eta/3)  -  M(x \pm \I \eta/3)\big) \big(A_1^{\circ_{1,2}} E_-\big)^{\circ_{x,x}} \rangle \big\vert\,. 
	\end{equation}

	Combining \eqref{eq:Psi 2 av firstcrit +} and \eqref{eq:Psi 2 av firstcrit -} with the estimate
	\begin{equation} \label{eq:Psi 1 av flexible}
		\big\vert \langle   (G_2 - M_2) A_2^{\circ_{2,1}}E_\sigma  \rangle \big\vert \prec \frac{|e_1 - \sigma e_2| + |\eta_1 - \eta_2|}{N\eta} + \frac{\psi_1^{\rm av}}{N \eta^{1/2}}
		%\big\vert \langle   (G_2 - M_2) \big(A_2^{\circ_{2,1}}E_\sigma\big)^{\circ_{1,1}}  \rangle \big\vert\,, 
	\end{equation}
	for the second term in \eqref{eq:Psi 2 av firstcrit start},which readily follows from \eqref{eq:Psi 2 av firstcrit 1} and \eqref{eq:single G}, we find that \eqref{eq:Psi 2 av firstcrit} can be bounded as
	\begin{equation} \label{eq:Psi 2 av firstcrit final}
		\big\vert \langle  \mathcal{S}[G_1 {A}^{\circ_{1,2}}_1 G_2] (G_2 - M_2) A_2^{\circ_{2,1}} \rangle \big\vert \prec 	\frac{1}{N \eta} + \frac{(\psi_1^{\rm av})^2}{(N \eta)^2}\,,
	\end{equation}
	where we used the trivial estimate $\psi_1^{\rm av} \prec \eta^{-1/2}$.
	\\[2mm]
	\underline{\emph{Estimating \eqref{eq:Psi 2 av secondcrit}.}} For the term \eqref{eq:Psi 2 av secondcrit}, we first note that the two prefactors $c_\sigma(\mathcal{X}_{21}[A_2^{\circ_{2,1}}] M_1) $ and $c_\sigma(\Phi_\sigma)$ are bounded. However, completely analogous to the proof of Lemma \ref{lem:underlined2}, in each of the two cases $\sigma = \pm $, the bound on \emph{one} of the prefactors can be improved: In the first case, $\sigma = +$, we use \eqref{eq:saturation} and compute
	\begin{equation*}
		c_+(\Phi_+) = \frac{\langle M_1 \rangle \big( 1 - \langle M_1 M_2 \rangle  \big)}{\langle M_1 M_2 \rangle } = \mathcal{O} \big( |e_1 - e_2| + \eta_1 + \eta_2 \big)\,.
	\end{equation*}
	\begin{equation*}
		\left\vert \langle G_1 \mathring{A}_1G_2 - M(w_1, \mathring{A}_1, w_2)\rangle\right\vert
		%\prec \frac{1}{|E_1 - E_2| + \eta_1 + \eta_2} \left(\big( |E_1 - E_2| + |\eta_1 - \eta_2| \big)\left( 1 + \frac{1}{N \eta} \right)  + \frac{1}{N\sqrt{\eta}} \right) 
		\prec \frac{1}{N \eta} + \frac{1}{|e_1 - e_2| + \eta_1 + \eta_2} \max_{i \in [2]}\vert \langle (G_i - M_i) (A_1^{\circ_{1,2}})^{\circ_{i,i}} \rangle \vert
	\end{equation*}
	which is obtained completely analogous to \eqref{eq:Psi 2 av firstcrit +}, we conclude that \eqref{eq:Psi 2 av secondcrit} for $\sigma = +$ can be estimated by $1/(N \eta)$.
	Similarly, in the second case, $\sigma = -$, we perform a computation similar to the one leading to \eqref{eq:stability} and use \eqref{eq:saturation} in order to obtain that $c_-(\mathcal{X}_{12}[A_1^{\circ_{1,2}}] M_2)$ equals
	\begin{equation*}
		\frac{\I}{2} \frac{\langle M_1 A_1^{\circ_{1,2}} M_2^* E_- \rangle }{\langle M_1 E_- M_2^* E_- \rangle} + \frac{1}{2 \I} \frac{\langle M_1 A_1^{\circ_{1,2}} M_2 E_- \rangle }{\langle M_1 E_- M_2^* E_- \rangle} \frac{1 + \langle M_1 E_- M_2^* E_- \rangle }{1 + \langle M_1 E_- M_2 E_- \rangle } =  \mathcal{O}\big(|e_1 + e_2| + \eta_1 + \eta_2\big)
	\end{equation*}
	Combining this with the bound
	\begin{align*}
		\big\vert  \langle \big(G_1 A_1^{\circ_{1,2}} G_2 - M(w_1, A_1^{\circ_{1,2}}, w_2)\big) E_- \rangle  \big\vert
		\prec \frac{1}{N\eta} + \frac{1}{|e_1 + e_2| + \eta_1 + \eta_2 } \cdot \frac{\psi_1^{\rm av}}{N \eta^{1/2}}
		%\sup_{\substack{|x - E_1| \ll 1 \text{or} \\ |x + E_2| \ll 1}} \big\vert \langle \big(G(x \pm \I \eta/3)  -  M(x \pm \I \eta/3)\big) \big(A_1^{\circ_{1,2}} E_-\big)^{\circ_{x,x}} \rangle \big\vert\,,
	\end{align*}
	which is obtained completely analogous to \eqref{eq:Psi 2 av firstcrit -}, we conclude that \eqref{eq:Psi 2 av secondcrit} can be estimated by $1/(N \eta)$ -- now in both cases $\sigma = \pm$.
	\\[2mm]
	\underline{\emph{Conclusion}.}
	Summarizing our investigations,
	%(for the case $\mathbf{1}_\delta = 0$, recall the discussion below \eqref{eq:Psi 2 av after decomp})
	we have shown that
	\begin{equation*}
		\big\langle \big(G_1 \mathring{A}_1 G_2 - M(w_1, \mathring{A}_1, w_2)\big) \mathring{A}_2 \big\rangle= - \big\langle\underline{W G_1 \mathring{A}_1 G_2 \mathring{A}_2'}\big\rangle + \mathcal{O}_\prec\big(\mathcal{E}_2^{\rm av}\big)\,,
	\end{equation*}
	where we used the shorthand notation
	\begin{equation} \label{eq:A'2av def}
		\mathring{A}_2' :=   \big(\mathcal{X}_{21}[\mathring{A}_2] M_1 \big)^\circ +   \sum_{\sigma} \frac{\mathbf{1}_\delta^\sigma \, c_\sigma(\mathcal{X}_{21}[\mathring{A}_2]M_1)}{1 - \mathbf{1}_\delta^\sigma \,  c_\sigma(\mathcal{X}_{21}[\mathring{\Phi}_\sigma]M_1)} \big(\mathcal{X}_{21}[\mathring{\Phi}_\sigma] M_1 \big)^\circ
	\end{equation}
	in the underlined term. Combining \eqref{eq:Psi 2 av firstcrit final} and the bound on \eqref{eq:Psi 2 av secondcrit} established above  with the usual single resolvent local laws \eqref{eq:single G} and the bounds on deterministic approximations in Lemma \ref{lem:Mbound}, we collected all the error terms from the expansion around \eqref{eq:Psi 2 av av cancel 1}--\eqref{eq:VPhicancel2}   in \eqref{eq:E2av}.
\end{proof}

\begin{proof}[Proof of Lemma \ref{lem:underlined4}]

	We denote $A_i \equiv \mathring{A}_i$, except we wish to emphasise $A_i$ being regular. As usual, we use the customary shorthand notations and start with
	\begin{equation*} %\label{eq:start Psi 2 iso}
		G_2 = M_2 - M_2 \underline{W G_2} + M_2 \mathcal{S}[G_2-M_2] G_2\,,
	\end{equation*}
	such that we get
	\begin{equation*}
		G_1 \tilde{A}_1 G_2 \mathring{A}_2 G_3 = G_1 \tilde{A}_1 M_2 \mathring{A}_2 G_3- G_1 \tilde{A}_1 M_2 \underline{W G_2} \mathring{A}_2 G_3+ G_1 \tilde{A}_1 M_2 \mathcal{S}[G_2-M_2] G_2\mathring{A}_2 G_3
	\end{equation*}
	for $\tilde{A}_1 = \mathcal{X}_{12}[A_1]$ with $A_1 = \mathring{A}_1$ (note that $\Vert\mathcal{X}_{12}[\mathring{A}_1]\Vert  \lesssim 1$ by Lemma \ref{lem:boundedpert}) and the linear operator $\mathcal{X}_{12}$ has been introduced in \eqref{eq:X12def}. The definition of $\mathcal{X}_{23}$ is completely analogous.

	Extending the underline to the whole product, we obtain
	%\begin{align*}
	%	G_1 \tilde{A}_1 G_2 \mathring{A}_2 G_3 = G_1 \tilde{A}_1 M_2 \mathring{A}_2 G_3- \underline{G_1 \tilde{A}_1 M_2 W G_2 \mathring{A}_2 G_3}+ G_1 \tilde{A}_1 M_2 \mathcal{S}[G_2-M_2] G_2\mathring{A}_2 G_3 \\
	%	+ G_1 \mathcal{S}[G_1 \tilde{A}_1 M_2] G_2 \mathring{A}_2 G_3 + G_1 \tilde{A}_1 M_2 \mathcal{S}[G_2 \mathring{A}_2 G_3]G_3\,,
	%\end{align*}
	%from which we conclude that 
	\begin{align*}
		G_1 \big(\tilde{A}_1  - & \mathcal{S}[M_1 \tilde{A}_1 M_2]\big) G_2 \mathring{A}_2 G_3                                                                                                  \\
		=                       & G_1 \tilde{A}_1 M_2 \mathring{A}_2 G_3- \underline{G_1 \tilde{A}_1 M_2 W G_2 \mathring{A}_2 G_3} + G_1 \tilde{A}_1 M_2 \mathcal{S}[G_2 \mathring{A}_2 G_3]G_3 \\
		                        & + G_1 \tilde{A}_1 M_2 \mathcal{S}[G_2-M_2] G_2\mathring{A}_2 G_3
		+ G_1 \mathcal{S}[(G_1-M_1) \tilde{A}_1 M_2] G_2 \mathring{A}_2 G_3 \,,
	\end{align*}
	which leaves us with
	\begin{align}
		  & G_1 \mathring{A}_1 G_2 \mathring{A}_2 G_3 - M(w_1 , A_1 , w_2, A_2, w_3)  \label{eq:Psi 2 iso before decomp}                                                                                                                      \\
		= & \ \big(G_1\, \big[ \mathcal{X}_{12}[\mathring{A}_1] M_2 (\mathring{A}_2 + \mathcal{S}[M_2\mathcal{X}_{23}[\mathring{A}_2] M_3]) \big]\, G_3 - M(w_1, \big[\cdots\big], w_3)\big) 	\nonumber                                       \\
		  & - \underline{G_1 \mathcal{X}_{12}[\mathring{A}_1] M_2 W G_2 \mathring{A}_2 G_3}  + G_1 \mathcal{X}_{12}[\mathring{A}_1] M_2 \mathcal{S}[G_2-M_2] G_2\mathring{A}_2 G_3 \nonumber                                                  \\
		  & + G_1 \mathcal{S}[(G_1-M_1) \mathcal{X}_{12}[\mathring{A}_1] M_2] G_2 \mathring{A}_2 G_3 + G_1 \mathcal{X}_{12}[\mathring{A}_1] M_2 \mathcal{S}[G_2 \mathring{A}_2 G_3 - M_2\mathcal{X}_{23}[\mathring{A}_2] M_3]G_3\,, \nonumber
	\end{align}
	where we used Lemma \ref{lem:recurel} for assembling the purely deterministic terms on the l.h.s.
	To continue, we first note that $\Vert \mathcal{X}_{12}[\mathring{A}_1] \Vert \lesssim 1 $ and $\Vert \mathcal{X}_{23}[\mathring{A}_2] \Vert \lesssim 1$ (again, the matrices being regular removes the potentially `bad direction' of the stability operators $\mathcal{X}_{12}$ and $\mathcal{X}_{23}$).

	%Therefore (see also the discussions below \eqref{eq:Psi 1 iso before decomp} and \eqref{eq:Psi 2 av after decomp}), if the constellation of spectral parameters is such that $\mathbf{1}_\delta(w_1,w_2) = 0$, i.e.~$\eta_1 > \delta$, $\eta_2 > \delta$, or $\big| |e_1| - |e_2|\big| > \delta$ (recall the definition of the characteristic function $\mathbf{1}_\delta$ from \eqref{eq:case regulation} and \eqref{eq:case regulation2}), the claim readily follows from \eqref{eq:Psi 2 iso before decomp} (after taking the scalar product with deterministic vectors $\boldsymbol{x}, \boldsymbol{y}$) by employing Lemma \ref{lem:Mbound}, except for the last term in \eqref{eq:Psi 2 iso before decomp}, which we discuss now: Note that, if $\mathbf{1}_\delta(w_1, w_3) \cdot \mathbf{1}_\delta(w_2, w_3) = 0$, the last term is bounded as desired, and we can thus restrict to $\mathbf{1}_\delta(w_1, w_3) \cdot \mathbf{1}_\delta(w_2, w_3) = 1$, which is only possible if $\big||e_1|-|e_2|\big| > \delta$ but $\big||e_1|-|e_3|\big| \le \delta$ and $\big||e_2|-|e_3|\big| \le \delta$. However, this means that $\mathbf{1}_{\delta'}(w_1, w_3) \cdot \mathbf{1}_{\delta'}(w_2, w_3) = 0$ with, e.g., $\delta' = \delta/3$ (which is sufficient for the desired bound in the last term), and we can hence restrict to constellations of spectral parameters, where $\mathbf{1}_\delta(w_1,w_2) = 1$ in the following. A similar reasoning shall also be applied when we restrict to $\mathbf{1}_\delta(w_1,w_3) = 1$ and $\mathbf{1}_\delta(w_2,w_3) = 1$ below (see \eqref{eq:111}). {\color{purple}[Maybe explain better]}

	Then, %since we assumed $\mathbf{1}_\delta^\sigma(w_1,w_2) = 1$ in \eqref{eq:2dimregass}, 
	we need to further decompose $\mathcal{X}_{12}[A_1] M_2$ in the last four terms in \eqref{eq:Psi 2 iso before decomp} as
	\begin{equation} \label{eq:decomp Psi 2 iso}
		\mathcal{X}_{12}[A_1] M_2 = \big(\mathcal{X}_{12}[A_1] M_2\big)^{\circ} + \sum_{\sigma} \mathbf{1}_\delta^\sigma \, c_{\sigma}(\mathcal{X}_{12}[A_1]M_2) E_\sigma\,,
	\end{equation}
	where, similarly as for $\cdot^\circ$, we suppressed the spectral parameters $w_1, w_2$ in the notation for the linear functionals $c_\sigma(...)$, which have been defined in see \eqref{eq:csigmadef Psi 1 iso}.
	Now, plugging \eqref{eq:decomp Psi 2 iso} into \eqref{eq:Psi 2 iso before decomp} we find
	\begin{align}
		  & G_1 \mathring{A}_1 G_2 \mathring{A}_2 G_3 - M(w_1 , \mathring{A}_1 , w_2, \mathring{A}_2, w_3)  \label{eq:Psi 2 iso after decomp}                                                                                                                              \\
		= & \ \big(G_1\, \big[ \mathcal{X}_{12}[\mathring{A}_1] M_2 (\mathring{A}_2 + \mathcal{S}[M_2\mathcal{X}_{23}[\mathring{A}_2] M_3]) \big]\, G_3 - M(w_1, \big[\cdots\big], w_3)\big) 	\nonumber                                                                    \\
		  & - \underline{G_1 \big(\mathcal{X}_{12}[\mathring{A}_1] M_2\big)^\circ W G_2 \mathring{A}_2 G_3}  + G_1 \big(\mathcal{X}_{12}[\mathring{A}_1] M_2\big)^\circ \mathcal{S}[G_2-M_2] G_2\mathring{A}_2 G_3 \nonumber                                               \\
		  & + G_1 \mathcal{S}[(G_1-M_1) \big(\mathcal{X}_{12}[\mathring{A}_1] M_2\big)^\circ] G_2 \mathring{A}_2 G_3 + G_1 \big(\mathcal{X}_{12}[\mathring{A}_1] M_2\big)^\circ \mathcal{S}[G_2 \mathring{A}_2 G_3 - M_2\mathcal{X}_{23}[\mathring{A}_2] M_3]G_3 \nonumber \\
		  & + \sum_{\sigma} \mathbf{1}_\delta^\sigma \,c_\sigma(\mathcal{X}_{12}[\mathring{A}_1] M_2) \bigg[  - \underline{G_1 E_\sigma W G_2 \mathring{A}_2 G_3}  + G_1 E_\sigma \mathcal{S}[G_2-M_2] G_2\mathring{A}_2 G_3 \nonumber                                     \\
		  & \hspace{1.5cm}+ G_1 \mathcal{S}[(G_1-M_1) E_\sigma] G_2 \mathring{A}_2 G_3 + G_1 E_\sigma\mathcal{S}[G_2 \mathring{A}_2 G_3 - M_2\mathcal{X}_{23}[\mathring{A}_2] M_3]G_3  \bigg]\,. \nonumber
	\end{align}

	Next, as in the earlier sections (see, e.g., the display above \eqref{eq:Phidef}), in the last line of \eqref{eq:Psi 2 iso after decomp} we now undo the underline and find the bracket $\big[\cdots\big]$ to equal (the negative of)
	\begin{equation*}
		G_1 E_\sigma \big(\mathring{A}_2 + \mathcal{S}[M(w_2, \mathring{A}_2, w_3)]\big)G_3 - G_1 \Phi_\sigma G_2 \mathring{A}_2 G_3\,,
	\end{equation*}
	where we denoted
	\begin{equation*}
		\Phi_\sigma := E_\sigma \frac{1}{M_2} - \mathcal{S}[M_1 E_\sigma]\,.
	\end{equation*}
	It is apparent from the expansion \eqref{eq:Psi 2 iso after decomp} (and it can also be checked by hand) that
	\begin{equation*}
		M(w_1, E_\sigma \mathring{A}_2 + E_\sigma \mathcal{S}[M(w_2, \mathring{A}_2, w_3)], w_3) = M(w_1, \Phi_\sigma, w_2, \mathring{A}_2, w_3) \,,
	\end{equation*}
	which finally yields
	\begin{align}
		  & G_1 \mathring{A}_1 G_2 \mathring{A}_2 G_3 - M(w_1 , \mathring{A}_1 , w_2, \mathring{A}_2, w_3)  \label{eq:Psi 2 iso after decomp2}                                                                                                                                           \\
		= & \ \big(G_1\, \big[ \mathcal{X}_{12}[\mathring{A}_1] M_2 (\mathring{A}_2 + \mathcal{S}[M_2\mathcal{X}_{23}[\mathring{A}_2] M_3]) \big]\, G_3 - M(w_1, \big[\cdots\big], w_3)\big) 	\nonumber                                                                                  \\
		  & - \underline{G_1 \big(\mathcal{X}_{12}[\mathring{A}_1] M_2\big)^\circ W G_2 \mathring{A}_2 G_3}  + G_1 \big(\mathcal{X}_{12}[\mathring{A}_1] M_2\big)^\circ \mathcal{S}[G_2-M_2] G_2\mathring{A}_2 G_3 \nonumber                                                             \\
		  & + G_1 \mathcal{S}[(G_1-M_1) \big(\mathcal{X}_{12}[\mathring{A}_1] M_2\big)^\circ] G_2 \mathring{A}_2 G_3 + G_1 \big(\mathcal{X}_{12}[\mathring{A}_1] M_2\big)^\circ \mathcal{S}[G_2 \mathring{A}_2 G_3 - M_2\mathcal{X}_{23}[\mathring{A}_2] M_3]G_3 \nonumber               \\
		  & + \sum_{\sigma} \mathbf{1}_\delta^\sigma \,c_\sigma(\mathcal{X}_{12}[\mathring{A}_1] M_2) \bigg[  - \big(G_1 E_\sigma \big(\mathring{A}_2 + \mathcal{S}[M(w_2, \mathring{A}_2, w_3)]\big)G_3 - M(w_1, [\cdots]w_3)\big) \nonumber                                            \\
		  & + \big(G_1 \mathring{\Phi}_\sigma G_2 \mathring{A}_2 G_3 - M(w_1, \mathring{\Phi}_\sigma, w_2, \mathring{A}_2, w_3)\big) + \sum_{\sigma} c_\sigma(\Phi_\sigma) \big(G_1 E_\sigma G_2 \mathring{A}_2 G_3 - M(w_1, E_\sigma, w_2, \mathring{A}_2, w_3)\big)\bigg]\,, \nonumber
	\end{align}
	where we further decomposed $\Phi_\sigma$ in the last line of \eqref{eq:Psi 2 iso after decomp2} (while using the first relation in \eqref{eq:orthogonality Psi 1 iso}) just as $\mathcal{X}_{12}[A_1]M_2$ in \eqref{eq:decomp Psi 2 iso}.

	Next, we write \eqref{eq:Psi 2 iso after decomp2} for both, $A_1 = \mathring{A}_1 = \mathring{\Phi}_+$ and $A_1= \mathring{A}_1 = \mathring{\Phi}_-$, and solve the two resulting linear equations for $G_1 \mathring{\Phi}_\pm G_2 - M(w_1, \mathring{\Phi}_\pm, w_2)$. Observe that by means of the second relation in \eqref{eq:orthogonality Psi 1 iso} the original system of linear equations boils down to two separate ones. Thus, plugging the solutions for $G_1 \mathring{\Phi}_\pm G_2 \mathring{A}_2 G_3 - M(w_1 , \mathring{\Phi}_\pm , w_2, \mathring{A}_2, w_3)$ back into \eqref{eq:Psi 2 iso after decomp2}, we arrive at
	\begin{align}
		  & G_1 \mathring{A}_1 G_2 \mathring{A}_2 G_3 - M(w_1 , \mathring{A}_1 , w_2, \mathring{A}_2, w_3)  \label{eq:Psi 2 iso final}                                                                                                                                                                                                                        \\
		= & \ \big(G_1\, \big[ \mathcal{X}_{12}[\mathring{A}_1] M_2 (\mathring{A}_2 + \mathcal{S}[M_2\mathcal{X}_{23}[\mathring{A}_2] M_3]) \big]\, G_3 - M(w_1, \big[\cdots\big], w_3)\big) 	\nonumber                                                                                                                                                       \\
		  & - \underline{G_1 \big(\mathcal{X}_{12}[\mathring{A}_1] M_2\big)^\circ W G_2 \mathring{A}_2 G_3}  + G_1 \big(\mathcal{X}_{12}[\mathring{A}_1] M_2\big)^\circ \mathcal{S}[G_2-M_2] G_2\mathring{A}_2 G_3 \nonumber                                                                                                                                  \\
		  & + G_1 \mathcal{S}[(G_1-M_1) \big(\mathcal{X}_{12}[\mathring{A}_1] M_2\big)^\circ] G_2 \mathring{A}_2 G_3 + G_1 \big(\mathcal{X}_{12}[\mathring{A}_1] M_2\big)^\circ \mathcal{S}[G_2 \mathring{A}_2 G_3 - M_2\mathcal{X}_{23}[\mathring{A}_2] M_3]G_3 \nonumber                                                                                    \\
		  & + \sum_{\sigma} \frac{\mathbf{1}_\delta^\sigma \,c_\sigma(\mathcal{X}_{12}[\mathring{A}_1] M_2)}{1 - \mathbf{1}_\delta^\sigma \,c_\sigma(\mathcal{X}_{12}[\mathring{\Phi}_\sigma] M_2)} \bigg[  - \big(G_1 \big[E_\sigma \big(\mathring{A}_2 + \mathcal{S}[M(w_2, \mathring{A}_2, w_3)]\big)\big]G_3 - M(w_1, \big[\cdots\big]w_3)\big) \nonumber \\
		  & + \big(G_1\, \big[ \mathcal{X}_{12}[\mathring{\Phi}_\sigma] M_2 (\mathring{A}_2 + \mathcal{S}[M_2\mathcal{X}_{23}[\mathring{A}_2] M_3]) \big]\, G_3 - M(w_1, \big[\cdots\big], w_3)\big) 	\nonumber                                                                                                                                               \\
		  & - \underline{G_1 \big(\mathcal{X}_{12}[\mathring{\Phi}_\sigma] M_2\big)^\circ W G_2 \mathring{A}_2 G_3}  + G_1 \big(\mathcal{X}_{12}[\mathring{\Phi}_\sigma] M_2\big)^\circ \mathcal{S}[G_2-M_2] G_2\mathring{A}_2 G_3 \nonumber                                                                                                                  \\
		  & + G_1 \mathcal{S}[(G_1-M_1) \big(\mathcal{X}_{12}[\mathring{\Phi}_\sigma] M_2\big)^\circ] G_2 \mathring{A}_2 G_3 + G_1 \big(\mathcal{X}_{12}[\mathring{\Phi}_\sigma] M_2\big)^\circ \mathcal{S}[G_2 \mathring{A}_2 G_3 - M_2\mathcal{X}_{23}[\mathring{A}_2] M_3]G_3 \nonumber                                                                    \\
		  & + c_\sigma(\Phi_\sigma) \big(G_1 E_\sigma G_2 \mathring{A}_2 G_3 - M(w_1, E_\sigma, w_2, \mathring{A}_2, w_3)\big)\bigg]\,. \nonumber
	\end{align}
	It has been shown in Lemma \ref{lem:splitting stable 1iso} that the denominators are bounded away from zero.

	Next, we take the scalar product of \eqref{eq:Psi 2 iso final} with two deterministic vectors $\boldsymbol{x}, \boldsymbol{y}$ satisfying $\Vert \boldsymbol{x} \Vert, \Vert \boldsymbol{y} \Vert \le 1$. In the resulting expression,  in case that $\mathbf{1}_\delta^\sigma\,(w_1, w_2) = 1$ (as we assumed in \eqref{eq:2dimregass}), there are three particular terms, namely the ones of the form
	\begin{equation} \label{eq:Psi 2 iso firstcrit}
		\big(G_1 \mathcal{S}[(G_1-M_1) A_1^{\circ_{1,2}}] G_2 \mathring{A}_2 G_3 \big)_{\boldsymbol{x}\boldsymbol{y}
			}\,,
	\end{equation}
	as appearing twice, in the fourth and second to last line,
	\begin{equation} \label{eq:Psi 2 iso secondcrit}
		\big(G_1 A_1^{\circ_{1,2}} \mathcal{S}[G_2 \mathring{A}_2 G_3 - M(w_2, \mathring{A}_2, w_3)]G_3\big)_{\boldsymbol{x}\boldsymbol{y}}\,,
	\end{equation}
	as appearing, again twice, in the fourth and second to last line,
	\begin{equation} \label{eq:Psi 2 iso thirdcrit}
		c_\sigma(\mathcal{X}_{12}[\mathring{A}_1] M_2) c_\sigma(\Phi_\sigma) \big(G_1 E_\sigma G_2 \mathring{A}_2 G_3 - M(w_1, E_\sigma, w_2, \mathring{A}_2, w_3)\big)_{\boldsymbol{x}\boldsymbol{y}
			}\,,
	\end{equation}
	as appearing in the last line, whose naive sizes $1/(N \eta^3)$, $1/(N \eta^3)$, and $1/\sqrt{N \eta^4}$ do not match the target. Hence, they have to be discussed in more detail.
	%Before we proceed, let us recall that we focus on the most critical situation \eqref{eq:111}.%\eqref{eq:2dimregass}, where 
	%Moreover, similarly to the argument leading to $\mathbf{1}_\delta(w_1, w_2) = 1$ right above \eqref{eq:decomp Psi 2 iso}, by reducing $\delta$ to, e.g., $\delta/3$, the cases where one of $\mathbf{1}_\delta(w_2, w_3)$ and $\mathbf{1}_\delta(w_1, w_3)$ is zero and the other is one, can always be reduced to both being zero, hence, we can restrict the following analysis to the situation where 
	%\begin{equation*} 
	%\mathbf{1}_\delta^\sigma(w_1, w_2) = \mathbf{1}_\delta^\sigma(w_2, w_3) = \mathbf{1}_\delta^\sigma(w_1, w_3) = 1\,. 
	%\end{equation*}
	\\[2mm]
	\noindent \underline{\emph{Estimating \eqref{eq:Psi 2 iso firstcrit}.}} For the terms of the first type, we begin by expanding
	\begin{equation*}
		\big(G_1 \mathcal{S}[(G_1-M_1) A_1^{\circ_{1,2}}] G_2 \mathring{A}_2 G_3 \big)_{\boldsymbol{x}\boldsymbol{y}
			} = \sum_\sigma \sigma \langle (G_1-M_1) A_1^{\circ_{1,2}}E_\sigma\rangle \big(G_1 E_\sigma G_2 \mathring{A}_2 G_3 \big)_{\boldsymbol{x}\boldsymbol{y}}
	\end{equation*}
	and recall from \eqref{eq:Psi 1 av flexible} that first factor can be estimated by
	\begin{equation} \label{eq:Psi 2 iso (i) firstfactor}
		\vert \langle (G_1-M_1) A_1^{\circ_{1,2}}E_\sigma\rangle  \vert \prec \frac{|e_1 - \sigma e_2| + |\eta_1 - \eta_2|}{N\eta}  + \frac{\psi_1^{\rm av}}{N \eta^{1/2}}\,.
	\end{equation}

	In the second factor, we distinguish the two cases $\sigma = \pm$.  For $\sigma = +$, we find
	\begin{equation*}
		G_1 G_2 A_2^{\circ_{2,3}} G_3 = \frac{G_1A_2^{\circ_{2,3}} G_3 - G_2 A_2^{\circ_{2,3}}  G_3 }{(e_1 - e_2) + \I (\eta_1 + \eta_2)}
	\end{equation*}
	by a simple {resolvent identity \eqref{eq:resolid}}, which together with
	\begin{align*}
		\mathring{A}_2^{{w_2,w_3}} = \mathring{A}_2^{{w_1,w_3}} & + \mathcal{O}\big(|e_1 - e_2| + |\eta_1 - \eta_2| + |e_1 - e_3| + |\eta_1 - \eta_3|\big) E_+           %\label{eq:threeenergies}
		\\
		                                                        & + \mathcal{O}\big(|e_1 - e_2| + |\eta_1 - \eta_2| + |e_1 + e_3| + |\eta_1 - \eta_3|\big) E_- \nonumber
	\end{align*}
	from Lemma \ref{lem:regularbasic} (note the difference between the $E_+$-error and the $E_-$-error!) and the usual isotropic law \eqref{eq:single G} yields the estimate
	\begin{equation} \label{eq:Psi 2 iso (i) secondfactor +}
		\big\vert \big( G_1 G_2 A_2^{\circ_{2,3}} G_3  \big)_{\boldsymbol{x} \boldsymbol{y}} \big\vert \prec \frac{1}{\eta} + \frac{1}{|e_1 - e_2| + \eta_1 + \eta_2} \left( 1 + \frac{\psi_1^{\rm iso}}{\sqrt{N\eta^2}} \right)\,,
	\end{equation}
	where we again used the a priori bound \eqref{eq:apriori Psi}.
	For $\sigma = -$ we employ the integral representation from Lemma \ref{lem:intrepG^2} and argue similarly as for \eqref{eq:Psi 2 av firstcrit -} such that we finally obtain
	\begin{equation}\label{eq:Psi 2 iso (i) secondfactor -}
		\big\vert \big( G_1 E_- G_2 A_2^{\circ_{2,3}} G_3  \big)_{\boldsymbol{x} \boldsymbol{y}} \big\vert\prec \frac{1}{\eta} + \frac{1}{|e_1 + e_2| + \eta_1 + \eta_2} \left( 1 + \frac{\psi_1^{\rm iso}}{\sqrt{N\eta^2}} \right)\,.
	\end{equation}

	Now, combining \eqref{eq:Psi 2 iso (i) firstfactor} with \eqref{eq:Psi 2 iso (i) secondfactor +} and \eqref{eq:Psi 2 iso (i) secondfactor -}, we find
	\begin{equation} \label{eq:Psi 2 iso (i) final}
		\big\vert \big(G_1 \mathcal{S}[(G_1-M_1) A_1^{\circ_{1,2}}] G_2 \mathring{A}_2 G_3 \big)_{\boldsymbol{x}\boldsymbol{y}}\big\vert \prec \frac{1}{\sqrt{N \eta^3}} \left( 1 +  \frac{\psi_1^{\rm av} \psi_1^{\rm iso}}{N \eta}\right)\,,
	\end{equation}
	where we used that $\psi_1^{\rm av} \prec \eta^{-1/2}$ trivially by \eqref{eq:single G}.
	\\[2mm]
	\underline{\emph{Estimating \eqref{eq:Psi 2 iso secondcrit}.}}
	For terms of the second type, we again start by expanding
	\begin{align*}
		\big(G_1 A_1^{\circ_{1,2}} & \mathcal{S}[G_2 \mathring{A}_2 G_3 - M(w_2, \mathring{A}_2, w_3)]G_3\big)_{\boldsymbol{x}\boldsymbol{y}}                                                                                           \\
		                           & = \sum_\sigma \sigma \big\langle \big(G_2 \mathring{A}_2 G_3 - M(w_2, \mathring{A}_2, w_3)\big)E_\sigma \big\rangle \big(G_1 A_1^{\circ_{1,2}} E_\sigma G_3\big)_{\boldsymbol{x}\boldsymbol{y}}\,.
	\end{align*}
	Then, for the first factor, we recall from the estimate of \eqref{eq:Psi 2 av firstcrit} that
	\begin{align*}
		\big\vert  \big\langle \big(G_2 A_2^{\circ_{2,3}} G_3 - M(w_2, A_2^{\circ_{2,3}}, w_3)\big) E_\sigma \big\rangle  \big\vert
		\prec \frac{1}{N\eta} + \frac{1}{|e_2 - \sigma e_3| + \eta_2 + \eta_3 } \cdot \frac{\psi_1^{\rm av}}{N \eta^{1/2}}\,.
	\end{align*}
	Treating the second factor analogously to \eqref{eq:Psi 2 iso (i) secondfactor +} and \eqref{eq:Psi 2 iso (i) secondfactor -} above, we find
	\begin{equation*}
		\big\vert \big(G_1 A_1^{\circ_{1,2}} E_\sigma G_3\big)_{\boldsymbol{x}\boldsymbol{y}}  \big\vert \prec \frac{|e_2 - \sigma e_3| + |\eta_2 - \eta_3|}{\eta} + \left( 1 + \frac{\psi_1^{\rm iso}}{\sqrt{N\eta^2}} \right)\,.
	\end{equation*}
	Combining the two estimates, we have shown that
	\begin{equation} \label{eq:Psi 2 iso (ii) final}
		\big\vert  \big(G_1 A_1^{\circ_{1,2}} \mathcal{S}[G_2 \mathring{A}_2 G_3 - M(w_2, \mathring{A}_2, w_3)]G_3\big)_{\boldsymbol{x}\boldsymbol{y}} \big\vert \prec \frac{1}{\sqrt{N\eta^3}} \left( 1 + \frac{\psi_1^{\rm iso}}{N\eta} + \frac{\psi_1^{\rm av} \psi_1^{\rm iso}}{N \eta}  \right)
	\end{equation}
	where we again used that $\psi_1^{\rm av} \prec \eta^{-1/2}$ trivially by \eqref{eq:single G}.
	\\[2mm]
	\underline{\emph{Estimating \eqref{eq:Psi 2 iso thirdcrit}.}} For the third term, we recall the (improved) estimates
	\begin{align*}
		c_+(\Phi_+)                               & = \mathcal{O}\big(|e_1 - e_2| + \eta_1 + \eta_2\big) \\
		c_-(\mathcal{X}_{12}[\mathring{A}_1] M_2) & = \mathcal{O}\big(|e_1 + e_2| + \eta_1 + \eta_2\big)
	\end{align*}
	on the anyway bounded prefactors, which have been shown in the course of estimating \eqref{eq:Psi 1 iso secondcrit}.  By arguing analogously to \eqref{eq:Psi 2 iso (i) secondfactor +} and \eqref{eq:Psi 2 iso (i) secondfactor -}, we also find
	\begin{equation*}
		\big\vert \big(G_1 E_\sigma G_2 \mathring{A}_2 G_3 - M(w_1, E_\sigma, w_2, \mathring{A}_2, w_3)\big)_{\boldsymbol{x}\boldsymbol{y}} \big\vert \prec \frac{1}{\sqrt{N \eta^3}} + \frac{1}{|e_1 - \sigma e_2| + \eta_2 + \eta_3} \, \frac{\psi_1^{\rm iso}}{\sqrt{N\eta^2}}\,.
	\end{equation*}
	Now, combining these estimates, we conclude
	\begin{equation} \label{eq:Psi 2 iso (iii) final}
		\big\vert  \eqref{eq:Psi 2 iso thirdcrit} \big\vert
		\prec \frac{1}{\sqrt{N\eta^3}}\left(1+ \psi_1^{\rm iso}\right)\,.
	\end{equation}
	\\[2mm]
	\underline{\emph{Conclusion}.}
	Summarizing our investigations, %(for the case that one or more pairs of the spectral parameters $w_1, w_2, w_3$ are such that $\mathbf{1}_\delta(\cdot, \cdot) = 0$, recall the discussions below \eqref{eq:Psi 2 iso before decomp} and above \eqref{eq:111}), 
	we have shown that
	\begin{equation*}
		{\big(G_1 \mathring{A}_1 G_2\mathring{A}_2 G_3 - M(w_1, \mathring{A}_1, w_2, \mathring{A}_2, w_3)\big)_{\boldsymbol{x} \boldsymbol{y}} = - \big(\underline{G_1 \mathring{A}_1' WG_2 \mathring{A}_2 G_3}\big)_{\boldsymbol{x} \boldsymbol{y}} + \mathcal{O}_\prec\big(\mathcal{E}_2^{\rm iso}\big)\,,}
	\end{equation*}
	where we used the shorthand notation
	\begin{equation} \label{eq:A'2iso def}
		\mathring{A}_1' =   \big(\mathcal{X}_{12}[A_1] M_2 \big)^\circ +   \sum_{\sigma} \frac{ \mathbf{1}_\delta^\sigma \, c_\sigma(\mathcal{X}_{12}[A_1]M_2)}{1 - \mathbf{1}_\delta^\sigma \, c_\sigma(\mathcal{X}_{12}[\mathring{\Phi}_\sigma]M_2)} \big(\mathcal{X}_{12}[\mathring{\Phi}_\sigma] M_2 \big)^\circ
	\end{equation}
	in the underlined term. Combining \eqref{eq:Psi 2 iso (i) final}, \eqref{eq:Psi 2 iso (ii) final}, and \eqref{eq:Psi 2 iso (iii) final} with the usual single resolvent local laws \eqref{eq:single G} and the bounds on deterministic approximations in Lemma \ref{lem:Mbound}, we collected all the error terms from \eqref{eq:Psi 2 iso final} in \eqref{eq:E2iso}.
\end{proof}

\end{document}